\newtheorem{thm}{Theorem}[section]
\newtheorem{prop}[thm]{Proposition}
\newtheorem{cor}[thm]{Corollary}
\newtheorem{lem}[thm]{Lemma}
\theoremstyle{definition}
\newtheorem{define}[thm]{Definition}
\theoremstyle{remark}
\newtheorem{rem}[thm]{Remark}
\newtheorem{example}[thm]{Example}
\newcommand{\ve}[1]{\boldsymbol{\mathbf{#1}}}
\newcommand{\R}{\mathbb{R}}
\newcommand{\Z}{\mathbb{Z}}
\newcommand{\C}{\mathbb{C}}
\renewcommand{\d}{\partial}
\renewcommand{\subset}{\subseteq}
\renewcommand{\tilde}{\widetilde}
\renewcommand{\bar}{\overline}
\newcommand{\iso}{\cong}
\DeclareMathOperator{\Diff}{{Diff}}
\DeclareMathOperator{\ev}{{ev}}
\DeclareMathOperator{\gr}{{gr}}
\DeclareMathOperator{\Hom}{{Hom}}
\DeclareMathOperator{\Id}{{Id}}
\DeclareMathOperator{\id}{{id}}
\DeclareMathOperator{\Int}{{int}}
\DeclareMathOperator{\im}{{im}}
\DeclareMathOperator{\Mor}{{Mor}}
\DeclareMathOperator{\Pin}{{Pin}}
\DeclareMathOperator{\Spin}{{Spin}}
\DeclareMathOperator{\Span}{{Span}}
\DeclareMathOperator{\Sym}{{Sym}}
\DeclareMathOperator{\tw}{tw}
\newcommand{\bA}{\mathbb{A}}
\newcommand{\bB}{\mathbb{B}}
\newcommand{\bE}{\mathbb{E}}
\newcommand{\bF}{\mathbb{F}}
\newcommand{\bK}{\mathbb{K}}
\newcommand{\bL}{\mathbb{L}}
\newcommand{\bO}{\mathbb{O}}
\newcommand{\bS}{\mathbb{S}}
\newcommand{\bT}{\mathbb{T}}
\newcommand{\bW}{\mathbb{W}}
\newcommand{\bX}{\mathbb{X}}
\newcommand{\bZ}{\mathbb{Z}}
\newcommand{\bXI}{\mathbb{XI}}
\newcommand{\cB}{\mathcal{B}}
\newcommand{\cC}{\mathcal{C}}
\newcommand{\cD}{\mathcal{D}}
\newcommand{\cF}{\mathcal{F}}
\newcommand{\cG}{\mathcal{G}}
\newcommand{\cH}{\mathcal{H}}
\newcommand{\cI}{\mathcal{I}}
\newcommand{\cL}{\mathcal{L}}
\newcommand{\cM}{\mathcal{M}}
\newcommand{\cQ}{\mathcal{Q}}
\newcommand{\cV}{\mathcal{V}}
\newcommand{\frD}{\mathfrak{D}}
\newcommand{\frF}{\mathfrak{F}}
\newcommand{\frS}{\mathfrak{S}}
\newcommand{\frX}{\mathfrak{X}}
\newcommand{\frY}{\mathfrak{Y}}
\newcommand{\frs}{\mathfrak{s}}
\newcommand{\frt}{\mathfrak{t}}
\newcommand{\frx}{\mathfrak{x}}
\newcommand{\fry}{\mathfrak{y}}
\newcommand{\cCFL}{\mathcal{C\!F\!L}}
\newcommand{\cCFK}{\mathcal{C\hspace{-.5mm}F\hspace{-.3mm}K}}
\newcommand{\cCFKI}{\mathcal{C\hspace{-.5mm}F\hspace{-.3mm}K\hspace{-.3mm} \cI}}
\newcommand{\cCFLI}{\mathcal{C\hspace{-.5mm}F\hspace{-.4mm}\cL\hspace{-.1mm} \cI}}
\newcommand{\CF}{\mathit{CF}}
\newcommand{\HF}{\mathit{HF}}
\newcommand{\CFK}{\mathit{CFK}}
\newcommand{\CFI}{\mathit{CFI}}
\newcommand{\BI}{\mathit{BI}}
\newcommand{\PD}{\mathit{PD}}
\newcommand{\xs}{\ve{x}}
\newcommand{\ys}{\ve{y}}
\newcommand{\zs}{\ve{z}}
\newcommand{\ws}{\ve{w}}
\newcommand{\qs}{\ve{q}}
\newcommand{\as}{\ve{\alpha}}
\newcommand{\bs}{\ve{\beta}}
\newcommand{\gs}{\ve{\gamma}}
\newcommand{\ds}{\ve{\delta}}
\newcommand{\Ds}{\ve{\Delta}}
\newcommand{\Dt}{\Delta}
\renewcommand{\a}{\alpha}
\renewcommand{\b}{\beta}
\newcommand{\g}{\gamma}
\newcommand{\dt}{\delta}
\newcommand{\veps}{\varepsilon}
\newcommand{\SO}{\mathit{SO}}
\DeclareMathOperator{\Cone}{{Cone}}
\newcommand{\Ss}[1]{\scriptstyle{#1}}
\numberwithin{equation}{section}
\newcommand{\ar}{\mathrm{a.r.}}
\newcommand{\cell}{\mathrm{cell}}
\newcommand{\scA}{\mathscr{A}}
\newcommand{\scB}{\mathscr{B}}
\newcommand{\scC}{\mathscr{C}}
\newcommand{\scE}{\mathscr{E}}
\newcommand{\scH}{\mathscr{H}}
\newcommand{\scU}{\mathscr{U}}
\newcommand{\scV}{\mathscr{V}}
\newcommand{\scS}{\mathscr{S}}
\title{Naturality and functoriality in involutive Heegaard Floer homology}
\author{Kristen Hendricks}
\thanks{KH was partially supported by NSF grant DMS-2019396 and a Sloan Research Fellowship.}
\address{Department of Mathematics, Rutgers University, New Brunswick, NJ, USA}
\email{kristen.hendricks@rutgers.edu}
\author{Jennifer Hom}
\thanks{JH was partially supported by NSF grants DMS-1552285 and DMS-2104144.}
\address{School of Mathematics, Georgia Institute of Technology, Atlanta, GA, USA}
\email{hom@math.gatech.edu}
\author{Matthew Stoffregen}
\address{Department of Mathematics, Michigan State University, East Lansing, MI, USA}
\email{stoffre1@msu.edu}
\thanks{MS was partially supported by NSF grant DMS-1702532.}
\author{Ian Zemke}
\address{Department of Mathematics\\Princeton University\\  Princeton, NJ, USA}
\email{izemke@math.princeton.edu}
\thanks{IZ was partially supported by NSF grants DMS-1703685 and 2204375.}
\begin{document}

\begin{abstract}We prove first-order naturality of involutive Heegaard Floer homology, and furthermore construct well-defined maps on involutive Heegaard Floer homology associated to cobordisms between three-manifolds. We also prove analogous naturality and functoriality results for involutive Floer theory for knots and links.  The proof relies on the doubling model for the involution, as well as several variations.\end{abstract}

\maketitle

\tableofcontents

\section{Introduction}

Heegaard Floer homology, defined by Ozsv{\'a}th and Szab{\'o} in the early 2000s \cites{OSDisks, OSProperties}, is a powerful suite of invariants of 3-manifolds, knots and links inside of them, and 4-dimensional cobordisms between them. Given a basepointed 3-manifold $(Y,w)$ together with a choice of $\Spin^c$ structure $\frs$ on $Y$, the initial construction of the invariant goes by choosing a Heegaard diagram $\cH$ for $(Y,w)$ and associating to it a free finitely-generated chain complex $\CF^{-}(\cH,\frs)$ over the ring $\mathbb F[U]$, where $U$ is a variable of degree $-2$. If $\cH$ and $\cH'$ are two Heegaard diagrams representing $(Y,w)$, the Reidemeister-Singer theorem implies that there is some sequence of Heegaard moves that may be applied to $\cH$ to produce a diagram related to $\cH'$ by a basepoint-preserving diffeomorphism isotopic to the identity in $Y$. Ozsv{\'a}th and Szab{\'o} \cite[Section 9.2]{OSDisks} proved that these sequences of moves induce transition maps, that is, chain homotopy equivalences
\[
\Psi_{\cH \to \cH'}\colon \CF^{-}(\cH,\frs) \rightarrow \CF^{-}(\cH',\frs),
\] 
\noindent showing that the isomorphism class of the homology group $\HF^-(\cH) = \oplus_{\frs \in \Spin^c(Y)}\HF^-(\cH, \frs)$ is an invariant of $(Y,w)$. 

Juh{\'a}sz, Thurston, and the fourth author \cite{JTNaturality} proved that this data is \emph{first-order natural} in the sense that the maps $\Psi_{\cH \to \cH'}$ are independent up to chain homotopy of the choice of Heegaard moves and diffeomorphisms isotopic to the identity relating the two diagrams. (Here, ``first-order'' means that the chain homotopies relating the transition maps associated to two possible sequences of Heegaard moves are not known to be independent of the choices involved in their definition.) It follows that Heegaard Floer homology associates to $(Y,w)$ a specific module $\HF^{-}(Y,w)$ rather than an isomorphism class of $\mathbb F[U]$-modules.

In 2015 the first author and Manolescu \cite{HMInvolutive} put additional structure on the Heegaard Floer package in the form of a homotopy involution $\iota$, leading to involutive Heegaard Floer homology. The construction of the invariant goes as follows: given a basepointed 3-manifold $(Y,w)$ together with a conjugation-invariant $\Spin^c$ structure $\frs$ on $Y$, there is a chain isomorphism
\[ \eta \colon \CF^-(\cH, \frs) \to \CF^-(\overline{\cH},\frs)\]
\noindent where $\overline{\cH}$ denotes the conjugate Heegaard diagram. One may then compose with the transition map
\[ \Psi_{\overline{\cH} \to \cH} \colon \CF^-(\overline{\cH}, \frs) \to \CF^{-}(\cH, \frs)\]
\noindent obtaining the map $\iota$. This data is variously packaged either as the pair $(\CF^{-}(Y), \iota)$, called an \emph{$\iota$-complex}, or as the mapping cone
\[
\CFI^{-}(\cH, \frs) = \Cone(\CF^{-}(\cH,\frs) \xrightarrow{Q(1+\iota)} Q\cdot\CF^-(\cH, \frs)[-1]).
\]
\noindent Here, $Q$ is a formal variable of degree $-1$, and we view the cone as a complex over $\mathbb F[U,Q]/Q^2$. Juh{\'a}sz-Thurston-Zemke naturality implies that the equivariant chain homotopy equivalence class of the pair $(\CF^{-}(\cH,\frs), \iota)$, or equivalently the $\mathbb F[U,Q]/Q^2$ chain homotopy equivalence class of $\CFI^{-}(\cH, \frs)$, is an invariant of $Y$ \cite[Section 2.1]{HMInvolutive}. However, this does not suffice to show that involutive Heegaard Floer homology is itself natural.

In this paper we prove that involutive Heegaard Floer homology is a natural invariant of the basepointed 3-manifold $(Y,w)$ together with a choice of framing $\xi = (\xi_1, \xi_2, \xi_3)$ of the oriented normal bundle to the basepoint; equivalently, that transition maps between the involutive complexes induced by Heegaard moves and basepointed diffeomorphisms isotopic to the identity in $Y$ which preserve $\xi$ are unique up to homotopy. Our proof relies on equipping a Heegaard diagram $\cH$ for $(Y,w, \xi)$ with a set of \emph{doubling data} which is used to give a tractable model for the involution. We write $\frD$ for a Heegaard diagram $\cH$ equipped with such a collection of data, and we refer to such a $\frD$ as a \emph{doubling enhanced Heegaard diagram} for $(Y,w,\xi)$. We refer to the involutive Heegaard Floer homology of $(Y,w,\xi)$ as defined using the Heegaard diagram $\cH$ with this model for the involution as $\CFI^{-}(\frD)$. (For more information on the doubling model for the involution, see Section \ref{subsec:doubling-models}.)

 We also describe a set of moves which relate any two choices of doubling enhanced Heegaard diagrams. Given a sequence of such moves relating $\frD$ and $\frD'$, we define a transition map
\[ \Psi_{\frD \to \frD'}: \CFI^{-}(\frD) \rightarrow \CFI^{-}(\frD')\]
\noindent and show they are independent of the sequence chosen, as follows.

\begin{thm}\label{thm:final-naturality}
The transition map $\Psi_{\frD\to \frD'}$ is independent up to $\bF[U,Q]/Q^2$-equivariant chain homotopy from the sequence of moves between doubling enhanced Heegaard diagrams. Furthermore, $\Psi_{\frD\to \frD}\simeq \id$, and $\Psi_{\frD'\to \frD''}\circ \Psi_{\frD\to \frD'}\simeq \Psi_{\frD\to \frD''}$.
\end{thm}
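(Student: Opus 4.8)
The plan is to reduce the statement to a statement about a groupoid generated by elementary moves, and then verify a finite list of local relations. Concretely, one organizes the doubling enhanced Heegaard diagrams for $(Y,w,\xi)$ and the sequences of elementary moves between them into a graph $G$: vertices are the diagrams $\frD$, and edges are the elementary moves (Heegaard moves in the underlying diagram $\cH$, together with the moves that change the auxiliary doubling data, and basepointed isotopy/diffeomorphism moves preserving $\xi$). To every edge one has already assigned a transition map, and to an edge path its composite; the claim ``$\Psi_{\frD\to\frD'}$ is independent of the path'' is precisely the assertion that this assignment factors through $\pi_1$ of the $2$-complex obtained from $G$ by coning off a generating set of loops. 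So the strategy is: (i) identify a generating set of loops in $G$ (i.e.\ a presentation of the fundamental groupoid); (ii) check that each generating loop maps to a map chain homotopic to the identity; then (iii) deduce the three displayed conclusions.

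For step (i) I would invoke the naturality framework already in the literature. The Reidemeister–Singer theorem, together with Juh\'asz–Thurston–Zemke's analysis \cite{JTNaturality} of the graph of Heegaard diagrams for $(Y,w,\xi)$, gives a generating set of loops for the ``underlying diagram'' part; to this one must adjoin the loops coming from the extra structure, namely (a) loops that change only the doubling data while fixing $\cH$, which should be handled by a contractibility/uniqueness statement for the space of doubling data (analogous to how one proves the doubling model is well defined in the first place, cf.\ Section \ref{subsec:doubling-models}), and (b) ``distributivity'' or commutation loops recording that a Heegaard move and a change of doubling data can be performed in either order. The output of this step is a finite (up to obvious symmetries) list of square and triangle loops.

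Step (ii) is the heart of the argument and the main obstacle. For each generating loop one must produce an explicit $\bF[U,Q]/Q^2$-equivariant chain homotopy between the composite transition map and the identity on $\CFI^-(\frD)$. The key input is that the doubling model makes the involution, hence the whole cone $\CFI^-$, functorial for these moves at the chain level: each elementary move induces not just a map on $\CF^-$ but a compatible map on the conjugate complex and on the doubling region, so the transition maps on the cone are assembled from maps already known (by ordinary Heegaard Floer naturality and the properties of the doubling maps) to satisfy the relevant homotopy-commutativity up to explicit homotopies. The work is then to promote those homotopies on the pieces to a homotopy on the mapping cone, keeping track of the $Q$-variable; the commutation loops of type (b) should follow from a ``filtered'' or ``bifiltered'' version of ordinary naturality, and the doubling-data loops of type (a) from the uniqueness of maps induced by isotopies of the doubling arcs. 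I expect the bookkeeping for the interaction of two different kinds of moves (ordinary Heegaard moves versus doubling-data moves) to be where the essential subtlety lies, since one cannot simply quote \cite{JTNaturality} there.

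Finally, for step (iii): once the assignment factors through the fundamental groupoid, $\Psi_{\frD\to\frD}\simeq\id$ is the image of the constant loop, and $\Psi_{\frD'\to\frD''}\circ\Psi_{\frD\to\frD'}\simeq\Psi_{\frD\to\frD''}$ is functoriality of that assignment applied to a concatenation of paths; both are formal consequences of path-independence together with the (evident) fact that the empty sequence of moves induces a map chain homotopic to the identity. I would present (iii) in a sentence and devote the body of the proof to (i) and (ii).
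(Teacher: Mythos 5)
Your skeleton---reduce the theorem to showing that the transition maps kill a generating set of loops in a graph of diagrams, then deduce the three displayed conclusions formally---is the same as the paper's, which verifies the strong Heegaard invariant axioms of \cite{JTNaturality}. But two of your steps conceal genuine gaps. First, in step (i) you propose to invoke the existing naturality framework for the ``underlying diagram'' part. This fails as stated: because $\CFI$ depends on the framing $\xi$, the continuity axiom of \cite{JTNaturality} (every diffeomorphism isotopic to the identity through sutured diffeomorphisms acts trivially) is \emph{false} in this setting---the Dehn twist about $w$ acts by $\id+Q\Phi$ (Theorem~\ref{thm:twist-map})---so one cannot quote their Theorem~2.38 directly. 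One must instead re-run the Cerf-theoretic simple-connectivity argument for the smaller graph of boundary-framed Heegaard moves, where only isotopies rel boundary are permitted; this is Proposition~\ref{prop:simply-connected} and is a real modification, not a citation.

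Second, and more seriously, your step (ii) presupposes that the transition map attached to a handleslide equivalence is already well defined, leaving only commutation loops to check. In fact the maps of Section~\ref{sec:elementary-equivalences} are only constructed for \emph{elementary} equivalences (the 1-handle hypercubes in Figure~\ref{def:transition-map-elementary-handleslide} require degeneration arguments that need elementarity), and a general equivalence is assigned the composite over an arbitrarily chosen factorization into elementary ones. Independence of that factorization---part (1) of Theorem~\ref{thm:naturality-fixed-Sigma}---is not a consequence of ``uniqueness of maps induced by isotopies of the doubling arcs''; the paper has to introduce the basepoint-expanded model of the involution (Section~\ref{sec:expanded-def}), define transition maps there that make sense for non-elementary equivalences, and compare the two models (Proposition~\ref{prop:expansion}). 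Your proposal contains no mechanism for this. Relatedly, the simple handleswap loops, which you subsume under ``each generating loop,'' are themselves a substantial verification (Theorem~\ref{thm:handleswap}) resting on the multi-stabilization results of Proposition~\ref{prop:multi-stabilization-counts} and a neck-degeneration argument; they do not follow from the general heuristic of promoting homotopies on the pieces to the mapping cone.
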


\begin{rem}
Our involutive transition map $\Psi_{\frD\to \frD'}$ contains data equivalent to a pair $(\Psi_{\cH\to \cH'},H_{\frD\to \frD'})$ where $\Psi_{\cH\to \cH'}$ is the non-involutive transition map from $\CF^-(\cH)$ to $\CF^-(\cH')$, and $H_{\frD\to \frD'}$ is a distinguished chain-homotopy between $\Psi_{\cH\to \cH'}\iota_{\frD}$ and $\iota_{\frD'}\Psi_{\cH\to \cH'}$. In this manner, Theorem~\ref{thm:final-naturality} implies that the chain homotopy $H_{\frD\to \frD'}$ is also well-defined, up to a suitable notion of further chain-homotopy.
\end{rem}

Given a 4-dimensional cobordism $W$ with $\partial W = -Y_1 \coprod Y_2$ and a $\Spin^c$ structure $\frs$ on $W$, Ozsv{\'a}th and Szab{\'o} \cite{OSDisks} also constructed cobordism maps 
\[
\CF(W,\frs) : \CF^{-}(Y_1,  w_1,\frs|_{Y_1}) \rightarrow \CF^{-}(Y_2, w_2, \frs|_{Y_2}).\]
\noindent These maps depend on a choice of path $\gamma$ connecting the basepoints in $Y_1$ and $Y_2$ but are otherwise independent of choices made in their definition \cite[Sections 8 and 9]{OSDisks}, cf. also \cite[Corollary F]{ZemGraphTQFT}.

In \cite{HMInvolutive}, the first author and Manolescu gave a construction of cobordism maps on involutive Heegaard Floer homology, but did not show their construction was invariant of a choice of handle decomposition of the cobordism. In this paper we refine the construction to give maps
\[
\CFI(W, \xi, \frs): \CFI^-(Y_1, w_1, \frs|_{Y_1}, \xi_{Y_1}) \rightarrow \CFI^-(Y_2,w_2,\frs|_{Y_2}, \xi_{Y_2})
\]

\noindent associated to a cobordism $W$ from $Y_1$ to $Y_2$, a conjugation-invariant $\Spin^c$ structure on $W$, and a choice of framing $\xi$ of the normal bundle to a choice of path $\gamma$ which induces the framings $\xi_{Y_i}$ of the normal bundle to the basepoint $w_i$ in $Y_i$. We prove these maps are well-defined in the following sense.

\begin{thm}\label{thm:well-defined-cobordism-map}
The cobordism map $\CFI(W, \xi, \frs)$ is well-defined; that is, it depends only on the choice of $W$, $\frs$, $\g$, and $\xi$.
\end{thm}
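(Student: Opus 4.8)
The plan is to reduce Theorem~\ref{thm:well-defined-cobordism-map} to Theorem~\ref{thm:final-naturality} together with the analogous statement in the non-involutive setting, by exhibiting the involutive cobordism map as a composition of transition maps and elementary cobordism maps indexed by a handle decomposition of $W$ relative to $\gamma$, and then showing that any two such presentations are connected by moves of the kind already known to induce well-defined maps.

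First I would recall that, after choosing a Morse function on $W$ subordinate to $\gamma$, the cobordism $W$ is built from $Y_1$ by a sequence of $1$-, $2$-, and $3$-handle attachments, and that each elementary cobordism is realized by a Heegaard triple (or by a stabilization/destabilization move on the diagram) compatible with the path $\gamma$. I would upgrade each such elementary piece to the doubling-enhanced setting: a Heegaard triple $(\Sigma, \as, \bs, \gs, w)$ together with doubling data for the incoming and outgoing diagrams, so that the holomorphic-triangle count defines a map $\CFI^-(\frD_1)\to\CFI^-(\frD_2)$ refining the Ozsv\'ath--Szab\'o map. One must check here that the doubling model is compatible with the triangle maps — concretely, that the chosen doubling data on the two ends of a handle cobordism can be connected through the ``doubled'' diagram so that the triangle count, together with the extra homotopies recording $\iota$, assembles into a chain map over $\bF[U,Q]/Q^2$. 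The composite of these elementary involutive maps, interleaved with transition maps $\Psi_{\frD\to\frD'}$ between the chosen doubling-enhanced diagrams at each stage, is by definition $\CFI(W,\xi,\frs)$.

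The substance of the argument is independence of all the auxiliary choices. By Theorem~\ref{thm:final-naturality}, the transition maps glue associatively and functorially, so it suffices to show that any two handle decompositions of $W$ rel $\gamma$, together with the attendant choices of doubling-enhanced diagrams and almost-complex structures, are connected by a finite sequence of moves: (i) handle slides and creation/cancellation of handle pairs in the Morse function (Cerf theory rel $\gamma$), (ii) reorderings of handles of the same or adjacent indices, (iii) the doubling-enhanced Heegaard moves of Theorem~\ref{thm:final-naturality}, and (iv) changes of almost-complex structure. For each elementary move one verifies that the two resulting composite involutive maps agree up to $\bF[U,Q]/Q^2$-chain homotopy. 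The key inputs are: the non-involutive analogue (the handle-decomposition invariance of $\CF(W,\frs)$ from \cite{OSDisks}, cf.\ \cite{ZemGraphTQFT}), which handles the underlying chain map; associativity of triangle and quadrilateral maps, which handles reorderings and the composition-of-cobordisms relation; and naturality of the doubling model under the moves relating different doubling data, which is exactly Theorem~\ref{thm:final-naturality} applied in families. Stabilization invariance of the involutive triangle maps — that attaching an index-one/index-two cancelling pair does not change the map — follows from the corresponding stabilization statement for the doubling model.

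The main obstacle I expect is move (i): the handle (de)stabilization and handle-pair cancellation arguments. In the non-involutive theory these are handled by the associativity/holomorphic-polygon machinery, but in the involutive setting one must simultaneously track the conjugation isomorphism $\eta$ and the distinguished homotopies in the doubling data through the stabilization, and verify that the extra ``$Q$-component'' homotopies that appear are themselves canceled up to a higher homotopy. Concretely, when a handle is introduced and then canceled, the resulting chain homotopy between the long composite and the short composite must be shown to be compatible with $\iota$ in the precise sense encoded by $\CFI$; this is where the rigidity of the doubling model — that $\iota_\frD$ and the homotopies relating different $\iota_\frD$'s are built from small, explicitly understood diagrams — is essential. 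I would isolate this as the technical heart of the proof and treat the remaining moves as comparatively routine adaptations of \cite{OSDisks} and \cite{ZemGraphTQFT} combined with Theorem~\ref{thm:final-naturality}.
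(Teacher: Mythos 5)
Your overall strategy matches the paper's: decompose $W$ along a Morse function compatible with $\gamma$ into $1$-, $2$-, and $3$-handle cobordisms, define $\CFI(W,\xi,\frs)$ as the composite of the corresponding elementary maps, and use Cerf theory rel $\gamma$ plus Theorem~\ref{thm:final-naturality} to show any two presentations give homotopic composites. However, you have the difficulty inverted. You flag handle-pair creation/cancellation as ``the technical heart,'' worrying about tracking $\eta$ and the $Q$-component homotopies through stabilization. In the paper's framework this step is essentially free: the naturality map for a stabilization of the Heegaard surface is \emph{defined} as the composite of a $1$-handle cobordism map with the map for a cancelling $2$-handle (Section~\ref{sec:stable}), so invariance under index $1/2$ and $2/3$ cancellation reduces immediately to the composition law, e.g.\ $\CFI(W(\bS_1\cup\bK))\circ\CFI(W(\bS_0))\simeq\CFI(W(\bS_1))\circ\sigma$. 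There is no separate argument to make.

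Conversely, the ingredients you dismiss as ``comparatively routine adaptations'' — the composition law for $2$-handles (Proposition~\ref{prop:composition-law}), the commutation of $1$-/$3$-handles with $2$-handles (Proposition~\ref{prop:commute-1handles/2-handles}), and handleslide/bouquet invariance of the $2$-handle maps (Proposition~\ref{prop:2-handles-handle-slides}) — are where the real work lives. In the involutive setting each of these requires constructing an explicit $3$-dimensional hyperbox whose faces realize the two composites, with diagonal and length-$3$ maps produced via the multi-stabilization results (Proposition~\ref{prop:multi-stabilization-counts}) and the small-translate theorems; these do not follow formally from associativity of polygon maps plus the non-involutive statement. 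Your proof would succeed, but only after supplying exactly the hyperbox constructions you set aside, and the step you propose to labor over would collapse to a one-line application of the composition law once the stabilization map is defined the right way.
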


Our theory has some subtle differences from the Oszv{\'a}th-Szab{\'o} cobordism maps; in particular, the role of duality is not presently obvious. See Remark~\ref{rem:duality} for further discussion. 

Heegaard Floer homology also has a counterpart for knots and links in 3-manifolds, introduced by Ozsv{\'a}th and Szab{\'o} \cite{OSKnots} and independently in the case of knots by J.~Rasmussen \cite{RasmussenKnots}. In its modern form, this theory associates to a Heegaard diagram $\cH$ for a link $L$ in a 3-manifold $Y$, with two basepoints $w_i$ and $z_i$ on each component, a chain complex $\cCFL(\cH)$ over the ring $\mathbb F[\scU_1, \scV_1, \dots, \scU_{\ell}, \scV_{\ell}]$. This again decomposes over $\Spin^c$ structures on $Y$, such that $\cCFL(\cH) = \oplus_{\frs \in \Spin^c(Y)}\cCFL(\cH,\frs)$. As previously there are maps associated to sequence of Heegaard moves between Heegaard diagrams $\cH$ and $\cH'$ for $(Y, L, \ws,\zs)$
\[ \Psi_{\cH \to \cH'}: \cCFL(\cH, \frs) \rightarrow \cCFL(\cH', \frs).\]
These maps are again well-defined up to chain homotopy \cite[Theorem 1.8]{JTNaturality}.   

The first author and Manolescu \cite{HMInvolutive} defined an endomorphism $\iota_L$ on $\cCFL(\cH)$, called the link involution, in analogy with the procedure for defining the 3-manifold involution $\iota$, and showed that the equivariant chain homotopy equivalence class of the pair $\cCFLI(\cH) = (\cCFL(\cH),\iota_L)$ is an invariant of $(Y,L)$. (We note that \cite{HMInvolutive} focused on the case of knots, rather than links, but the same construction works for links). The doubling model for the involution $\iota$ on 3-manifolds may be adapted to a model for the endomorphism $\iota_L$. We use this model to prove naturality and functoriality of the link variant of Floer homology in the following sense. Once again, we let $\frD$ denote an appropriate set of doubling data for a Heegaard diagram $\cH$ for $(Y, L, \ws, \zs)$, and let $\cCFLI(\frD)$ refer to the $\iota_L$-complex defined using this data. This data need \emph{not} include a framing of either basepoint; see the discussion in Section \ref{subsec:twist-knots} for more on this issue. Given a sequence of Heegaard moves relating two doubling-enhanced Heegaard link diagrams, we define a transition map
\[
\Psi_{\frD \rightarrow \frD'} \colon \cCFLI(\frD) \rightarrow \cCFLI(\frD')
\] 
\noindent which are enhanced $\iota_L$-homomorphisms. (Types of morphisms between $\iota_L$ complexes are reviewed in Section \ref{sec:knot-and-link-complexes}.) We then prove the following.
\begin{thm} \label{thm:link-naturality}
The transition maps $\Psi_{\frD \rightarrow \frD'}$ are well-defined up to homotopies of morphisms of $\iota_L$-complexes.
\end{thm}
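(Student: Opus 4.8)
The plan is to run the same argument as for Theorem~\ref{thm:final-naturality}, replacing the category of $\iota$-complexes by that of $\iota_L$-complexes and the doubling model for $\iota$ by its link analog. The first step is to pin down the elementary moves relating two doubling-enhanced Heegaard link diagrams for $(Y,L,\ws,\zs)$: the usual Heegaard moves (isotopies of the attaching curves, handleslides, and index one/two and index zero/three (de)stabilizations, all performed in the complement of every basepoint), basepoint-preserving diffeomorphisms isotopic to the identity, and the moves that alter the doubling data itself (the doubling arcs and the auxiliary curves and almost complex structures entering the model for $\iota_L$). To each such move one attaches a transition map $\Psi_{\frD\to\frD'}$, an enhanced $\iota_L$-homomorphism whose underlying chain map is the Juh\'asz--Thurston--Zemke transition map on $\cCFL$ and whose secondary component is the distinguished homotopy between $\Psi_{\cH\to\cH'}\circ\iota_{L,\frD}$ and $\iota_{L,\frD'}\circ\Psi_{\cH\to\cH'}$ produced by the doubling model.

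Following \cite{JTNaturality}, one then organizes the proof around the $2$-complex whose vertices are doubling-enhanced Heegaard link diagrams, whose edges are the elementary moves above, and whose $2$-cells form a distinguished list of relations (commuting squares and triangles) among the moves; the link Reidemeister--Singer theorem together with the contractibility of the space of doubling data shows this complex is connected, so it suffices to prove that the enhanced $\iota_L$-homomorphism assigned to the boundary of each $2$-cell is homotopic to the identity through homotopies of morphisms of $\iota_L$-complexes. For the underlying chain maps this is precisely \cite[Theorem~1.8]{JTNaturality}; the new content is the secondary homotopy datum, together with the further ``homotopy of homotopies'' comprising the second component of a homotopy of $\iota_L$-morphisms.

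Most of these relations are inherited from the $3$-manifold setting. Any relation that can be carried out in the complement of the $\zs$ basepoints is formally identical to one already treated in the proof of Theorem~\ref{thm:final-naturality}, once one carries along the extra variables $\scV_1,\dots,\scV_\ell$; more precisely, forgetting the $\zs$ basepoints relates the doubling model for $\iota_L$ to the doubling model for the involution on the associated (multi-pointed) complex, and the relevant computations pass through unchanged. This leaves the genuinely new relations: those involving moves of the $\zs$ basepoints and the compatibility of $\iota_L$ --- which exchanges the roles of $\ws$ and $\zs$, equivalently interchanges $\scU_i\leftrightarrow\scV_i$ --- with those moves and with the doubling moves. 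For these I would run the same doubling-model arguments and their variations used in the body of the paper, now applied symmetrically in the two families of basepoints, to build the required homotopies directly.

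The main difficulty I anticipate is twofold. First, the link doubling data carries \emph{no} framing of the basepoints, in contrast to the $\CFI$ setting, so one must verify that the resulting ambiguity --- morally a rotation of a disk neighborhood of a basepoint along the link --- acts trivially on $\cCFLI(\frD)$ up to homotopy of $\iota_L$-morphisms; this is the phenomenon flagged in the discussion preceding the statement, and I would isolate it in a dedicated lemma identifying the relevant basepoint-rotation map with the identity morphism (or with an automorphism absorbed by the secondary homotopy). Second, tracking the secondary components of the homotopies of $\iota_L$-morphisms relation by relation is the most delicate bookkeeping in the argument: one must check that the naturality of the doubling construction produces these higher homotopies coherently, and I expect the bulk of the technical work to lie there, reducing to the corresponding verifications behind Theorem~\ref{thm:final-naturality} wherever the $\zs$ basepoints are not involved.
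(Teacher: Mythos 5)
Your overall strategy --- port the proof of Theorem~\ref{thm:final-naturality} to the $\iota_L$-setting, verify the Juh\'asz--Thurston--Zemke relations for the enhanced morphisms, and isolate the basepoint-rotation question in a separate lemma --- is the strategy of the paper, and your resolution of the framing issue is exactly right: in the link doubling model the connected-sum tube contains no basepoint, so the meridianal twist fixes every attaching curve and acts by the identity (this is the content of Section~\ref{sec:twisting-knots-links}). However, your plan misses what the paper identifies as \emph{the} main new technical subtlety, and the omission is a genuine gap. For links, the double $\Sigma\#\bar\Sigma$ is formed at an auxiliary point $p\in\Sigma\setminus(\as\cup\bs\cup\ws\cup\zs)$ rather than at a basepoint, and the doubling arcs have their endpoints at $p$. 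Two doubling enhancements based at different points $p,p'$ are related by a point-pushing diffeomorphism along a chosen path, and a priori the resulting transition map depends on the path: a loop of $p$ in $\Sigma$ is a new kind of loop with no analogue in the $3$-manifold case (where the tube is anchored at the fixed basepoint $w$). Your appeal to ``contractibility of the space of doubling data'' does not dispose of this, since the configuration space of the point $p$ on $\Sigma$ is not simply connected; one must actually prove that moving $p$ around a nontrivial loop induces the identity up to homotopy of $\iota_L$-morphisms.

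The paper resolves this by working exclusively with a further-expanded model allowing several connected-sum tubes at a set of points $\ve{p}$, and by constructing expansion and contraction morphisms $\scE_q$ and $\scC_q$ that add and remove a tube at a point $q$ (defined analogously to the $1$- and $3$-handle maps, including the auxiliary curve $\tau$ and the $H_1$-action $B_\tau$). The key lemma establishes that these commute with the handleslide transition maps, that $[\scE_p,\scE_q]\simeq[\scC_p,\scC_q]\simeq[\scE_p,\scC_q]\simeq 0$ for $p\neq q$, that $\scC_p\circ\scE_p\simeq\id$, and that a point-pushing isotopy from $p$ to $q$ satisfies $\phi_*\simeq\scC_p\circ\scE_q$; path-independence then follows by a short formal computation inserting an intermediate point $x_0$. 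Without some substitute for this mechanism your argument cannot show that the transition maps between doubling data based at different auxiliary points are well defined, so you should add this ingredient (or an equivalent monodromy computation for loops of $p$) to your plan. Your remaining points --- the $\scU_i\leftrightarrow\scV_i$ skew-equivariance and the choice of top-degree generators $\Theta^{\ws},\Theta^{\zs}$ entering the two triangle maps --- are handled in the paper by identifying the constituent maps of the doubling model with quasi-(de)stabilization link cobordism maps, which is consistent with, though more specific than, what you propose.
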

The fourth author \cite{ZemCFLTQFT}, following work of Juhasz \cite{JCob}, defined maps associated to decorated link cobordisms $(W, \Sigma, \frs)$ between $(Y_1,L_1, \ws_1,\zs_2,\frs|_{Y_1})$ and $(Y_2, L_2, \ws_2,\zs_2, \frs|_{Y_2})$
\[\cCFL(W,\Sigma,\frs): \cCFL(Y_1,L_1, \ws_1,\zs_1,\frs|_{Y_1}) \to \cCFL(Y_2,L_2,\ws_2,\zs_2, \frs|_{Y_2})\]
\noindent which are again invariant of the choices involved in their definitions. Here the pair $(W,\Sigma)$ consists of a compact 4-manifold $W$ with embedded surface $\Sigma$ such that $W$ is a cobordism from $Y_1$ to $Y_2$ and $\partial \Sigma = -L_1 \coprod L_2$, together with some decorations on $\Sigma$. We will be interested in the case that $\Sigma$ is a collection of annuli, each with one boundary component in $Y_1$ and one in $Y_2$, and each decorated with two parallel longitudinal arcs. Maps associated to knot concordances or knot cobordisms have also appeared in \cite{JMConcordance, TNConcordance, AECobordism}.

Given a link cobordism $(W, \Sigma)$ from $(Y_1, L_1, \ws, \zs)$ to $(Y_2, L_2, \ws, \zs)$ consisting of a set of annuli each with two parallel longitudinal arcs as described above, and a $\Spin^c$ structure $\frs$ on $W$ with the property that $\bar{\frs} = \frs + PD([\Sigma])$, we construct cobordism maps
\[
\cCFLI(W, \Sigma, \frs) \colon \cCFLI(Y_1, L_1, \ws, \zs, \frs|_{Y_1}) \rightarrow \cCFLI(Y_2, L_2, \ws, \zs, \frs|_{Y_2}).
\]
\noindent which are again enhanced homomorphisms of $\iota_L$-complexes. We prove the following functoriality theorem.

\begin{thm} \label{thm:link-functoriality}
The cobordism maps $\cCFLI(W, \Sigma, \frs)$ are well-defined up to homotopies of $\iota_L$-complexes.
\end{thm}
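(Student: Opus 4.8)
\emph{Proof strategy.} The plan is to run the argument of Theorem~\ref{thm:well-defined-cobordism-map} in the link setting, with the three-manifold doubling data replaced by the link doubling data of Section~\ref{subsec:doubling-models}. First I would fix a decomposition of the decorated link cobordism $(W,\Sigma)$ into elementary link cobordisms $(W_n,\Sigma_n)\circ\cdots\circ(W_1,\Sigma_1)$ --- births and deaths, band (saddle) cobordisms, basepoint-action cylinders, and $4$-dimensional $1$-, $2$-, $3$-handle attachments meeting $\Sigma$ in the standard models --- exactly as in Zemke's construction of $\cCFL(W,\Sigma,\frs)$ in \cite{ZemCFLTQFT}. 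On each intermediate three-manifold I would choose a doubling-enhanced Heegaard link diagram $\frD_i$, and define $\cCFLI(W,\Sigma,\frs)$ as the composite of (i) involutive refinements of the elementary cobordism maps and (ii) the involutive transition maps $\Psi_{\frD_i\to\frD_i'}$, which are well-defined by Theorem~\ref{thm:link-naturality}. Because the link doubling data carries no framing of the basepoints, there is no analogue here of the framing bookkeeping in the three-manifold case, and the only input is a pair $(W,\Sigma)$ of the restricted type (annuli with two parallel longitudinal arcs) together with a $\Spin^c$ structure $\frs$ satisfying $\bar\frs=\frs+PD([\Sigma])$.

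The core step is to promote each elementary cobordism map to an enhanced $\iota_L$-homomorphism. For a given elementary piece $(W_i,\Sigma_i)$ I would choose the doubling data on its two ends to be product-like across the relevant handle or band region, so that conjugating the Heegaard diagram and conjugating $(W_i,\Sigma_i,\frs_i)$ become interchangeable; the hypothesis $\bar\frs_i=\frs_i+PD([\Sigma_i])$ guarantees the conjugate elementary cobordism carries $\bar\frs_i$, which is what makes the two sides comparable. Running the doubling model, in which $\iota_{L,\frD}$ is realized as a composite $\Psi\circ\eta$ for a concrete holomorphic-triangle transition map $\Psi$ on a doubled diagram, I would produce a preferred chain homotopy $\bar H_{W_i}$ between $\cCFL(W_i,\Sigma_i,\frs_i)\circ\iota_{L,\frD_i}$ and $\iota_{L,\frD_i'}\circ\cCFL(W_i,\Sigma_i,\frs_i)$, using the relevant non-involutive naturality (triangle and quadrilateral associativity, stabilization and handleslide invariance, commutation of distant basepoint actions). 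The pair $(\cCFL(W_i,\Sigma_i,\frs_i),\bar H_{W_i})$ is then the required enhanced $\iota_L$-homomorphism, and the composition law for such morphisms (Section~\ref{sec:knot-and-link-complexes}) assembles these, interleaved with the $\Psi_{\frD_i\to\frD_i'}$, into $\cCFLI(W,\Sigma,\frs)$.

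For well-definedness I would follow the standard reduction (as in \cite{JCob,ZemCFLTQFT} and the proof of Theorem~\ref{thm:well-defined-cobordism-map}): any two decompositions of $(W,\Sigma)$ are related by a finite sequence of elementary relations --- reordering disjoint elementary cobordisms, handle cancellations, and chain-of-handles relations --- together with isotopies of $\Sigma$ and changes of the auxiliary Heegaard multi-diagrams. Isotopies and diagram changes are absorbed into the transition maps by Theorem~\ref{thm:link-naturality}. For each of the finitely many elementary relations the underlying identity of non-involutive cobordism maps is already known; I would upgrade it by checking that the corresponding identity of the homotopies $\bar H$ holds up to the appropriate secondary homotopy, which again follows from the doubling model and Theorem~\ref{thm:link-naturality}. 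This yields that $\cCFLI(W,\Sigma,\frs)$ depends only on $(W,\Sigma,\frs)$ up to homotopy of $\iota_L$-complexes.

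I expect the main obstacle to be the elementary cobordisms that change the link --- the band (saddle) cobordisms and the handle attachments crossing $\Sigma$ --- where the non-involutive map is a count of holomorphic triangles or quadrilaterals. For these one must build a doubled Heegaard multi-diagram interpolating between the cobordism computed from $\cH$ and from $\bar\cH$, possibly after introducing auxiliary interpolating doubling data, and verify that the doubling-model involution commutes with the triangle/quadrilateral map up to a homotopy compatible with the higher associativity relations; the degeneration analysis needed here is the most delicate point. A second, more bookkeeping-heavy difficulty is the ``first-order'' issue: one must organize the hierarchy of secondary homotopies relating different choices of $\bar H_{W_i}$ carefully enough that the composition and the elementary-relation checks close up, exactly as in Theorem~\ref{thm:final-naturality}.
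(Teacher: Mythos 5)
Your overall architecture (decompose $(W,\Sigma)$ into elementary pieces, promote each elementary map to an enhanced $\iota_L$-homomorphism via the doubling model, absorb diagram changes into the transition maps of Theorem~\ref{thm:link-naturality}, and check Cerf-type relations) is the same as the paper's, but your choice of decomposition diverges in a way that leaves a genuine gap. You decompose into the full list of elementary decorated link cobordisms, including births, deaths and saddles, and you correctly flag the involutive band maps as ``the most delicate point'' --- but you do not resolve that point, and for a general saddle it is not clear how to produce the homotopy $\bar H$ compatibly with all the secondary relations; that would be a substantial new construction. The paper avoids this entirely: because $\Sigma$ is required to be a union of annuli, each with one boundary component in each of $Y_1$ and $Y_2$ and decorated with two parallel longitudinal arcs, the surface contributes nothing beyond a product, and the construction reduces to $4$-dimensional $1$-, $2$- and $3$-handles attached in the complement of the link. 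The handle maps and their well-definedness then carry over essentially verbatim from Sections~\ref{sec:1-and-3-handles} and~\ref{sec:2-handles} and the proof of Theorem~\ref{thm:well-defined-cobordism-map}; no involutive saddle, birth or death maps are ever needed. If you pursue your route as written, you are attempting a strictly harder theorem than the one stated, and the hard step is exactly the one you leave open.

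One further point your proposal only gestures at: the genuinely new ingredient in the link version is the asymmetry between $\ws$ and $\zs$. The $2$-handle maps are defined by counting triangles with $\frs_{\ws}(\psi)=\frs$, and since $\frs_{\ws}-\frs_{\zs}=\PD[\Sigma]$, the final hypercube --- the one involving the conjugation map $\eta_L$ --- must pair the map for $\frs$ on the conjugate diagram with the map for $\bar\frs+\PD[\Sigma]$ on the original diagram. This is precisely where the hypothesis $\bar\frs=\frs+PD([\Sigma])$ enters, and it is what makes the final square of the doubling model close up into an enhanced $\iota_L$-homomorphism. Your remark that ``the conjugate elementary cobordism carries $\bar\frs_i$'' is in the right spirit but does not isolate this mechanism. (Note also that the only place the paper uses saddle maps is in identifying the expanded involution itself with quasi-(de)stabilizations, as in the proof of Lemma~\ref{lem:expanded-transition-links}, not as cobordism maps to be made equivariant.)
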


\subsection{Computations and examples}

In our paper, we also perform several example computations. The first example we present is for the cobordism $W=S^2\times S^2$, with its unique self-conjugate $\Spin^c$ structure $\frs$. If we remove two 4-balls from $W$, we obtain a cobordism from $S^3$ to $S^3$, and hence a map
\[
\CFI(W,\xi, \frs)\colon \bF[U,Q]/Q^2\to \bF[U,Q]/Q^2.
\]
In Section~\ref{sec:example}, we show via a direct diagrammatic computation that $\CFI(W, \xi, \frs)$ is multiplication by $Q$. (The choice of framings does not affect the result in this case.) This agrees with the prediction from $\Pin(2)$-equivariant monopole Floer homology; see the proof of \cite{LinExact}*{Theorem~5}.

 As a second example, we compute certain 2-handle cobordisms using the knot surgery formula from \cite{HHSZExact}. We recall that to a knot $K\subset S^3$, Ozsv\'ath and Szab\'{o} \cite{OSIntegerSurgeries}*{Theorem~1.1} described a complex $\bX_n(K)$ which computes $\CF^-(S^3_n(K))$. Therein they also described a way of computing the non-involutive cobordism map $\CF(W_n(K),\frs)$ in terms of $\cCFK(K)$, where $W_n(K)$ denotes the natural 2-handle cobordism from $S^3$ to $S^3_n(K)$, for each $\frs\in \Spin^c(W_n(K))$. In \cite{HHSZExact}, we described an involutive refinement of Ozsv\'{a}th and Szab\'{o}'s mapping cone formula, which we denoted by $\bXI_n(K)$, and which computes $\CFI(S^3_n(K))$. In Section~\ref{sec:knot-surgery}, we describe a refinement of our work from \cite{HHSZExact} which computes certain cobordism maps. From the description in \cite{HHSZExact}, it turns out to be most convenient to consider the cobordism $W_n'(K)$  from $S^3_n(K)$ to $S^3$ obtained by reversing the orientation of $W_n(K)$. This manifold possesses a self-conjugate $\Spin^c$ structure if and only if $n$ is even. In Theorem~\ref{thm:knot-surgery-cobordism-map}, we describe an algebraic formula for the involutive cobordism map $\CFI(W_{2n}'(K),\xi, \frs)$ when $\frs$ is the unique self-conjugate $\Spin^c$ structure on $W_{2n}'(K)$. Again the choice of framings does not affect the computation.

\subsection{The doubling model of the involution and several variations} \label{subsec:doubling-models} The constructions of this paper rely on a model for the involution $\iota$, used in our previous paper \cite{HHSZExact}, based on the procedure of \emph{doubling} a Heegaard diagram. We now briefly recall this model. If $\Sigma$ is a Heegaard splitting of $Y$, containing a basepoint $w$, we construct another Heegaard splitting $D(\Sigma)$ of $Y$, with  $D(\Sigma)\iso \Sigma\# \bar{\Sigma}$, which is embedded as the boundary of a regular neighborhood of $\Sigma\setminus N(w)$. A schematic appears in Figure \ref{fig:double}. The attaching curves for doubled diagrams are described in Section~\ref{sec:doubling-def}. An important property of the doubling operation is that if $\cH$ is a diagram for $Y$, and $D(\cH)$ is a double, then the maps relating $\CF^-(\cH)$ and  $\CF^-(D(\cH))$ have a conceptually simple form, and similarly for the maps from $\CF^-(D(\cH))$ to $\CF^-(\bar{\cH})$; see Section \ref{sec:doubling-model-manifolds}. (Doubled Heegaard diagrams had previously been considered in \cite{ZemDuality}, \cite{JZContactHandles}, and \cite{JZStabilizationDistance}.)

 \begin{figure}[H]
	\centering
\begingroup%
  \makeatletter%
  \providecommand\color[2][]{%
    \errmessage{(Inkscape) Color is used for the text in Inkscape, but the package 'color.sty' is not loaded}%
    \renewcommand\color[2][]{}%
  }%
  \providecommand\transparent[1]{%
    \errmessage{(Inkscape) Transparency is used (non-zero) for the text in Inkscape, but the package 'transparent.sty' is not loaded}%
    \renewcommand\transparent[1]{}%
  }%
  \providecommand\rotatebox[2]{#2}%
  \newcommand*\fsize{\dimexpr\f@size pt\relax}%
  \newcommand*\lineheight[1]{\fontsize{\fsize}{#1\fsize}\selectfont}%
  \ifx\svgwidth\undefined%
    \setlength{\unitlength}{181.49601871bp}%
    \ifx\svgscale\undefined%
      \relax%
    \else%
      \setlength{\unitlength}{\unitlength * \real{\svgscale}}%
    \fi%
  \else%
    \setlength{\unitlength}{\svgwidth}%
  \fi%
  \global\let\svgwidth\undefined%
  \global\let\svgscale\undefined%
  \makeatother%
  \begin{picture}(1,0.52198877)%
    \lineheight{1}%
    \setlength\tabcolsep{0pt}%
    \put(0,0){\includegraphics[width=\unitlength,page=1]{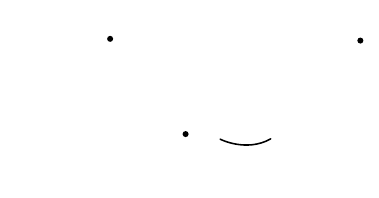}}%
    \put(0.03419793,0.27993578){\color[rgb]{0,0,0}\makebox(0,0)[t]{\lineheight{1.25}\smash{\begin{tabular}[t]{c}$\cH$\end{tabular}}}}%
    \put(0.93131096,0.25697733){\color[rgb]{0,0,0}\makebox(0,0)[t]{\lineheight{1.25}\smash{\begin{tabular}[t]{c}$\bar{\cH}$\end{tabular}}}}%
    \put(0.49047204,0.00443363){\color[rgb]{0,0,0}\makebox(0,0)[t]{\lineheight{1.25}\smash{\begin{tabular}[t]{c}$D(\cH)$\end{tabular}}}}%
    \put(0,0){\includegraphics[width=\unitlength,page=2]{double.pdf}}%
  \end{picture}%
\endgroup%

	\caption{Realizing the involution on $S^3$ by doubling.
	 }\label{fig:double}
\end{figure}

The proofs in the present paper require additional variations of this model for the involution, based on adding extra basepoints or extra tubes between the two copies of $\Sigma$ and $\bar \Sigma$.  In the 3-manifold case, we begin by describing transition maps between the involutive complexes associated to Heegaard diagrams for a 3-manifold which are related by \emph{elementary equivalences} in terms of maps on the complexes associated to the doubled diagrams, and define maps for general handleslide equivalences as compositions of the maps for elementary equivalences. However, it is somewhat difficult to see that this construction fails to depend on the sequence of elementary equivalences chosen. To show this, we consider an expanded version of this model, in which an additional basepoint and pair of curves are added in the doubling region. We introduce this model in detail in Section~\ref{sec:expanded-def}. Using this model we define transition maps for handleslide equivalences which are more obviously invariant of the choices involved, and then use this to prove invariance of the transition maps for the standard doubling model. In the case of knots and links, our situation is somewhat more technically complex, so for simplicity we in fact work solely with the analog of the expanded model for the involution. Throughout, the curve-counting arguments in our computations rely heavily on the cylindrical formulation of \cite{LipshitzCylindrical}.

\begin{rem} We emphasize here that there are two possible doubling models one could use to construct the map $\iota$ and our cobordism maps. The model described above and pictured in Figure~\ref{fig:double} is called the \emph{beta-doubling} model of the involution; the analogous construction with the roles of the alpha and beta curves reversed is the \emph{alpha-doubling} model. This distinction is examined more closely in Section~\ref{sec:models}. Our techniques also show naturality and functoriality for involutive Heegaard Floer homology constructed using the alpha-doubling model. We do not, however, attempt to relate these two models.
\end{rem}

\subsection{The role of framings} Theorem~\ref{thm:final-naturality} differs from Juh{\'a}sz-Thurston-Zemke naturality in the incorporation of a framing of the normal bundle to the basepoint $w$ in $Y$.  In particular, $\CFI(Y,w)$ is acted on not by $\Diff^+(Y,w)$, the orientation-preserving diffeomorphisms of $(Y,w)$, but by $\Diff^f(Y,w)$, the diffeomorphisms of $(Y,w)$ which preserving the framing $\xi$ at the basepoint.  The Dehn twist $\tw$ in a neighborhood of $w$ is an example of a class in $\pi_0(\Diff(Y,w,\xi))$ which is trivial in $\pi_0(\Diff(Y,w))$, but is not \emph{a priori} trivial in $\pi_0(\Diff(Y,w,\xi))$.  We calculate: 
\begin{thm}
\label{thm:twist-map} The element $\tw\in \pi_0(\Diff^f(Y,w))$ acts on $\CFI(Y,w)$ by $\Id+Q\Phi $. 
\end{thm}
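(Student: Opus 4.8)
The plan is to identify the Dehn twist $\tw$ with a diffeomorphism supported near the basepoint and to trace its effect through the doubling model for $\iota$. First I would recall that $\tw\in\pi_0(\Diff^f(Y,w))$ acts on $\CF^-(Y,w)$ trivially up to homotopy, since in $\pi_0(\Diff(Y,w))$ (forgetting the framing) the class is trivial and the non-involutive theory, by Juh\'asz--Thurston--Zemke naturality, sees only this coarser mapping class group; concretely, the basepoint-moving map associated to a small loop (which is what $\tw$ amounts to after forgetting $\xi$) is chain-homotopic to the identity on $\CF^-$. So on the $\iota$-complex the twist acts by a chain map of the form $\Id+Q\Phi'$ for some endomorphism $\Phi'$ of $\CF^-(Y,w)[-1]$, and the content of the theorem is that $\Phi'\simeq\Phi$, the formal variable / basepoint action $\Phi$ appearing in the Heegaard Floer TQFT.

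The key step is to compute the induced map on the doubling (or expanded) model directly. I would choose a doubling enhanced diagram $\frD$ for $(Y,w,\xi)$ in which the support of $\tw$ is confined to a standard disk near $w$, disjoint from the $\alpha$ and $\beta$ curves but meeting the doubling region. Applying $\tw$ produces a new doubling enhanced diagram $\tw(\frD)$, and by Theorem~\ref{thm:final-naturality} the transition map $\Psi_{\frD\to\tw(\frD)}$ is the well-defined action. Because $\tw$ acts trivially on $\cH$ itself, the diagrams $\cH$ and $\tw(\cH)$ differ by an isotopy, so the non-involutive part of the transition map is homotopic to the identity; the interesting data is the homotopy $H_{\frD\to\tw(\frD)}$ between $\Psi_{\cH\to\cH}\iota_\frD$ and $\iota_{\tw(\frD)}\Psi_{\cH\to\cH}$, as in the Remark following Theorem~\ref{thm:final-naturality}. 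The effect of the twist is localized in the doubling tube: it reglues the tube connecting $\Sigma$ and $\bar\Sigma$ with an extra full twist. I would model this regluing by a finger-move/handleslide across the special doubling curve, and compute the resulting triangle (or quadrilateral) count. The claim is that this count contributes exactly the basepoint action $\Phi$, weighted by $Q$, which is the natural source of a degree $-1$ correction in the $\iota$-complex.

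I expect the main obstacle to be making the local computation in the doubling region rigorous: one must show that twisting the doubling tube changes the doubling data by a move whose associated map is computable, and that this map is precisely $\Id+Q\Phi$ rather than $\Id+Q(\Phi+\partial(\text{something}))$ for some a priori unknown correction term. The cleanest route is probably to pass to the expanded model of Section~\ref{sec:expanded-def}, where an extra basepoint $w'$ and pair of curves are present in the doubling region; there the Dehn twist can be realized as an honest basepoint-swap or small-loop move between $w$ and $w'$, for which the associated map is governed by the known relation $\Phi_w+\Phi_{w'}\simeq 0$ and the formula for basepoint-moving maps from the graph TQFT (\cite{ZemGraphTQFT}). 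Then one reduces back to the standard model using the comparison already established in the proof of Theorem~\ref{thm:final-naturality}. An alternative, and perhaps the sanity check, is to verify the formula against the $S^2\times S^2$ computation in Section~\ref{sec:example} or against the $\Pin(2)$-equivariant prediction, since $\Id+Q\Phi$ is exactly the expected form by analogy with the action of the corresponding element in $\Pin(2)$-monopole Floer homology.
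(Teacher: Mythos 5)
Your setup is right: the twist fixes the underlying diagram $\cH$ and only affects the doubling data (it twists the doubling arcs' endpoints at $w$, equivalently Dehn-twists around a meridian of the connect-sum tube), so the non-involutive part of the transition map is the identity and the whole content is identifying the degree~$-1$ correction with $\Phi$. This matches the paper's starting point, Equation~\eqref{eq:Dehn-twist-map}.

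However, the mechanism you propose for the actual computation has a genuine gap. You suggest that in the expanded model the twist becomes ``an honest basepoint-swap or small-loop move between $w$ and $w'$, governed by $\Phi_w+\Phi_{w'}\simeq 0$ and the formula for basepoint-moving maps.'' The paper explicitly warns (in the remark following Figure~\ref{fig:twist_fig1}) that this is exactly the trap: interpreting $\tw_*$ as moving the basepoint around the connected-sum neck and invoking the hypercube $\pi_1$-action machinery does \emph{not} produce an $\bF[U,Q]/Q^2$-equivariant homotopy, because the relative homology actions appearing in that homotopy are not preserved by the involution --- the resulting $3$-dimensional hypercube need not have equal top and bottom faces. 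Had your route worked, it would more naturally output $\tw_*\simeq\Id$, which is false. Likewise, ``model the regluing by a finger-move/handleslide and compute the triangle count'' underestimates the problem: the twist is a change of the $\Ds$ curves realized by a surface diffeomorphism, and the resulting transition map involves the full hyperbox of quadrilateral maps in~\eqref{eq:Dehn-twist-map}, not a single localizable polygon count.

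What the paper actually does is quite different in its core: it introduces a doubly-tubed stabilized model so that all the interesting geometry localizes to a genus-one region $\bT$, and then compares \emph{perfectly matched} and \emph{trivially matched} moduli spaces across the connected sum via the maps $\Psi^{\Sigma<\Sigma'}$ (after Ozsv\'ath--Szab\'o's bordered techniques). The $\Phi_w$ term is produced by Lemma~\ref{lem:pair}: in a one-parameter family of matched holomorphic triangles, an end appears where a bigon on $\bT$ degenerates at the special matching line, and the index bound of Lemma~\ref{lem:index-bound} forces this to contribute exactly $\xs\otimes\theta^+\mapsto\Phi_w(\xs)$. Your proposal contains no substitute for this degeneration analysis, which is where the $\Phi$ actually comes from; the sanity checks you mention ($S^2\times S^2$, the $\Pin(2)$ prediction) confirm the answer but do not supply the missing argument.
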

Here $\Phi$ is a formal derivative of the differential with respect to $U$; for a review of this map, see Section~\ref{subsec:iotacomplexes}. The action of $\tw$ on $\CFI(Y,w)$ can also be interpreted as an action of loops in the space of framings of $(Y,w)$, as in Section ~\ref{sec:twist}. The space of such framings is a copy of $\SO(3)$, and therefore has fundamental group $\pi_1(\SO(3))\iso \Z/2$, whose nontrivial element acts as $\tw$ above.

In the doubling model described above, the choice of framing affects the choice of curves in the doubled diagram; concretely, it corresponds to a Dehn twist around a meridian of the connect sum tube in $\Sigma\#\overline{\Sigma}$, as in Figure~\ref{fig:dehn}.

\begin{figure}[H]
	\centering
	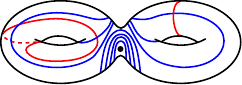
	\caption{The effect of changing the framing on the doubled diagram from Figure~\ref{fig:double}.
	 }\label{fig:dehn}
\end{figure}

The reader will note that we do not require a framing of the basepoint for the naturality and functoriality results for links. Heuristically, this follows from the fact that the connect sum region of (any variation of) the doubled diagram for a link does not contain a basepoint; for more detail, see Section~\ref{subsec:twist-knots}.

We expect that the dependence on the framings and the appearance of the map $\Phi$ in this computation are related to a conjectural $\Pin(2)$-equivariant structure on Heegaard Floer homology, in analogy with Manolescu's $\Pin(2)$-Seiberg Witten Floer homology and Lin's $\Pin(2)$-monopole Floer homology, and hope to investigate this connection in future work. We further note that the appearance of the framings in our theorems implies that any infinite order naturality statement for Heegaard Floer homology would necessarily also take framings of the basepoint into account.

\subsection{Organization} This paper is organized as follows. In Section \ref{sec:algebra} we discuss some algebraic preliminaries and their Floer-theoretic counterparts; in particular, we discuss hypercubes of chain complexes and hypercubes of attaching curves for a Heegaard surface. We additionally review the algebraic formalism of $\iota$-complexes and their relationship to hypercubes of chain complexes. In Section \ref{sec:involutive} we review some background on involutive Heegaard Floer homology, and in particular recall the operation of doubling a Heegaard diagram and the resulting model for the involution on Heegaard Floer homology from \cite{HHSZExact}. In Section \ref{sec:naturality} we recall the structure of Juh{\'a}sz, Thurston, and Zemke's proof of naturality of ordinary Heegaard Floer homology and describe how to adapt it to our case. In Section \ref{sec:elementary-equivalences} we define transition maps for Heegaard diagrams related by elementary equivalences, and define general transition maps for Heegaard diagrams related by handleslide equivalences as compositions of maps for elementary equivalences. We prove in Proposition \ref{prop:continuity} that these maps satisfy the continuity axiom, in analogy with \cite[Proposition 9.27]{JTNaturality}. In Section \ref{sec:polygons} we prove some necessary technical results concerning stabilizations and holomorphic polygons, generalizing the results from our previous paper \cite{HHSZExact}. In Section \ref{sec:expanded-def} we introduce the basepoint exanded doubling diagrams and the basepoint expanded model of the involution, and construct transition maps for handleslide equivalences using this model. In Section \ref{sec:relate-expanded} we define a chain homotopy equivalence between involutive chain complexes associated to the ordinary and basepoint expanded doubles of a Heegaard diagram, and prove that these maps commute with the transitions maps associated to handleslide equivalences. We further show that the transition maps defined in Section \ref{sec:expanded-def} for generalized handleslide equivalences are unique up to chain homotopy equivalence, which proves the result for the non-expanded model. We then turn our attention to constructing the maps between involutive complexes associated to cobordism; in Section \ref{sec:1-and-3-handles} we construct the maps associated to one- and three-handles and in Section \ref{sec:2-handles} we construct the maps associated to two-handles. In Section \ref{sec:stable} we define construct naturality maps for stabilizations as compositions of cobordism maps associated to cancelling handles and prove they commute with the transition maps associated to handleslide equivalences and with each other. In Section \ref{sec:handleswap} we prove invariance of our transition maps under handleswaps, in analogy with \cite[Section 9.3]{JTNaturality}. Finally, in Section~\ref{sec:final-overview} we complete the proofs of Theorem~\ref{thm:final-naturality}, following the structure of the proof in \cite[Section 9.4]{JTNaturality}, and of Theorem~\ref{thm:well-defined-cobordism-map}. As an example, in Section \ref{sec:example} we present the calculation of the cobordism map for a twice punctured copy of $S^2\times S^2$, decomposed as two handle $2$-handle cobordisms $S^3\to S^1\times S^2$  and $S^1\times S^2\to S^3$. In Section \ref{sec:knots} we describe how to adapt the naturality and functoriality results of the previous sections to the case of knots and links, and prove Theorems \ref{thm:link-naturality} and \ref{thm:link-functoriality}. We use this in Section \ref{sec:knot-surgery} to provide a computation of the involutive cobordism map associated to the cobordism $W'_n(K)$ from $S^3_n(K)$ to $S^3$ for $n$ even in terms of the involutive mapping cone formula. We conclude by analyzing the effect of changing the framing in Section ~\ref{sec:twist}, giving a proof of Theorem~\ref{thm:twist-map}.

\subsection*{Acknowledgments} The authors are grateful to Andr{\'a}s Juh{\'a}sz, Ciprian Manolescu, and Dylan Thurston for helpful conversations (over many years). The authors are further grateful to the anonymous referee for helpful comments. Parts of this work were carried out while the first and second author were in residence at the Mathematical Sciences Research Institute in Fall 2022, and therefore supported by NSF Grant DMS-1440140; the authors are grateful to the institute for its hospitality.

\section{Algebraic preliminaries} \label{sec:algebra}

\subsection{Hypercubes and hyperboxes}
\label{sec:hypercubes}
In this section, we recall the hypercube formalism of Manolescu and Ozsv\'{a}th \cite{MOIntegerSurgery}. We write $\bE_n$ for the set of points $\{0,1\}^n\subset \Z^n$. Similarly if $\ve{d}=(d_1,\dots, d_n)$ is a tuple of $n$ positive integers, we write $\bE(\ve{d})=\{0,\dots, d_1\}\times \cdots \times \{0,\dots, d_n\}$.  If $\veps,\veps'\in \R^n$, we say that $\veps\le\veps'$ if the inequality holds for each component of $\veps$ and $\veps'$.

\begin{define} 
An \emph{$n$-dimensional hypercube of chain complexes} $(C^\veps, D^{\veps,\veps'})_{\veps\in \bE_n}$ is a collection of groups $C^{\veps}$, ranging over $\veps\in \bE_n$, together with a collection of maps $D^{\veps,\veps'}\colon C^\veps\to C^{\veps'}$, whenever $\veps\le \veps'$.  Furthermore, we assume the following compatibility condition is satisfied whenever $\veps\le \veps''$:
\begin{equation}
\sum_{\veps': \veps\le \veps'\le\veps''} D^{\veps',\veps''}\circ D^{\veps,\veps'}=0. \label{eq:hypercube-structure-relations}
\end{equation}
\end{define}

We usually call $D^{\veps,\veps}$ the internal differential of $C^\veps$. The hypercube structure relation in equation~\eqref{eq:hypercube-structure-relations} is equivalent to $(\bigoplus_{\veps\in \bE_n} C^\veps,\sum_{\veps\le \veps'} D^{\veps,\veps'})$ being a chain complex.

\begin{define}
 A \emph{hyperbox of chain complexes of size~$\ve{d}$} consists of a collection of groups $(C^\veps)_{\veps\in \bE(\ve{d})}$, together with a choice of map $D^{\veps,\veps'}\colon C^\veps\to C^{\veps'}$ whenever $|\veps'-\veps|_{L^\infty}\le 1$. We assume equation~\eqref{eq:hypercube-structure-relations} holds whenever $|\veps''-\veps|_{L^\infty}\le 1$. 
\end{define}

An important operation involving hyperboxes is \emph{compression} \cite{MOIntegerSurgery}*{Section~5}. This operation takes a hyperbox of size $\ve{d}=(d_1,\dots, d_n)$ and returns an $n$-dimensional hypercube of chain complexes. We illustrate with an example. Consider a 1-dimensional hyperbox
\[
\begin{tikzcd}
C_0
	\ar[r, "f_{0,1}"]
&
C_1
	\ar[r, "f_{1,2}"]
&
\cdots
	\ar[r, "f_{n-1,n}"]
&
C_n.
\end{tikzcd}
\]
The compression of the above hyperbox is the hypercube
\[
\begin{tikzcd}[column sep=2.5cm]
C_0
\ar[r, "f_{n-1,n}\circ \cdots \circ f_{0,1}"]
&
C_n.
\end{tikzcd}
\]

\noindent More generally, the same description works for an $n$-dimensional hyperbox $\cC$ of size $(1,\dots, 1,d)$. That is, we may view $\cC$ as a 1-dimensional hyperbox of size $(d)$, which we call $\cC'$, where the complex at each point $\varepsilon_0$ of $\bE(d)$ is the $(n-1)$-dimensional hypercube $\cC'_{\varepsilon_0}=(\oplus_{\varepsilon\in \bE_{n-1}} C^{\varepsilon\varepsilon_0},\sum_{\varepsilon\leq \varepsilon'}D^{\varepsilon \varepsilon_0,\varepsilon'\varepsilon_0})$, viewed as a chain complex.  Schematically, $\cC'$ is represented as a diagram
\[
\begin{tikzcd}
\cC'_0
\ar[r, "f_{0,1}"]
&
\cC'_1
\ar[r, "f_{1,2}"]
&
\cdots
\ar[r, "f_{d-1,d}"]
&
\cC'_d.
\end{tikzcd}
\]
where the maps $f_{i,i+1}$ are constructed as a sum of the maps $D^{\varepsilon i,\varepsilon' i+1}$ for $\varepsilon\leq \varepsilon'\in \bE_{n-1}$.

The definition given above for compression of a $1$-dimensional hyperbox of chain complexes then applies, to construct a $1$-dimensional hypercube:
\[
\begin{tikzcd}[column sep=2.5cm]
\cC'_0
\ar[r, "f_{d-1,d}\circ \cdots \circ f_{0,1}"]
&
\cC'_n.
\end{tikzcd}
\]
For $\varepsilon\leq \varepsilon'\in \bE_n$ with $\varepsilon_n\neq \varepsilon_n'$, the map $D^{\varepsilon,\varepsilon'}$ of the compression of $\cC$ is the component of $f_{d-1,d}\circ\dots \circ f_{0,1}$ from $\cC^\varepsilon$ to $\cC^{\varepsilon'}$.

For a hyperbox $C$ of size $(d_1,\dots, d_n)$, the above description may be iterated to give the compression, as follows. This description depends on a choice of ordering of the axes of the cube. (The motivated reader may verify that re-ordering the axes results in a homotopic hypercube.) Firstly, we may view $C$ as a collection of $d_1\cdots d_{n-1}$ hyperboxes of size $(1,\dots, 1,d_n)$. We compress each of these hyperboxes using the function-composition description above. Stacking these hyperboxes results in a hyperbox of size $(d_1,\dots, d_{n-1},1)$. We may view this as a collection of $d_1\cdots d_{n-2}$ hypercubes of size $(1,\dots, 1,d_{n-1}, 1)$, which we compress using the function-composition description. Stacking the resulting hyperboxes gives a hyperbox of size $(d_1,\dots, d_{n-2},1,1)$. Repeating this procedure gives the compression of $C$. It is possible to relate this description to the description in terms of the algebra of songs in \cite{MOIntegerSurgery}, cf. \cite{Liu2Bridge}*{Section~4.1.2}.

\subsection{Hypercubes of attaching curves}

In this section, we describe the notion of a hypercube in the Fukaya category. Such objects are described by Manolescu and Ozsv\'{a}th \cite{MOIntegerSurgery}, and are referred to as \emph{hyperboxes of Heegaard diagrams}. The reader may also compare this construction to Lipshitz, Ozsv\'{a}th and Thurston's notion of a \emph{chain complex of attaching circles} \cite{LOTDoubleBranchedII}. The construction is also described in \cite{HHSZExact}*{Section~5.5}.

We begin with a preliminary definition about $\Spin^c$ structures. Firstly, we say $\cL_{\b}=(\bs^\veps)_{\veps\in \bE_n}$ is an \emph{empty hypercube of beta (or alpha)-attaching curves} if it is a collection of attaching curves on a surface $\Sigma$, indexed by points of $\bE_n$. Let $Y_{\veps_1, \veps_2}$ be the 3-manifold determined by the two sets of attaching curves $(\bs^{\veps_1}, \bs^{\veps_2})$ and $X_{\veps_1,\dots, \veps_m}$ be the smooth 4-manifold associated to the collection of sets of attaching curves $(\bs^{\veps_1},\dots, \bs^{\veps_m})$ in the usual way \cite[Section 8.1]{OSDisks}. 
\begin{define}
\begin{enumerate}
\item Suppose that $\cL_{\b}=(\bs^\veps)_{\veps\in \bE_n}$ is an empty hypercube of beta-curves. A \emph{hypercube of $\Spin^c$-structures} for $\cL_{\b}$ consists of a collection of $\Spin^c$ structures
\[
\frS_{\veps_1,\dots, \veps_m}\subset \Spin^c(X_{\veps_1,\dots, \veps_m})
\] 
for each sequence $\veps_1<\cdots< \veps_m$ in $\bE_n$, satisfying $m>1$ and the following compatibility relations. Firstly, if $1\le i<j\le m$, then $\frS_{\veps_1,\dots, \veps_m}$ is closed under the action of $\delta^1 H^1(Y_{\veps_i,\veps_j})$. Secondly if $\veps_1<\cdots< \veps_m$ is a sequence, and $\veps_{i_1}<\dots< \veps_{i_j}$ is a subsequence, where $1\le i_1<\dots< i_j\le m$, then $\frS_{\veps_{i_1},\dots, \veps_{i_j}}$ is the image of $\frS_{\veps_1,\dots, \veps_m}$ under the natural restriction map
\[
\Spin^c(X_{\veps_1,\dots, \veps_m})\to \Spin^c(X_{\veps_{i_1},\dots, \veps_{i_j}}).
\]
\item If $\cL_{\a}=(\as^\veps)_{\veps\in \bE_n}$ and $\cL_{\b}=(\bs^\veps)_{\veps\in \bE_m}$ are two empty hypercubes of alpha and beta attaching curves, then a hypercube of $\Spin^c$ structures for the pair $(\cL_{\a},\cL_{\b})$ consists of the following. For every pair of sequences $(\nu_1<\dots<\nu_k)$ in $\bE_n$ and $(\veps_1<\dots< \veps_\ell)$ in $\bE_m$ such that $k+\ell>1$, a set of $\Spin^c$ structures $\frS_{\nu_k,\dots, \nu_1,\veps_1,\dots, \veps_\ell}\subset\Spin^c(X_{\nu_k,\dots, \nu_1,\veps_1,\dots, \veps_{\ell}})$ that are closed under $\delta^1$-orbits and compatible with restriction, similar to above. The case that one of $(\nu_1<\dots<\nu_k)$ and $(\veps_1<\dots <\veps_\ell)$ is the empty list is allowed.
\end{enumerate}
\end{define}

Note that a hypercube of $\Spin^c$ structures for the pair $(\cL_{\a}, \cL_{\b})$ induces a hypercube of $\Spin^c$ structures for each of $\cL_{\a}$ and $\cL_{\b}$.

\begin{define} 
An $n$-dimensional hypercube of beta attaching curves $\cL_{\b}$ on a pointed surface $(\Sigma,w)$ consists of an empty hypercube of beta attaching curves $(\bs^\veps)_{\veps\in \bE_n}$, a hypercube of $\Spin^c$ structures on $\cL_{\b}$, together with a choice of Floer chain $\Theta_{\veps,\veps'}\in \CF^-(\Sigma, \bs^\veps,\bs^{\veps'}, w)$ whenever $\veps<\veps'$. Furthermore, the chains are required to satisfy the following compatibility condition, whenever $\veps<\veps'$:
\begin{equation}
\sum_{\veps=\veps_1<\cdots<\veps_n=\veps'} f_{\b^{\veps_1},\b^{\veps_2},\dots, \b^{\veps_n}} (\Theta_{\veps_1,\veps_2},\dots, \Theta_{\veps_{n-1},\veps_n})=0.
\label{eq:hypercube-Lagrangians}
\end{equation}

Here $f_{\b^{\veps_1},\b^{\veps_2},\dots, \b^{\veps_n}}$ is the polygon-counting map associated to $(\b^{\veps_1},\b^{\veps_2},\dots, \b^{\veps_n})$ \cite[Section 8.1]{OSDisks}. The sum is taken over $\Spin^c$ structures according to the hypercube of $\Spin^c$ structures.
\end{define}

We will also write the Floer chain $\Theta_{\veps,\veps'}$ as $\Theta_{\b^{\veps}, \b^{\veps'}}$ when it seems clearest to refer to the sets of curves involved. In the above, we assume that each Heegaard multi-diagram is weakly admissible, and the appropriate finiteness of counts so that the sum makes sense. Assuming weak admissibility, one may always obtain a sensible expression by working over the power series ring $\bF[[U]]$. Hypercubes of \emph{alpha} attaching curves are defined by a notational modification of the above definition.

We may \emph{pair} hypercubes of attaching curves, as follows. If $\cL_{\a}=(\as^\nu,\Theta_{\nu',\nu})_{\nu\in \bE_m}$ and $\cL_{\b}=(\bs^\veps,\Theta_{\veps,\veps'})_{\veps\in \bE_n}$ are hypercubes of attaching curves for some hypercubes of $\Spin^c$ structures, and we have a hypercube of $\Spin^c$ structures $\frS$ for the pair $(\cL_{\a},\cL_{\b})$ which extends the hypercubes for $\cL_{\a}$ and $\cL_{\b}$, then we may form an $(n+m)$-dimensional hypercube of chain complexes denoted $\CF^-(\cL_{\a},\cL_{\b}, \frS)$. The group at index $(\nu,\veps)$ is the ordinary Floer complex $\CF^-(\Sigma, \as^{\nu},\bs^\veps,\frS_{\nu,\veps})$. The morphism from $(\nu,\veps)$ to $(\nu',\veps')$ is the map
\[
D_{(\nu,\veps),(\nu',\veps')}(\xs)=\sum_{\substack{\nu=\nu_1<\cdots<\nu_k=\nu'\\ \veps=\veps_1<\cdots<\veps_j=\veps'}}f_{\nu_k,\dots, \nu_1,\veps_1,\dots, \veps_j,\frS_{\nu_k,\dots, \nu_1,\veps_1,\dots, \veps_j}}(\Theta_{\nu_k,\nu_{k-1}},\dots, \Theta_{\nu_2,\nu_1}, \ve{x},\Theta_{\veps_1,\veps_2},\dots, \Theta_{\veps_{j-1},\veps_{j}}).
\]

It is straightforward to use the compatibility condition in~\eqref{eq:hypercube-Lagrangians} to see that $\CF^-(\cL_{\a},\cL_{\b},\frS)$ is a hypercube of chain complexes. In specific examples, we will also use the abbreviated notation 
\[
f_{\a^{\nu_1}\to \dots\to  \a^{\nu_k}}^{\b^{\veps_1}\to \cdots \to \b^{\veps_j}}(\xs)=f_{\nu_k,\dots, \nu_1,\veps_1,\dots, \veps_j,\frS_{\nu_k,\dots, \nu_1,\veps_1,\dots, \veps_j}}\left(\Theta_{\nu_k,\nu_{k-1}},\dots, \Theta_{\nu_2,\nu_1}, \ve{x},\Theta_{\veps_1,\veps_2},\dots, \Theta_{\veps_{j-1},\veps_{j}}\right).
\]
\noindent If $\nu=\nu'$, then we will usually write $f_{\a^{\nu}}^{\b^{\veps}\to \b^{\veps'}}$, and similarly if $\veps=\veps'$. With respect to this notation the hypercube map becomes
\[
D_{(\nu,\veps),(\nu',\veps')}(\xs)=\sum_{\substack{\nu=\nu_1<\dots<\nu_k=\nu' \\ \veps=\veps_1<\dots< \veps_j=\veps'}}f_{\a^{\nu_1}\to \dots\to  \a^{\nu_k}}^{ \b^{\veps_1}\to \cdots \to \b^{\veps_j}}.\]
We will also frequently substitute the letter $h$ for $f$ in the specific case of a map counting pseudoholomorphic rectangles.

\subsection{$\iota$-complexes} \label{subsec:iotacomplexes}

We recall the following definition from \cite{HMInvolutive}.

	\begin{define}
		\label{def:iota-complex}
		An \emph{$\iota$-complex} is a chain complex $(C,\d)$ which is free and finitely generated over $\bF[U]$ and equipped with an endomorphism $\iota$. Furthermore, the following hold:
		\begin{enumerate}
			\item\label{iota-1} $C$ is equipped with a $\Z$-grading, such that $U$ has grading $-2$. We call this grading the \emph{Maslov} or \emph{homological} grading.
			\item\label{iota-2} There is a grading preserving isomorphism $U^{-1}H_*(C)\iso \bF[U,U^{-1}]$.
			\item \label{iota-3} $\iota$ is a grading preserving chain map and $\iota^2\simeq \id$.
		\end{enumerate}
	\end{define}

\begin{define}\label{def:enhanced-iota-homomorphisms}
	If $(C,\iota)$ and $(C',\iota')$ are $\iota$-complexes, we define the group of \emph{enhanced $\iota$-morphisms} to be
	\[
	\underline{\Mor}((C,\iota),(C',\iota')):=\Hom_{\bF[U]}(C,C')\oplus \Hom_{\bF[U]}(C,C')[1],
	\]
	where $\Hom_{\bF[U]}(C,C')$ denotes homogeneous (but not necessarily grading-preserving) morphisms and $[1]$ denotes a grading shift. We define
	\[
	\d_{\underline{\Mor}}(F,h)=(\d' F+F\d, F\iota+\iota'F+\d' h+h\d),
	\]
	which makes the category of $\iota$-complexes into a dg-category. 
	An enhanced $\iota$-\emph{homo}morphism is an enhanced $\iota$-morphism $(F,h)$ satisfying $\d_{\underline{\Mor}}(F,h)=0$. An \emph{enhanced $\iota$-homotopy equivalence} is an enhanced $\iota$-homomorphism $(F,h)$ such that there exists an enhanced $\iota$-homomorphism $(G,k)$ with the property that $(G,k)\circ (F,h)$ and $(F,h)\circ (G,k)$ both differ from $(\id,0)$, the identity enhanced $\iota$-morphism, by a boundary in $\underline{\Mor}$.  Composition is given by $(F,h)\circ (F',h')=(F\circ F',F'h+h'F)$.
\end{define}

If $(C,\d)$ is a free, finitely generated chain complex over $\bF[U]$, we now describe the chain map
\[
\Phi\colon C\to C.
\]
 Write $\d$ as a matrix with respect to a free $\bF[U]$-basis of $C$. We then define $\Phi$ to be the endomorphism obtained by differentiating each entry of this matrix with respect to $U$ and extend $\bF[U]$-linearly. This map appears naturally in considering the basepoint action on Heegaard Floer homology \cite{ZemHatGraphTQFT, ZemGraphTQFT, JTNaturality}. The maps $\Phi$ is independent of the choice of basis up to $\bF[U]$-equivariant chain homotopy. In the special case that the basis is the set of generators associated to a particular Heegaard diagram for $(Y,w)$, such that powers of the variable $U$ in the differential record intersections with the divisor $\{w\}\times \Sym^{g-1}(\Sigma)$, we may write this map $\Phi_w$.

\subsection{Hypercubes and $\iota$-complexes} \label{subsec:hypercube-equivalence}

In this section, we describe a relation between enhanced $\iota$-morphisms and hypercubes. An enhanced $\iota$-morphism $(F,h)\colon (C,\iota)\to (C',\iota')$ may be encoded into a diagram of the following form
\begin{equation}
\begin{tikzcd}[labels=description,row sep=1.5cm, column sep=2cm]
C\ar[d, "\iota"] \ar[r, "F"]\ar[dr,dashed, "h"] & C\ar[d, "\iota' "]\\
C\ar[r,"F"]& C'
\end{tikzcd}
\label{eq:hypercube=morphism-iota-complexes}
\end{equation}
The pair $(F,h)$ is an enhanced $\iota$-homomorphism if and only if the above diagram is a hypercube of chain complexes. Composition of $\iota$-morphisms is encoded by stacking and compressing hypercubes. 

\begin{rem}
The diagram in equation~\eqref{eq:hypercube=morphism-iota-complexes} is algebraically equivalent to the following diagram:
\begin{equation}
\begin{tikzcd}[labels=description,row sep=1.5cm, column sep=2cm]
C\ar[d, "Q(\iota+\id)"] \ar[r, "F"]\ar[dr,dashed, "Q\cdot h"] & C'\ar[d, "Q(\iota' +\id)"]\\
Q\cdot C\ar[r,"F"]& Q\cdot C'
\end{tikzcd}
\end{equation}
In particular, we may view enhanced iota morphisms between iota complexes as containing equivalent information to an $\bF[U,Q]/Q^2$-equivariant map between the associated complexes over $\bF[U,Q]/Q^2$. We will use the two perspectives interchangeably.
 \label{rem:q-to-iota}
\end{rem}

Relations between compositions of $\iota$-morphisms are also naturally encoded into hypercubes:
\begin{lem}\label{lem:iota-maps-from-hyperboxes}
 Suppose that  $(A,\iota_A)$, $(B,\iota_B)$, $(C,\iota_C)$ and $(D,\iota_D)$ are $\iota$-complexes, and $(F,f)$, $(G,g)$, $(H,h)$ $(I,i)$ and $(J,j)$ are enhanced $\iota$-morphisms which fit into the following diagram
 \[
 \begin{tikzcd}[column sep={1.5cm,between origins},row sep=.8cm,labels=description]
A
	\ar[dr, "G"]
	\ar[ddd, "\iota_A"]
	\ar[dddrr,dashed, "i"]
	\ar[rr, "I"]
	\ar[ddddrrr,dotted, "j"]
&[.7 cm]\,
&B
	\ar[rd, "H"]
	\ar[ddd,"\iota_B"]
	\ar[ddddr,dashed, "h"]
&[.7 cm]\,
\\
&[.7 cm] C
	\ar[rr, crossing over, "F"]
	\ar[dddrr,dashed, crossing over, "f"]
&\,
&[.7 cm] D
	\ar[from=ulll, dashed, crossing over, "J"]
	\ar[ddd, "\iota_D"]
\\
\\
A
	\ar[dr, "G"]
	\ar[rr,"I"]
	\ar[drrr,dashed, "J"]	
&[.7 cm]\,&
B
	\ar[dr, "H"]&[.9 cm]\,\\
& [.7 cm] C
	\ar[from=uuuul, dashed, crossing over, "g"]
	\ar[from=uuu,crossing over, "\iota_C"]
	\ar[rr, "F"]&\,
&[.9 cm] D
\end{tikzcd}
 \]
Then the hypercube relations for the above diagram are equivalent to each of $(F,f)$, $(G,g)$, $(H,h)$ and $(I,i)$ being enhanced $\iota$-homormophisms, as well as the relation
\[
(F,f)\circ (G,g)+(H,h)\circ (I,i)=\d_{\underline{\Mor}}(J,j).
\]
\end{lem}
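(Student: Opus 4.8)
The strategy is to unwind both sides of the claimed identity into their constituent components and match them with the hypercube relations \eqref{eq:hypercube-structure-relations} for the given diagram. First I would recall, via Remark~\ref{rem:q-to-iota} and \eqref{eq:hypercube=morphism-iota-complexes}, that each enhanced $\iota$-morphism such as $(I,i)$ corresponds to a square hypercube with vertical legs $\iota_A,\iota_B$, horizontal legs $I$, and diagonal $i$; the condition $\d_{\underline{\Mor}}(I,i)=0$ is precisely the assertion that this square is a hypercube of chain complexes. So the ``four squares'' appearing as faces of the pictured $3$-dimensional diagram (the front face $C\to D$, back face $A\to B$, and the two side faces built from $G$, $H$) being hypercubes is equivalent to $(F,f),(G,g),(H,h),(I,i)$ each being enhanced $\iota$-homomorphisms, together with $(J,j)$ being a chain map in the appropriate enhanced sense.

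Next I would carefully set up the bookkeeping: label the eight vertices of the cube by $\{0,1\}^3$, with one axis recording the ``$\iota$'' direction (top-to-bottom), and the other two recording the horizontal composition $A\to B\to D$ versus $A\to C\to D$. Then the length-$2$ and length-$3$ hypercube relations are finite sums of composites of the maps $I,H,G,F,\iota_\bullet$ and the diagonals $i,h,g,f,J,j$. I would extract the relation associated to the main long diagonal of the cube (from the top $A$ to the bottom $D$): its length-$3$ hypercube relation, after grouping terms, reads as the $Q$-component of $(F,f)\circ(G,g) + (H,h)\circ(I,i) + \d_{\underline{\Mor}}(J,j) = 0$, using the composition formula $(F,f)\circ(G,g)=(F\circ G,\, Gf+gF)$ from Definition~\ref{def:enhanced-iota-homomorphisms} and the formula for $\d_{\underline\Mor}$. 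The length-$2$ relations of the cube, distributed among the faces, give exactly the statements that the four squares are hypercubes; the length-$1$ relations are the statements that the eight vertex complexes have $\d^2=0$, which hold by hypothesis. Conversely, given that the four morphisms are $\iota$-homomorphisms and the displayed relation holds, one reassembles precisely the family of relations \eqref{eq:hypercube-structure-relations} for the cube.

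The routine but slightly delicate point — and the step I expect to be the main obstacle — is the sign-and-term matching in the length-$3$ relation: one must check that every composite of three hypercube maps along the long diagonal appears exactly once, with the correct grouping, so that it coincides term-by-term with the expansion of $(F\circ G,Gf+gF) + (H\circ I, Ih+hI) + (\d'J+J\d,\, J\iota_A+\iota_D J+\d'j+j\d)$ at the level of the $\bF[U,Q]/Q^2$-picture of Remark~\ref{rem:q-to-iota} (where all ``signs'' are trivial over $\bF$, so the real content is purely combinatorial). I would handle this by passing to the $Q$-formalism of \eqref{eq:hypercube=morphism-iota-complexes}, where the cube becomes an ordinary hypercube of $\bF[U,Q]/Q^2$-complexes and the bracketed chain-homotopy relation is literally one face's worth of the hypercube relations; then translating back via Remark~\ref{rem:q-to-iota} gives the statement as phrased. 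Once the dictionary is in place the proof is a direct verification, so I would present it as: (1) recall the $Q$-picture; (2) observe the displayed $3$-dimensional diagram is an honest hypercube of $\bF[U,Q]/Q^2$-complexes iff its faces are and the long-diagonal relation holds; (3) identify the long-diagonal relation with $(F,f)\circ(G,g)+(H,h)\circ(I,i)=\d_{\underline{\Mor}}(J,j)$ using the composition and differential formulas; (4) conclude.
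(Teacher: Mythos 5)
Your proposal is correct and follows essentially the same route as the paper: sort the hypercube relations by length, identify the length-2 relations on the four side faces with the statements that $(F,f),(G,g),(H,h),(I,i)$ are enhanced $\iota$-homomorphisms, and match the top/bottom length-2 relation and the length-3 relation with the two components of $(F,f)\circ(G,g)+(H,h)\circ(I,i)=\d_{\underline{\Mor}}(J,j)$ via the composition formula and the formula for $\d_{\underline{\Mor}}$. The paper does this verification directly without passing through the $Q$-formalism, and note the small slip in your expansion of $(H,h)\circ(I,i)$, whose homotopy component should read $Hi+hI$.
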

\begin{proof}The length 1 relations imply that $F$, $G$, $H$ and $I$ are chain maps. The length 2 relations along the left, right, front and back faces are easily seen to be equivalent to $(F,f)$, $(G,g)$, $(H,h)$ and $(I,i)$ being enhanced $\iota$-homormorphisms. The length 2 relation along the top and bottom face is equivalent to 
\[
FG+H I=[\d, J].
\] 
The length 3 relation is equivalent to 
\[
F g+f G+H i+h I=\iota_D J+J\iota_A+[\d, j].
\]
On the other hand, by definition,
\[
(F,f)\circ (G,g)+(H,h)\circ (I,i)=(FG+HI,Fg+fG+Hi+hI)
\]
while
\[
\d_{\underline{\Mor}}(J,j)=([\d,J], \iota_D J+J\iota_A+[\d, j]).
\]
The main claim follows immediately.
\end{proof}

\subsection{$\iota_K$-complexes and $\iota_L$-complexes} \label{sec:knot-and-link-complexes} We now review the refinement of the analog of an $\iota$-complex for knots, called an $\iota_K$-complex, and its extension to links. First we recall some algebraic background. Let $(C_K,\d)$ be a free, finitely generated complex over the ring $\bF[\scU,\scV]$. There are two naturally associated maps
\[
\Phi,\Psi\colon C_K\to C_K,
\]
constructed as follows. Write $\d$ as a matrix with respect to a free $\bF[\scU,\scV]$-basis of $C_K$. We then define $\Phi$ to be the endomorphism obtained by differentiating each entry of this matrix with respect to $\scU$. We define $\Psi$ to be the endomorphism obtained by differentiating each entry with respect to $\scV$. These maps appear naturally in the study of knot Floer homology, see \cites{SarkarMaslov, ZemQuasi, ZemCFLTQFT}. The maps $\Phi$ and $\Psi$ are independent of the choice of basis up to $\bF[\scU,\scV]$-equivariant chain homotopy \cite{ZemConnectedSums}*{Corollary~2.9}.

An $\bF$-linear map $F\colon C_K\to C_K'$ is \emph{skew-$\bF[\scU,\scV]$-equivariant} if 
\[
F\circ \scV=\scU\circ F\quad \text{and} \quad F\circ \scU=\scV\circ F.
\]

We recall the following definition from \cite{HHSZExact}. (Compare also \cite{HMInvolutive}*{Definition~6.2} and \cite{ZemConnectedSums}*{Definition~2.2}.)

\begin{define}\label{def:iota-K-complex}
An \emph{$\iota_K$-complex} $(C_K,\d, \iota_K)$ is a finitely generated, free chain complex $(C_K,\d)$ over $\bF[\scU,\scV]$, equipped with a skew-equivariant endomorphism $\iota_K$ satisfying 
 \[
\iota_K^2\simeq \id+\Phi\Psi. 
 \]
\end{define}

\begin{rem}\label{rem:iotaKU}
If $(C_K,\d,\iota_K)$ is an $\iota_K$-complex, then $\iota_K$ commutes with $U=\scU\scV$, and hence we can view $C_K$ as an (infinitely generated) complex over $\bF[U]$ with an $\bF[U]$-equivariant endomorphism $\iota_K$. 
\end{rem}

We have the following notions of morphism of $\iota_K$-complexes:

\begin{define} Suppose that  $\mathscr{C}=(C_K,\d, \iota_K)$ and $\mathscr{C}'=(C'_K,\d',\iota_K')$ are $\iota_K$-complexes.
\begin{enumerate}
\item  An \emph{$\iota_K$-homomorphism} from $\mathscr{C}$ to $\mathscr{C}'$ consists of an $\bF[\scU,\scV]$-equivariant chain map $F\colon C_K\to C_K'$, which satisfies $\iota_K'F+F\iota_K\eqsim 0$ (where $\eqsim$ denotes skew-equivariantly chain homotopy equivalence).
\item The group of \emph{enhanced $\iota_K$-morphisms} is
\[
\underline{\Mor}(\scC,\scC'):=\Hom_{\bF[\scU,\scV]}(C_K,C_K')\oplus \bar{\Hom}_{\bF[\scU,\scV]}(C_K,C_K')[1,1],
\]
where $\bar{\Hom}_{\bF[\scU,\scV]}(C_K,C_K')$ denotes the group of $\bF[\scU,\scV]$-skew-equivariant maps. The differential on $\underline{\Mor}(\scC_K,\scC'_K)$ is given by 
\[
\d_{\underline{\Mor}}(F,g)=(F\d+\d'F, F\iota_K+\iota_K' F+\d' g+g\d).
\]
We say $(F,g)$ is an enhanced $\iota_K$-\emph{homo}morphism if $\d_{\underline{\Mor}}(F,g)=0$. Two enhanced $\iota_K$-morphisms are $\iota_K$-\emph{homotopic} if their sum is a boundary in $\underline{\Mor}(\scC_K,\scC_K')$.
\end{enumerate}
\end{define}

We will also be interested in the case of free, finitely-generated complexes $(C_L,\d)$ over the ring $\bF[\scU_1,\scV_1, \dots \scU_{\ell}, \scV_{\ell}]$, which arise when we study links $L$ with multiple components. In this case, there are maps $\Phi_i$ and $\Psi_i$ for $1 \leq i \leq {\ell}$ arising from differentiating with respect to each variable. An $\bF$-linear map $F \colon C_L \rightarrow C_L'$ is said to be skew-equivariant if it exchanges $\scU_i$ and $\scV_i$ for each $i$. We may then consider $\iota_L$-complexes $(C_L, \d, \iota_L)$, where the map $\iota_L$ satisfies the formula 
\[
\iota_L^2 \simeq \left(\id+\Phi_{\ell}\Psi_{\ell}\right)\circ \dots \circ \left(\id + \Phi_1\Psi_1\right).
\]
This is the diffeomorphism map for performing a Dehn twist on each link component. See \cite{ZemQuasi}*{Theorem~B} for further discussion. The definitions of $\iota_K$-morphisms and their variations now extend straightforwardly to $\iota_L$-complexes.

As in Section \ref{subsec:hypercube-equivalence}, an enhanced $\iota_K$-homomorphism is equivalent to a hypercubes of chain complexes, compositions of $\iota_K$-morphisms are encoded as hypercubes in the same manner as in Lemma~\ref{lem:iota-maps-from-hyperboxes}.

\section{Involutive Heegaard Floer homology} \label{sec:involutive}

In this section, we review Hendricks and Manolescu's construction of involutive Heegaard Floer homology \cite{HMInvolutive}, and also define the doubling models which feature in our statements of naturality and functoriality. 

\subsection{The involutive Floer complexes}

We presently recall Hendricks and Manolescu's original construction \cite{HMInvolutive}. Suppose that $\cH=(\Sigma,\as,\bs,w)$ is a weakly admissible Heegaard diagram for $Y$. Suppose that $\frs$ is a self-conjugate $\Spin^c$ structure. Write $\bar \cH=(\bar \Sigma, \bar \bs, \bar \as, w)$ for the diagram obtained from $\cH$ by reversing the orientation of $\Sigma$ and reversing the roles of $\as$ and $\bs$. There is a canonical chain isomorphism
\[
\eta\colon \CF^-(\bar{\cH},\frs)\to \CF^-(\cH,\frs).
\]
Hendricks and Manolescu consider the map
\[
\iota \colon \CF^-(\cH,\frs)\to \CF^-(\cH,\frs)
\]
given by the formula
\[
\iota := \eta \circ \Psi_{\cH\to \bar {\cH}},
\]
where $\Psi_{\cH\to \bar \cH}$ is the map from naturality as in \cite{JTNaturality}. They define the involutive Heegaard Floer complex $\CFI(Y,\frs)$ to be the $\bF[U,Q]/(Q^2)$-chain complex whose underlying $\bF[U,Q]/(Q^2)$-module is 
\[
\CF^-(Y,\frs)\otimes_{\bF[U]} \bF[U,Q]/(Q^2)=\CF^-(Y,\frs)\oplus Q\CF^-(Y,\frs)
\] with differential $\partial_{\CFI(Y,\frs)}=\partial_{\CF^-(Y,\frs)}\otimes \id +(\id+\iota)\otimes Q$.  That is, the involutive Heegaard Floer complex is the mapping cone
\[
\Cone\left(Q(\id+\iota)\colon \CF^-(Y,\frs)\to Q \CF^-(Y,\frs)\right).
\]
with the evident action of $\bF[U,Q]/(Q^2)$.  Hendricks and Manolescu prove that $\CFI^-(Y,\frs)$ is well-defined up to chain homotopy equivalence.

Hendricks and Manolescu also define a refinement for knots, which we recall presently along with its extension to links. Beginning with the knot case, suppose that $K$ is a null-homologous, oriented knot in a 3-manifold $Y$, and $\frs\in \Spin^c(Y)$ is self-conjugate. We recall the following standard definition.

\begin{define} A Heegaard diagram for a pair $(Y,K)$ consists of a tuple $(\Sigma,\as,\bs,w,z)$, such that $(\Sigma,\as,\bs)$ is a Heegaard diagram for $Y$, such that the knot $K$ intersects $\Sigma$ in the points $w$ and $z$. Furthermore, if $U_{\a}$ and $U_{\b}$ are the closures of the two components of $Y\setminus \Sigma$, then $K$ intersects each of $U_{\a}$ and $U_{\b}$ in a boundary-parallel arc. The standard convention is that $K$ intersects $\Sigma$ positively at $z$, and negatively at $w$.
\end{define}

We will work over the version of knot Floer homology \cite{OSKnots} \cite{RasmussenKnots} which is freely generated over a 2-variable polynomial ring $\bF[\scU,\scV]$. We denote this chain complex by $\cCFK(K)$. See, e.g. \cite{ZemConnectedSums}*{Section~3} for background on this version of knot Floer homology. 

Let $\phi^+ \colon (Y,K,w,z)\to (Y,K,z,w)$ be the diffeomorphism of tuples supported in a neighborhood of $K$ corresponding to a positive half-twist along $K$, and similarly for $\phi^{-}$ and the negative half-twist. There is a canonical chain isomorphism
\[
\eta_K\colon\cCFK(\bar\Sigma,\bar \bs, \bar \as,z,w)\to \cCFK(\Sigma, \as, \bs,w,z)
\]
which satisfies $\eta_K(\scU\cdot x)=\scV \cdot \eta_K(x)$ and $\eta_K(\scV \cdot x)=\scU\cdot \eta_K(x)$. We define
\[
\iota_{K,+}:=\eta_K\circ \Psi_{\phi^+(\cH)\to \bar \cH} \circ \phi_*^+,
\]
and we define $\iota_{K,-}$ similarly. Here, $\phi_*^+$ denotes the tautological diffeomorphism map, and $\Psi_{\phi^+(\cH)\to \bar \cH}$ denotes the map from naturality obtained by picking a sequence of Heegaard moves relating $\phi^+(\cH)$ and $\bar \cH$. There is a map $\iota_K^-$ defined similarly. Hendricks and Manolescu define the involutive knot Floer complex to be the data consisting of $\cCFK(K)$ equipped with the endomorphism $\iota_{K,\pm}$, which gives an $\iota_K$-complex.

\begin{rem}
 In \cite{HMInvolutive}, Hendricks and Manolescu considered only the involution we call $\iota_{K,+}$, and wrote simply $\iota_K$. It is unknown whether there are any knots $K$ where $(\cCFK(K), \iota_{K,+})$ is inequivalent to $(\cCFK(K),\iota_{K,-})$. The distinction between $\iota_{K,+}$ and $\iota_{K,-}$ has previously been observed in \cite{BKST:genus}.
\end{rem}

The case of links is similar. For a link $L$ of $\ell=|L|$ components, we consider Heegaard diagrams $(\Sigma, \as, \bs, \ws, \zs)$ such that $\ws = \{w_1,\dots, w_n\}$ and $\zs = \{z_1, \dots, z_{\ell}\}$ and the $i$-th component $K_i$ of $L$  intersects $\Sigma$ positively at $z_i$ and negatively at $w_i$. The resulting chain complex $\cCFL(L)$ is freely-generated over the polynomial ring $\bF[\scU_1, \scV_1, \dots, \scU_{\ell}, \scV_{\ell}]$. In principle there are $2^{\ell}$ choices of orientation which may be used to define the involution; however for simplicity we assume that an orientation is fixed, and we consider only the two orientations which are either coherent, or opposite to our preferred orientation. It is now straightforward to adapt the construction of $\iota_{K,\pm}$ given above to models for $\iota_{L,\pm}$ for these versions of the link involution.

\subsection{Doubled Heegaard diagrams}
\label{sec:doubling-def}

Suppose that $\cH=(\Sigma,\as,\bs,w)$ is a pointed Heegaard diagram for $Y$. We view a regular neighborhood of $\Sigma$ as $\Sigma\times [0,1]$. Pick a small disk $D\subset \Sigma$, which contains $w$ along its boundary. The submanifold $U_0=(\Sigma\setminus D)\times [0,1]$ is a handlebody of genus $2g$, where $g$ denotes $g(\Sigma)$. The boundary of this submanifold is canonically identified with $\Sigma\# \bar \Sigma$, where the connected sum occurs at $w$. Write $U_1$ for the closure of the complement of $U_0$. Clearly $U_1$ is also a handlebody of genus $2g$. We write $D(\Sigma)=\Sigma\# \bar \Sigma$ for this Heegaard surface.

We may naturally equip $D(\Sigma)$ with attaching curves, as follows. For $U_1$, we may use the curves $\as \cup \bar \bs$, where $\as\subset \Sigma$ and $\bar\bs \subset \bar \Sigma$. For $U_0$, we pick compressing disks by picking a collection $\delta_1,\dots, \delta_{2g}$ of properly embedded arcs on $\Sigma\setminus N(w)$, called the doubling arcs. We add a basepoint to $\d N(w)$, for which we also write $w$. We assume that the arcs $\delta_1,\dots, \delta_{2g}$ are disjoint from $w$, and also form a basis of $H_1(\Sigma\setminus D, \d D\setminus\{w\})$.  We form attaching curves $\Ds\subset \Sigma\# \bar \Sigma$ by doubling the curves $\delta_1,\dots, \delta_{2g}$ onto $\Sigma\# \bar \Sigma.$ We write $D(\cH)=(\Sigma \# \bar \Sigma, \as \bar \bs, \Ds,w)$ for the resulting diagram.

If $\xi=(\xi_1,\xi_2,\xi_3)$ is a framing of $w\in Y$, then the framing is used in the construction as follows. We assume the Heegaard surface is tangent to the 2-plane field $(\xi_1,\xi_2)\subset T_w Y$. Additionally, in the tube connecting $\Sigma\# \bar \Sigma$, we must place the basepoint $w$. We place it in the direction corresponding to $\xi_1$.

\begin{rem}
 The diagram $D(\cH)$ may also be described by gluing two bordered Heegaard diagrams for genus $g$ handlebodies \cite{LOTBordered}.
\end{rem}  

\subsection{Doubling and the involution} \label{sec:doubling-model-manifolds}

We use doubled Heegaard diagrams to give a natural model of the involution. This model appears in \cite{ZemDuality} and \cite{HHSZExact}. In this section, we describe the construction.

Firstly, we define the following map as a  composition of 1-handle maps
\[
F_1^{\bar \b,\bar\b}\colon \CF^-(\Sigma,\as,\bs,w)\to \CF^-(\Sigma\# \bar \Sigma, \as \bar \bs, \bs\bar \bs, w).
\]
Next, we note that the diagram $(\Sigma\# \bar \Sigma, \bs\bar \bs,\Ds,w)$ is a doubled diagram for $(S^1\times S^2)^{\# g}$, and hence we may pick a cycle $\Theta_{\b\bar \b,\Dt}$ generating the top degree of homology. The cycle $\Theta_{\b\bar \b, \Dt}$ is not unique, but any two choices are related by a boundary. We define the holomorphic triangle map
\[
f_{\a\bar \b}^{\b \bar \b\to \Dt}\colon \CF^-(\Sigma\# \bar \Sigma, \as \bar \bs, \bs \bar \bs, w)\to \CF^-(\Sigma\# \bar \Sigma, \as \bar \bs, \Ds,w)
\] 
by the formula $f_{\a \bar \b}^{\b \bar \b\to \Dt}(x)=f_{\a \bar \b, \b \bar \b,\Dt}(x,\Theta_{\b \bar \b,\Dt}).$ 

Analogously, there is another holomorphic triangle map
\[
f_{\a \bar \b}^{\Dt\to \a \bar \a}\colon \CF^-(\Sigma \# \bar \Sigma, \as \bar \bs, \Ds, w)\to \CF^-(\Sigma\# \bar \Sigma, \as \bar \bs, \as \bar \as,w)
\]
as well as a 3-handle map
\[
F_3^{\a,\a}\colon \CF^-(\Sigma \# \bar \Sigma, \as \bar \bs, \as \bar \as, w)\to \CF^-(\bar \Sigma, \bar \bs, \bar \as,w).
\]

\begin{prop}[\cite{ZemDuality}*{Propositions~7.2, 7.8}]
\label{prop:doubling-involution}
 The composition $f_{\a \bar \b}^{\b \bar \b\to \Dt} \circ F_1^{\bar \b,  \bar\b}$ is chain homotopic to the map $\Psi_{\cH \to D(\cH)}$ from naturality. Dually, the composition $F_3^{\a,\a}\circ f_{\a \bar \b}^{\Dt\to \a \bar \a}$ is chain homotopic to the map $\Psi_{D(\cH)\to \bar \cH}$.
\end{prop}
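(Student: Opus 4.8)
The plan is to identify the composite $f_{\a \bar \b}^{\b \bar \b\to \Dt} \circ F_1^{\bar \b,  \bar\b}$ with a specific composition of elementary naturality maps for $\cH$, and then invoke first-order naturality of $\CF^-$ to conclude. First I would analyze $F_1^{\bar \b,\bar\b}$: this is a sequence of $g$ index-one handle attachments (stabilizations), each of which trivially attaches a small handle, so by the standard computation of one-handle maps (Ozsv\'ath--Szab\'o, Zemke) it is chain homotopic to a composition of stabilization maps $\Psi$ from naturality. The resulting diagram $(\Sigma\# \bar\Sigma, \as\bar\bs, \bs\bar\bs, w)$ is $\cH$ stabilized $g$ times, realizing it as a diagram for $Y \# (S^1\times S^2)^{\#g}$... no: more precisely, after the one-handle maps we have a diagram for $Y$ where the extra genus is cancelled by the extra $\bar\bs$-curves paired against themselves, so that $(\Sigma\#\bar\Sigma, \as\bar\bs, \bs\bar\bs, w)$ and $\cH$ are related by Heegaard moves (stabilizations).

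Second, I would analyze the triangle map $f_{\a\bar\b}^{\b\bar\b\to \Dt}$. Here the key point is that $(\Sigma\#\bar\Sigma, \bs\bar\bs, \Ds, w)$ is a diagram for $(S^1\times S^2)^{\#g}$ (this is asserted in the excerpt and follows from the choice of doubling arcs $\delta_i$ forming a basis of the appropriate relative homology), and $\Theta_{\b\bar\b,\Dt}$ is the top generator. A triangle map against a top generator in such a ``stabilization-type'' triangle is, by the stabilization results for holomorphic polygons (e.g.\ those generalized in Section~\ref{sec:polygons} from \cite{HHSZExact}), chain homotopic to a composition of handleslide/isotopy maps from naturality: the passage from $\bs\bar\bs$ to $\Ds$ across the doubling region is exactly a sequence of handleslides in the doubled diagram. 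So the full composite $f_{\a\bar\b}^{\b\bar\b\to\Dt}\circ F_1^{\bar\b,\bar\b}$ is chain homotopic to a composition of stabilization and handleslide maps from $\cH$ to $D(\cH)$, hence (by \cite{JTNaturality}, i.e.\ first-order naturality of $\CF^-$) chain homotopic to $\Psi_{\cH\to D(\cH)}$.

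For the dual statement, I would run the mirror of this argument: $F_3^{\a,\a}$ is a composition of index-three handle maps, again identified with destabilization maps from naturality, and $f_{\a\bar\b}^{\Dt\to\a\bar\a}$ is a triangle map against a top generator of another $(S^1\times S^2)^{\#g}$-diagram, hence a composition of handleslide maps; composing gives a sequence of Heegaard moves from $D(\cH)$ to $\bar\cH$, so the composite is $\Psi_{D(\cH)\to\bar\cH}$ by naturality. Alternatively, one observes the dual composite is the ``conjugate'' or adjoint of the first under the $\eta$-symmetry and reversal of the diagram, so it follows formally from the first statement once one checks the handle maps and triangle maps behave correctly under this duality --- this is essentially how \cite{ZemDuality} organizes it.

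The main obstacle I anticipate is the second step: rigorously identifying the triangle map $f_{\a\bar\b}^{\b\bar\b\to\Dt}$ against the top generator with a composition of naturality maps. This requires a careful stabilization/neck-stretching analysis of holomorphic triangles in the connect-sum region --- one must show the triangle counts degenerate, in the limit, into a product of a triangle count in the original $\cH$ and combinatorially-determined counts in the doubling region --- and one must pin down which generator $\Theta_{\b\bar\b,\Dt}$ is used and check the answer is independent of that choice (up to the boundary ambiguity noted in the excerpt). Since this proposition is cited directly from \cite{ZemDuality}*{Propositions~7.2, 7.8}, the cleanest route in the present paper is to quote those results, but to make the exposition self-contained one would invoke the polygon stabilization lemmas of Section~\ref{sec:polygons}; the bookkeeping of which handleslides occur and in what order is the genuinely technical part.
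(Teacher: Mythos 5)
Your proposal goes wrong at the key identification step. The paper (following \cite{ZemDuality}) proves this by observing that the composite is a composition of \emph{cobordism} maps for cancelling 4-dimensional handles: $F_1^{\bar\b,\bar\b}$ is a composition of $g$ four-dimensional 1-handle maps, so the intermediate diagram $(\Sigma\#\bar\Sigma,\as\bar\bs,\bs\bar\bs,w)$ is a diagram for $Y\#(S^1\times S^2)^{\#g}$ --- \emph{not} a diagram for $Y$ related to $\cH$ by Heegaard moves, as you assert. A 4-dimensional 1-handle attachment adds an $\alpha/\beta$ pair meeting in two points and changes the 3-manifold; a JTZ index $(1,2)$-stabilization adds a pair meeting in one point and preserves it. These are different operations, so your plan to absorb $F_1^{\bar\b,\bar\b}$ into the naturality transition maps fails at the outset.

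The second step has the same problem: $\bs\bar\bs$ and $\Ds$ are \emph{not} handleslide equivalent on $\Sigma\#\bar\Sigma$. Indeed $(\Sigma\#\bar\Sigma,\bs\bar\bs,\Ds,w)$ represents $(S^1\times S^2)^{\#g}$, whereas a pair of handleslide-equivalent attaching sets on a genus-$2g$ surface would give $(S^1\times S^2)^{\#2g}$. So $f_{\a\bar\b}^{\b\bar\b\to\Dt}$ is not a handleslide/continuation map but the 2-handle cobordism map for surgering out the $g$ extra $S^1\times S^2$ summands. Consequently you cannot invoke first-order naturality of $\CF^-$ for the composite; what is needed instead is invariance of the cobordism maps under cancellation of 1-handle/2-handle pairs (resp.\ 2-handle/3-handle pairs), which identifies the composite with the map of the product cobordism $Y\times I$, i.e.\ with $\Psi_{\cH\to D(\cH)}$. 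This is exactly the one-line idea the paper records after the proposition statement, and it is the content of \cite{ZemDuality}*{Propositions~7.2, 7.8}. Your instinct that the technical core is a neck-stretching/stabilization analysis of the triangle counts in the connect-sum region is correct, but that analysis serves to establish handle cancellation, not to rewrite the triangle map as a sequence of handleslides.
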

The idea of the proof of the above result is that the composition is topologically equivalent to a composition of the maps for canceling 1-handles and 2-handles, or canceling 2-handles and 3-handles.

\begin{cor} \label{cor:doubling-model}
The involution satisfies
\[
\iota \simeq \eta\circ  F_3^{\a,\a} \circ f_{\a \bar \b}^{\Dt \to \a \bar \a}\circ f_{\a \bar \b}^{\b \bar \b \to \Dt} \circ F_1^{\bar \b, \bar \b},
\]
where $\eta$ is the canonical chain isomorphism
\[
\eta\colon \CF^-(\bar \Sigma, \bar \bs, \bar \as,w)\to \CF^-(\Sigma,\as,\bs,w).
\]
\end{cor}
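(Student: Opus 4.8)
The plan is to chain together the two halves of Proposition \ref{prop:doubling-involution} with the definition of $\iota$ from Section \ref{sec:involutive}. Recall that by definition $\iota = \eta \circ \Psi_{\cH \to \bar\cH}$, where $\Psi_{\cH \to \bar\cH}$ is the naturality transition map and $\eta \colon \CF^-(\bar\Sigma, \bar\bs, \bar\as, w) \to \CF^-(\Sigma, \as, \bs, w)$ is the canonical chain isomorphism.

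First I would factor the transition map $\Psi_{\cH \to \bar\cH}$ through the doubled diagram $D(\cH)$. By the composition law for naturality maps (the analog of the last clause of Theorem \ref{thm:final-naturality} in the non-involutive setting, established in \cite{JTNaturality}), we have
\[
\Psi_{\cH \to \bar\cH} \simeq \Psi_{D(\cH) \to \bar\cH} \circ \Psi_{\cH \to D(\cH)}.
\]
Now I would substitute the two chain-homotopy identifications supplied by Proposition \ref{prop:doubling-involution}: namely $\Psi_{\cH \to D(\cH)} \simeq f_{\a \bar\b}^{\b \bar\b \to \Dt} \circ F_1^{\bar\b, \bar\b}$ and $\Psi_{D(\cH) \to \bar\cH} \simeq F_3^{\a, \a} \circ f_{\a \bar\b}^{\Dt \to \a \bar\a}$. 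Composing these and pre-composing with $\eta$ yields exactly the claimed formula
\[
\iota \simeq \eta \circ F_3^{\a,\a} \circ f_{\a \bar\b}^{\Dt \to \a \bar\a} \circ f_{\a \bar\b}^{\b \bar\b \to \Dt} \circ F_1^{\bar\b, \bar\b}.
\]

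There is essentially no hard step here — this is a bookkeeping corollary that repackages Proposition \ref{prop:doubling-involution}. The only point requiring a small amount of care is making sure the intermediate $\Spin^c$ structures and admissibility hypotheses are consistent along the composition (so that all the triangle and handle maps are defined and the compositions in Proposition \ref{prop:doubling-involution} make sense simultaneously), and that chain homotopies compose: if $A \simeq A'$ and $B \simeq B'$ then $A \circ B \simeq A' \circ B'$, which is immediate. I would also remark that the cycle $\Theta_{\b\bar\b, \Dt}$ (and its dual partner used implicitly in $f_{\a\bar\b}^{\Dt \to \a\bar\a}$) is only well-defined up to a boundary, but the triangle maps associated to homologous cycles are chain homotopic, so the right-hand side is well-defined up to homotopy, consistent with the $\simeq$ in the statement.
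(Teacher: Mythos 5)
Your proposal is correct and is exactly the intended derivation: the paper gives no separate proof of the corollary because it follows immediately from Proposition \ref{prop:doubling-involution} together with the composition law for the non-involutive transition maps of \cite{JTNaturality}, which is precisely what you use. The remarks on admissibility and on the well-definedness of $\Theta_{\b\bar\b,\Dt}$ up to boundary are appropriate but not essential.
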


\begin{define} Suppose $(Y,w,\xi)$ is a based manifold with a framing $\xi=(\xi_1,\xi_2,\xi_3)$ of $T_w Y$. 
A \emph{doubling enhanced Heegaard diagram} $\frD$ consists of the following data:
\begin{enumerate}
\item A Heegaard diagram $\cH=(\Sigma,\as,\bs,w)$. Furthermore, $\Sigma$ is positively tangent to the 2-plane spanned by $\xi_1$ and $\xi_2$.
\item A set of attaching curves $\Ds$ on $\Sigma\# \bar \Sigma$, constructed by doubling a basis of arcs $\delta_1,\dots, \delta_{2g}$. Furthermore, we view the endpoint of each $\delta_{i}$ arc as determining an oriented tangent space (oriented to point into $\delta_i$). We assume that none of these oriented tangent spaces coincide with the span of $\xi_1$. 
\item Choices of almost complex structures to compute the four Floer complexes $\CF(\Sigma,\as,\bs)$, $\CF(\Sigma \# \bar \Sigma, \as \bar \bs, \bs \bar \bs)$, $\CF(\Sigma\# \bar \Sigma, \as \bar \bs, \Ds)$ and $\CF(\Sigma\# \bar \Sigma, \as \bar \bs, \as \bar\as)$, as well as almost complex structures to compute the triangle maps $f_{\a \bar \b}^{\Dt \to \a \bar \a}$ and $f_{\a \bar \b}^{\b \bar \b \to \Dt}$.  
\end{enumerate}
We say that $\frD$ is a \emph{doubling enhancement} of $\cH$.  We say that $\frD$ is \emph{admissible} if each of the Heegaard diagrams and triples are weakly admissible.  
\end{define}

\subsection{Doubling and the knot and link involution}
\label{sec:doubling-knots}
The knot involution of \cite{HMInvolutive} also admits a convenient doubling model, which we introduced in \cite{HHSZExact} (see also \cite{JZStabilizationDistance}*{Section~9.4}). In this section we recall this model. In our proof of naturality, we will also need to use several stabilized models, which we introduce in Section~\ref{sec:knots}. 

The doubling model for knots has a similar description to Corollary~\ref{cor:doubling-model}, but more care will be needed to define the maps. There is furthermore one additional choice to be made, which will correspond to a choice of which direction along $K$ to perform a half-twist.

We begin by describing doubled knot diagrams. If $(\Sigma,\as,\bs,w,z)$ is a diagram for $(Y,K)$, we consider the following two diagrams:
\[
D^+(\cH)=(\Sigma\#_z \bar \Sigma, \as \bar \bs, \Ds, w ,\bar w)\quad \text{and} \quad D^-(\cH)=(\Sigma\#_w \bar \Sigma, \as \bar \bs, \Ds,\bar z, z).
\]
We start by analyzing the diagram $\cD^+(\cH).$ There is a map
\[
F_{1,+}^{\bar \b, \bar \b}\colon \cCFK(\Sigma,\as,\bs,w,z)\to \cCFK(\Sigma\#_z \bar \Sigma, \as \bar \bs, \bs \bar \bs, w, \bar w)
\]
obtained as the composition of a 1-handle map, as well as a diffeomorphism map for moving $z$ to $\bar w$. As in the setting of closed 3-manifolds, we have holomorphic triangle maps
$f_{\a \bar \b}^{\b \bar \b\to \Dt}$ and $f_{\a \bar \b}^{\Dt\to \a \bar \a}$. Finally, there is a map
\[
F_{3,+}^{\a,\a}\colon \cCFK(\Sigma\#_z \bar \Sigma, \as \bar \bs, \as \bar \as, w, \bar w)\to \cCFK(\bar\Sigma, \bar \bs, \bar \as, \bar z, \bar w),
\]
obtained by moving $w$ to the position of $\bar z$, and then applying the 3-handle maps to remove the $\as$ curves. Similar logic to Proposition~\ref{prop:doubling-involution} then implies that
\[
\iota_{K,+}\simeq \eta_K\circ  F_{3,+}^{\a,\a}\circ f_{\a \bar \b}^{\Dt \to \a \bar \a}\circ  f_{\a \bar \b}^{\b \bar \b\to \Dt}\circ  F_{1,+}^{\bar \b, \bar \b}.
\]

\noindent There are analogous maps $F_{1,-}^{\bar \b, \bar \b}$ and $F_{3,-}^{\a, \a}$ which instead involve the diagram $D^-(\cH)$, such that
\[
\iota_{K,-}\simeq \eta_K\circ  F_{3,-}^{\a,\a}\circ f_{\a \bar \b}^{\Dt \to \a \bar \a}\circ  f_{\a \bar \b}^{\b \bar \b\to \Dt}\circ  F_{1,-}^{\bar \b, \bar \b}.
\]

The same procedure with only notational changes may be used to give doubling models for the link involutions $\iota_{L^,\pm}$ for links with more than one component. See Section~\ref{sec:knots} for more details.

\subsection{Alpha versus beta doubling} \label{sec:models}

The reader may note that a somewhat arbitrary choice has been made in the definition of $\iota$.  Indeed, there is another diagram $\bar{D}(\cH)=(\Sigma\# \bar \Sigma, \Ds, \bs \bar \as,w)$, conjugate to the diagram $D(\cH)$ considered above.  Similar to Proposition~\ref{prop:doubling-involution} and using the same notation from that proposition, $f^{\b\bar\a}_{\a\bar\a\to \Delta}\circ F_1^{\bar\a,\bar\a}$ is also chain homotopic to $\Psi_{\cH\to \bar D(\cH)}$.  Dually, $F_3^{\b,\b}\circ f^{\b\bar\a}_{\Delta\to\b\bar\b}$ is homotopic to $\Psi_{\bar D(\cH)\to \bar{\cH}}$.  As a consequence, parallel to Corollary \ref{cor:doubling-model},
\begin{equation}
\iota \simeq \eta\circ  F_3^{\b,\b} \circ f_{\Dt\to \b \bar \b}^{\b\bar\a}\circ f_{\a \bar \a\to \Dt}^{\b \bar \a} \circ F_1^{\bar \a, \bar \a}.\label{eq:alpha-doubling}
\end{equation}
We will refer to the composite on the right side of equation~\eqref{eq:alpha-doubling} as the \emph{alpha-doubling model} for $\iota$, and we refer to the model  $\eta\circ  F_3^{\a,\a} \circ f_{\a \bar \b}^{\Dt \to \a \bar \a}\circ f_{\a \bar \b}^{\b \bar \b \to \Dt} \circ F_1^{\bar \b, \bar \b}$ from Corollary~\ref{cor:doubling-model}  as the \emph{beta-doubling} model. In spite of the fact that the alpha-doubling and beta-doubling models are homotopic by naturality, we do not claim there is a canonical homotopy between them. In this paper, we arbitrarily work over the beta-doubling model exclusively.

\section{On Naturality in Heegaard Floer theory} \label{sec:naturality}

In this section, we discuss naturality in Heegaard Floer theory and how it adapts to our setting. Our goal in this section is to state a version of the naturality theorem which covers our use of framed basepoints in the involutive Heegaard Floer complex.

We recall that the proof of \cite{JTNaturality} centers on proving naturality for invariants of sutured 3-manifolds. We recall that if $Y$ is equipped with a basepoint $w$, then we may obtain a sutured manifold by removing a regular neighborhood of $w$ and adding a single suture (closed loop) to the boundary. The reader should think of the suture as corresponding to the intersection of a Heegaard surface for $Y$ with the boundary.

We cannot apply the work of \cite{JTNaturality} without any change, because our construction of the involutive Floer complex requires a choice of framing at the basepoint. This is incompatible with the \emph{continuity} axiom of \cite{JTNaturality}, which requires any diffeomorphism of a sutured manifold  $\phi \in \Diff^+(M,\g)$ which is isotopic to the identity through sutured diffeomorphisms to act by the identity on the complex. In our present context, this is too strong of an axiom to impose, since such diffeomorphisms may correspond to diffeomorphisms of $(Y,w)$ which act non-trivially on the framing of the basepoint. Instead, we wish to weaken the continuity axiom to only require that diffeomorphisms which are isotopic to the identity \emph{relative to the boundary} to act by the identity on our invariant. In this section, we explain how the techniques of \cite{JTNaturality} adapt to our present situation.

\begin{rem}Note that in the original setting \cite{JTNaturality}, this was of little importance, since isotopies of sutured manifolds which are supported in a small neighborhood of the boundary do not change the position of the alpha or beta curves.
\end{rem}

We begin by recalling some notation from \cite{JTNaturality}. We first recall the graph $\cG_{(M,\g)}$ defined therein.  The vertices consist of embedded Heegaard diagrams for $(M,\g)$, with attaching curves taken up to isotopy on the Heegaard surface. The edges of $\cG_{(M,\g)}$ consist of the following:
\begin{enumerate}
\item Alpha equivalences on a fixed Heegaard surface.
\item Beta equivalences on a fixed Heegaard surface.
\item Index (1,2)-stabilizations.
\item Diffeomorphisms of $(M,\g)$ which are isotopic to the identity as sutured diffeomorphisms.
\end{enumerate}

We now define a subgraph $\cG_{(M,\g)}^{\d}\subset \cG_{(M,\g)}$, which we call the graph of \emph{boundary framed Heegaard moves}. This graph has the same vertices as $\cG_{(M,\g)}$. We add all of the same edges, except that diffeomorphisms are required to be the identity on the boundary of $M$, and also are required to be isotopic to the identity rel boundary. We consider the following loops of Heegaard diagrams (thought of as 2-cells). These are a restriction of the distinguished rectangles from \cite{JTNaturality}*{Definition~2.32}. These consist of the following loops:
\begin{enumerate}
\item Commutation of alpha and beta equivalences.
\item Commutations of two index $(1,2)$-stabilizations.
\item Commutations of an index $(1,2)$-stabilization and an alpha/beta equivalence.
\item Simple handleswap loops.
\item A sequence of diffeomorphism edges in $\cG_{(M,\g)}^{\d}$, starting and ending at a fixed Heegaard diagram $\cH$, such that the composition of the diffeomorphisms is a contractible loop in $\Diff(M,\d M)$.\label{itm:nat-5}
\end{enumerate}

\begin{prop}\label{prop:simply-connected}
 After attaching 2-cells to the graph $\cG_{(M,\g)}^{\d}$ corresponding to the loops (1)--(5), we obtain a simply connected space.
\end{prop}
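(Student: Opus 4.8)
The plan is to follow the strategy of \cite{JTNaturality}*{Section~2.5} essentially verbatim, with the only new input being a modification of the handling of diffeomorphism edges so as to track the framing at the basepoint. The key observation is that the graph $\cG_{(M,\g)}^{\d}$ differs from $\cG_{(M,\g)}$ only in that the diffeomorphism edges are constrained to lie in $\Diff(M,\d M)$ rather than in the group of sutured diffeomorphisms, and that the extra 2-cell of type~\eqref{itm:nat-5} replaces the continuity 2-cells of \cite{JTNaturality} (which asserted that a contractible loop of \emph{sutured} diffeomorphisms bounds). So what needs checking is: (i) the subgraph generated by the first four types of edges is already connected, which is exactly the Reidemeister--Singer theorem together with \cite{JTNaturality}, since those edges are literally a subset of the edges of $\cG_{(M,\g)}$ and the connectivity proof there never used diffeomorphism edges beyond the identity component; and (ii) the fundamental group, once the first four families of 2-cells are attached, is generated by loops of diffeomorphism edges, and these are killed by the 2-cells of type~\eqref{itm:nat-5}.

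First I would recall the structure of the argument in \cite{JTNaturality}. There one shows that $\cG_{(M,\g)}$ is connected (Reidemeister--Singer plus a handling of diffeomorphisms), and then that $\pi_1$ of the space obtained by attaching the distinguished rectangles and the simple handleswap 2-cells is trivial. The proof of simple-connectivity proceeds by taking an arbitrary loop, homotoping it (using commutation cells) into a normal form where all stabilization edges, then all alpha/beta equivalence edges, then all diffeomorphism edges are grouped, and then analyzing each block. For the block of alpha/beta equivalences and stabilizations the argument is purely combinatorial/topological and is unchanged in our setting, since those edges and the corresponding 2-cells of types~(1)--(4) are identical in $\cG^{\d}_{(M,\g)}$ and $\cG_{(M,\g)}$. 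The only genuinely different block is the one consisting entirely of diffeomorphism edges based at a fixed diagram $\cH$: such a loop is a word $\phi_k\circ\cdots\circ\phi_1$ of diffeomorphisms each isotopic to $\id$ rel $\d M$, hence represents an element of $\pi_1$ of the identity component $\Diff_0(M,\d M)$, and a 2-cell of type~\eqref{itm:nat-5} is, by definition, attached precisely when this composite is null-homotopic in $\Diff(M,\d M)$. The point is that every loop of diffeomorphism edges \emph{is} such a composite (the basepoint diagram is fixed so the word is a genuine element of $\Diff(M,\d M)$), so after collecting we may fill it. I would also note the one subtlety that the normal-form reduction must be done so that conjugating a non-diffeomorphism edge past a diffeomorphism edge (which is how one moves all diffeomorphism edges to one end) only introduces further edges of the allowed types; this is handled in \cite{JTNaturality} by the distinguished rectangles involving a diffeomorphism and an alpha/beta equivalence or stabilization, and those rectangles descend to $\cG^{\d}_{(M,\g)}$ without change because a diffeomorphism rel $\d M$ conjugates an elementary move to an elementary move.

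The steps, in order, are: (1) invoke \cite{JTNaturality} to see that the four combinatorial families of edges already make $\cG^{\d}_{(M,\g)}$ connected; (2) take an arbitrary loop and, using the commutation 2-cells of types~(1)--(3), push it into normal form with a single block of diffeomorphism edges and the rest built from alpha/beta equivalences, stabilizations, and handleswaps; (3) apply the simple-connectivity argument of \cite{JTNaturality}*{Section~2.5} verbatim to the non-diffeomorphism portion to contract it, leaving a loop of diffeomorphism edges based at one diagram; (4) observe this loop is the boundary of a 2-cell of type~\eqref{itm:nat-5} precisely when the composite of the diffeomorphisms is contractible in $\Diff(M,\d M)$ --- and after contraction the composite \emph{is} the identity as a mapping class represented by a genuine loop, so it is null-homotopic iff it is so in $\Diff(M,\d M)$, which is what the 2-cell requires. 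The main obstacle --- and the place deserving the most care --- is step~(3)/(4): one must verify that the reduction of the non-diffeomorphism part can always be carried out \emph{without} breaking up the diffeomorphism block, i.e., that every distinguished rectangle of \cite{JTNaturality} that one needs either avoids diffeomorphism edges or uses one of the mixed rectangles (types~(1),(3) above) that survives in $\cG^{\d}_{(M,\g)}$; this is exactly where the hypothesis ``isotopic to the identity rel boundary'' is used to guarantee that conjugation by such a diffeomorphism preserves the class of each elementary move, so that no new, disallowed edges are created. Once that bookkeeping is in place the statement follows.
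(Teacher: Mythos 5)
Your proposal takes a genuinely different route from the paper --- a combinatorial normal-form reduction rather than a Morse-theoretic one --- and it has a gap at exactly the point where this proposition differs from \cite{JTNaturality}. The paper's proof does not collect the diffeomorphism edges into a block; it runs the stratification argument of \cite{JTNaturality}*{Section~8} directly: a loop in $\cG^{\d}_{(M,\g)}$ is lifted to a loop in the space of pairs (sutured function, gradient-like vector field), a generic null-homotopy yields a polyhedral decomposition of $D^2$ whose 2-cells reduce after subdivision to types (1)--(5), and the only new input is the observation that the isotopy extension lemma \cite{JTNaturality}*{Lemma~6.19} and the null-homotopy of loops of diffeomorphisms \cite{JTNaturality}*{Lemma~6.20} are both obtained by integrating time-dependent vector fields supported in $\Int(M)$, so the resulting isotopies and null-homotopies are automatically the identity on $\d M$.

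The essential gap in your argument is step (4). Once you have contracted the non-diffeomorphism portion, you are left with a loop of diffeomorphism edges based at a single diagram, and you need it to bound a 2-cell of type \eqref{itm:nat-5}; but such a cell is attached only when the composite is a \emph{contractible loop in $\Diff(M,\d M)$}, and you give no reason why the residual loop satisfies this. The sentence ``it is null-homotopic iff it is so in $\Diff(M,\d M)$, which is what the 2-cell requires'' is circular: the whole point of weakening the continuity axiom is that a composite isotopic to the identity as a sutured diffeomorphism need \emph{not} be contractible rel $\d M$ (the twist $\tw$ is the standing example), so contractibility rel boundary must be \emph{proved} for the specific loops that arise --- which is exactly what the interior-support observation applied to \cite{JTNaturality}*{Lemma~6.20} accomplishes in the paper. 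A secondary issue is step (2): commuting diffeomorphism edges past alpha/beta equivalences and stabilizations uses mixed distinguished rectangles, which are not among the 2-cells (1)--(5) attached in the statement; you would either need to show those rectangles are already null-homotopic in the complex built from (1)--(5), or you are proving a different statement.
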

\begin{proof}
The result follows from essentially the same argument as \cite{JTNaturality}, with some extra attention paid to the behavior of diffeomorphisms on $\d M$. Note that $\cG^\d_{(M,\g)}$ is a subcomplex of $\cG_{(M,\g)}$. One may go through the proof described in \cite{JTNaturality}*{Section~8}. We consider a loop of Heegaard moves in $\cG^{\d}_{(M,\g)}$ of $k$-steps:
\[
\begin{tikzcd}
\cH_0
	\ar[r, "e_1"]
&\cH_1
	\ar[r, "e_1"]
&\cH_2
	\ar[r, "e_2"]
&\cdots
	\ar[r, "e_k"]
&\cH_k=\cH_0.
\end{tikzcd}
\]
 we now build a map $S^1\to \cF\cV_{\ge 1}(M,\g)$, denoting the space of pairs $(f,v)$ of sutured functions and gradient like vector fields which lie in codimension 0 or 1. (See \cite{JTNaturality} for this notation). This is constructed the same as therein. Namely, for each diagram $\cH_i$, we pick a compatible pair $(f,v)\in \cG\cV_{0}(M,\g)$ inducing this pair. For each move of Heegaard diagrams, we pick a compatible 1-parameter family of pairs $(f_t,v_t)\in \cG\cV_{\le 1}(M,\g)$. Accordingly, one constructs a polyhedral decomposition of $D^2$, such that each edge contains at most one codimension 1 singularity, and each 2-cell contains at most one codimension 2 singularity. From here, one argues that after subdividing it is sufficient to consider only 2-cells whose edges compose to give one of the loops (1)--(5).
 
 The difference between our situation an \cite{JTNaturality} is that we must argue that it is sufficient to consider only sutured isotopies which fix the boundary pointwise. One must do this for both edges and for 2-cells. Consider first a 1-cell 
 \[
 \sigma_1\colon D^1\to \cF \cV_0(M,\g).
 \]
Given such a path, one may construct a family of embedded Heegaard surfaces. To see that these are related by an isotopy, one uses a version of the isotopy extension lemma, as stated in \cite{JTNaturality}*{Lemma~6.19}. Note that this lemma actually constructs an isotopy which is the identity on $\d M$, because it is obtained by integrating a time dependent family of vector fields on $(M,\g)$ which is supported on the interior of $(M,\g)$.

Given a map $\sigma_2\colon D^2\to \cF\cV_0(M,\g)$, the above construction, applied to $S^1=\d D^2$, defines a loop of diffeomorphisms $\phi_\theta\colon S^1\to \Diff(M,\d M)$. Since the corresponding path in $\cF \cV_0(M,\g)$ is contractible, we may extend this loop $\phi_\theta$ over $D^2$.  We observe that \cite{JTNaturality}*{Lemma~6.20} constructs a null-homotopy of the corresponding loop of diffeomorphisms. We observe that this null-homotopy is again constructed by integrating along a time dependent family of vector fields which is supported in $\Int(M)$, and hence the corresponding family of diffeomorphisms are the identity on $\d M$. Hence the boundary framed loops in (1)--(5) are sufficient to contract an arbitrary loop.
\end{proof}

\section{Maps for elementary handleslide equivalences}

\label{sec:elementary-equivalences}
In this section, we define the transition maps for changes of the doubling data which fix the Heegaard surface $\Sigma$. These correspond to changes of the $\as$, $\bs$ and $\Ds$ curves by handleslides and isotopies. Furthermore, in this section, we consider only a restricted set of alpha and beta equivalences, as follows.
\begin{define}\,
\begin{enumerate}
\item Suppose that $(\Sigma,\ws)$ is a Heegaard surface with $n\ge 1$ basepoints and $\gs$ and $\gs'$ are attaching curves on $\Sigma$. We say that $\gs$ and $\gs'$ are \emph{handleslide equivalent} if they may be related by a sequence of handleslides and isotopies.
\item  Suppose $\Sigma$ is a Heegaard surface and $\gs$ and $\gs'$ are handleslide equivalent attaching curves on $\Sigma$. We say that $\gs$ and $\gs'$ are related by an \emph{elementary equivalence} if $(\Sigma,\gs',\gs,\ws)$ is a weakly admissible diagram, and $\bT_{\g'}\cap \bT_{\g}$ consists of exactly $2^{|\g|-1}$ generators. 
\end{enumerate}
\end{define}

Suppose $(\Sigma,\as,\bs)$ is a Heegaard diagram, and $\as'$ and $\bs'$ are handleslide equivalent to $\as$ and $\bs$, respectively. Suppose further that $\Ds$ and $\Ds'$ are choices of doubling curves. Let $\frD$ denote the data consisting of $(\Sigma,\as,\bs)$, the curves $\Ds$, and appropriate choices of almost complex structures to compute the involution. Let $\frD'$ denote the analogous data with $\as'$, $\bs'$ and $\Ds'$. We say that $\frD'$ is obtained from $\frD$ by an \emph{elementary equivalence} if $\as'$ and $\bs'$ are obtained from $\as$ and $\bs$ by elementary equivalences, and also if the tuple
\[
(\Sigma\# \bar \Sigma,\as'\bar \as',\as\bar \as,\bs\bar \bs,\bs'\bar \bs',\Ds,\Ds',w)
\]
is weakly admissible.

In the case that $\frD'$ is obtained from $\frD$ by an elementary equivalence, we define the transition map
\[
\Psi_{\frD\to \frD'}\colon \CFI(\frD)\to \CFI(\frD') 
\]
as the compression of the hyperbox of chain complexes shown in Figure~\ref{def:transition-map-elementary-handleslide}. Therein, the rows with the 1-handle maps are constructed similarly to \cite{HHSZExact}*{Section~14}, where they are called \emph{hypercubes of stabilization}. The construction here requires that the $\as'$ be obtained from $\as$ by an elementary equivalence, and similar for $\bs$ and $\bs'$, so that the degeneration argument from \cite{HHSZExact}*{Section~10} applies.  The rows with 3-handles are similar. The remaining subcubes are obtained by pairing hypercubes of attaching curves.

Note that in Figure~\ref{def:transition-map-elementary-handleslide} we have omitted a final level involving the tautological map $\eta\colon \CF(\bar \bs,\bar \as)\to \CF(\as,\bs)$. Throughout the paper, we will usually omit this row.

\begin{figure}[h]
\[
\Psi_{\frD\to \frD'}:=\quad 
\begin{tikzcd}[labels=description, row sep=1cm]
\CF(\as,\bs)
	\ar[r]
	\ar[d, "F_{1}^{\bar{\b},\bar{\b}}"]
& \CF(\as',\bs)
	\ar[r]
	\ar[d, "F_{1}^{\bar{\b}',\bar{\b}}"]
& \CF(\as',\bs')
	\ar[d,"F_{1}^{\bar{\b}',\bar{\b}'}"]
\\
\CF(\as \bar{\bs}, \bs \bar{\bs})
	\ar[r]
	\ar[d]
	\ar[dr,dashed]
& \CF(\as'\bar{\bs}', \bs \bar{\bs})
	\ar[r]
	\ar[d]
	\ar[dr,dashed]
& \CF(\as' \bar{\bs}', \bs' \bar{\bs}')
	\ar[d]
\\
\CF(\as\bar \bs,\Ds)
	\ar[r]
	\ar[d]
	\ar[dr,dashed]
& \CF(\as' \bar \bs',\Ds)
	\ar[r]
	\ar[d]
	\ar[dr,dashed]
& \CF(\as'\bar \bs',\Ds')
	\ar[d]
\\
\CF(\as \bar{\bs}, \as \bar{\as})
	\ar[r]
	\ar[d, "F_3^{\a,\a }"]
&\CF(\as' \bar{\bs}', \as  \bar{\as})
	\ar[r]
	\ar[d, "F_3^{\a',\a }"]
& \CF(\as' \bar{\bs}', \as' \bar{\as}')
	\ar[d, "F_3^{\a',\a'}"]
\\
\CF(\bar{\bs}, \bar{\as})
	\ar[r]
	\ar[d,"\id"]
	\ar[drr,dashed]
& \CF(\bar{\bs}',\bar{\as})
	\ar[r]
& \CF(\bar{\bs}',\bar{\as}')
	\ar[d,"\id"]
\\
\CF(\bar \bs,\bar \as)
	\ar[r]
&
\CF(\bar \bs, \bar \as')
	\ar[r]
&
\CF(\bar \bs', \bar \as')
\end{tikzcd}
\]
\caption{The transition map for an elementary equivalence. An additional row involving the canonical map $\eta$ is omitted.}
\label{def:transition-map-elementary-handleslide}
\end{figure}

If $\frD$ and $\frD'$ are handleslide equivalent (but perhaps not related by an elementary equivalence), we pick an arbitrary sequence 
\[
\frD=\frD_1,\dots, \frD_n=\frD'
\]
such that $\frD_{i+1}$ and $\frD_i$ are related by an elementary equivalence for all $i$. We define
\begin{equation}
\Psi_{\frD\to \frD'}:=\Psi_{\frD_{n-1}\to \frD_n}\circ \cdots \circ \Psi_{\frD_1\to \frD_2}.
\label{eq:definition-transition-map-compose-elementary}
\end{equation}

A key step in our proof of naturality will be proving the following theorem:

\begin{thm}
\label{thm:naturality-fixed-Sigma}
 Suppose that $\frD$, $\frD'$ and $\frD''$  are doubling enhanced Heegaard diagrams which all have the same underlying Heegaard surface $\Sigma$ and are handleslide equivalent.
 \begin{enumerate}
 \item\label{thm:naturality-fixed-Sigma-1} The map $\Psi_{\frD\to \frD'}$ defined in~\eqref{eq:definition-transition-map-compose-elementary} is independent from the intermediate sequence of doubling enhanced Heegaard diagrams, up to $\bF[U,Q]/Q^2$-equivariant chain homotopy.
 \item\label{thm:naturality-fixed-Sigma-2} $\Psi_{\frD\to \frD}\simeq \id_{\CFI(\frD)}.$
 \item\label{thm:naturality-fixed-Sigma-3} $\Psi_{\frD'\to \frD''}\circ \Psi_{\frD\to \frD'}\simeq \Psi_{\frD\to \frD''}$.
 \end{enumerate}
\end{thm}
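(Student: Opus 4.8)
\emph{Plan.} The plan is to reduce parts~\ref{thm:naturality-fixed-Sigma-2} and~\ref{thm:naturality-fixed-Sigma-3} to part~\ref{thm:naturality-fixed-Sigma-1} and then concentrate all effort on the latter. If $\frD = \frA_1,\dots, \frA_m = \frD'$ and $\frD' = \frB_1,\dots, \frB_k = \frD''$ are the sequences of elementary equivalences chosen to define $\Psi_{\frD\to \frD'}$ and $\Psi_{\frD'\to \frD''}$ via~\eqref{eq:definition-transition-map-compose-elementary}, then $\Psi_{\frD'\to \frD''}\circ \Psi_{\frD\to \frD'}$ is, by construction, precisely the transition map attached to the concatenated sequence $\frA_1,\dots, \frA_m = \frB_1,\dots, \frB_k$, which is again a sequence of elementary equivalences from $\frD$ to $\frD''$; part~\ref{thm:naturality-fixed-Sigma-1} then identifies it up to $\bF[U,Q]/Q^2$-equivariant homotopy with $\Psi_{\frD\to \frD''}$, giving~\ref{thm:naturality-fixed-Sigma-3}. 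Part~\ref{thm:naturality-fixed-Sigma-2} is the case $\frD=\frD'=\frD''$, together with the observation that the length-one sequence computes $\Psi_{\frD\to \frD}$ as the empty composition $\id_{\CFI(\frD)}$. So the content is part~\ref{thm:naturality-fixed-Sigma-1}.

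For part~\ref{thm:naturality-fixed-Sigma-1}, the first step is to recast $\Psi_{\frD\to\frD'}$ as a single compression. Writing $\cB_{\frD_i\to \frD_{i+1}}$ for the hyperbox of chain complexes in Figure~\ref{def:transition-map-elementary-handleslide}, whose compression is $\Psi_{\frD_i\to\frD_{i+1}}$, the standard fact that the compression of a stack of hyperboxes agrees up to homotopy with the composition of the individual compressions (Manolescu--Ozsv\'{a}th \cite{MOIntegerSurgery}*{Section~5}, cf.\ \cite{HHSZExact}) shows that the map defined by~\eqref{eq:definition-transition-map-compose-elementary} is $\bF[U,Q]/Q^2$-equivariantly homotopic to the compression of the single stacked hyperbox $\cB_{\frD_1\to \frD_2}\ast\cdots\ast \cB_{\frD_{n-1}\to \frD_n}$. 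It therefore suffices to show that any two stacked hyperboxes built from two sequences of elementary equivalences with the same endpoints $\frD,\frD'$ compress to $\bF[U,Q]/Q^2$-equivariantly homotopic hypercubes. The next step is to reduce this to a short list of elementary relations between sequences: since any handleslide equivalence of a fixed surface decomposes into elementary ones, and since the $\as$-curves, $\bs$-curves, and doubling curves $\Ds$ can be altered with independently supported moves, any two sequences of elementary equivalences from $\frD$ to $\frD'$ are related by finitely many moves of three types --- (i) \emph{commutation} of two adjacent elementary moves supported on different sides (one on the $\as$-data, one on the $\bs$-data); (ii) \emph{zig-zag cancellation}, inserting or deleting a consecutive pair $\frE\to\frE'\to\frE$; and (iii) \emph{refinement}, replacing a consecutive pair $\frE\to\frE'\to\frE''$ by the single elementary move $\frE\to\frE''$ (and conversely) when that composite is itself elementary. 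This is the fixed-surface portion of the simple-connectivity argument behind Proposition~\ref{prop:simply-connected}: among the distinguished loops of \cite{JTNaturality} only the commutations of $\as$- and $\bs$-equivalences fix $\Sigma$, while (ii)--(iii) encode the relations internal to a single color, handled in \cite{JTNaturality} by associativity of holomorphic polygon counts on a fixed configuration.

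Finally I would establish invariance under (i)--(iii). In each case the point is to produce one auxiliary hyperbox whose compression, via the associativity principle above, exhibits the required homotopy. For (iii): a hyperbox of $\as$-attaching curves of size $(2)$, namely $\as\to\as'\to\as''$, compresses as a hyperbox of attaching curves to the pair $\as\to\as''$; pairing it with the rows labelled $\bs\bar\bs$, $\Ds$ and $\as\bar\as$, sandwiching the result between the $1$-handle and $3$-handle rows exactly as in Figure~\ref{def:transition-map-elementary-handleslide}, and compressing, yields $\Psi_{\frE'\to\frE''}\circ\Psi_{\frE\to\frE'}\simeq \Psi_{\frE\to\frE''}$. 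For (ii) one instead uses the curve hyperbox $\as\to\as'\to\as$ together with a holomorphic-quadrilateral filling identifying its compression with the identity hyperbox of the constant curve, yielding $\Psi_{\frE'\to\frE}\circ\Psi_{\frE\to\frE'}\simeq\id$. For (i) one builds a genuinely two-dimensional hyperbox, the $\as$-change running in one direction and the $\bs$-change in the other, whose two boundary staircases are the two stacked hyperboxes being compared; this exists because disjointly supported $\as\to\as'$ and $\bs\to\bs'$ handleslide data assemble into a two-dimensional hyperbox of attaching curves.

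The main obstacle is the same in all three cases, and is what Section~\ref{sec:polygons} is meant to supply: one must promote the ``hypercubes of stabilization'' of \cite{HHSZExact}*{Section~14} to arbitrary-dimensional \emph{hyperboxes of stabilization}, compatible with the doubling curves and with the triangle maps $f_{\a\bar\b}^{\b\bar\b\to \Dt}$ and $f_{\a\bar\b}^{\Dt\to \a\bar\a}$, and re-run the neck-stretching and degeneration analysis of \cite{HHSZExact}*{Section~10} to verify that no broken holomorphic flows escape into the connect-sum tube, so that the relevant structure relations actually hold on the nose. A secondary nuisance is arranging that all the enlarged multi-diagrams appearing can be made simultaneously weakly admissible while retaining the minimal intersection-point counts that make the relevant equivalences elementary; this is routine but must be checked before the degeneration arguments apply.
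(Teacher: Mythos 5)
Your reductions of parts \eqref{thm:naturality-fixed-Sigma-2} and \eqref{thm:naturality-fixed-Sigma-3} to part \eqref{thm:naturality-fixed-Sigma-1}, and the recasting of the composition \eqref{eq:definition-transition-map-compose-elementary} as the compression of a single stacked hyperbox, are fine. The gap is in the next step: you assert that any two sequences of elementary equivalences between $\frD$ and $\frD'$ are related by commutations, zig-zag cancellations, and ``refinements,'' and you attribute this to the fixed-surface part of Proposition~\ref{prop:simply-connected}. That proposition (and \cite{JTNaturality}) treats an \emph{arbitrary} $\alpha$- or $\beta$-handleslide equivalence as a single edge; it says nothing about relations among \emph{elementary} equivalences within a fixed handleslide-equivalence class, which is the presentation your argument needs. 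Moreover the refinement move is only available when the composite $\frE\to\frE''$ of two elementary equivalences is itself elementary, and generically it is not ($\bT_{\g''}\cap\bT_{\g}$ need not have $2^{|\g|-1}$ points, and $\widehat{\CF}(\gs'',\gs)$ need not have vanishing differential), so it is far from clear that these three moves suffice to contract a loop in the elementary-equivalence graph. Your decomposition into ``$\as$-side'' and ``$\bs$-side'' moves also ignores the doubling curves $\Ds$, which are part of the data of $\frD$ and are changed along each edge of Figure~\ref{def:transition-map-elementary-handleslide}; relations among choices of $\Ds$ must be incorporated into any such presentation.

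This missing combinatorial input is exactly what the paper avoids by a different mechanism. As flagged in the remark after the theorem, the obstruction to defining $\Psi_{\frD\to\frD'}$ directly for a non-elementary equivalence is the $1$-handle hypercube in Figure~\ref{def:transition-map-elementary-handleslide}, which requires the algebraic rigidity supplied by the elementary hypothesis. The paper's route (Sections~\ref{sec:expanded-def}--\ref{sec:relate-expanded}, Proposition~\ref{prop:expansion}) is to introduce the basepoint-expanded model $\CFI(\tilde\frD)$, where the extra basepoint $w'$, the curves $c,c'$, and the relative homology map $A_\lambda$ make the analogous $1$-handle and $3$-handle hypercubes constructible for an \emph{arbitrary} handleslide equivalence (the map $F_1^{c\bar\b',c\bar\b}$ is a chain map for any choice of top-degree cycle $\Theta_{\b',\b}$). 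One then proves the composition law $\tilde\Psi_{\tilde\frD'\to\tilde\frD''}\circ\tilde\Psi_{\tilde\frD\to\tilde\frD'}\simeq\tilde\Psi_{\tilde\frD\to\tilde\frD''}$ for arbitrary equivalences by a single hyperbox construction, and transfers back via the equivalences $F_{\frD\to\tilde\frD}$, which are shown to intertwine the elementary transition maps. Any successful version of your argument would either have to prove the presentation of the elementary-equivalence groupoid you are assuming, or reintroduce some device (such as the expansion) that lets one compare a composite of elementary moves directly with a single non-elementary one.
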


Part~\eqref{thm:naturality-fixed-Sigma-2} of Theorem~\ref{thm:naturality-fixed-Sigma} is proven in the subsequent Section~\ref{sec:continuity}. Parts~\eqref{thm:naturality-fixed-Sigma-1} and \eqref{thm:naturality-fixed-Sigma-3} are more technical and are proven in Sections \ref{sec:expanded-def} and ~\ref{sec:relate-expanded}; in particular, they follow from Proposition~\ref{prop:expansion}.

\begin{rem}
 We are only able to define the map $\Psi_{\frD\to \frD'}$ using Figure~\ref{def:transition-map-elementary-handleslide} when $\frD$ and $\frD'$ are related by an elementary equivalence because we are not able to construct the 1-handle hypercubes if $\frD$ and $\frD'$ are related by an arbitrary handleslide equivalence. To show the map $\Psi_{\frD\to \frD'}$ is independent of the choice of intermediate diagrams, we will define a more complicated transition map $\tilde{\Psi}_{\frD\to \frD'}$ in Section~\ref{sec:maps-expanded-model}.
\end{rem}

\subsection{Continuity}
\label{sec:continuity}

In this section, we prove the continuity axiom from \cite{JTNaturality}*{Definition~2.32} for our maps associated to a handleslide equivalence. In our present context, the continuity axiom amounts to the following:

\begin{prop}
\label{prop:continuity}
 Let $\frD$ denote an admissible doubling enhanced Heegaard diagram for $(Y,w,\xi)$. Let $g\colon (\Sigma,w)\to (\Sigma,w)$ be a pointed diffeomorphism which is smoothly isotopic to the identity through diffeomorphisms which are the identity on a neighborhood of $w$. Let $g(\frD)$ be the push-forward  of the diagram $\frD$ under $g$, and let 
 \[
\CFI(g)\colon \CFI(\frD)\to \CFI(g(\frD)) 
 \]
 denote the tautological map. Then
 \[
\Psi_{\frD\to g(\frD)}\simeq \CFI(g). 
 \]
\end{prop}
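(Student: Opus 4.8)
The plan is to reduce the statement to a fact about diffeomorphisms acting on the doubled picture, and then invoke the known naturality of the non-involutive theory together with the explicit formula for $\iota$ from Corollary~\ref{cor:doubling-model}. First I would observe that since $g$ is isotopic to the identity rel a neighborhood of $w$, it fixes the framing $\xi$, so $g(\frD)$ is again a legitimate doubling enhanced Heegaard diagram for the \emph{same} $(Y,w,\xi)$; in particular the doubled surface $D(\Sigma)=\Sigma\#\bar\Sigma$ is carried to $D(g(\Sigma))$ by the induced diffeomorphism $D(g)$, which agrees with the connected sum $g\#\bar g$ and is the identity near the connected-sum tube (hence near $w$ and $\bar w$). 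The tautological maps are functorial under such diffeomorphisms, and the 1-handle, 3-handle, and holomorphic-triangle maps entering the definition of $\Psi_{\frD\to g(\frD)}$ in Figure~\ref{def:transition-map-elementary-handleslide} all commute with pushing forward by $D(g)$ on the nose (the almost complex structures in $g(\frD)$ are by definition the pushforwards of those in $\frD$).

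The key point is then to compare two ways of building the map $\CFI(\frD)\to \CFI(g(\frD))$. On one hand there is $\CFI(g)$, which is the cone of $D(g)_*$ acting on the doubling model, i.e. it intertwines $\iota_{\frD}$ and $\iota_{g(\frD)}$ strictly because every constituent of the formula in Corollary~\ref{cor:doubling-model} is natural under the diffeomorphism. On the other hand, $\Psi_{\frD\to g(\frD)}$ is the compression of the hyperbox of Figure~\ref{def:transition-map-elementary-handleslide}; but if $g(\frD)$ is related to $\frD$ by an \emph{elementary} equivalence we can feed the diffeomorphism-pushforward into each row as the chosen equivalence, and the resulting hyperbox has, along each horizontal level, exactly the tautological diffeomorphism map $D(g)_*$ (or $g_*$), which by \cite{JTNaturality} continuity is chain homotopic to the transition map obtained from any sequence of alpha/beta equivalences realizing the same isotopy. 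The vertical compression then produces precisely $\CFI(g)$ up to the diagonal homotopies, which amounts to the assertion that the hyperbox built from $g$ is equivalent to the trivial ``identity'' hyperbox post-composed with $D(g)_*$.

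Concretely, the steps I would carry out are: (1) record that $g$ fixes $\xi$ and that $D(g)$ is a pointed diffeomorphism of $(D(\Sigma),w)$ isotopic to the identity rel $N(w)$, so that it qualifies for the non-involutive continuity statement of \cite{JTNaturality}*{Proposition~9.27} on each of the four Floer complexes and two triangle maps appearing in $\frD$; (2) check that the 1-handle and 3-handle hypercubes of \cite{HHSZExact}*{Section~14} and the triangle-map hypercubes are natural under $D(g)$, so the whole hyperbox of Figure~\ref{def:transition-map-elementary-handleslide} for the move ``apply $g$'' is isomorphic to the hyperbox for ``apply $\id$'' post-composed with the pushforward-by-$g$ isomorphism of hyperboxes; (3) compress and identify the result with $\CFI(g)=\Cone(D(g)_*)$, using that compression commutes with the isomorphism of hyperboxes; (4) handle the case where $g(\frD)$ and $\frD$ are not related by a single elementary equivalence by factoring the isotopy from $\id$ to $g$ through finitely many steps each realizable by an elementary equivalence (possible after subdividing the isotopy and interpolating, as in \cite{JTNaturality}), and using multiplicativity \eqref{eq:definition-transition-map-compose-elementary} together with functoriality of the tautological maps.

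The main obstacle I expect is step (4) combined with the bookkeeping in step (2): namely, verifying that the pushforward of a doubling enhanced diagram under $g$ can actually be connected to the original by a sequence of \emph{elementary} equivalences whose compositions realize $D(g)_*$ on the Floer level, and that along the way the admissibility hypotheses needed to form the hyperboxes in Figure~\ref{def:transition-map-elementary-handleslide} can be arranged. This is where one genuinely uses the structure of the proof of \cite{JTNaturality} rather than a formal argument, and it is also where the ``isotopic to the identity'' (as opposed to merely ``pointed diffeomorphism'') hypothesis is essential: it guarantees that the interpolating family of embedded diagrams stays within the class of handleslide-equivalent diagrams, so that the non-involutive continuity proposition applies level-by-level.
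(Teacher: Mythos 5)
There is a genuine gap. The crux of the proposition is to identify a map built out of holomorphic triangle and quadrilateral counts (the rows and diagonals of Figure~\ref{def:transition-map-elementary-handleslide} for the pair $\frD$, $g(\frD)$) with the tautological map $\CFI(g)$, and your plan never supplies the analytic input that makes this identification possible. Your step (2) asserts that "the resulting hyperbox has, along each horizontal level, exactly the tautological diffeomorphism map $D(g)_*$" -- but the horizontal maps in that hyperbox are triangle maps $f_{\a\to g(\a)}^{\b}$, not pushforwards; equivariance of the construction under $D(g)$ only tells you that $\Psi_{g(\frD)\to g(\frD')}$ is conjugate to $\Psi_{\frD\to\frD'}$ by $\CFI(g)$, which says nothing about $\Psi_{\frD\to g(\frD)}$ itself. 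Appealing to the continuity statement of \cite{JTNaturality} row-by-row gives at best that each length-1 map is \emph{chain homotopic} to the tautological map, and that is not enough: the compression of the hyperbox also involves the length-2 quadrilateral counts, and without controlling those (and the compatibility of the row homotopies with them) you cannot conclude that the compressed $\bF[U,Q]/Q^2$-map is homotopic to $\Cone(D(g)_*)$, which has zero diagonal.

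The paper's proof closes exactly this gap using the small translate theorems \cite{HHSZExact}*{Propositions~11.1 and 11.5}: for a $C^\infty$-small diffeomorphism $g_0$ with $|\a_i'\cap\a_j|=2\delta_{ij}$, one can choose interpolating almost complex structures so that every triangle map in the hyperbox counts only small triangles and is therefore \emph{equal} to the nearest-point map, while every quadrilateral map counts nothing and vanishes; hence $\Psi_{\frD\to g_0(\frD)}$ equals the tautological map on the nose, with no diagonal to worry about. The general case is then handled by writing $g$ as a composition of $C^\infty$-small diffeomorphisms and using functoriality of both sides under composition -- which also resolves your worry in step (4), since small translates automatically yield elementary equivalences and admissible multi-diagrams. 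If you want to repair your argument, replace steps (2)--(3) with this nearest-point computation for small $g_0$ and replace step (4) with the factorization of $g$ into $C^\infty$-small pieces.
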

\begin{proof}
We adapt the argument of \cite{JTNaturality}*{Proposition~9.27} to the involutive case by also using the small translate theorems for triangles and quadrilaterals from our previous paper \cite{HHSZExact}. Write $(\Sigma,\as,\bs,w)$ for the underlying Heegaard diagram of $\frD$, and let $\Ds$ be the doubling curves. As a first step, pick a $C^\infty$ small diffeomorphism $g_0$ of $(\Sigma,w)$ and write $\as'=g_0(\as)$, and define $\bs'$ and $\Ds'$ similarly. Assume that $|\a'_i\cap \a_j|=2 \delta_{ij}$, where $\delta_{ij}$ denotes the Kronecker delta, and assume the analogous statement holds for $\bs'$, $\bs$, $\Ds'$ and $\Ds$. It follows from the small translate theorems \cite[Propositions 11.1 and 11.5]{HHSZExact} that if $g_0$ is chosen suitably small, then we may pick an almost complex structure on $\Sigma\times \Delta$ which interpolates between a fixed $J$ on the $(\Sigma,\as,\bs)$ cylindrical end and $(g_0)_*(J)$ on the $(\Sigma,\as',\bs)$ cylindrical end such that the map $f_{\a',\a,\b}(\Theta_{\a',\a},\xs)$ counts only small triangles, so that $f_{\a',\a,\b}(\Theta_{\a',\a},\xs)=\xs_{np}$ for all $\xs\in \bT_{\a}\cap \bT_{\b}$. Here $\xs_{np}$ denotes the nearest point of $\bT_{\a'}\cap \bT_{\b}$ to $\xs$.  In a similar manner, the small translate theorem for quadrilaterals implies that all of the length 2 maps counted by $\Psi_{\frD\to g_0(\frD)}$ vanish (as no holomorphic curves are counted). In particular, $\Psi_{\frD\to g_0(\frD)}$ coincides with the nearest point map.

Similarly, if $g_0'$ is $C^\infty$ close to $g_0$, then the same argument as the small translate theorems also imply that $\Psi_{\frD\to g_0'(\frD)}$ and $\Psi_{g_0'(\frD)\to \frD}$ coincide with the nearest point maps.

More generally, if $g$ is $C^\infty$ small relative to $g_0$, then we can decompose
\[
\Psi_{\frD\to g(\frD)}\simeq \Psi_{g_0(\frD)\to g(\frD)}\circ \Psi_{\frD\to g_0(\frD)}
\]
and apply the nearest point results to the latter two maps to identify $\Psi_{\frD\to g(\frD)}$ with $\CFI(g)$. This establishes the claim when $g$ is $C^\infty$-small. Since both sides of the desired equality are functorial under composition, and any diffeomorphism $g\colon (\Sigma,w)\to (\Sigma,w)$ which is smoothly isotopic to the identity may be decomposed into a composition of $C^\infty$-small diffeomorphisms, this proves the claim in general.
\end{proof}

\section{Holomorphic polygons and stabilizations} \label{sec:polygons}

In this section, we describe some technical results concerning stabilizations of Heegaard diagrams and holomorphic polygons. The results we cover are a generalization of those considered in \cite{HHSZExact}*{Section~9}.

\begin{define} Suppose that $\cD_0=(\Sigma_0,\gs_1,\dots, \gs_n,\ws)$ is a weakly admissible Heegaard tuple and $\frS$ is a set of $\Spin^c$ structures on $X_{\g_1,\dots, \g_n}$, which is closed under the action of $\delta H^1(Y_{\g_i,\g_j})$ for $i<j$.
\begin{enumerate}
\item We say that $(\cD_0,\frS)$ is a \emph{multi-stabilizing diagram} if the following are satisfied. All elements of $\frS$ restrict to a single element of $\Spin^c(\d X_{\g_1,\dots, \g_n})$ which is furthermore torsion. Additionally, we require that if $\frs_1,\frs_2\in \frS$, then $d(\frs_1)=d(\frs_2)$, where $d(\frs)=(c_1^2(\frs)-2\chi-3\sigma)/4$.
\item We say that $(\cD_0,\frS)$ is \emph{algebraically rigid} if $\d=0$ on $\widehat{\CF}(\Sigma_0, \gs_i, \gs_{i+1},\frs_{i,i+1},\ws)$, where $\frs_{i,i+1}$ is the restriction to $Y_{\g_i,\g_{i+1}}$ of the elements of $\frS$. 
\end{enumerate}
\end{define}

\begin{example}
 Let $\cD_0$ be a Heegaard triple where all $\gs_i$ are pairwise related by small Hamiltonian translates. Let $\frS=\{\frs_0\}$, where $\frs_0$ is the unique $\Spin^c$ structure which has torsion restriction to the boundary. Then $(\cD_0,\frS)$ is a multi-stabilizing diagram. For suitable choices of translates, $\cD_0$ is algebraically rigid.
\end{example}

\begin{rem} When the set $\frS$ is understood from context (such as the case when there is a unique $\Spin^c$ structure on $X_{\g_1,\dots, \g_n}$ which has torsion restriction to $\d X_{\g_1,\dots, \g_n}$), we will usually refer to the diagram $\cD_0$ as being a multi-stabilizing diagram.
\end{rem}

In this section, we state a helpful result about stabilizations and holomorphic curves. Before we state our result, we introduce some notation.

As a first step, we recall some notation and basic facts about Stasheff's associahedron, $K_n$. We view $K_n$ as a convex polytope which models the compactification of $n+1$ marked points on the boundary of a disk. It is well known that $K_n$ admits an embedding into a Euclidean space and has the homology of a point. There is a well known cell structure on $K_n$, giving a convenient model for its homology, which we denote $C_*^{\cell}(K_n)$. The cells of this decomposition are in bijective correspondence with planar trees with $n$ inputs, one output, and no internal vertices of valence less than 3. If $T$ is such a tree, then the degree of the corresponding cell is 
\[
n-1-\#(\text{internal vertices}).
\]
The differential of a tree $T$ is the sum of all ways of breaking an internal vertex into two vertices, both with valence at least 3. The degree 0 cells correspond to trees with only valence 3 internal vertices. There is a codimension $0$ cell of degree $n-2$, which has just one internal vertex.

Suppose that $\cD=(\Sigma,\gs_1,\dots, \gs_n,w)$ is a weakly admissible Heegaard tuple and $J=(J_x)_{x\in K_{n-1}}$ is a stratified family of almost complex structures on $\Sigma\times D_n$ for computing holomorphic $n$-gons. Suppose also that $\frS\subset \Spin^c(X_{\g_1,\dots, \g_n})$ is a set which is closed under the actions of $\delta^1 H^1(Y_{\g_i,\g_j})$ for all $i<j$, and assume for simplicity that all elements of $\frS$ restrict to a single element of $\Spin^c(\d X_{\g_1,\dots, \g_n})$. Given a tree $T\in C_*^{\cell}(K_{n-1})$, there is a map
\[
\widehat{f}_{\cD, \frS; J,T}\colon \widehat{\CF}(\gs_1,\gs_2, \frs_{1,2})\otimes \cdots \otimes \widehat{\CF}(\gs_{n-1},\gs_n,\frs_{n-1,n})\to \widehat{\CF}(\gs_1,\gs_n, \frs_{1,n}),
\]
obtained by composing the polygon maps corresponding to each internal vertex of $T$.
 If $T$ has only valence 3 internal vertices, then there is a unique point of $K_{n-1}$ corresponding to $T$, which is in the strata of maximal codimension. We write $J_T$ for the corresponding almost complex structure, and otherwise suppress the tree from the notation in this case. The map $\widehat{f}_{\cD, \frS; J,T}$ is obtained by successively composing holomorphic triangle maps according to the tree $T$.

More generally, we can view the holomorphic polygon maps as fitting together compatibly to give a chain map
\[
\widehat{f}_{\cD,\frS; J}\colon C_*^{\cell}(K_{n-1})\otimes \widehat{\CF}(\gs_1,\gs_2,\frs_{1,2})\otimes \cdots \otimes \widehat{\CF}(\gs_{n-1}, \gs_n,\frs_{n-1,n})\to \widehat{\CF}(\gs_{1},\gs_n, \frs_{1,n}).
\]

The same statement holds for $\CF^-$ as long as we are more careful about finiteness of curve counts, either by restricting $\Spin^c$ structures or working over $\bF[[U]]$. We call this version of the map $f_{\cD,\frS;J}$ and its specialization to a particular tree $f_{\cD, \frS; J, T}$ analogously.

\begin{lem}
 Suppose that $\cD_0=(\Sigma_0,\gs_1,\dots, \gs_n,\ws)$ is a diagram and $\frS$ is a set of $\Spin^c$ structures such that $(\cD_0,\frS)$ is algebraically rigid and multi-stabilizing. Let $T_1$ and $T_2$ be two degree 0 trees. Then,
 \[
\widehat{f}_{\cD_0,\frS;J_{T_1}}(\xs_1,\dots, \xs_{n-1})=\widehat{f}_{\cD_0,\frS;J_{T_2}}(\xs_1,\dots, \xs_{n-1}),
 \]
 for any $\xs_i\in \bT_{\g_i}\cap \bT_{\g_{i+1}}.$
\end{lem}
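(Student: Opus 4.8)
The plan is to run a homological-algebra argument built on the contractibility of the associahedron $K_{n-1}$, with the geometric content of the multi-stabilizing and algebraic rigidity hypotheses entering only at the very end. Recall that the polygon counts for $(\cD_0,\frS,J)$ assemble into a chain map
\[
\widehat f_{\cD_0,\frS,J}\colon C_*^{\cell}(K_{n-1})\otimes \widehat{\CF}(\gs_1,\gs_2,\frs_{1,2})\otimes \cdots \otimes \widehat{\CF}(\gs_{n-1},\gs_n,\frs_{n-1,n})\to \widehat{\CF}(\gs_1,\gs_n,\frs_{1,n}),
\]
in which the two degree-$0$ trees $T_1$ and $T_2$ are vertices of $K_{n-1}$ and $\widehat f_{\cD_0,\frS;J_{T_i}}$ is exactly the restriction of this chain map to $T_i\otimes(-)$. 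First I would spell out why this assembled map is a chain map: the source carries the differential $\partial_{C_*^{\cell}}\otimes \id+\id\otimes \partial_{\otimes}$, and the compatibility relation expressing $\partial\circ \widehat f_{\cD_0,\frS,J}$ in terms of $\widehat f_{\cD_0,\frS,J}\circ(\partial_{C_*^{\cell}}\otimes\id+\id\otimes\partial_\otimes)$ together with boundary degenerations is precisely the statement that the ends of moduli spaces of holomorphic $n$-gons are accounted for by the cellular boundary of $K_{n-1}$, which is standard.

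The first substantive step is to invoke algebraic rigidity: it gives $\partial=0$ on each factor $\widehat{\CF}(\gs_i,\gs_{i+1},\frs_{i,i+1})$, so the internal differential $\partial_{\otimes}$ of the tensor product vanishes, and in particular $\partial \xs_i=0$ for every input. Next I would use that $K_{n-1}$ is a contractible polytope, so that over $\bF$ any two of its vertices are homologous; hence there is a cellular $1$-chain $\gamma\in C_1^{\cell}(K_{n-1})$ with $\partial\gamma=T_1+T_2$. Feeding $\gamma\otimes\xs_1\otimes\cdots\otimes\xs_{n-1}$ into the chain-map relation and using $\partial_{\otimes}=0$ yields
\[
\widehat f_{\cD_0,\frS;J_{T_1}}(\xs_1,\dots,\xs_{n-1})+\widehat f_{\cD_0,\frS;J_{T_2}}(\xs_1,\dots,\xs_{n-1})=\partial_{\widehat{\CF}(\gs_1,\gs_n,\frs_{1,n})}\bigl(\widehat f_{\cD_0,\frS,J}(\gamma\otimes\xs_1\otimes\cdots\otimes\xs_{n-1})\bigr),
\]
so the two maps differ by an explicit boundary, the homotopy being assembled from the rectangle (and, if $\gamma$ is chosen to pass through higher-dimensional faces, higher polygon) maps attached to the cells of $\gamma$. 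Equivalently, choosing $\gamma$ along the $1$-skeleton, it suffices to show that crossing a single edge $E$ of $K_{n-1}$, which contributes a composition $\widehat f_{\cD_0,\frS;J_E}$ involving exactly one rectangle map, does not change the answer.

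The main obstacle — and the only place the hypotheses beyond ``the polygon maps form an $A_\infty$-type structure'' are used — is to upgrade this to an \emph{equality}, i.e.\ to show the displayed boundary term vanishes. Here I would use the multi-stabilizing hypothesis exactly as in the small-translate results of \cite{HHSZExact}*{Section~9} that this lemma generalizes: constancy of $d(\frs)$ on $\frS$ forces the Maslov grading of $\widehat f_{\cD_0,\frS;J_T}(\xs)$ to depend only on the combinatorial type of $T$ (so $\widehat f_{T_1}(\xs)$ and $\widehat f_{T_2}(\xs)$ lie in a single grading), and the rigidity of the stabilizing region pins every polygon count occurring in these compositions to its model value, so that each edge contribution $\widehat f_{\cD_0,\frS;J_E}(\xs)$ is a cycle whose image under $\partial_{\widehat{\CF}(\gs_1,\gs_n,\frs_{1,n})}$ vanishes (equivalently, once all curve collections are in stabilizing position the relevant gradings of the target carry no differential). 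Consequently the boundary term is zero and $\widehat f_{\cD_0,\frS;J_{T_1}}=\widehat f_{\cD_0,\frS;J_{T_2}}$. I expect the grading-shift bookkeeping and the verification that the stabilizing region really does rigidify every intermediate polygon count appearing in a composition — not just the ``outermost'' one — to be the technically delicate part, just as in \cite{HHSZExact}.
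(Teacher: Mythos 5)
Your proposal follows the paper's argument: $T_1$ and $T_2$ are homologous in $C_*^{\cell}(K_{n-1})$ (which has the homology of a point), so the two polygon maps are chain homotopic, and algebraic rigidity makes the domain differential vanish. The only divergence is your final step: the paper simply observes that the differential on the codomain $\widehat{\CF}(\gs_1,\gs_n,\frs_{1,n})$ also vanishes, so chain homotopic maps between complexes with zero differentials are literally equal --- the grading bookkeeping and model polygon counts you flag as the ``technically delicate part'' are not needed, as you yourself note in your parenthetical ``equivalently'' remark.
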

 \begin{proof}
The trees $T_1$ and $T_2$ are homologous as elements of $C_*^{\cell}(K_{n-1})$, since $H_*^{\cell}(K_{n-1})\iso \Z$. Hence, the maps
\[
\widehat{f}_{\cD_0,\frS;J_{T_1}}, \widehat{f}_{\cD_0,\frS;J_{T_2}}\colon \widehat{\CF}(\gs_1,\gs_2, \frs_{1,2})\otimes \cdots \otimes \widehat{\CF}(\gs_{n-1},\gs_n, \frs_{n-1,n})\to \widehat{\CF}(\gs_1,\gs_n,\frs_{1,n})
\]
are chain homotopic.
 The differentials on domain and codomain of the two maps vanish, so any two chain homotopic maps between them are equal.
\end{proof}

\begin{prop}\label{prop:multi-stabilization-counts}
Let $\cD=(\Sigma,\ds_1,\dots, \ds_{n},\ws)$ and $\cD_0=(\Sigma_0,\gs_1,\dots,  \gs_{n},\ws_0)$ be Heegaard $n$-diagrams, where $n>2$. Assume $\frs\in \Spin^c(X_{\dt_1,\dots, \dt_{n}})$ and $\frS\subset \Spin^c(X_{\g_1,\dots, \g_{n}})$ and assume that $(\cD_0,\frS)$ is an algebraically rigid multi-stabilizing Heegaard diagram. Form the connected sum $\cD\# \cD_0$ by adding a tube from $\cD_0$ at some $w_0\in \ws_0$ to a point on $\cD$. Let $T$ be a degree 0 tree representing the generator of $C_*^{\cell}(K_{n-1})$. Suppose $J=(J_x)_{x\in K_{n-1}}$ and $J^0=(J^0_x)_{x\in K_{n-1}}$ are stratified families of almost complex structures on $\Sigma\times D_n$ and $\Sigma_0\times D_n$, respectively, for counting holomorphic $n$-gons.  Let $\theta_1,\dots, \theta_{n-1}$ be homogeneously graded elements (necessarily cycles) of $\widehat{\CF}(\Sigma_0,\gs_1,\gs_2,\frs_{1,2}),\dots, \widehat{\CF}(\Sigma_0,\gs_{n-1}, \gs_{n},\frs_{n-1,n})$, respectively, and suppose that
\[
\ys=\widehat{f}_{\cD_0,\frS;J^0_T}(\theta_1,\dots, \theta_{n-1})
\]
is non-zero. Then for any tree $T'$ 
\[
f_{\cD\# \cD_0, \frs\# \frS; (J \wedge J^0), T'}(\xs_1\times \theta_1,\dots, \xs_{n-1}\times \theta_{n-1})=f_{\cD, \frs; J, T'}(\xs_1,\dots, \xs_{n-1})\otimes \ys+\sum_{\substack{
\zs\in \bT_{\g_1}\cap \bT_{\g_n}\\
\gr(\zs)> \gr(\ys)}} \qs_{\zs}\otimes \zs,
\]
where $\qs_{\zs}$ are elements of $\ve{\CF}^-(\Sigma,\ds_1,\ds_n)$.
\end{prop}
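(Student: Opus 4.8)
The plan is to run a neck-stretching argument at the connected-sum region, generalizing the degeneration analysis of \cite{HHSZExact}*{Section~9} from triangles and rectangles to arbitrary $n$-gons. I would realize $J\wedge J^0$ as the limit of a family of connected-sum complex structures on $\Sigma\#\Sigma_0$ with neck length $T_{\mathrm{neck}}\to\infty$, keeping $J$ fixed on the $\Sigma$ cylindrical end and $J^0$ fixed on the $\Sigma_0$ end over each point of the associahedron $K_{n-1}$. By Gromov compactness, a $(J\wedge J^0)$-holomorphic $n$-gon of the index counted by $f_{\cD\#\cD_0,\frs\#\frS}$ converges, as $T_{\mathrm{neck}}\to\infty$, to a matched configuration: a $J$-holomorphic $n$-gon $u_1$ on $(\Sigma,\ds_1,\dots,\ds_n)$ and a $J^0$-holomorphic $n$-gon $u_0$ on $(\Sigma_0,\gs_1,\dots,\gs_n)$ lying over a common point of $K_{n-1}$, together with a possibly nonempty collection of branched covers of the connecting tube, subject to the matching of local multiplicities $n_p(u_1)=n_{w_0}(u_0)=:k$ at the connected-sum point.

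The heart of the argument is the index bookkeeping. Writing $\psi_1,\psi_0$ for the homotopy classes of $u_1,u_0$, the connected-sum formula for the Maslov index of polygon classes at an interior point gives $\mu(\psi_1\#\psi_0)=\mu(\psi_1)+\mu(\psi_0)-2k$, the polygon analogue of the disk computation in \cite{HHSZExact}*{Section~9}. Since the glued class is rigid, $\mu(\psi_1)+\mu(\psi_0)=2k$, and as both summands are nonnegative, $0\le\mu(\psi_1)\le 2k$. Combining with the grading formula for polygon classes on $\cD_0$ and the fact that $\ys$ is realized by the class with $\mu=0$ and $n_{w_0}=0$, the output generator $\zs\in\bT_{\gs_1}\cap\bT_{\gs_n}$ of $u_0$ satisfies $\gr(\zs)=\gr(\ys)+\mu(\psi_1)$. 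Thus when $\psi_1$ is rigid one has $\gr(\zs)=\gr(\ys)$, and otherwise $\gr(\zs)>\gr(\ys)$. In the first case I would further argue, using that $(\cD_0,\frS)$ is algebraically rigid and multi-stabilizing (so all diagrams restrict to a single torsion $\Spin^c$ structure on the boundary with a common value of $d$), that $\ys=\widehat{f}_{\cD_0,\frS;J^0_T}(\theta_1,\dots,\theta_{n-1})$ is, with multiplicity one, the unique output in grading $\gr(\ys)$; here one invokes the preceding lemma to replace the count on $\cD_0$ by the count at the vertex $T$, legitimate precisely because algebraic rigidity kills all non-degenerate contributions. The same index estimate applied to the branched tube components shows they never lower the output grading below $\gr(\ys)$.

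It then remains to glue in the reverse direction: at an interior tube (allowing branched covers) the gluing is unobstructed, and since the associahedron parameter is shared the gluing map is a bijection between broken matched configurations and large-neck $(J\wedge J^0)$-polygons. Assembling the pieces, the matched configurations with $\psi_1$ rigid contribute, after the multiplicity-one count on the $\cD_0$-side, exactly $f_{\cD,\frs;J}(\xs_1,\dots,\xs_{n-1})\otimes\ys$; note that \emph{every} rigid class $\psi_1$ appears, including those with $n_p(\psi_1)>0$, each matched with a $\cD_0$-component of index $2n_p(\psi_1)$ and output $\ys$, exactly as in the stabilization case of \cite{HHSZExact}*{Section~9}. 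The matched configurations with $\mu(\psi_1)>0$ contribute precisely the correction terms $\sum_{\zs:\gr(\zs)>\gr(\ys)}\qs_\zs\otimes\zs$, with $\qs_\zs\in\ve{\CF}^-(\Sigma,\ds_1,\ds_n)$, which yields the stated formula.

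The step I expect to be the main obstacle is keeping the two degenerations — of the conformal structure of $D_n$ toward the lower strata of $K_{n-1}$ and of the connected-sum neck — under simultaneous control, so that every limiting configuration is genuinely a matched pair over a common associahedron point with no uncounted curves appearing, and establishing rigorously both the connected-sum index identity with the $-2k$ correction and the multiplicity-one statement for $\ys$ in grading $\gr(\ys)$. The remaining analytic input — transversality for the matched moduli spaces and the interior-tube gluing theorem — should be a routine, if lengthy, adaptation of \cite{HHSZExact}*{Section~9}.
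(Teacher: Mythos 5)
Your overall strategy is the same as the paper's: degenerate the connected-sum region (the paper works directly with almost complex structures that are nodal at the connected-sum point, so the moduli spaces become fibered products over $D_n^k\times K_{n-1}$, rather than taking a neck-stretching limit, but these are equivalent in spirit), apply the connected-sum index formula with the $-2k$ correction, deduce that every output on the $\cD_0$-side has grading at least $\gr(\ys)$, and identify the bottom-grading contribution with $f_{\cD,\frs;J}(\cdots)\otimes \ys$. Two points need attention.

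First, your index bookkeeping uses disk conventions that fail for $n$-gons. A rigid $n$-gon has $\mu=3-n$, and the transversality lower bound is $\mu(\psi)\ge 3-n$, not $\mu(\psi)\ge 0$; so ``the glued class is rigid'' gives $\mu(\psi_1)+\mu(\psi_0)=2k+3-n$, and the claim that ``both summands are nonnegative'' is false for $n>3$. The correct route (as in the paper) is to combine Sarkar's formula with the grading identity $\gr(\ys_0)-2n_{\ws_0}(\psi_0)+\mu(\psi_0)=\gr(\ys)$, which follows from the existence of a reference class with $\mu=0$ and $n_{\ws_0}=0$ (this is where the nonvanishing of $\ys$ and the constancy of $d(\frs)$ on $\frS$ enter). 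One then gets $\mu(\psi\wedge\psi_0)=\mu(\psi)+\gr(\ys)-\gr(\ys_0)+n_{\ws_0\setminus\{w_0\}}(\psi_0)$, and rigidity of the glued class together with $\mu(\psi)\ge 3-n$ forces $\gr(\ys_0)\ge\gr(\ys)$. This is fixable, but as written your inequalities are wrong.

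Second, and more seriously, the heart of the proposition is the step you defer: for each $k>0$, why does the count of $\cD_0$-side polygons of index $2k$, constrained to match $k$ prescribed points in the neck, reproduce exactly the $\ys_0$-coefficients of $\ys$ (and in particular vanish mod $2$ for every $\ys_0$ in grading $\gr(\ys)$ that is \emph{not} a summand of $\ys$)? Your formulation of this as a ``multiplicity-one statement for $\ys$ in grading $\gr(\ys)$'' is not the right statement, and ``the preceding lemma'' only shows independence of the degree-$0$ tree, which does not touch the $k>0$ constrained counts. The paper's mechanism is a degree argument: one shows that $\ev\colon\cM_k(\theta_1,\dots,\theta_{n-1},\ys_0)\to D_n^k\times K_{n-1}$ has odd degree exactly when $\ys_0$ is a summand of $\ys$, by choosing a path in $D_n^k\times K_{n-1}$ whose $K_{n-1}$-coordinate limits to the tree $T$ while the $k$ marked points escape into a cylindrical end. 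The finite-time degenerations cancel because each $\widehat{\CF}(\gs_i,\gs_{i+1},\frs_{i,i+1})$ has vanishing differential (this is precisely where algebraic rigidity is used), and the end at infinity factors as the product of a moduli space of $\mu=2k$ disks in $\pi_2(\theta_1,\theta_1)$ passing through the $k$ prescribed points (which has odd count) with the rigid tree-of-triangles count computing $\widehat{f}_{\cD_0,\frS;J_T^0}$. Without this argument, or an equally concrete substitute, the leading term of your formula is not established.
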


\begin{proof} The proof follows from similar reasoning to \cite{HHSZExact}*{Propositions~10.2 and 10.6}, as we now sketch. See also \cite{OSLinks}*{Section~6.1}. Since we are using almost complex structures which are singular at the connected sum point, the relevant moduli spaces are fibered products over $D_{n}^k\times K_{n-1}$, where $k$ is the multiplicity of a class at the connected sum point. Consider a class $\psi\wedge \psi_0$, where $\psi_0\in \pi_2(\theta_1,\dots, \theta_{n-1}, \ys_0)$, for some intersection point $\ys_0$, and such that $\frs_{\ws_0}(\psi_0)\in \frS$. The assumption that $\widehat{f}_{\g_1,\dots, \g_n,\frS;J_T^0}(\theta_1,\dots, \theta_{n-1})=\ys$ is non-zero implies that there exists a class of triangles  $\psi'\in \pi_2(\theta_1,\dots, \theta_{n-1},\ys)$ which has $\frs_{\ws_0}(\psi')\in \frS$, $n_{\ws_0}(\psi')=0$ and $\mu(\psi')=0$.
Since all elements of $\frS$ have the same degree,
 we conclude that 
\[
\gr(\ys_0)-2 n_{\ws_0}(\psi_0)+\mu(\psi_0)=\gr(\ys).
\]
Using Sarkar's formula for the Maslov index, we see that
\[
\mu(\psi\wedge \psi_0)=\mu(\psi)+\mu(\psi_0)-2n_{w_0}(\psi_0).
\]
(The only difference between the formulas for $\mu(\psi\wedge \psi_0)$ and $\mu(\psi)+\mu(\psi_0)$ is in the Euler measure, which is corrected on the right hand side by $2n_{w_0}(\psi_0)$).
Hence, combining the above two formulas, we obtain
\[
\mu(\psi\wedge \psi_0)=\mu(\psi)+\gr(\ys)-\gr(\ys_0)+n_{\ws_0\setminus \{w_0\}}(\psi_0).
\]
The main claim concerns only classes where $\gr(\ys)-\gr(\ys_0)\ge 0$. Since the map $f_{\cD\# \cD_0, \frs\# \frs_0; J\wedge J^0, T'}$ counts index $3-n$ curves, we assume
\[
\mu(\psi)+\gr(\ys)-\gr(\ys_0)+n_{\ws_0\setminus \{w_0\}}(\psi_0)=3-n.
\]
By transversality for curves representing $\psi$, we may assume that $\mu(\psi)\ge 3-n$. Hence, if $\gr(\ys)-\gr(\ys_0)\ge 0$, then we must have $\mu(\psi)=3-n$, $\gr(\ys)=\gr(\ys_0)=0$ and $n_{\ws_0\setminus \{w_0\}}(\psi_0)=0$. In particular, the unconstrained moduli space $\cM(\psi)$ is 0-dimensional. It suffices to understand the moduli spaces on the $\Sigma_0$ side.

Let $k\ge 0$ be a fixed integer, and assume that $\gr(\ys)=\gr(\ys_0)$. Write $\cM_k(\theta_1,\dots, \theta_{n-1},\ys_0)$ for the moduli space of curves representing a class in $\pi_2(\theta_1,\dots, \theta_{n-1},\ys_0)$ with multiplicity $k$ at $w_0$. We now claim that
\[
\ev\colon \cM_k(\theta_1,\dots, \theta_{n-1},\ys_0)\to D_{n}^k\times K_{n-1}
\]
is odd degree if $\ys_0$ is a summand of $\ys$ in $\widehat{\CF}(\gs_1,\gs_n)$, and is even degree if $\ys_0$ is not a summand of $\ys$. This clearly implies the claim.

 Let $(\ve{d}, x)\in D_{n}^k\times K_{n-1}$ be generic, and pick a path $\g\colon [0,\infty)\to D_{n}^k\times K_{n-1}$, such that $\g(0)=(\ve{d},x)$ and $\g(t)$ has the following behavior as $t\to \infty$. As $t\to \infty$,  we assume that $x(t)$ approaches the tree $T$, viewed as a point in $\d K_{n-1}$, while the $k$-tuple of points $\ve{d}(t)$ travel towards one of the boundary punctures of $D_{n}^k$. Furthermore, we assume that under the identification of this end as a half cylinder $[0,1]\times[0,\infty),$ they approach some fixed $\ve{d}'\subset ((0,1)\times \R)^k$, modulo the $\R$-action on $[0,\infty)$. We consider the 1-dimensional moduli space of curves which have $\ev(u)\in \im \g$. The generic degenerations of this moduli space at finite $t$ correspond exactly to index 1 disks breaking off of the cylindrical ends. These cancel modulo 2, since the differentials vanish on each $\widehat{\CF}(\gs_i,\gs_{i+1}, \frs_{i,i+1})$. The ends appearing as $t\to \infty$ correspond to points in the following set:
\[
\bigg(\coprod_{\substack{
\phi\in \pi_2(\theta_1,\theta_1)\\ \mu(\phi)=2k}  } \cM(\phi, \ve{d}')\bigg)\times \bigg(\coprod_{\substack{\psi_0\in \pi_2(\theta_1,\dots, \theta_{n-1},\ys_0)\\ \mu(\psi_0)=0 \\ n_{\ws_0}(\psi_0)=0}} \cM_{J_T}(\psi_0)\bigg).
\]
The left-hand factor has odd cardinality by \cite{ZemDuality}*{Equation~(31)}. The count of the right-hand side is exactly the $\ys_0$ coefficient of $\widehat{f}_{\cD_0,\frS; J_T^0}(\theta_1,\dots, \theta_{n-1})$, completing the proof.
\end{proof}

\subsection{Remarks and special cases}

Proposition~\ref{prop:multi-stabilization-counts} generalizes \cite{HHSZExact}*{Propositions~10.2 and 10.6}, which concern triangles and quadrilaterals. In this section we describe some special cases.

\begin{rem}  The results of \cite{HHSZExact}*{Propositions~10.2 and 10.6} are stated in terms of an \emph{index tuple}, which does not appear in our Proposition~\ref{prop:multi-stabilization-counts}. The stabilization formulas from \cite{HHSZExact} are stated only in the case that one of the index tuple coordinates was 0. The entry of the index tuple being zero implied a specific formula for $\ve{y}=\widehat{f}_{\cD_0,\frS;J^0_T}(\theta_1,\dots, \theta_n)$. See \cite{HHSZExact}*{Lemma~10.4}. In our present paper, we also need the stabilization formulas in cases when the entry of the index tuple is non-zero, but where we know the value of $\widehat{f}_{\cD_0,\frS;J^0_T}(\theta_1,\dots, \theta_n)$.
\end{rem}

\begin{example}  Suppose that $\cD$ and $\cD_0$ are as in the statement of Proposition~\ref{prop:multi-stabilization-counts}, and suppose further that $\cD_0$ has the property that each $\gs_i$ is obtained from $\gs_1$ by a small Hamiltonian translation of each of the curves. Then Proposition~\ref{prop:multi-stabilization-counts} implies that
\[
f_{\cD\# \cD_0, \frs \# \frs_0; J\wedge J^0, T'}(\xs_1\times \theta_{1,2}^+,\dots, \xs_{n-1}\times \theta_{n-1,n}^+)=f_{\cD, \frs;J, T'}(\xs_1,\dots, \xs_{n-1})\otimes \Theta^+_{1,n}.
\]
Similarly,
\[
\begin{split}
&f_{\cD\# \cD_0, \frs \# \frs_0; J\wedge J^0, T'}(\xs_1\times \theta_{1,2}^-,\xs_2\times \theta_{2,3}^+,\dots, \xs_{n-1}\times \theta_{n-1,n}^+)\\
=&f_{\cD, \frs;J, T'}(\xs_1,\dots, \xs_{n-1})\otimes \theta^-_{1,n}+\sum_{\substack{\Theta\in \bT_{\g_1}\cap \bT_{\g_n} \\ \gr(\Theta)>\gr(\theta^-_{1,n})}} \qs_{\zs}\otimes \Theta.
\end{split}
\]
\end{example}

\section{An expanded model of the involution}

\label{sec:expanded-def}

In this section, we describe a \emph{basepoint-expanded} doubling model of the involution, which will allow us to understand the transition maps for non-elementary handleslide equivalences. An outline of this section is as follows. In Section~\ref{sec:expanded-model-def-details}, we define the notion of a \emph{basepoint expanded, doubling enhanced Heegaard diagram} $\tilde{\frD}$, which can be used to construct an $\bF[U,Q]/Q^2$-complex $\CFI(\tilde{\frD})$. In Section~\ref{sec:maps-expanded-model}, we construct transition maps
\[
\tilde{\Psi}_{\tilde{\frD}_1\to \tilde{\frD}_2}\colon \CFI(\tilde{\frD}_1)\to \CFI(\tilde{\frD}_2)
\]
in the case that $\tilde\frD_1$ and $\tilde\frD_2$ are two basepoint expanded, doubling enhanced Heegaard diagrams which have the same underlying Heegaard surface, and which satisfy a weak admissibility condition. Unlike in Section~\ref{sec:elementary-equivalences}, we do not require $\tilde{\frD}_1$ and $\tilde{\frD}_2$ to be related by an elementary handleslide. In the subsequent Section~\ref{sec:relate-expanded}, we relate the complexes $\CFI(\tilde{\frD})$ and the transition maps $\tilde{\Psi}_{\tilde{\frD}_1\to \tilde{\frD}_2}$ to the non-expanded models described earlier. The key motivation for considering the maps $\tilde{\Psi}_{\tilde\frD_1\to \tilde \frD_2}$ is that they may be defined for non-elementary handleslide equivalences in a manner which clearly is independent of additional choices.

\subsection{Doubling with extra basepoints}
\label{sec:expanded-model-def-details}

We now describe our expanded model of the involution, which uses an extra basepoint. The presence of the extra basepoint simplifies some of the gluing arguments.

\begin{define} 
A \emph{basepoint expanded, doubling enhanced Heegaard diagram} $\tilde{\frD}$ consists of a singly pointed Heegaard diagram $\cH=(\Sigma,\as,\bs,w)$, a small disk $D\subset \Sigma$, containing $w$ along its boundary, a choice of point $w'$, as well as a collection of $2g+2$ attaching curves $\Ds\subset \Sigma\# \bar \Sigma$, as follows. The curves $\Ds$ are constructed by doubling a basis of pairwise disjoint, properly embedded arcs $\delta_1,\dots, \delta_{2g+2}$ on $\Sigma\setminus D$, which avoid $w$ and $w'$ and which form a basis of $H_1(\Sigma\setminus D, \d D\setminus \{w,w'\})$. Additionally, $\tilde{\frD}$ contains the choice of almost complex structures used to compute the holomorphic triangle maps.
\end{define}

Using a basepoint expanded diagram $\tilde{\frD}$, as above, we obtain a model of the involution by modifying the formula in Corollary~\ref{cor:doubling-model}, as follows. We let $c$ and $c'$ be small perturbations of the circle $\d D$, as in Figure~\ref{fig:5}.

We define a 1-handle map 
\[
F_1^{c\bar \b, c\bar \b}\colon \CF(\Sigma,\as,\bs,w)\to \CF(\Sigma\# \bar \Sigma, \as c \bar \bs, \bs c \bar \bs, w,w')
\]
by tensoring with the top degree generator. This map adds the basepoint $w'$. Similarly, we define a map
\[
F_3^{\a c',\a c'}\colon \CF(\Sigma \# \bar \Sigma, \as c' \bar \bs, \as c'\bar \as,w,w')\to \CF(\bar \Sigma, \bar \bs, \bar \as, w)
\]
by using the same formula as the standard 3-handle map.

  We define the basepoint-expanded model for the involution via the formula
 \[
\iota:=\eta\circ F_3^{\a c',\a c'}\circ f_{\a c \bar {\b} \to \a c' \bar {\b}}^{\a c' \bar \a}\circ   A_{\lambda}\circ  f^{\Dt\to \a c'\bar{\a} }_{\a c\bar{\b}} \circ f_{\a c\bar{\b}}^{\b c\bar{\b} \to \Dt} \circ  F_1^{c\bar{\b},c\bar{\b}}.
 \]
In the above equation, $A_{\lambda}$ denotes the relative homology map for an arc $\lambda$ connecting $w$ and $w'$ in the connected sum region. This map is discussed further in the subsequent Section~\ref{sec:hypercubes-rel-homology}.

\begin{rem}
Similar to the non-expanded model, the expanded model also naturally depends on our framing $\xi=(\xi_1,\xi_2,\xi_3)$ of the basepoint. We assume that the original Heegaard surface $\Sigma$ is positively tangent to $(\xi_1,\xi_2)$, that $w$ is in the direction of $\xi_1$ and $w'$ is in the direction of $\xi_2$, in the tube region. Furthermore, we assume that $\lambda$ is chosen so that if we identify $\Span(\xi_1,\xi_2)$ with $\C$, where $\xi_1=1$ and $\xi_2=i$, and we identify the intersection with the tube region of $\Sigma\# \bar \Sigma$ with $S^1\subset \C$, then $\lambda$ corresponds to $e^{it}$ for $t\in [0,\pi/2]$.
\end{rem}

 \begin{figure}[H]
	\centering
	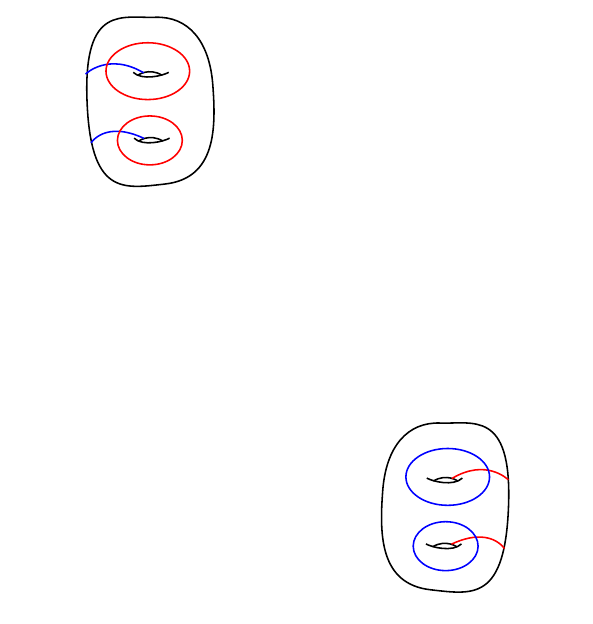
	\caption{The basepoint expanded doubling model of the involution. (Frames read in alphabetical order).}\label{fig:5}
\end{figure}

\subsection{Homology actions on hypercubes}
\label{sec:hypercubes-rel-homology}

In this section, we describe how to construct a homology action on a hypercube of Floer chain complexes. We can construct both a version for closed curves on $\Sigma$, as well as a relative one for arcs on $\Sigma$ with boundary in $\ws$. We focus on the relative case.

We recall the construction from \cite{ZemGraphTQFT}*{Section~5}. If $\cH=(\Sigma,\as,\bs,\ws)$ is a multi-pointed Heegaard diagram, and $\lambda$ is a path on $\Sigma$ connecting $w_1,w_2\in \ws$, there is an endomorphism
\begin{equation}
A_\lambda\colon \CF(\cH,\frs)\to \CF(\cH,\frs),\label{eq:A-lambda-original}
\end{equation}
which satisfies
\begin{equation}
[\d, A_\lambda]=U_{w_1}+U_{w_2}.\label{eq:[d,A-lambda]}
\end{equation}
In our present paper, we work with just one $U$-variable, so $A_{\lambda}$ becomes a chain map. The map $A_\lambda$ is defined as follows. If $\phi\in \pi_2(\xs,\ys)$ is a class of disks, we may define a quantity $a(\lambda,\phi)\in \Z$ by summing the changes of the multiplicities of $\phi$ across the alpha curves as one travels along $\lambda$. (Compare \cite{Ni-homological}). Sometimes it is helpful to write $a(\lambda,\phi)=(\d_{\a} D(\phi))\cdot \lambda$, where $\d_{\a} D(\phi)$ is the alpha boundary of the domain of $\phi$.  The map $A_\lambda$ is defined via the formula
\[
A_{\lambda}(\xs):=\sum_{\substack{\phi\in \pi_2(\xs,\ys)\\ \mu(\phi)=1}} a(\lambda,\phi) \# (\cM(\phi)/\R)U^{n_{w_1}(\phi)+\cdots+n_{w_n}(\phi)} \cdot \ys.
\]

The map $A_{\lambda}$ is sometimes a helpful tool when adding and removing basepoints. We now describe a version of $A_\lambda$ for hypercubes. Suppose we are given two hypercubes of attaching curves $\cL_\a$ and $\cL_{\b}$ on a multi-pointed Heegaard diagram, each consisting of handleslide equivalent attaching curves, such that the length 1 morphisms are all the top degree generators of $\CF^-(\#^n (S^1\times S^2))$. We now suppose that $\lambda$ is an arc which connects two basepoints, $w_1$ and $w_2$. Consider the pairing $\CF(\cL_\a,\cL_\b)$, where we identify the variables of the basepoints $w_1$ and $w_2$ to a single $U$.

 We now construct a morphism of hypercubes
\[
\scA_{\lambda}\colon \CF(\cL_{\a}, \cL_{\b})\to \CF(\cL_{\a}, \cL_{\b}).
\]
Recall that a morphism of hypercubes can itself be thought of as a hypercube of dimension 1 larger than $\dim \CF(\cL_{\a}, \cL_{\b})$. Our morphism of hypercubes $\scA_{\lambda}$ will have the property that the length one morphisms coincide with the chain maps $A_\lambda$ from~\eqref{eq:A-lambda-original}. We will show that $\scA_{\lambda}$ is well-defined up to chain homotopy of hypercube morphisms, and is natural with respect to restriction to sub-cubes of $\CF(\cL_{\a},\cL_{\b})$.

We note that we may formally view $\lambda$ as an input for a holomorphic polygon counting map by setting
\[
\begin{split}
&f_{\g_1,\dots,\g_j,\g_j,\dots, \g_m}(\xs_{1,2},\dots,\xs_{j-1,j}, \lambda, \xs_{j,j+1},\dots, \xs_{m-1,m})
\\:=&\sum_{\substack{\psi\in \pi_2(\xs_{0,1},\dots, \xs_{m-1,m},\zs)\\ \mu(\psi)=3-m}} (\d_{\g_j}D(\psi) \cdot \lambda)\# \cM(\psi)U^{n_{w_1}(\psi)+\cdots+n_{w_n}(\psi)}\cdot \zs,
\end{split}
\]
where $\cM(\psi)$ denotes the moduli space of holomorphic $m$-gons (in the usual sense).

\noindent As particular examples, 
\[
f_{\a,\a,\b}(\lambda,\xs)=A_{\lambda}(\xs)\quad \text{and} \quad f_{\a,\b,\b}(\xs,\lambda)=B_{\lambda}(\xs),
\] 
where $A_{\lambda}$ counts holomorphic disks weighted by $\#(\d_{\a}(\phi)\cap \lambda)$, while $B_{\lambda}$ counts disks weighted by $\#(\d_{\b}(\phi)\cap \lambda)$.

\begin{lem}\label{lem:associativity-extra-lambda}
The holomorphic polygon counting maps, defined with an extra input $\lambda$, satisfy the standard associativity rule as long as one uses the convention that $\d \lambda=U_{w_1}+U_{w_2}$, together with conventions that the polygon maps are $U_{w_i}$-equivariant and strictly unital (i.e. they vanish if they have $1$ as an input when there are more than 2 total inputs). In particular, if we work over a single $U$ variable, they satisfy the associativity relations with the convention that $\d \lambda=0$.
\end{lem}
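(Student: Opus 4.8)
The plan is to prove the relations by the usual analysis of the ends of one-dimensional moduli spaces of holomorphic polygons, run now for the moduli spaces that define the $\lambda$-decorated maps. Concretely, $f_{\g_1,\dots,\g_j,\g_j,\dots,\g_m}(\xs_1,\dots,\lambda,\dots,\xs_{m-1})$ counts the same holomorphic $m$-gons as the polygon map for the tuple with the attaching set $\g_j$ repeated, but weighted by $a(\lambda,\psi)=(\d_{\g_j}D(\psi))\cdot\lambda$. Two facts will be imported wholesale: ordinary polygon associativity for the undecorated maps, and the additivity $a(\lambda,\psi_1\ast\psi_2)=a(\lambda,\psi_1)+a(\lambda,\psi_2)$ under concatenation of domains, which is immediate from $D(\psi_1\ast\psi_2)=D(\psi_1)+D(\psi_2)$ (cf.\ \cite{ZemGraphTQFT}*{Section~5} and \cite{Ni-homological}). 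One should also record the ``Stokes'' identity that for any polygon class $\phi$ and any arc $\lambda$ from $w_1$ to $w_2$ in general position, $\sum_i (\d_{\g_i}D(\phi))\cdot \lambda = n_{w_1}(\phi)+n_{w_2}(\phi)$ in $\bF$: the total number of crossings of $\lambda$ with the boundary of the domain equals the difference of the multiplicities of $\phi$ at the endpoints of $\lambda$.

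For a class $\psi$ whose index is one greater than rigid, the ends of $\cM(\psi)$ are two-story limits $\psi=\psi_1\ast \psi_2$, and there are two families of them to account for. In the first family the node separates the polygon so that the edge carrying $\lambda$ lies in one factor and the other factor is an ordinary polygon; by additivity such an end contributes a term of the shape (decorated polygon map)$\circ$(undecorated polygon map), or its mirror, i.e.\ exactly the terms in which ``$\lambda$ is distributed to one of the two factors'' in the desired associativity relation. In the second family the edge carrying $\lambda$ collapses, the two copies of $\g_j$ merge, and the surviving component is an ordinary $(m-1)$-gon with domain $\phi$; here the collapsing $\lambda$ may be pushed across either adjacent curve family, so one sums the contribution over the relevant chambers and invokes the Stokes identity above to see that the net contribution is $(U_{w_1}+U_{w_2})\cdot f_{\g_1,\dots,\g_m}(\xs_1,\dots,\xs_{m-1})$. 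This is precisely the term produced by treating $\lambda$ as a formal element with $\d\lambda=U_{w_1}+U_{w_2}$, so setting the signed count of all ends to zero yields the stated relation.

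It remains to bootstrap from this basic configuration to the general associativity relation, in which $\lambda$ may occupy any slot and other slots may carry unit generators: here $U_{w_i}$-equivariance lets one strip off $U$-powers, and strict unitality annihilates the terms in which a unit sits adjacent to $\lambda$ among three or more inputs, as well as the degenerate bubble given by the constant triangle on $\g_j$; this is routine bookkeeping. Finally, identifying all $U_{w_i}$ with a single variable $U$ forces $U_{w_1}+U_{w_2}=2U=0$ over $\bF$, so the convention becomes $\d\lambda=0$, which is the last assertion. I expect the one genuinely delicate point to be the collapsing-edge degeneration of the second family --- making precise which chambers $\lambda$ is distributed over and checking that the net coefficient is exactly $n_{w_1}(\phi)+n_{w_2}(\phi)$, i.e.\ the polygon-level upgrade of Zemke's disk computation $[\d,A_\lambda]=U_{w_1}+U_{w_2}$; everything else is a direct transcription of the standard arguments.
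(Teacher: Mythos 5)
Your overall strategy---counting ends of the one-dimensional moduli spaces of $m$-gons weighted by $a(\lambda,\psi)=(\d_{\g_j}D(\psi))\cdot\lambda$ and using additivity of the weight under splitting---is the same as the paper's, which handles two total inputs by citing $[\d,A_\lambda]=U_{w_1}+U_{w_2}$ (Equation~\eqref{eq:[d,A-lambda]}) and the higher cases by exactly this ends count. Your ``first family'' of ends is correct and is the entire content of the case of more than two inputs.

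The ``second family'' is where the argument goes wrong. The decorated map is \emph{defined} as a count of ordinary $m$-gons on the un-repeated tuple $(\g_1,\dots,\g_m)$, weighted by the integer $(\d_{\g_j}D(\psi))\cdot\lambda$; there is no extra boundary edge of the polygon ``carrying $\lambda$,'' hence no stratum of the compactification in which such an edge collapses to leave an $(m-1)$-gon. More importantly, the term you extract from this putative family, $(U_{w_1}+U_{w_2})\cdot f_{\g_1,\dots,\g_m}$, must \emph{not} appear when there are more than two total inputs: algebraically the corresponding term $f(\dots,\d\lambda,\dots)=(U_{w_1}+U_{w_2})\,f(\dots,1,\dots)$ is annihilated by strict unitality, and geometrically the only possible source of such a correction---boundary degenerations---is excluded for index reasons once $m\ge 3$ (bubbling off an index-two $\a$- or $\b$-degeneration from the one-dimensional stratum, which consists of $m$-gons of index $3-m$, leaves an $m$-gon of index $1-m$, strictly below the rigid index $3-m$). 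So for $\ell>2$ the relation is the bare sum of two-fold compositions with no correction, in agreement with the conventions in the statement. The $U_{w_1}+U_{w_2}$ enters only for $\ell=2$, where it comes from $\a$-boundary degenerations in the ends of the index-two disk moduli spaces weighted by $\lambda\cdot\d A$; this is precisely Zemke's computation of $[\d,A_\lambda]$, which should simply be imported rather than rederived. Relatedly, your ``Stokes'' identity sums $(\d_{\g_i}D(\phi))\cdot\lambda$ over \emph{all} curve families $i$, whereas the weight in the decorated polygon maps involves only the single family $\g_j$; that identity is the mechanism behind the telescoping relation~\eqref{eq:telescope-A-lambda}, not behind the associativity relation, so it cannot be used to evaluate the contribution you are after. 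Deleting the second family and instead (a) quoting Equation~\eqref{eq:[d,A-lambda]} for $\ell=2$ and (b) observing the index exclusion of boundary degenerations for $\ell>2$ repairs the proof.
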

\begin{proof}The proof differs based on whether $\ell=2$ or $\ell>2$. If $\ell=2$, then the result follows from Equation~\eqref{eq:[d,A-lambda]}. For the case that $\ell>2$, one counts the ends of index $(3-\ell)$ moduli spaces weighted by $\d_{\g_i} D(\psi)\cdot \lambda$ and quickly obtains the result.
\end{proof}

The construction of $\scA_{\lambda}$ is as follows. We will formally construct a cube-shaped diagram $\cL_{\a}^\lambda$ of dimension $\dim \cL_{\a}+1$. To begin, we take two copies of $\cL_{\a}$, for which we write $\cL_{\a}\times \{0\}$ and $\cL_{\a}\times \{1\}$, and formally adjoin additional length 1 arrows from $\cL_{\a}\times \{0\}$ to $\cL_{\a}\times \{1\}$, which we label by the character $\lambda$.

 We begin by constructing the length 2 chains of $\cL_{\a}^\lambda$, via the following argument. We will show that the 2-dimensional faces can be constructed to have the following form:
\begin{equation}
\begin{tikzcd}[column sep=2cm, row sep=2cm, labels=description]
\as
\ar[r, "\Theta_{\a',\a}"]\ar[d, "\lambda"]
\ar[dr,dashed, "U\eta_{\a',\a}"]& \as'\ar[d, "\lambda"]
\\
\as\ar[r,"\Theta_{\a',\a}"]& \as'
\end{tikzcd}
\label{eq:length-2-A-lambda}
\end{equation}
(We have not yet defined $\eta_{\a',\a}$.) The desired relation is that
\begin{equation}
A_\lambda(\Theta_{\a',\a})+A'_{\lambda}(\Theta_{\a',\a})=\d U \eta_{\a',\a}.\label{eq:length-2-relation-A-lambda}
\end{equation}
Here $A_{\lambda}$ counts changes of a disk class across $\as$, while $A'_{\lambda}$ counts changes across $\as'$.   Note that by \cite{HMZConnectedSum}*{Section~3.5}, we have that
\begin{equation}
A_{\lambda}(\Theta_{\a',\a})+A'_{\lambda}(\Theta_{\a',\a})=U(\Phi_{w_1}+\Phi_{w_2})(\Theta_{\a',\a}), \label{eq:telescope-A-lambda}
\end{equation}
where $\Phi_{w_i}$ denotes the map 
\[
\Phi_{w_i}(\xs)=U^{-1}\sum_{\substack{\phi\in \pi_2(\xs, \ys) \\ \mu(\phi)=1}} n_{w_i}(\phi) \# (\cM(\phi)/\R) U^{(n_{w_1}+\cdots+n_{w_n})(\phi)},
\]
extended $\bF[U]$-equivariantly. 

 We note that $\Phi_{w_i}(\Theta_{\a',\a})$ is a cycle of grading 1 higher than $\Theta_{\a',\a}$, so it is null-homologous, since $\Theta_{\a',\a}$ is the top degree generator. Let $\eta_{\a',\a}$ be any  chain of homogeneous grading one higher that $\Theta_{\a',\a}$ so that
\[
\d \eta_{\a',\a}=(\Phi_{w_1}+\Phi_{w_2})(\Theta_{\a',\a}).
\]
 We perform this construction to build all of the length 2 chains in $\cL_{\a}^{\lambda}$. (If we did not quotient by $U_{w_1}+U_{w_2}$, then we would instead obtain a length 2 morphism of the form $U_1\eta_{\a',\a}^1+U_2\eta_{\a',\a}^2$). 

We now construct the higher length chains in the cube $\cL^\lambda_{\a}$. Their construction is similar to the construction of the length 2 chains. We assume, by induction, that each length $m>1$ chain is of the form $U \eta_{\a',\a}$. (This is only necessary when $m\in \{2,3\}$, but is satisfiable for all $m>1$).

 Let $\veps<\veps'$ be points $\bE_{d}$, where $d=\dim \cL_\a$, and suppose that all chains of length less than $|\veps'-\veps|_{L^1}$ have already been defined. Set
\begin{equation}
\begin{split}
C_{\veps',\veps}:=&\sum_{\veps=\veps_1<\cdots <\veps_j=\veps'} \sum_{i=0}^j f_{\a^{\veps_j},\dots, \a^{\veps_1}}(\Theta_{\a^{\veps_j},\a^{\veps_{j-1}}},\dots,\Theta_{\a^{\veps_{i+1}},\a^{{\veps}_{i}}},\lambda,\Theta_{\a^{\veps_i},\a^{\veps_{i-1}}}\dots, \Theta_{\a^{\veps_2},\a^{\veps_1}})\\
+&\sum_{\veps=\veps_1<\cdots <\veps_j=\veps'} \sum_{i=0}^j f_{\a^{\veps_j},\dots, \a^{\veps_1}}(\Theta_{\a^{\veps_j},\a^{\veps_{j-1}}},\dots,\Theta_{\a^{\veps_{i+1}},\a^{\veps_i}},U\eta_{\a^{\veps_i},\a^{\veps_{i-1}}},\Theta_{\a^{{\veps}_{i-1}},\a^{{\veps}_{i-2}}}\dots, \Theta_{\a^{\veps_2},\a^{\veps_1}})
\end{split}
\label{eq:A-lambda-1st-equation}
\end{equation}

However, by an entirely analogous argument to ~\eqref{eq:telescope-A-lambda}, we have
\begin{equation}
\begin{split}
C_{\veps',\veps}=&\sum_{\veps=\veps_1<\cdots <\veps_j=\veps'}  f_{\a^{\veps_j},\dots, \a^{\veps_1}}^{n_{w_1}+n_{w_2}}(\Theta_{\a^{\veps_j},\a^{{\veps}_{j-1}}},\dots,\Theta_{\a^{\veps_2},\a^{\veps_1}})\\
+&\sum_{\veps=\veps_1<\cdots <\veps_j=\veps'} \sum_{i=0}^j f_{\a^{\veps_j},\dots, \a^{\veps_1}}(\Theta_{\a^{\veps_j},\a^{{\veps}_{j-1}}},\dots,\Theta_{\a^{\veps_{i+1}},\a^{\veps_i}},U\eta_{\a^{\veps_i},\a^{\veps_{i-1}}},\Theta_{\a^{{\veps}_{i-1}},\a^{\veps_{i-2}}}\dots, \Theta_{\a^{\veps_2},\a^{\veps_1}})
\end{split}
\label{eq:A-lambda-2nd-equation}
\end{equation}
where $f_{\a^{\veps_n},\dots, \a^{\veps_1}}^{n_{w_1}+n_{w_2}}$ counts holomorphic polygons with a factor of $n_{w_1}(\psi)+n_{w_2}(\psi)$. In particular, $C_{\veps',\veps}=U C'_{\veps',\veps}$, for some chain $C'_{\veps',\veps}$. Since the hypercube relations are satisfied for all proper faces, the element $C_{\veps',\veps}$ is a cycle. Since the complexes are free $\bF[U]$-modules, this implies that $C'_{\veps',\veps}$ is also a cycle. However $C_{\veps',\veps}$ has grading $m-2$ higher than the top degree generator, where $m=|\veps'-\veps|_{L^1}$. Hence $C'_{\veps',\veps}$ has grading $m$ higher than the top degree generator. As long as $m\ge 1$, we conclude that $C'_{\veps',\veps}$ is a boundary. We set $\eta_{\a^{\veps'}, \a^{\veps}}$ to be any chain of homogeneous grading $m+1$ such that $\d \eta_{\a^{\veps'},\a^{\veps}}=C'_{\veps',\veps}$.

Finally, we define the hypercube corresponding to the morphism $\scA_\lambda$ to be the pairing of $\CF(\cL^\lambda_{\a}, \cL_\b)$, where the hypercube maps are defined by using the normal formulas for the maps in a pairing, while allowing the chains $\lambda$ to be inputs.

\begin{lem}\,
\begin{enumerate}
\item The diagram $\cL_{\a}^{\lambda}$ satisfies the hypercube relations (interpreted in the way described above).
\item The map $\scA_{\lambda}$ satisfies $\d_{\Mor}(\scA_{\lambda})=0$ (i.e. $\scA_{\lambda}$ is a cycle).
\item The map $\scA_{\lambda}$ is independent, up to chain-homotopy of hypercube morphisms, from the choices of chains used in its construction.
\end{enumerate}
\end{lem}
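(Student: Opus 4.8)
All three statements are $\lambda$-decorated analogues of standard facts about hypercubes of attaching curves, so the plan is to import the usual inductive arguments; the one genuinely new ingredient is the associativity relation for $\lambda$-decorated polygon maps in Lemma~\ref{lem:associativity-extra-lambda}, which should let everything go through as in the undecorated case. For (1), I would first note that any face of $\cL_\a^\lambda$ not moving in the new ($\lambda$) direction is a face of one of the copies $\cL_\a\times\{0\}$, $\cL_\a\times\{1\}$, so the relation holds since $\cL_\a$ is a hypercube of attaching curves. For a face from $\veps$ to $\veps'$ in the $\lambda$-direction, the hypercube relation — interpreted as in~\eqref{eq:hypercube-Lagrangians} but allowing $\lambda$ as a polygon input with $\d\lambda=0$ — should, after isolating the term $\d(U\eta_{\a^{\veps'},\a^\veps})$ involving the top chain of that face, be exactly $C_{\veps',\veps}=\d(U\eta_{\a^{\veps'},\a^\veps})$ with $C_{\veps',\veps}$ as in~\eqref{eq:A-lambda-1st-equation}. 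I would verify this by induction on $|\veps'-\veps|_{L^1}$: given the relations for proper subfaces, Lemma~\ref{lem:associativity-extra-lambda} forces $C_{\veps',\veps}$ to be a cycle; by freeness over $\bF[U]$ it is $U$ times a cycle $C'_{\veps',\veps}$; the grading count in the construction puts $C'_{\veps',\veps}$ strictly above the top-degree generator, hence null-homologous; and $\eta_{\a^{\veps'},\a^\veps}$ was chosen with $\d\eta_{\a^{\veps'},\a^\veps}=C'_{\veps',\veps}$. Choosing the $\eta$'s in order of increasing $|\veps'-\veps|_{L^1}$ makes this a consistent induction, since each new choice only feeds into the cycle $C_{\veps'',\veps}$ of strictly larger faces.

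Statement (2) should then be formal. By Lemma~\ref{lem:associativity-extra-lambda} the pairing construction of Section~\ref{sec:hypercubes} applies verbatim to $\cL_\a^\lambda$ with $\lambda$-decorated polygon maps allowed, so $\CF(\cL_\a^\lambda,\cL_\b)$ is a genuine hypercube of chain complexes whose two faces at the ends of the $\lambda$-direction are copies of $\CF(\cL_\a,\cL_\b)$; thus $\scA_\lambda$ is the morphism of hypercubes that it encodes, and since the encoding of Section~\ref{subsec:hypercube-equivalence} sends $\d_{\Mor}$-closed morphisms exactly to hypercubes of chain complexes, $\d_{\Mor}(\scA_\lambda)=0$. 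The identity $f_{\a,\a,\b}(\lambda,\xs)=A_\lambda(\xs)$ identifies the length-one part of $\scA_\lambda$ with the map $A_\lambda$ of~\eqref{eq:A-lambda-original}.

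For (3), given two systems of choices producing $\cL_\a^\lambda$, $(\cL_\a^\lambda)'$ and morphisms $\scA_\lambda$, $\scA_\lambda'$, I would build a homotopy of hypercube morphisms as follows: construct a $(\dim\cL_\a+2)$-dimensional decorated diagram $\cL_\a^{\lambda,\lambda'}$ with two extra directions, restricting to $\cL_\a^\lambda$ and $(\cL_\a^\lambda)'$ on two parallel faces and to the trivial ($\eta\equiv 0$, $\lambda$-free) diagram on the other two, and satisfying the hypercube relations; pairing with $\cL_\b$ then yields the desired homotopy. The construction should run by exactly the induction used for $\cL_\a^\lambda$ itself: at each length the obstruction to extending is a sum $C_{\veps',\veps}$ of decorated polygon maps which, by the proper-subface relations and Lemma~\ref{lem:associativity-extra-lambda}, is a cycle equal to $U$ times a cycle of grading high enough to be null-homologous, so a primitive exists. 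The new wrinkle relative to (1) is the two-dimensional boundary data; on the length-two faces, for instance, one uses that $\eta_{\a^{\veps'},\a^\veps}-\eta'_{\a^{\veps'},\a^\veps}$ is a cycle lying above the top generator.

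I expect the main obstacle to be part (3): organizing the doubly-inductive construction so that $\cL_\a^{\lambda,\lambda'}$ restricts correctly to every boundary face, and so that the grading estimates forcing the obstruction classes to vanish hold uniformly across the induction. Parts (1) and (2) should be essentially formal once Lemma~\ref{lem:associativity-extra-lambda} and the pairing formalism are available; one minor point to track throughout is the convention $\d\lambda=0$ in the single-$U$ setting versus $\d\lambda=U_{w_1}+U_{w_2}$ in general.
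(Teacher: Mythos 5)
Your proposal is correct and follows essentially the same route as the paper: (1) is immediate from the inductive construction of the $\eta$-chains, (2) follows from the $\lambda$-decorated associativity relation (Gromov compactness) together with the pairing formalism, and (3) is proven by filling a $(\dim\cL_\a+2)$-dimensional diagram with the two models of $\cL_\a^\lambda$ on opposite faces and identity morphisms on the remaining edges, then pairing with $\cL_\b$. The extra inductive detail you supply for the filling argument is exactly what the paper summarizes as ``follows nearly verbatim from the construction of the cube $\cL_\a^\lambda$ itself.''
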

\begin{proof}The first claim follows immediately from the construction.
The second claim follows quickly from Gromov compactness interpreted via Lemma~\ref{lem:associativity-extra-lambda}.

We consider the final claim. This is proven as follows. Suppose we have two models of $\cL_{\a}^{\lambda}$, which are both constructed using the above procedure. We consider a partially constructed hypercube of dimension $d+2$, given by the following diagram:
\[
\cL_{\a}^{\lambda_1\to \lambda_2}=
\begin{tikzcd}[column sep=2cm, row sep=2cm, labels=description]
\cL_{\a}\ar[d, "\id"] \ar[dr,dashed, "*"]\ar[r,"\lambda_1"] &\cL_{\a} \ar[d,"\id"]\\
\cL_{\a}\ar[r, "\lambda_2"]& \cL_{\a}.
\end{tikzcd}
\]
Here $\lambda_1$ denotes the chains in one model of $\cL_{\a}^{\lambda}$, while $\lambda_2$ denotes the chains constructed in the other model, and the $*$ arrows are yet to be defined. The hypercube relations are satisfied on the four $d$-dimensional faces of the above diagram. We wish to define chains which increment both of the displayed coordinates, such that the hypercube relations are satisfied. The construction of such arrows (corresponding to the $*$ arrow above) follows nearly verbatim from the construction of  the cube $\cL^\lambda_\a$ itself. Pairing $\cL_{\a}^{\lambda_1\to \lambda_2}$ with $\cL_{\b}$ gives the homotopy between the two models of $\scA_{\lambda}$, completing the proof.
\end{proof}

\begin{rem}
\begin{enumerate}
\item  If one works over the ring $\bF[U_{w_1},\dots, U_{w_n}]$, one obtains instead that $\d_{\Mor}(\scA_{\lambda})=(U_{w_1}+U_{w_2})\cdot \id$, where $\d \lambda=U_{w_1}+U_{w_2}$. 
\item The simplest case of the map $\scA_{\lambda}$ is when $\cL_{\a}$ is 0-dimensional. Then $\cL_\a^\lambda$ is just the formal hypercube
\[
\begin{tikzcd} \as \ar[r, "\lambda"] & \as \end{tikzcd}
\]
If $\cL_{\b}$ is a beta hypercube, then the map
\[
\scA_{\lambda}\colon \CF(\as, \cL_{\b})\to \CF(\as, \cL_{\b})
\]
requires no additional choices.
\item Naturally, one may also build a hypercube $\cL_{\b}^{\lambda}$, similar to above, and define
\[
\scB_{\lambda}\colon \CF(\cL_\a, \cL_{\b})\to \CF(\cL_{\a},\cL_{\b})
\]
as the hypercube $\CF(\cL_{\a}, \cL_{\b}^{\lambda})$.
\end{enumerate}
\end{rem}

\subsection{Maps for general equivalences on the expanded model}
\label{sec:maps-expanded-model}

We now define the naturality maps for a general alpha or beta equivalence in terms of the expanded model of the involution. Suppose that $\tilde\frD$ and $\tilde\frD'$ are two basepoint expanded doubling enhanced Heegaard diagrams, which share the same underlying Heegaard surface $\Sigma$. Write $\cH=(\Sigma,\as,\bs,w)$ and $\cH'=(\Sigma,\as',\bs',w)$ for the underlying non-involutive Heegaard diagrams of $\tilde\frD$ and $\tilde\frD'$, respectively. Write $\Ds$ and $\Ds'$ for the choices of doubling curves. Furthermore, we assume that the diagram
\[
 (\Sigma\# \bar{\Sigma},\as',\bar \as',\as,\bar \as,\bs,\bar \bs,\bs', \bar \bs',c,c',\Ds,\Ds',w,w')
 \]
  is weakly admissible. (This may be achieved by winding $\as$, $\as'$, $\bs$ and $\bs'$ sufficiently).
  \begin{rem}
  \label{rem:admissibility-unconventional}
   Note that the above is not a valid Heegaard diagram since none of the attaching curve sets have $2g(\Sigma)+1$ curves,  however it still makes sense to require weak-admissibility. Indeed the definition of weak-admissibility is that if $P$ is a non-zero integral 2-chain with boundary equal to a linear combination of curves on the diagram and $n_{w}(P)=n_{w'}(P)=0$, then $P$  has both positive and negative multiplicities.
\end{rem}

\begin{figure}[ht]
\[
\begin{tikzcd}[labels=description, row sep=1cm]
\CF(\as,\bs,w)
	\ar[r]
	\ar[d, "F_{1}^{c\bar{\b},c\bar{\b}}"]
& \CF(\as',\bs,w)
	\ar[r]
	\ar[d, "F_{1}^{c\bar{\b}',c\bar{\b}}"]
& \CF(\as',\bs',w)
	\ar[d,"F_{1}^{c\bar{\b}',c\bar{\b}'}"]
\\
\CF(\as c \bar{\bs}, \bs c \bar{\bs},w,w')
	\ar[r]
	\ar[d]
	\ar[dr,dashed]
& \CF(\as'c \bar{\bs}', \bs c \bar{\bs},w,w')
	\ar[r]
	\ar[d]
	\ar[dr,dashed]
& \CF(\as' c \bar{\bs}', \bs' c \bar{\bs}',w,w')
	\ar[d]
\\
\CF(\as c \bar \bs, \Ds,w,w')
	\ar[r]
	\ar[d, "A_{\lambda}"]
	\ar[dr,dashed]
& \CF(\as'c\bar \bs',\Ds,w,w')
	\ar[r]
	\ar[d, "A_{\lambda}"]
	\ar[dr,dashed]
& \CF(\as' c \bar \bs', \Ds',w,w')
	\ar[d, "A_{\lambda}"]
\\
\CF(\as c \bar \bs, \Ds,w,w')
	\ar[r]
	\ar[d]
	\ar[dr,dashed]
& \CF(\as'c\bar \bs',\Ds,w,w')
	\ar[r]
	\ar[d]
	\ar[dr,dashed]
& \CF(\as' c \bar \bs', \Ds',w,w')
	\ar[d]
\\
\CF(\as c\bar{\bs}, \as c' \bar{\as},w,w')
	\ar[r]
	\ar[d]
	\ar[dr,dashed]
&\CF(\as'c \bar{\bs}', \as c' \bar{\as},w,w')
	\ar[r]
	\ar[d]
	\ar[dr,dashed]
&\CF(\as' c \bar{\bs}', \as' c \bar{\as}',w,w')
	\ar[d]
\\
\CF(\as c' \bar{\bs}, \as c' \bar{\as},w,w')
	\ar[r]
	\ar[d, "F_3^{\a c',\a c'}"]
&\CF(\as' c' \bar{\bs}', \as c' \bar{\as},w,w')
	\ar[r]
	\ar[d, "F_3^{\a'c',\a c'}"]
& \CF(\as' c' \bar{\bs}', \as' c' \bar{\as}',w,w')
	\ar[d, "F_3^{\a'c',\a'c'}"]
\\
\CF(\bar{\bs}, \bar{\as},w)
	\ar[r]
	\ar[d,"\id"]
	\ar[drr,dashed, "h_{\bar \b\to \bar \b'}^{\bar \a \to \bar \a'}"]
& \CF(\bar{\bs}',\bar{\as},w)
	\ar[r]
& \CF(\bar{\bs}',\bar{\as}',w)
	\ar[d,"\id"]
\\
\CF(\bar \bs, \bar \as,w)
	\ar[r]
&
\CF(\bar \bs, \bar \as',w)
	\ar[r]
&
\CF(\bar \bs', \bar \as',w)
\end{tikzcd}
\]
\caption{The hyperbox whose compression is the basepoint expanded transition map $\tilde\Psi_{\tilde\frD\to \tilde\frD'}$.}
\label{fig:transition-map-1}
\end{figure}

Our transition map $\tilde \Psi_{\tilde \frD\to \tilde \frD'}$ is defined by the hyperbox in Figure~\ref{fig:transition-map-1}. We now describe aspects of the construction in more detail. The second and third levels are obtained by pairing hyperboxes of attaching curves in the obvious manner. The fourth level is obtained from the argument in Section~\ref{sec:hypercubes-rel-homology}, where we take $\lambda$ to be a path in the neck region which connects the two basepoints $w$ and $w'$.

 We now consider the levels involving the 1-handle and 3-handle maps. 
The map $F_{1}^{c\bar{\b}',c \bar{\b}}$ is different than the other 1-handle maps we have seen so far, since we do not require the diagram $(\Sigma, \bar{\bs}, \bar{\bs}')$ to be a standard diagram for $(S^1\times S^2)^{\# g}$. Instead, we only require the diagram to be admissible. We define the top level as the compression of the following hyperbox
\begin{equation}
\begin{tikzcd}[labels=description, row sep=1cm, column sep=.4cm]
\CF(\as,\bs,w) 
	\ar[r]
	\ar[d, "{\otimes \Theta_{\bar \b,\bar \b}}"]
& \CF(\as',\bs,w)
	\ar[r]
	\ar[d, "\otimes \Theta_{\bar{\b}', \bar \b}"]
& \CF(\as',\bs',w)
	\ar[d,"\otimes \Theta_{\bar \b', \bar \b'}"]
\\
\CF(\as, \bs ,w)\otimes \CF(\bar \bs, \bar \bs, w')
	\ar[r]
	\ar[d, "F_1^{c,c}"]
& \CF(\as', \bs,w) \otimes \CF(\bar{\bs}', \bar \bs,w')
	\ar[r]
	\ar[d,"F_1^{c,c}"]
& \CF(\as', \bs',w)\otimes \CF(\bar{\bs}', \bar{\bs}',w')
	\ar[d,"F_1^{c,c}"]
\\
\CF(\as c \bar{\bs}, \bs c \bar{\bs},w,w')
	\ar[r]
& \CF(\as'c \bar{\bs}', \bs c \bar{\bs},w,w')
	\ar[r]
& \CF(\as' c \bar{\bs}', \bs' c \bar{\bs}',w,w')
\end{tikzcd}
\label{eq:1-handle-expanded-hypercube}
\end{equation}

The horizontal arrows in Equation~\eqref{eq:1-handle-expanded-hypercube} are holomorphic triangle maps. In the middle, they are the tensor product of the holomorphic triangle maps on the two relevant Heegaard triples. (Or equivalently, the map which counts holomorphic triangles in the symplectic manifold $(\Sigma\sqcup \bar \Sigma)\times D_3$, where $D_3$ denotes a disk with three boundary punctures).

\begin{lem} The diagram in Equation~\eqref{eq:1-handle-expanded-hypercube} satisfies the hyperbox relations.
\end{lem}
\begin{proof} That the maps $\otimes \Theta_{\bar \b, \bar \b}$ (and so forth) are chain maps follows from the fact that $\Theta_{\bar \b, \bar \b}$ is a cycle. The maps $F_1^{c,c}$ are chain maps by \cite{ZemGraphTQFT}*{Proposition~8.5}. The fact that the top left and right squares commute is a consequence of the fact that
\[
f_{\bar \b', \bar \b, \bar \b}(\Theta_{\bar \b', \bar \b}, \Theta_{\bar \b, \bar \b})=\Theta_{\bar \b', \bar \b}\quad \text{and} \quad f_{\bar \b', \bar \b, \bar \b'}(\Theta_{\bar \b', \bar \b}, \Theta_{ \bar \b, \bar \b'})=\Theta_{\bar \b', \bar \b'}.
\]
The left relation follows from a small triangle argument \cite{HHSZExact}*{Proposition~11.1}, while the right follows from grading considerations.  Commutativity of the bottom two squares follows from the stabilization results for triangles \cite{ZemGraphTQFT}*{Theorem~8.8}. 
\end{proof}

\begin{rem}
 We remark that our main motivation for using the expanded model of involution is to simplify the construction of the 1-handle and 3-handle hypercubes in the transition map. Indeed, the 1-handle hypercube at the top level of Equation~\eqref{def:transition-map-elementary-handleslide} is challenging to construct for general $\bs$ and $\bs'$. The challenge is that in Heegaard Floer theory, degenerating the holomorphic $\ell$-gon maps along a connected sum neck in the Heegaard diagram yields moduli spaces which are fibered products over an evaluation map to $\Sym^n(D_\ell)\times K_{\ell-1}$, where $K_{\ell-1}$ denotes an associahedron. Here, $n$ denotes the multiplicity at the connected sum point. The factor of $\Sym^{n}(D_{\ell})$ records asymptotics of holomorphic curves at the connected sum point. The factor of $K_{\ell-1}$  records a choice of almost complex structure on $D_\ell$. The construction of the 1-handle hypercube amounts to a problem of deforming the diagonal in $\Sym^n(D_\ell)\times\Sym^n(D_\ell)\times K_{\ell-1}\times K_{\ell-1}$. However, in the expanded model of the involution we can use stabilization results like Proposition~\ref{prop:multi-stabilization-counts} to reduce the problem to considering diagonals in $K_{\ell-1}\times K_{\ell-1}$, which is a simpler problem and is sufficient for our purposes.
\end{rem}

%

%

\section{Relating the expanded and ordinary models of the involution}
\label{sec:relate-expanded}

In this section, we relate the expanded model of the involution from Section~\ref{sec:expanded-def} with the ordinary model from Section~\ref{sec:doubling-def}. If $\frD$ is a doubling enhanced Heegaard diagram, and $\tilde{\frD}$ is a basepoint expanded doubling diagram, such that the underlying (non-involutive) Heegaard diagrams for $\frD$ and $\tilde{\frD}$ coincide, we will construct a chain homotopy equivalence
\[
F_{\frD\to \tilde{\frD}}\colon \CFI(\frD)\to \CFI(\tilde{\frD}).
\]

\begin{prop} \label{prop:expansion}\,
\begin{enumerate}
\item\label{prop:expansion-part-2}
 If $\frD$ and $\frD'$ are ordinary doubling diagrams which differ by an elementary equivalence, then 
\[
\tilde\Psi_{\tilde \frD\to \tilde \frD'}\circ F_{\frD\to \tilde{\frD}}+F_{\frD'\to \tilde \frD'}\circ \Psi_{\frD\to \frD'}\simeq 0.
\]
\item \label{prop:expansion-part-3} If $\tilde \frD, $ $\tilde \frD'$ and $\tilde \frD''$ are three basepoint expanded doubling enhancements of three handleslide equivalent diagrams, then
\[
\tilde{\Psi}_{\tilde \frD'\to \tilde \frD''}\circ \tilde{\Psi}_{\tilde \frD\to \tilde \frD'}\simeq \tilde{\Psi}_{\tilde \frD\to \tilde \frD''}.
\]
\end{enumerate}
\end{prop}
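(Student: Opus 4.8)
The plan is to prove part~(2) of Proposition~\ref{prop:expansion} --- the composition law for the basepoint-expanded transition maps --- by the standard device of stacking hyperboxes and compressing, in the spirit of \cite{JTNaturality}*{Section~9} and \cite{HHSZExact}. (Part~(1), the compatibility of the two models, is handled by an analogous hyperbox computation once the homotopy equivalence $F_{\frD\to\tilde\frD}$ has been built, so I concentrate on the composition law here.)

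First I would fix the bookkeeping. The hyperbox of Figure~\ref{fig:transition-map-1} whose compression is $\tilde\Psi_{\tilde\frD\to\tilde\frD'}$ is assembled functorially from: (a) hyperboxes of attaching curves interpolating $\as\rightsquigarrow\as'$, $\bs\rightsquigarrow\bs'$, $\Ds\rightsquigarrow\Ds'$, together with the (nearly trivial) ones for $c,c'$ and their conjugates; (b) the pairings of these hyperboxes, which produce the middle levels; (c) the $1$-handle and $3$-handle hypercubes of stabilization in the sense of \cite{HHSZExact}*{Section~14}; and (d) the relative-homology level $\scA_\lambda$ built in Section~\ref{sec:hypercubes-rel-homology}. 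Using the data entering the definitions of $\tilde\Psi_{\tilde\frD\to\tilde\frD'}$ and $\tilde\Psi_{\tilde\frD'\to\tilde\frD''}$, I would produce a combined size-$2$ hyperbox of attaching curves on $\{0,1,2\}$ in each direction whose restrictions to $\{0,1\}$ and $\{1,2\}$ recover the two given ones; such a filling exists by the usual argument for hyperboxes of attaching circles, the top-degree generators on $\#^k(S^1\times S^2)$ being unique up to boundary. Pairing these and inserting the corresponding $1$-/$3$-handle stabilization levels and the $\scA_\lambda$ level yields a hyperbox $\mathcal H$ which is precisely the hyperbox of $\tilde\Psi_{\tilde\frD\to\tilde\frD'}$ stacked on top of that of $\tilde\Psi_{\tilde\frD'\to\tilde\frD''}$ along the diagram-change direction.

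Next I would use two features of compression. First, compressing a stacked hyperbox along the stacking direction gives the composite of the compressions of the two pieces --- exactly the statement from Section~\ref{sec:hypercubes} that composition of morphisms is encoded by stacking and compressing --- so compressing $\mathcal H$ along the diagram direction produces $\tilde\Psi_{\tilde\frD'\to\tilde\frD''}\circ\tilde\Psi_{\tilde\frD\to\tilde\frD'}$. Second, compression commutes, up to the canonical homotopies and hence up to homotopy of hyperboxes, with each of the building operations (a)--(d): with pairing of hypercubes of attaching curves (this is formal), with the $1$-/$3$-handle stabilization construction (here the polygon-stabilization results of Section~\ref{sec:polygons}, in particular Proposition~\ref{prop:multi-stabilization-counts}, are what make the composed stabilization hypercubes compress correctly), and with the $\scA_\lambda$ level (using its independence from choices and naturality under restriction to subcubes from Section~\ref{sec:hypercubes-rel-homology}). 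Consequently, if one first compresses the combined size-$2$ curve hyperboxes down to size-$1$ hyperboxes interpolating $\as\rightsquigarrow\as''$, $\bs\rightsquigarrow\bs''$, $\Ds\rightsquigarrow\Ds''$ and then reassembles, one gets a hyperbox homotopic to the diagram-direction compression of $\mathcal H$; this reassembled hyperbox is a valid defining hyperbox for a transition map $\tilde\frD\to\tilde\frD''$, so its compression is homotopic to $\tilde\Psi_{\tilde\frD\to\tilde\frD''}$ by well-definedness of these transition maps up to $\bF[U,Q]/Q^2$-equivariant homotopy. Chaining the two identifications gives $\tilde\Psi_{\tilde\frD'\to\tilde\frD''}\circ\tilde\Psi_{\tilde\frD\to\tilde\frD'}\simeq\tilde\Psi_{\tilde\frD\to\tilde\frD''}$.

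The main obstacle is the second bullet: verifying that \emph{all} the non-pairing ingredients --- the $1$- and $3$-handle hypercubes of stabilization, and especially the relative-homology level $\scA_\lambda$, which is itself built from auxiliary chains $\eta_{\a',\a}$ --- interact with compression and stacking in the expected way. For the handle levels this is a degeneration/neck-stretching argument powered by Proposition~\ref{prop:multi-stabilization-counts}; for $\scA_\lambda$ it reduces to the independence-of-choices and restriction-naturality statements established in Section~\ref{sec:hypercubes-rel-homology}. The other input needed at the very end, well-definedness of the expanded transition maps up to homotopy, is in practice proved simultaneously, by the same uniqueness-of-hyperbox-fillings argument, since two fillings of a fixed boundary hyperbox are homotopic.
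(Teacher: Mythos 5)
Your high-level skeleton for the composition law---one hyperbox of size $(2,2,1)$ for each horizontal level of Figure~\ref{fig:transition-map-1}, stacked and compressed---is the same as the paper's. But the argument does not close as written, because the assertion that ``compression commutes with each of the building operations (a)--(d)'' fails at precisely the place where the paper's Section~\ref{sec:composition-law-expanded-proof} spends all of its effort: the $1$- and $3$-handle levels. The map $F_1^{c\bar\b',c\bar\b}$ is defined by tensoring with a chosen top-degree cycle $\Theta_{\bar\b',\bar\b}$, and when two such levels are stacked with the intervening triangle maps, the resulting special element is $f_{\bar\b'',\bar\b',\bar\b}(\Theta_{\bar\b'',\bar\b'},\Theta_{\bar\b',\bar\b})$, which is only \emph{homologous} to the cycle $\Theta_{\bar\b'',\bar\b}$ used to define $F_1^{c\bar\b'',c\bar\b}$. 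So the stacked object is not, even up to an obvious homotopy, a defining hyperbox for $\tilde\Psi_{\tilde\frD\to\tilde\frD''}$. The paper repairs this by building the auxiliary ``mixed'' hypercube of Equation~\eqref{eq:top-degree-generator-hypercube}, whose chains $\lambda_{\bar\b'',\bar\b}$, $\lambda_{\bar\b'',\bar\b'}$, $\omega_{\bar\b'',\bar\b'}$ record these homologies, and then verifying the length-$3$ relations of the four cubes in Figure~\ref{fig:naturality-Sigma-fixed-1-handles}. Those verifications (Lemma~\ref{lem:top-left-1-handle-subcube} and its companion) are \emph{not} consequences of Proposition~\ref{prop:multi-stabilization-counts} alone: they require introducing new deformed quadrilateral-counting maps $h^t$ and $h^{t<t_0}$ (pairs of index $-1$ rectangles on $\Sigma\sqcup\bar\Sigma$ whose conformal parameters are constrained to differ by $t$, or to be ordered), and counting the ends of the associated one-parameter fibered-product moduli spaces as $t\to\infty$, together with nearest-point arguments to rule out stray degenerations. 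None of this is captured by ``a degeneration/neck-stretching argument powered by Proposition~\ref{prop:multi-stabilization-counts}.''

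Two further points. First, your closing step identifies the reassembled hyperbox with $\tilde\Psi_{\tilde\frD\to\tilde\frD''}$ ``by well-definedness of these transition maps up to homotopy,'' but that independence-of-choices is itself part of what Proposition~\ref{prop:expansion} is being used to establish (it feeds into Theorem~\ref{thm:naturality-fixed-Sigma}), so you cannot invoke it here without a separate uniqueness argument for the non-pairing levels. Second, part~(1) of the proposition is dispatched in one sentence, whereas in the paper it occupies all of Section~\ref{sec:expansion-relation-transition-maps}: one must extend every face (A)--(S) and (A')--(L') of the two hyperboxes defining $F_{\frD\to\tilde\frD}$ to a size-$(1,1,2)$ hyperbox, and several of those extensions again need Proposition~\ref{prop:multi-stabilization-counts} together with the model computations of Lemma~\ref{lem:s->c-multi-stabilize-verify}. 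As it stands the proposal is a plausible outline, not a proof.
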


The definition of the map $F_{\frD\to \tilde{\frD}}$ is in Section~\ref{subsec:expanding-the-involution}. The proof of Part~\eqref{prop:expansion-part-2} of Proposition~\ref{prop:expansion} is in Section~\ref{sec:expansion-relation-transition-maps}. Finally we prove Part~\eqref{prop:expansion-part-3} in Section~\ref{sec:composition-law-expanded-proof}.

\subsection{Preliminaries}

In this section, we review a few maps that will appear in the construction of $F_{\frD\to \tilde{\frD}}$, and also construct an important hypercube.

Firstly, we recall the \emph{free-stabilization} maps from \cite{ZemGraphTQFT}*{Section~6}. If $\cH=(\Sigma,\as,\bs,\ws)$ is a Heegaard diagram, and $w'$ is a point in $\Sigma\setminus(\as\cup \bs \cup \ws)$, we may form a new diagram $\cH^+=(\Sigma,\as\cup \a_0,\bs\cup \b_0, \ws\cup \{w'\})$ by adding attaching curves $\a_0$ and $\b_0$ which are contained in a small ball centered at $w'$. We assume that $\a_0$ and $\b_0$ intersect in two points, and furthermore are disjoint from $\as\cup \bs\cup \ws$.

 In the above setting, there are chain maps 
\[
S_{w'}^+\colon \CF^-(\cH,\frs)\to \CF^-(\cH^+, \frs)\quad \text{and} \quad S_{w'}^-\colon \CF^-(\cH^+,\frs)\to \CF^-(\cH,\frs)
\]
called the free-stabilization maps. They are given by the formulas
\[
S_{w'}^+(\xs)=\xs\times \theta^+,\qquad S_{w'}^-(\xs\times \theta^+)=0,\quad \text{and} \quad S_{w'}^-(\xs\times \theta^-)=\xs,
\]
extended equivariantly over $\bF[U]$.

If $\lambda$ is an arc which connects $w'$ to another basepoint in $\ws$, then
\begin{equation}
S_{w'}^- A_{\lambda} S_{w'}^+=\id,
\label{eq:free-stabilization}
\end{equation}
for appropriately chosen almost complex structure.  This is proven in \cite{ZemGraphTQFT}*{Lemma~7.10} by a model computation.

We now generalize Equation~\eqref{eq:free-stabilization} to the setting of hypercubes. Suppose that $\scH=\CF(\cL_{\a},\cL_{\b})$ is a hypercube formed by pairing hypercubes of attaching curves $\cL_{\a}$ and $\cL_{\b}$. 
 We form new hypercubes, $\cL_{\a}^+$ and $\cL_{\b}^+$, by adding small translates of $\a_0$ and $\b_0$ to each attaching curve of $\cL_{\a}$ and $\cL_{\b}$. We form the morphisms of $\cL_{\a}^+$ by tensoring each morphism of $\cL_{\a}$ with $\theta_{\a_0,\a_0}^+$. We construct $\cL_{\b}^+$ similarly. The hypercube relations for $\cL_{\a}^+$ and $\cL_{\b}^+$ follow from Proposition~\ref{prop:multi-stabilization-counts}. Finally, we define a new hypercube of chain complexes, $\scH^+=\CF(\cL_{\a}^+,\cL_{\b}^+)$.

We assume, additionally, that $\cL_{\a}$ is a hypercube of handleslide equivalent alpha attaching curves, and that the length 1 morphisms are all cycles representing the top degree generator of homology. We construct the morphism
\[
\scA_{\lambda}\colon \CF(\cL_{\a}^+,\cL_{\b}^+)\to \CF(\cL_{\a}^+,\cL_{\b}^+)
\]
as in Section~\ref{sec:hypercubes-rel-homology}.

 We consider the following diagram:
 \begin{equation}
 \begin{tikzcd}[labels=description, row sep=1.2cm, column sep=1.2cm]
 \scH 
 	\ar[r,"\id"]
 	\ar[ddd,"\id"]
 & \scH
  	\ar[d,"\scS_{w'}^+"]
 \\
 &
 \scH^+
 	\ar[d, "\scA_{\lambda}"]
 \\
 &
 \scH^+
 	\ar[d, "\scS_{w'}^-"]
 \\
 \scH
 	\ar[r,"\id"]
 &
 \scH
 \end{tikzcd}
 \label{eq:add-basepoint-hypercube}
 \end{equation}
 
 The maps $\scS_{w'}^+$ and $\scS_{w'}^-$ denote hypercube versions of the free-stabilization maps; that is, they are hypercube morphisms with only length 1 maps, which are the ordinary free-stabilization maps on the Floer complexes. In the language of \cite{HHSZExact}, these are \emph{hypercubes of stabilization}.

 If we compress the right-hand side of~\eqref{eq:add-basepoint-hypercube} we obtain a cube-shaped diagram. In the following, we verify that the resulting diagram is a hypercube of chain complexes: 
 
 \begin{prop}\label{prop:simple-expansion-hypercube}
 Let $\scH$ and $\scH^+$ be as above. Then $\scA_{\lambda}$ may be chosen so that the diagram in ~\eqref{eq:add-basepoint-hypercube} becomes a hypercube of chain complexes once we compress the right hand-side. In the hypercubes along the right-hand side, we use almost complex structures which are nodal near the free-stabilization region.
 \end{prop}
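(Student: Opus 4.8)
The plan is to show that, after compressing the right-hand column, the diagram in~\eqref{eq:add-basepoint-hypercube} becomes the square of hypercube morphisms
\[
\begin{tikzcd}[labels=description, row sep=1cm, column sep=1.6cm]
\scH\ar[r,"\id"]\ar[d,"\id"]\ar[dr,dashed]&\scH\ar[d,"\Phi"]\\
\scH\ar[r,"\id"]&\scH
\end{tikzcd}
\]
where $\Phi:=\scS_{w'}^-\circ\scA_\lambda\circ\scS_{w'}^+\colon\scH\to\scH$ is the compression of the column. Since every length $1$ arrow here is a chain map (the identities trivially, $\Phi$ as a composition of chain maps), the hypercube relations for this square amount to the existence of a hypercube morphism $\scJ\colon\scH\to\scH$ (the dashed diagonal) with $\d_{\underline{\Mor}}\scJ=\id+\Phi$, i.e.\ to $\Phi$ being homotopic to $\id_\scH$ through hypercube morphisms. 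I will in fact arrange that $\Phi=\id_\scH$ exactly, so that $\scJ=0$ works (using $\id+\id=0$ over $\bF$) and all higher hypercube relations hold trivially.

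First I would pin down the length $1$ part of $\Phi$. Because $\scS_{w'}^+$ and $\scS_{w'}^-$ are hypercubes of stabilization in the sense of \cite{HHSZExact} --- hypercube morphisms supported entirely in length $1$ --- the length $k$ component of the compressed column is $S_{w'}^-\circ(\scA_\lambda)_k\circ S_{w'}^+$, where $(\scA_\lambda)_k$ is the length $k$ component of $\scA_\lambda$ and $S_{w'}^\pm$ are the ordinary free-stabilization maps. Taking the almost complex structure nodal near the free-stabilization region, as in the statement, the model computation of \cite{ZemGraphTQFT}*{Lemma~7.10} (recorded above as~\eqref{eq:free-stabilization}) gives $S_{w'}^-A_\lambda S_{w'}^+=\id$, so the length $1$ component of $\Phi$ is $\id_\scH$.

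It remains to kill the higher components, and this is the step I expect to require the most care. With a nodal almost complex structure the free-stabilized complex $\CF(\cL_\a^+,\cL_\b^+)$ splits in the stabilization factor as a complex on $\Sigma$ tensored with $\langle\theta^+,\theta^-\rangle$, with no $U_{w'}$-powers in the differential or the higher structure maps. I would run the construction of $\scA_\lambda$ from Section~\ref{sec:hypercubes-rel-homology} directly on $\cL_\a^+$, choosing inductively each auxiliary chain $\eta_{\a^{\veps'},\a^\veps}$ to have the form $(\text{chain on }\Sigma)\times\theta_{\a_0,\a_0}^+$ in the stabilization factor; this is possible because there $\Phi_{w'}$ vanishes and $\Phi_{w}$ respects the splitting, so the equations $\d\eta=(\Phi_{w}+\Phi_{w'})(\Theta)$ defining the $\eta$-chains descend to the corresponding equations on $\Sigma$, whose solutions exist by the degree argument already used in Section~\ref{sec:hypercubes-rel-homology}. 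Since $\scA_\lambda$ is only well-defined up to homotopy of hypercube morphisms, this particular normalization is harmless. With it, for $k\ge 2$ every polygon contributing to $(\scA_\lambda)_k$ has all of its stabilization-factor inputs equal to $\theta^+$, and the same model computation --- now applied with the $\lambda$-weighting of Lemma~\ref{lem:associativity-extra-lambda}, together with Proposition~\ref{prop:multi-stabilization-counts} controlling the connected-sum degeneration --- shows the stabilization-factor output is again $\theta^+$, hence is annihilated by $S_{w'}^-$. Therefore $S_{w'}^-(\scA_\lambda)_kS_{w'}^+=0$ for $k\ge 2$, so $\Phi=\id_\scH$, and the compressed diagram is a hypercube with vanishing diagonal terms. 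The genuine obstacle is this last point: confirming that, once the stabilized inputs and auxiliary chains are normalized, no $\lambda$-weighted higher polygon can leak a $\theta^-$ into the output --- which is exactly where the nodal almost complex structures and the free-stabilization model computations of \cite{ZemGraphTQFT} do the real work.
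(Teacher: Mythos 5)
Your proposal is correct and follows essentially the same route as the paper: the reduction to showing $\scS_{w'}^-\scA_\lambda\scS_{w'}^+=\id$ as hypercube morphisms, the normalization of the auxiliary chains of $(\cL_\a^+)^\lambda$ to have a $\theta^+_{\a_0,\a_0}$ tensor factor (this is exactly the paper's Lemma~\ref{lem:A_lambda-cube-stabilized}, proved via the vanishing of $\Phi_{w'}$ and the splitting of $\Phi_w$), the use of Proposition~\ref{prop:multi-stabilization-counts} to see that higher polygon counts with all-$\theta^+$ inputs output $\theta^+$ and hence die under $S_{w'}^-$, and the appeal to \cite{ZemGraphTQFT}*{Lemma~7.10} for the length~1 component. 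No gaps.
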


 We begin with a lemma:
\begin{lem}
\label{lem:A_lambda-cube-stabilized}
Let $\scH$ and $\scH^+$ be as above. The hypercube $(\cL_\a^+)^{\lambda}$ may be chosen so that each chain of length at least 2 in the $\lambda$ direction is of the form $U\eta_{\a',\a}\otimes \theta^+_{\a_0,\a_0}$.
\end{lem}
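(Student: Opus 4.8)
The plan is to mimic the construction of $\cL_\a^\lambda$ from Section~\ref{sec:hypercubes-rel-homology}, but carried out entirely inside the free-stabilized complexes, and to verify inductively that at every stage the natural ansatz $U\eta_{\a',\a}\otimes \theta^+_{\a_0,\a_0}$ can be used. First I would set up notation: write $\cL_\a^+$ for the hypercube obtained by tensoring each generator of $\cL_\a$ with $\theta^+_{\a_0,\a_0}$, so that its length-$1$ morphisms are $\Theta_{\a',\a}\otimes \theta^+_{\a_0,\a_0}$, and these are again top-degree generators. I would then run the construction of $(\cL_\a^+)^\lambda$ following the recursion in equations~\eqref{eq:A-lambda-1st-equation}--\eqref{eq:A-lambda-2nd-equation}, where now $\lambda$ is the arc connecting $w'$ (the free-stabilization basepoint) to the other basepoint.

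The key step is the base case, the length-$2$ chains in the $\lambda$-direction, cf.~\eqref{eq:length-2-A-lambda}--\eqref{eq:telescope-A-lambda}. Here I would invoke Proposition~\ref{prop:multi-stabilization-counts} (in the special case of a connected sum with a small-translate/free-stabilization region) to compute the maps $A_\lambda$, $A'_\lambda$ and $\Phi_{w_i}$ on the stabilized complex: because the free-stabilization region contributes only the factor $\theta^+_{\a_0,\a_0}$ and carries no $\lambda$-crossings (the arc $\lambda$ passes through the neck, not the stabilization ball), one gets $A_\lambda(\Theta_{\a',\a}\otimes \theta^+_{\a_0,\a_0}) = (A_\lambda \Theta_{\a',\a})\otimes \theta^+_{\a_0,\a_0}$ up to higher-order terms, and similarly for $\Phi_{w_1}+\Phi_{w_2}$. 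Hence the relation~\eqref{eq:length-2-relation-A-lambda} becomes, after factoring off $\theta^+_{\a_0,\a_0}$, exactly the unstabilized relation $\d(U\eta_{\a',\a}) = (A_\lambda+A'_\lambda)(\Theta_{\a',\a})$, which is solvable by the same grading argument as in Section~\ref{sec:hypercubes-rel-homology} (the obstruction $\Phi_{w_i}(\Theta_{\a',\a})\otimes \theta^+_{\a_0,\a_0}$ is a cycle of grading one higher than a top-degree generator, hence a boundary). So one may take $\eta_{\a',\a}^+ = \eta_{\a',\a}\otimes\theta^+_{\a_0,\a_0}$, verifying the claimed form for $m=2$.

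For the inductive step I would assume all $\lambda$-length-$m'$ chains with $1<m'<m$ have the form $U\eta_{\a',\a}\otimes \theta^+_{\a_0,\a_0}$ and analyze the cycle $C_{\veps',\veps}$ defined in~\eqref{eq:A-lambda-1st-equation}. Every polygon count appearing in $C_{\veps',\veps}$ factors through the free-stabilization region; by Proposition~\ref{prop:multi-stabilization-counts} each such count equals the corresponding unstabilized count tensored with $\theta^+_{\a_0,\a_0}$, plus terms landing in strictly higher Maslov grading, which here means they vanish against top-degree generators. Thus $C_{\veps',\veps} = C_{\veps',\veps}^{\mathrm{unstab}}\otimes \theta^+_{\a_0,\a_0}$, and the analogue of~\eqref{eq:A-lambda-2nd-equation} shows $C_{\veps',\veps} = U\,C'_{\veps',\veps}$ with $C'_{\veps',\veps}$ a cycle of grading $m$ above top degree, hence a boundary for $m\ge 1$; one chooses $\eta_{\a^{\veps'},\a^{\veps}}$ of the form $\eta\otimes\theta^+_{\a_0,\a_0}$ bounding it. I expect the main obstacle to be bookkeeping the "higher grading" error terms from Proposition~\ref{prop:multi-stabilization-counts} carefully enough to be sure they genuinely cannot contribute to the coefficients of the top-degree generators that appear in the recursion — i.e.\ checking that the relevant $\ys$ in the statement of that proposition is always forced to be the top generator by the grading of $C_{\veps',\veps}$, so that the $\qs_{\zs}$ correction terms drop out. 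Once that is in place, the lemma follows, and it feeds directly into the proof of Proposition~\ref{prop:simple-expansion-hypercube} by giving the explicit diagonal chains needed to compress~\eqref{eq:add-basepoint-hypercube}.
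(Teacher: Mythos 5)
Your proposal is correct and follows essentially the same route as the paper: run the recursion for $(\cL_\a^+)^{\lambda}$, use the stabilization results to factor each relevant count as the unstabilized count tensored with $\theta^+_{\a_0,\a_0}$ (the error terms $\qs_{\zs}\otimes \zs$ of Proposition~\ref{prop:multi-stabilization-counts} indeed drop out because the model output is already the top-degree generator of the stabilization region), and then solve by the same grading argument. The one detail to adjust is the base case: Proposition~\ref{prop:multi-stabilization-counts} concerns $n$-gons with $n>2$, so for the length-2 relation the paper instead passes through the telescoping identity~\eqref{eq:telescope-A-lambda} and the free-stabilization formula for the differential (\cite{ZemGraphTQFT}*{Proposition~6.5}), which gives $\Phi_{w'}(\Theta_{\a',\a}\otimes\theta^+_{\a_0,\a_0})=0$ and $\Phi_{w}(\Theta_{\a',\a}\otimes\theta^+_{\a_0,\a_0})=\Phi_{w}^-(\Theta_{\a',\a})\otimes\theta^+_{\a_0,\a_0}$; note also that $\lambda$ terminates at $w'$ and hence does cross $\a_0$, so the direct splitting of $A_\lambda$ is not quite as automatic as you suggest, though the conclusion is the same.
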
 
\begin{proof} The proof is to follow the steps of the construction in Section~\ref{sec:hypercubes-rel-homology} and verify that each of the chains in $\cL_{\a}^\lambda$ of length 2 or more may be taken to be of the stated form. Suppose $\veps<\veps'$ and $|\veps'-\veps|_{L^1}=1$. The chain of $\cL_{\a}^+$ from $\as_{\veps}$ to $\as_{\veps'}$ is of the form $\Theta_{\a_{\veps'},\a_{\veps}}\otimes \theta^+_{\a_0,\a_0}$ by definition. As in Equation~\eqref{eq:length-2-relation-A-lambda}, the desired length 2 hypercube relation is 
\begin{equation}
(A_\lambda+A_{\lambda}')(\Theta_{\a',\a}\otimes \theta_{\a_0,\a_0}^+)=\d (U\eta_{\a',\a}\otimes \theta_{\a_0,\a_0}^+). \label{eq:A_lambda-length-2-stabilized}
\end{equation}
From Equation~\eqref{eq:telescope-A-lambda}, we know
\[
(A_\lambda+A_{\lambda}')(\Theta_{\a',\a}\otimes \theta_{\a_0,\a_0}^+)=U(\Phi_{w}+\Phi_{w'})(\Theta_{\a',\a}\otimes \theta_{\a_0,\a_0}^+).
\]
Using the differential computation from \cite{ZemGraphTQFT}*{Proposition~6.5}, we see that $\Phi_{w'}(\Theta_{\a',\a}\otimes \theta_{\a_0,\a_0}^+)=0$ and 
\[
\Phi_{w}(\Theta_{\a',\a}\otimes \theta^+_{\a_0,\a_0})=\Phi_{w}^-(\Theta_{\a',\a})\otimes \theta_{\a_0,\a_0}^+
\]
where $\Phi_{w}^-$ denotes the basepoint action on the unstabilized complexes. In particular $\Phi_{w}^-(\Theta_{\a',\a})=\d \eta_{\a',\a}$, for some $\eta_{\a',\a}$, since $\Phi_w^-$ increases grading by 1, and $\Theta_{\a',\a}$ is in the maximal grading of homology. Using the fact that $S_{w'}^+$ is a chain map \cite{ZemGraphTQFT}*{Proposition~6.5}, we also have
\[
\d(\eta_{\a',\a}\otimes \theta_{\a_0,\a_0}^+)=\d(\eta_{\a',\a})\otimes \theta_{\a_0,\a_0}^+.
\]
In particular, the chain $\eta_{\a',\a}$, constructed as above, satisfies Equation~\eqref{eq:A_lambda-length-2-stabilized}.

The higher length chains of $(\cL_{\a}^+)^\lambda$ are analyzed similarly. In our present context, by using the stablization result for holomorphic polygons in Proposition~\ref{prop:multi-stabilization-counts}, we see that the right-hand side of Equation~\eqref{eq:A-lambda-1st-equation} can be written as $C_{\veps',\veps}\otimes \theta_{\a_0,\a_0}^+$, where $C_{\veps',\veps}$ is a chain on the unstabilized complex. By the exact same reasoning as before, $C_{\veps',\veps}$ is a multiple of $U$, and is a cycle, so we may pick a primitive of the form $U\eta_{\a',\a}\otimes \theta_{\a_0,\a_0}^+$, completing the proof.
\end{proof}

 \begin{proof}[Proof of Proposition~\ref{prop:simple-expansion-hypercube}] The hypercube relations for the right-hand side follow from the fact that each of the maps $\scS_{w'}^+$, $\scA_{\lambda}$ and $\scS_{w'}^-$ is a homomorphism of hypercubes (i.e. a chain map).

The hypercube relations for the entire cube, after compressing the right-hand side of the diagram in Equation~\eqref{eq:add-basepoint-hypercube} are equivalent to the relation
\[
\scS_{w'}^- \scA_{\lambda} \scS_{w'}^+=\id,
\] 
as hypercube morphisms. First, we use Lemma~\ref{lem:A_lambda-cube-stabilized} to construct the hypercube $(\cL_{\a}^+)^{\lambda}$ so that each chain of length at least 2 which has non-trivial $\lambda$ direction is of the form $U \eta_{\a',\a}\otimes \theta_{\a_0,\a_0}^+$. In the composition $\scA_{\lambda} \scS_{w'}^+$, each arrow with length greater than 1 will be obtained by a holomorphic polygon count which only has inputs of the form $\xs\otimes \theta^+_{\a_0,\a_0}$. By Proposition~\ref{prop:multi-stabilization-counts}, the output of such a polygon count will also have a tensor factor of $\theta^+_{\a_0,\a_0}$. Such elements lie in the kernel of $S_{w'}^-$. Hence the composition $\scS_{w'}^- \scA_{\lambda} \scS_{w'}^+$ has no arrows of length greater than 1. The arrows of length 1 correspond to the composition $S_{w'}^- A_{\lambda} S_{w'}^+$, which is the identity, by \cite{ZemGraphTQFT}*{Lemma~7.10}.
 \end{proof}

\subsection{The map \texorpdfstring{$F_{\frD\to \tilde{\frD}}$}{F:D->D-tilde}}\label{subsec:expanding-the-involution}

In this section, we define the map $F_{\frD\to \tilde{\frD}}$. The map will correspond to a hypercube relating the expanded model of the involution from Section~\ref{sec:expanded-def} and the non-expanded model from Section~\ref{sec:doubling-def}. 
 
The construction proceeds in two steps. We first relate the singly-pointed involution to a model which adds a basepoint in a very simple way. We call this the \emph{trivially expanded} involution. As a second step, we relate the trivially expanded model to the expanded model defined in Section~\ref{sec:expanded-def}.
 
Let $(\Sigma,\as,\bs,w)$ denote a Heegaard diagram, and let $w'$ be a new basepoint, chosen near $w$. Let $\a_0$ and $\b_0$ be two new alpha and beta circles, centered at $w'$. We will let $s$ denote either of the curves $\a_0$ or $\b_0$ (or small translates thereof).

\begin{figure}[ht]
\[
 \begin{tikzcd}[labels=description, row sep=.8cm, column sep=1.5cm]
\CF(\as,\bs)
 	\ar[r,"\id"]
 	\ar[d,"F_1^{\bar \b, \bar \b}"]
 	\ar[dr,phantom, "\fbox{A}"]
&[-1cm]
\CF(\as,\bs)
 	\ar[d,"F_1^{\bar \b, \bar \b}"]
	\ar[r, "\id"]
 	\ar[dr,phantom, "\fbox{B}"]
&
\CF(\as,\bs)
	\ar[r, "\id"]
	\ar[d,"F_{1}^{s\bar \b , s\bar \b}"]
	 \ar[dr,phantom, "\fbox{C}"]
&[-1cm]
\CF(\as,\bs)
	\ar[d,"F_{1}^{s\bar \b, s\bar \b}"]
\\[1cm]
\CF(\as \bar \bs, \bs \bar \bs)
	\ar[r,"\id"]
	\ar[d, "f_{\a \bar \b}^{\b\bar \b\to \Dt}"]
 	\ar[dr,phantom, "\fbox{D}"]
&
\CF(\as\bar \bs, \bs \bar \bs)
	\ar[d, "f_{\a \bar \b}^{\b\bar \b\to \Dt}"]
	\ar[r, "S_{w'}^+"]
 	\ar[dr,phantom, "\fbox{E}"]
&
\CF(\as s \bar \bs, \bs s \bar \bs)
	\ar[d, "f_{\a s \bar \b}^{\b s\bar \b\to s\Dt}"]
	\ar[r,"\id"]
 	\ar[dr,phantom, "\fbox{F}"]
&
\CF(\as s \bar \bs , \bs s \bar \bs)
	\ar[d, "f_{\a s \bar \b}^{\b s\bar \b\to s\Dt}"]
\\[1cm]
\CF(\as \bar \bs, \Ds)
	\ar[r,"\id"]
	\ar[ddd, "\id"]
	\ar[dddr,phantom, "\fbox{G}"]
&
\CF(\as \bar \bs, \Ds)
	\ar[d, "S_{w'}^+"]
	\ar[r, "S_{w'}^+"]
 	\ar[dr,phantom, "\fbox{H}"]
&
\CF(\as s \bar \bs, s \Ds)
	\ar[r, "\id"]
	\ar[d," A_{\lambda}"]
 	\ar[dr,phantom, "\fbox{I}"]
&
\CF(\as s \bar \bs, s\Ds )
	\ar[d,"A_\lambda"]
\\
&
\CF(\as s \bar \bs, s \Ds)
	\ar[d, "A_\lambda"]
	\ar[r, "A_\lambda"]
 	\ar[dr,phantom, "\fbox{J}"]
&
\CF(\as s \bar \bs, s \Ds)
	\ar[r, "\id"]
	\ar[d,"f_{\a s \bar \b}^{ s\Dt\to \a s \bar \a}"]
 	\ar[dr,phantom, "\fbox{K}"]
&
\CF(\as s \bar \bs, s \Ds)
	\ar[d,"f_{\a s \bar \b}^{ s\Dt\to \a s \bar \a}"]
\\
&
\CF(\as s \bar \bs, s \Ds)
	\ar[d, "S_{w'}^-"]
	\ar[r, "f_{\a s \bar \b}^{ s\Dt\to \a s \bar \a}"]
 	\ar[dr,phantom, "\fbox{L}"]
&
\CF(\as s \bar \bs, \as s \bar \as)
	\ar[r,"\id"]
	\ar[d,"S_{w'}^-"]
 	\ar[dr,phantom, "\fbox{M}"]
&
\CF(\as s \bar \bs, \as s \bar \as)
	\ar[d,"F_3^{\a s,\a s}"]
\\
\CF(\as \bar \bs, \Ds)
	\ar[r,"\id"]
	\ar[d, "f_{\a \bar \b}^{\Dt\to \a \bar \a}"]
 	\ar[dr,phantom, "\fbox{N}"]
&
\CF(\as \bar \bs, \Ds)
	\ar[d,"f_{\a \bar \b}^{\Dt\to \a \bar \a}"]
	\ar[r," f_{\Dt\to \b\bar \b}^{\b\bar{\a}}"]
 	\ar[dr,phantom, "\fbox{O}"]
&
\CF(\as \bar \bs, \as \bar \as)
	\ar[r,"F_3^{\a,\a}"]
	\ar[d,"F_3^{\a,\a}"]
 	\ar[dr,phantom, "\fbox{P}"]
&
\CF(\bar \bs, \bar \as)
	\ar[d,"\id "]
\\[1cm]
\CF(\as\bar \bs,\as \bar \as)
	\ar[r,"\id"]
	\ar[d, "F_3^{\a,\a}"]
 	\ar[dr,phantom, "\fbox{Q}"]
&
\CF(\as\bar \bs, \as\bar \as)
	\ar[d, "F_3^{\a,\a}"]
	\ar[r,"F_3^{\a,\a}"]
 	\ar[dr,phantom, "\fbox{R}"]
&
\CF(\bar \bs, \bar \as)
	\ar[r, "\id"]
	\ar[d,"\id"]
 	\ar[dr,phantom, "\fbox{S}"]
&
\CF(\bar \bs, \bar \as)
	\ar[d,"\id"]
\\[1cm]
\CF(\bar \bs, \bar \as)
	\ar[r,"\id"]
&
\CF(\bar \bs, \bar \as)
	\ar[r,"\id"]
&
\CF(\bar \bs, \bar \as)
	\ar[r,"\id"]
&
\CF(\bar \bs, \bar \as)
\end{tikzcd}
\]
\caption{The first hyperbox used to construct the map $F_{\frD\to \tilde{\frD}}$. None of the faces have length 2 maps. The boxed letter will be referred to in the proof of part \eqref{prop:expansion-part-2} of Proposition~\ref{prop:expansion}.}
\label{eq:transition-standard-to-trivial}
\end{figure}

In Figure~\ref{eq:transition-standard-to-trivial} each square commutes, and no length 2 maps are necessary. As our definition, we take the compression of the right-hand side of Figure~\ref{eq:transition-standard-to-trivial} as the trivially expanded model of the involution.

We now describe how to relate the trivially expanded model in Figure~\ref{eq:transition-standard-to-trivial} to the expanded model described in Section~\ref{sec:expanded-def}. To do this, we use the hyperbox shown in Figure~\ref{eq:trivial-model-to-neck}.

\begin{figure}[ht]
\[
\begin{tikzcd}[labels=description, row sep=1.2cm, column sep=2.2cm]
\CF(\as,\bs)
	\ar[d,"F_{1}^{s\bar \b, s\bar \b}"]
	\ar[r,"\id"]
 	\ar[dr,phantom, "\fbox{A'}"]
&
\CF(\as,\bs)
	\ar[d,"F_{1}^{s\bar \b,c\bar \b}"]
	\ar[r,"\id"]
 	\ar[dr,phantom, "\fbox{B'}"]
&
\CF(\as,\bs)
	\ar[d,"F_{1}^{c\bar \b,c\bar \b}"]
\\
\CF(\as s \bar \bs, \bs s \bar \bs)
	\ar[d, "f_{\a s \bar \b}^{\b s \bar \b\to s \Dt}"]
	\ar[r,"f_{\a s \bar \b}^{\b s \bar \b\to \b c \bar \b}"]
	\ar[dr,dashed]
	\ar[dr,phantom, shift left=6,pos=.6, "\fbox{C'}"]
&
\CF(\as s \bar \bs, \bs c \bar \bs)
	\ar[d,"f_{\a s \bar \b}^{\b c \bar \b\to s \Dt}"]
	\ar[r,"f_{\a s \bar \b\to \a c \bar \b}^{\b c \bar \b}"]
	\ar[dr,dashed]
	\ar[dr,phantom, shift left=6,pos=.6, "\fbox{D'}"]
&
\CF(\as c \bar \bs, \bs c \bar \bs)
	\ar[d,"f_{\a c \bar \b}^{\b c\bar \b \to s\Dt}"]
\\
\CF(\as s \bar \bs, s \Ds)
	\ar[d,"A_\lambda"]
	\ar[r,"\id"]
	\ar[dr,phantom, "\fbox{E'}"]
&
\CF(\as s \bar \bs, s \Ds)
	\ar[d,"A_\lambda"]
	\ar[r,"f_{\a s \bar \b\to \a c \bar \b}^{s \Dt}"]
	\ar[dr,dashed]
	\ar[dr,phantom,  shift left=6,pos=.6,"\fbox{F'}"]
&
\CF( \as c \bar \bs, s\Ds)
	\ar[d,"A_\lambda"]
\\
\CF(\as s \bar \bs, s \Ds)
	\ar[d,"f_{\a s \bar \b}^{s \Dt \to \a s \bar \a}"]
	\ar[r,"\id"]
	\ar[dr,dashed]
	\ar[dr,phantom,  shift left=6,pos=.6,"\fbox{G'}"]
&
\CF(\as s \bar \bs, s \Ds)
	\ar[d,"f_{\a s \bar \b}^{s \Dt\to \a c'\bar \a}"]
	\ar[r,"f_{\a s \bar \b\to \a c \bar \b}^{s \Dt}"]
	\ar[dr,dashed]
	\ar[dr,phantom,  shift left=6,pos=.6,"\fbox{H'}"]
&
\CF(\as c \bar \bs, s \Ds)
	\ar[d,"f_{\a c \bar \b}^{s \Dt\to \a c'\bar \a}"]
\\
\CF(\as s \bar \bs, \as s \bar \as)
	\ar[d,"\id"]
	\ar[r,"f_{\a s \bar \b}^{\a s \bar \a\to \a c'\bar \a}"]
	\ar[dr,phantom, "\fbox{I'}"]
&
\CF(\as s \bar \bs, \as c'\bar \as)
	\ar[r,"f_{\a s \bar \b\to \a c \bar \b}^{\a c'\bar \a}"]
	\ar[d, "\id"]
	\ar[dr,dashed]
	\ar[dr,phantom,  shift left=6,pos=.6,"\fbox{J'}"]
&
\CF(\as c \bar \bs, \as c' \bar \as)
	\ar[d, "f_{\a c \bar \b\to \a c'\bar \b}^{\a c'\bar \a}"]
\\
\CF(\as s \bar \bs, \as s \bar \as)
	\ar[d,"F_3^{\a s,\a s}"]
	\ar[r, "f_{\a s \bar \b}^{\a s \bar \a\to \a c'\bar \a}"]
	\ar[dr,phantom,"\fbox{K'}"]
&
\CF(\as s \bar \bs, \as c'\bar \as)
	\ar[d,"F_3^{\a s, \a c'}"]
	\ar[r,"f_{\a s \bar \b\to \a c'\bar \b}^{\a c'\bar \a}"]
	\ar[dr,phantom,"\fbox{L'}"]
&
\CF(\as c'\bar \bs, \as c'\bar \as)
	\ar[d,"F_3^{\a c',\a c'}"]
\\
\CF(\bar \bs, \bar \as)
	\ar[r,"\id"]
&
\CF(\bar \bs, \bar \as)
	\ar[r,"\id"]
&
\CF(\bar \bs, \bar \as)
\end{tikzcd}
\]
\caption{The second hyperbox used to construct the map $F_{\frD\to \tilde{\frD}}$. The boxed letters label the faces (in particular, they do not label maps in the faces).}
\label{eq:trivial-model-to-neck}
\end{figure}

We now explain the maps appearing in equation Figure~\ref{eq:trivial-model-to-neck}. Except for the top and bottom-most cubes, all of the faces either commute on the nose, or are obtained by pairing hypercubes of attaching curves. The hypercubes are all of handleslide equivalent attaching curves, or handleslide equivalent attaching curves with an extra direction corresponding to $\lambda$. Hence the construction follows by the technique of filling empty hypercubes of handleslide equivalent attaching curves \cite{MOIntegerSurgery}*{Lemma~8.6}, or Section~\ref{sec:hypercubes-rel-homology}.

We now consider the top-most and bottom-most hypercubes. We claim that the hypercube relations may be verified by appealing to Proposition~\ref{prop:multi-stabilization-counts}. To this end, we state the following lemma:

\begin{lem}\,
	\label{lem:s->c-multi-stabilize-verify}
\begin{enumerate}
\item  With respect to the unique $\Spin^c$ structures which have torsion restriction to the boundaries of the corresponding 4-manifolds, the Heegaard triples 
\[
(\bar \Sigma, s \bar \bs, s \bar \bs, c \bar \bs,w,w')\quad \text{and}\quad  (\bar \Sigma, c \bar \bs, s \bar \bs, c \bar \bs,w,w')
\]
 are weakly admissible multi-stabilizing Heegaard triples. Furthermore
 \[
\widehat{f}_{s\bar \b, s \bar \b, c \bar \b}(\Theta^+_{s\bar \b, s \bar \b},\Theta^+_{s\bar \b, c \bar \b})=\Theta^+_{s\bar \b, c \bar \b}\quad \text{and} \quad \widehat{f}_{c \bar \b, s \bar \b, c \bar \b}(\Theta^+_{c \bar \b,s \bar \b},\Theta^+_{s \bar \b, c \bar \b})=\Theta^+_{c \bar \b, c \bar \b}. 
 \]
 Here, $\widehat{f}_{s\bar \b, s \bar \b, c \bar \b}$ denotes the map on $\widehat{\CF}$, and similarly for the other holomorphic triangle map.
 \item  Similarly, with respect to the unique $\Spin^c$ structures which have torsion restriction to the boundary,
   \[
 (\Sigma, \as s, \as s, \as c',w,w')\quad \text{and} \quad (\Sigma, \as c', \as s, \as c',w,w')
 \]
  are weakly admissible multi-stabilizing diagrams, and
  \[
\widehat{f}_{\a s, \a s, \a c'} (\Theta^-_{\a s, \a s}, \Theta^+_{\a s, \a c'})=\Theta^-_{\a s, \a c'}\quad \text{and}\quad \widehat{f}_{\a c', \a s, \a c'}(\Theta^+_{\a c', \a s}, \Theta^-_{\a s, \a c'})=\Theta^-_{\a c',\a c'}.  
  \]
\end{enumerate}
\end{lem}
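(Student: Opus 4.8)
The plan is to prove Lemma~\ref{lem:s->c-multi-stabilize-verify} by reducing each assertion to a small model computation in the free-stabilization region, using the machinery of Section~\ref{sec:polygons}. I would first dispense with the admissibility claims: in each case the curves $s\bar\bs$ and $c\bar\bs$ (respectively $\as s$ and $\as c'$) are small Hamiltonian translates of one another, so weak admissibility can be arranged by the standard winding argument, and any periodic domain with zero multiplicity at $w$ and $w'$ must be supported away from the stabilization region, so it descends to a periodic domain of a genuinely admissible subdiagram. The multi-stabilizing property is then immediate: there is a unique $\Spin^c$ structure with torsion restriction to the boundary, and since all the curves involved are small translates, the Maslov index $d(\frs)$ condition is vacuous (there is only one $\frs$ to consider). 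This is routine and I would state it quickly.

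The heart of the lemma is the computation of the triangle maps on $\widehat\CF$. Here the key observation is that all four Heegaard triples are of the form $\cD\#\cD_0$ where $\cD_0$ is a standard small-translate triple in the genus-one (or genus-zero, i.e. sphere with a pair of curves) stabilization region carrying the curves $s$, $c$, $c'$, and $\cD$ is the ``rest'' of the doubled diagram. By Proposition~\ref{prop:multi-stabilization-counts} and the Example immediately following it, the triangle count on the connected sum is determined by the triangle count on the $\cD_0$ factor together with a count on $\cD$; but the relevant inputs are precisely the top-degree generators $\Theta^+$ and $\Theta^-$, so I would invoke the Example's formulas: feeding in all $\theta^+$'s yields $\Theta^+$ out, and feeding in one $\theta^-$ and the rest $\theta^+$'s yields $\theta^-$ out (up to higher-grading terms, which vanish in $\widehat\CF$ since $\theta^-$ is already the \emph{bottom} generator, or more precisely one checks the grading bookkeeping forces no correction terms). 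The actual content is thus the model computation on $\cD_0$: that $\widehat f_{s,s,c}(\Theta^+_{s,s},\Theta^+_{s,c}) = \Theta^+_{s,c}$ and $\widehat f_{c,s,c}(\Theta^+_{c,s},\Theta^+_{s,c}) = \Theta^+_{c,c}$, and the analogous statements with one $\Theta^-$ in the second part. These are exactly the kind of model computations carried out in \cite{HHSZExact} and in \cite{ZemGraphTQFT}*{Section~8} for triples of small translates of a single curve; I would cite those and note that the small-triangle count is the unique holomorphic triangle in the relevant homotopy class, with the nearest-point / small-translate theorem \cite{HHSZExact}*{Propositions~11.1 and 11.5} guaranteeing the count is exactly $1$.

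The main obstacle I anticipate is the grading bookkeeping in part (2), where a $\Theta^-$ appears: one must verify that $\widehat f_{\a s,\a s,\a c'}(\Theta^-_{\a s,\a s},\Theta^+_{\a s,\a c'})$ is \emph{exactly} $\Theta^-_{\a s,\a c'}$ with no higher-order terms, i.e.\ that the ``$\sum \qs_{\zs}\otimes\zs$'' correction in Proposition~\ref{prop:multi-stabilization-counts} genuinely vanishes here. The point is that $\Theta^-$ is the generator of \emph{minimal} Maslov grading in $\widehat\CF$ of the relevant $(S^1\times S^2)$-type diagram, so there is no room for a generator of strictly higher grading to appear as a correction once one also uses that the $\cD$-side count $f_{\cD}(\cdots)$ is forced to land in a single grading; combined with the fact that $\widehat\CF$ of the small-translate $\cD_0$-diagram has vanishing differential (algebraic rigidity), the correction terms are cycles of too-high grading in a complex where the top generator is $\Theta^+$, hence the only possibility is $\Theta^-$ itself. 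I would also need to be mildly careful that the connected sum is taken at the correct basepoint so that the $\theta^+_{\a_0,\a_0}$-type tensor factors match up with the curves $s$, $c$, $c'$ as they actually sit in Figures~\ref{eq:transition-standard-to-trivial} and \ref{eq:trivial-model-to-neck}; this is bookkeeping rather than a genuine difficulty. Once these model computations are in hand the lemma follows by assembling the pieces via Proposition~\ref{prop:multi-stabilization-counts}.
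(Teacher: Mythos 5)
There is a genuine gap, and it sits at the linchpin of your argument: the claim that $s\bar \bs$ and $c\bar\bs$ (resp.\ $\as s$ and $\as c'$) are small Hamiltonian translates of one another. They are not. The curve $s$ is a free-stabilization curve centered at $w'$, while $c$ and $c'$ are perturbations of the meridian $\d D$ of the connected-sum tube; they are handleslide equivalent (equivalently, related by an isotopy crossing a basepoint), but not isotopic in the complement of $\{w,w'\}$. If they were small translates, the triangle maps $f_{\a s\bar\b}^{\b s \bar \b\to \b c\bar\b}$ and their relatives would be nearest-point maps and the whole hyperbox of Figure~\ref{eq:trivial-model-to-neck} --- and this lemma --- would be vacuous. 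Because this premise fails, your appeals to the small-translate theorems \cite{HHSZExact}*{Propositions~11.1 and 11.5} and to the Example following Proposition~\ref{prop:multi-stabilization-counts} do not apply, and the actual content of the lemma --- the triangle counts on the small models $(S^2,s,s,c,w,w')$, $(S^2,c,s,c,w,w')$, etc. --- is never computed. Your connected-sum reduction via Proposition~\ref{prop:multi-stabilization-counts} is structurally fine (with $\cD_0$ taken to be the small-translate piece carrying the $\bar\bs$ or $\as$ curves and $\cD$ the genus-zero piece carrying $s$ and $c$), but it only shifts the problem onto those genus-zero triples, where you would still have to count triangles or find a soft argument.

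The paper's proof avoids the model count entirely. Admissibility is checked by destabilizing to the genus-zero triples (removing pairwise isotopic triples of curves from $\bar\bs$ or $\as$), exactly as you suggest. For the triangle identities, however, the paper observes that each map is a naturality map for a sequence of handleslides, hence carries the top-degree generator of homology to the top-degree generator; since every $\widehat{\CF}$ complex in sight has a \emph{unique} intersection point in its top degree and a unique one in its bottom degree, the chain-level identities (including the $\Theta^-$ cases in part~(2), where you worried about correction terms) are forced by grading considerations alone --- there is simply no other intersection point in the relevant grading for a correction term to live in. If you want to salvage your approach, replace the small-translate citations with either a direct count of the finitely many positive triangle classes on the genus-zero models, or with this naturality-plus-grading argument.
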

\begin{proof} The claims about admissibility are proven by reducing to the genus zero triples $(S^2, s,s,c,w,w')$, and so forth, by removing pairwise isotopic triples of curves from $\bar \bs$ or $\as$. See Figure~\ref{fig:7} for the model case. It is straightforward to verify admissibility for the genus 0 diagrams. Clearly the other conditions of a multi-stabilizing diagram are satisfied. 

We now prove the claim about the holomorphic triangle maps. Both triangle maps may be interpreted as naturality maps for a sequence of handleslides. Since each of the Floer complexes appearing in the statement have a unique top degree intersection point, and a unique bottom degree intersection point, and also since the claims only involve the maps on $\widehat{\CF}$, the claims follow from grading considerations.
\end{proof}  
  
 \begin{figure}[H]
	\centering
	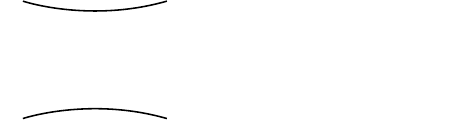
	\caption{The curves $s$, $c$ and $c'$ in the connected sum neck region. The $w$ basepoint is indicated $\bullet$ and the $w'$ is indicated $\bullet'$.}\label{fig:7}
\end{figure}

\subsection{Proof of Part \eqref{prop:expansion-part-2} of Proposition~\ref{prop:expansion}}
\label{sec:expansion-relation-transition-maps}

We now show that the map $F_{\frD\to \tilde \frD}$, defined in the previous section, commutes up to chain homotopy for the transition maps for elementary equivalences of the alpha and beta curves. This is part~\eqref{prop:expansion-part-2} of Proposition~\ref{prop:expansion}.

Our proof is to build a 3-dimensional hypercube which realizes the homotopy commutation
\[
F_{\frD'\to \tilde{\frD}'}\circ \Psi_{\frD\to \frD'}\simeq \tilde{\Psi}_{\tilde \frD\to \tilde \frD'}\circ F_{\frD\to \tilde \frD},
\]
whenever $\frD$ and $\frD'$ differ by an elementary equivalence. We build this hypercube as follows. For each of the faces labeled (A)--(S) in Figure~\ref{eq:transition-standard-to-trivial}, and each of the faces labeled (A')--(L') in Figure~\ref{eq:trivial-model-to-neck}, we build a hyperbox of size $(1,1,2)$, which extends the corresponding sub-cube of the map $F_{\frD\to \tilde{\frD}}$  in the direction of the maps $\Psi_{\frD\to \frD'}$ and $\tilde{\Psi}_{\tilde \frD\to \tilde \frD'}$. We can view each of these hyperboxes as a sequence of two hypercubes. The third coordinate of the first hypercube is an alpha equivalence, and the third coordinate of the second hypercube is a beta equivalence. (This pattern is parallel to the construction of the map $\Psi_{\frD\to \frD'}$; see Figure~\ref{def:transition-map-elementary-handleslide}). Write $\as',$ $\bs'$ and $\Ds'$ for the attaching curves of $\frD'$.

The construction of the size $(1,1,2)$ hyperbox over the facet labeled B in Figure~\ref{eq:transition-standard-to-trivial} is shown below.
\begin{equation}
\begin{tikzcd}[
	labels=description,
	row sep={2cm,between origins},
	column sep={3cm,between origins},
	execute at end picture={
	\foreach \Nombre in  {A,B,C,D}
	{\coordinate (\Nombre) at (\Nombre.center);}
	\fill[opacity=0.1] 
	(A) -- (B) -- (C) -- (D) -- cycle;
	}	
	]
 |[alias=A]|\CF(\as,\bs)
	\ar[rr,"f_{\a\to a'}^{\b}"]
	\ar[dd,"F_1^{\bar \b, \bar \b}"]
	\ar[dr,"\id"]
	\ar[dddr,phantom, "\fbox{B}"]
&[-1cm]
&
\CF(\as',\bs)
	\ar[dd,"F_1^{\bar \b', \bar \b}"]
	\ar[rr, "f_{\a'}^{\b\to\b'}"]
	\ar[dr,"\id"]
&[-1cm]
&
\CF(\as',\bs')
	\ar[dd,"F_{1}^{\bar \b',\bar \b'}",]
	\ar[dr,"\id"]
&[-1cm]
\,
\\[-.6cm]
&
 |[alias=B]|\CF(\as,\bs)
	\ar[rr,"f_{\a\to \a'}^{\b}",crossing over]
&&
\CF(\as',\bs)
	\ar[rr, "f_{\a'}^{\b\to \b'}",crossing over]
&&
\CF(\as',\bs')
	\ar[dd,"F_1^{s \bar \b', s \bar \b'}"]
\\[.7cm]
 |[alias=D]|
 \CF(\as\bar \bs, \bs \bar \bs)
	\ar[rr, "f_{\a \bar \b\to \a'\bar \b'}^{\b \bar \b} ", pos=.7]
	\ar[dr,"S_{w'}^+"]
&&
\CF(\as' \bar \bs', \bs  \bar \bs)
	\ar[rr, "f_{\a' \bar \b'}^{\b \bar \b \to \b'\bar \b'}", pos=.7]
	\ar[dr,"S_{w'}^+"]
&&
\CF(\as' \bar \bs', \bs'  \bar \bs')
	\ar[dr,"S_{w'}^+"]
\\[-.6cm]
&
 |[alias=C]|\CF(\as s \bar \bs, \bs s  \bar \bs)
	\ar[rr, "f_{\a s \bar \b \to \a' s \bar \b'}^{\b s \bar \b}"]
	\ar[from=uu,crossing over,"F_1^{s\bar \b',s \bar \b}"]
&&
\CF(\as's \bar \bs', \bs s \bar \bs)
	\ar[rr, "f_{\a' s \bar \b'}^{\b s \bar \b\to \b' s \bar \b'} "]
	\ar[from=uu, crossing over,"F_1^{s\bar \b,s \bar \b}"]
&&
\CF(\as' s \bar \bs', \bs' s \bar \bs')
\end{tikzcd}
\label{eq:hyperbox-over-B}
\end{equation}
The hypercube relations for the diagram in~\eqref{eq:hyperbox-over-B} follow by using Proposition~\ref{prop:multi-stabilization-counts} to destabilize the holomorphic triangle counts. 

The hyperbox over the face labeled (G) in Figure~\ref{eq:transition-standard-to-trivial} follows from the construction from Proposition~\ref{prop:simple-expansion-hypercube}. The size $(1,1,2)$ hyperboxes over the remaining faces (A)--(S) are similar to the construction in equation~\eqref{eq:hyperbox-over-B}. We leave the details to the reader.

Next, we move on to extending the hyperboxes from Figure~\ref{eq:trivial-model-to-neck}. The extensions of the faces labeled (C')--(J') are obtained by pairing hypercubes of handleslide equivalent attaching curves (with an extra dimension corresponding to $\lambda$ in the case of E' and F'). In particular, the extensions are constructed using techniques which are now standard, so we leave them to the reader.

 We investigate the hypercubes labeled A', B', K' and L'. All four are constructed similarly, so we focus our attention on the face labeled A'. The extension is shown below:
 \begin{equation}
 	\begin{tikzcd}[
 		labels=description,
 		row sep={2cm,between origins},
 		column sep={3cm,between origins},fill opacity=.8,
 		execute at end picture={
 			\foreach \Nombre in  {A,B,C,D}
 			{\coordinate (\Nombre) at (\Nombre.center);}
 			\fill[opacity=0.1] 
 			(A) -- (B) -- (C) -- (D) -- cycle;
 		}	
 		]
 		|[alias=A]|\CF(\as,\bs)
 		\ar[rr,"f_{\a\to a'}^{\b}"]
 		\ar[dd,"F_1^{s\bar \b, s\bar \b}"]
 		\ar[dr,"\id"]
 		\ar[dddr,phantom, "\fbox{A'}"]
 		&[-1cm]
 		&
 		\CF(\as',\bs)
 		\ar[dd,"F_1^{s\bar \b', s\bar \b}"]
 		\ar[rr, "f_{\a'}^{\b\to\b'}"]
 		\ar[dr,"\id"]
 		&[-1cm]
 		&
 		\CF(\as',\bs')
 		\ar[dd,"F_{1}^{s\bar \b',\bar s\b'}",]
 		\ar[dr,"\id"]
 		&[-1cm]
 		\,
 		\\[-.6cm]
 		&
 		|[alias=B]|\CF(\as,\bs)
 		\ar[rr,"f_{\a\to \a'}^{\b}",crossing over]
 		&&
 		\CF(\as',\bs)
 		\ar[rr, "f_{\a'}^{\b\to \b'}",crossing over]
 		&&
 		\CF(\as',\bs')
 		\ar[dd,"F_1^{s \bar \b', c \bar \b'}"]
 		\\[.7cm]
 		|[alias=D]|
 		\CF(\as s\bar \bs, \bs s \bar \bs)
 		\ar[rr, "f_{\a s \bar \b\to \a' s\bar \b'}^{\b s \bar \b} ", pos=.7]
 		\ar[dr,"f_{\a s \bar \b}^{\b s \bar \b \to \b c \bar \b}"]
 		&&
 		\CF(\as' s \bar \bs', \bs s \bar \bs)
 		\ar[rr, "f_{\a' s \bar \b'}^{\b s \bar \b \to \b' s\bar \b'}", pos=.7]
 		\ar[dr,"f_{\a' s \bar \b'}^{\b s \bar \b \to \b c \bar \b}"]
 		&&
 		\CF(\as's \bar \bs', \bs's  \bar \bs')
 		\ar[dr,"f_{\a' s \bar \b'}^{\b' s \bar \b'\to \b' c \bar \b'}"]
 		\\[-.6cm]
 		&
 		|[alias=C]|\CF(\as s \bar \bs, \bs c  \bar \bs)
 		\ar[rr, "f_{\a s \bar \b \to \a' s \bar \b'}^{\b c \bar \b}"]
 		\ar[from=uu,crossing over,"F_1^{s\bar \b,c \bar \b}"]
 		&&
 		\CF(\as's \bar \bs', \bs c \bar \bs)
 		\ar[rr, "f_{\a' s \bar \b'}^{\b c \bar \b\to \b' c \bar \b'} "]
 		\ar[from=uu, crossing over,"F_1^{s\bar \b',c \bar \b}"]
 		&&
 		\CF(\as' s \bar \bs', \bs' c \bar \bs')
 	\end{tikzcd}
 	\label{eq:hyperbox-over-A'}
 \end{equation}
The hypercube relations follow from the stabilization results of Proposition~\ref{prop:multi-stabilization-counts}, together with Lemma~\ref{lem:s->c-multi-stabilize-verify}.

\subsection{The composition law for expanded transition maps}
\label{sec:composition-law-expanded-proof}

In this section we prove part \eqref{prop:expansion-part-3} of Proposition \ref{prop:expansion}. As in the hypotheses of the proposition, suppose that we have three handleslide equivalent Heegaard diagrams $(\Sigma,\as,\bs,w)$, $(\Sigma,\as',\bs',w)$ and $(\Sigma,\as'',\bs'',w)$, all with underlying surface $\Sigma$. We also pick three choices of basepoint expanded doubling curves $\Ds$, $\Ds'$ and $\Ds''$, and let $c$ and $c'$ denote the curves in the connected sum region. We further assume that the tuple 
 \[
(\Sigma \# \bar \Sigma, \as'',\bar \as'', \as',\bar \as', \as, \bar \as, \bs, \bar \bs, \bs',\bar \bs', \bs'',\bar \bs'',\Ds,\Ds',\Ds'',c,c',w)  
 \]
 is weakly admissible (cf. Remark~\ref{rem:admissibility-unconventional}). Write $\tilde \frD$, $\tilde\frD'$ and $\tilde\frD''$ for the associated basepoint expanded, doubling enhanced Heegaard diagrams. We will show that
 \[
\tilde{\Psi}_{\tilde\frD\to \tilde\frD''}\simeq \tilde{\Psi}_{\tilde\frD'\to \tilde\frD''}\circ \tilde{\Psi}_{\tilde\frD\to \tilde\frD'}.
 \]

Our proof goes by constructing a hyperbox which realizes the stated relation.
 For each vertical level in the construction of $\tilde{\Psi}_{\tilde \frD\to \tilde\frD'}$, the hyperbox we construct will have a level which takes the form of a hyperbox of size $(2,2,1)$.

We describe the 1-handle hyperbox first. See Figure~\ref{fig:naturality-Sigma-fixed-1-handles} for the overall shape of the 1-handle hyperbox.
As a first step, we construct the hypercube shown in Equation~\eqref{eq:top-degree-generator-hypercube}.
\begin{equation}
\begin{tikzcd}[
	column sep={2.3cm,between origins},
	row sep=1.4cm,
	labels=description,
	fill opacity=.7,
	text opacity=1,
	]
\bF[U]	\ar[dr, "\id", pos=.6]
	\ar[ddd, "\Theta_{\bar \b', \bar \b}"]
	\ar[rr, "\id"]
	\ar[ddddrrr,dotted, "\omega_{\bar \b'', \bar \b'}"]
&[.7 cm]
&\bF[U]
	\ar[rd, "\id"]
	\ar[ddd, "\Theta_{\bar \b', \bar \b'}"]
&[.7 cm]
\,
\\
&[.7 cm]
 \bF[U]
 	\ar[rr,line width=2mm,dash,color=white,opacity=.7]
	\ar[rr,  "\id"]
&\,
&[.7 cm]
 \bF[U]
	\ar[ddd, "\Theta_{\bar \b'', \bar \b'}"]
\\
\\
\CF(\bar \bs', \bar \bs)
	\ar[dr,"f_{\bar \b'\to \bar \b''}^{\bar \b}"]
	\ar[rr, "f_{\bar \b'}^{\bar \b\to\bar \b'}"]
	\ar[drrr,dashed, "h_{\bar \b'\to \bar \b''}^{\bar \b \to \bar \b'}"]
&[.7 cm]\,&
\CF(\bar \bs', \bar \bs')
	\ar[dr, "f_{\bar \b'\to \bar \b''}^{\bar \b'}"]
&[.7 cm]
\\
& [.7 cm]
\CF(\bar \bs'', \bar \bs)
 	\ar[from=uuu,line width=2mm,dash,color=white,opacity=.7]
	\ar[from=uuu, "\Theta_{\bar \b'', \bar \b}"]
	\ar[rr, "f_{\bar \b''}^{\bar \b\to \bar \b'}"]
 	\ar[from=uuuul,line width=2mm,dash,color=white,opacity=.7]
	\ar[from=uuuul, dashed, "\lambda_{\bar \b'', \bar \b}"]
&
&[.7 cm] 
\CF(\bar \bs'', \bar \bs')
 	\ar[from=uuull,line width=2mm,dash,color=white,opacity=.7]
	\ar[from=uuull,dashed,"\lambda_{\bar \b'', \bar \b'}"]
\end{tikzcd}
\label{eq:top-degree-generator-hypercube}
\end{equation}
The hypercube in Equation~\eqref{eq:top-degree-generator-hypercube} is constructed similarly to the procedure for \emph{filling} a hypercube of handleslide equivalent attaching curves, described by Manolescu and Ozsv\'{a}th \cite{MOIntegerSurgery}*{Lemma~8.6}. In Equation~\eqref{eq:top-degree-generator-hypercube}, an arrow labeled by an element $\xs\in \bT_{\a}\cap \bT_{\b}$ means the map from $\bF[U]$ to $\CF(\as,\bs)$ given by
\[
1\mapsto \xs,
\]
extended equivariantly over $U$. The existence of $\lambda_{\bar \b'',\bar \b'}$, $\lambda_{\bar \b'', \bar \b'}$ and $\omega_{\bar \b'', \bar \b'}$ so that the diagram in~\eqref{eq:top-degree-generator-hypercube} satisfies the hypercube relations follows from the fact that $\bar \bs$, $\bar\bs'$ and $\bar\bs''$ are handleslide equivalent.

It is helpful to concretely understand the hypercube relations in~\eqref{eq:top-degree-generator-hypercube}. The length 1 relations amount to each chain marked with $\Theta$ being a cycle. The length 2 relations are as follows:
\begin{equation}
\begin{split}
f_{\bar \b'}^{ \bar \b\to \bar \b'}(\Theta_{\bar \b', \bar \b})&=\Theta_{\bar \b', \bar \b'}\\
f_{\bar \b'\to \bar \b''}^{\bar \b'}(\Theta_{\bar \b', \bar \b'})&=\Theta_{\bar \b'', \bar \b'}\\
\Theta_{\bar \b'', \bar \b}+f_{\bar \b'\to \bar \b''}^{\bar \b}(\Theta_{\bar \b', \bar \b})&=\d \lambda_{\bar \b'', \bar \b}\\
f_{\bar \b''}^{\bar \b\to \bar \b'}(\Theta_{\bar \b'', \bar \b})+ \Theta_{\bar \b'', \bar \b'}&= \d \lambda_{\bar \b'', \bar \b'}\\
f_{\bar \b'\to \bar \b''}^{\bar \b'}\circ f_{\bar \b'}^{\bar \b\to \bar \b'}+f_{\bar \b''}^{\bar \b\to \bar \b'}\circ f_{\bar \b'\to \bar \b''}^{\bar \b}&=\left[\d, h_{\bar \b'\to \bar \b''}^{\bar \b\to \bar \b'}\right].
\end{split}
\label{eq:hypercube-relations-diagonal-product}
\end{equation}

Note that the first relation is forced by grading considerations, and the second is implied by a nearest point map computation. The third and fourth relations are clearly satisfiable, and the final relation is the associativity relations for quadrilaterals.

 Finally, the length 3 relation is
\[
f_{\bar \b''}^{\bar \b \to \bar \b'}(\lambda_{\bar \b'', \bar \b})+\lambda_{\bar \b'', \bar \b'}+h_{\bar \b'\to \bar \b''}^{\bar \b \to \bar \b'}(\Theta_{\bar \b', \bar \b})=\d \omega_{\bar \b'', \bar \b'}.
\]

Note that the hypercube in~\eqref{eq:top-degree-generator-hypercube} is a new type of hypercube. It is not naturally realized by pairing alpha and beta hypercubes together. We can think of the left side as determining the following hypercube of \emph{alpha} attaching curves
\begin{equation}
\begin{tikzcd}[labels=description, column sep=1.5cm, row sep=1.5cm]
\bar{\bs}
	\ar[r,"1"]
	\ar[dr,dashed,"\lambda_{\bar\b'',\bar\b}"]
	\ar[d,"\Theta_{\bar \b', \bar \b}"]
&\bar{\bs}
	\ar[d, "\Theta_{\bar\b'',\bar\b}"]
\\
\bar{\bs}'
	\ar[r, "\Theta_{\bar\b'',\bar\b'}"]
&\bar{\bs}''
\end{tikzcd}
\label{eq:subface-left-pure-1-handle}
\end{equation}
On the other hand, the front side is more naturally viewed as corresponding to the following hypercube of \emph{beta} attaching curves
\begin{equation}
\begin{tikzcd}[labels=description, column sep=1.5cm, row sep=1.5cm]
\bar{\bs}''
	\ar[r,"1"]
	\ar[dr,dashed,"\lambda_{\bar\b'',\bar\b'}"]
	\ar[d,"\Theta_{\bar \b'', \bar \b}"]
&\bar{\bs}''
	\ar[d, "\Theta_{\bar\b'',\bar\b'}"]
\\
\bar{\bs}
	\ar[r, "\Theta_{\bar\b,\bar\b'}"]
&\bar{\bs}'
\end{tikzcd}
\label{eq:subface-front-pure-1-handle}
\end{equation}
The bottom face of~\eqref{eq:top-degree-generator-hypercube} is the pairing of hypercubes of attaching curves.

Next, we pick hypercubes of alpha attaching curves and beta attaching curves (respectively):
\begin{equation}
\begin{tikzcd}[labels=description, column sep=1.5cm, row sep=1.5cm]
\as
	\ar[r,"\Theta_{\a',\a}"]
	\ar[dr,dashed,"\lambda_{\a'',\a}"]
	\ar[d,"1"]
&\as'
	\ar[d, "\Theta_{\a'',\a'}"]
\\
\as
	\ar[r, "\Theta_{\a'',\a}"]
&\as''
\end{tikzcd}
\quad \text{and} \quad
\begin{tikzcd}[labels=description, column sep=1.5cm, row sep=1.5cm]
\bs
	\ar[r,"\Theta_{\b,\b'}"]
	\ar[dr,dashed,"\lambda_{\b,\b''}"]
	\ar[d,"1"]
&\bs'
	\ar[d, "\Theta_{\b',\b''}"]
\\
\bs
	\ar[r, "\Theta_{\b,\b''}"]
&\bs''
\end{tikzcd}
\label{eq:alpha-hypercubes}
\end{equation} 
As usual, we assume that the length 1 morphisms represent top degree cycles. Note that we do not assume that the chains in~\eqref{eq:alpha-hypercubes} have any relation with the chains in~\eqref{eq:subface-left-pure-1-handle} or~\eqref{eq:subface-front-pure-1-handle}.

 \begin{figure}[hp]
\[
\begin{tikzcd}[
	column sep={2.1cm,between origins},
	row sep=1.5cm,
	labels=description,
	fill opacity=.7,
	text opacity=1,
	execute at end picture={
	\foreach \Nombre in  {A,B,C,D}
  {\coordinate (\Nombre) at (\Nombre.center);}
\fill[opacity=0.1] 
  (A) -- (B) -- (C) -- (D) -- cycle;
}]
(\as,\bs)
	\ar[dr, "\id"]
	\ar[ddd, "F_1^{c \bar{\b}, c \bar{\b}}"]
	\ar[rr, "f_{\a\to \a'}^{\b}"]
	\ar[ddddrrr,dotted]
&[.7 cm]
&(\as',\bs)
	\ar[rd, "f_{\a'\to \a''}^{\b}"]
	\ar[ddd, "F_1^{c\bar{\b}', c \bar{\b}}"]
	\ar[rr, "f_{\a'}^{\b\to \b'}"]
	\ar[ddddrrr,dotted]
	\ar[ddddr,dashed]
&[.7 cm]
&
(\as',\bs')
	\ar[dr, "f_{\a'\to \a''}^{\b'}"]
	\ar[ddd, "F_{1}^{c \bar{\b}', c \bar{\b}'}"]
&[.7 cm]
\\
&[.7 cm]
 |[alias=A]|(\as,\bs)
 	\ar[rr,line width=2mm,dash,color=white,opacity=.7]
	\ar[rr, "f_{\a\to \a''}^{\b}"]
&\,
&[.7 cm]
 (\as'',\bs)
	\ar[ddd, "F_1^{c\bar{\b}'', c\bar{\b}}"]
	\ar[rr, "f_{\a''}^{\b\to \b'}"]
 	\ar[from=ulll,line width=2mm,dash,color=white,opacity=.7]
	\ar[from=ulll, dashed]
&&
|[alias=B]|(\as'',\bs')
	\ar[ddd, "F_{1}^{c\bar{\b}'', c\bar{\b}'}"]
 	\ar[from=ulll,line width=2mm,dash,color=white,opacity=.7]	
	\ar[from=ulll, dashed]
\\
\\
(\as c \bar{\bs}, \bs c \bar{\bs})
	\ar[dr,"\id"]
	\ar[rr, "f_{\a c \bar{\b}\to \a' c \bar{\b}'}^{\b c \bar{\b}}"]
	\ar[drrr,dashed]
&[.7 cm]\,&
(\as' c \bar{\bs}', \bs c \bar{\bs})
	\ar[dr, "f_{\a' c \bar{\b}'\to \a'' c \bar{\b}''}^{\b c \bar{\b}}"]
	\ar[rr, "f_{\a'c \bar{\b}'}^{\b c \bar{\b}\to \b' c \bar{\b}'}"]
	\ar[drrr,dashed]
&[.7 cm]\,
&
(\as'c\bar{\bs}',\bs' c \bar{\bs}')
	\ar[dr, "f_{\a'c \bar{\b}'\to \a'' c \bar{\b}''}^{\b' c \bar{\b}'}"]
\\
& [.7 cm]
|[alias=D]|(\as c \bar{\bs}, \bs c \bar{\bs})
 	\ar[from=uuu,line width=2mm,dash,color=white,opacity=.7]
	\ar[from=uuu, "F_1^{c\bar{\b}, c\bar{\b}}"]
	\ar[rr, "f_{\a c \bar{\b}\to \a'' c \bar{\b}''}^{\b c \bar{\b}}"]
&
&[.7 cm] 
(\as'' c \bar{\bs}'', \bs c \bar{\bs})
	\ar[rr, "f_{\a''c \bar{\b}''}^{\b c \bar{\b}\to \b' c \bar{\b}'}" ]
	&&
|[alias=C]|(\as'' c \bar{\bs}'', \bs' c \bar{\bs}')
 	\ar[from=uuull,line width=2mm,dash,color=white,opacity=.7]	
	\ar[from=uuull,dashed]
\end{tikzcd}
\]
\vspace{1cm}
\[
\begin{tikzcd}[
	column sep={2.1cm,between origins},
	row sep=1.5cm,
	labels=description,
	fill opacity=.7,
	text opacity=1,
	execute at end picture={
	\foreach \Nombre in  {A,B,C,D}
  	{\coordinate (\Nombre) at (\Nombre.center);}
	\fill[opacity=0.1] 
  (A) -- (B) -- (C) -- (D) -- cycle;
}]
|[alias=A]|(\as,\bs)
	\ar[dr, "\id"]
	\ar[ddd, "F_1^{c \bar{\b}, c \bar{\b}}"]
	\ar[rr, "f_{\a\to \a''}^{\b}"]
&[.7 cm]
&(\as'',\bs)
	\ar[rd, "\id"]
	\ar[ddd, "F_1^{c\bar{\b}'', c \bar{\b}}"]
	\ar[rr, "f_{\a''}^{\b\to \b'}"]
	\ar[ddddrrr,dotted]
	\ar[dddrr,dashed]
&[.7 cm]
&
|[alias=B]|(\as'',\bs')
	\ar[dr, "f_{\a''}^{\b'\to \b''}"]
	\ar[ddd, "F_{1}^{c \bar{\b}'', c \bar{\b}'}"]
&[.7 cm]
\\
&[.7 cm]
 (\as,\bs)
 	\ar[rr,line width=2mm,dash,color=white,opacity=.7]
	\ar[rr, "f_{\a\to \a''}^{\b}"]
&\,
&[.7 cm]
 (\as'',\bs)
	\ar[ddd, "F_1^{c\bar{\b}'', c\bar{\b}}"]
	\ar[rr, "f_{\a''}^{\b\to \b''}"]
&&
(\as'',\bs'')
	\ar[ddd, "F_{1}^{c\bar{\b}'', c\bar{\b}''}"]
 	\ar[from=ulll,line width=2mm,dash,color=white,opacity=.7]	
	\ar[from=ulll, dashed]
\\
\\
|[alias=D]|(\as c \bar{\bs}, \bs c \bar{\bs})
	\ar[dr,"\id"]
	\ar[rr, "f_{\a c \bar{\b}\to \a'' c \bar{\b}''}^{\b c \bar{\b}}"]
	&[.7 cm]\,&
(\as'' c \bar{\bs}'', \bs c \bar{\bs})
	\ar[dr, "\id"]
	\ar[rr, "f_{\a''c \bar{\b}''}^{\b c \bar{\b}\to \b' c \bar{\b}'}"]
	\ar[drrr,dashed]
&[.7 cm]\,
&
|[alias=C]|(\as''c\bar{\bs}'',\bs' c \bar{\bs}')
	\ar[dr, "f_{\a''c \bar{\b}''}^{\b' c \bar{\b}'\to \b'' c \bar{\b}''}"]
\\
& [.7 cm]
(\as c \bar{\bs}, \bs c \bar{\bs})
 	\ar[from=uuu,line width=2mm,dash,color=white,opacity=.7]
	\ar[from=uuu, "F_1^{c \bar{\b}, c \bar{\b}}"]
	\ar[rr, "f_{\a c \bar{\b}\to \a'' c \bar{\b}''}^{\b c \bar{\b}}"]
&
&[.7 cm] 
(\as'' c \bar{\bs} '', \bs c \bar{\bs})
	\ar[rr, "f_{\a''c \bar{\b}''}^{\b c \bar{\b}\to \b'' c \bar{\b}''}" ]
	&&
(\as'' c \bar{\bs}'', \bs'' c \bar{\bs}'')
\end{tikzcd}
\]
\caption{Four hypercubes. The length 2 maps are sums of quadrilateral and triangle maps (obtained by pairing hypercubes). The two boxes are stacked along the gray faces.}
\label{fig:naturality-Sigma-fixed-1-handles}
\end{figure}

We now consider the diagram in Figure~\ref{fig:naturality-Sigma-fixed-1-handles}. We will presently define the maps appearing therein, and then subsequently prove the hypercube relations. All the maps are built from the hypercubes in~\eqref{eq:top-degree-generator-hypercube} and~\eqref{eq:alpha-hypercubes}. We note that the hypercube relations are slightly subtle, since they do not arise from simply pairing hypercubes of attaching curves together.

We focus first on the top left cube of Figure~\ref{fig:naturality-Sigma-fixed-1-handles}. The maps appearing there are as follows:
\begin{enumerate}
\item The length 2 map along the top is the one associated to pairing the 0-dimensional beta hypercube $\bs$ with the alpha hypercube
\[
\begin{tikzcd}[labels=description, column sep=1.5cm, row sep=1.5cm]
\as
	\ar[r,"\Theta_{\a',\a}"]
	\ar[dr,dashed,"\lambda_{\a'',\a}"]
	\ar[d,"1"]
&\as'
	\ar[d, "\Theta_{\a'',\a'}"]
\\
\as
	\ar[r, "\Theta_{\a'',\a}"]
&\as''
\end{tikzcd}
\]
\item The length 2 map along the bottom is the one obtained by pairing the 0-dimensional beta hypercube $\bs c \bar \bs$ with the hypercube
\[
\begin{tikzcd}[labels=description, column sep=3cm, row sep=1.5cm]
\as c \bar \bs
	\ar[r,"\Theta_{\a',\a}|\theta^+|\Theta_{\bar \b', \bar\b}"]
	\ar[d,"1"]
	\ar[dr,dashed, "\lambda"]
&\as' c \bar \bs'
	\ar[d, "\Theta_{\a'',\a'}|\theta^+|\Theta_{\bar \b '', \bar \b '}"]
\\
\as c \bar \bs
	\ar[r, "\Theta_{\a'',\a}|\theta^+|\Theta_{\bar \b'', \bar \b}"]
&\as'' c \bar \bs''
\end{tikzcd}
\]
where
\[
\lambda=\lambda_{\a'',\a}|\theta^+|f_{\bar{\b}'', \bar{\b}', \bar{\b}}(\Theta_{\bar{\b}'', \bar{\b}'}, \Theta_{\bar{\b}', \bar{\b}}) +\Theta_{\a'',\a}|\theta^+|\lambda_{\bar{\b}'', \bar{\b}}.
\]
\item The length 2 map along the right side is 
\[
\xs\mapsto f_{\a'\to \a''}^{\b}(\xs)| \theta^+| \lambda_{\bar{\b}'',\bar{\b}}.
\]
\item The length 3 map is 
\[
\xs \mapsto h_{\a \to \a'\to \a''}^{\b}(\xs)| \theta^+| \lambda_{\bar{\b}'', \bar \b}+f_{\a'', \a,\b}(\lambda_{\a'',\a},\xs)|\theta^+|\lambda_{\bar{\b}'',\bar{\b}}.
\]
\end{enumerate}

\begin{lem}\label{lem:top-left-1-handle-subcube}
The maps described above for the top left subcube of the diagram in Figure~\ref{fig:naturality-Sigma-fixed-1-handles} satisfy the hypercube relations.
\end{lem}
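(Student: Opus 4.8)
The plan is to verify the length-$1$, length-$2$ and length-$3$ hypercube relations for this $3$-cube in turn, the common mechanism being to reduce every holomorphic polygon count that occurs on an $\as c\bar\bs$-doubled diagram to a count on the underlying $\Sigma$-diagram tensored with $\theta^+_{c,c}$ (the top generator of $c$ against a small translate) and a count on the $\bar\Sigma$-side, by repeated application of the destabilization formula of Proposition~\ref{prop:multi-stabilization-counts}, and then to quote the hypercube relations already built into~\eqref{eq:top-degree-generator-hypercube} and~\eqref{eq:alpha-hypercubes} together with the standard associativity relations for holomorphic triangles, quadrilaterals and pentagons. The length-$1$ relations are immediate: the maps $F_1^{c\bar\b,c\bar\b}$, $F_1^{c\bar\b',c\bar\b}$, $F_1^{c\bar\b'',c\bar\b}$ are chain maps by \cite{ZemGraphTQFT}*{Proposition~8.5}, the identity maps are trivially chain maps, and the triangle and quadrilateral edge maps are chain maps by the usual Gromov compactness argument.

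For the length-$2$ relations there are six faces. The top face carries no $F_1$ or $c\bar\bs$ decoration and is precisely the pairing of the $0$-dimensional beta hypercube $\bs$ with the left alpha hypercube of~\eqref{eq:alpha-hypercubes}; since that alpha hypercube satisfies its hypercube relations by construction, this face relation holds. The bottom face is the pairing of the $0$-dimensional beta hypercube $\bs c\bar\bs$ with the hypercube of item~(2), so here one first checks that that hypercube satisfies its own relations: this is a short computation combining the length-$2$ relation for $\lambda_{\a'',\a}$ from~\eqref{eq:alpha-hypercubes}, the relation $\Theta_{\bar\b'',\bar\b}+f_{\bar\b'\to\bar\b''}^{\bar\b}(\Theta_{\bar\b',\bar\b})=\d\lambda_{\bar\b'',\bar\b}$ from~\eqref{eq:hypercube-relations-diagonal-product}, and the fact that $\theta^+_{c,c}$ is a cycle. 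The two faces joining the top layer to the bottom layer along the $F_1$ maps express the commutation of $F_1$ with the transition maps; applying Proposition~\ref{prop:multi-stabilization-counts} these destabilize term by term, and using $f_{\bar\b'\to\bar\b''}^{\bar\b}(\Theta_{\bar\b',\bar\b})=\Theta_{\bar\b'',\bar\b}+\d\lambda_{\bar\b'',\bar\b}$ one sees that the required length-$2$ homotopy is exactly the destabilized quantity $\xs\mapsto f_{\a'\to\a''}^{\b}(\xs)|\theta^+_{c,c}|\lambda_{\bar\b'',\bar\b}$ of item~(3), governed by the nontrivial diagonal morphism $\lambda_{\bar\b'',\bar\b}$ of the $\bar\b$-hypercube~\eqref{eq:subface-left-pure-1-handle}.

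The length-$3$ relation is the crux. One writes out the alternating sum of all compositions of length-$1$ and length-$2$ maps from the initial to the terminal vertex of the cube, together with $\d$ applied to the length-$3$ map of item~(4). Every term splits off a factor of $\theta^+_{c,c}$ via Proposition~\ref{prop:multi-stabilization-counts}, and after this destabilization the identity to be verified is exactly the combination of: (a) the length-$3$ relation $f_{\bar\b''}^{\bar\b\to\bar\b'}(\lambda_{\bar\b'',\bar\b})+\lambda_{\bar\b'',\bar\b'}+h_{\bar\b'\to\bar\b''}^{\bar\b\to\bar\b'}(\Theta_{\bar\b',\bar\b})=\d\omega_{\bar\b'',\bar\b'}$ from~\eqref{eq:top-degree-generator-hypercube}; (b) the relation of the alpha hypercube~\eqref{eq:alpha-hypercubes} governing $h_{\a\to\a'\to\a''}^{\b}$ and $\lambda_{\a'',\a}$ (the quadrilateral-plus-triangle associativity); and (c) the pentagon associativity relation on the relevant Heegaard multi-diagram. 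The length-$3$ map of item~(4) is precisely the term-by-term $|\theta^+_{c,c}|$-pairing of the alpha-side data $h_{\a\to\a'\to\a''}^{\b}$ and $f_{\a'',\a,\b}(\lambda_{\a'',\a},-)$ with the beta-side chain $\lambda_{\bar\b'',\bar\b}$, so the destabilized identity follows. The main obstacle here is organizational rather than conceptual: the diagram~\eqref{eq:top-degree-generator-hypercube} is \emph{not} a pairing of an alpha hypercube with a beta hypercube, so one cannot simply invoke the fact that pairings are hypercubes; instead each length-$2$ and length-$3$ morphism of the subcube is a bespoke $|\theta^+_{c,c}|$-tensor of alpha-side and beta-side data, and the careful accounting needed to match every destabilized triangle, quadrilateral and pentagon degeneration against these combinations is the delicate part of the argument.
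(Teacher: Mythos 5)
Your treatment of the length-$1$ and length-$2$ relations matches the paper's: the only nontrivial length-$2$ face is the right one, and there the destabilization of Proposition~\ref{prop:multi-stabilization-counts} together with the relation $\Theta_{\bar\b'',\bar\b}+f_{\bar\b'\to\bar\b''}^{\bar\b}(\Theta_{\bar\b',\bar\b})=\d\lambda_{\bar\b'',\bar\b}$ does the job. But your length-$3$ argument has a genuine gap, and it also invokes the wrong auxiliary data. The chains $\lambda_{\bar\b'',\bar\b'}$ and $\omega_{\bar\b'',\bar\b'}$ and the length-$3$ relation of~\eqref{eq:top-degree-generator-hypercube} play no role in the top left subcube (they enter only in the top right one); the only beta-side diagonal appearing here is $\lambda_{\bar\b'',\bar\b}$, and no pentagon maps occur anywhere in this cube, since its length-$3$ map is defined directly as a sum of quadrilateral and triangle counts tensored with $\lambda_{\bar\b'',\bar\b}$ rather than as a pentagon count.

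More seriously, the claim that every term splits off a factor of $\theta^+$ via Proposition~\ref{prop:multi-stabilization-counts}, so that the length-$3$ relation reduces to pure algebra, fails for the one term that carries all the content: the quadrilateral count $h_{\a''c\bar\b'',\a'c\bar\b',\a c\bar\b,\b c\bar\b}$ with inputs $\Theta_{\a'',\a'}|\theta^+|\Theta_{\bar\b'',\bar\b'}$, and so on. After simplifying the remaining terms, the relation becomes equivalent to identifying this count with $h^{\b}_{\a\to\a'\to\a''}(\xs)|\theta^+|f_{\bar\b'',\bar\b',\bar\b}(\Theta_{\bar\b'',\bar\b'},\Theta_{\bar\b',\bar\b})$. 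Proposition~\ref{prop:multi-stabilization-counts} does not give this: the $\bar\Sigma$-side quadruple $(\bar\Sigma,\bar\bs'',\bar\bs',\bar\bs,\bar\bs)$ is not being used as the multi-stabilizing factor evaluated on a fixed degree-$0$ tree, the output on that side is not controlled in top degree (so the error terms $\sum_{\gr(\zs)>\gr(\ys)}\qs_{\zs}\otimes\zs$ of that proposition are not suppressed), and the desired answer is a genuine fibered product of an index $-1$ rectangle count on $\Sigma$ with an index $0$ rectangle count on $\bar\Sigma$, not a composite of triangle maps. The paper closes exactly this gap with a separate analytic argument: it realizes the connected-sum quadrilateral count as a fibered product over the evaluation map $\ev_{\Box}$ to $\cM(\Box)\iso\R$, uses the small translate theorem to force $\mu(\psi)=-1$ on $\Sigma$ and $\mu(\psi_0)=0$ on $\bar\Sigma$ (possible because $\bar\bs$ appears twice as a small Hamiltonian translate), deforms the matching by an offset $t\in[0,\infty)$, and identifies the $t\to\infty$ ends with the product formula while the finite-$t$ degenerations cancel. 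Without some version of this deformation-of-matching argument, your proof of the length-$3$ relation does not go through.
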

\begin{proof} The length 1 relations are immediate, as are the length 2 relations for all but the right face of the cube. For the right face of the cube, using the stabilization result from Proposition~\ref{prop:multi-stabilization-counts}, applied to holomorphic triangles, we see that the hypercube relations amount to showing that if $\xs\in \bT_{\a''}\cap \bT_{\b}$, then
\[
f_{\a'\to \a''}^{\b}(\xs)| \theta^+|\left(\Theta_{\bar \b'', \bar \b}+f_{\bar \b'', \bar \b', \bar \b}(\Theta_{\bar \b'', \bar \b'}, \Theta_{\bar \b', \bar \b})\right)=\d \left(f_{\a' \to \a''}^{\b}(\xs)| \theta^+ | \lambda_{\bar{\b}'', \bar \b}\right)+f_{\a'\to \a''}^{\b}(\d \xs)| \theta^+|\lambda_{\bar{\b}'', \bar{\b}},
\]
which follows from the form of the differential on the Heegaard diagram obtained by attaching a 1-handle. The key point is that the right-hand side of the above equation simplifies to just $f_{\a'\to \a''}^{\b}(\xs)| \theta^+|\d \lambda_{\bar{\b}'', \bar \b}$, which coincides with the left-hand side by Equation~\eqref{eq:hypercube-relations-diagonal-product}.

Next, we consider the length 3 relation of the cube. The desired relation is that if $\xs\in \bT_{\a}\cap \bT_{\b},$ then 
\begin{equation}
\begin{split}
&h_{\a\to \a'\to \a''}^{\b}(\xs)|\theta^+|\Theta_{\bar \b'', \bar \b} \\
&+f_{\a'',\a,\b}(\lambda_{\a'',\a},\xs)|\theta^+|\Theta_{\bar \b'', \bar \b}\\
&+h_{\a'' c \bar \b'', \a' c \bar \b', \a c\bar \b, \b c \bar \b}(\Theta_{\a'',\a'}|\theta^+|\Theta_{\bar \b'' \bar \b'}, \Theta_{\a',\a}|\theta^+|\Theta_{\bar \b' \bar \b},\xs|\theta^+|\Theta_{\bar \b, \bar \b})\\
&+ (f_{\a'\to \a''}^{\b}\circ f_{\a \to \a'}^{\b})(\xs)|\theta^+|\lambda_{\bar \b'', \bar \b}\\
&+f_{\a'' c \bar \b'', \a c \bar \b, \b c \bar \b}(\lambda_{\a'',\a}|\theta^+|f_{\bar \b'', \bar \b', \bar \b}(\Theta_{\bar \b'', \bar \b'}, \Theta_{\bar \b', \bar \b}), \xs |\theta^+|\Theta^+_{\bar \b, \bar \b})\\
&+f_{\a'' c \bar \b'', \a c \bar \b, \b c \bar \b}(\Theta_{\a'', \a}|\theta^+|\lambda_{\bar \b'', \bar \b},\xs |\theta^+|\Theta_{\bar \b, \bar \b})\\
=&\left[ h_{\a\to \a'\to \a''}^{\b}(-)|\theta^+|\lambda_{\bar{\b}'', \bar{\b}}+f_{\a'',\a,\b}(\lambda_{\a'',\a},-)|\theta^+|\lambda_{\bar \b'', \bar \b},\d\right](\xs).
\end{split}
\label{eq:hypercube-relations-diagonal-1-handle}
\end{equation}
We note that
\begin{equation}
\begin{split}
&\left[ h_{\a\to \a'\to \a''}^{\b}(-)|\theta^+|\lambda_{\bar{\b}'', \bar{\b}},\d\right](\xs)
\\
=&f_{\a'',\a,\b}(f_{\a'',\a',\a}(\Theta_{\a'',\a'},\Theta_{\a',\a}),\xs)|\theta^+|\lambda_{\bar{\b}'',\bar \b}\\
&+\left(f_{\a'\to \a''}^{\b}\circ f_{\a\to \a'}^{\b}\right)(\xs)|\theta^+|\lambda_{\bar \b'', \bar \b}\\
&+h_{\a\to \a'\to \a''}^{\b}(\xs)|\theta^+| \Theta_{\bar \b'', \bar \b}\\
&+h_{\a\to \a'\to \a''}^{\b}(\xs)|\theta^+| f_{\bar \b'', \bar \b', \bar \b}(\Theta_{\bar \b'', \bar \b'}, \Theta_{\bar \b', \bar \b}).
\end{split}
\label{eq:1-handle-hypercube-simpl-1}
\end{equation}
Also 
\begin{equation}
\begin{split}
&\left[f_{\a'',\a,\b}(\lambda_{\a'',\a},-)|\theta^+|\lambda_{\bar \b'', \bar \b},\d\right](\xs)\\
=&f_{\a'', \a, \b}(\Theta_{\a'',\a}+f_{\a'',\a',\a}(\Theta_{\a'',\a'}, \Theta_{\a',\a}),\xs)|\theta^+|\lambda_{\bar \b'', \bar \b}
\\
+&
f_{\a'',\a,\b}(\lambda_{\a'',\a},\xs)|\theta^+|(\Theta_{\bar \b'', \bar \b}+f_{\bar \b'', \bar \b', \bar \b}(\Theta_{\bar \b'', \bar \b'}, \Theta_{\bar \b', \bar \b})).
\end{split}
\label{eq:1-handle-hypercube-simpl-2}
\end{equation}
Next, we note that the stabilization result from Proposition~\ref{prop:multi-stabilization-counts}
implies that
\begin{equation}
\begin{split}f_{\a'' c \bar \b'', \a c \bar \b, \b c \bar \b}(\lambda_{\a'',\a}|\theta^+|f_{\bar \b'', \bar \b', \bar \b}(\Theta_{\bar{\b}'', \bar{\b}'}, \Theta_{\bar \b', \bar \b}), \xs |\theta^+|\Theta_{\bar \b, \bar \b})
&=f_{\a'', \a, \b}(\lambda_{\a'', \a}, \xs)|\theta^+|f_{\bar \b'', \bar \b', \bar \b}(\Theta_{\bar{\b}'', \bar{\b}'}, \Theta_{\bar \b', \bar \b}),
\\
f_{\a'' c \bar \b'', \a c \bar \b, \b c \bar \b}(\Theta_{\a'', \a}|\theta^+|\lambda_{\bar \b'', \bar \b},\xs |\theta^+|\Theta_{\bar \b, \bar \b})
&=f_{\a'', \a, \b}(\Theta_{\a'',\a}, \xs)|\theta^+|\lambda_{\bar \b'', \bar \b}.
\end{split}
\label{eq:1-handle-hypercube-simpl-3}
\end{equation}

Using~\eqref{eq:1-handle-hypercube-simpl-1}, ~\eqref{eq:1-handle-hypercube-simpl-2} and~\eqref{eq:1-handle-hypercube-simpl-3} to simplify~\eqref{eq:hypercube-relations-diagonal-1-handle}, we see that the length 3 hypercube relation becomes equivalent to
\begin{equation}
\begin{split}
&h_{\a'' c \bar \b'', \a' c \bar \b', \a c\bar \b, \b c \bar \b}(\Theta_{\a'',\a'}|\theta^+|\Theta_{\bar \b'' \bar \b'}, \Theta_{\a',\a}|\theta^+|\Theta_{\bar \b',\bar \b},\xs|\theta^+|\Theta_{\bar \b, \bar \b})\\
=&h_{\a\to \a'\to \a''}^{\b}(\xs)|\theta^+| f_{\bar \b'', \bar \b', \bar \b}(\Theta_{\bar \b'', \bar \b'}, \Theta_{\bar \b', \bar \b}).
\end{split}
\label{eq:hypercube-reformulated-diagonal-1handle}
\end{equation}

We now prove ~\eqref{eq:hypercube-reformulated-diagonal-1handle}.   Suppose that $\psi$ and $\psi_0$ are classes of rectangles on $(\Sigma,\as'',\as',\as,\bs)$ and $(\bar{\Sigma}, \bar{\bs}'', \bar{\bs}',\bar{\bs}, \bar{\bs})$ such that $\mu(\psi)+\mu(\psi_0)=-1$. Let $C(\psi,\psi_0)$ denote the set of all homology classes which coincide with $\psi$ and $\psi_0$ outside the 1-handle region, and have $\theta^+$ as their input and output in the 1-handle region. (Such rectangles necessarily have index $-1$).  Applying Proposition~\ref{prop:multi-stabilization-counts} to the 1-handle region allows us to remove the 1-handle region connecting $\Sigma$ and $\bar{\Sigma}$. We obtain
\begin{equation}
\sum_{\phi\in C(\psi,\psi_0)} \# \cM(\phi)\equiv \#( \cM(\psi)\times_{\ev_{\Box}} \cM(\psi_0)) \pmod 2 \label{eq:rectangles-and-1-handles}.
\end{equation}
 Here, we are also using the nodal almost complex structures around the 1-handle region on the left-hand side. Furthermore any class of rectangles which is not in some $C(\psi,\psi_0)$ for a pair $(\psi,\psi_0)$ satisfying $\mu(\psi)+\mu(\psi_0)=-1$ has no holomorphic representatives. Also, $\times_{\ev_{\Box}}$ denotes the fibered product over the evaluation map to the parameter space $\R\iso \cM(\Box)$.

We write $h^0$ for the map from $\CF^-(\as,\bs)$ to $\CF^-(\as''c\bar{\bs}'',\bs c \bar{\bs})$ which is the composition 
\[
h^0=h_{\a c \bar{\b} \to \a' c \bar{\b}'\to \a'' c \bar{\b}''}^{\b c \bar{\b}}\circ F_1^{c \bar{\b}, c \bar{\b} }.
\]
By Equation~\eqref{eq:rectangles-and-1-handles}, we may identify $h^0$ as the map which counts holomorphic rectangles on the disjoint union of two diagrams, viewed as a fibered product over $\ev_{\Box}$. We may consider a map $h^t$ for any generic $t\in \R$, which counts curve pairs $(u,u_0)$ on the disjoint union consisting of holomorphic quadrilaterals such that $\ev_{\Box}(u)=\ev_{\Box}(u_0)+t$.

We claim that $h^t=h^{t'}$ for generic $t$ and $t'$ in $\R$. Firstly, we note that since $\bar{\bs}$ appears twice (once as a small Hamiltonian isotopy), the nearest point argument of \cite{HHSZExact}*{Proposition~11.5} (cf. \cite{LOTDoubleBranchedII}*{Section~3.4})  implies that for suitably small choices of translations, the only rectangles on $\bar{\Sigma}$ with representatives have Maslov index at least 0. In particular all holomorphic rectangles counted by $h^t$ for generic $t$ have $\mu(\psi)=-1$ and $\mu(\psi_0)=0$. 

We consider the ends of the moduli spaces counted by $h^t$, ranging over $t\in [0,\infty)$. No degenerations are possible on the $\Sigma$-side, since all rectangles there have index $-1$. On the $\bar{\Sigma}$-side, rectangles have index $0$, so the possible degenerations are a holomorphic disk splitting off at some finite $t$, as well as the ends which appear as $t\to \infty$. The ends appearing at $t\to \infty$ correspond exactly to the map
\[
h_{\a\to \a'\to \a''}^{\b}(\xs)|\theta^+|f_{\bar{\b}'', \bar{\b}',\bar{\b}}(\Theta_{\bar{\b}'', \bar{\b}'}, f_{\bar{\b}', \bar{\b}, \bar{\b}}(\Theta_{\bar{\b}', \bar{\b}}, \Theta_{\bar{\b},\bar{\b}})).
\]
The ends at finite $t$ are slightly more subtle. First, one end which can appear is an end where a disk breaks off into the $\bar{\bs}$-$\bar{\bs}$ end. These cancel modulo two since $\Theta_{\bar{\b}, \bar{\b}}$ is a cycle. Holomorphic disks breaking off into the other ends are prohibited, since they would leave an index $-1$ holomorphic rectangle, with $\Theta_{\bar{\b},\bar{\b}}$ as input. (The most important end where this is prohibited is the outgoing $\bar{\bs}''$-$\bar{\bs}$ end, where a more general diagram might have degenerations.) In particular, we obtain~\eqref{eq:hypercube-reformulated-diagonal-1handle}, completing the proof.
 \end{proof}

 The hypercube in the bottom right of Figure~\ref{fig:naturality-Sigma-fixed-1-handles} is constructed by simply interchanging the roles of the alpha and beta curves in the above procedure.
 
 We now construct the maps in the top right hypercube of Figure~\ref{fig:naturality-Sigma-fixed-1-handles}. As a first step, we pick chains to give hypercubes of attaching curves, as follows:

\begin{enumerate}
 \item The top face is obtained by pairing the two hypercubes
 \[
\begin{tikzcd}
  \as'\ar[r, "\Theta_{\a'',\a'}"]
  &
  \as''
\end{tikzcd}
\quad \text{and} \quad 
 \begin{tikzcd}
  \bs\ar[r, "\Theta_{\b,\b'}"]
  &
  \bs'
\end{tikzcd}
 \]
 \item The bottom face is obtained by pairing the two hypercubes
 \[
 \begin{tikzcd}[column sep =2.5cm]
  \as'c \bar{\bs}'\ar[r, "\Theta_{\a'',\a'}|\theta^+|\Theta_{\bar \b'', \bar \b'}"]
  &
  \as''c \bar{\bs}''
\end{tikzcd}
\quad \text{and} \quad 
 \begin{tikzcd}[column sep =2.5cm]
  \bs c \bar \bs\ar[r, "\Theta_{\b,\b'}|c|\Theta_{\bar \b, \bar \b'}"]
  &
  \bs' c \bar \bs'
\end{tikzcd}
 \]
 \item The length 2 map along the left face is 
\[
\xs\mapsto f_{\a'\to \a''}^{\b}(\xs)| \theta^+| \lambda_{\bar{\b}'',\bar{\b}}.
\]
\item The length 2 map along the front face is
\[
\xs\mapsto f_{\a''}^{ \b\to \b'}(\xs)| \theta^+| \lambda_{\bar \b'', \bar \b'}.
\]
\item The length 3 map is the sum of the following two maps
\[
\begin{split}
 \xs&\mapsto \left(f_{\a''}^{\b\to \b'}\circ f_{\a'\to \a''}^{\b}\right)(\xs)|\theta^+|\omega_{\bar \b'', \bar \b'}\\
\xs&\mapsto h^{t<t_0}(\xs)
\end{split}
\]
\end{enumerate}
The map $h^{t<t_0}$ is a new map, as follows. It counts pairs $(u,u_0)$, where $u$ and $u_0$ are holomorphic rectangles on $(\Sigma,\as'',\as',\bs,\bs')$ and $(\bar{\Sigma}, \bar \bs'',\bar \bs',\bar\bs,\bar \bs')$, respectively, both of index $-1$, such that
\[
\ev_{\Box}(u)<\ev_{\Box}(u_0).
\]
If $\ys$ and $\zs$ are the outputs of $u$ and $u_0$, respectively, then the $h^{t<t_0}(\xs)$ has a summand of $\ys\times \theta^+\times \zs$ (weighted by the appropriate $U$-power).

\begin{lem}
 With the maps described above, the top right subcube of Figure~\ref{fig:naturality-Sigma-fixed-1-handles} is a hypercube of chain complexes.
\end{lem}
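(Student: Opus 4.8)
The plan is to follow the proof of Lemma~\ref{lem:top-left-1-handle-subcube} essentially verbatim, with the two sides of the connected sum interchanged and with the new map $h^{t<t_0}$ in place of the maps $h^t$ and the $\lambda$-primitives used there. The length~$1$ relations are immediate: the chains marked $\Theta$ are cycles and $F_1^{c\bar\b,c\bar\b}$ and its variants are chain maps. The length~$2$ relations on the top and bottom faces hold because those faces are pairings of hypercubes of attaching curves --- the alpha/beta hypercubes of~\eqref{eq:alpha-hypercubes} and their $c\bar\bs$-stabilizations --- so their relations follow from Proposition~\ref{prop:multi-stabilization-counts} applied to the stabilized triangle and quadrilateral counts. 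For the left and front faces the argument is the one already used for the right face in Lemma~\ref{lem:top-left-1-handle-subcube}: stripping off the $1$-handle region joining $\Sigma$ and $\bar\Sigma$ via Proposition~\ref{prop:multi-stabilization-counts}, the desired identities collapse to the relations of~\eqref{eq:hypercube-relations-diagonal-product} together with the subsequent length~$3$ relation governing $\lambda_{\bar\b'',\bar\b}$, $\lambda_{\bar\b'',\bar\b'}$ and $\omega_{\bar\b'',\bar\b'}$.

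The real content is the length~$3$ relation, which is where $h^{t<t_0}$ is needed. First I would write the relation out explicitly and, after excising the $1$-handle neck joining $\Sigma$ and $\bar\Sigma$ via Proposition~\ref{prop:multi-stabilization-counts} (in the form of~\eqref{eq:rectangles-and-1-handles}), carry out algebraic simplifications of the $[\partial,\,\cdot\,]$ terms parallel to~\eqref{eq:1-handle-hypercube-simpl-1}--\eqref{eq:1-handle-hypercube-simpl-3}, reducing the relation to a single identity among counts of pairs $(u,u_0)$ of holomorphic rectangles on the disjoint union $(\Sigma,\as'',\as',\bs,\bs')\sqcup(\bar\Sigma,\bar\bs'',\bar\bs',\bar\bs,\bar\bs')$ sharing the $\theta^+$ corner in the neck, in the spirit of~\eqref{eq:hypercube-reformulated-diagonal-1handle}. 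As in Lemma~\ref{lem:top-left-1-handle-subcube}, because $\bs$ and $\bar\bs$ each occur next to a small Hamiltonian translate, the small-translate theorem \cite{HHSZExact}*{Proposition~11.5} (cf.\ \cite{LOTDoubleBranchedII}*{Section~3.4}) pins the contributing classes into the expected index range, keeping all counts finite. I would then prove the resulting identity by a Gromov-compactness argument on the $1$-dimensional moduli space $\cN$ of pairs $(u,u_0)$ --- one side rigid, the other of index one higher, subject to the half-space constraint $\ev_\Box(u)\le\ev_\Box(u_0)$ --- reading off its ends: the stratum $\ev_\Box(u)=\ev_\Box(u_0)$ contributes the honest fibered-product count of pairs of rectangles; the finite-parameter ends where a holomorphic disk bubbles off abut a cycle ($\Theta_{\b,\b'}$, $\Theta_{\bar\b,\bar\b}$, etc.) and cancel modulo $2$, all other such ends being excluded for index reasons; and the separation end $\ev_\Box(u)\ll\ev_\Box(u_0)$, where $u$ and $u_0$ each break into a juxtaposition of two triangles, contributes the composed triangle maps recorded by $\bigl(f_{\a''}^{\b\to\b'}\circ f_{\a'\to\a''}^{\b}\bigr)(\xs)|\theta^+|\omega_{\bar\b'',\bar\b'}$ together with the diagonal quadrilateral count on the top face. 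Setting the total boundary count to zero yields the length~$3$ relation.

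I expect the main obstacle to be the bookkeeping at the separation end: one must identify the degenerations of $\cN$ at $\ev_\Box(u)\ll\ev_\Box(u_0)$ term-by-term with the length~$2$ and length~$3$ chains of the auxiliary hypercubes~\eqref{eq:top-degree-generator-hypercube} and~\eqref{eq:alpha-hypercubes}, and in particular confirm that the length~$3$ chain $\omega_{\bar\b'',\bar\b'}$ of~\eqref{eq:top-degree-generator-hypercube} is precisely the primitive that surfaces, rather than a combination forced by a different grouping of the ends. This is the same mechanism as in Lemma~\ref{lem:top-left-1-handle-subcube}, but with the extra conformal parameter of the $\beta$-handleslide direction now also present; consequently I would additionally re-examine the admissibility hypothesis (Remark~\ref{rem:admissibility-unconventional}) and the small-translate theorems to be sure that no unexpected rectangle classes enter, since that is what makes the relevant counts finite and the matching unambiguous. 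The hypercube in the bottom left of Figure~\ref{fig:naturality-Sigma-fixed-1-handles} is then handled identically, and the remaining levels of the hyperbox (involving the triangle maps $f_{\a\bar\b}^{\b\bar\b\to\Dt}$, the relative homology maps $A_\lambda$, the triangle maps $f_{\a\bar\b}^{\Dt\to\a\bar\a}$, and the $3$-handle maps) are obtained by pairing hypercubes of attaching curves, possibly with an extra $\lambda$-direction as in Section~\ref{sec:hypercubes-rel-homology}, so their hypercube relations are routine given the stabilization results of Section~\ref{sec:polygons}.
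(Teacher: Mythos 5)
Your proposal is correct and follows essentially the same route as the paper: reduce to the length~3 relation, destabilize the connected-sum/1-handle region via Proposition~\ref{prop:multi-stabilization-counts}, and then count the ends of the one-parameter family of rectangle pairs constrained by $\ev_\Box(u_0)-\ev_\Box(u)=t$ for $t\in[0,\infty)$, identifying the $t=0$ end with the fibered-product quadrilateral count, the finite-$t$ disk degenerations as canceling, and the $t\to\infty$ end with the composed triangle maps. Your worry about whether $\omega_{\bar\b'',\bar\b'}$ is the right primitive is resolved exactly as you suspect: computing $[\,(f_{\a''}^{\b\to\b'}\circ f_{\a'\to\a''}^{\b})(-)|\theta^+|\omega_{\bar\b'',\bar\b'},\,\d\,]$ reproduces precisely the length-3 relation of the auxiliary hypercube~\eqref{eq:top-degree-generator-hypercube}, which cancels the leftover terms.
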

\begin{proof} The proof is similar to the proof of Lemma~\ref{lem:top-left-1-handle-subcube}. The length 1 and 2 hypercube relations are similar to the previous case, so we focus on the length 3 relation. If $\xs\in \bT_{\a'}\cap \bT_{\b}$, the desired relation is
\begin{equation}
\begin{split}
&h_{\a'\to \a''}^{\b\to \b'}(\xs)|\theta^+|\Theta_{\bar \b'', \bar\b'}\\
&+f_{\a'' c \bar \b'', \b c \bar \b,\b' c \bar \b'}\left(f_{\a'\to \a''}^{\b}(\xs)|\theta^+|\lambda_{\bar \b'', \bar \b}, \Theta_{\b, \b'}|\theta^+|\Theta_{\bar \b, \bar \b'}\right)\\
&+\left(f_{\a''}^{\b\to \b'}\circ f_{\a'\to \a''}^{\b}\right) (\xs)|\theta^+|\lambda_{\bar \b'', \bar \b'}\\
&+h_{\a' c \bar \b'\to \a'' c \bar \b''}^{\b c \bar \b\to \b' c \bar \b'}(\xs|\theta^+|\Theta_{\bar \b', \bar \b})\\
=&\left[\left(f_{\a''}^{\b\to \b'}\circ f_{\a'\to \a''}^{\b}\right)(-)|\theta^+|\omega_{\bar \b'', \bar \b'}+h^{t<t_0}, \d \right](\xs)
\end{split}
\label{eq:length-3-relation-well-defined-ab}
\end{equation}
By excising the special curves in the connected sum region using the results about holomorphic triangles and 1-handles from \cite{ZemGraphTQFT}*{Theorem~8.8} (compare Proposition~\ref{prop:multi-stabilization-counts}, above), we obtain
\begin{equation}
\begin{split}
&f_{\a'' c \bar \b'', \b c \bar \b,\b' c \bar \b'}\left(f_{\a'\to \a''}^{\b}(\xs)|\theta^+|\lambda_{\bar \b'', \bar \b}, \Theta_{\b, \b'}|\theta^+|\Theta_{\bar \b, \bar \b'}\right)\\
=&\left(f^{\b\to \b'}_{\a''}\circ f_{\a' \to \a''}^{\b}\right)(\xs)|\theta^+|f_{\bar \b''}^{\bar \b \to \bar \b'}(\lambda_{\bar \b'', \bar \b}).
\end{split}
\label{eq:hyperbox-holomorphic-triangles-ab-well-def}
\end{equation}

We now analyze the quadrilateral map $h_{\a' c \bar \b'\to \a'' c \bar \b''}^{\b c \bar \b\to \b' c \bar \b'}(\xs|\theta^+|\Theta_{\bar \b', \bar \b})$. As with the previous case, we can destabilize the 1-handle region and identify the expression with the count of index $-1$ holomorphic quadrilaterals on $(\Sigma\sqcup \bar \Sigma)\times \Box$. We can view this moduli space as the fibered product over the evaluation map to $\cM(\Box)\iso \R$. If $(\psi,\psi_0)$ is a homology class on $(\Sigma \sqcup \bar \Sigma)\times \Box$ which admits a holomorphic representative, then there are generically two possibilities:
\begin{enumerate}
\item\label{case:mu-1} $\mu(\psi)=0$ and $\mu(\psi_0)=-1$;
\item\label{case:mu-2} $\mu(\psi)=-1$ and $\mu(\psi_0)=0$.
\end{enumerate}
As before we will write $h^0$ for the map $h_{\a' c \bar \b'\to \a'' c \bar \b''}^{\b c \bar \b\to \b' c \bar \b'}(\xs|\theta^+|\Theta_{\bar \b', \bar \b})$. For $t\in [0,\infty)$, we write $h^t$ for a deformation, which counts curves such that 
\[
\ev_{\Box}(u_0)-\ev_{\Box}(u)=t.
\]
We count the ends of the moduli spaces defining the maps $h^t$ (ranging over $t\in [0,\infty)$). We obtain 
\begin{equation}
\begin{split}
[\d, h^{t<t_0}](\xs)=&h_{\a' c \bar \b'\to \a'' c \bar \b''}^{\b c \bar \b\to \b' c \bar\b'}(\xs|\theta^+|\Theta_{\bar \b', \bar \b})\\
&+(f^{\b\to \b'}_{\a''}\circ f_{\a'\to \a''}^{\b})(\xs)|\theta^+|h_{\bar \b'\to \bar \b''}^{\bar \b\to \bar \b'}(\Theta_{\bar \b', \bar \b})\\
&+h_{\a'\to \a''}^{\b\to \b'}(\xs)|\theta^+|\Theta_{ \bar \b'', \bar \b'}.
\end{split}
\label{eq:deform-diagonals-quadrilaterals}
\end{equation}
Note that  instead of $\Theta_{\bar \b'', \bar \b'}$ in the final factor, what more  naturally appears is 
\[
f_{\bar \b'', \bar \b', \bar \b'}(\Theta_{\bar \b'', \bar \b'}, f_{\bar \b', \bar \b, \bar \b'}(\Theta_{\bar \b', \bar \b}, \Theta_{\bar \b, \bar \b'})).
\]
However this coincides with $\Theta_{\bar \b'', \bar \b'}$, by a nearest point computation, cf. \cite{HHSZExact}*{Proposition~11.1}.

Finally,
\begin{equation}
\begin{split}
&\left[\left(f_{\a''}^{\b\to \b'}\circ f_{\a'\to \a''}^{\b}(-)\right)|\theta^+|\omega_{\bar \b'', \bar \b'}, \d \right](\xs)\\
=&\left(f_{\a''}^{\b\to \b'}\circ f_{\a'\to \a''}^{\b}(\xs)\right)|\theta^+|\left(f_{\bar \b''}^{\bar \b \to \bar \b'}(\lambda_{\bar \b'', \bar \b})+\lambda_{\bar \b'', \bar \b'}+h_{\bar \b'\to \bar \b''}^{\bar \b \to \bar \b'}(\Theta_{\bar \b', \bar \b})\right)
\end{split}
\label{eq:def-omega-b''b'}
\end{equation}
By combining ~\eqref{eq:hyperbox-holomorphic-triangles-ab-well-def}, ~\eqref{eq:deform-diagonals-quadrilaterals} and ~\eqref{eq:def-omega-b''b'}, we quickly obtain ~\eqref{eq:length-3-relation-well-defined-ab}, completing the proof.
\end{proof}

\section{1- and 3-handles} \label{sec:1-and-3-handles}

In this section, we define our cobordism maps for 1-handle and 3-handles. Since for the maps in this section there will be an obvious choice of $\Spin^c$ structure and framed path, we omit these from the notation throughout. 

\subsection{The construction}
We now construct the map for a 4-dimensional 1-handle. Suppose that $\bS_0=\{p,p'\}$ is a (framed) 0-sphere in $Y$. We pick a Heegaard surface $(\Sigma,\as,\bs)$ so that $\bS_0\subset \Sigma\setminus (\as\cup \bs)$. We form a Heegaard surface for the surgered manifold $Y(\bS_0)$ by cutting out small disks centered at $p$ and $p'$, and gluing in an annular diagram $(A,\g_0,\g_0)$, where $\g_0$ and $\g_0$ are meridians of the annulus which intersect in two points. (Here and onwards, we abuse notation and view one copy of $\g_0$ as a Hamiltonian translate of the other). Write $(\Sigma', \as \g_0, \bs \g_0,w)$ for the diagram obtained by this procedure.

We now define the 4-dimensional 1-handle map. To do this, we first pick paths $\lambda$ and $\lambda'$ from $p$ and $p'$ to $w$. We pick doubling arcs $\ds$ for $\Sigma$ which are disjoint from $\lambda$ and $\lambda'$. Furthermore, we choose two more doubling arcs $\ds_0$ on $\Sigma'$, which are contained in the union of a neighborhood of $\lambda$ and $\lambda'$, as well as the annular 1-handle region. We assume $\ds\cup \ds_0$ is a valid collection of doubling arcs for $\Sigma'$.  Write $\Ds$ and $\Ds_0$  for the curves obtained by doubling $\ds$ and $\ds_0$, respectively. 

Additionally, the involutive 1-handle map requires a choice of closed curve $\tau\subset \Sigma'\# \bar \Sigma'$, such that the path $\tau$ is obtained by doubling a properly embedded arc on $\Sigma'\setminus N(w)$ which intersects $\g_0$ transversely in a single point.

We define the 1-handle map to be the diagram obtained by compressing the following hyperbox:
\begin{equation}
\begin{tikzcd}[
labels=description,
row sep=1cm,
column sep=2.4cm,
fill opacity=.7,
text opacity=1
]
\CF(\as,\bs)
	\ar[d, "F_{1}^{\bar{\b},\bar{\b}}"]
	\ar[rrr, "F_1^{\g_0,\g_0}"]
&[-1.5cm]
&[-1.5cm]
&[-.2cm]
\CF(\as\g_0,\bs\g_0)
	\ar[d,"F_{1}^{\bar{\b}\bar \g_0,\bar{\b} \bar \g_0}"]
\\
\CF(\as \bar{\bs}, \bs \bar{\bs})
	\ar[rr, "F_1^{\g_0\bar \g_0, \g_0\bar \g_0}"]
	\ar[d,"f^{\b \bar{\b}\to \Dt}_{\a \bar{\b}}"]
&
& 
\CF(\as \bar \bs \g_0 \bar \g_0, \bs  \bar \bs \g_0  \bar \g_0)
	\ar[r,equal]
	\ar[d, "f^{\b \bar \b\g_0 \bar \g_0\to \Dt \g_0 \bar \g_0}_{\a \bar \b \g_0 \bar \g_0}"]
	\ar[dr,dashed]
& \CF(\as\bar \bs \g_0 \bar \g_0, \bs \bar \bs \g_0 \bar \g_0)
	\ar[d,"f^{\b \bar \b \g_0 \bar \g_0\to \Dt \Dt_0}_{\a \bar \b \g_0 \bar \g_0}"]
\\
\CF(\as \bar \bs,\Ds)
	\ar[rr, "F_1^{\g_0\bar \g_0, \g_0\bar \g_0}"]
	\ar[d, "f^{\Dt\to \a  \bar{\a}}_{\a  \bar{\b}}"]
&
&
 \CF(\as \bar \bs  \g_0 \bar \g_0, \Ds \g_0 \bar \g_0)
	\ar[r, "f_{\a \bar \b \g_0 \bar \g_0}^{\Dt \g_0 \bar \g_0 \to \Dt \Dt_0}"]
	\ar[d, "f_{\a \bar \b \g_0 \bar \g_0}^{\Dt \g_0 \bar \g_0\to \a \bar \a \g_0 \bar \g_0}"]
	\ar[dr,dashed]
& \CF(\as\bar{\bs} \g_0 \bar \g_0,\Ds \Ds_0)
	\ar[d, "f_{\a \bar \b \g_0 \bar \g_0}^{\Dt \Dt_0\to \a \bar \a \g_0 \bar \g_0}"]
\\
\CF(\as  \bar{\bs}, \as  \bar{\as})
	\ar[rr, "F_1^{\g_0\bar \g_0,\g_0\bar \g_0}"]
	\ar[d, "F_{3}^{\a,\a}"]
&&
\CF(\as \bar \bs \g_0 \bar \g_0, \as \bar \as \g_0 \bar \g_0)
	\ar[r, "B_\tau"]
&
\CF(\as \bar \bs \g_0 \bar \g_0, \as \bar \as \g_0 \bar \g_0)
	\ar[d, "F_3^{\a\g_0,\a\g_0}"]
	\\
\CF(\bar{\bs}, \bar{\as})
	\ar[rrr, "F_1^{\bar \g_0, \bar \g_0}"]
&&&
\CF(\bar{\bs} \bar{\g}_0,\bar{\as} \bar{\g}_0)
\end{tikzcd}
\label{eq:1-handles}
\end{equation}

It will be helpful to observe that the bottom-most level of~\eqref{eq:1-handles} has the following slightly expanded form
\begin{equation}
\begin{tikzcd}[
labels=description,
row sep=1cm,
column sep=1.5cm,
fill opacity=.7,
text opacity=1
]
\CF(\as  \bar{\bs}, \as  \bar{\as})
	\ar[r, "F_1^{\bar \g_0, \bar \g_0}"]
	\ar[d, equal]
&[-.5cm]
\CF(\as\bar \bs\bar \g_0, \as \bar \as\bar \g_0)
	\ar[r,"F_1^{\g_0,\g_0}"]
	\ar[d,equal]
&[-.5cm]
\CF(\as \bar \bs \g_0 \bar \g_0, \as \bar \as \g_0 \bar \g_0)
	\ar[r, "B_\tau"]
&[-1cm]
\CF(\as \bar \bs \g_0 \bar \g_0, \as \bar \as \g_0 \bar \g_0)
	\ar[d, "F_3^{\g_0,\g_0}"]
\\
\CF(\as\bar{\bs}, \as\bar{\as})
	\ar[r, "F_1^{\bar \g_0, \bar \g_0}"]
	\ar[d, "F_3^{\a,\a}"]
&
\CF(\as\bar \bs \bar \g_0,\as \bar \as \bar \g_0)
	\ar[rr,equal]
&&
\CF(\as\bar{\bs} \bar{\g}_0,\as\bar{\as} \bar{\g}_0)
	\ar[d,"F_3^{\a,\a}"]
\\
\CF(\bar{\bs}, \bar{\as})
	\ar[rrr, "F_1^{\bar \g_0, \bar \g_0}"]
&&&
\CF(\bar{\bs} \bar{\g}_0,\bar{\as} \bar{\g}_0)
\end{tikzcd}
\label{eq:1-handles-expanded-bottom-portion}
\end{equation}

We now describe the construction of~\eqref{eq:1-handles}.  The top-most face which has a diagonal map is obtained by pairing a hypercube of beta attaching curves with the hypercube of alpha attaching curves consisting of  $\as\bar \bs \g_0\bar \g_0$.

We now consider the lowest face of~\eqref{eq:1-handles} which has a diagonal map. This map is obtained by pairing the 0-dimensional alpha hypercube $\as \bar \bs \g_0\bar \g_0$ with the diagram below. The map $B_\tau$ is the $H_1$-action for $\tau$, defined by counting changes across the beta curves, $\as\bar \as \g_0\bar \g_0$.

\begin{equation}
\begin{tikzcd}[labels=description, row sep =2cm, column sep =2.5cm]
\Ds \g_0\bar \g_0
	\ar[r,"\Theta_{\Dt \g_0\bar \g_0, \Dt \Dt_0}"]
	\ar[d,"\Theta_{\Dt \g_0\bar \g_0, \a \bar \a \g_0\bar \g_0}"]
	\ar[dr, dashed, "\eta"]
&
\Ds \Ds_0
	\ar[d, "\Theta_{\Dt \Dt_0, \a \bar \a \g_0 \bar \g_0}"]
\\
\as \bar \as \g_0 \bar \g_0
	\ar[r ,"\tau"]
&
\as \bar \as \g_0 \bar \g_0
\end{tikzcd}
\label{eq:1-handle-hypercube-involving-H1}
\end{equation}
In equation~\eqref{eq:1-handle-hypercube-involving-H1}, the hypercube relations are to be interpreted using the formalism of Section~\ref{sec:hypercubes-rel-homology}. In particular, the hypercube relations amount to the equation
\begin{equation}
\d \eta=B_\tau(\Theta_{\Dt\g_0\bar \g_0, \a \bar \a \g_0\bar \g_0})+f_{\Dt \g_0\bar \g_0, \Dt \Dt_0, \a \bar \a\g_0\bar \g_0}(\Theta_{\Dt \g_0\bar \g_0, \Dt\Dt_0}, \Theta_{\Dt\Dt_0,\a \bar \a \g_0\bar \g_0}).
\label{eq:1-handle-hypercube-involving-H1-unpacked}
\end{equation}

\begin{rem}
\label{rem:1-handle-many-choices}
There are many choices of morphisms in the diagram in equation~\eqref{eq:1-handle-hypercube-involving-H1}. Any two choices of top-degree generators $\Theta_{\Dt \g_0 \bar \g_0, \Dt \Dt_0}$ are homologous. Similarly any two choices of $\Theta_{\Dt \g_0 \bar \g_0, \a \bar\a \g_0 \bar \g_0}$ and $\Theta_{\Dt \Dt_0, \a \bar \a \g_0 \bar \g_0}$ are homologous. Unlike these morphisms, it is not the case that any two choices of diagonal morphism $\eta=\eta_{\Dt \g_0\bar \g_0, \a \bar \a \g_0 \bar \g_0}$ are homologous. Indeed this chain lies in the top degree of $\CF^-(\#^{g+1} S^1\times S^2)$, and hence there two choices which are not homologous. Similarly, it is not the case that any two choices for the curve $\tau$ are homologous. Nonetheless, we will show in Lemma~\ref{lem:1-handles-and-handleslides} that resulting involutive 1-handle map obtained by compressing the diagram in equation~\eqref{eq:1-handles-expanded-bottom-portion} is independent from these choices, up to chain homotopy.
\end{rem}

We now show that this is satisfiable:

\begin{lem}
\label{lem:1-handle-bottom-hypercube}There is a homogeneously graded $\eta\in \CF(\Ds\g_0\bar \g_0, \as \bar \as \g_0\bar \g_0)$ satisfying~\eqref{eq:1-handle-hypercube-involving-H1-unpacked}.
\end{lem}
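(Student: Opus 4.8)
The statement asserts that the chain $\eta$ exists, i.e.\ that the right-hand side of \eqref{eq:1-handle-hypercube-involving-H1-unpacked} is a boundary. The strategy is the standard one for filling in a diagonal of a two-dimensional hypercube: show that the element
\[
Z := B_\tau(\Theta_{\Dt\g_0\bar \g_0, \a \bar \a \g_0\bar \g_0})+f_{\Dt \g_0\bar \g_0, \Dt \Dt_0, \a \bar \a\g_0\bar \g_0}(\Theta_{\Dt \g_0\bar \g_0, \Dt\Dt_0}, \Theta_{\Dt\Dt_0,\a \bar \a \g_0\bar \g_0})
\]
is a cycle in $\CF^-(\Sigma'\#\bar\Sigma', \Ds\g_0\bar\g_0, \as\bar\as\g_0\bar\g_0,w)$, and then identify the homology of this complex and the grading of $Z$ to conclude that any cycle in that grading is automatically a boundary.

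First I would check that $Z$ is a cycle. The two length-$1$ chains $\Theta_{\Dt\g_0\bar\g_0,\a\bar\a\g_0\bar\g_0}$ (target of $\Theta_{\Dt\g_0\bar\g_0,\Dt\Dt_0}$ composed with $\Theta_{\Dt\Dt_0,\a\bar\a\g_0\bar\g_0}$, and also target of $B_\tau$) are top-degree generators of Floer homology, hence cycles; the corner-to-corner chains $\Theta_{\Dt\g_0\bar\g_0,\Dt\Dt_0}$ and $\Theta_{\Dt\Dt_0,\a\bar\a\g_0\bar\g_0}$ are cycles as well. Using Lemma~\ref{lem:associativity-extra-lambda} (with the convention $\d\lambda=0$ appropriate to a single $U$-variable, here applied to the closed curve $\tau$ rather than an arc, for which $B_\tau$ is a genuine chain map), the $\d$-image of each term is computed by the usual associativity relation for triangles with one extra $\tau$-input: every term involves $\d$ of one of the above cycles and therefore vanishes. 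Concretely, $[\d, B_\tau]=0$ on the relevant complex, and the triangle map $f_{\Dt\g_0\bar\g_0,\Dt\Dt_0,\a\bar\a\g_0\bar\g_0}$ satisfies the associativity relation, so $\d Z = B_\tau(\d\Theta_{\ldots}) + (\text{terms with } \d\Theta_{\ldots}) = 0$.

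Next I would identify the complex $\CF^-(\Sigma'\#\bar\Sigma', \Ds\g_0\bar\g_0, \as\bar\as\g_0\bar\g_0, w)$. Since $\Ds\g_0\bar\g_0$ is a doubling of arcs on $\Sigma'\setminus N(w)$ and $\as\bar\as\g_0\bar\g_0$ is the ``conjugate-symmetric'' set of attaching curves, the underlying three-manifold is $\#^{g+1}(S^1\times S^2)$ (here $g=g(\Sigma)$, so $g(\Sigma')=g+1$). Hence $U^{-1}H_*$ of this complex is an exterior algebra on $g+1$ generators tensored with $\bF[U,U^{-1}]$, concentrated in gradings from $-(g+1)$ to $(g+1)$ relative to the bottom; in particular its top grading is one-dimensional, spanned by the image of the top generator. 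The chain $Z$ has grading one higher than the top-degree generator $\Theta_{\Dt\g_0\bar\g_0,\a\bar\a\g_0\bar\g_0}$ (the map $B_\tau$ raises grading by $1$, and the triangle map is grading-preserving while $\Theta_{\Dt\g_0\bar\g_0,\Dt\Dt_0}$ and $\Theta_{\Dt\Dt_0,\a\bar\a\g_0\bar\g_0}$ are top generators, so the composition also lands one above the target's top generator after accounting for the Maslov shift of the rectangle count). Since there is no homology one grading above the maximum, the cycle $Z$ is null-homologous, and freeness over $\bF[U]$ lets us choose a homogeneous primitive $\eta$ with $\d\eta = Z$, which is exactly \eqref{eq:1-handle-hypercube-involving-H1-unpacked}.

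\textbf{Main obstacle.} The delicate point is the grading bookkeeping: one must confirm that $Z$ really sits strictly above the top nonzero grading of $U^{-1}H_*\bigl(\CF^-(\Sigma'\#\bar\Sigma',\Ds\g_0\bar\g_0,\as\bar\as\g_0\bar\g_0)\bigr)$, so that the ``cycle $\Rightarrow$ boundary'' step is valid. This requires knowing the precise grading of the top generator $\Theta_{\Dt\g_0\bar\g_0,\a\bar\a\g_0\bar\g_0}$ and that $B_\tau$ shifts grading by exactly $+1$ (an $H_1$-action), which follows from the general properties of the $H_1$-action recalled in Section~\ref{sec:hypercubes-rel-homology} and the identification of the diagram as a doubled diagram for $\#^{g+1}(S^1\times S^2)$. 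A secondary technical check is that the triangle map $f_{\Dt\g_0\bar\g_0,\Dt\Dt_0,\a\bar\a\g_0\bar\g_0}$ is defined and counts only finitely many curves, which is guaranteed by the weak admissibility built into the hypothesis that $\tilde\frD$ (and the relevant triples) are admissible. Everything else is routine once these grading facts are in place.
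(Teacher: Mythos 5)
Your verification that $Z$ is a cycle is fine, but the grading step that drives the whole proof is wrong, and the gap is fatal. The $H_1$-action $B_\tau$ counts Maslov index one disks with an extra weight, exactly like the differential, so it \emph{lowers} the Maslov grading by $1$; it does not raise it. (Likewise the triangle term, applied to two top-degree cycles, lands in the grading of $\Theta_{\Dt\g_0\bar\g_0,\a\bar\a\g_0\bar\g_0}$ minus one --- in any case the two summands of $Z$ must share a grading for the equation to make sense.) Hence $Z$ sits one degree \emph{below} the top-degree generator of $\CF^-(\Ds\g_0\bar\g_0,\as\bar\as\g_0\bar\g_0)$, a complex for $\#^{g+1}(S^1\times S^2)$, not one degree above it. In that grading the homology is nonzero (for $\#^{n}(S^1\times S^2)$ it is $\Lambda^{n-1}$ of an $n$-dimensional space), so ``homogeneous cycle $\Rightarrow$ boundary'' simply does not apply, and nothing in your argument rules out $Z$ representing a nontrivial class. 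Consistent with this, Remark~\ref{rem:1-handle-many-choices} places $\eta$ itself in the \emph{top} degree, where two primitives can fail to be homologous; your grading would put $\eta$ one degree above the top.

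The paper has to do genuine work at exactly this point: using that any two choices of $\Ds\Ds_0$ are handleslide equivalent, it reduces by an associativity argument to a convenient choice with $\Ds_0$ disjoint from $\as\cup\bar\as$, splits the Heegaard triple as a connected sum $(\Sigma\#\bar\Sigma,\Ds,\Ds,\as\bar\as)\#(\bT^2\#\bT^2,\g_0\bar\g_0,\Ds_0,\g_0\bar\g_0)$, destabilizes via Proposition~\ref{prop:multi-stabilization-counts}, and then verifies by an explicit count of holomorphic triangles (or by identifying the genus two triple with $0$-surgery on an unknot in $S^1\times S^2$) that $f_{\g_0\bar\g_0,\Dt_0,\g_0\bar\g_0}(\Theta^+,\Theta^+)=\theta^+|\bar\theta{}^-+\theta^-|\bar\theta{}^+$ coincides with the $\tau$-action, so the two terms on the right-hand side of~\eqref{eq:1-handle-hypercube-involving-H1-unpacked} cancel on the nose for that model. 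Some such model computation or cancellation argument is unavoidable; a pure grading argument cannot close this lemma.
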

\begin{proof}Any two choices of $\Ds\Ds_0$ are related by a sequence of handleslides and isotopies. Via an associativity argument, it is sufficient to show the claim for any convenient choice of $\Ds$ and $\Ds_0$. In particular, we may assume that the curves of $\Ds_0$ are disjoint from $\as$ and $\bar \as$. In particular, the relevant triple diagram $(\Sigma' \# \bar \Sigma', \Ds \g_0 \bar \g_0,\Ds \Ds_0,\as \bar \as \g_0 \bar \g_0)$ may be decomposed as a connected sum 
 \[
 (\Sigma\# \bar \Sigma,\Ds,\Ds,\as \bar \as)\#(\bT^2\# \bT^2,\g_0 \bar \g_0, \Ds_0, \g_0 \bar \g_0).
 \]
  By the stabilization result in Proposition~\ref{prop:multi-stabilization-counts}, it suffices to show the claim for the genus 2 triple $(\bT^2\# \bT^2,\g_0 \bar \g_0, \Ds_0, \g_0 \bar \g_0)$.
  
   There are two ways to verify the claim in this situation. The first strategy is to argue topologically via the following reasoning:
\begin{enumerate}
\item Topologically the triple $(\bT^2\# \bT^2,\g_0 \bar \g_0, \Ds_0, \g_0 \bar \g_0)$ is for 0-surgery on an unknot in $S^1\times S^2$.
\item By picking $\Ds_0$ appropriately, we may assume that $\tau$ runs parallel to a curve $\Dt\in  \Ds_0$. Furthermore, $\Dt$ is the meridian of the knot on which we are performing surgery, and in particular defines the dual of the surgery knot.
\item In general, if $U$ is a 0-framed unknot in $Y$, and $\mu$ is a meridian, then $\CF(W(U))=A_\mu\circ F_{1}$, where $F_1$ is the 1-handle map. This is easily verified using a genus 1 diagram and a stabilization result for triangles.
\item By combining the ideas above, the proof is complete.
\end{enumerate}
Alternately, one may explicitly perform the computation by counting holomorphic triangles. Write $\theta^{\pm}$ (resp. $\bar{\theta}{}^{\pm}$) for the intersection points of $\g_0$ with its translate (resp. $\bar \g_0$ with its translate). In Figure~\ref{fig:4}, we show the four index 0 triangle classes which can contribute.
Two of the classes are in $\pi_2(\Theta^+,\Theta^+,\theta^+|\bar{\theta}{}^-)$ and two of the classes are in $\pi_2(\Theta^+,\Theta^+,\theta^-|\bar{\theta}{}^+)$.
If $\psi_1$ and $\psi_2$ denote the two classes in $\pi_2(\Theta^+,\Theta^+,\theta^+|\bar{\theta}{}^-)$, we claim
\begin{equation}
\# \cM(\psi_1)+\# \cM(\psi_2)\equiv 1\mod 2, \label{eq:sum-of-classes-is-1}
\end{equation}
and similarly for the two classes in $\pi_2(\Theta^+,\Theta^+,\theta^-|\bar{\theta}{}^+)$. 
This claim is verified via a Gromov compactness argument which is illustrated and explained in Figure~\ref{fig:13}.
Hence $f_{\g_0\bar \g_0, \Dt_0, \g_0 \bar \g_0}(\Theta^+,\Theta^+)=\theta^+|\bar{\theta}^-+\theta^-|\bar{\theta}^+$, which is also easily seen to be the action $\tau$ on $\theta^+|\bar \theta^+$. 

\end{proof}

 \begin{figure}[ht]
	\centering
	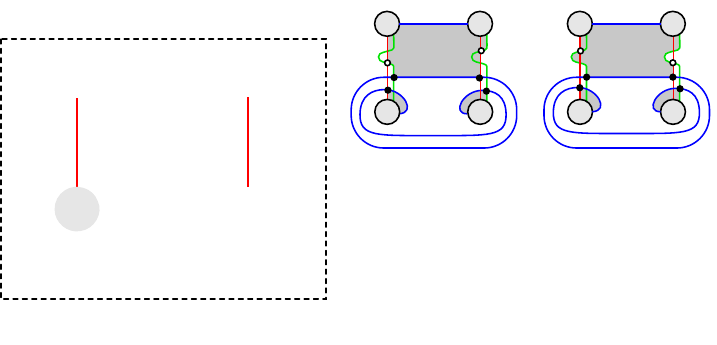
	\caption{Verifying equation~\eqref{eq:1-handle-hypercube-involving-H1-unpacked} by counting triangles. Left: The diagram $(\bT^2\# \bT^2, \g_0\bar\g_0, \Dt_0)$. Right: The four classes of index 0 holomorphic triangles which contribute to the triangle map $f_{\g_0\bar \g_0,\Dt_0,\g_0\bar\g_0}(\Theta^+,\Theta^+)$. 
	}\label{fig:4}
\end{figure}
 \begin{figure}[ht]
	\centering
	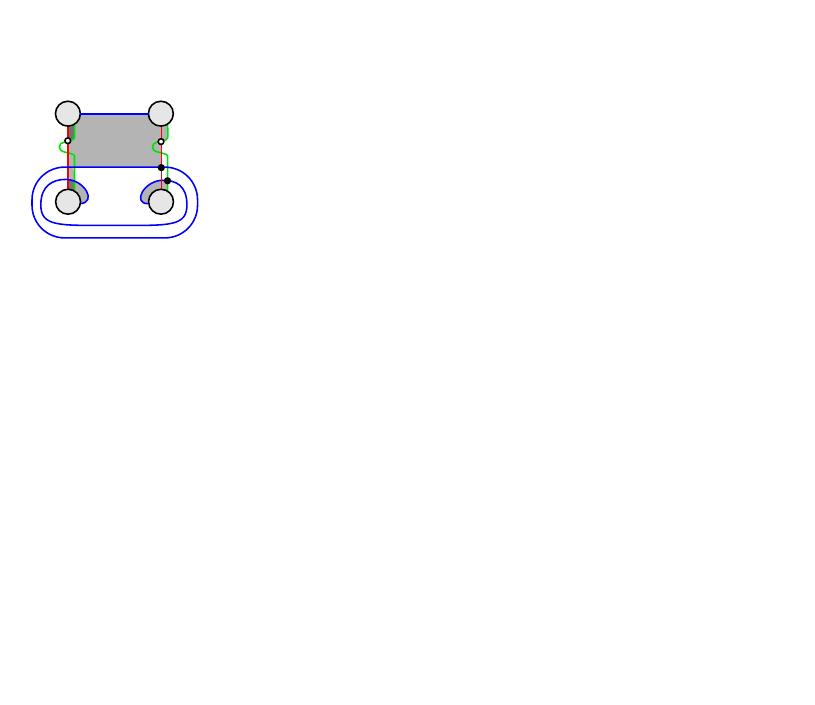
	\caption{A Gromov compactness argument which shows that the total count of holomorphic triangles of index 0 in $\pi_2(\Theta^+,\Theta^+,\theta^+|\bar{\theta}{}^-)$ is 1. In the left-most columns are index 1 classes of triangles. On the other columns of a given row, we show all decompositions of that class into an index 0 triangle class and an index 1 disk class (such that both classes have nonnegative multiplicities). Configurations $A$ and $E$ have the same total count as the $\theta^+|\bar{\theta}{}^-$ coefficient of our map. Configuration $B$ has total count 1. Configurations $C$ and $D$ have the same count. Gromov compactness gives $\# A+\# B+\# C\equiv 0$ and $\# D+\# E\equiv 0$, which quickly gives Equation~\eqref{eq:sum-of-classes-is-1}.}\label{fig:13}
\end{figure}

Finally, we verify that the bottom-most face of~\eqref{eq:1-handles} is a hypercube. Note that this face may be expanded into the diagram shown in~\eqref{eq:1-handles-expanded-bottom-portion}. In light of this, it suffices to show the following relation (note the similarity to Equation ~\eqref{eq:add-basepoint-hypercube}):
\begin{lem}\label{lem:formal-cube-1-handle} 
As maps from $\CF(\as \bar \bs \bar \g_0, \as \bar \as \bar \g_0)$ to $\CF(\as \bar \bs \bar \g_0, \as \bar \as \bar \g_0)$, there is an equality
\[
F_3^{\g_0,\g_0}\circ B_\tau \circ F_1^{\g_0,\g_0}=\id.
\]
\end{lem}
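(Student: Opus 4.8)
\textbf{Proof plan for Lemma~\ref{lem:formal-cube-1-handle}.} The strategy is to compute all three maps explicitly on the genus-one annular region, using the fact that each of $F_1^{\g_0,\g_0}$, $B_\tau$, and $F_3^{\g_0,\g_0}$ is supported there (i.e. acts by the identity outside a neighborhood of the $1$-handle, with the $\as\bar\bs$ and $\as\bar\as$ curves playing a passive role). First I would record that $F_1^{\g_0,\g_0}(\xs)=\xs\times\theta^+_{\g_0,\g_0}$ by definition of the $1$-handle map, so the composite $B_\tau\circ F_1^{\g_0,\g_0}$ applied to $\xs$ produces $\xs\times B_\tau(\theta^+_{\g_0,\g_0})$, where $B_\tau$ counts the change of disk multiplicities across the $\g_0$ (beta) curve along the arc $\tau$. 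Since $\tau$ is the double of a properly embedded arc meeting $\g_0$ transversely once, a model computation on the torus $\bT^2$ (the same one used in the proof of Lemma~\ref{lem:1-handle-bottom-hypercube}) shows $B_\tau(\theta^+_{\g_0,\g_0})=\theta^-_{\g_0,\g_0}$, with no higher $U$-powers, because the relevant bigon class has multiplicity one across $\g_0$ and trivial multiplicity at $w$. Finally, $F_3^{\g_0,\g_0}(\xs\times\theta^-_{\g_0,\g_0})=\xs$ and $F_3^{\g_0,\g_0}(\xs\times\theta^+_{\g_0,\g_0})=0$ by the defining formula for the $3$-handle map; hence the full composite sends $\xs\mapsto\xs$, which is the claim.

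More carefully, the only subtlety is justifying that the three maps really do decouple as a tensor product of an ``inert'' factor on $(\Sigma,\as\bar\bs,\as\bar\as)$ and the genus-one factor on the annular doubling region $(\bT^2,\g_0\bar\g_0,\g_0\bar\g_0)$ carrying the curve $\tau$. For the $1$- and $3$-handle maps this is immediate from their definitions as tensoring with a top/bottom generator. For $B_\tau$ I would invoke the stabilization/connected-sum behavior of the $H_1$-action, as in \cite{ZemGraphTQFT}*{Section~5}: since $\tau$ lies entirely in the genus-one region and is disjoint from the $\as\bar\as$ curves that are not part of $\g_0\bar\g_0$, the action $B_\tau$ on $\CF(\as\bar\bs\bar\g_0\g_0,\as\bar\as\bar\g_0\g_0)\cong\CF(\as\bar\bs\bar\g_0,\as\bar\as\bar\g_0)\otimes\widehat{\CF}(\bT^2,\g_0,\g_0)$ is $\id\otimes B_\tau^{\bT^2}$ up to chain homotopy, with $B_\tau^{\bT^2}$ the action on the small-translate torus complex. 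Then the model computation identifies $B_\tau^{\bT^2}(\theta^+)=\theta^-$: the two intersection points $\theta^\pm$ of $\g_0$ with its Hamiltonian translate cobound two embedded bigons; exactly one of them crosses $\tau$ (since $\tau\cdot\g_0=1$), and that bigon avoids $w$, so $B_\tau^{\bT^2}(\theta^+)=\theta^-$ with no $U$-power, while $B_\tau^{\bT^2}(\theta^-)=0$ for grading reasons.

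Assembling, $F_3^{\g_0,\g_0}\circ B_\tau\circ F_1^{\g_0,\g_0}$ equals $(\id\otimes S^-_{w'})\circ(\id\otimes B_\tau^{\bT^2})\circ(\id\otimes S^+_{w'})=\id\otimes(S^-_{w'}B_\tau^{\bT^2}S^+_{w'})$; and $S^-_{w'}B_\tau^{\bT^2}S^+_{w'}=\id$ follows from $S^+_{w'}(\xs)=\xs\times\theta^+$, $B_\tau^{\bT^2}(\theta^+)=\theta^-$, and $S^-_{w'}(\xs\times\theta^-)=\xs$ — indeed this is precisely the content of Equation~\eqref{eq:free-stabilization} with $\lambda$ replaced by the arc whose double is $\tau$, so one may alternatively quote \cite{ZemGraphTQFT}*{Lemma~7.10} directly rather than redoing the torus count. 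The main obstacle is not any hard analysis but rather being careful with the chain-level normalizations: one must check that the particular choices of top-degree generators $\theta^+_{\g_0,\g_0}$ used in defining $F_1^{\g_0,\g_0}$ and $F_3^{\g_0,\g_0}$ are compatible (so that no stray $U$-power or extra term appears), and that the almost complex structure on the annular neck is stretched enough for the small-translate/nodal degeneration picture of \cite{HHSZExact}*{Propositions~11.1 and~11.5} to apply; granting the earlier results of the paper, both of these are routine.
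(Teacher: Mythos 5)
Your overall strategy matches the paper's: compute $F_1^{\g_0,\g_0}$ as tensoring with $\theta^+_{\g_0,\g_0}$, understand what $B_\tau$ does to such elements, and use the fact that $F_3^{\g_0,\g_0}$ annihilates anything carrying a $\theta^+_{\g_0,\g_0}$ factor. However, there is a genuine gap in the middle step. You assert that ``$\tau$ lies entirely in the genus-one region'' and hence that $B_\tau$ decouples as $\id\otimes B_\tau^{\bT^2}$. This is false: by construction $\tau$ is the \emph{double} of a properly embedded arc on $\Sigma'\setminus N(w)$, so its endpoints lie on $\d N(w)$ and the doubled closed curve runs through the connected-sum tube at $w$ and across the rest of the diagram before crossing $\g_0$ once in the handle region. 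Consequently $B_\tau$ receives contributions from holomorphic disks supported away from the $1$-handle region, weighted by intersections of their beta boundary with the portion of $\tau$ outside that region, and the map does not factor as a tensor product (not even obviously up to chain homotopy — and note the lemma asserts an on-the-nose equality of chain maps, so a homotopy-level decoupling would in any case be too weak for the hypercube construction it feeds into).

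The correct repair is exactly what the paper does: by the stabilization result for the differential (\cite{ZemGraphTQFT}*{Proposition~8.5}), the only holomorphic disks changing $\theta^+_{\g_0,\g_0}$ to $\theta^-_{\g_0,\g_0}$ are the two bigons contained in the $1$-handle region, and these connect $\xs\times\theta^+_{\g_0,\g_0}$ to $\xs\times\theta^-_{\g_0,\g_0}$ for the \emph{same} $\xs$; since $\tau$ meets the boundary of exactly one of these bigons, one gets
\[
B_\tau(\xs\times\theta^+_{\g_0,\g_0})=\xs\times\theta^-_{\g_0,\g_0}+\zs\otimes\theta^+_{\g_0,\g_0}
\]
for some $\zs$ collecting all the remaining (non-local) contributions. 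The point is not that $\zs$ vanishes — it generally does not — but that $F_3^{\g_0,\g_0}$ kills it. With this correction your computation of the composite goes through and recovers the identity.
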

\begin{proof}Compare \cite{ZemGraphTQFT}*{Lemma~7.10}. The relation follows from the form of the differential after attaching a 1-handle. Namely, it follows from \cite{ZemGraphTQFT}*{Proposition~8.5} that the differential satisfies
\[
\d(\xs\times \theta^+_{\g_0,\g_0})= \d(\xs)\otimes \theta^+_{\g_0,\g_0}.
\]
Furthermore, the proof shows that the holomorphic curves which go from $\xs\times \theta^+_{\g_0,\g_0}$ to $\ys\times \theta^-_{\g_0,\g_0}$ for some $\xs$ and $\ys$ all have $\xs=\ys$ and have domain consisting of one of the two bigons contained in the 1-handle region. The 1-chain $\tau$ intersects the boundary of exactly one of these bigons, and hence $B_{\tau}(\xs\times \theta_{\g_0,\g_0}^+)= \xs\times \theta_{\g_0,\g_0}^-+\zs\otimes \theta_{\g_0,\g_0}^+$, for some $\zs\in \CF(\as \bar \bs, \as \bar \as)$. The stated claim follows immediately.
\end{proof}

\subsection{Well-definedness of the 1-handle maps}

In this section, we prove that the stabilization maps are compatible with the maps for changing the alpha and beta curves via handleslides or isotopies, or changing the choice of doubling curves $\Ds$ and $\Ds_0$. We also prove that the involutive 1-handle maps are independent of the choice of the curve $\tau\subset \Sigma'\# \bar \Sigma'$, as well as the choice of chains used to construct the hypercubes in~\eqref{eq:1-handles}.

As a first step, we verify independence from the curve $\tau$:

\begin{lem}
\label{lem:1-handle-map-ind-tau}
 Suppose that $\tau$ and $\tau'$ are two closed curves on $\Sigma'\# \bar \Sigma'$ obtained by doubling  properly embedded arcs on $\Sigma'\setminus N(w)$ which intersect $\g_0$ transversely in a single point. Then there are choices of the additional auxiliary data in the construction so that the models of the 1-handle maps in Equation~\eqref{eq:1-handles} are chain homotopic.
\end{lem}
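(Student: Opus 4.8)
The plan is to show that the two curves $\tau$ and $\tau'$ differ, up to handleslides across the $\as\bar\as\g_0\bar\g_0$ curves and isotopy, by a multiple of the curve $\Dt\in\Ds_0$ which is the meridian of the surgery knot (so that $B_\tau$ and $B_{\tau'}$ agree on the relevant complex up to a correction that is absorbed into the diagonal term $\eta$). Concretely, any two properly embedded arcs on $\Sigma'\setminus N(w)$ which meet $\g_0$ transversely once represent classes in $H_1(\Sigma'\setminus D,\d D\setminus\{w\})$ that agree modulo the subspace spanned by arcs disjoint from $\g_0$; after doubling, this means $[\tau]-[\tau']$ lies in the span of the $\Ds_0$-curves in $H_1$. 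I would first reduce, via the stabilization result Proposition~\ref{prop:multi-stabilization-counts} exactly as in the proof of Lemma~\ref{lem:1-handle-bottom-hypercube}, to the genus-$2$ piece $(\bT^2\#\bT^2,\g_0\bar\g_0,\Ds_0,\g_0\bar\g_0)$, where the computation of $B_\tau$ on $\theta^+|\bar\theta^+$ is explicit.

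The key steps, in order, are as follows. First, I would set up an intermediate hyperbox of size $(1,\dots,1,1)$ with one extra coordinate whose endpoints are the $\tau$-model and the $\tau'$-model of the diagram in Equation~\eqref{eq:1-handles-expanded-bottom-portion}, filling in all levels except the $B_\tau$/$B_{\tau'}$ level by the identity and the standard (commuting) squares. Second, on the $B_\tau$ level I would use the hypercube formalism of Section~\ref{sec:hypercubes-rel-homology}: the map $B_\tau$ is the $\scB_\lambda$-type action of Section~\ref{sec:hypercubes-rel-homology} (here for a closed curve rather than an arc, but the same construction applies, counting changes across the $\as\bar\as\g_0\bar\g_0$ curves), and two homologous $1$-cycles $\tau,\tau'$ on $\Sigma'\#\bar\Sigma'$ give chain-homotopic $H_1$-actions. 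Third, I would verify that the chain homotopy $K$ with $[\d,K]=B_\tau+B_{\tau'}$ can be chosen compatibly with the surrounding identity maps in the hyperbox, so that after compression the two models of the $1$-handle map differ by a boundary in the appropriate $\underline{\Mor}$; this uses that the difference $\tau-\tau'$ has a representative supported away from the connected-sum basepoint so that the stabilization results of Proposition~\ref{prop:multi-stabilization-counts} apply to identify the relevant holomorphic curve counts with those on the unstabilized diagram together with the genus-$2$ model piece.

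The main obstacle I anticipate is the third step: organizing the chain homotopy into a genuine hyperbox so that the homotopies at the $B_\tau$-level are compatible with the $F_1^{\g_0,\g_0}$ maps above and the $F_3^{\g_0,\g_0}$ maps below, i.e.\ producing the length-$2$ and length-$3$ chains. This is where Lemma~\ref{lem:formal-cube-1-handle} must be invoked in a parametrized form, and one must check that the degeneration analysis (breaking of index-$1$ disks into the stabilization/$1$-handle region) still cancels modulo $2$ as $\tau$ is deformed to $\tau'$ through closed curves meeting $\g_0$ once. I expect the argument to mirror the Gromov-compactness bookkeeping of Figure~\ref{fig:13} and the proof of Lemma~\ref{lem:1-handle-bottom-hypercube}, with the deformation parameter now being a path from $\tau$ to $\tau'$ in the space of admissible curves rather than an $\R$-translation parameter. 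Once this compatibility is established, choosing the auxiliary generators $\Theta_{\Dt\g_0\bar\g_0,\Dt\Dt_0}$, $\Theta_{\Dt\Dt_0,\a\bar\a\g_0\bar\g_0}$ and the diagonal $\eta$ for the two models to be related by the homotopy completes the proof that the compressed maps are chain homotopic.
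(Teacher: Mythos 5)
Your opening observation is the right one and matches the paper: any two admissible choices $\tau,\tau'$ satisfy $[\tau]-[\tau']\in\operatorname{span}\{[\Dt]:\Dt\in\Ds\cup\Ds_0\}$ in $H_1(\Sigma'\#\bar\Sigma')$, and the discrepancy should be absorbed into the diagonal chain $\eta$ of the hypercube~\eqref{eq:1-handle-hypercube-involving-H1}. But the route you then take has a genuine gap, and in one place would fail outright. You propose to produce a chain homotopy between $B_\tau$ and $B_{\tau'}$ by ``deforming $\tau$ to $\tau'$ through closed curves meeting $\g_0$ once'' and running Gromov compactness over that path. Since $\tau$ and $\tau'$ generally lie in \emph{different} homology classes of $\Sigma'\#\bar\Sigma'$ (their difference is a nonzero combination of $\Ds$-classes, and the $\Ds$ curves are not attaching curves of the diagram $(\as\bar\bs\g_0\bar\g_0,\as\bar\as\g_0\bar\g_0)$ on which $B_\tau$ acts), no such path of embedded closed curves exists, and the general fact that homologous cycles give chain-homotopic $H_1$-actions does not apply. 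Your step three --- making the putative homotopy compatible with the $F_1^{\g_0,\g_0}$ and $F_3^{\g_0,\g_0}$ levels --- is exactly where you say you ``expect'' things to work, and it is left unresolved.

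The paper avoids all of this with one observation you are missing: every $\tau$-dependent contribution to the compressed diagonal map is annihilated on the nose. The $\tau$-weighted triangle count $f_{\a\bar\b\g_0\bar\g_0}^{\Dt\g_0\bar\g_0\to\a\bar\a\g_0\bar\g_0\xrightarrow{\tau}\a\bar\a\g_0\bar\g_0}$ is only ever applied to inputs carrying a $\theta^+_{\g_0,\g_0}$ tensor factor (they come from $F_1^{\g_0\bar\g_0,\g_0\bar\g_0}$), so by Proposition~\ref{prop:multi-stabilization-counts} its output also carries a $\theta^+_{\g_0,\g_0}$ factor and is killed by $F_3^{\a\g_0,\a\g_0}$; hence that summand vanishes for any $\tau$. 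For the $\eta$-dependence, one takes a $2$-chain $C$ with $\d C=\tau-\tau'+(\text{pushoffs of }\Ds\text{ curves})$ and sets $\eta_{\tau'}=\eta_\tau+H_C(\Theta_{\Dt\g_0\bar\g_0,\a\bar\a\g_0\bar\g_0})$, where $H_C$ multiplies each intersection point by the local multiplicities of $C$; this is a valid diagonal for the $\tau'$-hypercube, and since every summand of $H_C(\Theta)$ again has a $\theta^+_{\g_0,\g_0}$ factor, the same vanishing applies and the two diagonal maps literally coincide. Note also that $F_3^{\g_0,\g_0}\circ B_\tau\circ F_1^{\g_0,\g_0}=\id$ holds for \emph{every} admissible $\tau$ by Lemma~\ref{lem:formal-cube-1-handle}, so the length-one part of the compression needs no comparison at all; no parametrized version of that lemma is required.
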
 
\begin{proof} In the compression of Equation~\eqref{eq:1-handles} the choice of $\tau$ only affects the diagonal map. There are two ways in which it appears. The first is via the choice of $\eta$, and the second is a summand which has a factor of
\begin{equation}
F_3^{\a \g_0,\a \g_0}\circ f_{\a \bar \b \g_0\bar \g_0}^{\Dt \g_0\bar \g_0\to \a \bar \a \g_0\bar \g_0\xrightarrow{\tau} \a \bar \a \g_0\bar \g_0}\circ F_{1}^{\g_0\bar \g_0, \g_0\bar \g_0}.\label{eq:1-handle-map-tau-contribution}
\end{equation}
We consider this latter summand first. The map in Equation~\eqref{eq:1-handle-map-tau-contribution} counts holomorphic triangles weighted by $\# \tau \cap \d_{\a \bar \a \g_0\bar \g_0}(\psi)$. If $\psi\in \pi_2(\xs,\ys,\zs)$ is a class of triangles counted in the above composition, then both the $\xs$ and $\ys$ (the incoming intersection points) have $\theta_{\g_0,\g_0}^+$ as factors. By the stabilization result of Proposition~\ref{prop:multi-stabilization-counts}, if such a class has a representative, then $\zs$ must have a factor of $\theta_{\g_0,\g_0}^+$ as well. However, such a term would evaluate trivially under $F_3^{\a\g_0,\a \g_0}$. Hence, Equation~\eqref{eq:1-handle-map-tau-contribution} vanishes.

We now consider the first summand, which involves the choice of $\eta$. Let $\tau$ and $\tau'$ be two choices of curves, as above, and let $\eta_\tau$ be some choice of diagonal chain for the hypercube from Equation~\eqref{eq:1-handle-hypercube-involving-H1}, when $\tau$ is used. It is straightforward to see that $[\tau]-[\tau']\in H_1(\Sigma'\# \bar \Sigma')$ lies in the span of the $\Ds$ curves.  Let $C$ be a 2-chain on $\Sigma$ such that $\d \Sigma$ is $\tau-\tau'$ plus a linear combination of small parallel pushoffs of the $\Ds$ curves. 

We define the 2-chain
\[
\eta_{\tau'}:=\eta_\tau+H_C(\Theta_{\Dt \g_0\bar \g_0, \a \bar \a \g_0\bar \g_0}).
\]
In the above, $H_C$ denotes the $\bF[U]$-linear map whose value on an intersection point $\xs\in \bT_{\Dt\g_0\bar \g_0}\cap \bT_{\a \bar \a \g_0\bar \g_0}$ is the sum of the multiplicities of $C$ at each factor of $\xs$. It is straightforward to see that $\eta_{\tau'}$ is a valid choice of diagonal for the hypercube in Equation~\eqref{eq:1-handle-hypercube-involving-H1} which uses $\tau'$.
Since each summand of $H_C(\Theta_{\Dt \g_0\bar \g_0, \a \bar \a \g_0\bar \g_0})$ has $\theta_{\g_0,\g_0}^+$ as a factor, the same argument as above shows that
\[
F_3^{\g_0,\g_0}\circ f_{\a \bar \b\g_0\bar \g_0, \Dt \g_0\bar \g_0, \a \bar \a \g_0\bar \g_0}(F_1^{\g_0,\g_0}(\xs), H_C(\Theta_{\Dt \g_0\bar \g_0, \a \bar \a \g_0\bar \g_0}))=0,
\]
so the diagonal maps computed using $\eta_\tau$ and $\eta_{\tau'}$ coincide.
\end{proof}

\begin{lem}\label{lem:1-handles-and-handleslides} Suppose that $\cH=(\Sigma,\as,\bs)$ and $\cH'=(\Sigma,\as',\bs')$ are two Heegaard diagrams for $Y$ with the same underlying Heegaard surface, such that $\as'$ is obtained from $\as$ by an elementary handleslide or isotopy. We assume $\bs'$ is obtained similarly from $\bs$. Let $\frD$ and $\frD'$ denote doubling enhancements of $\cH$ and $\cH'$ with choices of almost complex structures and doubling curves $\Ds$ and $\Ds'$.  Let $\bS_0$ denote a 0-sphere which is embedded in $\Sigma\setminus (\as\cup \bs \cup \as'\cup \bs')$ and let $\Sigma(\bS_0)$ denote the result of attaching a 1-handle along $\bS_0$ to $\Sigma$. Let $\Ds_0$ and $\Ds_0'$ denote extra choices of doubling arcs, as in the construction of the 1-handle map, and let $\frD(\bS_0)$ and $\frD'(\bS_0)$ denote the doubling enhancements of $(\Sigma(\bS_0),\as\g_0,\bs\g_0)$ and $(\Sigma(\bS_0),\as'\g_0,\bs'\g_0)$ by using the new doubling curves $\Ds_0$ and $\Ds_0'$, respectively. Let $\tau$ be a single choice of symmetric 1-cycle on $\Sigma(\bS_0)\# \bar \Sigma(\bS_0)$ which intersects $\g_0$ transversely at a single point. Finally, let $\CFI(W(\bS_0))$ and $\CFI(W(\bS_0))'$ denote the involutive 1-handle maps, computed with these two choices of data. Then
\begin{equation}
\Psi_{\frD(\bS_0)\to \frD'(\bS_0)}\circ \CFI(W(\bS_0))\simeq \CFI(W(\bS_0))'\circ \Psi_{\frD\to \frD'}.
\label{eq:transition-map-commutes-1-handles}
\end{equation}
\end{lem}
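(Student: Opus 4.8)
\textbf{Proof plan for Lemma~\ref{lem:1-handles-and-handleslides}.}

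The plan is to construct a single large hyperbox whose compression witnesses the homotopy commutation~\eqref{eq:transition-map-commutes-1-handles}. The shape of this hyperbox is dictated by the two sides of the equation: one axis records the $1$-handle construction of~\eqref{eq:1-handles} (six vertical levels, from $\CF(\as,\bs)$ down to $\CF(\bar\bs\bar\g_0,\bar\as\bar\g_0)$, with an intermediate ``width'' direction recording the passage from $\ds$-doubling arcs to $\ds\cup\ds_0$-doubling arcs), and another axis records the elementary equivalence $\frD\to\frD'$, which is itself a two-step process (an alpha equivalence followed by a beta equivalence, exactly as in Figure~\ref{def:transition-map-elementary-handleslide}). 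So I would build, level by level, a hyperbox of size $(1,2,2)$ (the last two coordinates being the alpha and beta equivalences, the $2$ in the second coordinate being the $\ds\to\ds\cup\ds_0$ direction) sitting over each of the five horizontal ``levels'' of~\eqref{eq:1-handles}, and then stack them vertically. Compressing the whole thing and reading off the component from $\CFI(\frD)$ to $\CFI(\frD'(\bS_0))$ gives the desired homotopy between $\Psi_{\frD(\bS_0)\to\frD'(\bS_0)}\circ\CFI(W(\bS_0))$ and $\CFI(W(\bS_0))'\circ\Psi_{\frD\to\frD'}$.

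First I would handle the levels built purely by pairing hypercubes of attaching curves (the second and third levels of~\eqref{eq:1-handles}, involving the $\b\bar\b\to\Dt$ and $\Dt\to\a\bar\a$ triangle maps): here the extended hyperbox is obtained simply by pairing the relevant hypercubes of alpha and beta attaching curves for $(\Sigma(\bS_0)\#\bar\Sigma(\bS_0),\ldots)$, which exist by the standard ``filling empty hypercubes'' technique \cite{MOIntegerSurgery}*{Lemma~8.6}, and the hypercube relations are automatic. The rows involving the $1$-handle maps $F_1$ and $3$-handle maps $F_3$ are handled using Proposition~\ref{prop:multi-stabilization-counts}: the key point is that holomorphic triangle and quadrilateral counts involving a $1$-handle region split off a tensor factor of $\theta^+_{\g_0,\g_0}$ (resp.\ behave correctly under $F_3$), so the hyperbox relations for the extended $1$-handle/$3$-handle rows follow from those for the unstabilized rows, exactly as in the construction of~\eqref{eq:hyperbox-over-B} and~\eqref{eq:hyperbox-over-A'} in Section~\ref{sec:expansion-relation-transition-maps}. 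The level involving the $H_1$-action $B_\tau$ requires care: I would extend the hypercube of equation~\eqref{eq:1-handle-hypercube-involving-H1} over the elementary-equivalence direction using the formalism of Section~\ref{sec:hypercubes-rel-homology} (homology actions on hypercubes), noting that since $\tau$ is held fixed, the naturality of $\scB_\tau$ under restriction to subcubes gives compatibility of the $B_\tau$ maps with the transition maps on the $\Ds$-curves. Finally, the bottom-most level — which after expansion is~\eqref{eq:1-handles-expanded-bottom-portion} and uses the relation $F_3^{\g_0,\g_0}\circ B_\tau\circ F_1^{\g_0,\g_0}=\id$ from Lemma~\ref{lem:formal-cube-1-handle} — needs a hyperbox version of that identity; this I would prove by the same local model computation as Lemma~\ref{lem:formal-cube-1-handle}, applied to each subcube, combined with Proposition~\ref{prop:multi-stabilization-counts} to control the off-diagonal terms.

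I expect the main obstacle to be the level involving $B_\tau$ and the bottom-most level. For the $B_\tau$ level, one must verify that the diagonal chains $\eta$ (the choices of primitives in~\eqref{eq:1-handle-hypercube-involving-H1-unpacked}) can be chosen coherently across the whole extended hyperbox — i.e.\ that the higher-length hyperbox relations, which mix the $H_1$-action formalism of Section~\ref{sec:hypercubes-rel-homology} with triangle and quadrilateral degenerations, can be satisfied simultaneously. As in Lemma~\ref{lem:1-handle-bottom-hypercube}, this should reduce (via Proposition~\ref{prop:multi-stabilization-counts} and an associativity argument) to a genus-$2$ model computation on $(\bT^2\#\bT^2,\g_0\bar\g_0,\Dt_0,\g_0\bar\g_0)$ together with the already-established Lemma~\ref{lem:1-handle-map-ind-tau} (independence of $\tau$ and of the choice of $\eta$); the latter is precisely what lets me use a single fixed $\tau$ on both the $\frD$ and $\frD'$ sides. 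Once those model computations and the coherence of the $\eta$-choices are in hand, stacking the levels and compressing is routine, and the off-diagonal error terms all vanish by the $\theta^+_{\g_0,\g_0}$-factor argument used repeatedly above.
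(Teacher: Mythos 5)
Your overall strategy is the same as the paper's: extend the defining diagram~\eqref{eq:1-handles} of the $1$-handle map into a $3$-dimensional hyperbox whose third direction is the elementary equivalence $\frD\to\frD'$, handle the triangle-map levels by pairing hypercubes of attaching curves, handle the $F_1$/$F_3$ levels as hyperboxes of stabilization via Proposition~\ref{prop:multi-stabilization-counts}, and treat the bottom level by showing all off-diagonal terms carry a $\theta^+_{\g_0,\g_0}$ tensor factor and hence die under $F_3^{\g_0,\g_0}$. You also correctly identify the genuinely delicate level, namely the one containing $B_\tau$.

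The one place where your plan does not actually close is your proposed resolution of that delicate level. The paper's mechanism is to build a $3$-dimensional hypercube of \emph{attaching curves} (equation~\eqref{eq:hypercube-attaching-curves-1-handle-w-tau}) whose front and back faces contain the fixed chains $\eta$ and $\eta'$ from the two $1$-handle maps, and whose bottom face has $\tau$ as a length-$1$ edge. The length-$2$ relation on that bottom face reads $A_\tau(\Theta)+B_\tau(\Theta)=\d h$, which is satisfied by $h=0$ since $\tau$ is closed; but the paper deliberately takes $h=\veps\cdot\Theta_{\a\bar\a\g_0\bar\g_0,\a'\bar\a'\g_0\bar\g_0}$ with $\veps\in\bF$ undetermined, because the length-$3$ obstruction cycle $C$ lives in the grading of the top-degree generator of $\HF^-$ of a connected sum of $S^1\times S^2$'s and may represent that generator rather than be null-homologous; the choice $\veps\in\{0,1\}$ absorbs this obstruction. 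This is a global filling argument, not a local one: whether $C$ is exact depends on the whole diagram, so it cannot be settled by the genus-$2$ model computation you invoke, and Lemma~\ref{lem:1-handle-map-ind-tau} (independence of $\tau$ and $\eta$) does not address it either, since here $\tau$ and the chains $\eta,\eta'$ are already fixed by hypothesis. You would need to add the $\veps$-correction (or an equivalent obstruction-theoretic device) to make the higher hypercube relations satisfiable; once that is in place, the rest of your outline goes through as in the paper.
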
 
\begin{proof} 
The proof is to enlarge the diagram in~\eqref{eq:1-handles} into a 3-dimensional hyperbox by adding an extra direction coming out of the page. One face will correspond to the map $\CFI(W(\bS_0))$ and an opposite face will correspond to the map $\CFI(W(\bS_0))'$. It will be straightforward to unpack the hypercube relations to obtain~\eqref{eq:transition-map-commutes-1-handles}. We presently describe the construction of the hyperbox.

We extend the top-most hypercube of~\eqref{eq:1-handles} to the hyperbox shown in equation~\eqref{eq:hyperbox-1-handles-top}. The unlabeled arrows are holomorphic triangle maps. There are no morphisms of length 2 or 3. The hypercube relations follow from the stabilization formula of Proposition~\ref{prop:multi-stabilization-counts}. 
\begin{equation}
\begin{tikzcd}[
	column sep={1.6cm,between origins},
	row sep=1cm,
	fill opacity=.7,
	text opacity=1,
	execute at end picture={
	\foreach \Nombre in  {A,B,C,D}
  {\coordinate (\Nombre) at (\Nombre.center);}
\fill[opacity=0.1] 
  (A) -- (B) -- (C) -- (D) -- cycle;
}]
\CF(\as ,\bs)
	\ar[dr]
	\ar[ddd, "F_1^{\bar \b,\bar \b}"]
	\ar[rr,"F_1^{\g_0,\g_0}"]
&[.7 cm]
&\CF(\as\g_0,\bs \g_0)
	\ar[dr]
	\ar[ddd, "F_1^{\bar \b \bar\g_0, \bar \b \bar\g_0}"]
&[.7 cm]
\\
&[.7 cm]
 |[alias=A]|\CF(\as',\bs)
 	\ar[rr,line width=2mm,dash,color=white,opacity=.7]
	\ar[rr,"F_1^{\g_0,\g_0}"]
&\,
&[.7 cm]
 |[alias=B]|
\CF (\as' \g_0, \bs\g_0)
	\ar[ddd,swap, pos=.35, "F_1^{\bar \b'\bar \g_0, \bar \b\bar \g_0}"]
\\
\\
\CF(\as \bar \bs, \bs \bar \bs)
	\ar[dr]
	\ar[rr, "F_1^{\g_0\bar \g_0,\g_0\bar \g_0}"]
&[.7 cm]\,&
\CF(\as \bar \bs \g_0 \bar \g_0, \bs \bar \bs \g_0 \bar \g_0)
	\ar[dr]
\\
& [.7 cm]
|[alias=D]|
\CF(\as' \bar \bs', \bs \bar \bs)
 	\ar[from=uuu,line width=2mm,dash,color=white,opacity=.7]
	\ar[from=uuu, "F_1^{\bar \b', \bar \b}", pos=.35]
	\ar[rr, "F_1^{\g_0\bar \g_0, \g_0\bar \g_0}"]
&
&[.7 cm] 
|[alias=C]|
\CF(\as' \bar \bs'\g_0\bar \g_0, \bs \bar \bs\g_0\bar \g_0)
\end{tikzcd}
\hspace{-1.7cm}
\begin{tikzcd}[
	column sep={1.6cm,between origins},
	row sep=1cm,
	fill opacity=.7,
	text opacity=1,
	execute at end picture={
	\foreach \Nombre in  {A,B,C,D}
  {\coordinate (\Nombre) at (\Nombre.center);}
\fill[opacity=0.1] 
  (A) -- (B) -- (C) -- (D) -- cycle;
}]
 |[alias=A]|
\CF(\as',\bs)
	\ar[dr]
	\ar[ddd, "F_1^{\bar \b', \bar \b}"]
	\ar[rr, "F_1^{\g_0,\g_0}"]
&[.7 cm]
&
 |[alias=B]|
  \CF(\as' \g_0, \bs \g_0)
	\ar[rd]
	\ar[ddd, "F_1^{\bar \b'\bar \g_0,\bar \b \bar \g_0}"]
&[.7 cm]
\\
&[.7 cm]
\CF(\as',\bs')
 	\ar[rr,line width=2mm,dash,color=white,opacity=.7]
	\ar[rr,"F_1^{\g_0,\g_0}"]
&\,
&[.7 cm]
\CF (\as'\g_0,\bs'\g_0)
	\ar[ddd, "F_1^{\bar \b' \bar \g_0, \bar \b'\bar \g_0} " ]
\\
\\
|[alias=D]|
\CF(\as'\bar \bs', \bs \bar \bs)
	\ar[dr]
	\ar[rr,"F_1^{\g_0\bar \g_0,\g_0 \bar\g_0}"]
&[.7 cm]
&
|[alias=C]|
\CF(\as' \bar \bs' \g_0 \bar \g_0, \bs \bar \bs \g_0 \bar \g_0)
	\ar[dr]
\\
& [.7 cm]
\CF(\as'\bar \bs', \bs' \bar \bs')
 	\ar[from=uuu,line width=2mm,dash,color=white,opacity=.7]
	\ar[from=uuu, "F_1^{\bar \b',\bar \b'}", pos=.4]
	\ar[rr, "F_1^{\g_0\bar \g_0,\g_0 \bar\g_0}"]
&
&[.7 cm] 
\CF(\as' \bar \bs' \g_0 \bar \g_0, \bs' \bar \bs'\g_0 \bar \g_0)
\end{tikzcd}
\label{eq:hyperbox-1-handles-top}
\end{equation}

The hypercubes obtained by extending the two small left-most cubes of~\eqref{eq:1-handles} are similarly defined to be hyperboxes of stabilizations.

We now consider the right-most cubes of~\eqref{eq:1-handles} (i.e. the two cubes which have length 2-maps). The top-most cube is easily extended by building hypercubes of attaching curves where each length 1 chain is a cycle generating the top degree of $\HF^-$. We leave the details to the reader.

We now consider the bottom-most cube of~\eqref{eq:1-handles} which has a length 2 map, and we describe its extension to a 3-dimensional hyperbox. (Recall that this hypercube involved more choices than usual, such as the choice of the chain $\eta$ and the curve $\tau$; see Remark~\ref{rem:1-handle-many-choices}.) The construction is obtained by pairing hypercubes of attaching curves. The main technical challenge in extending the 2-dimensional cube in~\eqref{eq:1-handle-hypercube-involving-H1} is building a 3-dimensional hypercube of attaching curves with the following form:
\begin{equation}
\begin{tikzcd}[
	column sep={1.3cm,between origins},
	row sep=.8cm,
	labels=description,
	fill opacity=.7,
	text opacity=1,
	]
\Ds \g_0 \bar \g_0
	\ar[dr]
	\ar[ddd]
	\ar[rr]
	\ar[dddrr,dashed, "\eta"]
	\ar[ddddrrr,dotted]
&[.7 cm]
& \Ds \Ds_0
	\ar[rd]
	\ar[ddd]
	\ar[ddddr,dashed]
&[.7 cm]
\\
&[.7 cm]
\Ds' \g_0 \bar \g_0
 	\ar[rr,line width=2mm,dash,color=white,opacity=.7]
	\ar[rr]
&\,
&[.7 cm]
\Ds' \Ds_0'
	\ar[ddd]
 	\ar[from=ulll,line width=2mm,dash,color=white,opacity=.7]
	\ar[from=ulll,dashed]
\\
\\
\as \bar \as \g_0 \bar \g_0
	\ar[dr]
	\ar[rr,"\tau"]
	\ar[drrr,dashed, "h"]
&[.7 cm]\,&
\as \bar \as \g_0 \bar \g_0
	\ar[dr]
\\
& [.7 cm]
\as' \bar \as' \g_0 \bar \g_0
 	\ar[from=uuu,line width=2mm,dash,color=white,opacity=.7]
	\ar[from=uuu]
	\ar[rr, "\tau"]
 	\ar[from=uuuul,line width=2mm,dash,color=white,opacity=.7]
 	\ar[from=uuuul,dashed]
&
&[.7 cm] 
\as'\bar \as' \g_0 \bar \g_0
 	\ar[from=uuull,line width=2mm,dash,color=white,opacity=.7]
	\ar[from=uuull,dashed, "\eta'"]
\end{tikzcd}
\label{eq:hypercube-attaching-curves-1-handle-w-tau}
\end{equation}
In equation~\eqref{eq:hypercube-attaching-curves-1-handle-w-tau},  the chains $\eta$ and $\eta'$ are the ones used to construct $\CFI(W(\bS_0))$ and $\CFI(W(\bS_0))'$, as constructed in Lemma~\ref{lem:1-handle-bottom-hypercube}. In particular, these morphisms are fixed, and similarly the top length 2 morphism has also already been chosen in the previous step, so we assume that chain is fixed. Note that the two morphisms from $\as \bar \as \g_0\bar\g_0$ to $\as'\bar \as' \g_0\bar \g_0$ are forced to coincide, since $\as'$ is obtained from $\as$ by an elementary handleslide, so there is a single top degree generator of $\CF(\as'\bar \as'\g_0\bar \g_0, \as \bar \as\g_0\bar \g_0)$, which is $\Theta_{\a'\bar \a', \a \bar \a}\otimes \theta_{\g_0\bar \g_0, \g_0\bar \g_0}^+$.

Assuming for the moment the existence of the hypercube in~\eqref{eq:hypercube-attaching-curves-1-handle-w-tau}, one extends the bottom-most cube of~\eqref{eq:1-handles} by constructing two hypercubes. The first hypercube is obtained by pairing the back face of~\eqref{eq:hypercube-attaching-curves-1-handle-w-tau} with the hypercube
\[
\begin{tikzcd}[column sep=2cm]
\as \bar \bs \ar[r, "\Theta_{\a \bar \b, \a'\bar \b'}"]& \as'\bar \bs'
\end{tikzcd}.
\]
The second hypercube is obtained by pairing the entirety of ~\eqref{eq:hypercube-attaching-curves-1-handle-w-tau} with the 0-dimensional hypercube $\as' \bar \bs'$.

We now prove the existence of the hypercube in~\eqref{eq:hypercube-attaching-curves-1-handle-w-tau}.  The existence of length 2 chains in Equation~\eqref{eq:hypercube-attaching-curves-1-handle-w-tau} along the top, left and right faces is straightforward. 

We now consider the bottom face of~\eqref{eq:hypercube-attaching-curves-1-handle-w-tau}. The desired relation is
\[
A_\tau(\Theta_{\a \bar \a \bar \g_0, \a'\bar \a' \g_0\bar \g_0})+B_\tau(\Theta_{\a \bar \a \bar \g_0, \a'\bar \a' \g_0\bar \g_0})=\d (h)
\]
where $h$ denotes the diagonal chain. Since $\tau$ is a closed curve, we have an equality $A_\tau=B_{\tau}$, the above equation is satisfied by $h=0$. (Compare Equation~\eqref{eq:telescope-A-lambda} for arcs instead of closed curves). 
As well will see, setting $h=0$ might not allow the length 3 hypercube relation to be satisfied, so instead we use the chain
\begin{equation}
h=\veps\cdot \Theta_{\a\bar \a \g_0\bar \g_0,\a'\bar \a'\g_0\bar \g_0},
\label{eq:length-2-map-1-handle-T}
\end{equation}
where $\veps\in \bF$ is to be determined.

Finally, we claim that we may make a choice of the constant $\veps$ in equation~\eqref{eq:length-2-map-1-handle-T}, and also pick a length 3 chain for the cube in~\eqref{eq:hypercube-attaching-curves-1-handle-w-tau} so that the hypercube relations are satisfied.  To see this, let $C\in \CF(\Ds \g_0 \bar \g_0, \as'\bar \as' \g_0 \bar \g_0)$ denote the sum of all compositions featuring in the length 3 hypercube relations, except for the length 3 map itself. Here, we momentarily set $\veps=0$ in~\eqref{eq:length-2-map-1-handle-T}. The hypercube relations on the proper faces force $C$ to be a cycle. We note however that $C$ has the same grading as the top degree element generator of $\HF^-(\Ds\g_0\bar \g_0, \as'\bar \as' \g_0\bar \g_0)$. In particular, it either represents the top degree element or is null-homologous. If it is null-homologous, we set $\veps=0$ in~\eqref{eq:hypercube-attaching-curves-1-handle-w-tau} and pick any primitive of $C$ in the appropriate grading to be the length 3 map in our hypercube. If $C$ represents the top degree generator, then we pick $\veps=1$, and let the length 3 chain be any primitive of
\[
f_{\Dt\g_0\bar \g_0, \a \bar \a \g_0\bar \g_0, \a'\bar \a' \g_0\bar \g_0}(\Theta_{\Dt\g_0\bar \g_0, \a \bar \a \g_0\bar \g_0},\Theta_{\a \bar \a \g_0\bar \g_0, \a'\bar \a' \g_0\bar \g_0})+C.
\]
The hypercube relations are clearly satisfied.

Finally, it remains to extend the bottom-most cube of Figure~\ref{eq:1-handles}. To extend this hypercube, it is convenient to consider the expanded description in~\eqref{eq:1-handles-expanded-bottom-portion}. In fact, the only hypercube which requires explanation to extend is the cube
\[
\begin{tikzcd}[
labels=description,
row sep=1cm,
column sep=1cm,
fill opacity=.7,
text opacity=1
]
\CF(\as \bar \bs \bar \g_0, \as \bar \as \bar \g_0) 
	\ar[r, "F_1^{\g_0, \g_0}"]
	\ar[d,equal]
&
\CF(\as \bar \bs \g_0\bar \g_0, \as \bar \as \g_0\bar \g_0)
	\ar[r, "B_\tau"]
&
\CF(\as \bar \bs \g_0 \bar \g_0, \as \bar \as \g_0\bar \g_0)
	\ar[d, "F_3^{\g_0,\g_0}"]
\\
\CF(\as \bar \bs \bar \g_0, \as \bar \as \bar \g_0)
	\ar[rr, equal]
&&
\CF(\as \bar \bs \bar \g_0, \as \bar \as \bar \g_0)
\end{tikzcd}
\]
We define our 3-dimensional extension of this cube to have trivial length 3 map. The hypercube relations for the 3-dimensional extension of this cube are proven somewhat similarly to the proof of Proposition~\ref{prop:simple-expansion-hypercube}. The only non-trivial length 2 map in the above diagram is in the top subface. This subface has the following form
\[
\begin{tikzcd}[labels=description,column sep=1.3cm, row sep=1cm]
\CF(\as \bar \bs \bar \g_0,\as \bar \as\bar \g_0)
	\ar[r, "F_1^{\g_0,\g_0}"]
	\ar[d]
&
\CF(\as \bar \bs\g_0 \bar \g_0, \as \bar \as\g_0\bar \g_0)
	\ar[r, "B_\tau"] 
	\ar[d]
	\ar[dr,dashed]& 
\CF(\as \bar \bs\g_0 \bar \g_0, \as \bar \as\g_0\bar \g_0)
	\ar[d]\\
\CF(\as' \bar \bs' \bar \g_0,\as \bar \as\bar \g_0) 
	\ar[r, "F_1^{\g_0,\g_0}"]
	\ar[d]
&\CF(\as' \bar \bs'\g_0 \bar \g_0, \as \bar \as\g_0\bar \g_0)
	\ar[r, "B_{\tau}"]
	\ar[d]
	\ar[dr,dashed]
&
\CF(\as' \bar \bs'\g_0 \bar \g_0, \as \bar \as\g_0\bar \g_0)
	\ar[d]\\
\CF(\as' \bar \bs' \bar \g_0,\as' \bar \as'\bar \g_0)
	\ar[r, "F_1^{\g_0,\g_0}"]&
\CF(\as' \bar \bs'\g_0 \bar \g_0, \as' \bar \as'\g_0\bar \g_0)
	\ar[r, "B_{\tau}"]&
 \CF(\as' \bar \bs'\g_0 \bar \g_0, \as' \bar \as'\g_0\bar \g_0)
\end{tikzcd}
\]
It is sufficient to show that the compression of this diagram composes trivially with the map $F_3^{\g_0, \g_0}$. The key tool we will use is that the triangle maps, when applied to elements of the form $(\xs\times\theta_{\g_0,\g_0}^+)\otimes (\ys\times \theta_{\g_0,\g_0}^+)$ have only summands of the form $\zs\times \theta_{\g_0,\g_0}^+$. This fact follows from Proposition~\ref{prop:multi-stabilization-counts}. The top diagonal map counts holomorphic triangles which are weighted by the quantity $\# (\d_{\a\bar \a \g_0\bar \g_0}(\psi)\cap \tau)\in \bF$. Similarly, the bottom triangle map is a sum of holomorphic triangle maps with certain weights. The special inputs for the length 1 triangle maps all have tensor factors of $\theta_{\g_0,\g_0}^+$, as does the special length 2 element used for the bottom diagonal map by equation~\eqref{eq:length-2-map-1-handle-T}. Hence, in compression of the diagram, the only holomorphic triangles which are counted have only tensor factors of $\theta_{\g_0,\g_0}^+$, and hence so does their output. In particular, the output vanishes when composed with $F_3^{\g_0,\g_0}$.
\end{proof}

\subsection{3-handles}

We define the 3-handle map by dualizing the construction of the 1-handle map. Concretely, this amounts to compressing the diagram shown in Equation~\eqref{eq:3-handles}.

\begin{equation}
\begin{tikzcd}[
labels=description,
row sep=1cm,
column sep=2.6cm,
fill opacity=.7,
text opacity=1
]
\CF(\as\g_0,\bs\g_0)
	\ar[d, "F_{1}^{\bar{\b}\bar \g_0,\bar{\b}\bar \g_0}"]
	\ar[rr, "F_3^{\g_0,\g_0}"]
&
&
\CF(\as,\bs)
	\ar[d,"F_{1}^{\bar{\b},\bar{\b}}"]
\\
\CF(\as \bar{\bs} \g_0\bar \g_0, \bs \bar{\bs} \g_0\bar \g_0)
	\ar[r, "B_\tau"]
	\ar[d,"f^{\b \bar{\b} \g_0\bar \g_0\to \Dt\Dt_0}_{\a \bar{\b}\g_0 \bar \g_0}"]
	\ar[dr,dashed]
& 
\CF(\as \bar \bs \g_0 \bar \g_0, \bs  \bar \bs \g_0  \bar \g_0)
	\ar[r,"F_3^{\g_0\bar \g_0, \g_0\bar \g_0}"]
	\ar[d, "f^{\b \bar \b\g_0 \bar \g_0\to \Dt \g_0 \bar \g_0}_{\a \bar \b \g_0 \bar \g_0}"]
& \CF(\as\bar \bs, \bs \bar \bs)
	\ar[d,"f^{\b \bar \b \to \Dt }_{\a \bar \b }"]
\\
\CF(\as \bar \bs \g_0 \bar \g_0,\Ds \Ds_0)
	\ar[r, "f_{\a \bar \g_0\bar \g_0}^{\Dt\Dt_0\to \Dt \g_0\bar \g_0}"]
	\ar[d, "f^{\Dt\to \a  \bar{\a}}_{\a  \bar{\b}}"]
	\ar[dr,dashed]
&
 \CF(\as \bar \bs  \g_0 \bar \g_0, \Ds \g_0 \bar \g_0)
	\ar[r, "F_3^{\g_0\bar \g_0, \g_0\bar \g_0}"]
	\ar[d, "f_{\a \bar \b \g_0 \bar \g_0}^{\Dt \g_0 \bar \g_0\to \a \bar \a \g_0 \bar \g_0}"]
& \CF(\as\bar{\bs},\Ds)
	\ar[d, "f_{\a \bar \b}^{\Dt\to \a \bar \a}"]
\\
\CF(\as  \bar{\bs} \g_0\bar \g_0, \as  \bar{\as} \g_0\bar \g_0)
	\ar[r, equal]
	\ar[d, "F_3^{\a \g_0,\a \g_0}"]
&
\CF(\as \bar \bs \g_0 \bar \g_0, \as \bar \as \g_0 \bar \g_0)
	\ar[r, "F_3^{\g_0\bar \g_0, \g_0\bar \g_0}"]
&
\CF(\as \bar \bs, \as \bar \as)
	\ar[d, "F_3^{\a,\a}"]
\\
\CF(\bar{\bs}\bar \g_0, \bar{\as} \bar \g_0)
	\ar[rr, "F_3^{\bar \g_0, \bar \g_0}"]
&
&
\CF(\bar{\bs},\bar{\as})
\end{tikzcd}
\label{eq:3-handles}
\end{equation}

\subsection{Attaching 1-handles near the basepoint}

We may obtain a slightly simpler formula for the 1-handle map when we perform the attachment near the basepoint $w$.

\begin{lem}\label{lem:1-handles-simple} Suppose $\bS_0\subset Y$ is a 0-sphere and $Y$ is connected. We have an isomorphism of groups
\[
\CFI^-(Y(\bS_0))\iso \CFI^-(Y)\otimes_{\bF} \langle \theta^+,\theta^-\rangle.
\]
With respect to this decomposition, the involutive 1-handle map takes the form
 \[
 \CFI(W(\bS_0))(\xs)=\xs\times \theta^+,
 \]
 extended equivariantly over $\bF[U,Q]/Q^2$. The 3-handle map takes the form $\xs\times \theta^+\mapsto 0$ and $\xs\times \theta^-\mapsto \xs$, extended $\bF[U,Q]/Q^2$-equivariantly.
\end{lem}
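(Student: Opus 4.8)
\textbf{Proof proposal for Lemma~\ref{lem:1-handles-simple}.}

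The plan is to take the general 1-handle construction from Equation~\eqref{eq:1-handles} and specialize it to the case where the attaching 0-sphere $\bS_0 = \{p,p'\}$ sits in a small ball containing the basepoint $w$, so that every piece of auxiliary data can be chosen to be as simple as possible. First I would choose the paths $\lambda, \lambda'$ from $p, p'$ to $w$ to be very short and the extra doubling arcs $\ds_0$ to be supported entirely in a neighborhood of the 1-handle region and the $\lambda,\lambda'$ arcs; since $Y$ is connected, this reduces the entire construction to a connected-sum statement. The non-involutive 1-handle map $F_1^{\g_0,\g_0}$ sends $\xs \mapsto \xs \times \theta^+_{\g_0,\g_0}$ by definition, which already gives the group isomorphism $\CF^-(Y(\bS_0)) \cong \CF^-(Y) \otimes_{\bF}\langle \theta^+,\theta^-\rangle$ on the non-involutive level, and hence on the mapping cone $\CFI^-$. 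The real content is to show that, after compressing the hyperbox in~\eqref{eq:1-handles}, the composite along the left column (which is $\iota$) and the composite along the diagonal both interact with the new $\theta^\pm$ tensor factor in the trivial way, i.e. they all carry $\theta^+_{\g_0,\g_0}$ to $\theta^+_{\g_0,\g_0}$ and the output never has a $\theta^-$ factor in the relevant place.

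The key steps, in order: (1) With the simple choice of $\ds_0$, each of the Heegaard triples/quadruples appearing in~\eqref{eq:1-handles} decomposes as a connected sum of the corresponding diagram for $Y$ with a small genus-one or genus-two diagram in the 1-handle region — exactly as in the proof of Lemma~\ref{lem:1-handle-bottom-hypercube}. Apply Proposition~\ref{prop:multi-stabilization-counts} (the stabilization formula for holomorphic polygons) to each level to conclude that every polygon count appearing in the compression, when fed inputs of the form $\xs \times \theta^+_{\cdot,\cdot}$, produces outputs of the form $\ys \times \theta^+_{\cdot,\cdot}$ together with higher-grading error terms; since the relevant local complexes have $\d = 0$ and a unique top generator, the error terms can be arranged to vanish. (2) Verify that the length-$2$ diagonal map built from the $H_1$-action $B_\tau$ and the chain $\eta$ in~\eqref{eq:1-handle-hypercube-involving-H1} contributes nothing here: by Lemma~\ref{lem:1-handle-map-ind-tau} the 1-handle map is independent of $\tau$, and choosing $\tau$ disjoint from the connected-sum region shows the $B_\tau$ summand vanishes on inputs of the form $\xs \times \theta^+_{\g_0,\g_0}$ (any representative would force a $\theta^+_{\g_0,\g_0}$ output, which dies under $F_3^{\a\g_0,\a\g_0}$, cf. the argument in Lemma~\ref{lem:1-handle-map-ind-tau}). (3) Combining (1) and (2), the compression of~\eqref{eq:1-handles} acts as $\xs \mapsto \xs \times \theta^+$, and since $\iota_{Y(\bS_0)}$ is identified with $\iota_Y \otimes \id$ on the $\theta^+$ summand under this splitting, the induced map on $\CFI^-$ is $\xs \mapsto \xs \times \theta^+$ extended $\bF[U,Q]/Q^2$-equivariantly. (4) For the 3-handle map, dualize: run the same analysis on the diagram~\eqref{eq:3-handles}, using that $F_3^{\g_0,\g_0}$ kills $\ys \times \theta^+_{\g_0,\g_0}$ and sends $\ys \times \theta^-_{\g_0,\g_0} \mapsto \ys$, together with the stabilization formula to control the polygon counts, to get $\xs\times\theta^+\mapsto 0$ and $\xs \times \theta^- \mapsto \xs$.

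I expect the main obstacle to be step (1)–(2): bookkeeping that \emph{all} of the length-$\geq 2$ maps in the compressed hyperbox — not just the obvious triangle maps but the diagonal maps involving $\eta$, $B_\tau$, and the $A_\lambda$-type relative homology maps from earlier levels — really do respect the $\theta^+_{\g_0,\g_0}$ tensor splitting once the connected-sum point is chosen near $w$. The subtlety is that the stabilization results (Proposition~\ref{prop:multi-stabilization-counts}) only control the top-degree behavior of outputs up to higher-grading terms, so one must argue grading-by-grading that those higher terms are forced to vanish because the local factor has a single generator in its top grading and zero differential. This is morally the same argument used repeatedly in Section~\ref{sec:1-and-3-handles} (e.g.\ in Lemmas~\ref{lem:1-handle-bottom-hypercube} and~\ref{lem:1-handles-and-handleslides}), so I would invoke those reductions rather than redo the holomorphic curve counts, and simply note that in the present ``attach near $w$'' case the local diagrams are the model diagrams $(S^2,s,s,c,w,w')$ (and their variants) already analyzed there, for which everything is explicit.
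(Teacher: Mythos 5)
Your overall strategy is the paper's: specialize the diagram so the 1-handle region sits next to $w$, destabilize every polygon count with Proposition~\ref{prop:multi-stabilization-counts}, and kill the diagonal terms of the compression of~\eqref{eq:1-handles}, with the 3-handle case by duality. Two details, however, are stated in a way that would not survive being written out. First, $\tau$ cannot be chosen ``disjoint from the connected-sum region'': by construction it must meet $\g_0$ transversely in one point (this is what makes $F_3^{\g_0,\g_0}\circ B_\tau\circ F_1^{\g_0,\g_0}=\id$ in Lemma~\ref{lem:formal-cube-1-handle}). Your parenthetical --- that the $\tau$-weighted triangle count only produces $\theta^+_{\g_0,\g_0}$ outputs, which die under $F_3^{\a\g_0,\a\g_0}$ --- is the correct argument and is what the paper uses, so drop the disjointness claim and keep the parenthetical.

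Second, the blanket assertion in your step (1) that every destabilized polygon count carries $\theta^+$ inputs to $\theta^+$ outputs is false for the local quadruple containing $\Ds_0$: by the computation in Lemma~\ref{lem:1-handle-bottom-hypercube}, the triple $(\g_0\bar\g_0,\Ds_0,\g_0\bar\g_0)$ evaluates to $\theta^+|\bar\theta^-+\theta^-|\bar\theta^+$, so the bottom diagonal map $h^{\Dt\g_0\bar\g_0\to\Dt\Dt_0\to\a\bar\a\g_0\bar\g_0}_{\a\bar\b\g_0\bar\g_0}$ has a component whose local factor is $\theta^-_{\g_0,\g_0}$, and that component is \emph{not} killed by $F_3^{\a\g_0,\a\g_0}$. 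The paper disposes of it by identifying its coefficient with $h_{\a\bar\b}^{\Dt\to\Dt\to\a\bar\a}(\xs)\otimes\theta^+_{\bar\g_0,\bar\g_0}$ and invoking the small translate theorem for quadrilaterals (\cite{HHSZExact}*{Proposition~11.5}), which applies because $\Ds$ appears twice in the destabilized quadruple; the top diagonal is killed the same way via $h_{\a\bar\b}^{\b\bar\b\to\Dt\to\Dt}=0$. So the small translate theorem is doing essential work here beyond the grading bookkeeping you describe, and your write-up needs to invoke it explicitly rather than folding everything into the ``$\theta^+$ outputs die under $F_3$'' mechanism.
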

\begin{proof}We focus on the 1-handle map. We pick our diagram so that, before doubling, there is a special genus 1 region near the basepoint, and so the doubled arcs have the configuration shown in Figure~\ref{fig:4} in the double of this region. We form the compression of~\eqref{eq:1-handles} by first compression horizontally, then vertically.

We first show that for suitable almost complex structures, the diagonal maps make trivial contribution to compression of the diagram. We consider the top-most diagonal map. This map counts holomorphic quadrilaterals on the connected sum
\[
(\as \bar \bs, \bs \bar \bs, \Ds, \Ds)\# (\g_0\bar \g_0,\g_0\bar \g_0,\g_0\bar \g_0,\Ds_0).
\]
Note that each Heegaard subdiagram of the right-handle quadruple has vanishing differential (since it has the minimal number of generators possible; see Figure~\ref{fig:4}). Hence the stabilization results for holomorphic quadrilaterals from Proposition~\ref{prop:multi-stabilization-counts} imply that 
\[
h_{\a \bar \b \g_0\bar \g_0}^{\b \bar \b \g_0\bar \g_0\to \Dt \g_0\bar \g_0\to \Dt \Dt_0}\circ F_1^{\g_0 \bar \g_0, \g_0\bar \g_0}=F_1^{\g_0\bar \g_0, \Dt_0} \circ h_{\a\bar \b}^{\b \bar \b \to \Dt\to \Dt}.
\]
By the small translate theorem for holomorphic quadrilaterals \cite[Proposition 11.5]{HHSZExact}, the map $h_{\a\bar \b}^{\b \bar \b \to \Dt\to \Dt}$ vanishes.

We now consider the bottom-most diagonal map in~\eqref{eq:1-handles}. Because of our choice of diagram, the chain $\eta$ in~\eqref{eq:1-handle-hypercube-involving-H1} may be chosen to vanish, by the same model computation we used to verify Lemma~\ref{lem:1-handle-bottom-hypercube}.  Hence, the dashed length 2 map is the sum
\begin{equation}
h^{\Dt \g_0\bar \g_0\to \Dt \Dt_0\to \a \bar \a \g_0\bar \g_0}_{\a \bar \b \g_0\bar \g_0}
+h_{\a \bar \b \g_0\bar \g_0}^{\Dt \g_0\bar \g_0\to \a \bar \a \g_0\bar \g_0\xrightarrow{\tau} \a \bar \a \g_0\bar \g_0}
\label{eq:holomorphic-quadrilaterals-1-handle-diagonal}
\end{equation}
Here, $h_{\a \bar \b \g_0\bar \g_0}^{\Dt \g_0\bar \g_0\to \a \bar \a \g_0\bar \g_0\xrightarrow{\tau} \a \bar \a \g_0\bar \g_0}$ denotes the map which counts holomorphic triangles on the diagram $(\Sigma'\# \bar \Sigma', \as \bar \bs \g_0, \bar \g_0, \Ds \g_0\bar \g_0, \as \bar \as \g_0\bar \g_0)$ which are weighted by an extra factor of $\d_{\a \bar \a \g_0\bar \g_0} D(\psi)\cdot \tau$.

 We claim that both of these maps in~\eqref{eq:holomorphic-quadrilaterals-1-handle-diagonal} vanish when precomposed with $F_1^{\g_0\bar \g_0,\g_0\bar \g_0}$ and post-composed with $F_3^{\a \g_0,\a \g_0}$. Consider the first quadrilateral map. It counts curves on the diagram
\[
(\as \bar \bs, \Ds, \Ds, \as \bar \as)\# (\g_0\bar \g_0, \g_0\bar \g_0, \Ds_0, \g_0\bar \g_0 ).
\]
We can use Proposition~\ref{prop:multi-stabilization-counts} to destabilize the quadruple, noting that the evaluation of the right-hand triple on a degree 0 tree of $C_*^{\cell}(K_4)$ is $\theta^+|\bar \theta^-+\theta^-|\bar \theta^+$ by Lemma~\ref{lem:1-handle-bottom-hypercube}. Hence $\theta_{\g_0,\g_0}^-$ factor of the composition is
\[
h_{\a \bar \b}^{ \Dt \to \Dt\to \a \bar \a}(\xs)\otimes \theta_{\bar \g_0, \bar \g_0}^+.
\]
This expression vanishes by the small translate theorem for quadrilaterals \cite[Proposition 11.5]{HHSZExact}, since $\Ds$ appears twice.

Finally, we argue that the triangle counting map $h_{\a \bar \b \g_0\bar \g_0}^{\Dt \g_0\bar \g_0\to \a \bar \a \g_0\bar \g_0\xrightarrow{\tau} \a \bar \a \g_0\bar \g_0}$ also makes trivial contribution. In this case, the triangle count occurs on 
\[
(\as \bar \bs, \Ds, \as \bar \as)\#(\g_0\bar \g_0, \g_0\bar \g_0, \g_0\bar \g_0).
\]
Proposition~\ref{prop:multi-stabilization-counts} implies there are no triangles with $\theta_{\g_0,\g_0}^-$ as output when only terms involving $\theta_{\g_0,\g_0}^+$ are the input, so this triangle map makes trivial contribution to the compressed diagram.

The argument for 3-handles follows because the 3-handle map is defined dually to the 1-handle map, so the above argument immediately translates after taking duals.
\end{proof}

\subsection{Commutation of 1-handles}

\begin{lem}
\label{lem:commute-1/3-handles}
 Suppose that $\bS$ and $\bS'$ are two embedded and disjoint spheres in $Y$, whose dimensions lie in $\{0,2\}$. Then
 \[
\CFI(W(Y(\bS),\bS'))\circ \CFI(W(Y,\bS))\simeq \CFI(W(Y(\bS'),\bS))\circ
\CFI(W(Y,\bS')).
\]
\end{lem}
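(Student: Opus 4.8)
\textbf{Proof strategy for Lemma~\ref{lem:commute-1/3-handles}.}

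The plan is to reduce the statement to a ``big hyperbox'' argument, analogous to the proof of Lemma~\ref{lem:1-handles-and-handleslides}, but with the extra direction corresponding to the second surgery $\bS'$ rather than a handleslide equivalence. Concretely, I would first treat the case where both $\bS$ and $\bS'$ are $0$-spheres (the remaining cases follow by the duality between $1$-handle and $3$-handle maps, exactly as in the last line of the proof of Lemma~\ref{lem:1-handles-simple}, and by composing with the mixed case). So suppose $\bS_0 = \{p,p'\}$ and $\bS_0' = \{q,q'\}$ are disjoint framed $0$-spheres. Choose a Heegaard surface $(\Sigma,\as,\bs)$ with $\bS_0 \cup \bS_0' \subset \Sigma \setminus(\as\cup\bs)$, choose disjoint framed paths $\lambda,\lambda'$ from $p,p'$ to $w$ and $\mu,\mu'$ from $q,q'$ to $w$, and choose doubling arcs $\ds$ for $\Sigma$ disjoint from all four paths, together with auxiliary arcs $\ds_0$ (in the neighborhood of $\lambda\cup\lambda'$ and the first annular handle) and $\ds_0'$ (in the neighborhood of $\mu\cup\mu'$ and the second annular handle), chosen so that $\ds\cup\ds_0$, $\ds\cup\ds_0'$, and $\ds\cup\ds_0\cup\ds_0'$ are all valid collections of doubling arcs. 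Finally choose symmetric closed curves $\tau$ (meeting the first $\g_0$ once) and $\tau'$ (meeting the second $\g_0'$ once), with $\tau,\tau'$ disjoint and each disjoint from the other handle region. Because $\bS_0$ and $\bS_0'$ are disjoint and each is attached in its own region, the two annular stabilizations $(A,\g_0,\g_0)$ and $(A',\g_0',\g_0')$ commute on the nose with each other and with all the curve systems.

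The key step is to build a three-dimensional hyperbox whose compression along two of the three axes recovers the two sides of the claimed equation. I would take the diagram in~\eqref{eq:1-handles} defining $\CFI(W(\bS_0))$ (now drawn for the surface $\Sigma$, producing a map landing in the complex for $\Sigma(\bS_0)$) and extend every face of it in a new direction corresponding to attaching the $\g_0'$-handle. Each of the $1$-handle rows of~\eqref{eq:1-handles} extends to a hyperbox of stabilizations in the $\g_0'$-direction (using Proposition~\ref{prop:multi-stabilization-counts} to control the triangle and quadrilateral counts, just as in~\eqref{eq:hyperbox-1-handles-top}); the $3$-handle rows extend dually; the triangle-map rows extend by pairing the relevant hypercubes of attaching curves after adjoining the $\g_0'$ or $\bar\g_0'$ circles to every set of curves, again invoking Proposition~\ref{prop:multi-stabilization-counts} for the destabilization identities; and the two rows carrying the $H_1$-actions $B_\tau$ extend by pairing with the three-dimensional analogue of the hypercube~\eqref{eq:1-handle-hypercube-involving-H1}, which exists by the same grading argument used in Lemma~\ref{lem:1-handle-bottom-hypercube} since the relevant complexes have their distinguished top-degree generators. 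Because $\tau$ and $\tau'$ are disjoint closed curves, the two $H_1$-actions commute strictly ($A_\tau A_{\tau'} = A_{\tau'} A_\tau$ and $[A_\tau, A_{\tau'}] = 0$ at the chain level), so no higher correction terms are needed between the two handle attachments. Compressing this hyperbox in the $\g_0$- and $\g_0'$-directions and reading off the length-$1$ map along the remaining axis gives $\CFI(W(Y(\bS_0),\bS_0')) \circ \CFI(W(Y,\bS_0))$ from one pair of opposite faces and $\CFI(W(Y(\bS_0'),\bS_0)) \circ \CFI(W(Y,\bS_0'))$ from the other, while the hypercube relations on the top and bottom $2$-cells supply the chain homotopy between them.

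For the mixed case ($\bS$ a $0$-sphere, $\bS'$ a $2$-sphere, or vice versa), I would argue that a $3$-handle attached along a $2$-sphere disjoint from the $0$-sphere region can be slid off, so the $3$-handle diagram~\eqref{eq:3-handles} and the $1$-handle diagram~\eqref{eq:1-handles} can be superimposed into a single hyperbox exactly as above, using that the $3$-handle construction is literally the dual of the $1$-handle construction; the commutation of two $3$-handles is then the dual of the commutation of two $1$-handles. The main obstacle I anticipate is bookkeeping rather than conceptual: verifying the hypercube relations for the two subcubes that carry the $H_1$-action rows requires care, since—as in Lemma~\ref{lem:1-handles-and-handleslides}—the diagonal chains $\eta$ in~\eqref{eq:1-handle-hypercube-involving-H1} are not canonical, and one must check that the extra factors of $\theta^+_{\g_0,\g_0}$ and $\theta^+_{\g_0',\g_0'}$ forced by Proposition~\ref{prop:multi-stabilization-counts} cause all the potentially troublesome length-$3$ contributions to vanish after composing with the appropriate $3$-handle map (this is the same vanishing phenomenon exploited in the proofs of Lemmas~\ref{lem:1-handle-map-ind-tau} and~\ref{lem:1-handles-simple}). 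Once that vanishing is in hand, the length-$3$ chain in the big hyperbox can be chosen to be zero and the relations close up.
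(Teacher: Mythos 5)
Your proposal is correct and coincides with one of the two routes the paper itself sketches for this lemma: the paper gives no details, noting only that the statement ``may be proven by building a 3-dimensional hyperbox which has two faces corresponding to the definition of the 1-handle maps from~\eqref{eq:1-handles},'' which is exactly the construction you flesh out. The paper's other (shorter) suggested route is to invoke the model computation of Lemma~\ref{lem:1-handles-simple}, under which both handle maps can be pushed to a neighborhood of the basepoint where they take the explicit forms $\xs\mapsto\xs\times\theta^{\pm}$ and visibly commute.
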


Lemma~\ref{lem:commute-1/3-handles} may be proven by using the model computations from Lemma~\ref{lem:1-handles-simple}. Alternatively, it may be proven by building a 3-dimensional hyperbox which has two faces corresponding to the definition of the 1-handle maps from~\eqref{eq:1-handles}.

\section{2-handles} \label{sec:2-handles}
In this section, we describe the map for a framed link $\bL\subset Y$, with a self-conjugate $\Spin^c$ structure $\frs$ on $W(Y,\bL)$ (or more generally, a family $\frS\subset \Spin^c(W(Y,\bL))$ which is closed under conjugation). As in Section \ref{sec:1-and-3-handles}, we may omit the path and framing from our notation.

The existence of a map for 2-handles was described by Hendricks and Manolescu \cite{HMInvolutive}, though they were not able to prove invariance of the construction. The construction we present in this section should be thought of as a systematic version of the construction therein using the doubling operation.

\subsection{The construction}

Suppose $\bL$ is a framed link in $Y$. We assume that the framing is Morse, so that there is a corresponding 2-handle cobordism $W(Y,\bL)$. We pick a bouquet for $\bL$, in the sense of \cite{OSTriangles}*{Definition~4.1}, as well as a Heegaard triple $(\Sigma,\as',\as,\bs)$ subordinate to this bouquet \cite{OSTriangles}*{Definition~4.2}.

\begin{rem}
 Like the 2-handle maps from \cite{OSTriangles}, our 2-handle maps also depend on a bouquet for the link $\bL$. Independence from the bouquet will be proven in Lemma~\ref{lem:well-defined-2handle}.
 \end{rem}

We first construct several auxiliary hypercubes. We firstly have two hypercubes of alpha attaching circles
\begin{equation}
\begin{tikzcd}[column sep=1.7cm]
\as \ar[r, "\Theta_{\a', \a}"] & \as'
\end{tikzcd}
 \quad
 \text{and}
 \quad
\begin{tikzcd}[column sep=1.7cm]
\as \bar \bs \ar[r, "\Theta_{\a' \bar \b, \a \bar \b}"] & \as'\bar \bs.
\end{tikzcd}
\label{eq:easy-aux-hypercubes-2-handles}
\end{equation}
and a hypercube of beta attaching circles
\begin{equation}
\begin{tikzcd}[column sep=1.7cm]
\bs\bar \bs \ar[r, "\Theta_{\b\bar \b, \Dt}"]& \Ds
\end{tikzcd}
\label{eq:easy-aux-hypercubes-2-handles-beta}
\end{equation}

There is a unique choice of top degree cycle in the hypercubes in Equation~\eqref{eq:easy-aux-hypercubes-2-handles}, while the cycle $\Theta_{\b\bar \b,\Dt}$ in Equation~\eqref{eq:easy-aux-hypercubes-2-handles-beta} is only well-defined up to addition of a boundary.

 Next we construct a more complicated hypercube of beta attaching curves:
\begin{equation}
\begin{tikzcd}[
labels=description,
row sep=1cm,
column sep=3cm,
fill opacity=.7,
text opacity=1
]
\Ds 
	\ar[r, "\Theta_{\Dt,\a'\bar\a'}"]
	\ar[d, "\Theta_{\Dt, \a\bar \a}"]
	\ar[dr,dashed, "\lambda_{\Dt,\a\bar\a'}"]
&
\as'\bar \as'
	\ar[d, "\Theta_{\a'\bar\a', \a \bar \a'}"]
\\
\as \bar \as
	\ar[r, "\Theta_{\a \bar \a,\a \bar \a'}"]
&
\as \bar \as',
\end{tikzcd}
\label{eq:second-auxiliary-hypercube-2-handle}
\end{equation}
We now prove that the hypercube in~\eqref{eq:second-auxiliary-hypercube-2-handle} may be constructed:
\begin{lem}\label{lem:lambda-cube} If $\Theta_{\Dt,\a'\bar \a'}$, $\Theta_{\a'\bar \a', \a \bar \a'}$, $\Theta_{\Dt,\a \bar \a}$ and $\Theta_{\a \bar \a, \a \bar \a'}$ are cycles which represent the top degree elements of their respective $\HF^-$ groups, then
\[
\left[f_{\Dt, \a'\bar \a', \a \bar \a'}(\Theta_{\Dt, \a'\bar \a'}, \Theta_{\a'\bar \a', \a \bar \a'})\right]=\left[f_{\Dt, \a \bar \a, \a \bar \a'}(\Theta_{\Dt, \a \bar \a},\Theta_{\a \bar \a, \a \bar \a'})\right],
\]
where brackets denote the induced element in homology.
In particular, $\lambda_{\Dt,\a \bar \a'}\in \CF(\Ds,\as\bar \as')$ may be chosen so that~\eqref{eq:second-auxiliary-hypercube-2-handle} is a hypercube of attaching curves.
\end{lem}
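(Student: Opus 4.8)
\textbf{Proof proposal for Lemma~\ref{lem:lambda-cube}.}

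The plan is to identify the two triangle-map outputs with ``nearest-point'' maps coming from the same topological picture, so that their homology classes agree automatically. First I would observe that all four triples appearing in~\eqref{eq:second-auxiliary-hypercube-2-handle} are diagrams in which the relevant pairs of attaching curves are handleslide-equivalent, and that each $\HF^-$ group appearing carries a well-defined top-degree generator (it is, up to isomorphism, $\HF^-$ of a connected sum of copies of $S^1\times S^2$, hence has one-dimensional top degree). Consequently it suffices to prove the equality of homology classes for one convenient choice of cycles, since any two choices differ by a boundary; this is the standard ``the claim only involves top-degree generators, so grading considerations suffice'' move used already in Lemma~\ref{lem:s->c-multi-stabilize-verify} and in the discussion after~\eqref{eq:hypercube-relations-diagonal-product}.

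Next I would compute both compositions. The composite $f_{\Dt,\a'\bar\a',\a\bar\a'}(\Theta_{\Dt,\a'\bar\a'},\Theta_{\a'\bar\a',\a\bar\a'})$ is a triangle map associated to a sequence of handleslides taking $\Ds$ through $\as'\bar\as'$ to $\as\bar\as'$; since the incoming and outgoing diagrams each have a unique top-degree intersection point, and the triangle count on $\widehat{\CF}$ between top-degree generators of handleslide-equivalent diagrams is the nearest-point map (equal to the identity after the canonical identification, by the small-translate theorem for triangles \cite[Proposition~11.1]{HHSZExact}), the output is the top-degree generator $\Theta_{\Dt,\a\bar\a'}$ up to terms of strictly higher grading, i.e.\ up to a boundary. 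The same reasoning applies verbatim to $f_{\Dt,\a\bar\a,\a\bar\a'}(\Theta_{\Dt,\a\bar\a},\Theta_{\a\bar\a,\a\bar\a'})$: it too is a handleslide naturality map from $\Ds$ to $\as\bar\as'$ and hence represents $\Theta_{\Dt,\a\bar\a'}$ in homology. Therefore both bracketed classes equal $[\Theta_{\Dt,\a\bar\a'}]$, which gives the desired equality. (Alternatively, one can phrase this as: the two composites are both chain-homotopic to the naturality map $\Psi_{(\Sigma\#\bar\Sigma,\as\bar\bs,\Ds)\to(\Sigma\#\bar\Sigma,\as\bar\bs,\as\bar\as')}$, which is well-defined up to homotopy by \cite{JTNaturality}.)

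Having shown $\left[f_{\Dt,\a'\bar\a',\a\bar\a'}(\Theta_{\Dt,\a'\bar\a'},\Theta_{\a'\bar\a',\a\bar\a'})\right]=\left[f_{\Dt,\a\bar\a,\a\bar\a'}(\Theta_{\Dt,\a\bar\a},\Theta_{\a\bar\a,\a\bar\a'})\right]$, the difference of the two triangle outputs is a boundary, so there exists a homogeneous chain $\lambda_{\Dt,\a\bar\a'}\in\CF(\Ds,\as\bar\as')$ with
\[
\d\lambda_{\Dt,\a\bar\a'}=f_{\Dt,\a'\bar\a',\a\bar\a'}(\Theta_{\Dt,\a'\bar\a'},\Theta_{\a'\bar\a',\a\bar\a'})+f_{\Dt,\a\bar\a,\a\bar\a'}(\Theta_{\Dt,\a\bar\a},\Theta_{\a\bar\a,\a\bar\a'}),
\]
which is exactly the length-$2$ hypercube relation needed for~\eqref{eq:second-auxiliary-hypercube-2-handle} to be a hypercube of attaching curves (the length-$1$ relations are immediate, each marked chain being a cycle). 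The main obstacle is purely bookkeeping: one must make sure the $\Spin^c$-structure conventions are consistent, so that the two triangle maps really land in the same summand of $\CF(\Ds,\as\bar\as')$ and the ``unique top-degree generator'' statement applies in that summand; this is handled by the hypothesis, implicit from the surrounding setup, that all diagrams are taken with the torsion $\Spin^c$ structure on $(S^1\times S^2)^{\#k}$, so there is nothing subtle beyond tracking indices.
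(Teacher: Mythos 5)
Your proof does not go through, and the problem is with its central step. You assert that all four triples in~\eqref{eq:second-auxiliary-hypercube-2-handle} involve handleslide-equivalent attaching curves, so that each composite $f_{\Dt,\a'\bar\a',\a\bar\a'}(\Theta,\Theta)$ and $f_{\Dt,\a\bar\a,\a\bar\a'}(\Theta,\Theta)$ is a handleslide-naturality (nearest-point) map landing on $\Theta_{\Dt,\a\bar\a'}$. But in the setting of Section~\ref{sec:2-handles} the curves $\as'$ are obtained from $\as$ by attaching $2$-handles along the framed link $\bL$ (the triple $(\Sigma,\as',\as,\bs)$ is subordinate to a bouquet), so $\as'\bar\as'$ and $\as\bar\as'$ are \emph{not} handleslide equivalent, and neither are $\Ds$ and $\as'\bar\as'$ (these are two genuinely different compressions of $\Sigma\#\bar\Sigma$, not small translates of one another). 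Indeed the three boundary manifolds of the triple $(\Sigma\#\bar\Sigma,\Ds,\as'\bar\as',\as\bar\as')$ are not even all diffeomorphic: $Y_{\Dt,\a'\bar\a'}\iso (S^1\times S^2)^{\# g}$ while $Y_{\Dt,\a\bar\a'}\iso (S^1\times S^2)^{\#(g-|\bL|)}$. Consequently the small-translate theorem \cite[Proposition~11.1]{HHSZExact} does not apply, there is no ``nearest-point'' identification, and one cannot conclude that either output equals a distinguished intersection point $\Theta_{\Dt,\a\bar\a'}$ up to boundaries. The alternative phrasing via the naturality map $\Psi$ of \cite{JTNaturality} fails for the same reason: these triangle maps change the underlying $3$-manifold, so they are not transition maps.

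The argument the paper actually uses is topological rather than diagrammatic: both triples $(\Sigma\#\bar\Sigma,\Ds,\as'\bar\as',\as\bar\as')$ and $(\Sigma\#\bar\Sigma,\Ds,\as\bar\as,\as\bar\as')$ present the \emph{same} $2$-handle cobordism from $(S^1\times S^2)^{\# g}$ to $(S^1\times S^2)^{\#(g-|\bL|)}$, namely surgery on an $|\bL|$-component link whose components are $S^1$-fibers of $S^1\times S^2$ summands. By the invariance of the Ozsv\'ath--Szab\'o cobordism maps \cite{OSTriangles}, the two triangle maps $f_{\Dt,\a'\bar\a',\a\bar\a'}(-,\Theta_{\a'\bar\a',\a\bar\a'})$ and $f_{\Dt,\a\bar\a,\a\bar\a'}(-,\Theta_{\a\bar\a,\a\bar\a'})$ induce the same map on homology, and both are applied to the class of the top-degree generator of $\HF^-((S^1\times S^2)^{\# g})$; hence the two classes agree, and a primitive $\lambda_{\Dt,\a\bar\a'}$ of their sum exists. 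Your final paragraph (extracting the length-$2$ hypercube relation from the equality of homology classes) is fine, but it rests entirely on the equality you have not established. If you want to salvage your write-up, replace the handleslide/nearest-point discussion with the cobordism-invariance argument above.
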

\begin{proof}
Both triples $(\Sigma\# \bar \Sigma,\Ds, \as'\bar \as', \as \bar \as')$ and $(\Sigma\# \bar \Sigma,\Ds, \as\bar \as, \as \bar \as')$ represent the same cobordism, topologically. Namely, they represent a cobordism from $(S^1\times S^2)^{\# g}$ to $(S^1\times S^2)^{g-|\bL|}$ which is surgery on a $|\bL|$ component link, each component of which is an $S^1$-fiber of a $S^1\times S^2$-summand.
\end{proof}
Since $\left[f_{\Dt, \a'\bar \a', \a \bar \a'}(\Theta_{\Dt, \a'\bar \a'}, \Theta_{\a'\bar \a', \a \bar \a'})\right]$ represents a top-degree class of $\CF(S^1\times S^2)^{g-|\bL|}$, we have that any two choices of $\lambda_{\Dt,\a\bar\a'}$ satisfying (\ref{eq:second-auxiliary-hypercube-2-handle}) are homologous.

Now suppose that $\frS\subset \Spin^c(W(Y,\bL))$ is a finite set which is closed under conjugation. We now define the 2-handle map for $\bL$, which we denote by $\CFI(W(Y,\bL),\frS)$, using the hypercubes in~\eqref{eq:easy-aux-hypercubes-2-handles}, ~\eqref{eq:easy-aux-hypercubes-2-handles-beta} and~\eqref{eq:second-auxiliary-hypercube-2-handle}. We define the map $\CFI(W(Y,\bL),\frS)$ to be the compression of the hyperbox shown in equation~\eqref{eq:hyperbox-2-handles}.  A priori, $\CFI(W(Y,\bL), \frS)$ depends on the choices in its construction; for well-definedness see Section \ref{subsec:independence-from-simple-handleslides}. 
\begin{equation}
\begin{tikzcd}[
labels=description,
row sep=1cm,
column sep=3cm,
fill opacity=.7,
text opacity=1
]
\CF(\as,\bs)
	\ar[rr, "f_{\a\to \a'}^{\b}"]
	\ar[d, "F_1^{\bar \b, \bar \b}"]
&&
\CF(\as', \bs)
	\ar[d, "F_1^{\bar \b, \bar \b}"]
\\
\CF(\as\bar\bs, \bs \bar \bs)
	\ar[rr, "f_{\a \bar \b\to \a' \bar \b}^{\b \bar \b}"]
	\ar[d,"f_{\a \bar \b}^{\b \bar \b\to \Dt}"]
	\ar[drr,dashed, "h_{\a\bar \b\to \a'\bar \b}^{\b\bar \b\to \Dt}"]
&&
\CF(\as '\bar \bs, \bs \bar \bs)
	\ar[d,"f_{\a' \bar \b}^{\b \bar \b\to \Dt}"]
\\
\CF(\as\bar \bs, \Ds)
	\ar[rr,"f_{\a\bar \b\to \a'\bar \b}^{\Dt}"]
	\ar[d,equal]
	\ar[drr,dashed," h_{\a\bar\b\to \a'\bar \b}^{\Dt\to \a'\bar \a'}"]
&
&
\CF(\as' \bar \bs, \Ds)
	\ar[d,"f_{\a'\b}^{\Dt\to \a'\bar \a'}"]
\\
\CF(\as \bar \bs, \Ds)
	\ar[d,"f_{\a \bar{\b}}^{\Dt\to \a \bar \a}"]
	\ar[dr,dashed, " H_{\a\bar \b}^{\Dt\to \a \bar\a'} "]
	\ar[r, "f_{\a \bar \b}^{\Dt\to \a'\bar \a'} "]
&
\CF(\as \bar \bs, \as'\bar \as')
	\ar[r, "f_{\a \bar \b\to \a'\bar \b}^{\a'\bar \a'}"]
	\ar[d, "f_{\a \bar \b}^{\a' \bar \a'\to \a \bar \a'}"]
&
\CF(\as' \bar \bs, \as' \bar \as')
	\ar[d, "F_3^{\a',\a'}"]
\\
\CF(\as \bar \bs, \as \bar \as)
	\ar[r, "f_{\a \bar \b}^{\a\bar \a\to \a \bar \a'}"]
	\ar[d,"F_3^{\a,\a}"]
&
\CF(\as \bar \bs, \as \bar \as')
	\ar[r, "F_3^{\a, \a}"]
	\ar[d, "F_3^{\a,\a}"]
&
\CF( \bar \bs, \bar \as')
	\ar[d,equal]
\\
\CF(\bar \bs, \bar \as)
	\ar[r,"f_{\bar \b}^{\bar \a\to \bar \a'}"]
&
\CF(\bar \bs, \bar \as')
	\ar[r,equal]
&
\CF(\bar \bs, \bar \as')
\end{tikzcd}
\label{eq:hyperbox-2-handles}
\end{equation}

\begin{lem}
 If we compress each horizontal level in~\eqref{eq:hyperbox-2-handles}, then the resulting diagram is a hyperbox of chain complexes.
\end{lem}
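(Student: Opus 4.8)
The plan is to verify the hyperbox relations for each horizontal level of~\eqref{eq:hyperbox-2-handles} separately, since compressing a hyperbox of size $(1,\dots,1,d)$ level-by-level produces a hypercube precisely when each level is itself a hypercube of chain complexes compatible with its neighbours. The first and sixth levels (the $1$-handle row $F_1^{\bar\b,\bar\b}$ and the $3$-handle row $F_3^{\a,\a}$, respectively) are the \emph{hypercubes of stabilization} of \cite{HHSZExact}*{Section~14}: here the hypothesis that $\as'$ is obtained from $\as$ by an elementary equivalence (built into the subordinate triple via a small Hamiltonian translate of the bouquet region) is exactly what is needed for the degeneration argument of \cite{HHSZExact}*{Section~10} to apply, so these rows satisfy the hypercube relations by that earlier work. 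The remaining rows are each obtained by pairing hypercubes of attaching curves: the second and third rows pair the alpha hypercubes of~\eqref{eq:easy-aux-hypercubes-2-handles} against the beta hypercube $\bs\bar\bs\to\Ds$ of~\eqref{eq:easy-aux-hypercubes-2-handles-beta}, the fourth and fifth rows pair the alpha hypercube $\as\bar\bs\to\as'\bar\bs$ against the more elaborate beta hypercube~\eqref{eq:second-auxiliary-hypercube-2-handle}, and the pairing construction of Section~\ref{sec:hypercubes-rel-homology} together with the compatibility relation~\eqref{eq:hypercube-Lagrangians} guarantees that the paired object is a hypercube of chain complexes. Lemmas~\ref{lem:lambda-cube} and the remarks following~\eqref{eq:easy-aux-hypercubes-2-handles}--\eqref{eq:easy-aux-hypercubes-2-handles-beta} supply the cycles and diagonal chains ($\Theta_{\b\bar\b,\Dt}$, $\lambda_{\Dt,\a\bar\a'}$) needed to invoke these pairings.

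The subtle point is the interface between adjacent rows: the diagonal maps $h_{\a\bar\b\to\a'\bar\b}^{\b\bar\b\to\Dt}$, $h_{\a\bar\b\to\a'\bar\b}^{\Dt\to\a'\bar\a'}$, $H_{\a\bar\b}^{\Dt\to\a\bar\a'}$ etc., together with the ``dog-leg'' third and fourth rows (which are not simple faces but use the homology action / the $\lambda$-chain), must fit so that $|\veps''-\veps|_{L^\infty}\le 1$ relations hold across level boundaries. Here I would argue exactly as in the construction of the elementary transition map of Figure~\ref{def:transition-map-elementary-handleslide}, since~\eqref{eq:hyperbox-2-handles} has the same formal shape: the maps $F_1^{\bar\b,\bar\b}$ and $F_3^{\a,\a}$ commute on the nose with the triangle maps $f_{\a\bar\b}^{\b\bar\b\to\Dt}$ and $f_{\a\bar\b}^{\Dt\to\a\bar\a}$ respectively (these are the defining compositions of Corollary~\ref{cor:doubling-model}), and the length-$2$ and length-$3$ hypercube relations straddling two rows reduce, after applying the stabilization/destabilization results of Proposition~\ref{prop:multi-stabilization-counts}, to the associativity relations for holomorphic triangles and quadrilaterals on the underlying surface $\Sigma$ (the $1$-handle and $3$-handle regions contribute only the top-degree generators $\theta^\pm$, by Proposition~\ref{prop:multi-stabilization-counts}).

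The main obstacle I anticipate is the $(1,1)$-shaped subcube in the fourth and fifth rows involving $\CF(\as\bar\bs,\as'\bar\as')$ and $\CF(\as\bar\bs,\as\bar\as')$: this is where the non-trivial beta hypercube~\eqref{eq:second-auxiliary-hypercube-2-handle} with its diagonal $\lambda_{\Dt,\a\bar\a'}$ enters, and the relevant relation is the length-$3$ hypercube relation, which unwinds to the statement that
\[
f_{\a\bar\b}^{\a'\bar\a'\to\a\bar\a'}\circ f_{\a\bar\b}^{\Dt\to\a'\bar\a'}
\;+\;
f_{\a\bar\b}^{\a\bar\a\to\a\bar\a'}\circ f_{\a\bar\b}^{\Dt\to\a\bar\a}
\;=\;
[\d,\,H_{\a\bar\b}^{\Dt\to\a\bar\a'}]\;+\;(\text{terms involving }\lambda_{\Dt,\a\bar\a'}),
\]
i.e.\ the two routes $\Ds\to\as'\bar\as'\to\as\bar\as'$ and $\Ds\to\as\bar\as\to\as\bar\as'$ through~\eqref{eq:second-auxiliary-hypercube-2-handle} agree up to the prescribed homotopy. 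This is exactly the content guaranteed by Lemma~\ref{lem:lambda-cube} together with the associativity of the quadrilateral maps (as in Lemma~\ref{lem:iota-maps-from-hyperboxes}); the bookkeeping of which diagonal chain pairs against which top-degree generator is the only genuinely fiddly part, and I would organize it by first recording the alpha-side and beta-side hypercubes as in~\eqref{eq:easy-aux-hypercubes-2-handles}--\eqref{eq:second-auxiliary-hypercube-2-handle} and then reading off the paired maps mechanically.
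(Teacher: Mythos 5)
Your decomposition into stabilization rows and paired rows is the right frame, and it matches the paper's opening move: each unit subcube is checked separately, the top and bottom rows are hypercubes of stabilization, and every subcube carrying a length~$2$ map is obtained by pairing hypercubes of attaching curves, so those relations are automatic once the auxiliary hypercubes \eqref{eq:easy-aux-hypercubes-2-handles}--\eqref{eq:second-auxiliary-hypercube-2-handle} exist. But you have misidentified the crux. The square you flag as ``the main obstacle'' --- the one with diagonal $H_{\a\bar\b}^{\Dt\to\a\bar\a'}$ pairing $\as\bar\bs$ against \eqref{eq:second-auxiliary-hypercube-2-handle} --- is precisely one of the paired subcubes whose hypercube relation is immediate once Lemma~\ref{lem:lambda-cube} supplies $\lambda_{\Dt,\a\bar\a'}$; the relation you display is the (length~$2$, not length~$3$ --- the hyperbox is $2$-dimensional, so no length~$3$ relations occur) structure equation that the pairing construction hands you for free. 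Also, your worry about relations ``straddling two rows'' is moot: in a hyperbox the relations are confined to unit subcubes, so there is nothing to check at the interfaces beyond the subcubes themselves.

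The genuinely non-trivial verification, which your proposal does not isolate, is the strictly commuting square second from the bottom on the right of \eqref{eq:hyperbox-2-handles}, whose relation is
\[
F_{3}^{\a',\a'}\circ f_{\a \bar \b\to \a'\bar \b}^{\a' \bar \a'}=F_3^{\a,\a}\circ f_{\a \bar \b}^{\a'\bar \a'\to \a \bar \a'}.
\]
This square is neither a pairing of hypercubes of attaching curves nor a standard hypercube of stabilization, so it needs an argument. The relevant Heegaard triples are $(\Sigma, \as',\as,\as')\#(\bar \Sigma, \bar \bs, \bar \bs, \bar \as')$ and $(\Sigma, \as, \as', \as)\#(\bar \Sigma, \bar \bs, \bar \as', \bar \as')$; both are multi-stabilizations of triples for small isotopies, so one first destabilizes via Proposition~\ref{prop:multi-stabilization-counts} and then must invoke the \emph{small triangle theorem} (\cite{HHSZExact}*{Proposition~11.1}) to identify both destabilized triangle maps with nearest-point maps. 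Your generic appeal to ``associativity relations for triangles and quadrilaterals'' does not yield this equality of two distinct compositions --- the nearest-point identification is the missing ingredient --- and the citation of Corollary~\ref{cor:doubling-model} for commutation of $F_1$, $F_3$ with the doubling triangle maps is not what is needed here.
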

\begin{proof} It suffices to show that each face is a hypercube of chain complexes. The subcubes with length 2 maps are all obtained by pairing hypercubes of attaching curves, so the claim is immediate for these faces. The top-most and bottom-most subcubes are hypercubes of stabilization, so the claim is immediate. The only remaining cube of interest is the second-bottom-most cube on the right. The hypercube relations amount to the relation
\begin{equation}
F_{3}^{\a',\a'}\circ f_{\a \bar \b\to \a'\bar \b}^{\a' \bar \a'}=F_3^{\a,\a}\circ f_{\a \bar \b}^{\a'\bar \a'\to \a \bar \a'}.
\label{eq:2-handle-map-relation-with-3-handles}
\end{equation}
This is proven as follows. The Heegaard triples for these triangle maps are $(\Sigma, \as',\as,\as')\#(\bar \Sigma, \bar \bs, \bar \bs, \bar \as')$ and $(\Sigma, \as, \as', \as)\#(\bar \Sigma, \bar \bs, \bar \as', \bar \as')$. We see that the first triple is a stabilization of the small isotopy triple $(\bar \Sigma, \bar \bs, \bar \bs, \bar \as')$, and similarly the second triple is also a stabilization of a Heegaard triple for a small isotopy. Using the stabilization results of Proposition~\ref{prop:multi-stabilization-counts} and the small triangle theorem \cite{HHSZExact}*{Proposition~11.1}, we obtain ~\eqref{eq:2-handle-map-relation-with-3-handles}, completing the proof.
\end{proof}

\begin{rem}
 If $\frS\subset \Spin^c(W(Y,\bL))$ is a set of $\Spin^c$ structures which is closed under the conjugation action, then a cobordism map may be defined by summing the maps above for all $\frs\in \frS$. If $\frS$ is infinite, one may need to use coefficients in $\bF[[U]]$ to obtain a well defined map.
\end{rem}

\subsection{Independence from simple handleslides}\label{subsec:independence-from-simple-handleslides}

We now prove that the construction above commutes with the maps for simple handleslides and changes of the doubling arcs. We phrase our result in enough generality that they also imply that the 2-handle maps are independent of the choice of bouquet for $\bL$, and are independent from handleslides amongst the components of $\bL$.

Suppose that $(\Sigma,\as',\as,\bs)$ is a Heegaard triple subordinate to a bouquet of $\bL\subset Y$. Suppose that $(\Sigma,\as'_H,\as_H,\bs_H)$ is another Heegaard triple, such that $\as'_H$, $\as_H$ and $\bs_H$ are obtained from $\as',$ $\as$ and $\bs$ (respectively) by elementary handleslide equivalences, in the sense of Section~\ref{sec:elementary-equivalences}. We assume, furthermore that each of the following sets admits a unique top degree intersection point, which is a cycle in its respective Floer complex:
\[
 \bT_{\a_H}\cap \bT_{\a}, \quad \bT_{\a_H'}\cap \bT_{\a'}, \quad \bT_{\a_H'}\cap \bT_{\a}, \quad \text{and} \quad \bT_{\b}\cap \bT_{\b_H}.
 \]
We also suppose that we pick two sets of doubling arcs, $\Ds$ and $\Ds_H$. We require no particular relation between $\Ds$ and $\Ds_H$, except that the diagram $(\Sigma\# \bar \Sigma,\Ds,\Ds_H,w)$ be admissible.

\begin{prop}\label{prop:2-handles-handle-slides} Suppose that $(\Sigma,\as',\as,\bs)$ and $(\Sigma,\as'_H,\as_H,\bs_H)$ are as above.
Let $\CFI(W(\bL))$ and $\CFI(W(\bL))^H$ denote the 2-handle maps, computed with the triples $(\Sigma,\as',\as,\bs)$ and $(\Sigma,\as'_H,\as_H,\bs_H)$, and the doubling arcs $\Ds$ and $\Ds_H$, respectively. Let $\frD$ denote the doubling data for $Y$ determined by $(\Sigma,\as,\bs)$, $\Ds$, and some choices of almost-complex structures. Let $\frD_H$ denote the doubling data determined by $(\Sigma,\as_H,\bs_H)$, $\Ds_H$, and some choices of almost-complex structures. Let $\frD'$ and $\frD_H'$ denote the analogous data for $Y(\bL)$. Then
\[
\Psi_{\frD'\to \frD_H'} \circ \CFI(W(\bL))\simeq \CFI(W(\bL))^H\circ \Psi_{\frD\to \frD_H}.
\]
Here, $\Psi_{\frD\to \frD_H}$ and $\Psi_{\frD'\to \frD'_H}$ denote the transition maps for elementary handleslides and changes of the doubling data, and $\simeq$ denotes $\bF[U,Q]/Q^2$-equivariant chain homotopy.
\end{prop}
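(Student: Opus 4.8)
The strategy is to promote the two-dimensional commutation square into a three-dimensional hyperbox whose two ``horizontal'' faces are the hyperboxes defining $\CFI(W(\bL))$ and $\CFI(W(\bL))^H$ (the hyperbox in~\eqref{eq:hyperbox-2-handles}), and whose remaining structure interpolates in the direction of the elementary-equivalence transition maps $\Psi_{\frD\to\frD_H}$ (as built in Figure~\ref{def:transition-map-elementary-handleslide}). Compressing this three-dimensional hyperbox in the two ``old'' directions yields a diagram whose hypercube relations encode exactly the homotopy commutation $\Psi_{\frD'\to\frD_H'}\circ\CFI(W(\bL))\simeq\CFI(W(\bL))^H\circ\Psi_{\frD\to\frD_H}$, together with the (already known) facts that the two horizontal and two vertical faces are themselves hyperboxes. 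So the entire content is the construction of the interpolating hyperbox, level by level, following the vertical structure of~\eqref{eq:hyperbox-2-handles}.

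First I would handle the levels that are ``easy'' in the sense that they are obtained purely by pairing hypercubes of attaching curves: for each such level of~\eqref{eq:hyperbox-2-handles}, one extends the relevant hypercube of alpha curves (resp.\ beta curves) by the extra handleslide direction, using the standard fact that empty hypercubes of handleslide-equivalent attaching curves can be filled (\cite{MOIntegerSurgery}*{Lemma~8.6}), and then pairs. The one genuinely new feature among these is the level containing the auxiliary hypercube~\eqref{eq:second-auxiliary-hypercube-2-handle} with its diagonal chain $\lambda_{\Dt,\a\bar\a'}$: extending it to a three-dimensional hypercube of beta attaching curves requires choosing a length-3 chain, and its existence follows from the fact that all four corner triangle-composition classes represent the same top-degree homology class of $\CF(\#^{g-|\bL|}(S^1\times S^2))$ (exactly as in Lemma~\ref{lem:lambda-cube}), so the relevant obstruction cycle is either null-homologous or top-degree, and in the latter case can be absorbed by adjusting a length-$2$ chain — the same bookkeeping device used in the proof of Lemma~\ref{lem:1-handles-and-handleslides}. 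The top-most and bottom-most levels of~\eqref{eq:hyperbox-2-handles} are hyperboxes of stabilization (the $1$-handle and $3$-handle rows); extending these in the handleslide direction is the content of the construction of $\Psi_{\frD\to\frD_H}$ itself, so one reuses the ``hyperboxes over the face labeled B'' type diagrams from Section~\ref{sec:expansion-relation-transition-maps} (e.g.~\eqref{eq:hyperbox-over-B}), whose hypercube relations follow from the stabilization formula for holomorphic polygons, Proposition~\ref{prop:multi-stabilization-counts}.

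The main obstacle is the level built from the triangle maps $f_{\a\bar\b}^{\b\bar\b\to\Dt}$ and $f_{\a\bar\b}^{\Dt\to\a\bar\a}$ together with their diagonal companions $h_{\a\bar\b\to\a'\bar\b}^{\b\bar\b\to\Dt}$, $h_{\a\bar\b\to\a'\bar\b}^{\Dt\to\a'\bar\a'}$, and $H_{\a\bar\b}^{\Dt\to\a\bar\a'}$ in~\eqref{eq:hyperbox-2-handles}. Here extending to three dimensions requires introducing length-$3$ chains governing how the triangle and quadrilateral counts interact with the handleslide direction, and the verification of the top-dimensional hypercube relation is not formal: it mixes associativity of polygon maps, the stabilization/destabilization identities of Proposition~\ref{prop:multi-stabilization-counts}, and a Gromov-compactness / deformation-of-almost-complex-structure argument of the same flavor as the one in the proof of Lemma~\ref{lem:top-left-1-handle-subcube} (deforming the evaluation parameter $t\in[0,\infty)$ and reading off the $t\to\infty$ ends). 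I expect most of the work to reduce to this single level, and I would organize the argument so that the contributions of the connected-sum region and the small-translate theorems \cite{HHSZExact}*{Propositions~11.1 and 11.5} strip away everything but a clean identity between broken configurations. Once all levels are constructed, compressing and unpacking the hypercube relations gives the stated chain homotopy, and one notes that all $U$-equivariance and $Q$-equivariance is automatic from the construction; this completes the proof.
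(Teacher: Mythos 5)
Your overall architecture — promote the square to a three‑dimensional hyperbox built level by level over the hyperbox~\eqref{eq:hyperbox-2-handles}, then compress — is exactly the paper's strategy, and your treatment of the pairing levels, the auxiliary cube~\eqref{eq:second-auxiliary-hypercube-2-handle}, and the stabilization rows matches the paper's. However, you have misallocated where the technical content lies. The levels you single out as ``the main obstacle'' (those carrying $f_{\a\bar\b}^{\b\bar\b\to\Dt}$, $f_{\a\bar\b}^{\Dt\to\a\bar\a}$ and their diagonals $h$ and $H$) are precisely the levels that the paper dispatches as routine: they are obtained by pairing hypercubes of attaching curves (possibly after a small rearrangement for the central cube), so their hypercube relations are automatic from the associativity relations for holomorphic polygons once the three‑dimensional hypercubes of attaching curves are filled. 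No Gromov‑compactness or evaluation‑parameter deformation in the style of Lemma~\ref{lem:top-left-1-handle-subcube} is needed or used there; that technique belongs to the $1$‑handle hypercubes of Section~\ref{sec:composition-law-expanded-proof}, where the cubes are \emph{not} pairings.

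The genuinely delicate levels — which your plan passes over as if they were plain hyperboxes of stabilization — are the three bottom‑most subcubes of~\eqref{eq:hyperbox-2-handles} that have \emph{no} length‑$2$ map, i.e.\ the ones mixing the triangle maps $f_{\a\bar\b\to\a'\bar\b}^{\a'\bar\a'}$, $f_{\a\bar\b}^{\a'\bar\a'\to\a\bar\a'}$ with the $3$‑handle maps $F_3$, whose two‑dimensional relation is the strict commutation~\eqref{eq:2-handle-map-relation-with-3-handles}. Extending these to three dimensions (as in~\eqref{eq:hyperbox-2-handle-bottom-right}) forces one to verify length‑$2$ and length‑$3$ relations whose summands are compositions of quadrilateral maps with $3$‑handle maps. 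The paper's argument is that each such summand vanishes identically: the relevant Heegaard quadruples are algebraically rigid multi‑stabilizations, so Proposition~\ref{prop:multi-stabilization-counts} lets one destabilize, and the resulting quadrilateral count is zero by the small‑translate theorem because one set of attaching curves is repeated (up to a small Hamiltonian translate). You do name Proposition~\ref{prop:multi-stabilization-counts} and the small‑translate theorems as tools, so the ingredients are in your hands, but your write‑up does not identify these cubes as the place where they must be deployed, and the deformation argument you propose instead would not produce the needed vanishing. With the emphasis corrected, the proof goes through as you outline.
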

\begin{proof} Mirroring the construction of the maps $\Psi_{\frD\to \frD_H}$, we construct for each subcube of Equation~\eqref{eq:hyperbox-2-handles} a hyperbox of size $(2,1,1)$. The final coordinate corresponds to the involution. If we collapse this axis of the cube, we obtain a hyperbox of size $(2,1)$. If we compress this hyperbox, we obtain the following hypercube, which realizes the homotopy commutation in the statement.
\[
\begin{tikzcd}[
labels=description,
row sep=1cm,
column sep=3cm,
fill opacity=.7,
text opacity=1
]
\CFI(\frD)
	\ar[dr, "\Psi_{\frD\to \frD_H}"]
	\ar[r,"F_{W(\bL)}"]
	\ar[drr,dashed]
& 
\CFI(\frD')
	\ar[dr,  "\Psi_{\frD'\to \frD'_H}"]
&
\\
&\CFI(\frD_H) 
	\ar[r,"F_{W(\bL)}^H"]
&
\CFI(\frD'_H) 
\end{tikzcd}
\]

   The top-most hyperbox is obtained by extending the top-most hypercube of Equation~\eqref{eq:hyperbox-2-handles} out of the page, and is obtained by stacking two hypercubes of stabilization. We leave the details to the reader.

We now consider the hypercube in~\eqref{eq:hyperbox-2-handles} which is second from the top. The corresponding hyperbox is obtained by pairing hypercubes, as follow. We think of a hyperbox of size $(2,1,1)$ as being obtained by stacking two hypercubes. The first hypercube is obtained by constructing and pairing the following hypercubes of attaching curves, where the length-$1$ edges are obtained using the unique top-degree generators specified earlier; the $2$-faces are filled as in Lemma \ref{lem:lambda-cube}:
\[
\begin{tikzcd}[
labels=description,
row sep=1cm,
column sep=2cm,
fill opacity=.7,
text opacity=1
]
\as \bar \bs
	\ar[r]
	\ar[drr,dashed]
	\ar[dr]
&
\as'\bar \bs
	\ar[dr]
&
\,
\\
&
\as_H \bar \bs_H
	\ar[r]
&
\as_H' \bar \bs_H
\end{tikzcd}
\quad \text{and} \quad
\begin{tikzcd}[
labels=description,
row sep=1cm,
column sep=3cm,
fill opacity=.7,
text opacity=1
]
\bs \bar \bs\ar[d]\\
\Ds
\end{tikzcd}.
\]
The next hypercube is obtained by constructing and pairing the following hypercubes of attaching curves:
\[
\begin{tikzcd}[
labels=description,
row sep=1cm,
column sep=2cm,
fill opacity=.7,
text opacity=1
]
\as_H\bar \bs_H \ar[r]
&
\as_H'\bar \bs_H
\end{tikzcd}
\quad 
\text{and}
\quad
\begin{tikzcd}[
labels=description,
row sep=1cm,
column sep=1.2cm,
fill opacity=.7,
text opacity=1
]
\bs \bar \bs
	\ar[d]
	\ar[dr]
	\ar[ddr,dashed]
&\,\\
\Ds
	\ar[dr]
& \bs_H \bar \bs_H
	\ar[d]
\\
& \Ds_H
\end{tikzcd}
\]

The remaining hypercubes which have a length 2 map are constructed by pairing hypercubes of attaching curves in a straightforward manner. We remark that the central-most hypercube of~\eqref{eq:hyperbox-2-handles} is obtained by a small rearrangement of the pairing of an alpha hypercube with a beta hypercube. The corresponding 3-dimensional hyperbox is obtained by performing the analogous rearrangement to two hypercubes obtained by pairing.

Finally, we consider the three bottom-most hypercubes of \eqref{eq:hyperbox-2-handles} which do not have a length-$2$ map. We consider the right hypercube in the second to bottom-most level.

\begin{equation}
\begin{tikzcd}[
	column sep={1.6cm,between origins},
	row sep=1cm,
	labels=description,
	fill opacity=.7,
	text opacity=1,
	execute at end picture={
	\foreach \Nombre in  {A,B,C,D}
  {\coordinate (\Nombre) at (\Nombre.center);}
\fill[opacity=0.1] 
  (A) -- (B) -- (C) -- (D) -- cycle;
}]
\CF(\as \bar \bs, \as'\bar \as')
	\ar[dr]
	\ar[ddd]
	\ar[rr]
&[.7 cm]
&\CF(\as' \bar \bs, \as'\bar \as')
	\ar[dr]
	\ar[ddd, "F_3^{\a',\a'}"]
&[.7 cm]
\\
&[.7 cm]
 |[alias=A]|\CF(\as_H \bar \bs_H, \as'\bar \as')
 	\ar[rr,line width=2mm,dash,color=white,opacity=.7]
	\ar[rr]
&\,
&[.7 cm]
 |[alias=B]|
\CF (\as'_H \bar \bs_H, \as'\bar \as')
	\ar[ddd, "F_3^{\a_H',\a'}"]
 	\ar[from=ulll,line width=2mm,dash,color=white,opacity=.7]
	\ar[from=ulll,dashed]
\\
\\
\CF(\as \bar \bs, \as \bar \as')
	\ar[dr]
	\ar[rr, "F_3^{\a,\a}"]
&[.7 cm]\,&
\CF(\bar \bs, \bar \as)
	\ar[dr]
\\
& [.7 cm]
|[alias=D]|
\CF(\as_H \bar \bs_H, \as \bar \as')
 	\ar[from=uuu,line width=2mm,dash,color=white,opacity=.7]
	\ar[from=uuu]
	\ar[rr, "F_3^{\a_H,\a}"]
 	\ar[from=uuuul,line width=2mm,dash,color=white,opacity=.7]
	\ar[from=uuuul,dashed]
&
&[.7 cm] 
|[alias=C]|
\CF(\bar \bs_H, \bar \as')
\end{tikzcd}
\hspace{-1.2cm}
\begin{tikzcd}[
	column sep={1.6cm,between origins},
	row sep=1cm,
	labels=description,
	fill opacity=.7,
	text opacity=1,
	execute at end picture={
	\foreach \Nombre in  {A,B,C,D}
  {\coordinate (\Nombre) at (\Nombre.center);}
\fill[opacity=0.1] 
  (A) -- (B) -- (C) -- (D) -- cycle;
}]
 |[alias=A]|
\CF(\as_H \bar \bs_H, \as'\bar \as')
	\ar[dr]
	\ar[ddd]
	\ar[rr]
&[.7 cm]
&
 |[alias=B]|
  \CF(\as'_H \bar \bs_H, \as'\bar \as')
	\ar[rd]
	\ar[ddd, "F_3^{\a_H',\a'}"]
&[.7 cm]
\\
&[.7 cm]
\CF(\as_H\bar \bs_H, \as'_H \bar \as'_H)
 	\ar[rr,line width=2mm,dash,color=white,opacity=.7]
	\ar[rr]
&\,
&[.7 cm]
\CF (\as_H' \bar \bs_H, \as'_H\bar \as'_H)
	\ar[ddd, "F_3^{\a_H',\a_H'}"]
 	\ar[from=ulll,line width=2mm,dash,color=white,opacity=.7]
	\ar[from=ulll,dashed]
\\
\\
|[alias=D]|
\CF(\as_H \bar \bs_H, \as \bar \as')
	\ar[dr]
	\ar[rr, "F_3^{\a_H, \a}"]
&[.7 cm]&
|[alias=C]|
\CF(\bar \bs_H, \bar \as')
	\ar[dr]
\\
& [.7 cm]
\CF(\as_H \bar \bs_H, \as_H \bar \as_H')
 	\ar[from=uuu,line width=2mm,dash,color=white,opacity=.7]
	\ar[from=uuu]
	\ar[rr, "F_3^{\a_H,\a_H}"]
 	\ar[from=uuuul,line width=2mm,dash,color=white,opacity=.7]
	\ar[from=uuuul,dashed]
&
&[.7 cm] 
\CF(\bar \bs_H, \bar \as_H')
\end{tikzcd}
\label{eq:hyperbox-2-handle-bottom-right}
\end{equation}
In~\eqref{eq:hyperbox-2-handle-bottom-right}, all of the length 1 and 2 maps which are unlabeled are holomorphic triangle maps or quadrilateral maps. There are no length 3 maps.

The length 2 hypercube relations for the diagrams in~\eqref{eq:hyperbox-2-handle-bottom-right} are verified as follows. For the faces which have length 2 maps, the hypercube relations are obvious. For the other faces, the length 2 relations are proven similarly to the proof of~\eqref{eq:2-handle-map-relation-with-3-handles}.

It remains to prove the length 3 relations for the diagrams in ~\eqref{eq:hyperbox-2-handle-bottom-right}. To verify these, we note that each summand in the length 3 relation is a composition of a sum of quadrilateral maps followed by a  3-handle map. We claim that each summand vanishes identically. To see this, consider the length 2 map along the top face of the left cube of~\eqref{eq:hyperbox-2-handle-bottom-right}. This map is the sum of two quadrilateral maps, which take place on the diagrams $(\Sigma, \as'_H,\as',\as,\as')\#(\bar \Sigma,\bar \bs_H, \bar \bs, \bar \bs, \bar \as')$ and $(\Sigma, \as'_H,\as_H,\as,\as')\#(\bar \Sigma,\bar \bs_H, \bar \bs_H, \bar \bs, \bar \as') $. In particular, both diagrams are algebraically rigid and admissible multi-stabilizations. By our result on stabilizations in Proposition~\ref{prop:multi-stabilization-counts}, we see that 
\[
F_3^{\a_H',\a_H}\circ h_{\a \bar \b\to \a'\bar \b\to \a_H'\bar \b_H}^{\a' \bar \a'}=h_{\bar \b \to \bar \b\to \bar \b_H}^{\bar \a'}\circ F_3^{\a, \a'},
\]
however $h_{\bar \b \to \bar \b\to \bar \b_H}^{\bar \a'}=0$ by the small translate theorem for quadrilaterals \cite{HHSZExact}*{Proposition~11.5}, since $\bar \bs$ is repeated twice in the diagram. The same argument works for all of the other summands, verifying the hypercube relations for the diagrams in~\eqref{eq:hyperbox-2-handle-bottom-right}.

One now constructs hypercubes for the bottom two faces of~\eqref{eq:hyperbox-2-handles}. The construction of these hyperboxes is similar to the ones constructed above, and we leave the details to the reader.

There is one final hyperbox to construct, corresponding to the lowest level of Figure~\ref{def:transition-map-elementary-handleslide}. This hyperbox is constructed easily by pairing hypercubes of attaching curves and rearranging the result slightly. We leave the details to the reader. 
\end{proof}

\subsection{The composition law}

In this section, we prove the composition law for cobordisms with only 2-handles.

\begin{prop}
\label{prop:composition-law}
 Suppose that $\bL_1$ and $\bL_2$ are two framed links in $Y$, and $\frs_1\in \Spin^c(W(Y,\bL_1))$ and $\frs_2\in \Spin^c(W(Y(\bL_1),\bL_2))$ are two self-conjugate $\Spin^c$ structures. Then
 \[
\CFI(W(Y(\bL_1),\bL_2),\frs_2)\circ \CFI(W(Y,\bL_1),\frs_1)\simeq \CFI(W(Y,\bL_1\cup \bL_2),\frS(\frs_1,\frs_2))
 \]
 where $\frS(\frs_1,\frs_2)$ is the set of $\Spin^c$ structures on $W(Y,\bL_1\cup \bL_2)$ which restrict to $\frs_1$ and $\frs_2$.
\end{prop}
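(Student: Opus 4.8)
The plan is to prove the composition law by stacking the two hyperboxes from~\eqref{eq:hyperbox-2-handles} --- one for $\bL_1$, one for $\bL_2$ --- and showing that the compressed stack agrees up to $\bF[U,Q]/Q^2$-equivariant chain homotopy with the single hyperbox defining $\CFI(W(Y,\bL_1\cup\bL_2),\frS(\frs_1,\frs_2))$. Concretely, I would first choose a Heegaard quadruple $(\Sigma,\as'',\as',\as,\bs)$ which is simultaneously subordinate to a bouquet for $\bL_1$ (via the sub-triple $(\Sigma,\as',\as,\bs)$), to a bouquet for $\bL_2$ (via $(\Sigma,\as'',\as',\bs)$), and to a bouquet for $\bL_1\cup\bL_2$ (via $(\Sigma,\as'',\as,\bs)$); such a quadruple exists by the usual arrangement of attaching curves for a surgery presentation, exactly as in the proof of the composition law for ordinary 2-handle maps in \cite{OSTriangles}. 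By Proposition~\ref{prop:2-handles-handle-slides}, the cobordism maps $\CFI(W(Y,\bL_i),\frs_i)$ are independent (up to the appropriate chain homotopy) of the choice of bouquet and of the doubling arcs, so it suffices to compute all three cobordism maps using compatible sub-diagrams of this one quadruple together with a single coherent choice of doubling arcs $\Ds$, $\Ds'$, $\Ds''$ on the three relevant doubled surfaces.

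Next I would assemble the relevant \emph{three}-dimensional hyperbox whose two vertical slabs are the hyperboxes~\eqref{eq:hyperbox-2-handles} for $\bL_1$ and for $\bL_2$, and whose compression along the stacking direction yields the hyperbox for $\bL_1\cup\bL_2$. Each horizontal level of~\eqref{eq:hyperbox-2-handles} must be promoted to a level of this larger hyperbox. The levels involving only $1$-handle or $3$-handle maps (the top and bottom) are promoted by stacking hypercubes of stabilization, as in \cite{HHSZExact}*{Section~14}; this is routine. The interior levels --- which are pairings of hypercubes of attaching curves --- are promoted by building the corresponding hyperboxes of alpha and beta attaching curves: for the alpha side one needs hypercubes realizing the compatibility of the top-degree generators $\Theta_{\a',\a}$, $\Theta_{\a'',\a'}$, $\Theta_{\a'',\a}$ together with the diagonal chain relating the two ways of composing them (this is exactly the standard ``filling'' of a hypercube of handleslide-equivalent attaching curves, cf. \cite{MOIntegerSurgery}*{Lemma~8.6} and the construction in~\eqref{eq:top-degree-generator-hypercube}); on the beta side one needs the analogous filled hypercube of $\Ds$, $\Ds'$, $\Ds''$ curves, and the more intricate hypercube built from the $\lambda$-chains of~\eqref{eq:second-auxiliary-hypercube-2-handle}, whose fillability follows from Lemma~\ref{lem:lambda-cube} applied to the relevant three-term configurations. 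The key algebraic point, exactly parallel to Lemma~\ref{lem:iota-maps-from-hyperboxes} and its use throughout Section~\ref{sec:relate-expanded}, is that once all these hyperboxes are constructed and paired, the hypercube relations of the total $3$-dimensional object encode precisely the identity $\CFI(W_2,\frs_2)\circ\CFI(W_1,\frs_1)\simeq\CFI(W_{12},\frS(\frs_1,\frs_2))$ after compression.

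The main obstacle I anticipate is the analogue of equation~\eqref{eq:2-handle-map-relation-with-3-handles} and the vanishing arguments of Proposition~\ref{prop:2-handles-handle-slides} in the presence of \emph{two} stacked sets of triangle/quadrilateral maps: one must verify that the various length-$2$ and length-$3$ hypercube relations in the bottom portion (where $3$-handle maps $F_3^{\a,\a}$, $F_3^{\a',\a'}$, $F_3^{\a'',\a''}$ interact with pentagon and quadrilateral maps on the doubled diagrams) actually hold, and in particular that certain composite polygon-counting maps vanish. These vanishings should follow, as in the excerpt, from the small-translate theorems for holomorphic polygons \cite{HHSZExact}*{Propositions~11.1 and 11.5} (a curve set appears twice, forcing a Maslov-index obstruction) combined with the stabilization/destabilization formula of Proposition~\ref{prop:multi-stabilization-counts} (which removes the connected-sum neck so that the offending factor carries a $\theta^+_{\g_0,\g_0}$-type tensor factor killed by the $3$-handle map). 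A secondary technical point is bookkeeping of $\Spin^c$ structures: one must check that summing over $\frs\in\frS(\frs_1,\frs_2)$ on the composite hyperbox matches the double sum over pairs $(\frs_1,\frs_2)$ coming from the stacked hyperboxes, which is the standard composition law for $\Spin^c$ structures on a quadruple and introduces no difficulty beyond the usual finiteness/admissibility caveat (work over $\bF[[U]]$ if $\frS$ is infinite). Once these pieces are in place, compressing the $3$-dimensional hyperbox in the two possible orders and invoking that re-ordering the compression axes yields a homotopic hypercube (Section~\ref{sec:hypercubes}) completes the proof.
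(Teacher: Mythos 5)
Your proposal follows essentially the same route as the paper: a Heegaard quadruple $(\Sigma,\as'',\as',\as,\bs)$ simultaneously subordinate to bouquets for $\bL_1$, $\bL_2$ and $\bL_1\cup\bL_2$, the Ozsv\'ath--Szab\'o model hypercube of alpha attaching curves, and a $3$-dimensional hyperbox (built level-by-level from the doubling model, with the technical vanishings supplied by Proposition~\ref{prop:multi-stabilization-counts} and the small-translate theorems) whose compressed hypercube relations encode $\CFI(W_2,\frs_2)\circ\CFI(W_1,\frs_1)\simeq\CFI(W_{12},\frS(\frs_1,\frs_2))$. The only cosmetic difference is framing: the paper's total object is a square of $\CFI$-complexes with edges $F_{W_1}$, $F_{W_2}$, $F_{W_{12}}$ and $\id$ (so the $\bL_1\cup\bL_2$ map appears as its own face rather than arising as the compression of the stacked $\bL_1$ and $\bL_2$ slabs), but your concluding formulation in terms of the hypercube relations of the total object is exactly what the paper verifies.
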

\begin{proof}
The proof of Proposition~\ref{prop:composition-law} is to construct a 3-dimensional hypercube with the following properties:
\begin{enumerate}
\item One direction corresponds to the involution.
\item One face corresponds to the cobordism map $\CFI(W(Y,\bL_1),\frs_1)$.
\item One face corresponds to the cobordism map $\CFI(W(Y(\bL_1),\bL_2), \frs_2)$.
\item One face corresponds to the identity map from $\CFI(Y)$ to itself.
\item One face corresponds to the map $\CFI(W(Y,\bL_1\cup \bL_2), \frS(\frs_1,\frs_2))$. 
\end{enumerate}
If we view the 3-dimensional hypercube as forming a 2-dimensional hypercube involving the $\CFI$ complexes, then the corresponding shape is as follows:
\[
\begin{tikzcd}[
labels=description,
row sep=1cm,
column sep=3cm,
fill opacity=.7,
text opacity=1
]
\CFI(Y)
	\ar[r, "F_{W(Y,\bL_1),\frs_1}"]
	\ar[dr,"F_{W(Y,\bL_1\cup \bL_2),\frS(\frs_1,\frs_2)}"]
	\ar[drr,dashed]
& 
\CFI(Y(\bL_1))
	\ar[dr, "F_{W(Y(\bL_1),\bL_2),\frs_2}"]
&
\\
&\CFI(Y(\bL_1,\bL_2)) 
	\ar[r, "\id"]
&
\CFI(Y(\bL_1,\bL_2))
\end{tikzcd}
\]

As a first step, pick a Heegaard quadruple $(\Sigma,\as'',\as',\as,\bs)$ satisfying the following:
\begin{enumerate}
\item $(\Sigma,\as',\as,\bs)$ is subordinate to a bouquet of the link $\bL_1\subset Y$.
\item $(\Sigma,\as'',\as',\bs)$ is subordinate to a bouquet of the link $\bL_2\subset Y(\bL_1)$.
\item $(\Sigma,\as'',\as,\bs)$ is subordinate to a bouquet of the link $\bL_1\cup \bL_2\subset Y$.
\end{enumerate}
Let $\Theta_{\a',\a}$, $\Theta_{\a'',\a'}$ and $\Theta_{\a'',\a}$ be the canonical generators. As in Ozsv\'{a}th and Szab\'{o}'s original proof for $\HF^-$, an easy model computation implies that the following diagram is a hypercube of alpha attaching curves:
\begin{equation}
\begin{tikzcd}[
labels=description,
row sep=1cm,
column sep=3cm,
fill opacity=.7,
text opacity=1
]
 \as
 \ar[r, "\Theta_{\a',\a}"]
 \ar[d, "\Theta_{\a'', \a}"]
 &
 \as'
 \ar[d, "\Theta_{\a'',\a'}"]
 \\
\as''
	\ar[r, "1"]
&
 \as''
 \end{tikzcd}
 \label{eq:hypercube-composition-law-non-inv}
\end{equation}
Pairing with the 0-dimensional hypercube $\bs$ and restricting to the $\Spin^c$ structures in the statement gives the non-involutive version of Proposition~\ref{prop:composition-law}. Note that we can stabilize~\eqref{eq:hypercube-composition-law-non-inv} to obtain the following hypercube of attaching curves on $\Sigma\# \bar \Sigma$:
\begin{equation}
\begin{tikzcd}[
labels=description,
row sep=1cm,
column sep=3cm,
fill opacity=.7,
text opacity=1
]
 \as\bar \bs
 \ar[r, "\Theta_{\a'\bar \b,\a\bar \b}"]
 \ar[d, "\Theta_{\a''\bar \b, \a\bar \b}"]
 &
 \as'\bar\bs
 \ar[d, "\Theta_{\a''\bar \b,\a'\bar \b}"]
 \\
\as''\bar \bs
	\ar[r, "1"]
&
 \as''\bar \bs
 \end{tikzcd}
 \label{eq:hypercube-composition-law-non-inv-stabilized}
\end{equation}
Our hyperbox for Proposition~\ref{prop:composition-law} will be of size $(1,1,6)$, which we organize into 6 hypercubes $\scC_1,\dots, \scC_6$, which we stack to obtain a hyperbox. The cubes $\scC_1$ and $\scC_2$ are shown in Figure~\ref{fig:composition-law-C1C2}.
\begin{figure}[ht!]
\[
\begin{tikzcd}[
	column sep={2cm,between origins},
	row sep=1.2cm,
	labels=description,
	fill opacity=.7,
	text opacity=1,
execute at end picture={
	\foreach \Nombre in  {A,B,C,D}
  	{\coordinate (\Nombre) at (\Nombre.center);}
	\fill[opacity=0.1] 
  (A) -- (B) -- (C) -- (D) -- cycle;}	
	]
\CF(\as,\bs)
	\ar[dr, "f_{\a\to \a''}^{\b}"]
	\ar[ddd, "F_1^{\bar \b, \bar \b}"]
	\ar[rr, "f_{\a\to \a'}^{\b}"]
&[.7 cm]
&\CF(\as',\bs)
	\ar[rd, "f_{\a\to \a''}^{\b}"]
	\ar[ddd,"F_1^{\bar \b, \bar \b}"]
&[.7 cm]
\\
&[.7 cm]
\CF(\as'',\bs)
 	\ar[rr,line width=2mm,dash,color=white,opacity=.7]
	\ar[rr, "\id"]
&\,
&[.7 cm]
\CF (\as'', \bs)
	\ar[ddd,"F_1^{\bar \b, \bar \b}"]
 	\ar[from=ulll,line width=2mm,dash,color=white,opacity=.7]
	\ar[from=ulll,dashed, " h_{\a\to \a'\to \a''}^{\b}" ]
\\
\\
|[alias=A]|\CF(\as\bar \bs, \bs \bar \bs)
	\ar[dr, "f_{\a \bar \b\to \a'' \bar \b}^{\b \bar \b}"]
	\ar[rr, "f_{\a \bar \b\to \a'\bar \b}^{\b \bar \b}"]
	\ar[drrr,dashed, "h_{\a\bar \b \to \a'\bar \b\to \a''\bar \b}^{\b\bar\b}"]
&[.7 cm]\,&
|[alias=B]|\CF(\as' \bar \bs, \bs \bar \bs)
	\ar[dr, "f_{\a'\bar \b\to\a'' \bar \b}^{\b \bar \b}"]
\\
& [.7 cm]
|[alias=D]|\CF(\as''\bar \bs, \bs \bar \bs)
 	\ar[from=uuu,line width=2mm,dash,color=white,opacity=.7]
	\ar[from=uuu,"F_1^{\bar \b, \bar \b}"]
	\ar[rr, "\id"]
&
&[.7 cm] 
|[alias=C]|\CF(\as''\bar \bs, \bs \bar \bs)
\end{tikzcd}
\]
\[
\begin{tikzcd}[
	column sep={2cm,between origins},
	row sep=1.2cm,
	labels=description,
	fill opacity=.7,
	text opacity=1,
	execute at end picture={
	\foreach \Nombre in  {A,B,C,D}
  	{\coordinate (\Nombre) at (\Nombre.center);}
	\fill[opacity=0.1] 
  (A) -- (B) -- (C) -- (D) -- cycle;}		
	]
|[alias=A]|\CF(\as\bar \bs,\bs \bar \bs)
	\ar[dr, "f_{\a\bar\b\to \a'' \bar \b}^{\b \bar \b}"]
	\ar[ddd, "f_{\a \bar \b}^{\b \bar \b\to \Dt}"]
	\ar[rr, "f_{\a\bar \b\to \a'\bar \b}^{\b\bar \b}"]
	\ar[dddrr,dashed]
&[.7 cm]
&|[alias=B]|\CF(\as'\bar \bs,\bs\bar \bs)
	\ar[rd, "f_{\a'\bar \b\to \a''\bar \b}^{\b\bar \b}"]
	\ar[ddd,"f_{\a'\bar \b}^{\b \bar \b\to \Dt}"]
	\ar[ddddr,dashed]
&[.7 cm]
\\
&[.7 cm]
|[alias=D]|\CF(\as''\bar \bs,\bs\bar \bs)
 	\ar[rr,line width=2mm,dash,color=white,opacity=.7]
	\ar[rr, "\id"]
&\,
&[.7 cm]
|[alias=C]|\CF (\as''\bar \bs, \bs\bar \bs)
	\ar[ddd,"f_{\a''\bar \b}^{\b \bar \b\to \Dt}"]
 	\ar[from=ulll,line width=2mm,dash,color=white,opacity=.7]
	\ar[from=ulll,dashed, " h_{\a\bar \b\to\a'\bar \b\to \a''\bar \b}^{\b\bar \b}" ]
\\
\\
\CF(\as\bar \bs, \Ds)
	\ar[dr, "f_{\a \bar \b\to \a''\bar \b}^{\Dt}"]
	\ar[rr, "f_{\a \bar \b\to \a'\bar \b}^{\Dt}"]
	\ar[drrr,dashed, "h_{\a\bar \b \to \a'\bar \b\to \a''\bar \b}^{\b\bar\b}"]
&[.7 cm]\,&
\CF(\as' \bar \bs,\Ds)
	\ar[dr, "f_{\a'\bar \b\to\a'' \bar \b}^{\Dt}"]
\\
& [.7 cm]
\CF(\as''\bar \bs, \Ds)
 	\ar[from=uuu,line width=2mm,dash,color=white,opacity=.7]
	\ar[from=uuu,"f_{\a \bar \b}^{\b\bar \b\to \Dt}"]
	\ar[rr, "\id"]
 	\ar[from=uuuul,line width=2mm,dash,color=white,opacity=.7]
 	\ar[from=uuuul,dashed]
&
&[.7 cm] 
\CF(\as''\bar \bs, \Ds)
\end{tikzcd}
\]
\caption{The hypercubes $\scC_1$ and $\scC_2$. The gray faces are stacked.}
\label{fig:composition-law-C1C2}
\end{figure}

The second hypercube in Figure~\ref{fig:composition-law-C1C2} also has a length 3 map which is not drawn. In the figure, the top hypercube is a hypercube of stabilization, and the bottom hypercube is obtained by pairing the hypercube in~\eqref{eq:hypercube-composition-law-non-inv-stabilized} with the 1-dimensional hypercube 
\[
\begin{tikzcd}[column sep=2cm, labels=description]
\bs \bar \bs\ar[r, "\Theta_{\b \bar \b, \Dt}"]& \Ds.
\end{tikzcd}
\]

We now move to the construction of the hypercube $\scC_3$ and $\scC_4$, starting by constructing some auxiliary hypercubes. In Figure~\ref{fig:auxiliary-beta-hypercube-2-handle-map}, we construct the 3-dimensional analog of the hypercube of beta-attaching curves in \eqref{eq:second-auxiliary-hypercube-2-handle}. We next consider the hypercube of chain complexes obtained by pairing the cube in~\eqref{eq:hypercube-composition-law-non-inv-stabilized} with the 1-dimensional cube 
\[
\begin{tikzcd}[column sep=2cm, labels=description]
\Ds\ar[r, "\Theta_{\Dt, \a''\bar \a''}"]& \as'' \bar \as''.
\end{tikzcd}
\]
\begin{figure} 
\begin{tikzcd}[
	column sep={1.3cm,between origins},
	row sep=.8cm,
	labels=description,
	fill opacity=.7,
	text opacity=1,
	]
\Ds
	\ar[dr]
	\ar[ddd]
	\ar[rr]
	\ar[dddrr,dashed]
	\ar[ddddrrr,dotted]
&[.7 cm]
& \as' \bar \as'
	\ar[rd]
	\ar[ddd]
&[.7 cm]
\\
&[.7 cm]
\as'' \bar \as''
 	\ar[rr,line width=2mm,dash,color=white,opacity=.7]
	\ar[rr]
&\,
&[.7 cm]
\as' \bar \as''
	\ar[ddd]
 	\ar[from=ulll,line width=2mm,dash,color=white,opacity=.7]
	\ar[from=ulll,dashed]
\\
\\
\as \bar \as
	\ar[dr]
	\ar[rr]
&[.7 cm]\,&
\as \bar \as'
	\ar[dr]
\\
& [.7 cm]
\as \bar \as''
 	\ar[from=uuu,line width=2mm,dash,color=white,opacity=.7]
	\ar[from=uuu]
	\ar[rr, "1"]
 	\ar[from=uuuul,line width=2mm,dash,color=white,opacity=.7]
 	\ar[from=uuuul,dashed]
&
&[.7 cm] 
\as \bar \as''
\end{tikzcd}
\caption{An auxiliary hypercube of $\beta$-attaching curves used in the construction of $\scC_4$.}
\label{fig:auxiliary-beta-hypercube-2-handle-map}
\end{figure}
\noindent We reorganize the resulting cube slightly, to obtain the the diagram shown in Figure~\ref{fig:C3}. Although the diagram in the figure is not technically a hypercube, since the bottom face is subdivided, if we compress the bottom face, we obtain a hypercube of chain complexes, which we call $\scC_3$.
\begin{figure}[ht]
\[
\begin{tikzcd}[
	column sep={1.8cm,between origins},
	row sep={1.5cm,between origins},
	labels=description,
	fill opacity=.7,
	text opacity=1,
	]
\CF(\as\bar \bs,\Ds)
	\ar[ddd, equal]
	\ar[rrrr, "f_{\a\bar \b\to \a'\bar \b}^{\Dt}"]
	\ar[ddrr, "f_{\a\bar \b\to \a'' \bar \b}^{\Dt}"]
	\ar[dddddrrrrrr,dotted]
&&&&
\CF(\as'\bar \bs, \Ds)
	\ar[ddrr, "f_{\a'\bar \b \to \a''\bar \b}^{\Dt}"]
	\ar[ddd,equal]
	\ar[dddddrr,dashed, "h_{\a'\bar \b\to \a''\bar \b}^{\Dt\to \a''\bar \a''}" ]
&&\,
\\
\\
&&\CF(\as''\bar \bs,\Ds)
	\ar[rrrr,equal]
&&&&
\CF(\as'' \bar \bs, \Ds)
 	\ar[from=uullllll,line width=2mm,dash,color=white,opacity=.7]
	\ar[from=uullllll, sloped,"h_{\a\bar \b\to \a'\bar \b \to \a'' \bar \b}^{\Dt}",dashed]
\\
\CF(\as\bar \bs, \Ds)
	\ar[dr, "f_{\a \bar \b}^{\Dt\to \a''\bar \a''}"]
	\ar[rrrr, "f_{\a \bar \b\to \a'\bar \b}^{\Dt}"]
	\ar[drrrrr,dashed, sloped, "h_{\a\bar \b \to \a'\bar \b}^{\Dt\to \a'' \bar \a''}", pos=.7]
&&&&
\CF(\as'\bar \bs,\Ds)
	\ar[dr, "f_{\a'\bar \b}^{\Dt\to \a''\bar\a''}"]
\\
&
\CF(\as\bar \bs, \as'' \bar \as'')
	\ar[rr, equal]
	\ar[dr, "f_{\a\bar \b\to \a''\bar \b}^{\a''\bar \a''}"]
&& 
\CF(\as\bar \bs, \as''\bar \as'')
	\ar[rr, "f_{\a\bar \b\to \a'\bar \b}^{\a''\bar \a''}"]
	\ar[dr, "f_{\a \bar \b\to \a''\bar \b}^{\a''\bar \a''}"]
	\ar[drrr,dashed, "h_{\a \bar \b\to \a'\bar \b\to \a''\bar \b}^{\a''\bar \a''}"]
&&
\CF(\as'\bar \bs, \as'' \bar \as'')
	\ar[dr, "f_{\a'\bar \b\to \a''\bar \b}^{\a''\bar \a''}"]
\\
\,&&
\CF(\as'' \bar \bs, \as'' \bar \as'')
	\ar[rr,equal]
 	\ar[from=uuu,line width=2mm,dash,color=white,opacity=.7]
	\ar[from=uuu,"f_{\a''\bar \b}^{\Dt\to \a''\bar \a''}", pos=.6]
 	\ar[from=uuuuull,line width=2mm,dash,color=white,opacity=.7]
	\ar[from=uuuuull,"h_{\a\bar \b\to\a''\bar \b}^{\Dt\to \a''\bar \a''}",dashed]
&&
\CF(\as'' \bar \bs, \as'' \bar \as'')
	\ar[rr,equal]
&&
\CF(\as'' \bar \bs, \as'' \bar \as'')
 	\ar[from=uuu,line width=2mm,dash,color=white,opacity=.7]
	\ar[from=uuu,"f_{\a''\bar \b}^{\Dt\to \a''\bar \a''}"]
\end{tikzcd}
\]
\caption{The diagram $\scC_3$.}
\label{fig:C3}
\end{figure}

Next, we pair the top face of Figure~\ref{fig:auxiliary-beta-hypercube-2-handle-map} with the cube
\[
\begin{tikzcd}[labels=description, column sep=2cm]
\as\bar \bs \ar[r,"\Theta_{\a\bar \b, \a'\bar \b}"] &\as' \bar \bs,
\end{tikzcd}
\]
and modify the resulting hypercube to obtain the back-most portion of the diagram shown on the top of Figure~\ref{fig:C456}. This diagram is not technically a hypercube, since some of the faces are subdivided. Nonetheless, we can form a hypercube $\scC_4$ as follows. We view the diagram as being obtained by stacking two box-like diagrams (one on the back, and one on the front). Some of the faces of these boxes are subdivided, so we compress these faces. This gives two cube-diagrams. The hypercube relations for these cubes are straightforward to verify. For the back-most cube, they follow from the hypercube relations for the original cube described above. For the front cube, the hypercube relations are straightforward to verify directly, since all of the triangle maps involve stabilizations of Heegaard triples for small isotopies of the attaching curves. Hence, Proposition~\ref{prop:multi-stabilization-counts} may be used to destabilize the relevant counts, and then the small translate theorem for triangles \cite{HHSZExact}*{Proposition~11.1} may be used to identify the destabilized maps with nearest point maps.

We now describe the construction of the hypercube $\scC_5$. We first pair the 0-dimensional hypercube $\as \bar \bs$ with the hypercube in Figure~\ref{fig:auxiliary-beta-hypercube-2-handle-map}. Then we rearrange, and obtain the diagram shown in the middle of Figure~\ref{fig:C456}.

There is a final hyperbox involving 3-handles, which we compress to obtain the hypercube $\scC_6$, shown on the bottom of Figure~\ref{fig:C456}. Stacking and compressing the hypercubes $\scC_1,\dots, \scC_6$ proves the statement.
\end{proof}


\begin{figure}[p]
 \[
 \scC_4:
\begin{tikzcd}[
	column sep={1.8cm,between origins},
	row sep=.5cm,
	labels=description,
	fill opacity=.7,
	text opacity=1,
	execute at end picture={
	\foreach \Number in  {A,B}
		      {\coordinate (\Number) at (\Number.center);}
	\filldraw[black] (A) node[rectangle,fill=white,fill opacity=.9, text opacity=1]{$\CF(\as \bar \bs, \as''\bar \as'') $};
	\filldraw[black] (B) node[rectangle,fill=white,fill opacity=.9, text opacity=1]{$ \CF(\as''\bar \bs,\as''\bar \as'')$};
		}
	]
\CF(\as\bar \bs,\Ds)
	\ar[ddd, equal]
	\ar[rrrr, ]
	\ar[dr]
	\ar[ddddrrrrr,dotted]
	\ar[dddrrrr,dashed]
&&&&
\CF(\as'\bar \bs, \Ds)
	\ar[dr,]
	\ar[ddd,]
	\ar[ddddr,dashed,  ]
&&\,
\\
&|[alias=A]| \phantom{\CF(\as \bar \bs, \as''\bar \as'')}
	\ar[dr]
	\ar[rr,equal]
&&
\CF(\as \bar \bs, \as'' \bar \as'')
	\ar[rr]
	\ar[ddd]
	\ar[dddrr,dashed]
&&
\CF(\as'\bar \bs, \as'' \bar \as'')
	\ar[dr]
	\ar[ddd]
	\ar[from=ulllll,dashed]
\\
&&|[alias=B]|
\phantom{\CF(\as''\bar \bs,\as''\bar \as'')}
	\ar[rrrr,equal,crossing over]
&&&&
\CF(\as'' \bar \bs, \as''\bar \as'')
	\ar[from=ulllll,dashed,crossing over]
\\
\CF(\as\bar \bs, \Ds)
	\ar[dr]
	\ar[rr]
	\ar[drrr,dashed]
&&
\CF(\as \bar \bs, \as'\bar \as')
	\ar[rr]
	\ar[dr]
	\ar[drrr,dashed]
&&
\CF(\as'\bar \bs,\as'\bar \as')
	\ar[dr]
\\
&
\CF(\as\bar \bs, \as'' \bar \as'')
	\ar[rr]
	\ar[dr]
	\ar[from=uuu, crossing over, equal]
&& 
\CF(\as\bar \bs, \as'\bar \as'')
	\ar[rr]
 	\ar[from=uuu,line width=2mm,dash,color=white,opacity=1,shorten <= 1.5cm]
	\ar[from=uuu,shorten <= 1.5cm]
&&
\CF(\as'\bar \bs, \as' \bar \as'')
	\ar[dr, "F_3^{\a',\a'}"]
\\
\,&&
\CF(\as'' \bar \bs, \as'' \bar \as'')
	\ar[rrrr,"F_3^{\a'',\a''}"]
 	\ar[from=uuu,line width=2mm,dash,color=white,opacity=.7]
	\ar[from=uuu,equal]
&&
&&
\CF(\bar \bs, \bar \as'')
 	\ar[from=uuu,line width=2mm,dash,color=white,opacity=.7]
	\ar[from=uuu,"F_3^{\a'',\a''}"]
\end{tikzcd}
\]
\[
\scC_5:
\begin{tikzcd}[
	column sep={1.8cm,between origins},
	row sep=.5cm,
	labels=description,
	fill opacity=.7,
	text opacity=1,
	execute at end picture={
		  \foreach \Number in  {A,B}
		      {\coordinate (\Number) at (\Number.center);}
		\filldraw[black] (A) node[rectangle,fill=white,fill opacity=.9, text opacity=1]{$\CF(\as \bar \bs, \as''\bar \as'') $};
		\filldraw[black] (B) node[rectangle,fill=white,fill opacity=.9, text opacity=1]{$\CF(\as''\bar \bs,\as''\bar \as'')$};
		}
	]
\CF(\as\bar \bs,\Ds)
	\ar[ddd]
	\ar[rr ]
	\ar[dr]
	\ar[dddrr,dashed]
	\ar[ddddrrr,dotted]
&&
\CF(\as \bar \bs, \as'\bar \as')
	\ar[dr]
	\ar[rr]
	\ar[ddd,shift left]
	\ar[ddddr,dashed]
&&
\CF(\as'\bar \bs, \as'\bar \as')
	\ar[dr,]
	\ar[ddd,"F_3^{\a',\a'}"]
&&\,
\\
&|[alias=A]|\phantom{\CF(\as \bar \bs, \as''\bar \as'')}
	\ar[dr]
	\ar[rr,crossing over]
	\ar[ddd]
&&
\CF(\as \bar \bs, \as' \bar \as'')
	\ar[rr,crossing over]
	\ar[ddd]
	\ar[from=ulll,dashed,crossing over]
&&
\CF(\as'\bar \bs, \as' \bar \as'')
	\ar[dr, "F_3^{\a',\a'}"]
	\ar[ddd, "F_3^{\a',\a'}"]
	\ar[from=ulll,dashed,crossing over]
\\
&&
|[alias=B]|\phantom{\CF(\as''\bar \bs,\as''\bar \as'')}
	\ar[rrrr,crossing over,"F_3^{\a'',\a''}", pos=.3]
&&&&
\CF(\bar \bs, \bar \as'')
\\
\CF(\as\bar \bs, \as\bar \as)
	\ar[dr]
	\ar[rr]
	\ar[drrr,dashed]
&&
\CF(\as \bar \bs, \as\bar \as')
	\ar[rr, "F_3^{\a,\a}", pos=.7]
	\ar[dr]
&&
\CF(\bar \bs,\bar \as')
	\ar[dr]
\\
&
\CF(\as\bar \bs, \as \bar \as'')
	\ar[rr,equal]
	\ar[dr, "F_3^{\a,\a}"]
	\ar[from=uuu, crossing over]
	\ar[from=uuuul,crossing over,dashed]
&& 
\CF(\as\bar \bs, \as\bar \as'')
	\ar[rr, "F_3^{\a,\a}"]
 	\ar[from=uuu,line width=2mm,dash,color=white,opacity=1, shorten <= 2cm]
 	 \ar[from=uuu, shorten <= 2cm]
&&
\CF(\bar \bs,\bar \as'')
	\ar[dr, equal]
\\
\,&&
\CF(\bar \bs,\bar \as'')
	\ar[rrrr,equal]
 	\ar[from=uuu,line width=2mm,dash,color=white,opacity=.7]
	\ar[from=uuu,"F_3^{\a'',\a''}", pos=.85]
&&
&&
\CF(\bar \bs, \bar \as'')
 	\ar[from=uuu,line width=2mm,dash,color=white,opacity=.7]
	\ar[from=uuu,equal]
\end{tikzcd}
\]
\[
\scC_6:
\begin{tikzcd}[
	column sep={1.8cm,between origins},
	row sep=.5cm,
	labels=description,
	fill opacity=.7,
	text opacity=1,
		execute at end picture={
		  \foreach \Number in  {A}
		      {\coordinate (\Number) at (\Number.center);}
		\filldraw[black] (A) node[rectangle,fill=white,fill opacity=.9, text opacity=1]{$\CF(\bar \bs,\bar \as'')$};
		}]
\CF(\as\bar \bs,\as \bar \as)
	\ar[ddd, "F_3^{\a, \a}"]
	\ar[rr]
	\ar[dr]
&&
\CF(\as \bar \bs, \as\bar \as')
	\ar[dr]
	\ar[rr, "F_3^{\a,\a}"]
	\ar[ddd, "F_3^{\a,\a}",shift left]
&&
\CF(\bar \bs, \bar \as')
	\ar[dr,]
	\ar[ddd,equal]
&&\,
\\
& \CF(\as \bar \bs, \as\bar \as'')
	\ar[dr, "F_3^{\a,\a}"]
	\ar[rr,crossing over, equal]
&&
\CF(\as \bar \bs, \as\bar \as'')
	\ar[rr, "F_3^{\a,\a}"]
	\ar[ddd, "F_3^{\a,\a}"]
 	\ar[from=ulll,line width=2mm,dash,color=white,opacity=.7]
	\ar[from=ulll,dashed]
&&
\CF(\bar \bs, \bar \as'')
	\ar[dr,equal]
	\ar[ddd,equal]
\\
&&
|[alias=A]|\phantom{\CF(\bar \bs,\bar \as'')}
	\ar[rrrr,crossing over,equal]
&&&&
\CF(\bar \bs, \bar \as'')
\\
\CF(\bar \bs,\bar \as)
	\ar[dr]
	\ar[rr]
	\ar[drrr,dashed]
&&
\CF(\bar \bs, \bar \as')
	\ar[rr, equal]
	\ar[dr]
&&
\CF(\bar \bs,\bar \as')
	\ar[dr]
\\
&
\CF(\bar \bs, \bar \as'')
	\ar[rr,equal]
	\ar[dr,equal]
	\ar[from=uuu,crossing over, "F_3^{\a,\a}"]
&& 
\CF(\bar \bs,\bar \as'')
	\ar[rr, equal]
 	\ar[from=uuu,line width=2mm,dash,color=white,opacity=1, shorten <= 1.7cm]
	\ar[from=uuu,shorten <= 1.7cm]
&&
\CF(\bar \bs,\bar \as'')
	\ar[dr, equal]
\\
\,&&
\CF(\bar \bs,\bar \as'')
	\ar[rrrr,equal]
 	\ar[from=uuu,line width=2mm,dash,color=white,opacity=.7,shift right=.3mm]
	\ar[from=uuu,equal, shift right=.3mm]
&&
&&
\CF(\bar \bs, \bar \as'')
 	\ar[from=uuu,line width=2mm,dash,color=white,opacity=.7]
	\ar[from=uuu,equal]
\end{tikzcd}
\]
\caption{The diagrams used to construct $\scC_4$, $\scC_5$ and $\scC_6$}
\label{fig:C456}
\end{figure}

\subsection{More on commutations of handles}

In this section, we prove that 1-handle and 3-handle maps may be commuted with 2-handle maps, when the corresponding topological handle manipulation is also possible:

\begin{prop}\label{prop:commute-1handles/2-handles}
 Suppose that $\bL$ is a framed 1-dimensional link in $Y$, and $\bS$ is a 0- or 2-dimensional sphere in $Y\setminus \bL$. Let $\frS\subset \Spin^c (W(Y,\bL))$ be a set of $\Spin^c$ structures which is closed under conjugation, and let $\frS'\subset \Spin^c (W(Y(\bS),\bL))$ be the corresponding set. Then
\[
\CFI(W(Y(\bL),\bS))\circ \CFI(W(Y,\bL),\frS)\simeq \CFI(W(Y(\bS),\bL),\frS')\circ \CFI(W(Y,\bS)).
\]
\end{prop}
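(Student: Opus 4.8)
\textbf{Proof proposal for Proposition~\ref{prop:commute-1handles/2-handles}.}

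The plan is to mimic the strategy used for the composition law (Proposition~\ref{prop:composition-law}) and for the analogous commutation results in the $1$- and $3$-handle section: construct a single hyperbox, one of whose axes is the involution direction, such that two of its facets realize the two compositions in the statement and the hypercube relations collapse to the claimed homotopy. Concretely, I would work with a Heegaard surface $\Sigma$ for $Y$ chosen so that the handle sphere $\bS$ is embedded in $\Sigma\setminus(\as\cup\bs\cup\as')$, where $(\Sigma,\as',\as,\bs)$ is a triple subordinate to a bouquet for $\bL$ (disjoint from $\bS$). Attaching the handle annulus $(A,\g_0,\g_0)$ along $\bS$ produces the stabilized triple $(\Sigma',\as'\g_0,\as\g_0,\bs\g_0)$, which is simultaneously subordinate to a bouquet for the image of $\bL$ in $Y(\bS)$. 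Thus both routes --- ``$2$-handle then $\bS$-handle'' and ``$\bS$-handle then $2$-handle'' --- are computed from the diagram~\eqref{eq:hyperbox-2-handles} for the $\bL$-map, before and after the stabilization/destabilization moves of~\eqref{eq:1-handles} (or~\eqref{eq:3-handles} when $\bS$ is $2$-dimensional). The overall object is therefore obtained by fusing the $6$-level hyperbox of~\eqref{eq:hyperbox-2-handles} with the $5$-level hyperbox of~\eqref{eq:1-handles}: each horizontal level of the $2$-handle hyperbox is extended by the ``annular'' $\g_0,\bar\g_0$ direction and the $B_\tau$/relative-homology machinery exactly as in Section~\ref{sec:1-and-3-handles}, while each vertical level of the $1$-handle hyperbox is extended by the $\as\to\as'$ direction that produces the $2$-handle map.

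The key steps, in order, would be: (1) Fix the compatible Heegaard data $(\Sigma',\as'\g_0,\as\g_0,\bs\g_0)$, doubling arcs $\ds\cup\ds_0$, the curve $\tau$, and almost complex structures, exactly as required by both constructions simultaneously; record that the genus of the extra annular region is $1$ and that its sub-diagrams have minimal generating sets (as in Figure~\ref{fig:4}), so Proposition~\ref{prop:multi-stabilization-counts} applies to every count involving the $\g_0\bar\g_0$ summand. (2) For each of the finitely many cube-faces appearing in the fused diagram, produce the required $3$-dimensional extension: the faces built by pairing hypercubes of attaching curves extend by the analogous paired hypercubes (possibly with an extra $\lambda$-direction, handled by Section~\ref{sec:hypercubes-rel-homology}); the faces that are ``hypercubes of stabilization'' for $1$-handles extend by larger hypercubes of stabilization; the faces involving $B_\tau$ and the chain $\eta$ of~\eqref{eq:1-handle-hypercube-involving-H1} extend using Lemma~\ref{lem:1-handle-bottom-hypercube} and the formal identity of Lemma~\ref{lem:formal-cube-1-handle}. (3) Verify that all length-$2$ and length-$3$ hypercube relations hold: the new relations are either routine pairings, or are of the shape $F_3^{\g_0,\g_0}\circ B_\tau\circ F_1^{\g_0,\g_0}=\id$ (Lemma~\ref{lem:formal-cube-1-handle}) combined with~\eqref{eq:2-handle-map-relation-with-3-handles}-type identities, all of which reduce via Proposition~\ref{prop:multi-stabilization-counts} and the small-translate theorems~\cite{HHSZExact}*{Propositions~11.1 and 11.5} to nearest-point computations, since whenever an attaching curve set is repeated the relevant polygon count vanishes. (4) Read off the collapsed $2$-dimensional hypercube of $\CFI$-complexes whose commuting square, up to the dashed length-$2$ homotopy, is precisely the assertion of the proposition. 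As in Section~\ref{sec:1-and-3-handles}, the $\Spin^c$ bookkeeping (the correspondence $\frS\leftrightarrow\frS'$ under the natural identification $\Spin^c(W(Y,\bL))\cong\Spin^c(W(Y(\bS),\bL))$ induced by the handle away from $\bL$) is immediate and is suppressed from the notation.

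The main obstacle I anticipate is organizational rather than conceptual: the fused hyperbox has many faces, and one must check the hypercube relations on each of the ``mixed'' cubes --- those that simultaneously involve a holomorphic-triangle (or quadrilateral) count for the $2$-handle move \emph{and} the annular $\g_0\bar\g_0$ tube with its $B_\tau$ twist. In each such cube the potential length-$3$ obstruction is a sum of compositions of quadrilateral maps with $3$-handle (or $1$-handle) maps, and the point is to see that every term either destabilizes (via Proposition~\ref{prop:multi-stabilization-counts}) to something with a $\theta^+_{\g_0,\g_0}$ tensor factor, hence dies under $F_3^{\g_0,\g_0}$, or is killed by a small-translate vanishing because some attaching-curve set repeats; a convenient shortcut, available because $\bS$ may be taken near the basepoint, is to invoke the explicit model computation of Lemma~\ref{lem:1-handles-simple}, which reduces the handle map to $\xs\mapsto\xs\times\theta^+$ (resp. $\xs\times\theta^-\mapsto\xs$) and makes the commutation essentially tautological. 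As the excerpt remarks for Lemma~\ref{lem:commute-1/3-handles}, either route --- the explicit model or the $3$-dimensional hyperbox --- works; I would present the hyperbox construction in parallel with that proof and delegate the repetitive verifications to the reader, noting only the one or two faces (the ones carrying $B_\tau$ next to a genuine $2$-handle triangle count) where the argument of step~(3) is spelled out in full.
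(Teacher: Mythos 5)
Your proposal matches the paper's proof: the paper likewise builds a three‑dimensional hyperbox (of size $(1,1,5)$, i.e.\ five stacked hypercubes $\scB_1,\dots,\scB_5$) fusing the levels of the $2$-handle hyperbox~\eqref{eq:hyperbox-2-handles} with the annular $\g_0\bar\g_0$/$B_\tau$ direction of~\eqref{eq:1-handles}, with the mixed faces handled exactly as you describe — hypercubes of stabilization, paired hypercubes of attaching curves with $\tau$ as a length‑$1$ morphism (as in Lemma~\ref{lem:1-handles-and-handleslides}), and vanishing of the problematic length‑$2$/length‑$3$ terms via Proposition~\ref{prop:multi-stabilization-counts} together with the small‑translate theorems. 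Four of the facets of the compressed hyperbox are the four cobordism maps, yielding the stated homotopy, just as in your step (4).
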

\begin{proof}
 The proof is similar to the proof of Proposition~\ref{prop:composition-law}. We will build a 3-dimensional hypercube, four of whose faces correspond to the four maps $\CFI(W(Y(\bL),\bS))$,  $\CFI(W(Y,\bL),\frS)$,  $\CFI(W(Y(\bS),\bL),\frS')$, and $\CFI(W(Y,\bS))$. The hypercube relations for the compression of this hyperbox translate exactly to the chain homotopy for the commutation described in the statement. 
 
 The hypercube realizing the commutation will be constructed as the compression of a hyperbox of size $(1,1,5)$, which we view as five hypercubes, stacked on top of each other. We write $\scB_1,\cdots \scB_5$ for these hypercubes.
 
The top-most hypercubes, $\scB_1$ and $\scB_2$, are shown in Figure~\ref{fig:B1B2}. They are constructed by stacking hypercubes constructed by pairing hypercubes of attaching curves, as well as hypercubes of stabilization.

\begin{figure}[H]
\[
\scB_1:\hspace{-.5cm}
  \begin{tikzcd}[
  	column sep={2.1cm,between origins},
  	row sep=1cm,
  	labels=description,
  	fill opacity=.7,
  	text opacity=1,
	execute at end picture={
	\foreach \Nombre in  {A,B,C,D}
  	{\coordinate (\Nombre) at (\Nombre.center);}
	\fill[opacity=0.1] 
  (A) -- (B) -- (C) -- (D) -- cycle;}	
  	]
  \CF(\as,\bs)
  	\ar[dr,"f_{\a\to \a'}^{\b}"]
  	\ar[ddd, "F_1^{\bar \b,\bar \b}"]
  	\ar[rr, "F_1^{\g_0,\g_0}"]
  &[.7 cm]
  &\CF(\as\g_0,\bs\g_0)
  	\ar[rd, "f_{\a\g_0\to \a'\g_0}^{\b\g_0}"]
  	\ar[rr,equal]
  	\ar[ddd, "F_1^{\bar\b\bar \g_0,\bar\b\bar \g_0}"]
  &[.7 cm]
  &
  \CF(\as\g_0,\bs \g_0)
  	\ar[dr, "f_{\a\g_0\to \a'\g_0}^{\b\g_0}"]
  	\ar[ddd,"F_1^{\bar \b \bar \g_0, \bar \b \bar \g_0}"]
  &[.7 cm]
  \\
  &[.7 cm]
\CF(\as',\bs)
   	\ar[rr,line width=2mm,dash,color=white,opacity=.7]
  	\ar[rr," F_1^{\g_0,\g_0}"]
  &\,
  &[.7 cm]
   \CF(\as'\g_0,\bs\g_0)
    \ar[rr,line width=2mm,dash,color=white,opacity=.7]
   	\ar[rr, equal]
  &&
\CF (\as'\g_0,\bs\g_0)
  	\ar[ddd,"F_1^{\bar \b \bar \g_0, \bar \b \bar \g_0}"]
  \\
  \\
|[alias=A]| \CF(\as \bar \bs, \bs \bar\bs)
  	\ar[dr, "f_{\a\bar \b\to \a'\bar \b}^{\b\bar \b}"]
  	\ar[rr, "F_1^{\g_0\bar \g_0,\g_0\bar \g_0}"]
  &[.7 cm]\,&
  \CF(\as \bar \bs \g_0\bar \g_0, \bs \bar \bs \g_0 \bar \g_0)
  	\ar[dr, "f_{\a\bar \b\g_0\bar \g_0\to \a'\bar \b \g_0\bar \g_0}^{\b \bar \b \g_0\bar \g_0}"]
  	\ar[rr,equal]
  &[.7 cm]\,
  &
|[alias=B]|  \CF(\as \bar \bs \g_0 \bar \g_0, \bs \bar \bs \g_0 \bar \g_0)
  	\ar[dr,"f_{\a\bar \b\g_0\bar \g_0\to \a'\bar \b \g_0\bar \g_0}^{\b \bar \b \g_0\bar \g_0}"]
  \\
  & [.7 cm]
|[alias=D]|  \CF(\as'\bar \bs, \bs \bar \bs)
   	\ar[from=uuu,line width=2mm,dash,color=white,opacity=.7]
  	\ar[from=uuu,"F_1^{\bar \b,\bar \b}"]
  	\ar[rr, "F_1^{\g_0\bar \g_0,\g_0\bar \g_0}"]
  &
  &[.7 cm] 
  \CF(\as'\bar \bs \g_0\bar \g_0, \bs \bar \bs \g_0\bar \g_0)
  	\ar[rr,equal]
 	\ar[from=uuu, crossing over, "F_1^{\bar \b \bar \g_0, \bar \b \bar \g_0}"]
  	&&
|[alias=C]| \CF(\as'\bar \bs\g_0\bar \g_0,\bs \bar \bs \g_0\bar \g_0)
  \end{tikzcd}
  \]
  \[
  \scB_2:\hspace{-.5cm}
  \begin{tikzcd}[
  	column sep={2.1cm,between origins},
  	row sep=1cm,
  	labels=description,
  	fill opacity=.7,
  	text opacity=1,
	execute at end picture={
	\foreach \Nombre in  {A,B,C,D}
  	{\coordinate (\Nombre) at (\Nombre.center);}
	\fill[opacity=0.1] 
  (A) -- (B) -- (C) -- (D) -- cycle;}	
  	]
 |[alias=A]| \CF(\as\bar \bs,\bs\bar \bs)
  	\ar[dr]
  	\ar[ddd]
  	\ar[rr, "F_1^{\g_0\bar \g_0,\g_0\bar \g_0}"]
  &[.7 cm]
  &\CF(\as\bar\bs \g_0\bar \g_0,\bs\bar \bs\g_0\bar \g_0)
  	\ar[rd]
  	\ar[ddd]
  	\ar[rr,equal]
  	\ar[ddddr,dashed]
  &[.7 cm]
  &
|[alias=B]|  \CF(\as\bar \bs\g_0\bar \g_0,\bs\bar \bs \g_0\bar \g_0)
  	\ar[dr]
  	\ar[ddd]
    \ar[ddddr,dashed]
  &[.7 cm]
  \\
  &[.7 cm]
|[alias=D]|\CF(\as'\bar \bs,\bs \bar \bs)
   	\ar[rr,line width=2mm,dash,color=white,opacity=.7]
  	\ar[rr," F_1^{\g_0 \bar \g_0,\g_0 \bar \g_0}"]
  &\,
  &[.7 cm]
   \CF(\as'\bar \bs\g_0\bar \g_0,\bs\bar \bs\g_0\bar \g_0)
  	\ar[ddd]
    \ar[rr,line width=2mm,dash,color=white,opacity=.7]
   	\ar[rr, equal]
  &&
|[alias=C]|   \CF(\as'\bar \bs\g_0\bar \g_0,\bs\bar \bs\g_0\bar \g_0)
  	\ar[ddd]
  \\
  \\
 \CF(\as \bar \bs,\Ds)
  	\ar[dr,]
  	\ar[rr, "F_1^{\g_0\bar \g_0,\g_0\bar \g_0}"]
  &[.7 cm]\,&
  \CF(\as \bar \bs \g_0\bar \g_0, \Ds \g_0 \bar \g_0)
  	\ar[dr,]
  	\ar[rr,]
  	\ar[drrr,dashed] 
  &[.7 cm]\,
  &
  \CF(\as \bar \bs \g_0 \bar \g_0, \Ds \Ds_0)
  	\ar[dr]
  \\
  & [.7 cm]
  \CF(\as'\bar \bs, \Ds)
   	\ar[from=uuu,line width=2mm,dash,color=white,opacity=.7]
  	\ar[from=uuu,]
  	\ar[rr,"F_1^{\g_0\bar \g_0, \g_0\bar \g_0}"]
   	\ar[from=uuuul,line width=2mm,dash,color=white,opacity=.7]  	
  	\ar[from=uuuul,dashed]
  &
  &[.7 cm] 
  \CF(\as'\bar \bs \g_0\bar \g_0, \Ds\g_0\bar \g_0)
  	\ar[rr]
  	\ar[from=uuu,crossing over]
  	&&
 \CF(\as'\bar \bs\g_0\bar \g_0,\Ds \Ds_0)
  \end{tikzcd}
\]

  \caption{The hypercubes $\scB_1$ (top) and $\scB_2$ (bottom) are obtained by compressing these two diagrams. The unlabeled arrows are all holomorphic triangle or quadrilateral maps.    There is also a length 3 arrow in the right subcube for $\scB_2$ which is not drawn.}
  \label{fig:B1B2}
  \end{figure}

The diagram used to construct $\scB_3$ is shown in Figure~\ref{fig:B3}.
\begin{figure}[H]
\[
\begin{tikzcd}[
	column sep={2.2cm,between origins},
	row sep={1.5cm,between origins},
	fill opacity=.7,
	text opacity=1,
execute at end picture={
 \foreach \Number in  {A,B}
    {\coordinate (\Number) at (\Number.center);}
	\filldraw[black] (A) node[rectangle,fill=white, fill opacity=.9, text opacity=1]{$\CF(\as' \bar \bs, \Ds)$};
	\filldraw[black] (B) node[rectangle,fill=white,fill opacity=.9, text opacity=1]{$\CF(\as'\bar \bs \g_0\bar \g_0, \Ds \g_0\bar \g_0)$};
	}
	]
\CF(\as\bar \bs,\Ds)
	\ar[ddd,equal]
	\ar[rr, "F_1^{\g_0\bar \g_0,\g_0\bar \g_0}"]
	\ar[ddrr]
&&
\CF(\as \bar \bs \g_0 \bar \g_0, \Ds\g_0\bar \g_0)
	\ar[rr]
	\ar[ddd,dash, shift left=2mm]
	\ar[ddd,dash, shift left=1.3mm]
	\ar[ddrr]
	\ar[dddddrr,dashed]
&&
\CF(\as\bar \bs \g_0 \bar \g_0, \Ds \Ds_0)
	\ar[ddrr]
	\ar[ddd,shift left=2mm,dash]
	\ar[ddd,shift left=1.3mm, dash]
	\ar[dddddrr,dashed]
&&\,
\\
\\
&&|[alias=A]|\phantom{\CF(\as' \bar \bs, \Ds)}
	\ar[rr, "F_1^{\g_0\bar \g_0,\g_0\bar \g_0}"]
&&
|[alias=B]|\phantom{\CF(\as'\bar \bs \g_0\bar \g_0, \Ds \g_0\bar \g_0)}
	\ar[rr]
&&
\CF(\as'\bar \bs \g_0 \bar \g_0, \Ds\Ds_0)
 	\ar[from=uullll,line width=2mm,dash,color=white,opacity=.7]
	\ar[from=uullll,dashed]
\\
\CF(\as\bar \bs, \Ds)
	\ar[dr]
	\ar[rr,"F_1^{\g_0\bar \g_0, \g_0\bar \g_0}"]
&&
\CF(\as\bar \bs \g_0 \bar \g_0, \Ds \g_0\bar \g_0)
	\ar[rr]
	\ar[dr]
	\ar[drrr,dashed]
&&
\CF(\as \bar \bs \g_0 \bar \g_0, \Ds \Ds_0)
	\ar[dr]
\\
&
\CF(\as\bar \bs, \as' \bar \as')
	\ar[rr, "F_1^{\g_0\bar\g_0,\g_0\bar\g_0}"]
	\ar[dr]
&& 
\CF(\as\bar \bs \g_0 \bar \g_0, \as' \bar \as'\g_0\bar \g_0)
	\ar[rr,"B_\tau"]
	\ar[dr]
	\ar[drrr,dashed]
&&
\CF(\as \bar \bs \g_0 \bar \g_0, \as'\bar \as' \g_0 \bar \g_0)
	\ar[dr]
\\
&&
\CF(\as'\bar \bs, \as'\bar \as')
	\ar[rr,"F_1^{\g_0\bar \g_0, \g_0\bar \g_0}"]
 	\ar[from=uuu,line width=2mm,dash,color=white,opacity=.7]
	\ar[from=uuu]
 	\ar[from=uuuuull,line width=2mm,dash,color=white,opacity=.7]
	\ar[from=uuuuull,dashed]
&&
\CF(\as'\bar \bs \g_0 \bar \g_0, \as'\bar \as' \g_0\bar \g_0)
	\ar[rr, "B_\tau"]
	\ar[from=uuu,line width=2mm,dash,color=white,opacity=.7]
	\ar[from=uuu]
&&
\CF(\as'\bar \bs \g_0 \bar \g_0, \as'\bar \as' \g_0 \bar \g_0)
 	\ar[from=uuu,line width=2mm,dash,color=white,opacity=.7]
	\ar[from=uuu]
 	\ar[from=uuull,line width=2mm,dash,color=white,opacity=.7]
	\ar[from=uuull,dashed]
\end{tikzcd}
\]
\caption{The diagram used to construct the hypercube $\scB_3$.}
\label{fig:B3}
\end{figure}

The hypercube $\scB_4$ is displayed in Figure~\ref{fig:B4}. The construction is as follows. The back-left cube is a hypercube of stabilization for the map $F_1^{\g_0\bar \g_0, \g_0\bar \g_0}$. The back-right cube is constructed by pairing the 0-dimensional hypercube $\as \bar \bs \g_0\bar \g_0$ with a 3-dimensional generalized hypercube of beta attaching curves which has $\tau$ as one of the length 1 morphisms. The construction is similar to the definition of the 1-handle map, and the proof of well definedness is similar to Lemma~\ref{lem:1-handles-and-handleslides}. The front portion is verified to satisfy the hypercube relations (after compressing the top face) by observing that the length 2 map composes trivially with the 3-handle map $F_3^{\a'\g_0,\a'\g_0}$ by applying our stabilization  result from Proposition~\ref{prop:multi-stabilization-counts} and applying the small translate theorems for triangles and quadrilaterals \cite[Propositions 11.1 and 11.5]{HHSZExact}.

Our final hypercube $\scB_5$ is shown in Figure~\ref{fig:B5}. Stacking and compressing these hypercubes completes the proof.
\end{proof}

\begin{figure}[H]
\[
\begin{tikzcd}[
	column sep={2.3cm,between origins},
	row sep={1.8cm,between origins},
	fill opacity=.7,
	text opacity=1,
	execute at end picture={
	\foreach \Nombre in  {A,B,C,D}
  	{\coordinate (\Nombre) at (\Nombre.center);}
	\fill[opacity=0.1] 
  (A) -- (B) -- (C) -- (D) -- cycle;}	
	]
\CF(\as\bar \bs,\Ds)
	\ar[dd]
	\ar[rr, "F_1^{\g_0\bar \g_0,\g_0\bar \g_0}"]
	\ar[dr]
&&
\CF(\as \bar \bs \g_0 \bar \g_0, \Ds\g_0\bar \g_0)
	\ar[rr]
	\ar[dd, shift left=1mm]
	\ar[dr]
	\ar[dddr,dashed]
	\ar[ddrr,dashed]
&&
\CF(\as\bar \bs \g_0 \bar \g_0, \Ds \Ds_0)
	\ar[dr]
	\ar[dd,shift left=1mm]
	\ar[dddr,dashed]
&&\,
\\
&
 |[alias=A]|\CF(\as \bar \bs, \as'\bar \as')
	\ar[rr, "F_1^{\g_0\bar \g_0,\g_0\bar \g_0}"]
&&
\CF(\as \bar \bs \g_0\bar \g_0, \as'\bar \as' \g_0\bar \g_0)
	\ar[rr, "B_\tau"]
	\ar[ddrr,dashed]
&&
 |[alias=B]|\CF(\as \bar \bs \g_0\bar \g_0, \as'\bar \as' \g_0\bar \g_0)
 	\ar[from=ulll,line width=2mm,dash,color=white,opacity=.7]
	\ar[from=ulll,dashed]
	\ar[dd]
\\[.5cm]
\CF(\as\bar \bs, \as \bar \as)
	\ar[dr]
	\ar[rr,"F_1^{\g_0\bar \g_0, \g_0\bar \g_0}"]
&&
\CF(\as \bar \bs \g_0 \bar \g_0, \as \bar \as \g_0 \bar \g_0)
	\ar[rr, "B_\tau"]
	\ar[dr]
	\ar[drrr,dashed]
&&
\CF(\as \bar \bs \g_0 \bar \g_0,\as \bar \as \g_0 \bar \g_0)
	\ar[dr]
\\
&
 |[alias=D]|\CF(\as\bar \bs, \as \bar \as')
	\ar[rr, "F_1^{\g_0\bar \g_0, \g_0\bar \g_0}"]
	\ar[from=uu,line width=2mm,dash,color=white,opacity=.7]
	\ar[from=uu]
 	\ar[from=uuul,line width=2mm,dash,color=white,opacity=.7]
	\ar[from=uuul,dashed]
&& 
\CF(\as\bar \bs \g_0 \bar \g_0, \as \bar \as'\g_0\bar \g_0)
	\ar[rr,"B_\tau"]
 	\ar[from=uu,line width=2mm,dash,color=white,opacity=.7]
	\ar[from=uu]
&&
 |[alias=C]|\CF(\as \bar \bs \g_0 \bar \g_0, \as\bar \as' \g_0 \bar \g_0)
\end{tikzcd}
\]
\[
\begin{tikzcd}[
	column sep={2.3cm,between origins},
	row sep={1.8cm,between origins},
	fill opacity=.7,
	text opacity=1,
	execute at end picture={
	\foreach \Nombre in  {A,B,C,D}
  	{\coordinate (\Nombre) at (\Nombre.center);}
	\fill[opacity=0.1] 
  (A) -- (B) -- (C) -- (D) -- cycle;},
	]
 |[alias=A]|\CF(\as \bar \bs, \as'\bar \as')
	\ar[rr, "F_1^{\g_0\bar \g_0,\g_0\bar \g_0}"]
&&
\CF(\as \bar \bs \g_0\bar \g_0, \as'\bar \as' \g_0\bar \g_0)
	\ar[rr, "B_\tau"]
	\ar[ddrr,dashed]
	\ar[dd]
&&
 |[alias=B]|\CF(\as \bar \bs \g_0\bar \g_0, \as'\bar \as' \g_0\bar \g_0)
	\ar[dd]
&\,
\\
&\CF(\as' \bar \bs, \as'\bar \as')
	\ar[rr,crossing over, "F_1^{\g_0\bar \g_0, \g_0\bar \g_0}"]
	\ar[from=ul]
&&
\CF(\as'\bar \bs \g_0\bar \g_0, \as'\bar \as' \g_0\bar \g_0)
	\ar[rr,crossing over,pos=.3, "B_\tau"]
	\ar[from=ul]
&&
\CF(\as'\bar \bs \g_0 \bar \g_0, \as'\bar \as' \g_0 \bar \g_0)
 	\ar[from=ulll,line width=2mm,dash,color=white,opacity=.7]
	\ar[from=ulll,dashed]
	\ar[from=ul]
\\[.5cm]
 |[alias=D]|\CF(\as\bar \bs, \as \bar \as')
	\ar[rr, "F_1^{\g_0\bar \g_0, \g_0\bar \g_0}"]
	\ar[dr, "F_3^{\a,\a}"]
	\ar[from=uu,line width=2mm,dash,color=white,opacity=.7]
	\ar[from=uu]
&& 
\CF(\as\bar \bs \g_0 \bar \g_0, \as \bar \as'\g_0\bar \g_0)
	\ar[rr,"B_\tau"]
&&
 |[alias=C]|\CF(\as \bar \bs \g_0 \bar \g_0, \as\bar \as' \g_0 \bar \g_0)
	\ar[dr, "F_3^{\a \g_0,\a \g_0}"]
\\
\,&
\CF(\bar \bs, \bar \as')
	\ar[rrrr,"F_1^{\bar \g_0,\bar \g_0}"]
	\ar[from=uu, "F_3^{\a',\a'}",crossing over, pos=.3]
&&
&&
\CF(\bar \bs \bar \g_0, \bar \as' \bar \g_0)
 	\ar[from=uu,line width=2mm,dash,color=white,opacity=.7]
	\ar[from=uu, "F_3^{\a'\g_0,\a'\g_0}"]
\end{tikzcd}
\]
\caption{The diagram used to build the hypercube $\scB_4$. We stack the diagrams along the gray face. There is a length 3 map in the right portion of top-most diagram which is not drawn.}
\label{fig:B4}
\end{figure}

\begin{figure}[H]
\[
\begin{tikzcd}[
	column sep={2.2cm,between origins},
	row sep={1.1cm,between origins},
	fill opacity=.7,
	text opacity=1,
	]
\CF(\as \bar \bs, \as \bar \as)
	\ar[ddd, "F_3^{\a,\a}"]
	\ar[rr, "F_1^{\g_0\bar \g_0,\g_0\bar \g_0}"]
	\ar[dr]
&&
\CF(\as \bar \bs \g_0 \bar \g_0, \as \bar \as\g_0\bar \g_0)
	\ar[rr, "B_\tau"]
	\ar[dr]
&&
\CF(\as\bar \bs \g_0 \bar \g_0, \as \bar \as \g_0 \bar \g_0)
	\ar[dr]
	\ar[ddd,shift left=1mm]
&&\,
\\
&
\CF(\as \bar \bs, \as \bar \as')
	\ar[rr, "F_1^{\g_0\bar\g_0,\g_0\bar \g_0}"]
&&
\CF(\as \bar \bs \g_0\bar \g_0, \as\bar \as' \g_0\bar \g_0)
	\ar[rr, "B_\tau",crossing over, pos=.3]
&&
\CF(\as \bar \bs \g_0\bar \g_0, \as\bar \as' \g_0\bar \g_0)
 	\ar[from=ulll,line width=2mm,dash,color=white,opacity=.7]
	\ar[from=ulll,dashed]
\\
&&\CF(\bar \bs, \bar \as')
	\ar[rrrr, "F_1^{\bar \g_0, \bar \g_0}", pos=.45, crossing over]
	\ar[from=ul, "F_3^{\a,\a}"]
&&
&&
\CF(\bar \bs \bar \g_0, \bar \as' \bar \g_0)
	\ar[from=ul, "F_3^{\a,\a}"]
\\
\CF(\bar \bs, \bar \as)
	\ar[ddrr]
	\ar[rrrr,"F_1^{\bar \g_0,\bar \g_0}", pos=.4]
&&
&&
\CF(\bar \bs \bar \g_0, \bar \as \bar \g_0)
	\ar[ddrr]
&&
\\
\,
\\
\,
&
\,&
\CF(\bar \bs, \bar \as')
	\ar[rrrr,"F_1^{\bar \g_0,\bar \g_0}"]
 	\ar[from=uuu,line width=2mm,dash,color=white,opacity=.7]
	\ar[from=uuu, equal]
&&
&&
\CF(\bar \bs \bar \g_0, \bar \as' \bar \g_0)
 	\ar[from=uuu,line width=2mm,dash,color=white,opacity=.7]
	\ar[from=uuu,equal]
\end{tikzcd}
\]
\caption{The diagram used to construct the hypercube $\scB_5$}
\label{fig:B5}
\end{figure}

\section{Naturality maps for stabilizations}\label{sec:stable}

In this section, we construct our naturality maps for stabilizations of the Heegaard surface, and prove some basic properties. Our definition of the naturality map for a stabilization is simple: we define the map for a stabilization as the composition of a 1-handle cobordism map with the 2-handle cobordism map for a 2-handle which topologically cancels the 1-handle.

 The definition gives some diagrammatic flexibility for computing the stabilization map. In particular, for a stabilization we require two points $p_0$ and $p_1$ to be chosen on $\Sigma\setminus (\as \cup \bs)$. We then add a tube to $\Sigma$ with feet at $p_0$ and $p_1$. We add one new alpha curve and one beta curve. The beta curve is a meridian of the tube, and the alpha curve is obtained by concatenating a longitude of the curve with an embedded curve on $\Sigma\setminus \as$. If $\Sigma$ is embedded in $Y$, then we may view the construction as being determined by picking an embedded path on $\Sigma\setminus \as$ which connects $p_0$ and $p_1$, and having the tube run parallel to this path. The new alpha circle runs parallel to this arc. We define the destabilization map as the composition of a 2-handle map for a 0-framed unknot, followed by the 3-handle cobordism map.
 
 Note that this stabilization operation naturally determines a 3-ball in $Y$ which intersects $\Sigma$ in a disk. We say two stabilizations are \emph{disjoint} if there are two choices of corresponding 3-balls in $Y$ which are disjoint. 
 
 We write $\sigma$ for the stabilization map, and $\tau$ for the destabilization map. The main results of this section are summarized in the following proposition.
 
 \begin{prop}\label{prop:stabilization-naturality}\,
 \label{prop:properties-stabilization}
 \begin{enumerate}
 \item\label{stabilization-properties-1}The stabilization and destabilization maps commute with the transition maps for elementary equivalences of the doubling-enhanced Heegaard diagram.
 \item\label{stabilization-properties-2} If $\sigma$ and $\sigma'$ are the maps for two disjoint stabilizations, then
 \[
\sigma\circ \sigma'\simeq \sigma'\circ \sigma. 
 \]
 \item\label{stabilization-properties-3} If $\sigma$ and $\tau$ are stabilizations and destabilizations for the same 3-ball, then
 \[
\sigma\circ \tau\simeq \id\quad \text{and} \quad \tau\circ \sigma\simeq \id. 
 \]
 \end{enumerate}
 \end{prop}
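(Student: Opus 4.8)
The plan is to deduce Proposition~\ref{prop:properties-stabilization} from the cobordism-theoretic results already assembled, treating the stabilization map $\sigma$ and destabilization map $\tau$ as composites of handle maps: $\sigma = \CFI(W(Y(\bS_0),\bU_0)) \circ \CFI(W(Y,\bS_0))$, where $\bS_0$ is the attaching $0$-sphere of the $1$-handle determined by the stabilization data and $\bU_0$ is the $0$-framed unknot geometrically cancelling it; dually $\tau = \CFI(W)\circ \CFI(W(Y(\bS_0),\bU_0))$ (a $2$-handle map for a $0$-framed unknot followed by a $3$-handle map). So each part of the proposition is a statement about commuting and composing these handle maps, and the relevant tools are Lemma~\ref{lem:1-handles-and-handleslides}, Proposition~\ref{prop:2-handles-handle-slides}, Lemma~\ref{lem:commute-1/3-handles}, Proposition~\ref{prop:commute-1handles/2-handles}, Proposition~\ref{prop:composition-law}, and the model computations in Lemma~\ref{lem:1-handles-simple}.

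For part~\eqref{stabilization-properties-1}, I would observe that $\sigma$ is a composition of a $1$-handle map and a $2$-handle map, and that Lemma~\ref{lem:1-handles-and-handleslides} gives commutation of the $1$-handle map with transition maps $\Psi_{\frD\to\frD'}$ for elementary equivalences, while Proposition~\ref{prop:2-handles-handle-slides} gives the analogous commutation for the $2$-handle map. One has to be slightly careful that the intermediate doubling data for $Y(\bS_0)$ used after the $1$-handle is compatible with the data used before the $2$-handle, but since both statements are proven by building $3$-dimensional hyperboxes with the involution as one direction and a handleslide equivalence as another, the two hyperboxes can be stacked; their compression gives the desired square. Stringing the two commutations together yields commutation of $\sigma$ with $\Psi_{\frD\to\frD'}$, and the argument for $\tau$ is the dual one using the $3$-handle commutation contained in the same lemmas. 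Since a general handleslide equivalence is a composition of elementary ones by \eqref{eq:definition-transition-map-compose-elementary}, this extends automatically.

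For part~\eqref{stabilization-properties-2}, disjointness of the two stabilizations means the relevant $0$-spheres $\bS_0,\bS_0'$ and cancelling unknots $\bU_0,\bU_0'$ can all be taken to be pairwise disjoint and supported in disjoint balls. The commutation $\sigma\circ\sigma' \simeq \sigma'\circ\sigma$ then follows by repeatedly applying Lemma~\ref{lem:commute-1/3-handles} (to commute a $1$-handle past another $1$-handle) and Proposition~\ref{prop:commute-1handles/2-handles} (to commute a $1$-handle past a $2$-handle and vice versa), plus Proposition~\ref{prop:composition-law} to recombine the two $2$-handle maps — essentially a bookkeeping argument rearranging a sequence of four handle attachments into another sequence realizing the same cobordism in a different order, where each elementary transposition is one of the already-proven commutation lemmas. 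Alternatively, and probably more cleanly, I would invoke the model computation of Lemma~\ref{lem:1-handles-simple} for stabilizations performed near the basepoint: after moving (via part~\eqref{stabilization-properties-1} and isotopy invariance, i.e.\ Proposition~\ref{prop:continuity}) the stabilization balls near $w$, the maps $\sigma,\sigma'$ become, up to chain homotopy equivalence, the explicit maps $\xs\mapsto \xs\times\theta^+$ on tensor factors, which manifestly commute.

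For part~\eqref{stabilization-properties-3}, $\sigma\circ\tau$ and $\tau\circ\sigma$ are composites of four handle maps ($1$-handle, $2$-handle, $2$-handle, $3$-handle in one order, permuted in the other); using Proposition~\ref{prop:composition-law} the two middle $2$-handle maps — for the cancelling unknot and for the handle that cancels the $1$-handle, on a disjoint/nested pair of balls that is topologically a blow-up and blow-down — compose to the cobordism map of the connected sum which is a trivial $S^2\times S^2$-summand or (more simply) to the identity after recognizing the composite cobordism as a product; then the remaining $1$-handle followed by $3$-handle, or $3$-handle followed by $1$-handle, is recognized via Lemma~\ref{lem:commute-1/3-handles} and the model computation of Lemma~\ref{lem:1-handles-simple}: $\CFI(W(\bS_0))$ is $\xs\mapsto \xs\times\theta^+$ and the $3$-handle map is $\xs\times\theta^+\mapsto 0$, $\xs\times\theta^-\mapsto\xs$, and one checks the appropriate composite is chain homotopic to the identity after accounting for the intervening $2$-handle maps not disturbing the $\theta^\pm$-tensor factor (again by the stabilization counts of Proposition~\ref{prop:multi-stabilization-counts}).

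The main obstacle I expect is part~\eqref{stabilization-properties-3}: unlike parts (1) and (2), which are pure commutation/bookkeeping over lemmas that already exist, showing $\sigma\circ\tau\simeq\id$ genuinely requires identifying the composite of a $2$-handle map and its "inverse" $2$-handle map (blow-up/blow-down of a $0$-framed unknot) as the identity map, and controlling that this identification interacts correctly with the flanking $1$- and $3$-handle maps and with the choice of curve $\tau$ (in the sense of Remark~\ref{rem:1-handle-many-choices}) and of auxiliary diagonal chains. The cleanest route is to push everything to a Heegaard diagram with the stabilization region near the basepoint, apply Lemma~\ref{lem:1-handles-simple} so the $1$- and $3$-handle maps are the explicit projection/inclusion maps onto a $\langle\theta^+,\theta^-\rangle$ tensor factor, and then reduce the claim to a genus-$1$ model computation of the composite $2$-handle maps, analogous to (and citing) the holomorphic-triangle count in Figure~\ref{fig:4} and the Gromov compactness argument of Figure~\ref{fig:13}; then invoke Proposition~\ref{prop:continuity} and part~\eqref{stabilization-properties-1} to transport the conclusion back to an arbitrary diagram. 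I would carry out the parts in the order (1), then (2), then (3), since (1) is used as an ingredient in the reductions for (2) and (3).
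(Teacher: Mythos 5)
Your proposal is correct and takes essentially the same route as the paper: part (1) is exactly the commutation of the constituent 1-, 2-, and 3-handle maps with the transition maps, and for parts (2) and (3) the paper likewise uses (1) to move the stabilization balls next to the basepoint and then applies the explicit model computation — this is precisely Lemma~\ref{lem:stabilization-near-basepoint}, which gives $\sigma\simeq(\xs\mapsto\xs\times c)$ and $\tau\simeq(\xs\times c\mapsto\xs)$, from which both claimed relations are immediate. Your alternative handle-rearrangement bookkeeping for (2) and the blow-up/blow-down discussion for (3) are unnecessary detours; the basepoint-adjacent computation you single out as ``cleanest'' is the one the paper actually uses.
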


Our proof of Parts \eqref{stabilization-properties-2} and \eqref{stabilization-properties-3} goes by way of computing basepoint-adjacent stabilizations and destabilizations (note that by handlesliding the original alpha and beta curves, such a configuration may always be achieved):

\begin{lem}
\label{lem:stabilization-near-basepoint}
 Suppose that $p_1$ and $p_0$ are two points on $\Sigma\setminus (\as\cup \bs)$ which both lie in the same component as the basepoint $w$. For a suitable choice of doubling curves on the stabilized diagram and suitably degenerated choices of almost complex structures, we have
 \[
\sigma\simeq (\xs\mapsto \xs\times c)\quad \text{and} \quad \tau\simeq(\xs\times c\mapsto \xs), 
 \]
 where $\{c\}=\a_0\cap \b_0$, and $(\xs\mapsto \xs\times c)$ denotes the $\bF[U,Q]/Q^2$-equivariant map which sends $\xs$ to $\xs\times c$, and similarly for $(\xs\times c\mapsto \xs)$.
\end{lem}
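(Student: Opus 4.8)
\textbf{Proof strategy for Lemma~\ref{lem:stabilization-near-basepoint}.}

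The plan is to build on Lemma~\ref{lem:1-handles-simple}, which already identifies the involutive $1$-handle and $3$-handle maps for a sphere $\bS_0$ near the basepoint $w$ with the ``trivial'' maps $\xs\mapsto \xs\times \theta^+$ and $\xs\times \theta^-\mapsto \xs$. Since by definition $\sigma$ is a composition of a $1$-handle cobordism map with the $2$-handle map for a $0$-framed unknot that cancels the $1$-handle, and $\tau$ is dually a $2$-handle map for a $0$-framed unknot followed by a $3$-handle map, the first task is to reduce everything to an explicit computation of the relevant $2$-handle map. By handlesliding $\as$ and $\bs$ so that $p_0$, $p_1$ and $w$ all lie in a small disk, we may choose a Heegaard triple $(\Sigma', \as\g, \as\g_0, \bs\g_0)$ subordinate to a bouquet of the $0$-framed unknot, where the entire triple, outside the original $\cH$, is supported in a genus-$1$ region near $w$; here $\g_0$ is the meridian of the stabilization tube and $\g$ is a small Hamiltonian translate. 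I would then choose the doubling arcs so that the extra doubling curve in the doubled diagram runs parallel to the picture in Figure~\ref{fig:4}, which lets me apply the stabilization results of Proposition~\ref{prop:multi-stabilization-counts} to strip off the genus-$1$ region from each of the polygon maps in the hyperbox~\eqref{eq:hyperbox-2-handles} defining $\CFI(W)$.

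The key steps, in order, are: (1) set up the near-basepoint diagram and doubling curves as above; (2) apply Proposition~\ref{prop:multi-stabilization-counts} to each vertical level of~\eqref{eq:hyperbox-2-handles}, showing that every map becomes the corresponding unstabilized map tensored with a fixed generator in the genus-$1$ region, and that every diagonal (length $\geq 2$) map in the hyperbox vanishes after degenerating the almost complex structure --- this uses the small translate theorems for triangles and quadrilaterals \cite[Propositions 11.1 and 11.5]{HHSZExact}, exactly as in the proof of Lemma~\ref{lem:1-handles-simple}, together with the fact that the repeated $\Ds$ curve and the small-isotopy structure of the auxiliary triples kill all higher polygon counts; (3) identify the remaining length-$1$ composition: the $1$-handle map contributes $\xs\mapsto \xs\times\theta^+$ by Lemma~\ref{lem:1-handles-simple}, and the $2$-handle map for the $0$-framed unknot, after destabilization, reduces to the genus-$1$ triangle count $f_{\a\g,\a\g_0,\b\g_0}(\Theta^+,-)$, which is the nearest-point map $\xs\times\theta^+\mapsto \xs\times c$ by the small triangle theorem; composing gives $\sigma(\xs)\simeq \xs\times c$; (4) dualize the entire argument for $\tau$, using that the $3$-handle map near the basepoint is $\xs\times\theta^-\mapsto\xs$ and that the $2$-handle map for the unknot in the reversed direction is again a nearest-point map, so $\tau(\xs\times c)\simeq \xs$; (5) finally, check that these equalities hold as maps of $\bF[U,Q]/Q^2$-complexes, i.e.\ that the $Q$-components (the chain homotopies between $\iota$ and the conjugate involution) also degenerate to the trivial ones --- this follows because the diagonal terms in the hyperbox, which carry the $Q$-data, were shown to vanish in step (2).

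The main obstacle I expect is step (2): one must verify that \emph{all} the length-$2$ and length-$3$ maps in the $2$-handle hyperbox~\eqref{eq:hyperbox-2-handles} make trivial contribution after degenerating the complex structure near the basepoint. Some of these involve the auxiliary hypercube~\eqref{eq:second-auxiliary-hypercube-2-handle} with its chain $\lambda_{\Dt,\a\bar\a'}$, and one has to argue that $\lambda_{\Dt,\a\bar\a'}$ can be chosen to be supported away from (or to split compatibly with) the genus-$1$ region, so that after destabilization the corresponding quadrilateral counts either vanish by the small translate theorem or pair to zero against the $3$-handle map because their output carries a $\theta^+$-factor. This is morally the same mechanism used in Lemma~\ref{lem:1-handles-simple} to kill the diagonal in the $1$-handle hyperbox, but here there are more levels and the auxiliary hypercubes are genuinely different, so the bookkeeping of which intersection points acquire a $\theta^+_{\g_0,\g_0}$ (hence die under $F_3$) versus a $\theta^-$ factor needs to be done carefully for each level. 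Once that degeneration is in hand, the remaining computation is the routine genus-$1$ nearest-point identification, and assembling steps (3)--(5) is straightforward.
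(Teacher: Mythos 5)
Your proposal follows essentially the same route as the paper's proof: reduce the $1$-handle/$3$-handle contributions to the near-basepoint model of Lemma~\ref{lem:1-handles-simple}, then show every length~$\geq 2$ map in the $2$-handle hyperbox~\eqref{eq:hyperbox-2-handles} dies after destabilizing via Proposition~\ref{prop:multi-stabilization-counts} — either because the destabilized quadrilateral count vanishes by the small translate theorem or because the output carries a top-degree tensor factor killed by the $3$-handle map — leaving only the nearest-point length-$1$ composition. You also correctly flag the genuine technical burden (the level-by-level bookkeeping of which outputs acquire a $\theta^+$ factor), which is exactly what occupies the bulk of the paper's argument.
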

\begin{proof}Consider $\sigma$ (the proof for $\tau$ is essentially the same). By definition, $\sigma$ is the composition of the 1-handle cobordism map, followed by the map for a canceling 2-handle. By Lemma~\ref{lem:1-handles-simple}, for a suitable choice of almost complex structure, the 1-handle map takes the form $\xs\mapsto \xs\times \theta^+$, extended $\bF[U,Q]/Q^2$-equivariantly. We will show that the 2-handle cobordism map is chain homotopic to a map which sends $\xs\times \theta^+$ to $\xs\times c$.

We recall that the 2-handle map is defined in Equation~\eqref{eq:hyperbox-2-handles}. We claim that the diagonal map of the compression is trivial. To see this, we make the following subclaims about the diagram in~\eqref{eq:hyperbox-2-handles} in the present context:
\begin{enumerate}
\item \label{model-comp-stabilization-1} The top-most length 2 map vanishes on elements of the form $\xs\times \theta_{\b_0,\b_0}^+\times \Theta_{\bar \b, \bar \b}^+\times \theta_{\bar \b_0,\bar \b_0}^+$, for $\xs\in \bT_{\a}\cap \bT_{\b}$.
\item\label{model-comp-stabilization-2} The middle length 2 map composes trivially with the 3-handle map $F_3^{\a  \b_0, \a  \b_0}$ (labeled $F_3^{\a',\a'}$ in Equation~\eqref{eq:hyperbox-2-handles}), and the bottom-most length 2 map composes trivially with the map $F_3^{\a \a_0, \a \a_0}$ (labeled $F_3^{\a,\a}$ in Equation~\eqref{eq:hyperbox-2-handles}).
\end{enumerate}

We consider claim~\eqref{model-comp-stabilization-1} first. There is a genus 2 portion of the Heegaard quadruple corresponding to the stabilization, which takes the following form:
\[
\cQ_0=(\bT^2\# \bT^2, \a_0 \bar \b_0, \b_0 \bar \b_0, \b_0 \bar \b_0, \Ds_0).
\]
We observe that
\[
\widehat{f}_{\a_0\bar \b_0, \b_0\bar \b_0, \Dt_0}(\widehat{f}_{\a_0\bar \b_0,\b_0\bar \b_0, \b_0\bar \b_0}(\Theta_{\a_0\bar \b_0, \b_0\bar \b_0}^+, \Theta_{\b_0\bar \b_0, \b_0\bar \b_0}^+), \Theta_{\b_0\bar \b_0, \Dt_0}^+)=\Theta_{\a_0\bar \b_0, \Dt_0}.
\]
This may be seen since the inner triangle map corresponds to the naturality map for a small perturbation of the curves $\b_0\bar \b_0$, while the outer triangle map corresponds to a factor of the doubling model of the involution for the diagram $(\bT^2, \a_0,\b_0)$. Hence, by applying Proposition~\ref{prop:multi-stabilization-counts}, we obtain the equality
\[
h_{\a \b_0 \bar \b \bar \b_0\to \a \a_0 \bar \b \bar \b_0}^{\b \b_0 \bar \b \bar \b_0\to \Dt\Dt_0}(\xs\times \theta_{\b_0,\b_0}^+\times \Theta_{\bar \b, \bar \b}^+\times \theta_{\bar \b_0,\bar \b_0}^+)=h_{\a \bar \b \to \a \bar \b}^{\b \bar \b \to \Dt}(\xs\times \theta_{\b_0,\b_0}^+)\otimes \Theta_{\a_0\bar \b_0, \Dt_0}.
\]
which vanishes by the small translate theorem for holomorphic quadrilaterals \cite[Proposition 11.5]{HHSZExact}.

Next, we consider claim~\eqref{model-comp-stabilization-2}, from above. There are two quadrilateral maps which appear in this claim. The proof for both maps is essentially the same, so we focus on the map
\[
h_{\a \b_0\bar \b \bar \b_0\to \a \a_0 \bar \b\bar \b_0}^{\Dt\Dt_0\to \a  \a_0\bar \a \bar \a_0}.
\]
This is the middle length 2 map in~\eqref{eq:hyperbox-2-handles}. We wish to show that this map composes trivially with $F_3^{\a',\a'}$. To see this, we consider the genus 2 Heegaard quadruple
 \[
\cQ_1= (\bT^2\# \bT^2, \a_0 \bar \a_0, \Dt_0, \b_0\bar \b_0, \a_0\bar \b_0).
 \]
 This is an admissible, algebraically rigid multi-stabilizing quadruple. Furthermore, we claim that
\begin{equation}
\widehat{f}_{\a_0\bar \a_0, \b_0\bar \b_0, \a_0\bar \b_0}(\widehat{f}_{\a_0\bar \a_0,\Dt_0, \b_0\bar \b_0}(\Theta^+_{\a_0\bar \a_0, \b_0\bar \b_0},\Theta^+_{\Dt_0, \b_0\bar \b_0}),\Theta^+_{\b_0\bar \b_0, \a_0\bar \b_0})=\Theta^-_{\a_0\bar \a_0, \a_0\bar \b_0}.
\label{eq:model-computation-stabilization-2-handle}
\end{equation}
The above equation is verified by noting that the inner triangle map may be interpreted as the cobordism map for a 2-handle attachment along a fiber of $S^1\times S^2$, while the outer triangle map may be interpreted as a 2-handle attached along a 0-framed unknot. The fact that $\Theta^-_{\a_0\bar \a_0, \a_0\bar \b_0}$ is the output may be viewed as a consequence of the above topological description. 

Combining~\eqref{eq:model-computation-stabilization-2-handle} with Proposition~\ref{prop:multi-stabilization-counts}, we conclude that
\begin{equation}
h_{\a \b_0\bar \b \bar \b_0\to \a \a_0 \bar \b\bar \b_0}^{\Dt\Dt_0\to \a  \a_0\bar \a \bar \a_0}(\xs \times \Theta^+_{\b_0\bar \b_0,\Dt})= h_{\a \bar \b \to \a \bar \b}^{\Dt \to \a \bar \a}(\xs)\times \Theta^-_{\a_0\bar \b_0, \a_0 \bar \a_0}+\sum_{\zs\in \bT_{\a\bar \b}\cap \bT_{\a \bar \a}} C_{\zs}\cdot \zs\times \Theta_{\a_0\bar \b_0,\a_0\bar \a_0}^+,
\label{eq:destabilize-quadrilatal-model-comp-stabilization}
\end{equation}
for some $C_{\zs}\in \bF[U]$. Equation ~\eqref{eq:destabilize-quadrilatal-model-comp-stabilization} vanishes once composed with $F_{3}^{\a \a_0,\a \a_0}$, by the small translate theorem for holomorphic quadrilaterals \cite[Proposition 11.5]{HHSZExact}. 

The composition of the bottom-most length 2 map from Equation~\eqref{eq:hyperbox-2-handles} with $F_3^{\a \b_0,\a\b_0}$ vanishes in our present setting by a nearly identical argument.
\end{proof}

We can now prove Proposition~\ref{prop:properties-stabilization}:

\begin{proof}[Proof of Proposition~\ref{prop:properties-stabilization}] Claim~\eqref{stabilization-properties-1} follows immediately from the fact that 1-handle, 2-handle and 3-handle cobordism maps commute (up to chain homotopy) with the transition maps for elementary handleslides.

Claims~\eqref{stabilization-properties-2} and~\eqref{stabilization-properties-3} follow from Lemma~\ref{lem:stabilization-near-basepoint}, since by~\eqref{stabilization-properties-1}, we may perform stabilizations near the basepoint, and then observe that the formulas given therein clearly satisfy the stated relations.
\end{proof}

	\section{Handleswaps} \label{sec:handleswap}
	
In this section, we prove handleswap invariance.

\subsection{Simple handleswaps}

Of central importance to the approach to naturality in \cite{JTNaturality}  are the following loops in the graph of Heegaard diagrams for a 3-manifold $Y$:

\begin{define} \label{def:simple-handleswap}
	Suppose $Y$ is a 3-manifold. A \emph{simple handleswap loop}  is a triangle of Heegaard diagrams
	\[
	\begin{tikzcd}
	&\cH_1\arrow[rd,"e"] & \\
	\cH_3 \arrow[ur,"g"] && \cH_2 \arrow[ll,"f"]
	\end{tikzcd}
	\]
	which satisfies the following:
	\begin{enumerate}
		\item $\cH_i $ is an embedded Heegaard diagram for $Y$, which has the same Heegaard surface for each $i\in \{1,2,3\}$.
		\item $e$ is an $\a$-equivalence, $f$ is a $\b$-equivalence, and $g$
		is a diffeomorphism.
		\item	The diagrams $\cH_i$ decompose as connected sums of a fixed Heegaard diagram $\cH=(\Sigma,\as_u,\bs_u)$ with genus two diagrams $\cG_i=(\Sigma_0,\as_i,\bs_i)$. Furthermore, in the punctured genus two surface $(\Sigma\# \Sigma_0)\setminus \Sigma$, the above triangle is diffeomorphic to the  triangle in Figure~\ref{fig:handlewap-elem}.
	\end{enumerate}
	\end{define}
	
	In particular, in the above definition, $\as_1$ consists of two closed curves $\a_1$ and $\a_2$, while $\bs_1$ consists of two closed curves $\b_1$ and $\b_2$. The arrow $e$ corresponds to handlesliding $\a_1$ over $\a_2$, while the arrow $f$ corresponds to handlesliding $\b_1$ over $\b_2$.

 \begin{figure}[H]
 	\centering
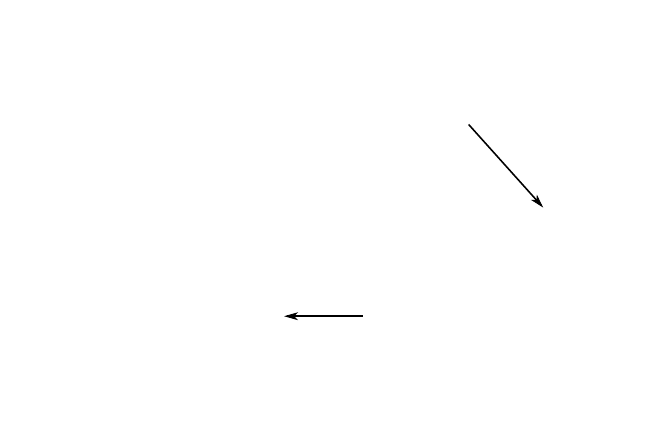
 	\caption{A simple handleswap.}\label{fig:handlewap-elem}
 \end{figure}

Recall that associated to handleslides and diffeomorphisms, we have defined morphisms, $\CFI(e)$, $\CFI(f)$ and $\CFI(g)$, which are well-defined up to $\bF[U,Q]/Q^2$-equivariant homotopy equivalence.

\begin{thm}
\label{thm:handleswap}
$
\CFI(g)\circ \CFI(f)\circ \CFI(e)\simeq \Id_{\CFI(\cH_1)}.$

\end{thm}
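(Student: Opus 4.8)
\textbf{Proof plan for Theorem~\ref{thm:handleswap}.}

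The plan is to imitate the structure of the handleswap argument in \cite{JTNaturality}*{Section~9.3}, adapted to the doubling model. Since a simple handleswap is supported in a connected-summand genus-two region $\cG_i=(\Sigma_0,\as_i,\bs_i)$, the first step is a reduction: using the stabilization results of Section~\ref{sec:polygons} (Proposition~\ref{prop:multi-stabilization-counts}), I would argue that each of the three transition maps $\CFI(e)$, $\CFI(f)$, $\CFI(g)$, computed with suitably degenerated almost complex structures adapted to the connected sum neck, is a connected sum of the ``outside'' identity data on $\cH=(\Sigma,\as_u,\bs_u)$ with a purely local map on the genus-two piece. More precisely, I would show the composite $\CFI(g)\circ\CFI(f)\circ\CFI(e)$ agrees up to homotopy with a map of the form $\Id_{\CFI(\cH)}\otimes(\text{local map})$ on the doubled diagram; this uses the compatibility of $1$-handle/triangle/$3$-handle maps with stabilization that was established in Section~\ref{sec:polygons}, together with the fact that all of $e$, $f$, $g$ fix $\cH$ pointwise. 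The diffeomorphism $g$ is handled via the continuity result Proposition~\ref{prop:continuity}, which identifies $\CFI(g)$ with the tautological map, reducing everything to an identification of chain complexes and maps on the genus-two summand and its double.

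The second step is the genuine content: a local computation on the genus-two region. Here I would pass to the basepoint-expanded doubling model (Section~\ref{sec:expanded-def}), or work directly with the diagram in Figure~\ref{def:transition-map-elementary-handleslide} restricted to the summand, and explicitly count the relevant small triangles and rectangles in the handleswap configuration of Figure~\ref{fig:handlewap-elem}, exactly as in the non-involutive handleswap computation but now keeping track of the extra doubled curves $\Ds$ and the triangle maps $f_{\a\bar\b}^{\b\bar\b\to\Dt}$, $f_{\a\bar\b}^{\Dt\to\a\bar\a}$. The small-translate theorems \cite{HHSZExact}*{Propositions~11.1 and 11.5} should force most holomorphic-polygon contributions to be nearest-point maps, so that the only nontrivial curve counts are those in the local genus-two handleswap picture. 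The outcome to verify is that, after assembling the hyperbox whose compression gives $\CFI(g)\circ\CFI(f)\circ\CFI(e)$, the total local contribution equals the identity — i.e.\ the length-$\le 2$ maps in the composite compress to $\id$ and the diagonal (length $\ge 1$ homotopy) terms either vanish or can be absorbed. Concretely, I would build a $3$-dimensional hyperbox realizing the homotopy $\CFI(g)\circ\CFI(f)\circ\CFI(e)\simeq \Id$, one of whose $2$-faces is the composite of the three transition maps and the opposite $2$-face is the identity, with the hyperbox relations producing the desired chain homotopy.

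The main obstacle I anticipate is organizing the interaction between the involutive hyperbox structure and the connected-sum degeneration: the transition maps $\Psi_{\frD\to\frD'}$ are compressions of multi-level hyperboxes (six rows in Figure~\ref{def:transition-map-elementary-handleslide}), so the composite of three of them, together with the diffeomorphism, is a fairly large stacked hyperbox, and verifying that its compression is homotopic to the identity requires a careful bookkeeping of which length-$2$ and length-$3$ maps survive after applying Proposition~\ref{prop:multi-stabilization-counts} and the small-translate theorems. In particular the curve $\tau$-type / $A_\lambda$-type maps appearing in the expanded model, and the diagonal chains $\eta$, must be shown to contribute trivially in the local handleswap region; this is analogous to, but more involved than, the vanishing arguments in Lemma~\ref{lem:stabilization-near-basepoint} and Lemma~\ref{lem:1-handles-simple}. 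A secondary but real subtlety is making sure the choice of doubling arcs $\Ds$ on the three diagrams $\cH_1,\cH_2,\cH_3$ can be taken compatibly with the handleswap (so that the doubling data is itself ``swapped'' in the expected way), which I would arrange by choosing the $\delta_i$ arcs in the local genus-two region to be symmetric under the handleswap diffeomorphism $g$ and disjoint from the relevant isotopy/handleslide supports.
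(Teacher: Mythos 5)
Your overall strategy --- localize to the genus-two summand via the multi-stabilization results and show the diagonal maps vanish --- is the right flavor, but there are two genuine gaps. The more serious one is your treatment of $\CFI(g)$. You propose to identify $\CFI(g)$ with the tautological map via Proposition~\ref{prop:continuity}, but that proposition only applies to diffeomorphisms smoothly isotopic to the identity, which the handleswap diffeomorphism $g$ is not; moreover $g$ does not fix the doubling curves, so by definition $\CFI(g)=\Psi_{g(\frD_3)\to \frD_1}\circ T_g$ and the tautological part $T_g$ is only half the story. The real content on this edge is showing $\Psi_{g(\frD_3)\to \frD_1}\simeq \id$, which the paper does by exhibiting an explicit chain of six handleslides $g(\Ds_0)=\Ds_0^0,\dots,\Ds_0^6=\Ds_0$ in the doubled genus-two region, each step admissible and algebraically rigid, and running the vanishing argument for each step; choosing the $\delta_i$ arcs ``symmetric under $g$'' as you suggest does not make this step go away. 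The second gap is in your localization step: the almost complex structures used to define the transition maps are maximally pinched along the meridian $c_0$ of the doubling tube, which is \emph{not} adapted to viewing the handleswap region as a connected summand. Before Proposition~\ref{prop:multi-stabilization-counts} can be applied you must insert change-of-almost-complex-structure hypercubes to re-pinch along a curve $c_\delta$ bounding the union of both handleswap regions in $\Sigma\#\bar\Sigma$; without this your claim that the composite is ``$\Id\otimes(\text{local map})$'' does not follow.

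Two further points where the paper's route is simpler than yours. First, it reduces in advance to basepoint-adjacent handleswaps (needed so the connected-sum point in the multi-stabilization argument sits at the basepoint), a reduction you omit. Second, rather than compressing the large stacked hyperbox $\CFI(f)\circ\CFI(e)$ that you (rightly) worry about, the paper invokes the already-proved composition law of Theorem~\ref{thm:naturality-fixed-Sigma} to replace $\Psi_{\frD_2\to\frD_3}\circ\Psi_{\frD_1\to\frD_2}$ by the single transition map $\Psi_{\frD_1\to\frD_3}$ changing both alpha and beta curves at once; the remaining work is then Proposition~\ref{prop:handleswap-factors=identity}, where the vanishing of diagonal terms follows from algebraic rigidity of the genus-two subdiagrams (Lemma~\ref{lem:admissibility-handleswap-1}) together with the small-translate theorems, not from a bespoke local curve count.
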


The proof of Theorem~\ref{thm:handleswap} occupies the subsequent sections.

\subsection{Reduction to basepoint-adjacent handleswaps}

A simple handleswap need not occur in a region of the Heegaard diagram near a basepoint. However we now show that for the purposes of naturality, it is sufficient to consider handleswaps where the special genus 2 region is adjacent to the basepoint. We note that this fact was also observed by Sungkyung Kang. 

\begin{lem}
Suppose that $F$ is a weak Heegaard invariant (in the sense of \cite{JTNaturality}*{Definition~2.24}) which satisfies all of the conditions of a strong Heegaard invariant (\cite{JTNaturality}*{Definition~3.23}) except possibly handleswap invariance. If $F$ has no monodromy around basepoint-adjacent handleswaps, then it has no monodromy around arbitrary handleswaps.
\end{lem}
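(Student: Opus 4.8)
The strategy is to show that an arbitrary simple handleswap loop is ``conjugate'' (in the graph of Heegaard moves) to a basepoint-adjacent one, so that triviality of the monodromy around the latter forces triviality around the former. Let
\[
\begin{tikzcd}
&\cH_1\arrow[rd,"e"] & \\
\cH_3 \arrow[ur,"g"] && \cH_2 \arrow[ll,"f"]
\end{tikzcd}
\]
be a simple handleswap loop, occurring in a genus two region $\cG$ connect-summed to an unchanging diagram $\cH_u=(\Sigma,\as_u,\bs_u)$ at a point $p$ which is in general far from the basepoint $w$. The point is that $p$ and $w$ both lie on the connected Heegaard surface $\Sigma\# \Sigma_0$, so we may choose an embedded arc $\gamma$ on $\Sigma\#\Sigma_0$ from $w$ to $p$ which avoids all the attaching curves of $\cH_u$ (this is possible after first handlesliding the curves of $\cH_u$ away from a neighborhood of $\gamma$; such handleslides change nothing in the genus two region). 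Sliding the entire genus two summand along $\gamma$ realizes a sequence of Heegaard moves (handleslides and isotopies, all supported away from the genus two region) relating each $\cH_i$ to a diagram $\cH_i'$ in which the genus two region is adjacent to $w$, and the handleswap loop for the $\cH_i'$ is basepoint-adjacent.

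The key step is to check that this ``sliding'' procedure can be carried out \emph{compatibly} with the three edges $e$, $f$, $g$ of the loop. Concretely, I would produce Heegaard moves $a_i\colon \cH_i\to \cH_i'$ (each a composition of handleslides/isotopies supported in the complement of the genus two region) together with the obvious basepoint-adjacent handleswap $e',f',g'$ among the $\cH_i'$, such that the squares
\[
\begin{tikzcd}
\cH_1 \arrow[r,"e"]\arrow[d,"a_1"] & \cH_2 \arrow[d,"a_2"]\\
\cH_1' \arrow[r,"e'"] & \cH_2'
\end{tikzcd}
\qquad\text{and similarly for }f,g
\]
commute up to the distinguished 2-cells of $\cG^\d_{(M,\g)}$ listed in Proposition~\ref{prop:simply-connected}. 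Indeed, commutation of a handleslide supported in the genus two region with handleslides supported in its complement is exactly a ``commutation of alpha/beta equivalences'' type 2-cell; commutation with the diffeomorphism $g$ (which is supported in the genus two region) is likewise among the allowed relations since $g$ commutes with any move supported in the disjoint complement. By Proposition~\ref{prop:simply-connected}, filling these commuting squares shows that the composite loop $a_1^{-1}\,g'f'e'\,a_1$ is homotopic, rel the 2-cells of $\cG^\d$, to $gfe$; since $F$ (equivalently $\CFI$, together with its transition maps) is assumed to have no monodromy around all the loops (1)--(5) and around basepoint-adjacent handleswaps, and since the transition maps compose functorially (Theorem~\ref{thm:naturality-fixed-Sigma}), we get $F(gfe)=F(a_1)^{-1}\circ F(g'f'e')\circ F(a_1)=F(a_1)^{-1}\circ \id\circ F(a_1)=\id$.

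I expect the main obstacle to be the bookkeeping in the previous paragraph: verifying that \emph{every} square one needs to fill is built only out of the distinguished 2-cells (1)--(5) of $\cG^\d_{(M,\g)}$, rather than some larger class of relations. This requires being a little careful that the arc $\gamma$ can be chosen disjoint from all relevant attaching curves simultaneously, that sliding the genus two summand along $\gamma$ decomposes into \emph{elementary} handleslides each of which is disjoint from the genus two region, and that moving the basepoint along $\gamma$ (if one prefers to move $w$ rather than the genus two region) is handled by the diffeomorphism edges in (4)--(5) together with the continuity axiom (Proposition~\ref{prop:continuity}). Modulo this combinatorial care, the argument is a direct adaptation of the ``sliding into standard position'' technique used throughout \cite{JTNaturality}*{Section~9}. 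Once these commuting squares are in place, the conclusion follows formally from the functoriality of transition maps and the simple-connectivity statement of Proposition~\ref{prop:simply-connected}, exactly as above.
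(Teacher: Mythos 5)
Your proposal is correct and is essentially the paper's argument: both relate an arbitrary simple handleswap loop to a basepoint-adjacent one by handlesliding the curves of $\as_u\cup\bs_u$ across the connected-sum point $p$ along an arc joining $p$ to $w$, fill the resulting comparison squares with the distinguished rectangles of commuting $\alpha$/$\beta$-equivalences, and conclude by conjugation. The only differences are organizational (the paper runs an induction on $|(\as_u\cup\bs_u)\cap\lambda|$, clearing one intersection per step) and a small imprecision on your part: the clearing handleslides are not ``supported away from the genus two region'' --- they slide curves of $\cH_u$ \emph{over} that region --- but since the handleswap curves themselves are unmoved, the squares are still distinguished rectangles and the argument goes through.
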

\begin{proof} 
Let $p\in \Sigma$ denote the connected sum point. Pick an arc $\lambda$ which connects $p$ to the basepoint $w$. We proceed by induction on the number of intersections of $(\as_u\cup \bs_u) \cap \lambda.$ The base case is the hypothesis. If $\cH$ is arbitrary, there is an equivalence $j$ between $\cH$ and a diagram $\cH'$ (obtained by handlesliding an alpha or beta curve over $p$) such that on $\cH'$, there is one fewer intersection of $(\as_u \cup \bs_u)\cap \lambda$. If $\cH'_1,$ $\cH_2'$ and $\cH_3'$ denote the diagrams obtained by connect summing the standard genus 2 diagrams from Figure~\ref{fig:handlewap-elem}, there is an analogous simple handleswap loop involving $\cH_1'$, $\cH_2'$ and $\cH_3'$. Write $e'$, $f'$ and $g'$ for the analogous equivalences. We have a diagram of equivalences
\[
\begin{tikzcd}[labels=description, column sep=.7cm, row sep=.9cm]
&&&\cH_1
	\ar[dddrrr, "e"]
	\ar[d, "j_1"]
&&&\\
&&&\cH_1'
	\ar[dr, "e'"]
\\
&&\cH_3'
	\ar[ur, "g'"]
	\ar[from=dll, "j_3"]
&& \cH_2'
	\ar[ll, "f'"]
	\ar[drr, "j_2"]
\\[-.3cm]
\cH_3
	\ar[uuurrr, "g"]
&&&&&&
\cH_2
	\ar[llllll, "f"]	
\end{tikzcd}
\]
The three outer rectanges are obtained from distinguished squares by inverting some of the arrows, which are isomorphsims. By hypothesis, $F$ commutes around these squares. Our inductive hypothesis is that  $F$ has no monodromy around the central triangle. This clearly implies $F$ has no monodromy around the outer triangle, completing the proof.
\end{proof}

\subsection{Proof of Theorem~~\ref{thm:handleswap}}

Let us introduce some notation.  For the curves in the handleswap region, we write $\as_0$, $\as_0^H$, $\bs_0$ and $\bs_0^H$. We write $\as^H$ for $\as_u\cup \as_0^H$, and similarly for $\bs_0^H$. Hence, for the three diagrams appearing in the handleswap loop, we write
\[
\cH_1=(\Sigma\# \Sigma_0,\as,\bs), \quad \cH_2=(\Sigma\# \Sigma_0, \as^H, \bs),\quad \text{and} \quad \cH_3=(\Sigma\# \Sigma_0, \as^H, \bs^H).
\]

Similarly, let $\Ds_u$ be a collection of doubling curves on $\Sigma\# \bar \Sigma$. Let $\Ds_0$ be choice of doubling curves on $\Sigma_0\# \bar \Sigma_0$, as shown in Figure~\ref{fig:involutive-handleswap}. Write $\Ds$ for $\Ds_u\cup \Ds_0$. 

We write $\frD$ for $\cH$, decorated with $\Ds_u$. We pick a collection of four doubling curves $\Ds_0$ on $\Sigma_0\# \bar \Sigma_0$. We decorate $\cH_1,$ $\cH_2$, $\cH_3$ with $\Ds$ to form the doubling enhanced diagrams $\frD_1$, $\frD_2$, and $\frD_3$, respectively. 

\begin{figure}[ht]
\centering
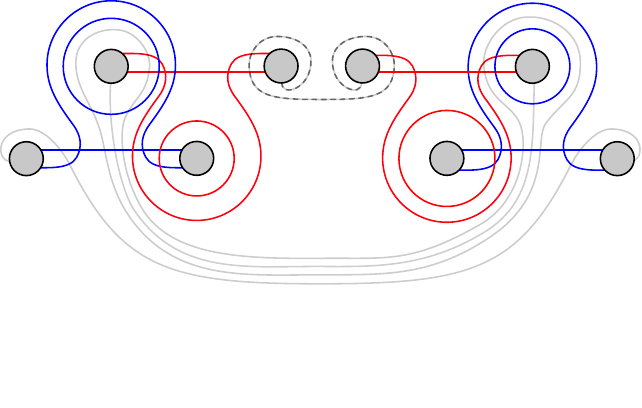
\caption{The double of the handleswap region. The left side is $\Sigma$, and the right is $\bar \Sigma$.}
\label{fig:involutive-handleswap}
\end{figure}

We note that for $i\in \{1,2,3\}$, there are canonical isomorphisms 
\begin{equation}
\CFI(\frD)\iso \CFI(\frD_i), \label{eq:canonical-iso}
\end{equation}
 given by $\xs\mapsto \ve{x}\times  \ve{c}_i,$
extended $\bF[U,Q]/Q^2$-equivariantly. Here, we write $\ve{c}_i$ for the unique intersection points
\[
\ve{c}_1\in \bT_{\a_0}\cap \bT_{\b_0}, \quad  \ve{c}_2\in \bT_{\a_0^H}\cap \bT_{\b_0}, \quad \text{and} \quad \ve{c}_3\in \bT_{\a_0^H}\cap \bT_{\b_0^H}.
\]

In fact, we have the following:
\begin{lem}
\label{lem:admissibility-handleswap-1}
The genus 2 diagram shown in Figure~\ref{fig:involutive-handleswap} is weakly admissible. Furthermore, each of the following subdiagrams has vanishing differential.
\begin{enumerate}
\item $(\Sigma_0\# \bar \Sigma_0,\bs_0\bar \bs_0, \Ds_0,w)$
\item $(\Sigma_0\# \bar \Sigma_0,\bs^H_0\bar \bs^H_0, \Ds_0,w)$.
\item $(\Sigma_0\# \bar \Sigma_0,\Ds_0, \as_0 \bar \as_0,w)$.
\item $(\Sigma_0\# \bar \Sigma_0,\Ds_0, \as^H_0\bar \as^H_0,w)$.
\item $(\Sigma_0\# \bar \Sigma_0,\as_0 \bar \bs_0, \Ds_0,w)$.
\item $(\Sigma_0\# \bar \Sigma_0,\as^H_0 \bar \bs^H_0, \Ds_0,w)$.
\end{enumerate}
\end{lem}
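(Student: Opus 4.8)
The claim has two halves: weak admissibility of the genus $2$ doubled handleswap diagram (Figure~\ref{fig:involutive-handleswap}), and vanishing of the differential on each of the six listed Heegaard tuples. Both are local computations in the genus $2$ surface $\Sigma_0 \# \bar\Sigma_0$, so the entire proof is diagrammatic and does not interact with the ``unperturbed'' part $\Sigma$; by the connected-sum stabilization philosophy (and the fact that admissibility and $\partial = 0$ are checked on the small diagram before summing), it suffices to work entirely in $\Sigma_0 \# \bar\Sigma_0$ with its four doubling curves $\Ds_0$. I will set up explicit names for the four $\delta$-arcs and their doubles, and for the handleswap curves $\alpha_1,\alpha_2,\beta_1,\beta_2$ and their handleslid versions $\alpha_1^H,\beta_1^H$, matching Figure~\ref{fig:handlewap-elem}.

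\textbf{Step 1: weak admissibility.} I would argue that $(\Sigma_0 \# \bar\Sigma_0, \text{all curves}, w)$ is weakly admissible in the sense of Remark~\ref{rem:admissibility-unconventional}: any nonzero integral $2$-chain $P$ whose boundary is a linear combination of the (un-oriented collections of) curves and with $n_w(P) = 0$ must have both positive and negative local multiplicities. The standard way to see this is to choose the doubling arcs $\delta_i$ and the small Hamiltonian perturbations so that each region of the diagram either touches $w$ or is bounded in a way that forces a sign change; equivalently, one winds the curves as needed. Since all the curves in play are either doubled arcs or handleslid copies of a fixed genus $2$ configuration, this is a finite check on the picture in Figure~\ref{fig:involutive-handleswap}, and I would phrase it as: the periodic domains are generated by thin annular regions on which any nonzero multiple changes sign, so the diagram is admissible. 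This is routine but is the step that most needs the figure to be consulted; I would relegate it to ``it is straightforward to verify from Figure~\ref{fig:involutive-handleswap}.''

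\textbf{Step 2: the six differentials vanish.} Each of the six tuples is a two-pointed genus $2$ diagram — actually a pair $(\Sigma_0 \# \bar\Sigma_0, \gs, \gs', w)$ with $|\gs| = |\gs'| = 4$ — and I claim each represents $(S^1\times S^2)^{\# g'}$ for the appropriate $g'$ (or a diagram with the minimal number of generators), with $\widehat{\CF}$ concentrated so that the differential must vanish. Concretely: $(\Sigma_0\#\bar\Sigma_0, \bs_0\bar\bs_0, \Ds_0, w)$ and its variants are doubled diagrams of $(S^1\times S^2)^{\#2}$, and with the chosen $\Ds_0$ the intersection points realize exactly $2^{|\Ds_0|-1}$ generators with no index-$1$ disks (this is how $\Ds_0$ is chosen in the figure — the doubled arcs are isotoped to meet the beta/alpha curves minimally). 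For the tuples of the form $(\as_0\bar\bs_0, \Ds_0, w)$, the same minimality holds. I would verify this by exhibiting, for each of the six diagrams, that every bigon class either passes through $w$ or has a boundary obstruction, so $\partial = 0$; the cleanest formulation is that each subdiagram is a multi-stabilizing, algebraically rigid diagram in the sense of Section~\ref{sec:polygons} (cf. the Example after that definition), which is exactly the statement that $\partial = 0$ on $\widehat{\CF}$ of the relevant pairs.

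\textbf{Main obstacle.} The only genuine content is making sure the specific choice of $\Ds_0$ drawn in Figure~\ref{fig:involutive-handleswap} simultaneously achieves (a) weak admissibility of the full multi-diagram and (b) minimality of generators for all six pairs, including the handleslid versions $\as_0^H, \bs_0^H$. Since $\as_0^H$ (resp. $\bs_0^H$) differs from $\as_0$ (resp. $\bs_0$) by a single handleslide in the standard handleswap picture, one must check the handleslid pairs don't introduce extra intersection points with $\Ds_0$; I would handle this by choosing the $\delta$-arcs supported away from the handleslide support, or by noting the handleslide can be realized by an isotopy of the diagram that is the identity near $\Ds_0$. After that, the verification is a direct inspection of the figure, and I would write: ``The claims about admissibility and vanishing differentials are verified by direct inspection of Figure~\ref{fig:involutive-handleswap}, analogously to the genus~$0$ reductions in the proof of Lemma~\ref{lem:s->c-multi-stabilize-verify}; see also Figure~\ref{fig:7}.''
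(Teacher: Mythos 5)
Your overall strategy matches the paper's: admissibility is dispatched as a routine inspection of the figure, and the vanishing of the differentials is reduced to a minimality-of-generators argument. However, two points in your Step 2 need repair. First, your generator count is off: you claim the pairs of the form $(\Sigma_0\#\bar\Sigma_0,\bs_0\bar\bs_0,\Ds_0,w)$ have $2^{|\Ds_0|-1}=8$ generators, but the correct count — and the one that makes the argument work — is $4$. These four diagrams represent $\#^2(S^1\times S^2)$, whose $\widehat{\HF}$ has rank $4$, so a diagram with exactly $4$ generators must have $\d=0$; with $8$ generators the rank comparison would prove nothing. (Likewise the last two diagrams represent $S^3$ and have a \emph{unique} intersection point, which you should state explicitly rather than fold into the ``appropriate $g'$'' phrasing.) Second, your ``cleanest formulation'' — that each subdiagram is algebraically rigid, ``which is exactly the statement that $\d=0$'' — is circular as written: algebraic rigidity is by definition the conclusion you are trying to establish, so invoking it is not an argument. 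The non-circular version is precisely the rank comparison above: count the intersection points of $\bT_{\gamma}\cap\bT_{\gamma'}$ in the figure, identify the $3$-manifold each pair represents, and observe that the number of generators equals the rank of its homology. With those corrections your proof coincides with the paper's; your additional worry about whether the handleslid curves $\as_0^H,\bs_0^H$ create extra intersections with $\Ds_0$ is legitimate but is settled by the same direct inspection of Figure~\ref{fig:involutive-handleswap}.
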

\begin{proof} Admissibility is a straightforward computation which we leave to the reader. To check that each of the stated complexes has vanishing differential, it suffices to check that each has the same number of generators as its homology. The first four subdiagrams represent a complex for $\#^2 S^1\times S^2$ and also have exactly four generators. Similarly the final two subdiagrams represent $S^3$, and each have a unique intersection point.
\end{proof}

By definition,
\[
\CFI(e)=\Psi_{\frD_1\to \frD_2}\quad \text{and} \quad \CFI(f)=\Psi_{\frD_2\to \frD_3},
\]
where $\Psi_{\frD_i\to \frD_{i+1}}$ is obtained by compressing a hyperbox as in Figure~\ref{def:transition-map-elementary-handleslide}. Computing the composition $\Psi_{\frD_2\to \frD_3}\circ \Psi_{\frD_1\to \frD_2}$ is somewhat inefficient, however, since the involutive transition maps naturally allow for changing both the alpha and the beta curves simultaneously. Instead, we will consider the map $\Psi_{\frD_1\to \frD_3}$ obtained by by compressing the hyperbox in Figure~\ref{def:transition-map-elementary-handleslide}. (See Equation~\eqref{eq:Psi_1->3}, below). By Theorem~\ref{thm:naturality-fixed-Sigma},
\begin{equation}
\CFI(f)\circ \CFI(e)=\Psi_{\frD_2\to \frD_3}\circ \Psi_{\frD_1\to \frD_2}\simeq \Psi_{\frD_1\to \frD_3}.
\label{eq:f-e-composed}
\end{equation}

To prove Theorem~\ref{thm:handleswap}, we also need to understand the diffeomorphism map $\CFI(g)$. We note that the diffeomorphism $g$ does not fix $\Ds_0$. Hence,  by definition, this map is the composition
\begin{equation}
\CFI(g):=\Psi_{g(\frD_3)\to \frD_1}\circ T_g,
\label{eq:CFI-g-def}
\end{equation}
which we explain presently. The map $T_g$ is the tautological map which sends an intersection point $\xs\in \bT_{\a \a_0^H}\cap \bT_{\b \b_0^H}$ to its image in $\bT_{\a \a_0}\cap \bT_{\b \b_0}$ under $g$, extended $\bF[U,Q]/Q^2$-equivariantly. Also, $g(\frD_3)$ is obtained by enhancing the Heegaard diagram $\cH_1$ with the doubling curves $\Ds_u\cup \Ds_0'$, where $\Ds_0'=g(\Ds_0)$. The map $\Psi_{g(\frD_3)\to \frD_1}$ is built by compressing a hyperbox similar to the one in Figure~\ref{def:transition-map-elementary-handleslide} which realizes the change of doubling curves.

 With respect to the canonical isomorphisms of~\eqref{eq:canonical-iso}, the map $T_g$ is clearly equal to the identity map.  Theorem~\ref{thm:handleswap} follows from  ~\eqref{eq:f-e-composed} and~\eqref{eq:CFI-g-def} and the following result.
 \begin{prop}\label{prop:handleswap-factors=identity}
  With respect to the canonical isomorphisms in~\eqref{eq:canonical-iso}, the maps $\Psi_{\frD_1\to \frD_3}$ and $\Psi_{g(\frD_3)\to \frD_1}$ are chain homotopic to the identity map.
 \end{prop}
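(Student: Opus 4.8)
The strategy is to exploit the near-basepoint structure of the handleswap, following the pattern of the computations in Section~\ref{sec:stable} (in particular Lemma~\ref{lem:stabilization-near-basepoint}) and of Ozsv\'ath--Szab\'o's original proof of handleswap invariance for $\CF^-$. First I would reduce, using the preceding lemma, to the case of a \emph{basepoint-adjacent} handleswap, so that the special genus~$2$ region of Figure~\ref{fig:handlewap-elem} sits next to $w$ and the doubling curves $\Ds_0$ are the explicit four curves of Figure~\ref{fig:involutive-handleswap}. Then I would write out the hyperbox whose compression is $\Psi_{\frD_1\to \frD_3}$ (the analog of Figure~\ref{def:transition-map-elementary-handleslide}, but changing $\as$ and $\bs$ simultaneously) as a connected sum of a ``universal'' hyperbox built from $\cH=(\Sigma,\as_u,\bs_u)$ and $\Ds_u$ with a ``local'' hyperbox built entirely from the genus~$2$ region. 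By Lemma~\ref{lem:admissibility-handleswap-1}, every Floer complex appearing in the local hyperbox has vanishing differential and the minimal possible number of generators; in particular all the local triangle, quadrilateral, and higher polygon maps are determined by grading considerations, and the relevant local diagrams are algebraically rigid multi-stabilizing diagrams in the sense of Section~\ref{sec:polygons}.

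The key step is then to apply Proposition~\ref{prop:multi-stabilization-counts} (and the special cases in the ensuing remarks) to destabilize the genus~$2$ region out of every polygon count in the hyperbox for $\Psi_{\frD_1\to\frD_3}$. Concretely, one shows that each length-$k$ arrow of the compressed hyperbox, when evaluated on an element of the form $\xs\times \ve{c}_1$, factors as $\big(\text{the corresponding arrow of the universal hyperbox applied to }\xs\big)\times (\text{a local generator})$, up to terms with strictly higher local grading. The vanishing-differential and minimal-generator properties from Lemma~\ref{lem:admissibility-handleswap-1} pin the surviving local generator down to $\ve{c}_i$ at the appropriate stage, and they force all the off-diagonal ``error'' terms in Proposition~\ref{prop:multi-stabilization-counts} to die upon composition with the subsequent $3$-handle maps (exactly as in the proof of Lemma~\ref{lem:stabilization-near-basepoint}, using that terms carrying a $\theta^+$-type factor lie in the kernel of $F_3$). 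What remains after destabilization is precisely the compression of the \emph{universal} hyperbox from $\frD$ to $\frD$ with no curves changed, i.e.\ the transition map $\Psi_{\frD\to\frD}$; by Part~\eqref{thm:naturality-fixed-Sigma-2} of Theorem~\ref{thm:naturality-fixed-Sigma} (Proposition~\ref{prop:continuity}, in fact), this is chain homotopic to the identity. Tracking the generator $\ve{c}_1\mapsto \ve{c}_3$ through the destabilization identifies the result with the identity under the canonical isomorphisms~\eqref{eq:canonical-iso}.

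For $\Psi_{g(\frD_3)\to \frD_1}$ the argument is the same, with one extra point: here the underlying Heegaard diagram $\cH_1$ is unchanged and only the doubling curves change from $\Ds_u\cup g(\Ds_0)$ to $\Ds_u\cup \Ds_0$, so the universal part is literally constant and the only content is in the local genus~$2$ region. Since $g$ acts on that region by the diffeomorphism of Figure~\ref{fig:handlewap-elem}, the curves $g(\Ds_0)$ and $\Ds_0$ again bound the minimal-generator, vanishing-differential local complexes of Lemma~\ref{lem:admissibility-handleswap-1} (after checking the mild admissibility needed, which is Lemma~\ref{lem:admissibility-handleswap-1} applied to the diffeomorphic picture), so Proposition~\ref{prop:multi-stabilization-counts} destabilizes everything and what is left is $\Psi_{\frD\to\frD}\simeq\id$, while locally $\ve{c}_3\mapsto \ve{c}_1$. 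Combining the two halves with~\eqref{eq:f-e-composed} and~\eqref{eq:CFI-g-def} gives Theorem~\ref{thm:handleswap}.

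\textbf{Main obstacle.} The delicate part is \emph{not} the topology but the bookkeeping: one must verify that \emph{every} polygon map in the (fairly large) hyperbox for $\Psi_{\frD_1\to\frD_3}$ destabilizes cleanly, and in particular that the higher ``error'' terms produced by Proposition~\ref{prop:multi-stabilization-counts} — which a priori have the correct grading to contribute to a diagonal map of the compressed hyperbox — are annihilated by the $3$-handle maps further down the hyperbox. This requires carefully tracking which local generator ($\theta^+$ versus $\theta^-$ type) sits in each factor at each vertical level, exactly as in the case analysis of Lemma~\ref{lem:stabilization-near-basepoint}; the genus~$2$ region has four generators rather than two, so there are more cases, but no new phenomena. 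A secondary subtlety is choosing the four doubling curves $\Ds_0$ (and hence $g(\Ds_0)$) so that the needed weak admissibility and algebraic rigidity hold simultaneously for all six subdiagrams of Lemma~\ref{lem:admissibility-handleswap-1}; this is a finite explicit check on a fixed genus~$2$ picture.
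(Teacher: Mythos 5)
Your overall strategy --- excise the genus-$2$ handleswap region from every polygon count via Proposition~\ref{prop:multi-stabilization-counts} together with the algebraic rigidity supplied by Lemma~\ref{lem:admissibility-handleswap-1}, then identify the remainder with $\Psi_{\frD\to\frD}\simeq\id$ --- is the same as the paper's, and the way you dispose of the off-diagonal error terms (they carry a top-degree local factor and die under the $3$-handle maps) is also how the paper argues. There are, however, two concrete gaps.

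First, you cannot apply the destabilization results directly to the hyperbox~\eqref{eq:Psi_1->3} computing $\Psi_{\frD_1\to\frD_3}$: the almost complex structures that are part of the data of $\frD_1$ and $\frD_3$ are pinched along the meridian $c_0$ of the doubling tube, and $c_0$ does \emph{not} separate the doubled handleswap region $\Sigma_0\#\bar\Sigma_0$ from the rest of $\Sigma\#\bar\Sigma$. Proposition~\ref{prop:multi-stabilization-counts} needs an almost complex structure which is degenerate along a curve bounding the region you wish to excise, here the curve $c_\delta$ enclosing both copies of the genus-$2$ region. The paper first inserts a hyperbox of change-of-almost-complex-structure maps (Equation~\eqref{eq:Psi_1->3-cx-structures}) to pass from the model pinched along $\{c_0,c_+,c_-\}$ to one pinched along $\{c_+,c_-,c_\delta\}$; only in the latter model is your ``universal $\#$ local'' decomposition of the hyperbox available. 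Your proposal omits this step.

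Second, your treatment of $\Psi_{g(\frD_3)\to\frD_1}$ assumes the change of doubling curves $g(\Ds_0)\to\Ds_0$ can be destabilized in a single step. The hypothesis of Proposition~\ref{prop:multi-stabilization-counts} is that the local multi-diagram be algebraically rigid, i.e.\ that $\widehat{\CF}$ of each adjacent pair of local attaching curves have vanishing differential. Lemma~\ref{lem:admissibility-handleswap-1} says nothing about the pair $(g(\Ds_0),\Ds_0)$, and since $g$ acts nontrivially on the handleswap region there is no reason for $\widehat{\CF}(\Sigma_0\#\bar\Sigma_0, g(\Ds_0),\Ds_0,w)$ to have vanishing differential or the minimal number of generators. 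The paper instead interpolates with an explicit chain $g(\Ds_0)=\Ds_0^0,\dots,\Ds_0^6=\Ds_0$ of doubling curves, each obtained from the previous by a handleslide and each satisfying the required admissibility and vanishing-differential conditions (Figure~\ref{fig:handleswaps-Delta}), and computes $\Psi_{g(\frD_3)\to\frD_1}$ as the composite of the six corresponding transition maps, each intertwined with the identity. You would need to supply such an interpolation (or another argument) before your destabilization claim applies to this map.
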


\begin{proof}
 We focus first on the computation of $\Psi_{\frD_1\to \frD_3}$. By definition, this map is obtained by compressing the following hyperbox:
\begin{equation}
\begin{tikzcd}[labels=description, row sep=1cm]
\CF(\as,\bs)
	\ar[r]
	\ar[d, "F_{1}^{\bar{\b},\bar{\b}}"]
& \CF(\as^H,\bs)
	\ar[r]
	\ar[d, "F_{1}^{\bar{\b}^H,\bar{\b}}"]
& \CF(\as^H,\bs^H)
	\ar[d,"F_{1}^{\bar{\b}^H,\bar{\b}^H}"]
\\
\CF(\as \bar{\bs}, \bs \bar{\bs})
	\ar[r]
	\ar[d]
	\ar[dr,dashed]
& \CF(\as^H\bar{\bs}^H, \bs \bar{\bs})
	\ar[r]
	\ar[d]
	\ar[dr,dashed]
& \CF(\as^H \bar{\bs}^H, \bs^H \bar{\bs}^H)
	\ar[d]
\\
\CF(\as \bar \bs,\Ds)
	\ar[r]
	\ar[d]
	\ar[dr,dashed]
& \CF(\as^H\bar \bs^H,\Ds)
	\ar[r]
	\ar[d]
	\ar[dr,dashed]
& \CF(\as^H\bar \bs^H,\Ds)
	\ar[d]
\\
\CF(\as\bar \bs, \as \bar \as)
	\ar[r]
	\ar[d, "F_3^{\a,\a }"]
&
\CF(\as^H \bar \bs^H, \as \bar \as)
	\ar[r]
	\ar[d, "F_3^{\a^H,\a }"]
&
 \CF(\as^H \bar{\bs}^H, \as^H \bar{\as}^H)
	\ar[d, "F_3^{\a^H,\a^H}"]
\\
\CF(\bar{\bs}, \bar{\as})
	\ar[r]
	\ar[drr,dashed]
	\ar[d]
& \CF(\bar{\bs}^H,\bar{\as})
	\ar[r]
& \CF(\bar{\bs}^H,\bar{\as}^H)
	\ar[d]
\\
\CF(\bar \bs,\bar \as)
	\ar[r]
&
\CF(\bar \bs, \bar \as^H)
	\ar[r]
&
\CF(\bar \bs^H,\bar \as^H)
\end{tikzcd}
\label{eq:Psi_1->3}
\end{equation}
We note that in the above diagram, each polygon counting map occurs on a connected sum. One summand is $\Sigma$ or $\Sigma\# \bar \Sigma$ while the other summand is the genus 2 subsurface $\Sigma_0$, or its double. We will argue by viewing the diagram as a multi-stabilization and applying Lemma~\ref{lem:admissibility-handleswap-1} and Proposition~\ref{prop:multi-stabilization-counts}.

 There is a subtlety in that the almost complex structures used to  construct the transition map Equation~\eqref{eq:Psi_1->3} are not naturally adapted to the genus four stabilization region where the handleswap occurs. Instead, we must first argue that we can change the above composition to one which is naturally adapted to the handleswap. To do so, we consider the closed curves on $\Sigma\# \bar \Sigma$ shown in Figure~\ref{fig:12}. We label these as $c_+,$ $c_-$, $c_{\delta}$ and $c_0$. These are as follows:
 \begin{enumerate}
 \item $c_0$ is a meridian of the connected sum tube used in the doubling operation.
 \item $c_+$ is a closed curve on $\Sigma$ which bounds the handleswap region and the connected sum point.
 \item $c_-$ is a symmetric closed curve on $\bar \Sigma$, and bounds the handleswap region as well as a the connected sum point.
 \item $c_\delta$ is a closed curve on $\Sigma\# \bar \Sigma$ which bounds the union of both handleswap regions. Also, $c_{\delta}$ is disjoint from all $\Ds$ curves.
 \end{enumerate}

\begin{figure}[ht]
\centering
\begingroup%
  \makeatletter%
  \providecommand\color[2][]{%
    \errmessage{(Inkscape) Color is used for the text in Inkscape, but the package 'color.sty' is not loaded}%
    \renewcommand\color[2][]{}%
  }%
  \providecommand\transparent[1]{%
    \errmessage{(Inkscape) Transparency is used (non-zero) for the text in Inkscape, but the package 'transparent.sty' is not loaded}%
    \renewcommand\transparent[1]{}%
  }%
  \providecommand\rotatebox[2]{#2}%
  \newcommand*\fsize{\dimexpr\f@size pt\relax}%
  \newcommand*\lineheight[1]{\fontsize{\fsize}{#1\fsize}\selectfont}%
  \ifx\svgwidth\undefined%
    \setlength{\unitlength}{157.16056521bp}%
    \ifx\svgscale\undefined%
      \relax%
    \else%
      \setlength{\unitlength}{\unitlength * \real{\svgscale}}%
    \fi%
  \else%
    \setlength{\unitlength}{\svgwidth}%
  \fi%
  \global\let\svgwidth\undefined%
  \global\let\svgscale\undefined%
  \makeatother%
  \begin{picture}(1,0.88286572)%
    \lineheight{1}%
    \setlength\tabcolsep{0pt}%
    \put(0,0){\includegraphics[width=\unitlength,page=1]{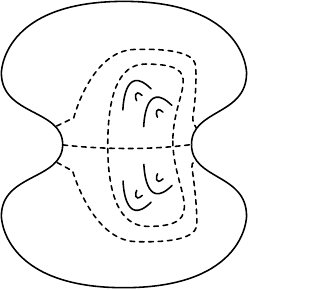}}%
    \put(0.39598671,0.44998143){\makebox(0,0)[lt]{\lineheight{1.25}\smash{\begin{tabular}[t]{l}$c_0$\end{tabular}}}}%
    \put(0.60949483,0.64031009){\makebox(0,0)[lt]{\lineheight{1.25}\smash{\begin{tabular}[t]{l}$c_+$\end{tabular}}}}%
    \put(0.61074652,0.23787373){\makebox(0,0)[lt]{\lineheight{1.25}\smash{\begin{tabular}[t]{l}$c_-$\end{tabular}}}}%
    \put(0.32940176,0.53815504){\makebox(0,0)[rt]{\lineheight{1.25}\smash{\begin{tabular}[t]{r}$c_\delta$\end{tabular}}}}%
    \put(0,0){\includegraphics[width=\unitlength,page=2]{fig12.pdf}}%
    \put(0.7930438,0.64567999){\makebox(0,0)[lt]{\lineheight{1.25}\smash{\begin{tabular}[t]{l}$\Sigma$\end{tabular}}}}%
    \put(0.77819703,0.26226407){\makebox(0,0)[lt]{\lineheight{1.25}\smash{\begin{tabular}[t]{l}$\bar\Sigma$\end{tabular}}}}%
    \put(0,0){\includegraphics[width=\unitlength,page=3]{fig12.pdf}}%
  \end{picture}%
\endgroup%

\caption{Curves along which we degenerate the almost complex structure.}
\label{fig:12}
\end{figure}

Write $J_{c_0}$ for an almost complex structure or family of almost complex structures which is nodal along $c_0$, and use similar notation to define almost complex structures degenerated on another curve or collections of pairwise disjoint curves on $\Sigma\# \bar \Sigma$.

We introduce some shorthand. Write $\CF(L_1),\dots, \CF(L_6)$ for the six horizontal levels of Equation~\eqref{eq:Psi_1->3}. (These are themselves 1-dimensional hyperboxes of chain complexes). By using change of almost complex structure hypercubes, similar to \cite{HHSZExact}*{Section~13}, we may assume that the complexes and maps in Equation~\eqref{eq:Psi_1->3} are computed with respect to almost complex structures which are \emph{maximally pinched} along $c_0$, $c_+$ and $c_-$, in the sense of \cite{HHSZExact}*{Section~8.3}. This gives the map $\Psi_{\frD_1\to \frD_3}$, as defined in the earlier sections of the present paper. On the other hand, using similar techniques, we may use change of almost complex structure hypercubes to relate this model to one which is maximally pinched along $c_+$, $c_-$ and $c_\delta$. This gives a hyperbox which we schematically indicate by the following diagram:
\begin{equation}
\begin{tikzcd}[labels=description, row sep=.8cm, column sep=4cm]
\CF_{J_{c_+}}(L_1)
	\ar[r, "\id"]
	\ar[d, "F_{1}"]
& \CF_{J_{c_+}}(L_1)
	\ar[d, "F_{1}"]
\\
\CF_{J_{c_+,c_-,c_0}}(L_2)
	\ar[r,"\Psi_{J_{c_+,c_-,c_0}\to J_{c_+,c_-,c_\delta}}"]
	\ar[d]
	\ar[dr,dashed]
& \CF_{J_{c_+,c_-,c_\delta}}(L_2)
	\ar[d]
\\
\CF_{J}(L_3)
	\ar[r, "\Psi_{J\to J_{c_\delta}}"]
	\ar[d]
	\ar[dr,dashed]
& \CF_{J_{c_\delta}}(L_3)
	\ar[d]
\\
\CF_{J_{c_+,c_-,c_0}}(L_4)
	\ar[r, "\Psi_{J_{c_+,c_-,c_0}\to J_{c_+,c_-,c_\delta}}"]
	\ar[d, "F_3"]
&
\CF_{J_{c_+,c_-,c_\delta}}(L_4)
	\ar[d, "F_3"]
\\
\CF_{J_{c_-}}(L_5)
	\ar[r,"\id"]
	\ar[dr,dashed]
	\ar[d]
& \CF_{J_{c_-}}(L_5)
	\ar[d]
\\
\CF_{J_{c_-}}(L_6)
	\ar[r, "\id"]
&
\CF_{J_{c_-}}(L_6)
\end{tikzcd}
\label{eq:Psi_1->3-cx-structures}
\end{equation}

The hyperbox in Equation~\eqref{eq:Psi_1->3-cx-structures} realizes a chain homotopy between the model of $\Psi_{\frD_1\to \frD_3}$, computed with respect to almost complex structures which are maximally pinched along $c_+,$ $c_-$ and $c_0$, and a model of $\Psi_{\frD_1\to \frD_3}$ which is computed using almost complex structures which are maximally pinched along $c_+,$ $c_-$ and $c_\delta$.

In the model of $\Psi_{\frD_1\to \frD_3}$ appearing along the right hand side of Equation~\eqref{eq:Psi_1->3-cx-structures}, the holomorphic polygon counts all occur on a multi-stabilization of the diagram on $\Sigma$ or $\Sigma\# \bar \Sigma$. The stabilization results for triangles imply that the two length 1 maps along the top of ~\eqref{eq:Psi_1->3} compose to give a map which is intertwined with the identity under the canonical isomorphisms $\CF(\cH)\iso \CF(\cH_i)$ (similar to~\eqref{eq:canonical-iso}). (This is the content of \cite{JTNaturality}*{Propositions~9.31, 9.32}). 

It remains to show that none of the diagonal maps make non-trivial contribution to the compression of~\eqref{eq:Psi_1->3}. To this end, we make the following claims, which together imply that the compression has trivial diagonal map:
\begin{enumerate}
\item\label{compression-claim-1} The compression of the top two levels of~\eqref{eq:Psi_1->3} has trivial diagonal map.
\item\label{compression-claim-2} The compression of the third and fourth levels of~\eqref{eq:Psi_1->3} has trivial diagonal map.
\item\label{compression-claim-3} The bottom-most level of ~\eqref{eq:Psi_1->3} has trivial diagonal map.
\end{enumerate}
Consider first the claim~\eqref{compression-claim-3}. The diagonal map is a quadrilateral counting map on the connected sum $(\Sigma,\as_u,\as_u,\bs_u,\bs_u)\#(\Sigma_0,\as_0^H,\as_0,\bs_0,\bs_0^H)$. Applying the stabilization result from Proposition~\ref{prop:multi-stabilization-counts} to the setting of holomorphic quadrilaterals, we may destabilize the map to the diagram $(\Sigma,\as_u,\as_u,\bs_u,\bs_u)$. By the small translate theorem for quadrilaterals \cite[Proposition 11.5]{HHSZExact}, this map vanishes.

Next, consider~\eqref{compression-claim-1}. For this claim, we observe that the stabilization results for holomorphic triangles imply that the length 1 holomorphic triangle maps preserve the subspaces spanned by the top-degree generator (and furthermore, act tensorially on these subspaces). Furthermore, the same argument as for claim~\eqref{compression-claim-3} implies that the holomorphic quadrilateral maps vanish on elements which have a factor which is the top degree generator. Since the top degree generator is the output of the 1-handle maps on the top row, this implies that the compression has trivial diagonal map. Claim~\eqref{compression-claim-2} follows from an analogous argument. This completes the proof of the claim about $\Psi_{\frD_1\to \frD_3}$.

We now consider the claim about $\Psi_{g(\frD_3)\to \frD_1}$. This map may be computed by compressing the following diagram
\begin{equation}
\begin{tikzcd}[labels=description, row sep=1cm]
\CF(\as,\bs)
	\ar[r]
	\ar[d, "F_{1}^{\bar{\b},\bar{\b}}"]
& \CF(\as,\bs)
	\ar[r]
	\ar[d, "F_{1}^{\bar{\b},\bar{\b}}"]
& \CF(\as,\bs)
	\ar[d,"F_{1}^{\bar{\b},\bar{\b}}"]
\\
\CF(\as \bar{\bs}, \bs \bar{\bs})
	\ar[r]
	\ar[d]
	\ar[dr,dashed]
& \CF(\as\bar{\bs}, \bs \bar{\bs})
	\ar[r]
	\ar[d]
	\ar[dr,dashed]
& \CF(\as \bar{\bs}, \bs \bar{\bs})
	\ar[d]
\\
\CF(\as \bar \bs,g(\Ds))
	\ar[r]
	\ar[d]
	\ar[dr,dashed]
& \CF(\as\bar \bs,g(\Ds))
	\ar[r]
	\ar[d]
	\ar[dr,dashed]
& \CF(\as\bar \bs,\Ds)
	\ar[d]
\\
\CF(\as\bar \bs, \as \bar \as)
	\ar[r]
	\ar[d, "F_3^{\a,\a }"]
&
\CF(\as \bar \bs, \as \bar \as)
	\ar[r]
	\ar[d, "F_3^{\a,\a }"]
&
 \CF(\as \bar{\bs}, \as \bar{\as})
	\ar[d, "F_3^{\a,\a}"]
\\
\CF(\bar{\bs}, \bar{\as})
	\ar[r]
	\ar[drr,dashed]
	\ar[d]
& \CF(\bar{\bs},\bar{\as})
	\ar[r]
& \CF(\bar{\bs},\bar{\as})
	\ar[d]
\\
\CF(\bar \bs,\bar \as)
	\ar[r]
&
\CF(\bar \bs, \bar \as)
	\ar[r]
&
\CF(\bar \bs,\bar \as)
\end{tikzcd}
\label{eq:g(D)->D}
\end{equation}
The description in~\eqref{eq:g(D)->D} is not the most convenient for our purposes. Instead, we describe a sequence of doubling curves $g(\Ds_0)=\Ds_0^0,\dots, \Ds_0^6=\Ds_0$ on $\Sigma_0\# \bar \Sigma_0$, such that each $\Ds_0^i$ is obtained from $\Ds_0^{i-1}$ by a handleslide. Furthermore, we have the following properties for each $i\in \{0,\dots 6\}$:
\begin{enumerate}
\item The diagram
\[
(\Sigma_0\# \bar \Sigma_0,\as_0 \bar \as_0, \bs_0 \bar \bs_0, \Ds_0^i,\Ds_0^{i+1},w)
\]
is weakly admissible.
\item Each of the complexes $\CF(\as_0 \bar \bs_0, \Ds_0^i)$, $\CF(\as \bar \as, \Ds_0^i)$, $\CF(\bs \bar \bs, \Ds_0^i)$, and $\CF(\Ds_0^i,\Ds_0^{i+1})$ has vanishing differential.
\end{enumerate}
The curves $\Ds_0^0,\dots \Ds_0^6$ are shown in Figure~\ref{fig:handleswaps-Delta}. We leave it to the reader to verify the above claims. (Compare Lemma~\ref{lem:admissibility-handleswap-1}).

We compute the map $\Psi_{g(\frD_3)\to \frD_1}$ as a composition of 6 maps, corresponding to the transition maps for changing $\Ds_0^i$ to $\Ds_0^{i+1}$, each of which is computed by compressing a hyperbox similar to~\eqref{eq:g(D)->D}. The same argument as for $\Psi_{\frD_1\to \frD_3}$ implies that each of these maps is intertwined with the identity map by the canonical isomorphisms in ~\eqref{eq:canonical-iso}. The proof is complete.
\end{proof}

\begin{figure}[p]
\centering
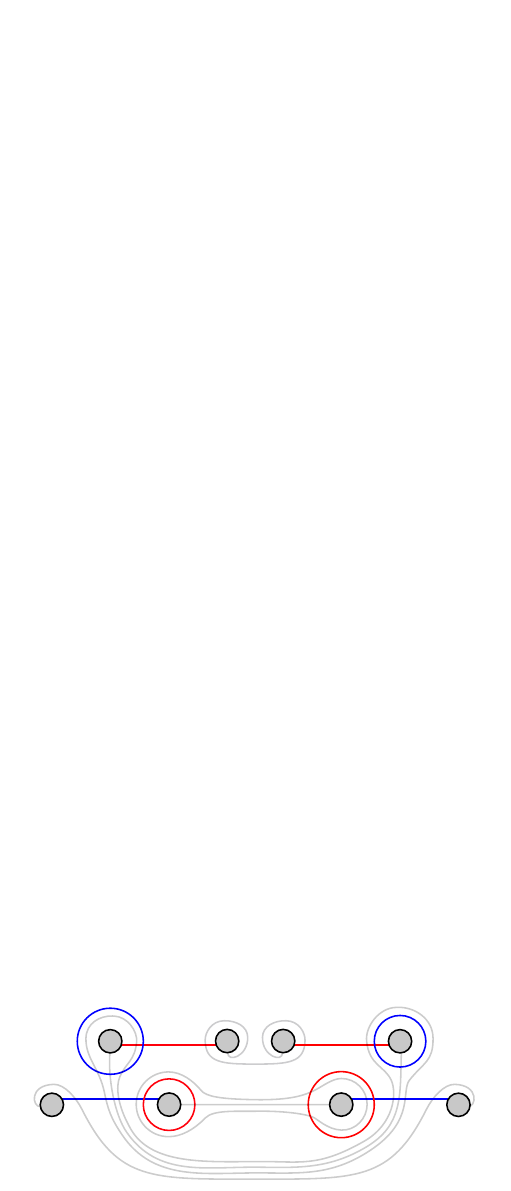
\caption{The curves $g(\Ds_0)=\Ds_0^0,\dots, \Ds_0^6=\Ds_0$, which relate $g(\Ds_0)$ and $\Ds_0$ by a sequence of handleslides. Each arrow indicates the subsequent handleslide.}
\label{fig:handleswaps-Delta}
\end{figure}

\section{Completing the proofs of naturality and functoriality} \label{sec:final-overview}

In this section, we complete our proofs of naturality and functoriality.

\subsection{Naturality}

If $\frD$ and $\frD'$ are two arbitrary doubling-enhanced Heegaard diagrams (equipped with a framing at the basepoint), we define our transition map $\Psi_{\frD\to \frD'}$ by composing the maps for an arbitrary sequence stabilizations, elementary equivalences of attaching curves, and pointed isotopies of the Heegaard diagram in $(Y,w)$.

We now prove Theorem \ref{thm:final-naturality}, showing that this transition map $\Psi_{\frD\to \frD'}$ defined above is independent up to chain homotopy from the sequence of Heegaard moves.

\begin{proof}[Proof of Theorem \ref{thm:final-naturality}] By \cite{JTNaturality}*{Theorem~2.38} and as in Section \ref{sec:naturality}, it suffices to show that the transition maps we have defined satisfy the axioms of a strong Heegaard invariant \cite{JTNaturality}*{Definition~2.32}.  We verify these axioms presently.

The \emph{functoriality} axiom asserts two claims. Firstly, it asserts that we have well defined morphisms associated to handleslide equivalences of alpha curves and morphisms associated to handleslide equivalences of beta curves (for a fixed Heegaard surface). This part of the axiom follows from Theorem~\ref{thm:naturality-fixed-Sigma}. (We remind the reader that unlike in the setting of ordinary Heegaard Floer homology, it is not particularly natural to define an involutive transition map which changes just the alpha curves, or just the beta curves, since the transition map is defined by compressing hyperboxes as in~\eqref{def:transition-map-elementary-handleslide}. Such hyperboxes necessarily require  changing both the alpha and beta curves simultaneously).

The second part of the functoriality axiom asserts that the morphisms for stabilizations are inverse to the morphisms for destabilizations. This is proven in Proposition~\ref{prop:stabilization-naturality}. 

The next axiom from \cite{JTNaturality}*{Definition~3.32} is the \emph{commutativity} axiom, concerning commutativity of the distinguished rectangles from \cite{JTNaturality}*{Definition~2.29}. In our present case, we verify that there is no monodromy around the distinguished rectangles in Proposition~\ref{prop:simply-connected}.
 In our present setting, we may rephrase the distinguished rectangles from \cite{JTNaturality} as diagrams of embedded, doubling enhanced Heegaard diagrams with the following shape, satisfying one of five configurations:
\[
\begin{tikzcd}
\frD_1
	\ar[r, "e"]
	\ar[d, "f"]	
	 &
\frD_2
	\ar[d, "g"]
\\
\frD_3
 \ar[r, "h"] & \frD_4
\end{tikzcd}
\]
We presently enumerate the five configurations, and also prove that there is no monodromy:
\begin{enumerate}
\item $e$, $f$, $g$ and $h$ are handleslide equivalences. In our setting, we must consider changes of both of the attaching curves, as well as changes of the doubling datum. Note these are also referred to as \emph{strong equivalences} in the literature, e.g.~in \cite{MOIntegerSurgery}. The diagram commutes up to chain homotopy by Theorem~\ref{thm:naturality-fixed-Sigma}. We note that in \cite{JTNaturality}, the stated rectangle has the property that $e$ and $h$ are $\alpha$-equivalences, while $f$ and $g$ are $\beta$-equivalences. However the rectangle therein is also a rectangle of \emph{isotopy diagrams}, so we must consider changes of both the alpha and beta curves along each edge.
\item $e$ and $h$ are handleslide equivalences of attaching curves and doubling curves, while $f$ and $g$ are both stabilizations. Commutativity follows from Proposition~\ref{prop:stabilization-naturality}.
\item $e$ and $h$ are handleslide equivalences, while $f$ and $g$ are diffeomorphisms preserving the framing $\xi$. Commutativity around such rectangles is tautological.
\item $e$, $f$, $g$ and $h$ are all stabilizations. Furthermore, there are disjoint 3-balls $B_1$ and $B_2$, such that $e$ and $h$ correspond to a stabilization in $B_1$, while $f$ and $g$ correspond to a stabilization in $B_2$. Commutativity around such rectangles follows from Proposition~\ref{prop:stabilization-naturality}.
\item The maps $e$ and $h$ are stabilizations, while $f$ and $g$ are diffeomorphisms preserving the framing. Furthermore, the stabilization ball for $h$ is the image of the stabilization ball for $e$ under the diffeomorphism $g$. Commutativity around such rectangles is tautological.
\end{enumerate}

The remaining axioms of \cite{JTNaturality}*{Definition~3.32}, continuity and handleswap invariance, were verified in  Proposition~\ref{prop:continuity} and Theorem~\ref{thm:handleswap}.
\end{proof}

\subsection{Functoriality}

We now finish defining our cobordism maps and sketch the proof of Theorem \ref{thm:well-defined-cobordism-map}, showing that these maps are well-defined. Our proof is modeled on \cite{OSTriangles}. Since our approach is standard, we only provide an overview and sketch the points at which our construction deviates from \cite{OSTriangles}.

We begin with the construction, and will shortly sketch a proof of invariance. Suppose $W$ is a cobordism from $Y_1$ to $Y_2$, and assume basepoints in $Y_1$ and $Y_2$, as well as a framed path $\g$ connecting them, have been chosen. We assume that $W$, $Y_1$, $Y_2$ are connected. We pick a Morse function $f\colon W\to [0,1]$. We may assume that the indices of the critical points of $f$ are non-decreasing, and we assume all critical values are distinct and that $f$ has critical points only of index 1, 2 and 3. Next, we pick a gradient like vector field $v$ such that our path $\g$ is a flow-line. A choice of gradient like vector field gives a diffeomorphism
\[
W\iso W(\bS_2)\cup W(\bS_1)\cup W(\bS_0),
\]
where $\bS_i$ are framed $i$-dimensional links and $\bS_0\subset Y_1$, $\bS_1\subset Y_1(\bS_0)$, and $\bS_2\subset Y_1(\bS_0)(\bS_1)$. If $\frs\in \Spin^c(W)$ is self-conjugate, we define our cobordism map as the composition
\[
\CFI(W,\xi, \frs)=\CFI(W(\bS_2))\circ \CFI(W(\bS_1), \frs|_{W(\bS_1)})\circ \CFI(W(\bS_0)).
\]

We now prove Theorem \ref{thm:well-defined-cobordism-map}. As a first step, we show that the 2-handle map is well-defined:

\begin{lem}\label{lem:well-defined-2handle}
 Our 2-handle map $\CFI(W(\bS_1),\frs|_{W(\bS_1)})$ is independent of the choice of diagram subordinate to a particular bouquet. Furthermore, it is also invariant from the choice of bouquet, as well as handleslides amongst the components of $\bS_1$.
\end{lem}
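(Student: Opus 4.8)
The plan is to mirror Ozsv\'ath and Szab\'o's proof of invariance of the ordinary $2$-handle map \cite{OSTriangles}, using the doubling-model transition maps from the earlier sections to upgrade each step to the involutive setting. The three assertions to prove are: (i) independence from the Heegaard triple subordinate to a fixed bouquet, (ii) independence from the choice of bouquet, and (iii) invariance under handleslides among the components of $\bS_1$. The first step is to observe that all three of these reduce, via standard moves, to statements about elementary handleslide equivalences and stabilizations of the triple $(\Sigma,\as',\as,\bs)$ subordinate to a bouquet, together with changes of the doubling arcs --- precisely the situations covered by Proposition~\ref{prop:2-handles-handle-slides}.

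For (i): any two Heegaard triples subordinate to the \emph{same} bouquet are related by a sequence of isotopies and handleslides of $\as$, $\as'$ and $\bs$ (with $\as'$ continuing to be obtained from $\as$ by the surgery handleslides), together with stabilizations. After subdividing, each such move can be taken to be an elementary handleslide equivalence in the sense of Section~\ref{sec:elementary-equivalences}, together with a change of the doubling arcs $\Ds$. Proposition~\ref{prop:2-handles-handle-slides} then gives the homotopy commutation $\Psi_{\frD'\to \frD_H'}\circ \CFI(W(\bL))\simeq \CFI(W(\bL))^H\circ \Psi_{\frD\to \frD_H}$; combined with Theorem~\ref{thm:naturality-fixed-Sigma} (which identifies the composite of transition maps along any path between two doubling enhancements of the same Heegaard surface) and Proposition~\ref{prop:stabilization-naturality}~\eqref{stabilization-properties-1} (for the stabilization moves), this shows that any two models of $\CFI(W(\bL),\frS)$ built from triples subordinate to the fixed bouquet are intertwined by transition isomorphisms, hence agree under the identification $\CFI(\frD)\simeq\CFI(\frD_H)$.

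For (ii) and (iii): following \cite{OSTriangles}, any two bouquets for $\bL$, as well as bouquets related by handleslides among the components of $\bS_1$, can be connected by a sequence of the following elementary moves --- isotopies of the bouquet arcs, handleslides of one component of $\bS_1$ over another, and handleslides of the bouquet arcs over components of $\bS_1$ --- each of which can be realized at the level of subordinate Heegaard triples by a handleslide of an $\as'$ curve over another $\as'$ curve, or of an $\as'$ curve over an $\as$ curve, i.e.\ again by an elementary handleslide equivalence of the triple (after possibly stabilizing to achieve the elementary condition $|\bT_{\gs'}\cap\bT_{\gs}|=2^{|\gs|-1}$). Crucially, Proposition~\ref{prop:2-handles-handle-slides} was phrased with enough generality to allow $\as'_H$, $\as_H$ and $\bs_H$ to be arbitrary elementary handleslide translates, with \emph{no} relation imposed between $\Ds$ and $\Ds_H$, so it applies verbatim to these bouquet-changing moves. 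Thus the same argument as in (i) gives the result.

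The main obstacle is bookkeeping rather than new mathematics: one must check that each topological move on bouquets (isotopy, handleslide of link components, handleslide of bouquet arcs over link components) can indeed be realized by a sequence of \emph{elementary} handleslide equivalences of the subordinate triple, since Proposition~\ref{prop:2-handles-handle-slides} and the construction of $\Psi_{\frD\to\frD'}$ in Section~\ref{sec:elementary-equivalences} only handle elementary equivalences directly, with general handleslide equivalences defined as compositions. This requires interpolating with small Hamiltonian translates and invoking Theorem~\ref{thm:naturality-fixed-Sigma}~\eqref{thm:naturality-fixed-Sigma-3} (composition law) to splice the pieces together, exactly as in the non-involutive case. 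Since this is entirely parallel to \cite{OSTriangles} and the involutive subtleties are all absorbed into Proposition~\ref{prop:2-handles-handle-slides}, I would only sketch it, leaving the routine verifications to the reader.
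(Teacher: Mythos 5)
Your overall strategy is the same as the paper's for the handleslide moves: both bouquet independence and invariance under handleslides among the components of $\bS_1$ are reduced to Proposition~\ref{prop:2-handles-handle-slides}, exactly as the paper does (following \cite{OSTriangles}*{Lemmas~4.5, 4.8}). That part of your argument is fine.

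The gap is in how you handle stabilization of the Heegaard triple, which is the fifth of Ozsv\'ath--Szab\'o's moves relating triples subordinate to a fixed bouquet. You cite Proposition~\ref{prop:stabilization-naturality}~\eqref{stabilization-properties-1}, but that statement only says the stabilization map $\sigma$ commutes with the \emph{transition maps for elementary equivalences} of doubling-enhanced diagrams; it says nothing about $\sigma$ commuting with the 2-handle \emph{cobordism} map $\CFI(W(\bS_1),\frs)$, which is what you actually need to compare the 2-handle map computed on a stabilized triple with the one on the original triple. The paper closes this gap differently: since the naturality map for a stabilization is \emph{defined} as the composition of a 1-handle cobordism map with the map for a canceling 2-handle, the required commutation $\sigma\circ\CFI(W(\bS_1),\frs)\simeq\CFI(W(\bS_1),\frs)'\circ\sigma$ follows formally from the composition law for 2-handles (Proposition~\ref{prop:composition-law}) together with the commutation of 1-/3-handles past 2-handles (Proposition~\ref{prop:commute-1handles/2-handles}). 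Your write-up never invokes either of these propositions, so as written the stabilization step does not go through; you should replace the appeal to Proposition~\ref{prop:stabilization-naturality}~\eqref{stabilization-properties-1} with this argument.
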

\begin{proof}
Ozsv\'{a}th and Szab\'{o} \cite{OSTriangles}*{Lemma~4.5} described five moves which relate any two bouquets of a fixed link. Invariance under the first four of these moves follows immediately from invariance of our map from handleslides amongst the curves in our Heegaard triple, Proposition~\ref{prop:2-handles-handle-slides}. The final move described by Ozsv\'{a}th and Szab\'{o} is stabilization of the Heegaard triple. Invariance from this move is a formal consequence of our definition of the stabilization map. Indeed we defined the naturality map for stabilizations as the composition of the 1-handle map, followed by a canceling 2-handle map. Both the 1-handle map and map for a canceling 2-handle map commute with the 2-handle map $\CFI(W(\bS_1), \frs|_{W(\bS_1)})$ by Propositions~\ref{prop:composition-law} and ~\ref{prop:commute-1handles/2-handles}. Hence our 2-handle map is independent of the choice of diagram subordinate to a fixed bouquet.

Next, we note that independence from the choice of bouquet, as well as independence from handleslides amongst the components of $\bS_1$ both follow from independence of the map from handleslides of the attaching curves, which we proved in Proposition~\ref{prop:2-handles-handle-slides}. Compare \cite{OSTriangles}*{Lemma~4.8}.
\end{proof}

\begin{proof}[Proof of Theorem~\ref{thm:well-defined-cobordism-map}]
Firstly, we verify that the construction is independent of the choice of gradient-like vector field $v$ for $f$. Subsequently we sketch that the cobordism map is invariant from $f$.

The space of gradient like vector fields for a fixed Morse function is connected. Since we have already decomposed $W$ along level sets which separate critical points of different indices, the codimension 1 singularities of a path of gradient like vector fields are handleslides amongst link components of the same indices. For critical points of index 1 or 3, invariance under these moves can instead be proven by reordering the attachment of the handles using the composition law for 1-handles and 3-handles, Proposition~\ref{lem:commute-1/3-handles}, since a handleslide is the same as an isotopy after attaching one of the handles. For 2-handles, we cannot add additional level sets because of the $\Spin^c$ decomposition, so instead handleslide invariance is proven directly in Lemma~\ref{lem:well-defined-2handle}.

We now consider invariance under $f$. By standard Cerf theory, \cite{CerfStratification} \cite{KirbyCalculus} any two Morse functions $f_0$ and $f_1$ with critical points ordered monotonically may be connected with a 1-parameter family $(f_{t})_{t\in [0,1]}$ whose critical points are also ordered monotonically, except at finitely many points where a birth-death singularity occurs. Furthermore, if $f_0$ and $f_1$ have no index 0 or 4 critical points, then each $f_t$ may also be chosen to have no index 0 or 4 critical points. We now show that the maps are invariant under index 1/2 handle cancellations, as well as index 2/3 handle cancellations. Invariance under such handle cancellation is essentially automatic from the definition of our naturality map for stabilizations. Indeed, consider the case that $W$ has a Morse function with only index 2 critical points, whose descending manifolds intersect $Y_0$ in a framed link $\bS_1$. Let $\bS_0\subset Y_0$ be a 0-sphere, and let $\bK$ be a framed knot in $Y_0(\bS_0)$ which intersects the co-core of $\bS_0$ in exactly 1-point. Assume $\bS_1$ is disjoint from $\bS_0$ and $\bK$. Using the composition law of Propositions~\ref{prop:composition-law}, we obtain
\[
\CFI(W(\bS_1\cup \bK))\circ \CFI(W(\bS_0))\simeq \CFI(W(\bS_1))\circ \CFI(W(\bK))\circ \CFI(W(\bS_0)):=\CFI(W(\bS_1))\circ \sigma,
\]
where $\sigma$ is the naturality map for stabilization (and we are omitting $\Spin^c$ structures from the notation). Invariance under index 2/3 handle cancelations follows entirely analogously. The proof is complete.
\end{proof}

\section{The cobordism map for $S^2\times S^2$}
 \label{sec:example}

In this section we compute the cobordism map for $S^2\times S^2$. 
 We prove the following:
\begin{prop} 
Let $W$ denote $S^2\times S^2$, with two 4-balls removed. Let $\frs$ denote the unique self-conjugate $\Spin^c$ structure on $W$. Then the cobordism map
\[
\CFI(W,\frs)\colon \bF[U,Q]/Q^2\to \bF[U,Q]/Q^2
\]
is multiplication by $Q$.
\end{prop}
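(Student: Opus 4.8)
The plan is to decompose $W = S^2 \times S^2 \setminus (B^4 \sqcup B^4)$ as a composition of two $2$-handle cobordisms and compute via the $2$-handle and composition formulas developed in Sections~\ref{sec:2-handles} and~\ref{sec:final-overview}. Concretely, $S^2 \times S^2$ with a ball removed is built from $B^4$ by attaching a single $0$-framed $2$-handle along an unknot (yielding $S^3 \to S^1 \times S^2$), and then a second $0$-framed $2$-handle along a knot dual to the first (yielding $S^1 \times S^2 \to S^3$). Thus $W$ decomposes as $W_1 \cup W_2$ where $W_1\colon S^3 \to S^1\times S^2$ and $W_2\colon S^1\times S^2 \to S^3$ are each $2$-handle cobordisms, and the unique self-conjugate $\Spin^c$ structure $\frs$ restricts to the unique self-conjugate $\Spin^c$ structure on each piece and on $S^1\times S^2$. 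By Proposition~\ref{prop:composition-law}, $\CFI(W,\frs) \simeq \CFI(W_2,\frs|_{W_2}) \circ \CFI(W_1, \frs|_{W_1})$, so it suffices to compute each of the two $2$-handle maps on a convenient genus-one (or genus-two, after doubling) Heegaard triple subordinate to a bouquet for the relevant unknot, and then compose.

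\textbf{Key steps.} First I would fix the standard genus-one Heegaard triple $(T^2, \a', \a, \b)$ for the $0$-framed $2$-handle on the unknot in $S^3$ (and its mirror for $W_2$), together with an explicit choice of doubling arcs on $T^2 \setminus N(w)$, and identify all the Floer complexes appearing in the hyperbox of~\eqref{eq:hyperbox-2-handles}. The key simplification is that most of the triangle and quadrilateral counts occur on small-isotopy or algebraically rigid stabilizing diagrams, so Proposition~\ref{prop:multi-stabilization-counts} together with the small-translate theorems from \cite{HHSZExact}*{Propositions~11.1, 11.5} force the diagonal (length-$2$) maps in the hyperbox to vanish or to act as nearest-point maps; this reduces the involutive $2$-handle map to the data of the non-involutive triangle maps of Ozsv\'ath--Szab\'o together with the doubling data for the involution. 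Second, I would carry out the genus-one triangle count for $\CFI(W_1,\frs|_{W_1})$ — this is the map $\HFI^-(S^3) = \bF[U,Q]/Q^2 \to \HFI^-(S^1\times S^2)$, and one checks (using the known $\iota$ on $S^1\times S^2$, which is the obvious involution) that it sends the generator to the appropriate top-degree class, with any $Q$-correction term determined by a holomorphic disk count in the doubled genus-two diagram. Third, I would do the dual count for $\CFI(W_2,\frs|_{W_2})\colon \HFI^-(S^1\times S^2) \to \bF[U,Q]/Q^2$, which is the $2$-handle map on a dual $0$-framed unknot. Finally, composing the two maps, the grading shift of $W$ (which is $(c_1(\frs)^2 - 2\chi(W) - 3\sigma(W))/4 = -2$ for $S^2\times S^2$, so degree $0$ on $\CFI$ after accounting for the $Q$-shift) together with the computation pins the answer down to multiplication by $Q$; the $Q$-factor arises because the $U=0$ part of the composite vanishes (this is already visible non-involutively: the composite $\HF^-(S^3)\to \HF^-(S^1\times S^2)\to \HF^-(S^3)$ induced by $S^2\times S^2$ is zero), while the first-order ($Q$-linear) part, governed by the chain homotopies $h$ in the hyperboxes and the $\iota$-maps, is nonzero and equals the identity after the $Q$-shift.

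\textbf{Main obstacle.} The hard part will be the explicit bookkeeping in the composition: assembling the hyperbox for $W_1$, the hyperbox for $W_2$, and the composition-law hypercube of Proposition~\ref{prop:composition-law} into a single computation, and tracking which diagonal chains survive. In particular, one must verify that the non-involutive composite $\CF(W_2)\circ \CF(W_1)$ is chain homotopic to zero (standard, since $b_2^+(S^2\times S^2) > 0$ forces the ordinary mixed-type vanishing, or directly from the triangle counts), and then identify the surviving $Q$-linear term with the identity; this requires a careful model computation of the relevant holomorphic polygon counts in the doubled diagrams, analogous to the model computations in Lemmas~\ref{lem:1-handle-bottom-hypercube} and~\ref{lem:stabilization-near-basepoint}. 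I expect the framings to play no role (as claimed), since by Theorem~\ref{thm:twist-map} changing the framing changes the relevant involutive complex of $S^3$ by $\Id + Q\Phi$, and $\Phi = 0$ on $\CF^-(S^3)$ since its differential vanishes; I would note this explicitly at the end. As a consistency check, the answer agrees with the $\Pin(2)$-monopole Floer computation in \cite{LinExact}*{proof of Theorem~5}.
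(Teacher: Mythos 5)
Your proposal follows essentially the same route as the paper: the same decomposition $W=W_2\circ W_1$ through $S^1\times S^2$, the composition law, explicit polygon counts on the doubled genus-two diagrams to evaluate the hyperbox of~\eqref{eq:hyperbox-2-handles}, and the observation that framings are irrelevant because one end of each piece is $S^3$. The only point to flag is that not all diagonal maps vanish or reduce to nearest-point maps — the single index $-1$ rectangle count $h^{\b\bar{\b}\to\Delta}_{\a\bar{\b}\to\a'\bar{\b}}(\Theta^+_{\a\bar\b,\b\bar\b})=\Theta^+_{\a'\bar\b,\Delta}$ in $W_1$ is precisely the source of the $Q$-term, which your "careful model computation" step would have to produce.
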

Using the composition law, we may decompose $W$ as two 2-handle cobordisms, $W=W_2\circ W_1$. The first 2-handle is attached along a 0-framed unknot in $S^3$ to form $W_1$. The second 2-handle is attached along an $S^1$ fiber of $S^1\times S^2$ to form $W_2$. Note that, in both cases, one end of the cobordism is a copy of $S^3$, on which the map $\Phi$ is nullhomotopic. This implies that the choice of framings of the basepoints of the three-manifolds and along choices of paths connecting them do not affect the final computation, and we therefore omit them. 

We start with the standard Heegaard triple $(\Sigma,\as',\as,\bs)$ for $W_1$. In Figure~\ref{fig:s2s2-1}, we show the diagram $(\Sigma\# \bar{\Sigma},\as'\bar{\bs},\as\bar{\bs},\bs\bar{\bs},\Ds)$. Therein, the top half of the figure is $\Sigma$, and the bottom half is $\bar \Sigma$. This figure also encodes $(\Sigma,\as',\as,\bs)$.

The first quadrilateral counting map in Equation~\eqref{eq:hyperbox-2-handles} is $h^{\b\bar{\b}\to \Delta}_{\a\bar{\b}\to \a'\bar{\b}}$.  The Heegaard quadruple $(\Sigma\# \bar \Sigma, \as' \bar \bs, \as \bar \bs, \bs \bar \bs ,\Ds)$ is depicted in Figure \ref{fig:s2s2-1}. This diagram is weakly admissible.  The input for $h^{\b\bar{\b}\to \Delta}_{\a\bar{\b}\to \a'\bar{\b}}$ from $\CF(\bs\bar{\bs},\Ds)$ is $\Theta^+_{\b\bar{\b},\Delta}$, and the input from $\CF(\as'\bar{\bs},\as\bar{\bs})$ is $\Theta^+_{\a'\bar{\b},\a\bar{\b}}$.  We need only evaluate $h^{\b\bar{\b}\to \Delta}_{\a\bar{\b}\to \a'\bar{\b}}(\Theta_{\a\bar{\b},\b\bar{\b}}^+)$, since $F_1^{\bar{\b},\bar{\b}}(\Theta_{\a,\b})=\Theta_{\a\bar{\b},\b\bar{\b}}^+$, and for grading reasons $h^{\b\bar{\b}\to \Delta}_{\a\bar{\b}\to \a'\bar{\b}}(\Theta_{\a\bar{\b},\b\bar{\b}}^+)$ is a multiple of $\Theta_{\a'\bar{\b},\Delta}^+$. Hence, we need only consider holomorphic quadrilaterals in
\[
\pi_2(\Theta^+_{\a'\bar{\b},\a\bar{\b}},\Theta_{\a\bar{\b},\b\bar{\b}}^+,\Theta^+_{\b\bar{\b},\Delta},\Theta_{\a'\bar{\b},\Delta}^+)
\]
There is an index $-1$ domain (shaded). This class has a holomorphic representative for a unique conformal class of rectangle.  Indeed, for each slit length (along the alpha and beta arcs in the interior of $P_2$)  there is a map of a varying conformal class of rectangle to the domain of $P_2$.  Meanwhile, there is a holomorphic rectangle mapping to $P_1$ for a unique conformal structure.  

 \begin{figure}[H]
	\centering
	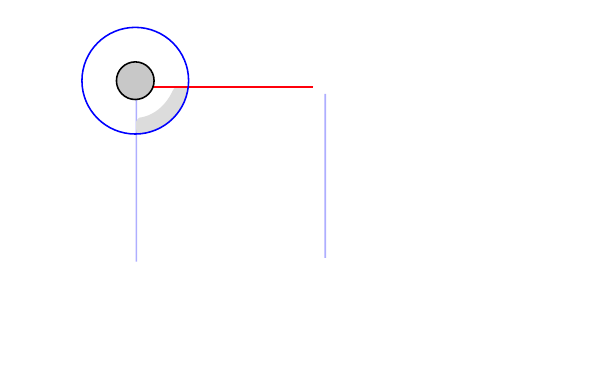
	\caption{The Heegaard quadruple $(\Sigma\# \bar{\Sigma},\as'\bar{\bs},\as\bar{\bs},\bs\bar{\bs},\Ds)$.  The generators of interest are $\Theta^+_{\a'\bar{\b},\a\bar{\b}}=\{\phi,\phi^+\}$, $\Theta_{\a\bar{\b},\b\bar{\b}}^+=\{\theta,\theta^+\}$, $\Theta^+_{\b\bar{\b},\Delta}=\{\tau,\tau^+\}$ and $\Theta_{\a'\bar{\b},\Delta}^+=\{z,z^+\}$. }\label{fig:s2s2-1}
\end{figure}

By inspection, there are no other positive domains and so
\[
h^{\b\bar{\b}\to \Delta}_{\a\bar{\b}\to \a'\bar{\b}}(\Theta_{\a\bar{\b},\b\bar{\b}}^+)=\Theta_{\a'\bar{\b},\Delta}^+.
\]  

For $h^{\Delta\to \a'\bar{\a}'}_{\a\bar{\b}\to \a'\bar{\b}}$ we need only consider summands in the image containing $\xs=\{x_0^-,x_1^+\}$, since grading considerations imply that the only other generator that can appear in the image of $h^{\Delta\to \a'\bar{\a'}}_{\a\bar{\b}\to \a'\bar{\b}}\circ f^{\b\bar{\b}\to\Delta}_{\a\bar{\b}}\circ F_1^{\bar{\b},\bar{\b}}$ is $\{x_0^+,x_1^-\}$, which is annihilated by $F_3^{\a',\a'}$.

 \begin{figure}[H]
	\centering
	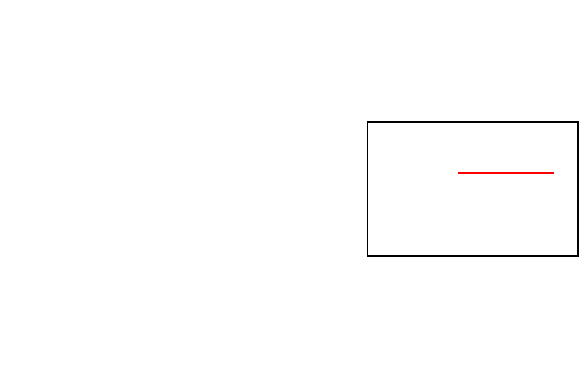
	\caption{The Heegaard quadruple $(\Sigma,\as'\bar{\bs},\as\bar{\bs},\Ds,\as'\bar{\as}')$ in the definition of $h^{\Delta\to \a'\bar{\a'}}_{\a\bar{\b}\to \a'\bar{\b}}$, with some intersection points labeled.  The generators of interest are $\Theta^+_{\Delta,\a'\bar{\a}'}=\{\tau^+,\tau\}$,  $\Theta^+_{\a\bar{\b},\Delta}=\{\theta,\theta'\}$,  $\Theta^+_{\a'\bar{\b},\a\bar{\b}}=\{ \phi^+,\phi\}$ and $\xs=\{x_0^-,x_1^+\}$. 	}\label{fig:s2s2-2}
\end{figure}

 \begin{figure}[H]
	\centering
	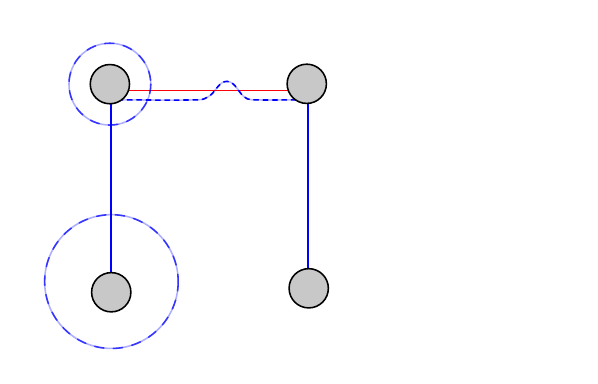
	\caption{The Heegaard quadruple $(\Sigma,\as\bar{\bs},\Ds,\as'\bar{\as}',\as\bar{\as}')$ in the definition of $H^{\Dt\to \a\bar{\a}'}_{\a\bar{\b}}$, with some intersection points labeled.  The generators of interest are $\Theta^+_{\Delta,\a'\bar{\a}'}=\{\tau^+,\tau\}$,  $\Theta^+_{\a'\bar{\a}',\a\bar{\a}'}=\xs_2:=\{x,x_2\}$,  $\ys=\{ \phi_2,\phi_3\}$ and $\Theta_{\a\bar{\b},\Delta}=\{\theta,\theta'\}$. 	}\label{fig:s2s2-3}
\end{figure}

One checks directly that there are no positive domains among those generators for $h^{\Delta\to \a'\bar{\a'}}_{\a\bar{\b}\to \a'\bar{\b}}$ as well as $H^{\Delta\to \a\bar{\a}'}_{\a\bar{\b}}$.

It follows that the map $F_{W_1,\frs|_{W_1}}$ takes the form
\[\left(
\begin{tikzcd}[column sep=1cm]
\CFI(S^3)
	\ar[r, "F_{W_1,\frs|_{W_1}}"]
&
\CFI(S^1\times S^2)
\end{tikzcd}\right)
\simeq 
\begin{tikzcd}[column sep=2cm, row sep=1cm, labels=description]
\CF(S^3)
	\ar[r, "1\mapsto \Theta^-"]
	\ar[d, "Q(1+\iota)"]
	\ar[dr,dashed, "1\mapsto Q\Theta^+"]
&
\CF(S^1\times S^2)
	\ar[d, "Q(1+\iota)"]
\\
Q\cdot\CF(S^3)
	\ar[r, "1\mapsto \Theta^-"]
	&
Q\cdot\CF(S^1\times S^2)
\end{tikzcd}
\]

We now consider the $2$-handle cobordism $W$ from $S^1\times S^2\to S^3$.  This is determined by the Heegaard triple $(\Sigma,\as'',\as',\bs)$, where $\as',\bs$ are as before, and $\as''$ is a copy of $\as$.  Let $\theta^-_{\a',\b}$ be the lower-degree generator of $\CF^-(\Sigma,\as',\bs)$ and $\theta^+_{\bar\b,\bar\b}$ be the top-degree generator of $\CF^-(\bar{\Sigma},\bar\bs,\bar\bs)$.  We need to calculate $h^{\b\bar\b\to \Delta}_{\a'\bar\b\to \a''\bar\b}(\theta^-_{\a',\b}\times \theta^+_{\bar\b,\bar\b})$, $h^{\Delta,\a''\bar{\a}''}_{\a'\bar{\b}\to\a''\bar{\b}}(\Theta^-_{\a'\bar{\b},\Delta})$, and $H^{\Delta\to\a'\bar{\a}''}_{\a'\bar{\b}}(\Theta^-_{\a'\bar{\b},\Delta})$.
  
We consider diagrams for each of those quadruples.  Indeed, the first quadruple is represented in Figure \ref{fig:s2s2-1} with the role of the $\as'\bar{\bs}$ and $\as\bar{\bs}$ curves exchanged.  A direct calculation shows there are no positive domains for $\pi_2(\Theta^+_{\a''\bar{\b},\a'\bar{\b}},\theta^-_{\a',\b}\times \theta^+_{\bar\b,\bar\b},\Theta^+_{\b\bar{\b},\Delta},\Theta_{\a''\bar{\b},\Delta})$ so $h^{\b\bar\b\to \Delta}_{\a'\bar\b\to \a''\bar\b}(\theta^-_{\a',\b}\times\theta^+_{\bar\b,\bar\b})=0$.  Entirely similar remarks apply for $h^{\Delta,\a''\bar{\a}''}_{\a'\bar{\b}\to\a''\bar{\b}}(\Theta^-_{\a'\bar{\b},\Delta})$, and $H^{\Delta\to\a'\bar{\a}''}_{\a'\bar{\b}}(\Theta^-_{\a'\bar{\b},\Delta})$.
Computing as above, we obtain that
\[\left(
\begin{tikzcd}[column sep=1cm]
\CFI(S^1\times S^2)
	\ar[r, "F_{W_2,\frs|_{W_2}}"]
&
\CFI(S^3)
\end{tikzcd}\right)
\simeq 
\begin{tikzcd}[column sep=2cm, row sep=1cm, labels=description]
\CF(S^1\times S^2)
	\ar[r, "\Theta^+\mapsto 1{,} \Theta^-\mapsto 0"]
	\ar[d, "Q(1+\iota)"]
&
\CF(S^3)
	\ar[d, "Q(1+\iota)"]
\\
Q\cdot\CF(S^1\times S^2)
	\ar[r, "\Theta^+\mapsto 1{,} \Theta^-\mapsto 0"]
	&
Q\cdot\CF(S^3)
\end{tikzcd}
\]

Composing the $\CFI(W_2,\frs_2)\circ \CFI(W_1,\frs_1)$, we obtain multiplication by $Q$.

\begin{rem} \label{rem:duality}
 The cobordism $W_1$ is obtained topologically by turning the cobordism $W_2$ around and reversing its orientation. The maps $\CFI(W_1,\frs_1)$ and $\CFI(W_2,\frs_2)$ become dual over $\bF[U,Q]/Q^2$ after performing a change of basis, corresponding to a nontrivial automorphism on $\CFI(S^1\times S^2)$. Compare \cite{OSTriangles}*{Theorem~3.5}. Note that our definition of the 2-handle map in Equation~\eqref{eq:hyperbox-2-handles} does not naturally dualize to another 2-handle map. We hope to investigate duality more precisely in future work.
\end{rem}

\section{Knots and links} \label{sec:knots}

In this section, we describe how to adapt the naturality and functoriality results of the previous sections to the case of knots and links, and in particular prove Theorems \ref{thm:link-naturality} and \ref{thm:link-functoriality}.

If $L$ is a link, in this section we write $\cCFLI(L)$ for the pair $(\cCFL(L), \iota_L)$. Analogously to the case of knots, our construction requires a choice of orientation on $L$, so there are potentially $2^{\ell}$ different potential models for $\iota_L$. To simplify the notation, we assume that an orientation is fixed, and we consider only the two orientations which are either coherent, or opposite to our preferred orientation. We write $\iota_{L,+}$ and $\iota_{L,-}$ for these models of the link involution.

In analogy to our construction in the case of closed 3-manifolds, we will describe several expanded models of the link involution $\tilde{\iota}_{L,\pm}$ in Section~\ref{sec:expanded-involution-knots}. To streamline the presentation, we only define transition maps with respect to the expanded models (unlike in the case of closed 3-manifolds, where we defined both non-expanded and expanded models). Since we only consider the expanded models of the involution in this section, we will write $\frD$ for the expanded, doubling enhanced link diagram we consider in Section~\ref{sec:expanded-involution-knots}. This notation departs from previous sections, where we wrote $\tilde \frD$ for expanded models of the involution. Similarly, we write $\Psi_{ \frD_1\to\frD_2}$ for the expanded models of the transition maps in this section, instead of $\tilde \Psi_{\tilde \frD_1\to \tilde \frD_2}$.

In this section, we show how to adapt the techniques of the earlier sections to prove the following naturality theorem, which implies Theorem \ref{thm:link-naturality}.

\begin{thm}
\label{thm:naturality-knots-links}
Suppose that $L$ is a link in a 3-manifold.
\begin{enumerate}
\item The transition maps $ \Psi_{ \frD\to \frD}$ are well-defined up to homotopies of morphisms of $\iota_L$-complexes.
\item $\Psi_{ \frD\to  \frD}\simeq \id_{\cCFLI( \frD)}$.
\item If $\frD_1$, $ \frD_2$ and $\frD_3$ are three expanded, doubling enhanced Heegaard link diagrams for $(Y,L)$, then
\[
\Psi_{\frD_2\to\frD_3}\circ  \Psi_{ \frD_1\to \frD_2}\simeq \Psi_{ \frD_1\to \frD_3}. 
\]
\end{enumerate}
\end{thm}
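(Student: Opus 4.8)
The plan is to follow essentially the same architecture as the proof of naturality in the closed three-manifold case, Theorem~\ref{thm:final-naturality}, but now tracking the extra structure of skew-equivariance and the modified involutive identity $\iota_L^2 \simeq (\id + \Phi_\ell\Psi_\ell)\circ\cdots\circ(\id+\Phi_1\Psi_1)$ throughout. First I would set up the analog of the graph $\cG^\d_{(M,\g)}$ and Proposition~\ref{prop:simply-connected} in the link setting, where the Heegaard moves are alpha/beta handleslide equivalences, index $(1,2)$-stabilizations away from the basepoints, and basepoint-fixing diffeomorphisms isotopic to the identity rel boundary. The proof of simple-connectivity of this complex is identical to the closed case, since removing a regular neighborhood of the link just yields a sutured manifold with more suture components. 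The key point that the link case needs no framing data is the observation already flagged in the introduction and Section~\ref{subsec:twist-knots}: the connected-sum neck regions of the doubled link diagrams $D^\pm(\cH)$ (and their expanded analogs) contain no basepoint, so the Dehn twist around the neck meridian acts trivially, and there is no $\SO(3)$ of framings to keep track of.

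Next I would establish, in the link setting, the three building-block results that Theorem~\ref{thm:naturality-knots-links} parts (1)--(3) rest on: (a) the analog of Theorem~\ref{thm:naturality-fixed-Sigma}, i.e. that the transition maps $\Psi_{\frD\to\frD'}$ for a fixed Heegaard surface are well-defined up to $\iota_L$-homotopy, compose correctly, and restrict to the identity when $\frD=\frD'$; (b) the analog of Proposition~\ref{prop:stabilization-naturality}, that stabilization and destabilization maps are mutually inverse and commute with the fixed-surface transition maps; and (c) handleswap invariance, the analog of Theorem~\ref{thm:handleswap}. For (a), since in this section we work exclusively with the expanded model (as the excerpt emphasizes), I would build the transition maps directly as compressions of hyperboxes of exactly the shape of Figure~\ref{fig:transition-map-1}, with the $\CF$'s replaced by the link Floer complexes $\cCFL$ over $\bF[\scU_1,\scV_1,\dots,\scU_\ell,\scV_\ell]$ and the relative homology maps $A_\lambda$ from Section~\ref{sec:hypercubes-rel-homology} inserted in the neck. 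Independence of the intermediate sequence and the composition law would follow from the link analog of Proposition~\ref{prop:expansion}, part~\eqref{prop:expansion-part-3}, whose proof copies Section~\ref{sec:composition-law-expanded-proof} verbatim; the polygon-stabilization input Proposition~\ref{prop:multi-stabilization-counts} and the small-translate theorems from \cite{HHSZExact} apply unchanged since they are statements about holomorphic curves insensitive to the coefficient ring. The continuity axiom is proven exactly as in Proposition~\ref{prop:continuity}.

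With parts (a)--(c) in hand, the proof of Theorem~\ref{thm:naturality-knots-links} is then the link analog of the proof of Theorem~\ref{thm:final-naturality}: one checks that the assignment $\frD\mapsto\cCFLI(\frD)$, $e\mapsto\Psi$ satisfies the strong Heegaard invariant axioms of \cite{JTNaturality}*{Definition~2.32} (functoriality, commutativity around the five types of distinguished rectangles, continuity, handleswap invariance), now in the category of $\iota_L$-complexes and enhanced $\iota_L$-homomorphisms, and invokes \cite{JTNaturality}*{Theorem~2.38}. Part~(2), $\Psi_{\frD\to\frD}\simeq\id$, falls out of continuity applied to the identity diffeomorphism, and part~(3) is the composition law, which is precisely the fixed-surface composition law (a) combined with the stabilization compatibility (b). The main obstacle, and the place where genuine care is required rather than transcription, is verifying that the hypercube-theoretic arguments respect the homological-algebra framework of $\iota_L$-complexes: one must confirm that every diagonal chain produced in the compressions is $\bF[\scU_i,\scV_i]$-equivariant or skew-equivariant as appropriate, that the modified squaring relation $\iota_L^2\simeq\prod_i(\id+\Phi_i\Psi_i)$ is preserved under all the transition maps (this is where the doubling model's built-in half-twist, and the distinction between $\iota_{L,+}$ and $\iota_{L,-}$, must be threaded through consistently), and that the notion of ``homotopy of morphisms of $\iota_L$-complexes'' used in the statement is exactly the one for which the hypercube relations of Lemma~\ref{lem:iota-maps-from-hyperboxes} (suitably adapted) encode composition. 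Once the bookkeeping is organized so that each hyperbox in the link setting is manifestly a hyperbox over $\bF[\scU_1,\scV_1,\dots,\scU_\ell,\scV_\ell]$ with the correct (skew-)equivariance on each edge, the geometric content is entirely imported from the closed case.
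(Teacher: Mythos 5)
Your overall architecture is the right one and matches the paper's: transcribe the closed-manifold machinery (fixed-surface transition maps via hyperboxes, stabilization maps, handleswap invariance, the Juh\'asz--Thurston--Zemke axioms) into the category of $\iota_L$-complexes, working exclusively with the expanded model, and note that the absence of a basepoint in the neck region is why no framing enters. The skew-equivariance bookkeeping you flag as the main obstacle is indeed present but is the routine part.

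However, there is a genuine gap: you never address the dependence of the expanded model on the choice of connected sum point $p$. In the closed case the double $\Sigma\#\bar\Sigma$ is formed at the basepoint $w$, which is part of the data of $(Y,w,\xi)$; in the link case the tube is attached at an auxiliary point $p\in\Sigma\setminus(\as\cup\bs\cup\ws\cup\zs)$ that is \emph{not} intrinsic to $(Y,L)$. Two expanded diagrams with the same underlying link diagram but different points $p$ and $p'$ must be related, and the obvious way to do so --- a point-pushing diffeomorphism along a path from $p$ to $p'$ --- could a priori depend on the choice of path (paths differing by a loop around a handle or a basepoint give diffeomorphisms that are not isotopic rel the marked data). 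Without resolving this, part (1) of the theorem is not established. The paper identifies this as ``the main new technical subtlety'' and handles it by introducing further expansions at a set $\ve{p}$ of marked points, together with expansion and contraction morphisms $\scE_q$ and $\scC_q$ satisfying $\scE_p\circ\scC_p\simeq\id$, $\scC_p\circ\scE_p\simeq\id$, the commutation relations $[\scE_p,\scE_q]\simeq[\scC_p,\scC_q]\simeq[\scE_p,\scC_q]\simeq 0$ for $p\neq q$, and compatibility with handleslide transition maps; the identity $\phi_*\simeq\scC_p\circ\scE_q$ for a point-pushing isotopy $\phi$ then shows path-independence, since the right-hand side makes no reference to the path. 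Constructing $\scE_q$ requires a hyperbox analogous to the involutive $1$-handle map (including the $B_\tau$ homology action and a choice of symmetric curve $\tau$ meeting the new neck meridian once), so this is new content, not transcription; your proposal would need this entire layer added before the strong-Heegaard-invariant argument can be run.
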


Additionally, we prove the following functoriality theorem, which is an expanded version of Theorem \ref{thm:link-functoriality}.

\begin{thm}
\label{thm:functoriality}
 Suppose that $(W,\Sigma)$ is a link cobordism from $(Y_1,L_1)$ to $(Y_2,L_2)$. Suppose further that $\Sigma$ consists of a collection of annuli, each with one boundary component in $Y_1$, and one boundary component in $Y_2$. Suppose further that each annulus is decorated with two parallel longitudinal  arcs, $\frs\in W$ is a $\Spin^c$ structure such that $\bar \frs=\frs+\PD[\Sigma]$. Then the cobordism map $\cCFLI(W,\Sigma,\frs)$ defined via the doubling model is an enhanced $\iota_L$-homomorphism, and is well-defined up to enhanced $\iota_L$-homotopy.
\end{thm}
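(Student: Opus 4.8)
The plan is to parallel the structure of Sections~\ref{sec:1-and-3-handles}--\ref{sec:final-overview} in the link setting, using the expanded doubling models for $\iota_{L,\pm}$ introduced in Section~\ref{sec:expanded-involution-knots}. First I would observe that a link cobordism $(W,\Sigma)$ of the stated type decomposes, after choosing a Morse function on $W$ compatible with the decoration, into a composition of elementary pieces: index $1$- and $3$-handle cobordisms disjoint from $\Sigma$, and index $2$-handle cobordisms, where the link $L$ simply threads through the handle attachments along boundary-parallel arcs in a manner dictated by the longitudinal decorations on the annuli. For each elementary piece I would define a map on $\cCFLI$ by compressing a hyperbox analogous to Equations~\eqref{eq:1-handles}, \eqref{eq:3-handles}, and~\eqref{eq:hyperbox-2-handles}, but now built from diagrams carrying the extra $z$-basepoints and working over $\bF[\scU_1,\scV_1,\dots,\scU_\ell,\scV_\ell]$ (or the single-variable specialization, as in Remark~\ref{rem:iotaKU}). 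The condition $\bar\frs = \frs + \PD[\Sigma]$ is exactly what guarantees that the relevant triangle and quadrilateral counts occur for $\Spin^c$ structures exchanged correctly under conjugation, so that the doubling-model formula for $\iota_{L,\pm}$ intertwines with the cobordism maps; this is the link analogue of the self-conjugacy hypothesis used throughout the closed case.

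Next I would establish the three structural results needed for well-definedness, mirroring Propositions~\ref{prop:stabilization-naturality}, \ref{prop:composition-law}, and~\ref{prop:commute-1handles/2-handles}: (i) the elementary maps commute up to $\iota_L$-homotopy with the transition maps $\Psi_{\frD\to\frD'}$ for handleslide equivalences and changes of doubling data (the link analogue of Lemmas~\ref{lem:1-handles-and-handleslides} and Proposition~\ref{prop:2-handles-handle-slides}); (ii) a composition law for stacked $2$-handles, and commutation of $1$/$3$-handles past $2$-handles, proved by building the same sort of large $3$-dimensional hyperboxes as in Figures~\ref{fig:composition-law-C1C2}--\ref{fig:C456} and~\ref{fig:B1B2}--\ref{fig:B5}; and (iii) independence from the bouquet, as in Lemma~\ref{lem:well-defined-2handle}. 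The key enabling technical input is again the stabilization/polygon-counting result Proposition~\ref{prop:multi-stabilization-counts} together with the small-translate theorems from \cite{HHSZExact}; these were stated in sufficient generality (arbitrary $n$-gons, arbitrary number of basepoints) that they apply verbatim once the $z$-basepoints are added, the only change being bookkeeping of $\scU_i$ versus $\scV_i$ powers and the skew-equivariance of $\eta_K$ and the relevant maps. With these in hand, invariance under changes of Morse function and gradient-like vector field follows by the Cerf-theory argument of the proof of Theorem~\ref{thm:well-defined-cobordism-map}: codimension-one degenerations are handleslides (absorbed by (i)), and birth-death singularities are handle cancellations (absorbed by (ii) and the definition of the stabilization map).

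Two points require genuine care beyond transcription. The first is that the link involution satisfies $\iota_{L,\pm}^2 \simeq \prod_i(\id+\Phi_i\Psi_i)$ rather than $\iota^2\simeq\id$, so the category of $\iota_L$-complexes and its enhanced morphisms (Section~\ref{sec:knot-and-link-complexes}) is the correct target, and one must check that each elementary cobordism map is an \emph{enhanced} $\iota_L$-homomorphism, i.e. comes with the auxiliary homotopy datum $g$ satisfying $\d_{\underline{\Mor}}(F,g)=0$; this homotopy is produced as part of the hyperbox compression, exactly as the dashed diagonal maps in the closed-case diagrams produce the homotopy $H_{\frD\to\frD'}$. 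The second point, which I expect to be the main obstacle, is managing the ``which half-twist'' ambiguity: the doubling models $D^{\pm}(\cH)$ attach the connected-sum tube at $z$ versus at $w$, and I must verify that the elementary cobordism maps are compatible with a \emph{single consistent choice} of sign $+$ or $-$ throughout the decomposition of $(W,\Sigma)$, so that no monodromy is introduced by the freedom in choosing the tube location along each annulus component. Concretely this means checking that the $1$-handle, $2$-handle, and $3$-handle hyperboxes can all be built using $D^+$-type (or all $D^-$-type) doubled diagrams and that handle commutations preserve this; I anticipate the bookkeeping here — tracking the positions of the $2\ell$ basepoints through the tube region and the relative-homology maps $A_\lambda$ that move them — to be the most delicate part, since unlike the closed case there is no framing datum to pin things down (see the discussion in Section~\ref{subsec:twist-knots}) and one instead exploits that the tube region contains no $w$-basepoint. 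Once this compatibility is verified, handleswap invariance reduces to the basepoint-adjacent case and the argument of Theorem~\ref{thm:handleswap} applies, completing the proof.
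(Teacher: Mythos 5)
Your proposal follows essentially the same route as the paper, which itself only sketches the argument in Section~\ref{sec:knots} by reducing to handles attached in the link complement and deferring to the closed-manifold constructions of Sections~\ref{sec:1-and-3-handles}--\ref{sec:final-overview}; you correctly identify the one genuinely new ingredient, namely the $\frs_{\ws}$-versus-$\frs_{\zs}$ asymmetry in the triangle counts and the role of $\bar\frs=\frs+\PD[\Sigma]$ in making the final hypercube involving $\eta_L$ close up. Your worry about the $D^{\pm}$ tube-placement ambiguity is resolved exactly as you suspect: the expanded model places the connected-sum point at an auxiliary point $p$ disjoint from all basepoints, and the sign is fixed once and for all by the choice of $\Theta^{\ws}$ versus $\Theta^{\zs}$ inputs defining $\tilde\iota_{L,\pm}$.
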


\subsection{Expanded models of the knot and link involutions}
\label{sec:expanded-involution-knots}

In this section, we describe our expanded model of the knot and link involution, similar to the expanded 3-manifold involution which appeared in Section~\ref{sec:expanded-def}. This is the model we use to prove Theorem~\ref{thm:naturality-knots-links}.

The construction is as follows. Suppose that $L$ is a link in $Y$, and  $(\Sigma,\as,\bs,\ws,\zs)$ is a Heegaard link diagram for $(Y,L)$ such that each component of $L$ contains exactly two basepoints. We pick an auxiliary point $p\in \Sigma\setminus (\as\cup \bs \cup \{w,z\})$, at which we will form the connected sum of $\Sigma$ and $\bar \Sigma$.

We write $c$ for the curve which is a meridian of the connected sum tube, as in the 3-manifold case. There is a natural 1-handle map
\[
F_1^{c\bar \b,c \bar \b}\colon \cCFL(\Sigma,\as,\bs,\ws,\zs)\to \cCFL(\Sigma\# \bar \Sigma, \as c \bar \bs, \bs c\bar \bs,\ws\cup \bar \zs, \zs\cup \bar \ws).
\]

We pick a collection of properly embedded and pairwise disjoint arcs $\delta_1,\dots, \delta_{2g+2|L|-1}$ on $\Sigma\setminus N(p)$, such that after cutting $\Sigma\setminus N(p)$ along $\delta_1,\dots, \delta_{2g+2|L|-1}$, we are left with $2|L|$ punctured disks. We assume that each disk contains exactly one basepoint from $\ws\cup \zs$.  Equivalently, we may assume that $\delta_1,\dots, \delta_{2g+2|L|-1}$ forms a basis of $H_1(\Sigma\setminus (\ws\cup \zs), p)$. We double the arcs $\delta_1,\dots, \delta_{2g+2|L|-1}$ to obtain the attaching curves $\Ds$. See Figure~\ref{fig:11} for an example.

The complex $\cCFL(\Sigma\# \bar \Sigma, \bs c \bar \bs, \Ds, \ws\cup \bar \zs, \zs \cup \bar \ws)$ represents an $|L|$-component unlink in $\#^g S^1\times S^2$, where each component contains 4 basepoints. There are two canonical homology classes, represented by cycles $\Theta^{\ws}_{\b c \bar \b, \Dt}$ and $\Theta^{\zs}_{\b c \bar \b, \Dt}$. These are the generators of the top degree of homology with respect to the gradings $\gr_{\ws}$ and $\gr_{\zs}$, respectively. See \cite{ZemCFLTQFT}*{Lemma~3.7}. We define 
\[
f_{\a c \bar \b}^{\b c \bar \b\to\Dt;\zs}(-)=f_{\a c \bar \b, \b c \bar \b, \Dt}(-,\Theta^{\ws}_{\b c \bar \b ,\Dt})
\]
and we similarly define $f_{\a c \bar \b}^{\b c \bar \b \to \Dt;\ws}$ to be the holomorphic triangle map with special input $\Theta_{\b c \bar \b, \Dt}^{\zs}$. (The reason that the $\zs$-map has the $\ws$-generator as its input is due to the interpretation of the map in terms of decorated link cobordisms from \cite{ZemCFLTQFT}).

 Similarly, there are two distinguished classes $\Theta_{\Dt,\a c \bar \a}^{\ws},\Theta_{\Dt,\a c \bar \a}^{\zs}$ in $\cCFL(\Sigma\# \bar \Sigma, \Ds, \as c \bar \as, \ws\cup \bar \zs, \zs\cup \bar \ws)$.

We define the following two expanded models of the knot involution:
\[
\begin{split}
\tilde\iota_{L,+}&=\eta_K\circ F_{3}^{\a c, \a c} \circ f_{\a c \bar \a}^{\Dt\to \a c \bar \a;\ws} \circ f_{\a c \bar \a}^{\b c \bar \b \to \Dt;\zs}\circ F_1^{c \bar \b, c \bar \b}\\
\tilde\iota_{L,-}&=\eta_K\circ F_{3}^{\a c, \a c} \circ f_{\a c \bar \a}^{\Dt\to \a c \bar \a;\zs} \circ f_{\a c \bar \a}^{\b c \bar \b \to \Dt;\ws}\circ F_1^{c \bar \b, c \bar \b}.
\end{split}
\]

\begin{figure}[h]
\centering
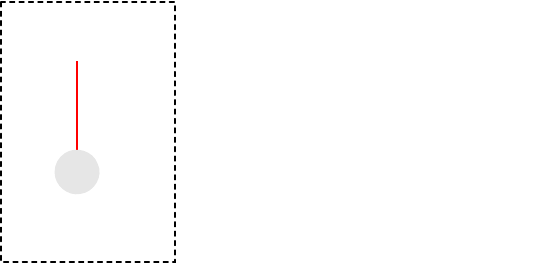
\caption{A doubly pointed Heegaard knot diagram with a special point $p$ (left), and the double (right).}
\label{fig:11}
\end{figure}

\begin{lem}
\label{lem:expanded-transition-links}
 The maps $\tilde \iota_{L,-}$ and $\tilde \iota_{L,+}$ are homotopic to the maps $\iota_{L,-}$ and $\iota_{L,+}$, respectively, which were defined in Section~\ref{sec:doubling-knots}.
\end{lem}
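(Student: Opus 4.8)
The plan is to compare the expanded model $\tilde\iota_{L,\pm}$ with the non-expanded doubling model $\iota_{L,\pm}$ from Section~\ref{sec:doubling-knots} by building a single hyperbox of chain complexes which interpolates between the two compositions, in complete analogy with the 3-manifold argument of Section~\ref{sec:relate-expanded}. The key structural point is that $\tilde\iota_{L,\pm}$ differs from $\iota_{L,\pm}$ only in that (i) the connected sum is taken at an interior point $p$ rather than at a basepoint of $L$, so that an extra pair of curves $c,c'$ (meridians of the connect sum tube) appears, and (ii) all four basepoints of each doubled link component are kept, together with the attaching curves $\Ds$ built from a basis of $H_1(\Sigma\setminus(\ws\cup\zs),p)$ rather than a basis relative to a single punctured disk around a basepoint. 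Both of these differences are handled by the free-stabilization maps $S_{w'}^\pm$ and the relative homology actions $A_\lambda$ of Section~\ref{sec:expanded-def}, whose key identity $S_{w'}^- A_\lambda S_{w'}^+=\id$ (Equation~\eqref{eq:free-stabilization}) and its hypercube version (Proposition~\ref{prop:simple-expansion-hypercube}) are exactly what is needed.

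Concretely, I would first recall that by Proposition~\ref{prop:doubling-involution} (and its knot analog already invoked in Section~\ref{sec:doubling-knots}) the non-expanded composition
\[
\iota_{L,\pm}\simeq \eta_K\circ F_3^{\a,\a}\circ f_{\a\bar\a}^{\Dt\to\a\bar\a;\ws/\zs}\circ f_{\a\bar\a}^{\b\bar\b\to\Dt;\zs/\ws}\circ F_1^{\bar\b,\bar\b}
\]
is chain homotopic to the corresponding composition of naturality and handle maps, where now the doubling is performed at a basepoint of $L$. Then I would mimic Figures~\ref{eq:transition-standard-to-trivial} and~\ref{eq:trivial-model-to-neck}: build a first hyperbox relating $\iota_{L,\pm}$ to a \emph{trivially expanded} model obtained by adjoining, in a small ball, an extra $\a_0,\b_0$ pair and a free basepoint (this step is formal, using Proposition~\ref{prop:multi-stabilization-counts} to commute holomorphic polygon counts past the stabilization region, and the small translate theorems \cite{HHSZExact}*{Propositions~11.1 and 11.5} to kill the diagonal maps, exactly as in Lemma~\ref{lem:1-handles-simple}); and then a second hyperbox which slides the connect sum tube from the basepoint to the interior point $p$ and introduces the meridional curves $c,c'$, using the multi-stabilizing triples of Lemma~\ref{lem:s->c-multi-stabilize-verify}. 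The relative homology action $A_\lambda$ in $\tilde\iota_{L,\pm}$ appears precisely as the image of the identity under $S_{w'}^- A_\lambda S_{w'}^+=\id$, as organized by Proposition~\ref{prop:simple-expansion-hypercube}. The only genuinely new bookkeeping, compared to the 3-manifold case, is that one must track the two gradings $\gr_{\ws},\gr_{\zs}$ and verify that the $\zs$-flavored (resp.\ $\ws$-flavored) top-degree generators $\Theta^{\ws}_{\b c\bar\b,\Dt}$, $\Theta^{\zs}_{\Dt,\a c\bar\a}$ (resp.\ with $\ws\leftrightarrow\zs$) are carried to one another by the stabilization and tube-sliding maps; this follows from \cite{ZemCFLTQFT}*{Lemma~3.7} together with grading considerations, since in each relevant Heegaard $n$-tuple the complex has a unique generator in the top degree for the relevant grading.

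Finally, I would observe that, just as in the closed case, the skew-equivariance of $\eta_K$ (i.e.\ $\eta_K(\scU_i\cdot x)=\scV_i\cdot\eta_K(x)$) is transported through the hyperbox unchanged, so $\tilde\iota_{L,\pm}$ is still a skew-equivariant endomorphism, and the chain homotopy produced by compressing the interpolating hyperbox is $\bF[\scU_1,\scV_1,\dots,\scU_\ell,\scV_\ell]$-equivariant; hence $\tilde\iota_{L,\pm}\simeq\iota_{L,\pm}$ as endomorphisms of $\cCFL(L)$, which is the assertion of the lemma (and in particular shows $(\cCFL(L),\tilde\iota_{L,\pm})$ is again an $\iota_L$-complex). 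The main obstacle I anticipate is purely organizational rather than conceptual: laying out the two interpolating hyperboxes so that every face is either a genuine pairing of hypercubes of attaching curves, a hypercube of stabilization, or an instance of Proposition~\ref{prop:simple-expansion-hypercube}, and checking that all the diagonal (length $\geq 2$) maps vanish after compression via the appropriate combination of Proposition~\ref{prop:multi-stabilization-counts} and the small translate theorems — exactly the verification already carried out in Sections~\ref{sec:relate-expanded} and~\ref{sec:1-and-3-handles}, so I would largely refer to those arguments rather than repeat them.
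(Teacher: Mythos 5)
Your strategy — interpolating between the expanded and non-expanded compositions by a hyperbox built from stabilization results and small-translate theorems, as in Section~\ref{sec:relate-expanded} — is genuinely different from what the paper does, and as written it has a concrete gap. The machinery you lean on is the closed-3-manifold machinery: the free-stabilization maps $S_{w'}^{\pm}$, the relative homology action $A_\lambda$, the identity $S_{w'}^- A_\lambda S_{w'}^+=\id$ of Equation~\eqref{eq:free-stabilization}, and Proposition~\ref{prop:simple-expansion-hypercube}. But the expanded link model of Section~\ref{sec:expanded-involution-knots} contains no $A_\lambda$ and adds no free basepoint $w'$; the actual difference between $\tilde\iota_{L,\pm}$ and $\iota_{L,\pm}$ is that the connected sum tube sits at an interior point $p$ rather than at a link basepoint, so that the doubled diagram carries four \emph{link} basepoints per component (from $\ws\cup\bar\zs$ and $\zs\cup\bar\ws$) instead of two, and the triangle maps use the flavored generators $\Theta^{\ws}$, $\Theta^{\zs}$. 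Passing between these configurations changes the number of $w$- and $z$-basepoints on a link component, which in the $\bF[\scU_1,\scV_1,\dots,\scU_\ell,\scV_\ell]$-module setting is governed by the quasi-stabilization maps $S^{\pm}_{w,z}$, $T^{\pm}_{w,z}$ of \cite{ZemCFLTQFT} and \cite{ZemQuasi}, not by the single-$U$ free-stabilization identity; your proposed faces built from $S_{w'}^{\pm}$ and $A_\lambda$ are not defined in this context.

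The missing idea is the one the paper's proof turns on: interpret each factor of $\tilde\iota_{L,\pm}$ as a decorated link cobordism map, identify $f_{\a c\bar\a}^{\b c\bar\b\to\Dt;\zs}\circ F_1^{c\bar\b,c\bar\b}$ with the quasi-stabilization $S^+_{\bar z,\bar w}$ and $F_3^{\a c,\a c}\circ f_{\a c\bar\a}^{\Dt\to\a c\bar\a;\ws}$ with the quasi-destabilization $T^-_{z,w}$ (via \cite{ZemCFLTQFT}*{Proposition~8.5}), and then use the relation that $T^-_{z,w}\circ S^+_{\bar z,\bar w}$ is the \emph{positive} basepoint-moving map sending $w\mapsto\bar z$ and $z\mapsto\bar w$ — i.e.\ precisely the half-twist $\phi^+_*$ entering the definition of $\iota_{L,+}$. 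Without some version of this step your interpolating hyperbox has no mechanism to produce the half-twist: it is exactly the choice of flavored generators ($\zs$ then $\ws$ versus $\ws$ then $\zs$) that distinguishes $\tilde\iota_{L,+}$ from $\tilde\iota_{L,-}$ and from the null-homotopic compositions noted in the remark following the lemma, and your grading argument (``unique top-degree generator in the relevant grading'') does not see this distinction. If you do want a hyperbox proof, the faces joining the two models would have to encode the quasi-stabilization relations themselves, at which point you are re-deriving the TQFT identities the paper simply cites.
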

\begin{proof}
We focus on the case when $L$ is a doubly pointed knot $K$, since the proof for links is not substantially different.

The proof is to identify each of the above compositions with a cobordism map from \cite{ZemCFLTQFT}. The map $F_1^{c \bar \b, c \bar \b}$ is equal to a composition of $g$ 4-dimensional 1-handle maps, as well as the birth cobordism map for adding an unknot component. The map $F_3^{\a c, \a c}$ is similarly equal to the composition of $g$ 3-handle maps, as well as a death cobordism map which deletes an unknot.

 We now consider the triangle maps.   The map $f_{\a c \bar \a}^{\b c \bar \b\to \Dt;\zs}$ is the composition of $g$ 4-dimensional 2-handle maps (as in the ordinary doubling model of the involution), as well as the saddle cobordism map which attaches a band which has both of its ends in a component of $K\setminus \{w,z\}$ which is oriented from $z$ to $w$ (i.e. which lies in the beta handlebody). Furthermore, the band is contained in the type-$\zs$ subsurface of the link cobordism. The map $f_{\a c \bar \a}^{\Dt\to \a c \bar \a;\ws}$ has a similar description, except that the band is instead in the type-$\ws$ subsurface. 
 
 To make the identifications in the main statement, we argue as follows. By canceling the 1-handles and 2-handles, we may identify the composition $f_{\a c \bar \a}^{\b c \bar \b\to \Dt;\zs}\circ F_1^{c \bar \b, c \bar \b}$ with the composition of a birth cobordism, followed by a $\ws$-saddle map. By \cite{ZemCFLTQFT}*{Proposition~8.5}, we may identify this composition with the quasi-stabilization map $S^+_{\bar z, \bar w}$, from \cite{ZemCFLTQFT}*{Section~4} and \cite{ZemQuasi}. Analogously, we may identify the composition $F_3^{\a c, \a c}\circ f_{\a c \bar \a}^{\Dt\to \a c \bar \a;\ws}$ with $T^-_{z,w}$. The composition $T^-_{z,w}\circ S^+_{\bar z, \bar w}$ is homotopic to the basepoint moving map which sends $w$ to $\bar z$ and $z$ to $\bar w$, both moving in the positive direction by \cite{ZemCFLTQFT}*{Section~4.4}. See Figure~\ref{fig:9}. 
\end{proof}

\begin{figure}[h]
\centering
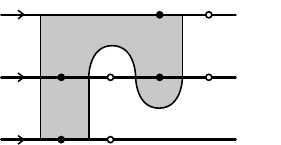
\caption{Decorations on a cylindrical link cobordism $[0,1]\times K$ corresponding to a composition of a quasi-stabilization followed by a quasi-destabilization.}
\label{fig:9}
\end{figure}

\begin{rem} The compositions $\eta_K\circ F_{3}^{\a c, \a c} \circ f_{\a c \bar \a}^{\Dt\to \a c \bar \a;\zs} \circ f_{\a c \bar \a}^{\b c \bar \b \to \Dt;\zs}\circ F_1^{c \bar \b, c \bar \b}$ and $\eta_K\circ F_{3}^{\a c, \a c} \circ f_{\a c \bar \a}^{\Dt\to \a c \bar \a;\ws} \circ f_{\a c \bar \a}^{\b c \bar \b \to \Dt;\ws}\circ F_1^{c \bar \b, c \bar \b}$ are null-homotopic. Indeed the same argument as above identifies the two maps with the link cobordism maps for a cylinder which contains a null-homotopic loop in the dividing set. Such a cobordism map induces the trivial map, since it may be identified with one of the compositions $S_{w,z}^- S_{w,z}^+$ or $T_{w,z}^- T_{w,z}^+$, which both vanish by \cite{ZemCFLTQFT}*{Lemma~4.13}.
\end{rem}

\begin{rem}
 As previously mentioned, if $L$ has $\ell$-components, there are $2^\ell$ natural models of the link involution, depending on how we twist each link component. The expanded model described above can also be used to realize any of these models.  We consider the diagram $(\Sigma\# \bar \Sigma, \bs c \bar \bs, \Ds, \ws\cup \bar \zs, \zs\cup \bar \ws)$. Suppose that $\mathbb{O}$ is a map from the set of link components of $L$ to the set $\{+,-\}$ (corresponding to a choice of whether to twist in the positive or negative direction). We let $\bZ(\bO)$  consist of the basepoints from $\ws\cup \bar \zs$, for components where $\bO=-$, as well as the basepoints from $\zs\cup \bar \ws$ for components where $\bO=+$. Let $\bW(\bO)$ denote the complement. Then each collection $\bZ(\bO)$ and $\bW(\bO)$ determine Maslov gradings on $\cCFL(\Sigma\# \bar \Sigma, \bs c \bar \bs, \Ds, \ws\cup \bar \zs, \zs\cup \bar \ws)$, and we let $\Theta^{\bZ(\bO)}$ and $\Theta^{\bW(\bO)}$ be the corresponding top-degree generators. A straightforward extension of Lemma~\ref{lem:expanded-transition-links} shows that the involution $\tilde \iota_{L,\bO}$ is computed by using $\Theta^{\bZ(\bO)}$ for the first triangle map, and $\Theta^{\bW(\bO)}$ for the second.
\end{rem}

\subsection{Naturality for knots and links}
\label{sec:naturality-knots-and-links}

Most of the ideas from the previous sections carry over to define transition maps and cobordism maps for the model of involutive link Floer homology described in Section~\ref{sec:expanded-involution-knots}.

 The main new technical subtlety is that the model of involutive link Floer homology in Section~\ref{sec:expanded-involution-knots} has an additional piece of data: the choice of connected sum point $p$. If $p$ and $p'$ are two choices of special points on $\Sigma$, a choice of path on $\Sigma$ from $p$ to $p'$ can be used to relate the two models where the connected sum is taken at $p$ or $p'$, by using a point-pushing diffeomorphism along this path. In principle, a transition map defined in this manner might depend on the choice of path. To handle this issue, we introduce further expansions of the doubling model, as follows.
 
 Suppose that $\ve{p}=\{p_1,\dots, p_n\}$ is a collection of marked points on $\Sigma\setminus (\as\cup \bs\cup \ws\cup \zs)$. We may form an expanded model of the involution, as follows. Firstly, we consider the surface $\Sigma\#_{\ve{p}} \bar \Sigma$, where we attach a connected sum tube at each point in $\ve{p}$. Let $\ve{c}=\{c_1,\dots, c_n\}$ denote a collection of meridians of the tubes. We pick properly embedded arcs $\delta_1,\dots, \delta_{2g+2|L|+|\ve{p}|-2}$, which form a basis of $H_1(\Sigma\setminus (\ve{w}\cup \ve{z}), \ve{p})$. It is straightforward to see that if $\Ds$ is constructed by doubling $\delta_1,\dots, \delta_{2g+2|L|+|\ve{p}|-2}$, then $(\Sigma\#_{\ve{p}} \bar \Sigma, \as \ve{c} \bar \bs, \Ds, \ws\cup \bar \zs, \zs\cup \bar \ws)$ is a diagram for $(Y,L)$, where each component of $L$ is given four basepoints.
 
 Given a collection of points $\ve{p}$ as well as doubling arcs, as above, the construction from Section~\ref{sec:expanded-involution-knots} adapts verbatim to produce a model of the link involution. It remains to relate the models for different choices of $\ve{p}$. 
 
Suppose $\cH=(\Sigma,\as,\bs,\ws,\zs)$ is a Heegaard link diagram, and $\ve{p}$ is a subset of $\Sigma\setminus (\as\cup \bs\cup \ws\cup \zs)$, and $q$ is a point on $\Sigma\setminus (\as\cup \bs \cup \ws \cup \zs \cup \ve{p})$. Suppose that  $\frD_{\ve{p}}$ and $\frD_{\ve{p}\cup \{q\}}$ are doubling enhancements of $\cH$, which use the connected sum points $\ve{p}$ and $\ve{p}\cup \{q\}$, respectively. We now define \emph{expansion} and \emph{contraction} morphisms
\[
\scE_{q}\colon \cCFKI(\frD_{\ve{p}})\to \cCFKI(\frD_{\ve{p}\cup \{q\}})\quad \text{and} \quad \scC_{q}\colon \cCFKI(\frD_{\ve{p}\cup \{q\}})\to \cCFKI(\frD_{\ve{p}}).
\]

We define the map $\scE_q$ in Equation~\eqref{eq:def-E_p}. Our definition of $\scE_{q}$ is similar to our definition of the 1-handle map, and $\scC_{q}$ is defined similarly to the 3-handle map.
\begin{equation}
\scE_q:= \begin{tikzcd}[column sep=2cm, row sep=2cm, labels=description]
\cCFK(\as,\bs)
	\ar[rr, "\id"]
	\ar[d,"F_1^{c_p\bar \b,c_p\bar \b}"]
&&
\cCFK(\as,\bs)
	\ar[d, "F_1^{c_p c_q \bar \b,c_p c_q\bar \b}"]
\\
\cCFK(\as\ve{c}_{\ve{p}}\bar \bs,\bs\ve{c}_{\ve{p}}\bar \bs)
	\ar[r, "F_1^{c_q,c_q}"]
	\ar[d, "f_{\a c_p \bar \b }^{\b c_p \bar \b\to \Dt;\zs}"]
&
\cCFK(\as\ve{c}_{\ve{p}}c_{q}\bar \bs,\bs\ve{c}_{\ve{p}}c_q\bar \bs)
	\ar[r, "\id"]
	\ar[d, "f_{\a c_p c_q \bar \b}^{\b c_p c_q \bar \b\to \Dt c_q;\zs}"]
	\ar[dr,dashed]
&
\cCFK(\as\ve{c}_{\ve{p}}c_{q}\bar \bs,\bs\ve{c}_{\ve{p}}c_q\bar \bs)
	\ar[d, "f_{\a c_p c_q \bar \b}^{\b c_p c_q \bar \b\to \Dt';\zs} "]
\\
\cCFK(\as\ve{c}_{\ve{p}}\bar \bs,\Ds)
	\ar[r, "F_1^{c_q,c_q}"]
	\ar[d,"f_{\a c_p \bar \b}^{\Dt\to \a c_p \bar \a;\ws}"]
&
\cCFK(\as\ve{c}_{\ve{p}}c_q\bar \bs,\Ds c_q)
	\ar[r, "f_{\a c_p c_q \bar \b}^{\Dt c_q\to \Dt'}"]
	\ar[d, "f_{\a c_p c_q \bar \b}^{\Dt c_q\to \a c_p c_q \bar \a;\ws}"]
	\ar[dr,dashed]
&
\cCFK(\as\ve{c}_{\ve{p}}c_q\bar \bs,\Ds')
	\ar[d,"f_{\a c_p c_q \bar \b}^{\Dt'\to \a c_p c_q \bar \a;\ws}"]
\\
\cCFK(\as\ve{c}_{\ve{p}}\bar \bs,\as \ve{c}_{\ve{p}} \bar \as)
	\ar[r, "F_1^{c_q,c_q}"]
	\ar[d,"F_3^{a c_p,a c_p}"]
&
\cCFK(\as\ve{c}_{\ve{p}}c_q\bar \bs,\as \ve{c}_{\ve{p}} c_q \bar \as)
	\ar[r, "B_\tau"]
&
\cCFK(\as\ve{c}_{\ve{p}}c_q\bar \bs,\as \ve{c}_{\ve{p}} c_q \bar \as)
	\ar[d, "F_3^{\a c_p c_q, \a c_p c_q}"]
\\
\cCFK(\bar \bs, \bar \as)
	\ar[rr, "\id"]
&
&
\cCFK(\bar \bs, \bar \as)
\end{tikzcd}
\label{eq:def-E_p}
\end{equation}
In the definition of $\scE_p$, we write $\tau$ for a closed curve on $\Sigma\#_{\ve{p}\cup \{q\}} \bar \Sigma$ which is symmetric with respect to the $\Z_2$-symmetry, and has a single transverse intersection with $c_q$.

\begin{lem}\,
\begin{enumerate}
\item\label{eq:E-C-1} The maps $\scE_{q}$ and $\scC_{q}$ commute with the transition maps for handleslides and isotopies of the attaching curves. Furthermore, $\scE_q$ and $\scC_q$ are independent of the choice of $\tau$, up to chain homotopy.
\item\label{eq:E-C-2} If $p\neq q$, then 
\[
[\scE_p,\scC_q]\simeq 0,\quad [\scE_p,\scE_q]\simeq 0\quad \text{and} \quad [\scC_p,\scC_q]\simeq 0.
\]
\item\label{eq:E-C-3} $\scE_p \circ \scC_p\simeq\id$ and $\scC_p\circ \scE_p\simeq \id$.
\item\label{eq:E-C-4} If $\phi$ is a surface isotopy of $\Sigma$ which moves $p$ to $q$ and fixes all of the other connected sum points and basepoints, then
\[
\phi_*\simeq \scC_p\circ \scE_q.
\]
\end{enumerate}
\end{lem}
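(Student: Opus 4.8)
The plan is to prove all four parts of the lemma by direct adaptation of the arguments used for the closed three-manifold 1- and 3-handle maps in Section~\ref{sec:1-and-3-handles}, recognizing that $\scE_q$ and $\scC_q$ are built from exactly the same ingredients --- a 1-handle map, a pair of triangle maps compressed with relative homology/$B_\tau$ morphisms, and a 3-handle map --- with the sole new feature that everything is happening in the presence of the extra basepoints $\ws,\zs$ and the already-present tubes $\ve{c}_{\ve{p}}$. The key observation is that the tube attached at $q$, with its meridian $c_q$, is inserted in a small ball disjoint from all other data, so each polygon count appearing in~\eqref{eq:def-E_p} occurs on a connected sum of the ``large'' diagram with a genus-one (or genus-two) model piece, and Proposition~\ref{prop:multi-stabilization-counts} together with the small-translate theorems of \cite{HHSZExact}*{Propositions~11.1 and 11.5} controls all the relevant counts exactly as in Lemmas~\ref{lem:1-handles-and-handleslides}, \ref{lem:1-handles-simple} and \ref{lem:commute-1/3-handles}.

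For part~\eqref{eq:E-C-1}, I would build, for each subcube of~\eqref{eq:def-E_p}, a hyperbox of size $(\ast,\ast,2)$ with the new third direction corresponding to an elementary handleslide/isotopy of $\as$, $\bs$ and $\Ds$ exactly as in the proof of Lemma~\ref{lem:1-handles-and-handleslides}; the rows with 1- and 3-handle maps become hyperboxes of stabilization, the polygon-map rows are obtained by pairing hypercubes of attaching curves (now carrying the extra basepoints, which changes nothing structurally), and the $B_\tau$ level uses the generalized hypercube of attaching curves from~\eqref{eq:hypercube-attaching-curves-1-handle-w-tau} adapted to the multi-pointed setting. Independence from $\tau$ follows verbatim from the argument of Lemma~\ref{lem:1-handle-map-ind-tau}: two choices of $\tau$ differ by a class in the span of the $\Ds$-curves, and the difference in the diagonal map is a term with a $\theta^+_{c_q,c_q}$ tensor factor, which dies under $F_3^{\a c_p c_q,\a c_p c_q}$ by Proposition~\ref{prop:multi-stabilization-counts}. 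For part~\eqref{eq:E-C-2}, disjointness of $p$ and $q$ means the two model tubes live in disjoint balls, so one constructs a three-dimensional hyperbox with two faces equal to the two compositions, exactly as in Lemma~\ref{lem:commute-1/3-handles} --- or, more cheaply, one uses the ``near the basepoint'' model computations (see below) to reduce to a tautology.

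The conceptual heart of the lemma is part~\eqref{eq:E-C-3}, and then~\eqref{eq:E-C-4} follows from it. The plan for~\eqref{eq:E-C-3} is to mimic Lemma~\ref{lem:1-handles-simple}: after a handleslide equivalence (harmless by part~\eqref{eq:E-C-1}) we may assume the point $q$ lies in the same component of $\Sigma\setminus(\as\cup\bs)$ as a basepoint, indeed in a standard genus-one ``finger'' near $w$, with $\Ds$ chosen so that the doubled arcs in the $q$-region have the configuration of Figure~\ref{fig:4}. Then the three diagonal maps in the compression of~\eqref{eq:def-E_p} vanish by the same chain of model computations as in Lemma~\ref{lem:1-handles-simple} (the relevant genus-one and genus-two quadruples are algebraically rigid multi-stabilizations, the internal triangle maps evaluate to the expected top/bottom generators, and the small-translate theorem for quadrilaterals kills the destabilized maps because some attaching-curve set is repeated), and $\scE_q$ collapses to $\xs\mapsto \xs\times c_q$ while $\scC_q$ collapses to $\xs\times c_q\mapsto \xs$, $\xs\times\theta^-_{c_q,c_q}\mapsto \xs$ wait --- more precisely to the obvious splitting-off map, so that both composites are the identity. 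Note one must check that the extra triangle maps $f^{\Dt c_q\to\Dt'}$ and $f^{\b c_p c_q\bar\b\to\Dt';\zs}$ relating the two choices of doubling curves on the $q$-side also reduce to nearest-point maps, which again follows from rigidity plus the small-translate theorem. Finally, part~\eqref{eq:E-C-4} is deduced by writing $\phi$ as the composition of the point-pushing along a path from $p$ to $q$ with the tautological identification, and observing via part~\eqref{eq:E-C-3} and the naturality of $\scE,\scC$ under the moves of part~\eqref{eq:E-C-1} that $\scC_p\circ\scE_q$ realizes exactly this point-push; the main obstacle I anticipate is the bookkeeping in part~\eqref{eq:E-C-3} --- keeping track of which of the many model quadruples is rigid and which generator it evaluates to, and ensuring the choice of $\Ds$ near $q$ is compatible with the choice of $\Ds'$ so that both the $\scE$-side and the inverse $\scC$-side computations go through with the same diagram --- but this is entirely parallel to \cite{HHSZExact} and Lemma~\ref{lem:1-handles-simple} and introduces no genuinely new difficulty.
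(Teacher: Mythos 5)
Your treatment of parts (1) and (2) (via hyperboxes) matches the paper, but part (3) — which you correctly identify as the heart of the lemma — is where your route breaks down. You propose to reduce to a model computation "near the basepoint" in the style of Lemma~\ref{lem:1-handles-simple}. That reduction is not available here. The connected-sum point $q$ is part of the data of $\frD_{\ve{p}\cup\{q\}}$: handleslides and isotopies of $\as$, $\bs$ and $\Ds$ (part (1)) do not move $q$, and the only mechanism in the paper for relating expansions at different points is part (4) — which is derived \emph{from} part (3), so your reduction is circular. Even granting a favorable position for $q$, the destabilization result you invoke (Proposition~\ref{prop:multi-stabilization-counts}) requires the algebraically rigid model factor to be joined to the rest of the diagram at one of \emph{its own basepoints}; in the link setting the tube at $q$ and its doubling curve contain no basepoint (this is exactly why no framing is needed for links), so the genus-two region around the $q$-tube cannot play the role of $\cD_0$. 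Your description of the output is also off target: the top and bottom rows of \eqref{eq:def-E_p} are identities on $\cCFK(\as,\bs)$ and $\cCFK(\bar\bs,\bar\as)$, so $\scE_q$ and $\scC_q$ are enhanced morphisms whose length-one component is already the identity — there is no ``$\xs\mapsto \xs\times c_q$'' to collapse to; the entire content of (3) is that the composite \emph{diagonal} terms are a boundary in the morphism complex.

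The paper proves (3) for arbitrary $q$ without any repositioning, by exhibiting an explicit hyperbox (Equation~\eqref{eq:main-hypercube-C_q-E_q=id}) whose nontrivial cube is obtained by pairing with a three-dimensional hypercube of beta attaching curves; the length-2 and length-3 chains are produced by cube-filling using the gradings of the unlink complexes, and the one delicate point is matching the diagonal chains of adjacent levels (adding a top-degree generator to $\eta$ does not affect the compressed diagonal). Your part (4) likewise skips the paper's essential device: one cannot simply assert that $\scC_p\circ\scE_q$ ``realizes the point-push,'' because commuting $\scC_p$ to the front would leave a diagram with no tube at all. The paper inserts $\scC_{x_0}\circ\scE_{x_0}\simeq \id$ at an interior point $x_0$ of the path from $p$ to $q$ and uses (1)--(3) to shuttle the factors so that at least one tube is always present. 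You should either adopt the paper's direct hyperbox construction for (3), or else prove a variant of the stabilization result in which the connected sum is taken at a non-basepoint of the rigid factor; as written, the argument does not close.
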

\begin{proof}
The proof of part~\eqref{eq:E-C-1} is essentially the same as for the 1-handle and 3-handle maps. See Lemmas~\ref{lem:1-handle-map-ind-tau} and~\ref{lem:1-handles-and-handleslides}.

The proofs of claims~\eqref{eq:E-C-2} and \eqref{eq:E-C-3} amount to constructing five hyperboxes (one for each relation). All of these hyperboxes have a similar flavor, so we describe the hyperbox for the relation $\scC_q\circ \scE_q\simeq \id$, and leave the remaining hyperboxes to the reader.

The hyperbox we build will correspond to the hypercube
\[
\begin{tikzcd}[labels=description]\cCFKI(\frD_{\ve{p}})\ar[r, "\scE_q"] \ar[d, "\id"] \ar[dr,dashed]& \cCFKI(\frD_{\ve{p}\cup \{q\}}) \ar[d, "\scC_q"]\\
\cCFKI(\frD_{\ve{p}})\ar[r, "\id"]& \cCFKI(\frD_{\ve{p}})
\end{tikzcd}
\]
We build four constituent hyperboxes, each corresponding to one of the four levels of Equation~\eqref{eq:def-E_p}. The highest level is the following diagram:
\[
\begin{tikzcd}[
	column sep={2cm,between origins},
	row sep=.8cm,
	labels=description,
	fill opacity=.7,
	text opacity=1,
	]
\cCFK(\as,\bs)
	\ar[dr, "\id"]
	\ar[ddd, "F_1^{c_p\bar \b,c_p \bar \b}"]
	\ar[rr, "\id"]
&[.7 cm]
&\cCFK(\as,\bs)
	\ar[rd, "\id"]
	\ar[ddd,"F_1^{c_pc_q\bar \b,c_pc_q \bar \b}"]
&[.7 cm]
\\
&[.7 cm]
\cCFK(\as,\bs)
 	\ar[rr,line width=2mm,dash,color=white,opacity=.7]
	\ar[rr, "\id"]
&\,
&[.7 cm]
\cCFK(\as, \bs)
	\ar[ddd,"F_1^{c_p\bar \b,c_p \bar \b}"]
\\
\\
\cCFK(\as\ve{c}_{\ve{p}}\bar \bs, \bs \ve{c}_{\ve{p}} \bar \bs)
	\ar[dr, "\id"]
	\ar[rr, "F_1^{c_q,c_q}"]
&[.7 cm]\,&
\cCFK(\as \ve{c}_{\ve{p}} c_q \bar \bs, \bs \ve{c}_{\ve{p}} c_q \bar \bs)
	\ar[dr, "F_3^{c_q,c_q}\circ B_\tau"]
\\
& [.7 cm]
\cCFK(\as \ve{c}_{\ve{p}}\bar \bs, \bs\ve{c}_{\ve{p}} \bar \bs)
 	\ar[from=uuu,line width=2mm,dash,color=white,opacity=.7]
	\ar[from=uuu,"F_1^{\bar \b, \bar \b}"]
	\ar[rr, "\id"]
&
&[.7 cm] 
\cCFK(\as \ve{c}_{\ve{p}}\bar \bs, \bs \ve{c}_{\ve{p}}\bar \bs)
\end{tikzcd}
\]
The bottom-most level is very similar to the above, so we omit it.

We now consider the hyperbox corresponding to the second level from the top in Equation~\eqref{eq:def-E_p}:
\begin{equation}
\begin{tikzcd}[
	column sep={2.2cm,between origins},
	row sep={1.5cm,between origins},
	labels=description,
	fill opacity=.8,
	text opacity=1,
	execute at end picture={
	  \foreach \Number in  {A,B,C,D,E}
	      {\coordinate (\Number) at (\Number.center);}
	\filldraw[black] (A) node[rectangle,fill=white, text opacity=1]{$\cCFK(\as \ve{c}_{\ve{p}} \bar \bs, \bs \ve{c}_{\ve{p}} \bar \bs)$};
	\filldraw[black] (B) node[rectangle,fill=white, text opacity=1]{$\cCFK(\as \ve{c}_{\ve{p}} c_q \bar \bs, \bs \ve{c}_{\ve{p}} c_q \bar \bs)$};
	\filldraw[black] (C) node[rectangle,fill=white, text opacity=1]{$\cCFK(\as \ve{c}_{\ve{p}} c_q \bar \bs, \bs \ve{c}_{\ve{p}} c_q \bar \bs)$};
	\filldraw[black] (D) node[rectangle,fill=white, text opacity=1]{$\cCFK(\as \ve{c}_{\ve{p}} \bar \bs, \bs \ve{c}_{\ve{p}} \bar \bs)$};
	\filldraw[black] (E) node[rectangle,fill=white, text opacity=1]{$\cCFK(\as \ve{c}_{\ve{p}} \bar \bs, \bs \ve{c}_{\ve{p}} \bar \bs) $};
	}
	]
|[alias=A]|\phantom{\cCFK(\as \ve{c}_{\ve{p}} \bar \bs, \bs \ve{c}_{\ve{p}} \bar \bs)}
	\ar[ddd]
	\ar[rr, "F_1^{c_q,c_q}"]
	\ar[ddrr, "\id"]
&&
\cCFK(\as \ve{c}_{\ve{p}}c_{q}\bar \bs, \bs \ve{c}_{\ve{p}}c_{q} \bar \bs)
	\ar[rr, "\id"]
	\ar[ddd, shift right]
	\ar[dr, "B_\tau"]
	\ar[ddddr,dashed]
	\ar[dddrr,dashed]
&&
\cCFK(\as \ve{c}_{\ve{p}}c_{q}\bar \bs, \bs \ve{c}_{\ve{p}}c_{q} \bar \bs)
	\ar[dr, "B_\tau"]
	\ar[ddd]
	\ar[ddddr,dashed]
&&\,
\\
&&&
|[alias=B]|\phantom{\cCFK(\as \ve{c}_{\ve{p}} c_q \bar \bs, \bs \ve{c}_{\ve{p}} c_q \bar \bs)}
	\ar[rr, "\id", pos=.2]
	\ar[ddd]
	\ar[dr, "F_3^{c_q,c_q}"]
&&
|[alias=C]|\phantom{\cCFK(\as \ve{c}_{\ve{p}} c_q \bar \bs, \bs \ve{c}_{\ve{p}} c_q \bar \bs)}
	\ar[dr, "F_3^{c_q,c_q}"]
	\ar[ddd]
\\
&&|[alias=D]|\phantom{\cCFK(\as \ve{c}_{\ve{p}} \bar \bs, \bs \ve{c}_{\ve{p}} \bar \bs)}
	\ar[rr, "\id",pos=.7]
&&
|[alias=E]|\phantom{\cCFK(\as \ve{c}_{\ve{p}} \bar \bs, \bs \ve{c}_{\ve{p}} \bar \bs)}
	\ar[rr, "\id",pos=.7]
&&
\cCFK(\as \ve{c}_{\ve{p}} \bar \bs, \bs \ve{c}_{\ve{p}} \bar \bs)
\\
\cCFK(\as \ve{c}_{\ve{p}} \bar \bs, \Ds)
	\ar[ddrr, "\id"]
	\ar[rr, "F_1^{c_q,c_q}"]
&&
\cCFK(\as \ve{c}_{\ve{p}} c_q \bar \bs, \Ds c_q)
	\ar[rr]
	\ar[dr, "B_\tau"]
	\ar[drrr,dashed]
&&
\cCFK(\as \ve{c}_{\ve{p}} c_q \bar \bs, \Ds')
	\ar[dr]
\\
&
&& 
\cCFK(\as \ve{c}_{\ve{p}} c_q \bar\bs, \Ds c_q )
	\ar[rr, "\id"]
	\ar[dr, "F_3^{c_q,c_q}"]
&&
\cCFK(\as \ve{c}_{\ve{p}} c_q \bar \bs, \Ds c_q)
	\ar[dr, "F_3^{c_q,c_q}"]
\\
\,&&
\cCFK(\as \ve{c}_{\ve{p}} \bar \bs, \Ds)
	\ar[rr,"\id"]
 	\ar[from=uuu,line width=2mm,dash,color=white,opacity=.7]
	\ar[from=uuu, pos=.6]
&&
\cCFK(\as \ve{c}_{\ve{p}} \bar \bs, \Ds)
	\ar[rr,"\id"]
 	\ar[from=uuu,line width=2mm,dash,color=white,opacity=.7, shift left]
	\ar[from=uuu, shift left]
&&
\cCFK(\as \ve{c}_{\ve{p}} \bar \bs, \Ds)
 	\ar[from=uuu,line width=2mm,dash,color=white,opacity=.7]
	\ar[from=uuu]
\end{tikzcd}
\label{eq:main-hypercube-C_q-E_q=id}
\end{equation}
In Equation~\eqref{eq:main-hypercube-C_q-E_q=id}, each unlabeled arrow is a triangle or quadrilateral map. The back right hypercube also has a length 3 map, which is not labeled. We now explain the back-right hypercube. This hypercube is obtained by pairing $\as \ve{c}_{\ve{p}} c_q \bar \bs$ with the following hypercube of beta attaching curves:
\begin{equation}
\begin{tikzcd}[
	column sep={2cm,between origins},
	row sep=.8cm,
	labels=description,
	fill opacity=.7,
	text opacity=1,
	]
\bs \ve{c}_{\ve{p}} c_q \bar \bs
	\ar[dr, "\tau"]
	\ar[ddd, "\Theta_{\b c_p c_q \bar \b, \Dt c_q}^{\zs}"]
	\ar[rr, "1"]
	\ar[dddrr,dashed]
	\ar[ddddrrr,dotted]
&[.7 cm]
&\bs \ve{c}_{\ve{p}} c_q \bar \bs
	\ar[rd, "\tau"]
	\ar[ddd,"\Theta_{\b c_p c_q \bar \b, \Dt'}^{\zs}"]
	\ar[ddddr,dashed]
&[.7 cm]
\\
&[.7 cm]
\bs \ve{c}_{\ve{p}} c_q \bar \bs
 	\ar[rr,line width=2mm,dash,color=white,opacity=.7]
	\ar[rr, "1"]
&\,
&[.7 cm]
\bs \ve{c}_{\ve{p}} c_q \bar \bs
	\ar[ddd,"\Theta_{\b c_p c_q \bar \b, \Dt c_q}^{\zs}"]
\\
\\
\Ds c_q
	\ar[dr, "\tau"]
	\ar[rr, "\Theta_{\Dt c_q,\Dt'}", pos=.4]
	\ar[drrr,dashed, "\eta"]
&[.7 cm]\,&
\Ds'
	\ar[dr, "\Theta_{\Dt',\Dt c_q}"]
\\
& [.7 cm]
\Ds c_q
 	\ar[from=uuu,line width=2mm,dash,color=white,opacity=.7]
	\ar[from=uuu,"\Theta_{\b c_p c_q \bar \b, \Dt c_q}^{\zs}"]
	\ar[rr, "1"]
&
&[.7 cm] 
\Ds c_q
\end{tikzcd}
\label{eq:3-d-hyperucbe-[C,E]}
\end{equation}
The above hypercube of beta attaching curves is constructed as follows. Length 2 chains are constructed by adapting Lemma~\ref{lem:1-handle-bottom-hypercube}. Once these are chosen, a length 3 chain may be chosen by the standard cube-filling procedure. Namely, if $C$ is the sum of all other terms of the length 3 relation, we know $\d C=0$ because of the hypercube relations on the other proper faces, and $C$ is in $\cCFK(\bs \ve{c}_{\ve{p}} c_q \bar \bs, \Ds c_q)$, a complex for an unlink in a connected sum of several copies of $S^1\times S^2$, of $\gr_{\zs}$-grading equal to that of the top degree generator of homology. By potentially adding a copy of the top degree generator to the 2-chain labeled $\eta$, in Equation~\eqref{eq:3-d-hyperucbe-[C,E]}, the hypercube relations may be satisfied.

The hyperbox for the third level of Equation~\eqref{eq:def-E_p} is constructed very similarly. Indeed, if we switch the roles of the alpha and beta curves, then the hypercube for the third level of Equation~\eqref{eq:def-E_p} is obtained by dualizing Equation~\eqref{eq:main-hypercube-C_q-E_q=id}. Stacking these hyperboxes and compressing gives the relation $\scC_q\circ \scE_q\simeq \id$. 

One important point to check is that the diagonal maps on the second and third hypercubes can be chosen to match. For example, to build the hypercube in Equation~\eqref{eq:main-hypercube-C_q-E_q=id}, we may potentially have to add the top degree generator to $\eta$ so that the hypercube relations are satisfiable. A similar issue occurs in constructing the length 2 map on the top face of the third hypercube as well. Nonetheless, as in the proof of Lemma~\ref{lem:1-handles-and-handleslides}, addition of the top degree generator to $\eta$ in Equation~\eqref{eq:3-d-hyperucbe-[C,E]} has no effect on the diagonal map in the compression of Equation~\eqref{eq:main-hypercube-C_q-E_q=id}. Hence, the compression of Equation~\eqref{eq:main-hypercube-C_q-E_q=id} may be stacked on top of the corresponding hypercube below it.

We now consider part~\eqref{eq:E-C-4}. This follows from the first claims, as we now explain. Let $\lambda$ be a path from $p$ to $q$. Let $\phi_{q\to p}$ denote the surface isotopy which moves $q$ to $p$ along $\lambda$, and is supported in a small neighborhood of $\lambda$. It is sufficient to show that
\[
\phi_{q\to p}\circ \scC_p\circ \scE_q\simeq \id.
\]
Let $x_0$ be a point in the interior of $\lambda$. We compute as follows, using the previous relations:
\[
\begin{split}
\phi_{q\to p}\circ \scC_p\circ \scE_q&\simeq  \phi_{q\to p}\circ \scC_p\circ \scC_{x_0} \circ \scE_{x_0}\circ  \scE_q\\
&\simeq \phi_{q\to p} \circ \scC_{x_0} \circ\scE_{q} \circ \scC_{p}\circ \scE_{x_0}\\
&\simeq  \scC_{x_0} \circ \phi_{q\to p} \circ\scE_{q} \circ \scC_{p}\circ \scE_{x_0}\\
&\simeq \scC_{x_0} \circ \scE_{p} \circ \scC_{p}\circ \scE_{x_0}\\
&\simeq \id.
\end{split}
\]
Note that commuting $\phi_{q\to p}$ past $\scC_{x_0}$ in the second line is possible since the diffeomorphism $\phi_{q\to p}$ is isotopic to one which fixes $x_0$. Next, $\phi_{q\to p}\circ \scE_q\simeq \scE_p$ by naturality of the expansion and contraction maps with respect to changes of attaching curves, which is part~\eqref{eq:E-C-1} of the statement.  This establishes part~\eqref{eq:E-C-4}.
\end{proof}

\subsection{Functoriality for knots and links}

In this section, we describe several aspects of the proof of Theorem~\ref{thm:functoriality}. Firstly, to construct cobordism maps for $(W,\Sigma)\colon (Y_1,L_1)\to (Y_2,L_2)$ when $\Sigma$ is a collection of annuli such that each component of $\Sigma$ intersects $Y_1$ and $Y_2$ non-trivially, it is sufficient to describe cobordism maps on link Floer homology for 4-dimensional handles attached to the complement of a link. 

Cobordism maps for 4-dimensional handles attached in the complement of a link are defined similarly to the maps for cobordisms between closed 3-manifolds (cf. \cite{ZemCFLTQFT}*{Section~5}). The 1-handle and 3-handle maps are defined as in Section~\ref{sec:1-and-3-handles}, and the 2-handle maps are defined similarly to Section~\ref{sec:2-handles}. One caveat is that there is an asymmetry in the definition of the 2-handle maps with respect to the basepoints, in that we choose the 2-handle maps to count holomorphic triangles which represent classes satisfying $\frs_{\ws}(\psi)=\frs$. Note that by \cite{ZemAbsoluteGradings}*{Lemma~3.8}, we have
\[
(\frs_{\ws}-\frs_{\zs})(\psi)=\PD[\Sigma].
\]
In particular, the final hypercube we use in constructing the cobordism maps for our 2-handle map will have the following form:
\[
\begin{tikzcd}[column sep=3cm, labels=description]
\cCFL(\bar \Sigma, \bar \bs, \bar \as)
	\ar[r, "f_{\bar \b;\frs}^{\bar \a\to \bar \a'}"]
	\ar[d, "\eta_L"]
&
\cCFL(\bar \Sigma, \bar \bs, \bar \as')
	\ar[d,"\eta_L"]
\\
\cCFL(\Sigma,\as, \bs)
	\ar[r, "f_{\a\to \a', \bar \frs+\PD[\Sigma]}^{\b}"]
&
\cCFL(\Sigma,\as',\bs)
\end{tikzcd}
\]

\section{2-handle maps and the knot surgery formula}
\label{sec:knot-surgery}

In this section, we compute another family of involutive cobordism maps. If $K\subset S^3$, and $n\in \Z$, we will compute the map for the 2-handle cobordism from $S_{2n}^3(K)$ to $S^3$. Our description will be in terms of the knot surgery formula of \cite{HHSZExact}, and is in a similar spirit to the non-involutive description of the 2-handle map of Ozsv\'{a}th and Szab\'{o} \cite{OSIntegerSurgeries}*{Theorem~1.1}.

We suppose that $K\subset S^3$ is a knot, and we consider the 2-handle cobordism $W'_m(K)$ from $S^3_m(K)$ to $S^3$. This cobordism is $\Spin$ if and only if $m$ is even. Furthermore, if $m=2n$, then the unique self-conjugate $\Spin^c$ structure on $W'_{2n}(K)$ is the one which satisfies $\langle c_1(\frs),\Sigma\rangle =0$, where $\Sigma$ is obtained by capping $K$ with the core of the 2-handle. This $\Spin^c$ structure restricts to $[n]\in \Spin^c(S^3_{2n}(K))$ under the isomorphism $\Spin^c(S^3_{2n}(K))\iso \Z/2n$.

Our formula is phrased in terms of the mapping cone formula of Ozsv\'{a}th and Szab\'{o} \cite{OSIntegerSurgeries} and its involutive analog from \cite{HHSZExact}. We review these constructions briefly. We write $A_s(K)$ for the subcomplex of $\CFK^\infty(K)$ generated by elements $\scU^i \scV^j \ve{x}$ where $A(\ve{x})+j-i=0$ and $j\ge -s$ and $i\ge 0$. We write $B_s(K)$ for the subspace generated by similar monomials, with no restriction on $j$. We write $\tilde{B}_s(K)$ for the subspace generated by monomials which satisfy $j\ge -s$, but with no restriction on $i$. We write $v_s\colon A_s\to B_s$ and $\tilde{v}_s\colon A_s\to \tilde{B}_s$ for the canonical inclusions. We recall from \cite{OSIntegerSurgeries} that the mapping cone formula requires a choice of homotopy equivalence from $\tilde{B}_s(K)$ to $B_{s+m}(K)$. We write $\frF_m$ for this map. We write $\bA(K)$ for the direct product over $s$ of $A_s(K)$, tensored with the power series ring $\bF[[U]]$. Similarly $\bB(K)$ denotes the direct product of the complexes $B_s(K)$, tensored with $\bF[[U]]$. 

We recall that Ozsv\'{a}th and Szab\'{o} proved that if $K\subset S^3$ and $m\in \Z$, then
\[
\ve{\CF}^-(S_m^3(K))\iso \Cone(v+h_m\colon \bA(K)\to \bB(K)),
\]
where $h_m=\frF_m\tilde v$. In \cite{HHSZExact}, we extended this to the involutive setting, and proved that
\[
\ve{\CFI}^-(S^3_m(K))\simeq \begin{tikzcd}[column sep=2cm]\bA(K)\ar[dr,dashed, "H_m\tilde v"]\ar[r, "v+h_m"]\ar[d, "Q(\id+\iota_{\bA})"]& \bB(K)\ar[d, "Q(\id+\iota_{\bB})"]\\
Q\bA(K)\ar[r, "v+h_m"]& Q \bB(K)
\end{tikzcd}
\]

The map $\iota_{\bA} \colon A_s \rightarrow A_{-s}$ is given by $\iota_{\bA} = U^s \iota_K$ and the map $\iota_{\bB}:B_s \rightarrow B_{-s+m}$ is given by $\frF_m U^s \iota_K$. The map $H_m$ is defined by picking a map $H_m\colon \tilde{B}_s(K)\to B_{-s}(K)$ satisfying
\begin{equation} \label{eqn:H}
U^s\iota_K+\frF_m U^s \iota_K \frF_m=[\d, H_m].
\end{equation}
Indeed, any two choices of map $H_m$ satisfying this equation are themselves related by appropriate homotopies. See \cite{HHSZExact}*{Section~3.5}.

 With the above notation in place, we now state our formula for the 2-handle map. Once again, because one end of the cobordism is the three-sphere, the formula is not affected by choices of framings, and we omit them from the notation. 
\begin{thm}\label{thm:knot-surgery-cobordism-map} Let $\frs$ be the unique self-conjugate $\Spin^c$ structure on the 2-handle cobordism $W'_{2n}(K)$ from $S^3_{2n}(K)$ to $S^3$. Write $\mathit{BI}_n(K):=\Cone(Q(\id+\iota_{\bB})\colon B_n(K)\to Q B_n(K))$, and let
 \[
J\colon \bXI_n(K)\to \BI_n(K)
\]
be the map $J:=v_n \Pi^A_n+Q\Pi_n^B\iota_{\bX}$, where $\Pi_s^A$ is projection onto $A_s\oplus Q A_s$ and $\Pi_s^B$ is similar. Then there is a hypercube of $\bF[U,Q]/Q^2$-equivariant maps
\[
\begin{tikzcd}[row sep=2cm, column sep=3cm, labels=description]
\CFI^-(S_{2n}^3(K)) \ar[dr,dashed] \ar[d,"\simeq"]\ar[r, "{\CFI(W'_{2n}(K),\frs)}"]& \CFI^-(S^3) \ar[d, "\simeq"]\\
\bXI_{2n}(K)\ar[r, "J"]& \BI_n(K)
\end{tikzcd}
\]
\end{thm}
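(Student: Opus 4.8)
The plan is to build an explicit chain homotopy equivalence between the two-handle cobordism $W'_{2n}(K)$, decomposed as a sequence of two-handle attachments along the link $\bL_n$ described in \cite{OSIntegerSurgeries}, and the algebraic mapping cone data of $\bXI_{2n}(K)$ and $\BI_n(K)$, upgrading the non-involutive argument of Ozsv\'ath--Szab\'o. Recall that in \cite{OSIntegerSurgeries} one presents $S^3_{2n}(K)$ as surgery on a two-component link (a framed meridian together with $K$), so that the two-handle cobordism $W'_{2n}(K)$ is realized by attaching a single two-handle; the mapping cone $\bX_{2n}(K)$ arises from the complete system of hyperboxes for all the surgery coefficients simultaneously. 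The map $\CF(W'_{2n}(K),\frs)$ on the non-involutive level becomes, in this model, the map $v_n \circ \Pi^A_n$, i.e. projection to the $s=n$ column of $\bA$ followed by the vertical map $v_n\colon A_n\to B_n$. So I would begin by carefully identifying, with our conventions from Section~\ref{sec:2-handles}, that the beta-doubling model of the two-handle map applied to this particular Heegaard triple agrees (up to the identifications of Ozsv\'ath--Szab\'o) with $v_n\Pi^A_n$; this uses Lemma~\ref{lem:well-defined-2handle} to reduce to a convenient bouquet and Heegaard triple adapted to the large-surgery picture.

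\textbf{Key steps.} First I would set up the relevant Heegaard diagrams: a doubling-enhanced diagram for $S^3_{2n}(K)$ compatible with the large surgery/mapping cone decomposition, and the triple subordinate to the surgery bouquet giving $W'_{2n}(K)$. Second, I would apply the composition law (Proposition~\ref{prop:composition-law}) to commute the involutive two-handle map past the surgery decomposition, exactly as the mapping cone formula of \cite{HHSZExact} is assembled, so that $\CFI(W'_{2n}(K),\frs)$ becomes expressed in terms of the same hyperboxes of $A_s$ and $B_s$ complexes. Third, on the non-involutive layer I would invoke the Ozsv\'ath--Szab\'o computation identifying the two-handle map with $v_n\Pi^A_n$; this gives the ``$F$'' component of the desired enhanced $\iota$-morphism $J=(v_n\Pi^A_n, Q\Pi^B_n\iota_{\bX})$. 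Fourth, and this is the heart of the matter, I would produce the diagonal (homotopy) component: the hypercube relating $\CFI(W'_{2n}(K),\frs)$ to $J$ requires a chain homotopy $h$ with $\d_{\Mor}(F,h) = 0$, i.e. $F\iota_{\bX} + \iota_{\BI_n} F + \d h + h\d = 0$. Since $\iota_{\BI_n}$ on the $B$-side is built from $\iota_{\bB}$ (with the $s=n$ column singled out), and $\iota_{\bX}$ is built from the $\iota_{\bA}$, $\iota_{\bB}$, and the maps $h_m$, $H_m$ reviewed in \cite{HHSZExact}*{Section~3.5}, the needed homotopy will be assembled from: (a) the diagonal maps already present in the two-handle hyperbox of Equation~\eqref{eq:hyperbox-2-handles}, traced through the surgery decomposition; (b) the maps $H_m$; and (c) the interaction term $\Pi^B_n\iota_{\bX}$ whose appearance in $J$ precisely records the asymmetry between the $v_s$ and $h_s$ legs of the mapping cone. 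I would build the full $3$-dimensional hyperbox (one axis being the involution, one the cobordism, one the $\Spin^c$/surgery direction) whose compression yields the stated two-dimensional hypercube, exactly as in the proofs of Propositions~\ref{prop:2-handles-handle-slides} and~\ref{prop:composition-law}.

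\textbf{Main obstacle.} The technical crux is identifying the diagonal term of the compressed hypercube with $Q\Pi^B_n \iota_{\bX}$ rather than something more complicated. This amounts to a bookkeeping computation of how the distinguished homotopies $h_{\a\bar\b\to\a'\bar\b}^{\b\bar\b\to\Dt}$, $H_{\a\bar\b}^{\Dt\to\a\bar\a'}$, etc., appearing in Equation~\eqref{eq:hyperbox-2-handles} interact with the mapping cone differential $v + h_{2n}$ and its involutive refinement. The key simplification I expect to exploit is that, in the large surgery regime, the holomorphic triangle and quadrilateral maps involving the ``large'' column $s=n$ stabilize (via Proposition~\ref{prop:multi-stabilization-counts} and the small-translate theorems of \cite{HHSZExact}*{Propositions~11.1, 11.5}), so most potential diagonal contributions vanish, leaving only the $s=n$ term, which is precisely $\Pi^B_n\iota_{\bX}$ after the identification of the two-handle map with $v_n\Pi^A_n$. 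A secondary subtlety is compatibility of the two homotopy equivalences $\CFI^-(S^3_{2n}(K))\simeq \bXI_{2n}(K)$ (from \cite{HHSZExact}) and $\CFI^-(S^3)\simeq \BI_n(K)$ (tautological, since $B_n(K)\simeq \CF^-(S^3)$) with the cobordism map; this requires checking that the equivalence from \cite{HHSZExact} is realized by the same hyperboxes used here, so that the squares genuinely commute up to the specified homotopy and not merely up to a further unspecified one. I would handle this by re-deriving the relevant portion of the \cite{HHSZExact} equivalence in the present doubling-enhanced language, which is routine given the machinery already in place.
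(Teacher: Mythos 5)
Your overall plan (assemble a three-dimensional hyperbox whose compression is the stated square, with the non-involutive layer identified with $v_n\Pi^A_n$ via the Ozsv\'ath--Szab\'o machinery) points in the right direction, but it is missing the two mechanisms that actually make the paper's proof work, and as written the key step would not go through. First, you cannot identify $\CFI(W'_{2n}(K),\frs)$ with $v_n\Pi^A_n$ ``directly'' from a Heegaard triple for the $2$-handle, because the identification $\CFI^-(S^3_{2n}(K))\simeq \bXI_{2n}(K)$ from \cite{HHSZExact} is itself defined through the auxiliary large surgery $S^3_{2n+m}(K)$, $m\gg 0$, and the mapping cone to $\underline{\ve{\CF}}^-(S^3)$. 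The single-$\Spin^c$ cobordism maps are extracted only after passing to twisted $\bF[T]$-coefficients as in \eqref{eq:surgery-quasi-iso-pre-quotient} and projecting onto individual powers of $T$; this is what produces the hyperboxes \eqref{eq:non-involutive-cobordism-comp-1} and \eqref{eq:non-involutive-cobordism-comp-2} relating $\CF(W'_{2n},\frx_s)$ to $v\Pi^A_s$ and $\CF(W'_{2n},\fry_s)$ to $h_{2n}\Pi^A_s$. The involutive input needed here is Proposition~\ref{prop:tilde-hypercubes}, whose proof requires reworking the central hypercubes of \cite{HHSZExact} (the description there is not compatible with the projections $\Pi_s$), together with a grading argument to kill the $Q$-component of $M$; calling this ``routine'' skips the genuinely new content.

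Second, and more seriously, your explanation of the diagonal term is wrong in an instructive way. You expect that ``most potential diagonal contributions vanish, leaving only the $s=n$ term.'' In fact the diagonal $Q\Pi^B_n\iota_{\bX}$ arises because the self-conjugate $\Spin^c$ structure satisfies $\frs=\frx_n$ and simultaneously $\frs=\fry_{-n}=\bar\frx_n$, so the same cobordism map appears on the top face of the three-dimensional hyperbox as $v_n\Pi^A_n$ (the $v$-leg at $s=n$) and on the bottom face as $h_{2n}\Pi^A_{-n}$ (the $h$-leg at $s=-n$). Since $\iota_{\bX}=\iota_{\bA}+\iota_{\bB}+H_{2n}\tilde v$ interchanges the columns $s$ and $-s$ and mixes the $v$- and $h$-legs, closing up the hyperbox requires two additional levels whose compression converts the $h_{2n}\Pi^A_{-n}$ description back into the $v_n\Pi^A_n$ one; the discrepancy is exactly $Q\Pi^B_n\iota_{\bX}$. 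The $s=-n$ contribution therefore does not vanish --- it is the entire point --- and no amount of stabilization or small-translate vanishing will produce it from the $s=n$ column alone. Without this conjugation bookkeeping your construction would yield an enhanced morphism whose diagonal term you cannot identify with $Q\Pi^B_n\iota_{\bX}$.
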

\begin{rem} In Theorem~\ref{thm:knot-surgery-cobordism-map}, $\iota_{\bX}$ denotes the involution on the mapping cone, i.e. $\iota_{\bX}=\iota_{\bA}+\iota_{\bB}+H_{2n} \tilde v$. Note $\iota_{\bA}$ vanishes when composed with $\Pi_n^B$, and hence makes no contribution to the map $J$. 
\end{rem}

\begin{example} Consider the case $K=U$ and $n=0$, so that $W'_{2n}(K)$ is the cobordism $W_2$ from $S^1\times S^2$ to $S^3$ considered in Section~\ref{sec:example}. We recall that $\bX_0(K)$ may be taken to be the cone of $A_0(K) \xrightarrow{v+h_0} B_0(K)$ \cite[Section 4.8]{OSIntegerSurgeries}. In \cite[Section 22.9]{HHSZExact} we show that $\CFI^-(S_{0}^3(K)) \simeq \bXI_0(K)$, to wit the complex
\[\begin{tikzcd}[column sep=2cm]A_0(K)\ar[dr,dashed, "H_0\tilde v"]\ar[r, "v+h_0"]\ar[d, "Q(\id+\iota_{\bA})"]& B_0(K)\ar[d, "Q(\id+\iota_{\bB})"]\\
QA_0(K)\ar[r, "v+h_m"]& Q B_0(K).
\end{tikzcd}\]
With this in mind, $BI_0(K) = B_0(K) \oplus QB_0(K)$ with appropriate gradings and differentials. We take $A_0(U)$ and $B_0(U)$ to be appropriately-graded copies of $\bF[U]$. For clarity let $a$ be the generator of $A_0(U)$ and $b$ be the generator of $B_0(U)$. Choosing $H_0 \equiv 0$ satisfies the formula~\ref{eqn:H}. With this in mind, the map $J$ sends $J(a) = b$ and $J(b) = Qb$, and correspondingly $J(Qa) = Qb$ and $J(Qb)=0$. This differs from the map associated to the cobordism computed in Section~\ref{sec:example} by a change of basis; setting $\Theta^{+}=a$ and $\Theta^{-} = a+Qb$ recovers the map as described there.
\end{example}

We will focus on proving the case that $n\neq 0$. The case that $n=0$ is similar.

We begin by considering the non-involutive analog of the above theorem, which is similar to \cite{OSIntegerSurgeries}*{Theorem~1.1}.  Ozsv\'{a}th and Szab\'{o} constructed a map from $\ve{\CF}^-(S^3_{2n}(K))$ to $\Cone(\ve{\CF}^-(S^3_{2n+m}(K))\to \underline{\ve{\CF}}^-(S^3))$, which we think of as a hypercube
\begin{equation}
\begin{tikzcd}
\ve{\CF}^-(S^3_{2n}(K))\ar[d] \ar[dr,dashed, "k"]\\
\ve{\CF}^-(S^3_{2n+m}(K))\ar[r]& \underline{\ve{\CF}}^-(S^3).
\label{eq:surgery-quasi-iso-main}
\end{tikzcd}
\end{equation}
 In the above, $\underline{\ve{\CF}}^-(S^3)$ denotes a version of twisted Floer homology which is isomorphic to  $\ve{\CF}^-(S^3)\otimes\bF[T]/(T^m-1)$.
 
 We may consider instead a version of the above hypercube with twisted coefficients lying in $\bF[T]$.   We will write $\tilde{\ve{\CF}}{}^-(S^3)$ for this version. We can also construct an analog of Equation~\eqref{eq:surgery-quasi-iso-main} over $\bF[T]$, though the resulting hypercube has a different shape:
 \begin{equation}
  \begin{tikzcd}
  \ve{\CF}^-(S^3_{2n}(K))\ar[d] \ar[dr,dashed, "k"] \ar[r, "F"]& \tilde{\ve{\CF}}{}^-(S^3)\ar[d, "M"]\\
  \ve{\CF}^-(S^3_{2n+m}(K))\ar[r, "G"]& \tilde{\ve{\CF}}{}^-(S^3).
  \label{eq:surgery-quasi-iso-pre-quotient}
  \end{tikzcd}
\end{equation}
  In Equation~\eqref{eq:surgery-quasi-iso-pre-quotient}, $F$ denotes 
  \[
  F=\sum_{\frs\in \Spin^c(W_{2n}'(K))} T^{A_{W_{2n}',\Sigma}(\frs)}\CF(W_{2n}'(K),\frs)
  \]
where
 \[
 A_{W_{2n}',\Sigma}(\frs):=\frac{\langle c_1(\frs),\widehat \Sigma\rangle +2n}{2}.
 \]
Additionally,
  \[
  M=\sum_{s\in 2\Z+1} U^{m(s^2-1)/8}T^{m(s+1)/2}.
  \]

Note that $M=0$ if we set $T^m=1$. Hence, the hypercube in Equation~\eqref{eq:surgery-quasi-iso-main} is obtained from Equation~\eqref{eq:surgery-quasi-iso-pre-quotient}   if we set $T^m=1$ and remove the top right corner of the hypercube.
 
The formula for $F$ is derived as follows. Ozsv\'{a}th and Szab\'{o} proved Equation~\eqref{eq:surgery-quasi-iso-main} by considering a Heegaard quadruple $(\Sigma,\as_3,\as_2,\as_1,\bs,w,z)$, with a special genus 1 surgery region, as in \cite{OSIntegerSurgeries}*{Section~3}. In the diagrams $(\Sigma,\as_1,\bs)$ and $(\Sigma,\as_2,\bs)$, the point $w$ and $z$ are immediately adjacent. The diagram $(\Sigma,\as_1,\bs,w)$ represents $S^3_{2n}(K)$. The diagram $(\Sigma,\as_2,\bs,w)$ represents $S^3_{2n+m}(K)$. The diagram $(\Sigma,\as_3,\bs,w,z)$ represents $(S^3,K)$. One obtains the hypercube in Equation~\eqref{eq:surgery-quasi-iso-pre-quotient} by considering the degenerations of holomorphic quadrilaterals. The quantity $U^{m(s^2-1)/8}T^{m(s+1)/2}$ arises from a model count of holomorphic triangles on the diagram $(\Sigma,\as_3,\as_2,\as_1,w,z)$ (see \cite{OSIntegerSurgeries}*{Section~3}).

We now fix $\delta\gg 0$, and we set $U^\delta=0$. Write $\CF^\delta$ for $\CF^-/U^\delta$, and make similar notation for maps. We assume that $m\gg 0$, so the only terms of $F$ with $U$-power less than $\delta$ have $s\in \{1,-1\}$. Hence
\[
F^\delta=(1+T^{m})\sum_{
\\ \frs\in \Spin^c(W_{2n}'(K))} T^{A_{W_{2n}',\Sigma}(\frs)} \CF^\delta(W_{2n}'(K),\frs).
\]

 On $D(-m,1)$, $\Spin^c$ structures may be identified with $2\Z+1$, where $s\in 2\Z+1$ corresponds to the $\Spin^c$ structure $\frt$ satisfying
\[
\langle c_1(\frt),S^2\rangle=s\cdot m.
\]
We are most interested in the $\Spin^c$ structures corresponding to $s=\pm 1$, which are the $\Spin^c$ structures with maximal square. Note that the composition of the natural cobordisms from $S_{2n}^3(K)$ to $S^3_{2n+m}(K)$ (which has an extra boundary component $L(m,1)$) and the cobordism from $S^3_{2n+m}(K)$ to $S^3$ is $W'_{2n}(K)\# D(-m,1)$. We identify $\Spin^c$ structures on $W_{2n}'(K)\#D(-m,1)$ as pairs $\frs\#[s]$ where $\frs\in \Spin^c(W'_{2n}(K))$ and $s\in 2\Z+1$.

We recall the notation of Ozsv\'{a}th and Szab\'{o} for $\Spin^c$ structures on $W_{2n}'(K)$. They write $\frx_s$ and $\fry_s$ for the $\Spin^c$ structures which satisfy
\[
\langle c_1(\frx_s),\Sigma\rangle +2n=2s\quad \text{and} \quad \langle c_1(\fry_s),\Sigma\rangle -2n=2s,
\]
respectively.

There are also analogous $\Spin^c$ structures on $W_{2n+m}'(K)$, for which we write $\frX_s$ and $\frY_s$. The $\Spin^c$ structure $\frX_s$ satisfies $\langle c_1(\frX_s),\Sigma\rangle +2n+m=2s$ and $\frY_s$ is similar. We write
\[
G_{\frX_s},G_{\frY_s}\colon \ve{\CF}^-(S_{2n+m}^3(K))\to \ve{\CF}^-(S^3)
\]
for the corresponding cobordism maps.

If $s\in \Z$, we may define a projection map $\Pi_{s}\colon \tilde{\CF}{}^{\delta} (S^3)\to \CF^\delta(S^3)$ by reading off only the component $T^s$. Using this map, we may form a hypercube
\[
 \begin{tikzcd}
 \CF^\delta(S^3_{2n}(K),[s])
 	\ar[d] 
 	\ar[dr,dashed, "k"]
 	\ar[r, "F"]
& \tilde{\CF}{}^\delta(S^3)
 	\ar[d, "M"]
 	\ar[r, "\Pi_s M"]
&\CF^\delta(S^3)
	\ar[d, "\id"]
	\\
\CF^\delta(S^3_{2n+m}(K))
	\ar[r, "G"]
&\tilde{\CF}{}^\delta(S^3)
	\ar[r, "\Pi_s"]
&\CF^\delta(S^3).
 \end{tikzcd}
\] 
If we compress the above hyperbox, we obtain the following diagram:
\[
 \begin{tikzcd}[column sep=3cm]
 \CF^\delta(S^3_{2n}(K),[s])
 	\ar[d]
 	\ar[dr,dashed, "\Pi_s k"]
 	\ar[r, "{\CF(W'_{2n},\frx_s)}"]
&
\CF^\delta(S^3)
	\ar[d, "\id"]
\\
\CF^\delta(S^3_{2n+m}(K))
	\ar[r, "G_{\frX_s}"]
&
\CF^\delta(S^3)
 \end{tikzcd}
\]
If we instead use $\Pi_{s+2n+m}$, we obtain a hypercube relating the maps $\CF(W_{2n}',\fry_s)$ and $G_{\frY_s}$.

The next step of the knot surgery formula is to construct a hypercube of the following form:
\begin{equation}
\begin{tikzcd}[column sep=1.5cm] \CF^\delta(S_{2n+m}^3(K)) \ar[r, "G_{\frX_s}+G_{\frY_s}"] \ar[d, "\Gamma"] \ar[dr,dashed,"j"]& \underline{\CF}^\delta(S^3) \ar[d, "\theta_w"]\\
\bA^\delta (K) \ar[r, "v+h_{2n}"]& \bB^\delta (K)
\end{tikzcd}
\label{eq:cone-cobordism->AB}
\end{equation}
The map $\Gamma$ is a holomorphic triangle counting map. The map $\theta_w$ is the map for trivializing the twisted coefficients defining $\underline{\CF}^\delta(S^3)$. 

 The above hypercube is built by gluing together hypercubes with the following shape:
\[
\begin{tikzcd} \CF^\delta(S_{2n+m}^3(K)) \ar[r, "G_{\frX_s}"] \ar[d, "\Gamma"]& \underline{\CF}^\delta(S^3) \ar[d, "\theta_w"]\\
\bA^\delta (K) \ar[r, "v"]& \bB^\delta (K)
\end{tikzcd}
\quad\text{and}\quad 
\begin{tikzcd} \CF^\delta(S_{2n+m}^3(K)) \ar[r, "G_{\frY_s}"] \ar[d, "\Gamma"] \ar[dr,dashed,"j"]& \underline{\CF}^\delta(S^3) \ar[d, "\theta_w"]\\
\bA^\delta (K)\ar[r, "h_{2n}"]& \bB^\delta (K)
\end{tikzcd}
\]

We may combine the above hypercubes into a hyperbox of the following shape:
\begin{equation}
\begin{tikzcd}[column sep=3cm,labels=description]
\CF^\delta(S^3_{2n}(K),[s])
	\ar[d, "{\CF^\delta(W_{2n,m})}"]
	\ar[dr, "k"]
	\ar[drr,dashed, "k_{\frx_s\#[- 1]}"]
	\ar[rr, "{\CF(W_{2n}',\frx_s)}"] 
&[-2.5cm]
& \CF^\delta(S^3)\ar[d, "\id"]\\
\bigg(\CF^\delta(S^3_{2n+m}(K))
	\ar[d, "\Gamma"]
	\ar[dr,"j"]
	\ar[r, "G"]
	\ar[rr, bend right=10, "G_{\frX_s}\Pi_{[s]}"]
 &\underline{\CF}^\delta(S^3) \ar[d, "\theta_w"]\bigg)
 &
 \CF^\delta(S^3)\ar[d,"\theta_w"]
 \\
\bigg(\bA^\delta(K)\ar[r, "v+h_{2n}"]
	\ar[rr, bend right=10, "v\Pi^A_s"]
	&
 \bB^\delta(K)\bigg)& B_n^\delta(K)
\end{tikzcd}
\label{eq:non-involutive-cobordism-comp-1}
\end{equation}
In the above, $\Pi_{[s]}$ denotes projection of $\CF^\delta(S^3_{2n+m}(K)$ onto the $\Spin^c$ structure identified with $[s]\in \Z/(2n+m)$.
Also, we view each parenthesized group as being a single point in the set $\bE(1,2)\iso \{0,1\}\times \{0,1,2\}$.

Symmetrically, there is another hypercube of the following form:
\begin{equation}
\begin{tikzcd}[column sep=3cm,labels=description]
\CF^\delta(S^3_{2n}(K),[s])
	\ar[d, "{\CF(W_{2n,m})}"]
	\ar[dr, "k"]
	\ar[drr,dashed, "k_{\fry_s\#[1]}"]
	\ar[rr, "{\CF(W_{2n}',\fry_s)}"] 
&[-2.5cm]
& \CF^\delta(S^3)\ar[d, "\id"]
\\
\bigg(\CF^\delta(S^3_{2n+m}(K))
	\ar[d, "\Gamma"]
	\ar[dr,"j"]
	\ar[drr,dashed, "\Pi_{s+2n+m}^B j"]
 	\ar[r, "G"]
 	\ar[rr, bend right=10, "G_{\frY_s}\Pi_{[s]}"]
 &\underline{\CF}^\delta(S^3) \ar[d, "\theta_w"]\bigg)
 & \CF^\delta(S^3)\ar[d,"\theta_w"] \\
\bigg(\bA^\delta(K)\ar[r, "v+h_{2n}"] \ar[rr, bend right=10, "h_{2n}\Pi^A_{s}"]& \bB^\delta(K)\bigg)& B_n^\delta(K)
\end{tikzcd}
\label{eq:non-involutive-cobordism-comp-2}
\end{equation}

Equations~\eqref{eq:non-involutive-cobordism-comp-1} and~\eqref{eq:non-involutive-cobordism-comp-2} can be used to compute the cobordism maps for $W_{2n}'(K)$, very similarly to Ozsv\'{a}th and Szab\'{o}'s \cite{OSIntegerSurgeries}*{Theorem~1.1}.

In particular, the fundamental objects to construct are the hypercubes in Equations~\eqref{eq:surgery-quasi-iso-main} and~\eqref{eq:surgery-quasi-iso-pre-quotient}. The involutive extension of Equation~\eqref{eq:surgery-quasi-iso-main} is one of the main theorems of \cite{HHSZExact}. We now describe how to extend Equation~\eqref{eq:surgery-quasi-iso-pre-quotient} to an involutive hypercube:
\begin{prop}\label{prop:tilde-hypercubes} There is a hypercube of the form: 
\[
  \begin{tikzcd}
  \ve{\CFI}^-(S^3_{2n}(K))\ar[d] \ar[dr,dashed, "k"] \ar[r, "F"]& \tilde{\ve{\CFI}}{}^-(S^3)\ar[d, "M"]\\
  \ve{\CFI}^-(S^3_{2n+m}(K))\ar[r, "G"]& \tilde{\ve{\CFI}}{}^-(S^3).
  \end{tikzcd}
\]
Furthermore, $G$ and $F$ are the involutive cobordism maps summed over all $\Spin^c$ structures (as defined in this paper) and $M$ is the map $\sum_{s\in 2\Z+1} U^{m(s^2-1)/8}T^{m(s+1)/2}\cdot \id+EQ$. The map $E$ decomposes into a sum
\[
E=\sum_{s\in 2\Z+1} E_s
\]
where each $E_s$ is of homogeneous grading $-m(s^2-1)/4+1$.
\end{prop}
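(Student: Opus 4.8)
The plan is to prove Proposition~\ref{prop:tilde-hypercubes} by doubling the Heegaard multi-diagram that Ozsv\'ath and Szab\'o use to establish the non-involutive hypercube of Equation~\eqref{eq:surgery-quasi-iso-pre-quotient}, building the associated hyperbox of doubling-model complexes, and then identifying the compressed length-one maps with the involutive cobordism maps and a twisted chain homotopy $M$ of the asserted form. Concretely, I would start from a Heegaard quadruple $(\Sigma,\as_3,\as_2,\as_1,\bs,w,z)$ with a genus one surgery region as in \cite{OSIntegerSurgeries}*{Section~3}, where $(\Sigma,\as_1,\bs,w)$, $(\Sigma,\as_2,\bs,w)$, and $(\Sigma,\as_3,\bs,w,z)$ represent $S^3_{2n}(K)$, $S^3_{2n+m}(K)$, and $(S^3,K)$, and $w$, $z$ are adjacent in the first two diagrams. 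Passing to $\Sigma\# \bar\Sigma$ and choosing doubling curves $\Ds_1,\Ds_2,\Ds_3$ that are disjoint from the surgery region and sufficiently wound so that all of the resulting multi-diagrams are weakly admissible, I would assemble a hyperbox whose vertical levels are the five rows of the $\beta$-doubling model (as in the transition maps and cobordism maps of Sections~\ref{sec:1-and-3-handles}--\ref{sec:2-handles}), with the $\underline{\ve{\CF}}^-(S^3)$ of the classical surgery formula replaced by an $\bF[T]$-twisted complex and everything taken over $\bF[[U]]$. Compressing each level produces a hypercube whose length-one morphisms are the maps $F$, $G$, and $M$, and whose diagonal is a map $k$; by Proposition~\ref{prop:doubling-involution} and the identifications of doubling-model compositions with cobordism maps used throughout Sections~\ref{sec:1-and-3-handles}--\ref{sec:2-handles}, the maps $F$ and $G$ are exactly the involutive cobordism maps summed over $\Spin^c$ structures.

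The key computational step, and the main obstacle, is to pin down the twisted chain homotopy $M$, showing that it equals $\sum_{s\in 2\Z+1}U^{m(s^2-1)/8}T^{m(s+1)/2}\cdot\id+EQ$ with $E=\sum_s E_s$ and $E_s$ of homogeneous grading $-m(s^2-1)/4+1$. The $\id$-part is inherited from Ozsv\'ath and Szab\'o's model count of index zero triangles on the genus one region $(\Sigma,\as_3,\as_2,\as_1,w,z)$, once one uses Proposition~\ref{prop:multi-stabilization-counts} to reduce the holomorphic polygon counts on the doubled diagram to products of a count on the surgery region with counts on the complement and on the doubling region; this is exactly the kind of reduction carried out in the 2-handle constructions of Section~\ref{sec:2-handles}. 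The new term $EQ$ is the $\Spin^c$-refined, twisted analog of the involutive correction appearing in the surgery formula of \cite{HHSZExact}; it records the interaction of the surgery polygons with the doubling-model triangle maps $f_{\a\bar\b}^{\b\bar\b\to \Dt}$ and $f_{\a\bar\b}^{\Dt\to \a\bar\a}$. Its existence is not a bare model count but rather follows once all lower-dimensional faces of the hyperbox have been filled: the hypercube relations force $EQ$ to be a cycle in the relevant morphism complex, and since the $s$-summand of the $\id$-part lies in grading $-m(s^2-1)/4$ while $Q$ has degree $-1$, the compensating diagonal contribution in that summand must lie one degree higher, giving the stated grading for $E_s$.

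The remaining work is bookkeeping of a standard flavor. I would verify weak admissibility for all the doubled multi-diagrams (by winding, in the spirit of Remark~\ref{rem:admissibility-unconventional} and the admissibility checks for the handleswap diagrams), check that the diagonal corrections in the successive levels of the hyperbox compose compatibly --- arguing as in Lemmas~\ref{lem:1-handle-bottom-hypercube} and~\ref{lem:1-handles-and-handleslides} that ambiguities in the choice of auxiliary chains have no effect on the compressed maps --- and confirm, exactly as in the non-involutive case, that for $|s|\ge 3$ the contributions to $M$ carry $U$-power above any fixed truncation degree, so that after setting $U^\delta=0$ one is left with the displayed finite expression. Stacking and compressing the whole hyperbox then yields the hypercube in the statement.
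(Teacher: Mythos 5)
Your proposal gets the broad shape right: the hypercube in the statement is the involutive extension of Equation~\eqref{eq:surgery-quasi-iso-pre-quotient}, the non-$Q$ part of $M$ is identified by Ozsv\'ath--Szab\'o's lattice-point count on the genus one surgery region after destabilizing with Proposition~\ref{prop:multi-stabilization-counts}, and your grading argument for $E_s$ (the $Q$-component of a homogeneously graded enhanced $\iota$-morphism sits one degree above its length-one component, once one grants the decomposition of $M$ over $\Spin^c(D(-m,1))\iso 2\Z+1$) is essentially the one the paper uses. That much is fine.

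The gap is in the step you describe as assembling "a hyperbox whose vertical levels are the five rows of the $\beta$-doubling model" and then letting "the hypercube relations force" the rest. The surgery square is not a cobordism map in the sense of Sections~\ref{sec:1-and-3-handles}--\ref{sec:2-handles}; its diagonal $k$ is a quadrilateral count on the Ozsv\'ath--Szab\'o quadruple, and its involutive extension requires constructing genuinely three-dimensional hypercubes interpolating between the surgery polygon maps and the doubling-model triangle maps (the "central hypercubes" of \cite{HHSZExact}). Constructing those is the content of the proposition, not bookkeeping, and the paper's proof shows precisely where the difficulty sits: the construction in \cite{HHSZExact} was carried out over $\bF[U,T]/(T^m-1)$, where the map $M$ vanishes and the square degenerates to a mapping cone, and it uses $T^m=1$ in essential ways. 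Over $\bF[T]$ the right-hand column is nontrivial and the first central hypercube must be rebuilt; the paper does this by exploiting that the triangle maps $f_{\g\to\g''}^{\dt'}$ and $f_{\g'\to\g''}^{\dt'}$ are stabilization maps admitting explicit left inverses $\sigma F_3$, which lets one reroute the length-two and length-three chains. Your proposal never confronts this, and in particular never explains how the involution interacts with the $\bF[T]$-twisting on $\tilde{\ve{\CF}}{}^-(S^3)$ (conjugation acts nontrivially on the twisting, so the relation $M_0\iota+\iota M_0=[\d,E]$ is not automatic — one needs the even-grading structure of $\CF^-(S^3)$ or an equivalent input to produce $E$). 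Asserting that $EQ$ "follows once all lower-dimensional faces have been filled" is circular, since producing $E$ \emph{is} filling one of those faces.
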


\begin{proof}The proof is mostly by a straightforward reinterpretation of the techniques of \cite{HHSZExact}. There is one point where our description in \cite{HHSZExact} is not convenient for our present purpose, which is in the first subcube of the central hypercube. We recall the Heegaard quintuple $(\gs'',\gs',\gs,\ds,\ds')$ used to define the first central hypercube. There is a natural hypercube, obtained by pairing two hypercubes (in \cite{HHSZExact}, we referred to this hypercube as the first \emph{auxiliary} hypercube). Over $\bF[T]$, this hypercube takes the following form: 
\[
\begin{tikzcd}[
	column sep={2cm,between origins},
	row sep=1cm,
	labels=description,
	fill opacity=.7,
	text opacity=1,
	]
\ve{\CF}^-(\gs,\ds)
	\ar[dr, "f_{\g\to \g'}^{\dt}"]
	\ar[ddd, "f_{\g}^{\dt\to \dt'}"]
	\ar[rr, "f_{\g\to \g''}^{\dt}"]
	\ar[dddrr,dashed, "h_{\g\to \g''}^{\dt\to \dt'}"]
	\ar[drrr,dashed, "h_{\g\to \g'\to \g''}^{\dt}"]
&[.7 cm]
&\tilde{\ve{\CF}}{}^-(\gs'',\ds)
	\ar[dr, "M"]
	\ar[ddd,"f_{\g''}^{\dt\to \dt'}"]
&[.7 cm]
\\
&[.7 cm]
\ve{\CF}^-(\gs',\ds)
 	\ar[rr,line width=2mm,dash,color=white,opacity=.7]
	\ar[rr, "f_{\g'\to \g''}^{\dt}"]
&\,
&[.7 cm]
\tilde{\ve{\CF}}{}^-(\gs'',\ds)
	\ar[ddd,"f_{\g''}^{\dt\to \dt'}"]
\\
\\
\tilde{\ve{\CF}}{}^-(\gs,\ds')
	\ar[dr, "f_{\g\to \g'}^{\dt'}"]
	\ar[rr, "f_{\g\to \g''}^{ \dt'}", pos=.4]
	\ar[drrr,dashed, "h_{\g\to\g'\to \g''}^{\dt'}"]
&[.7 cm]\,&
\tilde{\ve{\CF}}{}^-(\gs'',\ds')
	\ar[dr, "M"]
\\
& [.7 cm]
\tilde{\ve{\CF}}{}^-(\gs',\ds')
 	\ar[from=uuu,line width=2mm,dash,color=white,opacity=.7]
	\ar[from=uuu,"f_{\g'}^{\dt\to \dt'}"]
	\ar[rr, "f_{\g'\to \g''}^{\dt'}"]
	\ar[from=uuuul,line width=2mm,dash,color=white,opacity=.7]
	\ar[from=uuuul,dashed, "h_{\g\to \g'}^{\dt\to \dt'}"]
&
&[.7 cm] 
\tilde{ \ve{\CF}}{}^-(\gs'',\ds')
	 \ar[from=uuull,line width=2mm,dash,color=white,opacity=.7]
	\ar[from=uuull,"h_{\g'\to \g''}^{\dt\to \dt'}", dashed, pos=.3]
\end{tikzcd}
\]
The length 3 map (not shown) is the pentagon counting map $p_{\g\to \g'\to \g''}^{\dt\to \dt'}$.

We recall that the diagram $(\gs',\ds'')$ admits a 3-handle map to a diagram $\cH_0$, and that both $(\gs,\ds')$ and $(\gs',\ds')$ are genus 1 stabilizations of $\cH_0$. Furthermore, the maps $f_{\g\to \g''}^{\dt'}$ and $f_{\g'\to \g''}^{\dt'}$ are the maps for surgering on a 0-framed unknot. Since the relevant Heegaard triples are both stabilizations, both of these maps admit left inverses, which are a 3-handle map followed by a stabilization. Write $\sigma$ for the stabilization map which has $(\gs,\ds')$ as its codomain, and write $\sigma'$ for the stabilization map which has $(\gs',\ds')$ as its codomain.

As a replacement for the diagram $\cC_{\textrm{cen}}^{(1)}$ in \cite{HHSZExact}, we build the following diagram:
\[
\begin{tikzcd}[
	column sep={2.5cm,between origins},
	row sep=1.4cm,
	labels=description,
	fill opacity=.7,
	text opacity=1,
	]
\ve{\CF}^-(\gs,\ds)
	\ar[dr, "f_{\g\to \g'}^{\dt}"]
	\ar[ddd, "\id"]
	\ar[rr, "f_{\g\to \g''}^{\dt}"]
	\ar[dddrr,dashed, "\sigma F_3 h_{\g\to \g''}^{\dt\to \dt'}"]
&[.7 cm]
&\tilde{\ve{\CF}}{}^-(\gs'',\ds)
	\ar[dr, "M"]
	\ar[ddd,"\sigma F_3 f_{\g''}^{\dt\to \dt'}"]
	\ar[ddddr,dashed, "\sigma' F_3 h_1 \sigma F_3 f_{\g''}^{\dt\to \dt'}"]
&[.7 cm]
\\
&[.7 cm]
\ve{\CF}^-(\gs',\ds)
 	\ar[rr,line width=2mm,dash,color=white,opacity=.7]
	\ar[rr, "f_{\g'\to \g''}^{\dt}"]
&\,
&[.7 cm]
\tilde{\ve{\CF}}{}^-(\gs'',\ds)
	\ar[ddd,"\sigma' F_3 f_{\g''}^{\dt\to \dt'}"]
 	\ar[from=lllu,line width=2mm,dash,color=white,opacity=.7]
  	\ar[from=lllu,dashed, "h_{\g\to \g'\to \g''}^{\dt}"]
\\
\\
\ve{\CF}^-(\gs,\ds)
	\ar[dr, "f_{\g\to \g'}^{\dt}"]
	\ar[rr, "f_{\g}^{\dt\to \dt'}", pos=.4]
	\ar[drrr,dashed, "h_{\g\to \g'}^{\dt\to \dt'}"]
&[.7 cm]\,&
\tilde{\ve{\CF}}{}^-(\gs,\ds')
	\ar[dr, "f_{\g\to \g'}^{\dt'}"]
\\
& [.7 cm]
\ve{\CF}^-(\gs',\ds)
 	\ar[from=uuu,line width=2mm,dash,color=white,opacity=.7]
	\ar[from=uuu,"\id"]
	\ar[rr, "f_{\g'}^{\dt\to \dt'}"]
&
&[.7 cm] 
\tilde{ \ve{\CF}}{}^-(\gs',\ds')
	 \ar[from=uuull,line width=2mm,dash,color=white,opacity=.7]
	\ar[from=uuull,"\sigma' F_3 h_{\g'\to \g''}^{\dt \to \dt'}", dashed, pos=.3]
\end{tikzcd}
\]
The length 3 map (not shown) is $\sigma' F_3 p_{\g\to \g'\to \g''}^{\dt\to \dt'}+\sigma'F_3 h_1 \sigma F_3 h_{\g\to \g''}^{\dt\to \dt'}$. Following the notation of \cite{HHSZExact}, the map $h_1$ denotes $h_{\g\to \g'\to \g''}^{\dt'}$. The hypercube relations are straightforward to verify. Compare \cite{HHSZExact}*{Lemma~16.4}.

The hypercubes $\cC_{\mathrm{cen}}^{(2)}$ and $\cC_{\mathrm{cen}}^{(3)}$ are more straightforward to modify, and we leave the details to the reader. Upon compressing and stacking these hypercubes, we get the diagram in the statement. It remains to verify the listed claims. The identification of $J$ and $G$ as cobordism maps is straightforward, and we leave the details to the reader. Compare \cite{HHSZExact}*{Lemma~16.14}.

We now consider the map labeled $M$ in the statement, the length one component of this map (i.e. the non-$Q$ component) may be identified with $\sum_{s\in 2\Z+1} U^{m(s^2-1)/8}T^{m(s+1)/2}$ by the lattice point counting argument in \cite{OSIntegerSurgeries}*{Section~3}. It remains to identify the $Q$-term. The $Q$-term is essentially the same as the map $E$ considered in \cite{HHSZExact}*{Section~19}. Therein, we showed that $E$ is null-homotopic over $\bF[U,T]/(T^m-1)$. Our argument used at several places that the coefficients were in $\bF[U,T]/(T^m-1)$ instead of $\bF[U,T]$, so we give an alternate argument that is sufficient for our present purposes.

The first observation is that the map $E$ decomposes over $\Spin^c(D(-m,1))$, which we have already identified with $2\Z+1$. The grading change formula is also a routine consequence of this identification, and the standard grading change formulas for the cobordism maps in Heegaard Floer homology \cite{OSIntersectionForms}. 
\end{proof}

\begin{rem}
We write $M=M_0+Q M_1$. Since $\CF^-(S^3)\simeq \bF[U]$ is supported only in even gradings, and $M_1$ may be viewed as a chain map from $\ve{\CF}^-(S^3)$ to itself which shifts the mod 2 grading by 1, we may conclude that $M_1\simeq 0$. In particular, by adding a term to the length 3 map of the hypercube, we may assume that $M_1=0$. Furthermore, fixing $\delta$ and letting $m$ be sufficiently large, we see that $E^\delta$ is homogeneously graded, so the null-homotopy may also be taken to be homogeneously graded.
\end{rem}

In \cite{HHSZExact}*{Section~21}, we also described how to extend the hypercube in Equation~\eqref{eq:cone-cobordism->AB} into the involutive setting. Using this, together with Proposition~\ref{prop:tilde-hypercubes}, we may construct the involutive analog of the hyperboxes in Equation~\eqref{eq:non-involutive-cobordism-comp-1} and~\eqref{eq:non-involutive-cobordism-comp-2}. We construct this hyperbox so that the top face has the hyperbox from Equation~\eqref{eq:non-involutive-cobordism-comp-1}, which involves the $\Spin^c$ structures $\frx_s$ and $\frX_s$ with $s=n$. There is an important subtlety in that the bottom face of this hyperbox will be the hypercube from Equation~\eqref{eq:non-involutive-cobordism-comp-2}, involves the $\Spin^c$ structures $\fry_{-n}$ and $\frY_{-n}$. (Note that $\fry_{-n}=\bar\frx_n$). 

 The front face of this hyperbox will compress to have the diagonal map $H_{2n}\tilde v$, where $H_{2n}$ is the lenth 2 map from the involutive mapping cone of \cite{HHSZExact}.

We add one more level to this hyperbox so that the top and bottom levels faces coincide. We add this level to the bottom of the hyperbox. We build this new level as a pair of hypercubes. The first hypercube has the following shape:
\[
\begin{tikzcd}[
	column sep={2cm,between origins},
	row sep=.8cm,
	labels=description,
	fill opacity=1,
	text opacity=1,
	]
\CF^\delta(S_{2n}^3(K),[n])
	\ar[dr, "{\CF(W_{2n,2n+m})}"]
	\ar[drr, "k"]
	\ar[ddd, "\id"]
	\ar[rrr, "{\CF(W_{2n}',\fry_{-n})}"]
	\ar[drrrr,dashed, "k_{\fry_{-n}\#[1]}"]
&
&[.7 cm]&\CF^\delta(S^3)
	\ar[rd, "\id"]
	\ar[ddd,"\id", pos=.65]
&[.7 cm]
\\
&[.7 cm]
\bigg(\CF^\delta(S_{2n+m}(K))
	\ar[r]
 	\ar[rrr,bend right=12,line width=2mm,dash,color=white,opacity=.7]
	\ar[rrr,bend right=12, "G_{\frY_{-n}}\Pi_{[-n]}"]
&
\underline{\CF}^\delta(S^3)\bigg)
&\,
&[.7 cm]
\CF^\delta(S^3)
\\
\\
\CF^\delta(S_{2n}^3(K),[n])
	\ar[dr, "{\CF(W_{2n,2n+m})}"]
	\ar[rrr, "{\CF(W_{2n}',\frx_n)}"]
	\ar[drr,"k"]
	\ar[drrrr,dashed, pos=.4, "k_{\frx_n\#[-1]} "]
&[.7 cm]\,&&
\CF^\delta(S^3)
	\ar[dr, "\id"]
\\
& [.7 cm]
\bigg(\CF^\delta(S_{2n+m}^3(K))
	\ar[r]
 	\ar[from=uuu,line width=2mm,dash,color=white,opacity=.7]
	\ar[from=uuu,"\id"]
	\ar[rrr, bend right=12, "G_{\frX_n}\Pi_{[n]}"]
&
\underline{\CF}^\delta(S^3)\bigg)
 	\ar[from=uuu,line width=2mm,dash,color=white,opacity=.7]
	\ar[from=uuu,"\id"]
&
&[.7 cm] 
\CF^\delta(S^3)
	\ar[from=uuu, "\id"]
 	\ar[from=lluuu,line width=2mm,dash,color=white,opacity=.7]
	\ar[from=lluuu, dashed,"\Pi_{n}"]
\end{tikzcd}
\]
In the above $\Pi_n$ denotes projection onto the $B_n$-summand of $\underline{\CF}^-(S^3)$. The hypercube relations are straightforward. 

We finally have one additional hypercube, as follows:
\[
\begin{tikzcd}[
	column sep={3cm,between origins},
	row sep=.8cm,
	labels=description,
	fill opacity=1,
	text opacity=1,
	]
\bigg(\CF^\delta(S^3_{2n+m}(K))
	\ar[dr, "\Gamma"]
	\ar[ddd, "\id"]
	\ar[r]
	\ar[rrr, bend left=12, "G_{\frY_{-n}}\Pi_{[-n]}"]
&[-.2cm]\underline{\CF}^\delta(S^3)\bigg)
	\ar[dr, "\theta_w"]
	\ar[ddd,"\id", shift right=2mm, pos=.6]
	\ar[dddrr,dashed, "\Pi_n", pos=.6]
&
&\CF^\delta(S^3)
	\ar[rd, "\theta_w"]
	\ar[ddd,"\id"]
&\\
&\bigg(\bA^\delta(K)
	\ar[r]
 	\ar[rrr,bend right=12,line width=2mm,dash,color=white,opacity=.7]
	\ar[rrr, bend right=12, "h \Pi^A_{-n}"]
&
\bB^\delta(K)\bigg)
	\ar[from=ull,crossing over,"j"]
&\,
&
B_n^\delta(K)
	\ar[from=ullll, crossing over, "\Pi^B_{n} j",dashed]
\\
\\
\bigg(\CF^\delta(S_{2n+m}^3(K))
	\ar[dr, "\Gamma"]
	\ar[r]
	\ar[drr, "j"]
	\ar[rrr,bend left=12, "G_{\frX_n} \Pi_{[n]}"]
&
\underline{\CF}^\delta(S^3)\bigg)
	\ar[dr,"\theta_w"]
\,&&
\CF^\delta(S^3)
	\ar[dr, "\theta_w"]
\\
& [.7 cm]
\bigg(\bA^\delta(K)
	\ar[r]
 	\ar[from=uuu,line width=2mm,dash,color=white,opacity=.7]
	\ar[from=uuu,"\id", pos=.4]
	\ar[rrr, bend right=12, "v\Pi^A_n"]
&
\bB^\delta(K)\bigg)
 	\ar[from=uuu,line width=2mm,dash,color=white,opacity=.7]
	\ar[from=uuu,"\id"]
&
&[.7 cm] 
B_n^\delta(K)
	\ar[from=uuu, "\id"]
 	\ar[from=lluuu,line width=2mm,dash,color=white,opacity=.7]
	\ar[from=lluuu, dashed,"\Pi^B_n", pos=.4]
\end{tikzcd}
\]
The above is easily check to be a hypercube of chain complexes, so the proof is complete.

\section{The $\SO(3)$-twist} \label{sec:twist}

In this section, we show that the diffeomorphism map associated to $\tw\in \pi_1(\SO(3))\iso \Z/2$ acts on $\CFI(Y)$ by $\Id + Q\Phi$, proving Theorem~\ref{thm:twist-map}. Our argument will actually identify the map with $Q\Phi \iota$, though this is homotopic to $Q\Phi$ on $\CFI(Y)$. In Section~\ref{sec:twisting-knots-links}, we will further consider an analogous twisting diffeomorphism map on involutive knot Floer homology, and show that it is trivial.

Before beginning with the computation, we recall several topological perspectives about $\tw$. The first is as a Dehn twist along a small $S^2$ enclosing the basepoint. A regular neighborhood of such a sphere is $S^2\times I$. We recall that $\Diff(S^2)\simeq O(3)$ so the generator of $\pi_1(\SO(3))$ gives a diffeomorphism of $S^2\times I$ which is the identity on the boundary.

There is another perspective on $\tw$, is as follows. Write $\Diff^+(Y,w)$ for the orientation preserving diffeomorphisms $\phi$ such that $\phi(w)=w$; likewise, write $\Diff^f(Y,w)$ for the orientation preserving diffeomorphisms $\phi$ such that $\phi(w)=w$ and $d_w \phi=\id$. If we fix a trivialization of $T_w Y$, we obtain  a Serre fibration
\[
\Diff^f(Y,w)\to \Diff^+(Y,w)\to \SO(3),
\]
and hence a map $\pi_1(\SO(3))\to \pi_0(\Diff^f(Y,w))$.

\subsection{Heegaard diagrams for the twist map}

We first describe the Heegaard diagrams used to define $\tw_*$. Recall that in non-involutive Heegaard Floer homology, if $\phi\colon \Sigma\to \Sigma$ is a diffeomorphism, then the action of $\phi_*$ on Heegaard Floer homology is induced by composing the maps in the following diagram:
\[
\begin{tikzcd}
\CF(\Sigma,\as,\bs)
	\ar[r, "f_{\a\to \phi(\a)}^{\b\to \phi(\b)}"]& 
\CF(\Sigma,\phi^{-1}(\as), \phi^{-1}(\bs))\ar[r, "T_\phi"]&
\CF(\Sigma,\as,\bs)
\end{tikzcd}
\]
where $f_{\a\to \phi(\a)}^{\b\to \phi(\b)}$ is a composition of holomorphic triangle maps and change of almost complex structure maps, and $T_\phi$ is the tautological map on intersection points. The diffeomorphism maps on involutive Heegaard Floer homology are defined similarly, except that we replace $\CF(\Sigma,\as,\bs)$ with a one dimensional hyperbox (whose maps of non-zero length correspond to the involution), and we replace $f_{\a\to \phi(\a)}^{\b\to \phi(\b)}$ and $T_\phi$ with morphisms of hyperboxes.

This has the following instantiation in our present setting. Let $\boldsymbol{\delta}$ be a set of doubling arcs (with boundary on $w$) and $\tilde{\boldsymbol{\delta}}$ be a set obtained by applying a boundary Dehn twist in a loop which encircles $w$. Let $\Ds$ and $\tilde{\Ds}$ be their corresponding doubled curves on $\Sigma \# \bar \Sigma$. Then $\tw_*$ is obtained by compressing the following diagram (where we are omitting an extra column corresponding to the tautological portion of the diffeomorphism map):
\begin{equation}
\begin{tikzcd}
\CF(\Sigma,\as,\bs)
	\ar[r]
	\ar[d]& 
\CF(\Sigma,\as,\bs)
	\ar[d]
\\
\CF(\Sigma\# \bar \Sigma,\as \bar \bs, \bs \bar \bs)
	\ar[r]
	\ar[dr,dashed]
	\ar[d]&
\CF(\Sigma\# \bar \Sigma,\as \bar \bs, \bs \bar \bs)
	\ar[d]
\\
\CF(\Sigma\# \bar \Sigma,\as \bar \bs, \Ds)
	\ar[r]
	\ar[d]
	\ar[dr,dashed]
&
\CF(\Sigma\# \bar \Sigma,\as \bar \bs, \tilde{\Ds})
	\ar[d]
\\
\CF(\Sigma\# \bar \Sigma,\as \bar \bs, \as\bar \as)
	\ar[r]
	\ar[d]&
\CF(\Sigma\# \bar \Sigma,\as \bar\bs, \as \bar \as)
	\ar[d]
\\
\CF( \bar \Sigma,\bar \bs, \bar \as)
	\ar[r]
	&
\CF(\bar \Sigma,\bar \bs, \bar \as)
\end{tikzcd}
\label{eq:Dehn-twist-map}
\end{equation}
 	The relevant Heegaard diagrams are shown in Figure~\ref{fig:twist_fig1}.

\begin{figure}[ht]
\begingroup%
  \makeatletter%
  \providecommand\color[2][]{%
    \errmessage{(Inkscape) Color is used for the text in Inkscape, but the package 'color.sty' is not loaded}%
    \renewcommand\color[2][]{}%
  }%
  \providecommand\transparent[1]{%
    \errmessage{(Inkscape) Transparency is used (non-zero) for the text in Inkscape, but the package 'transparent.sty' is not loaded}%
    \renewcommand\transparent[1]{}%
  }%
  \providecommand\rotatebox[2]{#2}%
  \newcommand*\fsize{\dimexpr\f@size pt\relax}%
  \newcommand*\lineheight[1]{\fontsize{\fsize}{#1\fsize}\selectfont}%
  \ifx\svgwidth\undefined%
    \setlength{\unitlength}{221.83820152bp}%
    \ifx\svgscale\undefined%
      \relax%
    \else%
      \setlength{\unitlength}{\unitlength * \real{\svgscale}}%
    \fi%
  \else%
    \setlength{\unitlength}{\svgwidth}%
  \fi%
  \global\let\svgwidth\undefined%
  \global\let\svgscale\undefined%
  \makeatother%
  \begin{picture}(1,0.69449752)%
    \lineheight{1}%
    \setlength\tabcolsep{0pt}%
    \put(0,0){\includegraphics[width=\unitlength,page=1]{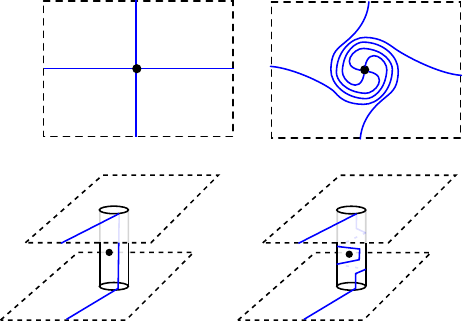}}%
    \put(0.3747111,0.56682597){\color[rgb]{0,0,1}\makebox(0,0)[lt]{\lineheight{1.25}\smash{\begin{tabular}[t]{l}$\ds$\end{tabular}}}}%
    \put(0.88805677,0.60241408){\color[rgb]{0,0,1}\makebox(0,0)[lt]{\lineheight{1.25}\smash{\begin{tabular}[t]{l}$\tilde\ds$\end{tabular}}}}%
  \end{picture}%
\endgroup%

\caption{Top row, the doubling arcs $\boldsymbol{\delta}$ (left) and $\tilde{\boldsymbol{\delta}}$ (right). On the bottom row, we show the doubled diagrams, and indicate how $\tilde{\Ds}$ is obtained from $\Ds$ by moving the basepoint in a loop around the connected sum tube.}
\label{fig:twist_fig1}
\end{figure}

\begin{rem} Since $\tw_*$ can be interpreted in terms moving the basepoint around the connected sum neck on $\Sigma\# \bar \Sigma$, one might expect that the formula for the hypercube $\pi_1$-action from \cite{ZemBordered}*{Section~13} would show that $\tw_*$ is null-homotopic as a morphism of 1-dimensional hypercubes. That is, if we view $\g_*$ as a 2-dimensional diagram below, then there is a 3-dimensional hypercube taking the form shown on the right:
\[
\g_*=
\begin{tikzcd}[row sep=3cm]
 \CF(Y)
 \ar[r, "\g_*"]
 \ar[dr,dashed, "\g_*"]
 \ar[d, "\iota"]
& \CF(Y)
\ar[d, "\iota"]
\\
\CF(Y) \ar[r, "\g_*"] & \CF(Y)
\end{tikzcd}
\begin{tikzcd}[
	column sep={1.3cm,between origins},
	row sep=.8cm,
	labels=description,
	fill opacity=1,
	text opacity=1,
	]
\CF(Y)
	\ar[dr, "\id"]
	\ar[ddd, "\iota"]
	\ar[rr, "\g_*"]
	\ar[dddrr,dashed, "\g_*"]
	\ar[ddddrrr,dotted]
&[.7 cm]
&\CF(Y)
	\ar[rd, "\id"]
	\ar[ddd,"\iota"]
&[.7 cm]
\\
&[.7 cm]
\CF(Y)
 	\ar[rr,line width=2mm,dash,color=white,opacity=.7]
	\ar[rr, "\id"]
&\,
&[.7 cm]
\CF(Y)
	\ar[ddd,"\iota"]
 	\ar[from=ulll,line width=2mm,dash,color=white,opacity=.7]
	\ar[from=ulll,dashed]
\\
\\
\CF(Y)
	\ar[dr, "\id"]
	\ar[rr, "\gamma_*"]
	\ar[drrr,dashed]
&[.7 cm]\,&
\CF(Y)
	\ar[dr, "\id"]
\\
& [.7 cm]
\CF(Y)
 	\ar[from=uuu,line width=2mm,dash,color=white,opacity=.7]
	\ar[from=uuu,"\iota"]
	\ar[rr, "\id"]
&
&[.7 cm] 
\CF(Y)
\end{tikzcd}.
\]
We observe however that the techniques of \cite{ZemBordered} do not guarantee that the 3-dimensional hypercube so constructed has the same maps on the top and bottom faces of the hypercube. (This may seem surprising since the length one components of $\g_*$ are the identity on the nose). Concretely, the homotopy on the top face involves maps related to relative homology actions on stabilized diagrams, which are in general not preserved by the involution. In particular, such a hypercube does not give us an $\bF[U,Q]/Q^2$-equivariant homotopy $\id\simeq \tw_*$ on $\CFI(Y)$.
\end{rem}

\subsection{Degenerating connected sums}

Our argument depends on a somewhat complicated neck stretching argument. Our strategy is adapted from work of Ozsv\'{a}th and Szab\'{o} in the setting of bordered knot Floer homology \cite{OSBorderedHFK}. We have explored some of these ideas further in \cite{HHSZExact} and \cite{ZemBordered}.

We focus first on the case of two ordinary Heegaard diagrams $\cH=(\Sigma,\as,\bs,w)$ and $\cH'=(\Sigma',\as',\bs,',w')$, and we take the connected sum at $w$ and $w'$ (deleting one of the basepoints). 

We first degenerate the connected sum tubes by letting the neck length approach $\infty$. We obtain maps which count \emph{perfectly matched} moduli spaces. We write $\CF_{\wedge}(\cH\# \cH')$ for the resulting chain complex. 

There are homotopy equivalences
\[
\Psi^{\Sigma<\Sigma'}, \Psi^{\Sigma'<\Sigma}\colon \CF_{\wedge}(\cH\# \cH')\to \CF(\cH)\otimes_{\bF[U]} \CF(\cH').
\]

The map $\Psi^{\Sigma<\Sigma'}$ counts holomorphic disk pairs $(u,u')$ of expected dimension $0$ such that
\begin{enumerate}
\item For $[0,1]\times (-\infty,0)$, the punctures of $u$ and $u'$ consist of perfectly matched pairs; that is, each puncture of $u$ is paired with a puncture of $u'$ which has identical projection to $[0,1]\times \R$ (and vice-versa).
\item There are an even number of $u$ and $u'$ punctures on $[0,1]\times \{0\}$, the \emph{special line}. These punctures all have distinct projections to $[0,1]\times \{0\}$, and alternate between $u$ and $u'$. The left-most puncture is from $u$.
\item In $[0,1]\times (0,\infty)$, $u$ and $u'$ may have any number of punctures, and there is no constraint.
\end{enumerate}
The map $\Psi^{\Sigma'<\Sigma}$ is defined similarly, except that the left-most puncture on the special line is from $u'$.

\begin{prop} The maps $\Psi^{\Sigma<\Sigma'}$ and $\Psi^{\Sigma'<\Sigma}$ are chain maps. Furthermore, they are homotopy equivalences, with homotopy inverses given by the maps which count curves in $0$-dimensional moduli spaces with matching conditions in the regions above and below $[0,1]\times \{0\}$ reversed.
\end{prop}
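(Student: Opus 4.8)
The plan is to argue that $\Psi^{\Sigma<\Sigma'}$ and $\Psi^{\Sigma'<\Sigma}$ are chain maps by a standard degeneration-of-the-boundary argument, and then to establish that they are homotopy equivalences by exhibiting explicit homotopy inverses together with a count of the ends of the relevant $1$-dimensional moduli spaces. The key geometric input is the behavior of perfectly matched moduli spaces near the special line $[0,1]\times\{0\}$ as the matching conditions degenerate; this is where the bordered-style analysis of Ozsv\'ath--Szab\'o \cite{OSBorderedHFK} (and its treatment in \cite{HHSZExact}, \cite{ZemBordered}) does the work.

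\textbf{Step 1: chain map property.} First I would set up the $1$-dimensional moduli space whose interior points are the index $1$ configurations $(u,u')$ satisfying the matching constraints defining $\Psi^{\Sigma<\Sigma'}$. The ends of this space fall into two types: (a) a holomorphic disk breaks off at one of the two cylindrical ends $[0,1]\times(\pm\infty,\cdot)$ of the $\Sigma$ or $\Sigma'$ side, with no puncture crossing the special line; these ends contribute the terms $\partial\Psi^{\Sigma<\Sigma'}+\Psi^{\Sigma<\Sigma'}\partial$. (b) Degenerations in which punctures collide on or cross the special line, or a puncture pair (one from $u$, one from $u'$) coalesces at a point of $[0,1]\times(-\infty,0)$. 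I would argue that the type (b) ends cancel in pairs: collisions of a matched pair below the special line cancel against the corresponding configuration where the pair passes through to the other side, by a gluing argument identical to the one used in the proof that the connected sum formula respects differentials (compare the associativity analysis underlying Proposition~\ref{prop:multi-stabilization-counts} and the neck-stretching arguments in \cite{HHSZExact}*{Section~10}). This yields $\partial\Psi^{\Sigma<\Sigma'}=\Psi^{\Sigma<\Sigma'}\partial$; the argument for $\Psi^{\Sigma'<\Sigma}$ is identical with the roles of $\Sigma$ and $\Sigma'$ exchanged.

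\textbf{Step 2: homotopy equivalence.} Let $\Xi^{\Sigma<\Sigma'}\colon \CF(\cH)\otimes_{\bF[U]}\CF(\cH')\to\CF_\wedge(\cH\#\cH')$ denote the map counting $0$-dimensional moduli spaces of pairs with the matching region above and below $[0,1]\times\{0\}$ reversed (i.e.\ perfectly matched above the special line, unconstrained below, and alternating across it). I would show that $\Xi^{\Sigma<\Sigma'}$ is a chain map by the same boundary-degeneration argument as Step 1, and then that $\Psi^{\Sigma<\Sigma'}\circ\Xi^{\Sigma<\Sigma'}$ and $\Xi^{\Sigma<\Sigma'}\circ\Psi^{\Sigma<\Sigma'}$ are each chain-homotopic to the identity. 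The homotopy is built by counting $0$-dimensional moduli spaces in a $1$-parameter family in which the height of the matching line is allowed to vary (equivalently, one interpolates between ``matched below, alternating at $0$, free above'' and ``matched everywhere''); the ends of the resulting $1$-dimensional family at the two extremes of the parameter give the two compositions, while the intermediate breakings give the terms $[\partial,H]$. This is the standard ``moving the dividing line'' technique; it appears in \cite{OSBorderedHFK} and is exactly the device used to prove the analogous statements in \cite{ZemBordered}. The key point is that at one end of the parameter family the alternating punctures on the special line are forced to disappear (there are generically none), recovering the identity on the tensor product, and at the other end one recovers the composite.

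\textbf{Main obstacle.} The hard part will be the careful bookkeeping of the special-line punctures: one must verify that the alternation condition (punctures strictly alternate between $u$ and $u'$, with a prescribed left-most side) is preserved under gluing and that the codimension-one degenerations of the $1$-dimensional moduli spaces pair up correctly across the special line rather than leaving unbalanced boundary contributions. This requires a transversality statement for perfectly matched moduli spaces with marked points on a dividing line and a Gromov compactness analysis ensuring no unexpected bubbling at the neck; I expect this to follow by adapting the compactness and gluing results already in place in \cite{HHSZExact} (the small translate theorems and the degeneration results of Section~\ref{sec:polygons}) together with the bordered-Floer moduli space technology of \cite{OSBorderedHFK}, but it is the step that demands genuine care rather than formal manipulation.
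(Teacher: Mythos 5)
Your outline is correct and is essentially the argument the paper relies on: the paper does not prove this proposition itself but defers to \cite{ZemBordered}*{Section~11.2} (modeled on \cite{OSBorderedHFK}), and your end-counting argument for the chain-map property together with the deformation-of-the-matching-line homotopies is exactly the strategy of that reference, with the delicate point (cancellation of the special-line degenerations and the attendant transversality) correctly isolated in your final paragraph. The one imprecision is in Step~2: gluing the two compositions produces moduli spaces with \emph{two} special lines whose vertical separation is the deformation parameter (the same device the paper uses in its proof of Lemma~\ref{lem:pair}), and at separation zero one needs an index/rigidity argument to see that only configurations with no line punctures and constant strips contribute, which is what recovers the identity.
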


A proof, modeled on Ozsv\'{a}th and Szab\'{o}'s work \cite{OSBorderedHFK}, may be found in \cite{ZemBordered}*{Section~11.2}. An extension of the connected sum maps $\Psi^{\Sigma<\Sigma'}$ to the setting of connected sums of hypercubes (under certain topological assumptions) is also proven in \cite{ZemBordered}*{Proposition~11.8}.

\subsection{Setting up the computation}

In Section~\ref{sec:naturality-knots-and-links} we considered several stabilized models of the involution where we used multiple connected sum tubes, but a single basepoint. To perform our computation, we will join $\Sigma$ and $\bar \Sigma$ using a tube at $w$ and a tube at another point $p$.  Considering this stabilized model will allow us to localize the computation.

\begin{figure}[ht]
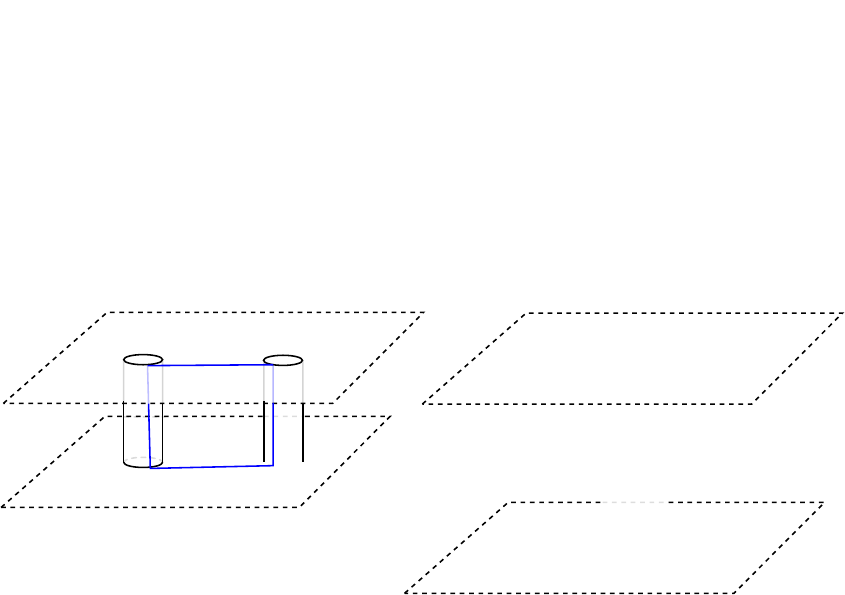
\caption{A once stabilized model of the involution. On the top, we show the doubling arcs on $\Sigma$. On the bottom left, we have an overview of the two tubes. On the bottom right, we zoom in on the $w$-tube region.}
\label{twist_fig:5}
\end{figure}

Write $c_p$ for the meridian of the $p$-tube. Note that we can get a valid Heegaard diagram by replacing $c_p$ with a meridian of the other tube. There are two natural choices, $c_w$ and $\bar c_w$, depending on which side of $w$ we put the meridian. 

We recall that the stabilized model of the involution from Section~\ref{sec:knots} takes the following form:
\[
F_3^{\a c_p, \a c_p}\circ f_{\a c_p \bar \b}^{\Dt\to (\a  c_p\bar \a)}\circ f_{\a  c_p\bar \b}^{(\b c_p \bar \b)\to \Dt}\circ  F_1^{c_p\bar \b, c_p \bar \b}.
\]

We now modify the sequence of curves we use in this Heegaard diagram, by breaking the sequence 
\[
\bs c_p \bar \bs\to \Ds\to \as c_p \bar \as
\]
into a longer sequence. This will make some holomorphic curve counts easier.

 We pick sets of doubled curves $\Ds_{1},\dots, \Ds_n$, containing $2g+1$ curves each, on $\Sigma\# \bar \Sigma$ such that the following hold:
\begin{enumerate}
\item $\Ds_i=\Ds_i^0 \cup D$, where $\Ds_i^0$ are doubled curves on the unstabilized diagram, obtained by doubling arcs with two endpoints at $w$. Here $D$ is a single curve which obtained by doubling an arc which connects $p$ to $w$.
\item $(\Sigma \# \bar \Sigma,\bs \bar \bs,\Ds_1^0)$ is algebraically rigid. (Compare \cite{HHSZExact}*{Definition~6.5}).
\item $(\Sigma \# \bar \Sigma,\as \bar \as,\Ds_n^0)$ is algebraically rigid. 
\item Each $\Ds_{i+1}^0$ is obtained from $\Ds_i^0$ by a small isotopy or a simple arcslide of the doubling arcs on $\Sigma\setminus N(p)$, not crossing over the endpoint corresponding to $D$. (In particular each $(\Sigma\# \bar \Sigma, \Ds_i^0,\Ds_{i+1}^0,w)$ is algebraically rigid). Note that this implies that $\Ds_{i+1}^0$ is obtained from $\Ds_i^0$ by an elementary handleslide.
\end{enumerate}

We will use the following model of the involution:
\[
F_3^{\a c_p, \a c_p}
	\circ 
\Psi_{(\a \bar c_w \bar \b)\to (\a c_p \bar \b)}^{(\a \bar c_w \bar \a)\to (\a c_p \bar \a)} 
	\circ
 f_{\a \bar c_w \bar \b}^{\Dt_n\to (\a \bar c_w \bar \a)}
	\circ \cdots\circ
 f_{\a \bar c_w \bar \b}^{\Dt_1\to \Dt_2} 	
 	\circ
 f_{\a \bar c_w \bar \b}^{(\b \bar c_w \bar \b)\to \Dt_1}
	 \circ
\Psi_{(\a  c_p \bar \b)\to (\a \bar c_w \bar \b )}^{(\b c_p \bar \b)\to (\b \bar c_w\bar \b)}
	\circ
F_1^{c_p\bar \b, c_p \bar \b}
\]

The motivation for using this model is that most of the holomorphic curve counts in the above diagram occur on a connected sum of the ordinary doubled diagram for $Y$ on $\Sigma\#\bar{\Sigma}$ and a genus 1 diagram where the diffeomorphism acts non-trivially. The genus 1 region is a neighborhood of $D\cup c_w$; we call it $\mathbb T$.

We now write $\tilde{\Ds}_i$ for $\tw(\Ds_i)$. If we write $\Ds_i=\Ds_i^0\cup D$, then $\tilde{\Ds}_i=\Ds_i^0\cup \tilde{D}$, where $\tilde{D}$ is obtained from $D$ by applying the twist in the torus region.

Our argument will proceed  as follows. We will view each of the holomorphic polygon counts as occurring on a connected sum of $\Sigma\# \bar \Sigma$ and $\bT$. We will build a 3-dimensional hyperbox whose front and back faces are expanded versions of the hyperbox in Equation~\eqref{eq:Dehn-twist-map}. The new dimension corresponds to the map $\Psi^{\Sigma\# \bar{\Sigma}<\bT}$. The back face will consist of the perfectly matched complexes. The front face will consist of the trivially matched complexes with some extra terms. The extra terms correspond to the $Q\Phi$ summand of $\tw_*$. 

\subsection{Initial and final hypercubes}

In this section, we extend the following hypercubes (with holomorphic curves counted with perfect matching) into a third dimension corresponding to the map $\Psi^{\Sigma\# \bar \Sigma<\bT}$:
\[
\begin{tikzcd} \CF(\as,\bs)
	\ar[r, "\id"]
	\ar[d, "F_1^{c_p \bar \b,c_p \bar \b}"]
&
\CF(\as, \bs)
	\ar[d, "F_1^{c_p \bar \b,c_p \bar \b}"]
\\
\CF(\as c_p \bar \bs)
	\ar[r, "\id"]& 
\CF(\as c_p \bar \bs)
\end{tikzcd}
\quad \begin{tikzcd}
 \CF(\as \bar c_w \bar \as,\as \bar c_w \bar \as)
	\ar[r, "\id"]
	\ar[d]
&
\CF(\as \bar c_w \bar \as,\as \bar c_w \bar \as)
	\ar[d]
\\
\CF(\as c_p \bar \as,\as c_p \bar \as)
	\ar[r, "\id"]
	\ar[d, "F_3^{\a c_p, \a c_p}"]& 
\CF(\as c_p \bar \as,\as c_p \bar \as)
	\ar[d, "F_3^{\a c_p, \a c_p}"]
	\\
\CF(\bar \bs, \bar \as) \ar[r, "\id" ] &\CF(\bar \bs, \bar \as).
\end{tikzcd}
\]

In the above, there is no diagonal term because the diffeomorphism $\tw$ fixes all of the attaching curves on the Heegaard diagram.

 \begin{lem}\label{lem:1-handle-c_w-commutation} If we stretch the almost complex structure sufficiently on the connected sum tubes, we have the equality
 \[
\Psi_{(\a  c_p \bar \b)\to (\a \bar c_w \bar \b )}^{(\b c_p \bar \b)\to (\b \bar c_w\bar \b)}\circ  F_1^{c_p \bar \b, c_p \bar \b}=F_1^{\bar c_w \bar \b, \bar c_w \bar \b}.
 \]
 \end{lem}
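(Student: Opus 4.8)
The statement compares two maps $\CF(\as,\bs)\to \CF(\Sigma\#\bar\Sigma,\as\bar c_w\bar\bs,\bs\bar c_w\bar\bs)$. On the left side, one first attaches the $g$ one-handles and the unknot pair $(c_p,c_p)$ (this is $F_1^{c_p\bar\b,c_p\bar\b}$), then slides the $c_p$ meridian of the $p$-tube over to become the $\bar c_w$ meridian of the $w$-tube, keeping the $\bs\bar\bs$ curves in place (this is the transition map $\Psi$ for an elementary handleslide equivalence on the doubled diagram). On the right, one directly attaches the $g$ one-handles and the unknot pair $(\bar c_w,\bar c_w)$. Topologically these clearly give the same map, since the two unknot pairs differ by an isotopy of the Heegaard surface supported in the two-tube region; the content is that the algebraic models agree on the nose (after sufficient neck-stretching), not merely up to homotopy.

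The plan is to carry this out as a direct holomorphic-curve computation localized in the genus-one region $\mathbb T$ around $D\cup c_w$ and the connected sum tubes. First I would recall that $F_1^{c_p\bar\b,c_p\bar\b}(\xs)=\xs\times\theta^+_{c_p\bar\b}$ (in the notation of Section~\ref{sec:1-and-3-handles}), where $\theta^+$ is the top-degree generator of the stabilization region, and similarly $F_1^{\bar c_w\bar\b,\bar c_w\bar\b}(\xs)=\xs\times\theta^+_{\bar c_w\bar\b}$. Next I would observe that $\Psi_{(\a c_p\bar\b)\to(\a\bar c_w\bar\b)}^{(\b c_p\bar\b)\to(\b\bar c_w\bar\b)}$ is the compression of the hyperbox of Figure~\ref{def:transition-map-elementary-handleslide} (or rather its expanded analog) associated to the elementary equivalence replacing $c_p$ by $\bar c_w$ while fixing all other curves. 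The key simplification is that since all attaching curves other than the meridian are literally unchanged (not even perturbed, once we pass to the perfectly-matched model in the $\mathbb T$-region), the relevant Heegaard quadruples degenerate as connected sums of the fixed diagram for $(Y,w)$ with a genus-one piece, and Proposition~\ref{prop:multi-stabilization-counts} together with the small-translate theorems of \cite{HHSZExact}*{Propositions~11.1 and 11.5} force all the quadrilateral (length-2) contributions to vanish. What remains is a model triangle count in the genus-one region: one checks that the unique index-0 triangle class connecting $\theta^+_{c_p\bar\b}$ to $\theta^+_{\bar c_w\bar\b}$ over the appropriate top-degree generator of the $(c_p\bar\b,\bar c_w\bar\b)$-unlink complex has a single holomorphic representative, so $\Psi$ acts by $\xs\times\theta^+_{c_p\bar\b}\mapsto\xs\times\theta^+_{\bar c_w\bar\b}$.

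Combining the two, $\Psi\circ F_1^{c_p\bar\b,c_p\bar\b}(\xs)=\xs\times\theta^+_{\bar c_w\bar\b}=F_1^{\bar c_w\bar\b,\bar c_w\bar\b}(\xs)$, which is the claim. I expect the main obstacle to be the bookkeeping in the degeneration argument: one must verify that, after stretching the neck enough, the transition-map hyperbox really does split as a connected sum with the small genus-one piece in a way that lets Proposition~\ref{prop:multi-stabilization-counts} apply to every constituent polygon count, and that the surviving genus-one triangle count is exactly the nearest-point map. This is the same pattern used throughout the paper (e.g.\ in the proof of Lemma~\ref{lem:1-handles-simple} and Proposition~\ref{prop:continuity}), so the heavy machinery is already in place; the work is in setting up the local model in $\mathbb T$ and identifying the relevant top-degree generators and index-0 classes, which I would do by an explicit picture analogous to Figure~\ref{fig:7}.
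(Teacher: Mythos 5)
Your proposal is correct and follows essentially the same route as the paper, whose proof is simply to invoke the stabilization result for holomorphic triangles (Proposition~\ref{prop:multi-stabilization-counts}) together with the topological observation that $c_p$ is carried to $\bar c_w$ by handlesliding over each $\bar\bs$ curve twice on $\bar\Sigma$ without crossing $w$, so that the triangle counts destabilize to a nearest-point computation sending $\xs\times\theta^+_{c_p\bar\b}$ to $\xs\times\theta^+_{\bar c_w\bar\b}$. The only (harmless) extra step in your writeup is the discussion of vanishing quadrilateral contributions: at this level $\Psi_{(\a c_p\bar\b)\to(\a\bar c_w\bar\b)}^{(\b c_p\bar\b)\to(\b\bar c_w\bar\b)}$ is a map between single Floer complexes, i.e.\ a composition of triangle maps, so no length-two terms arise in the first place.
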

 \begin{proof} This follows from our stabilization result for holomorphic triangles in Proposition~\ref{prop:multi-stabilization-counts}. Topologically, it is sufficient to observe that on $\bar \Sigma$, there is a sequence of handleslides of $c_p$ over the other $\bar \bs$ curves (not crossing over $w$) which takes $c_p$ to $\bar c_w$ (i.e. handleslide $c_p$ over each $\bar \bs$ curve twice).
 \end{proof}

We now investigate the next terms in the sequence. In Figure~\ref{twist_fig:7}, we draw the diagram $(\bT, \bar c_w,D, \tilde D, \bar c_w',w)$. 

\begin{figure}[ht]
\begingroup%
  \makeatletter%
  \providecommand\color[2][]{%
    \errmessage{(Inkscape) Color is used for the text in Inkscape, but the package 'color.sty' is not loaded}%
    \renewcommand\color[2][]{}%
  }%
  \providecommand\transparent[1]{%
    \errmessage{(Inkscape) Transparency is used (non-zero) for the text in Inkscape, but the package 'transparent.sty' is not loaded}%
    \renewcommand\transparent[1]{}%
  }%
  \providecommand\rotatebox[2]{#2}%
  \newcommand*\fsize{\dimexpr\f@size pt\relax}%
  \newcommand*\lineheight[1]{\fontsize{\fsize}{#1\fsize}\selectfont}%
  \ifx\svgwidth\undefined%
    \setlength{\unitlength}{174.7030358bp}%
    \ifx\svgscale\undefined%
      \relax%
    \else%
      \setlength{\unitlength}{\unitlength * \real{\svgscale}}%
    \fi%
  \else%
    \setlength{\unitlength}{\svgwidth}%
  \fi%
  \global\let\svgwidth\undefined%
  \global\let\svgscale\undefined%
  \makeatother%
  \begin{picture}(1,0.95200474)%
    \lineheight{1}%
    \setlength\tabcolsep{0pt}%
    \put(0,0){\includegraphics[width=\unitlength,page=1]{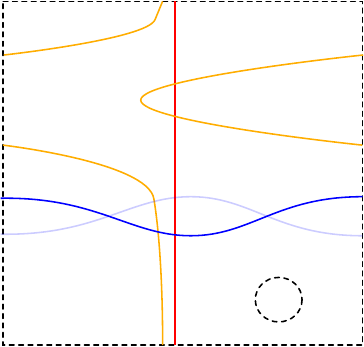}}%
    \put(0.61395214,0.26698994){\color[rgb]{0,0,1}\makebox(0,0)[lt]{\lineheight{1.25}\smash{\begin{tabular}[t]{l}$\bar c_w$\end{tabular}}}}%
    \put(0.12376047,0.27253206){\color[rgb]{0,0,1}\makebox(0,0)[lt]{\lineheight{1.25}\smash{\begin{tabular}[t]{l}$\bar c_w'$\end{tabular}}}}%
    \put(0.49661182,0.18366684){\color[rgb]{1,0,0}\makebox(0,0)[lt]{\lineheight{1.25}\smash{\begin{tabular}[t]{l}$D$\end{tabular}}}}%
    \put(0.83754842,0.5842289){\color[rgb]{1,0.5254902,0}\makebox(0,0)[lt]{\lineheight{1.25}\smash{\begin{tabular}[t]{l}$\tilde{D}$\end{tabular}}}}%
    \put(0.72441241,0.6606722){\makebox(0,0)[lt]{\lineheight{1.25}\smash{\begin{tabular}[t]{l}$w$\end{tabular}}}}%
    \put(0,0){\includegraphics[width=\unitlength,page=2]{twist_fig7.pdf}}%
  \end{picture}%
\endgroup%

\caption{The special genus 1 region and the diagram $(\bT^2, \bar {c}_w, D, \tilde{D}, \bar{c}_w',w)$. The dashed circle is the connected sum point. Also shown is a holomorphic quadrilateral that will play a role in our argument.}
\label{twist_fig:7}
\end{figure}

Next, we observe
\[
F_3^{\a c_p, \a c_p}=F_3^{c_p,c_p}\circ F_3^{\a,\a}.
\]
We note that after surgering out the $\as$ curves, the curves $c_p$ become isotopic to $c_w$. Hence we identify
\[
F_3^{c_p,c_p}\circ F_3^{\a,\a}=F_3^{c_w,c_w}\circ F_3^{\a,\a}.
\]
In a similar manner to Lemma~\ref{lem:1-handle-c_w-commutation}, we have the strict commutation
\begin{equation}
F_3^{\a,\a}\circ\Psi_{(\a \bar c_w \bar \b)\to (\a c_p \bar \b)}^{(\a \bar c_w \bar \a)\to (\a c_p \bar \a)}
=
\Psi_{(\bar c_w \bar \b)\to (c_w\bar \b)}^{(\bar c_w \bar \a)\to (c_w \bar \a)}\circ F_3^{\a,\a}.\label{eq:commute-3-handle-c-w-c-w'}
\end{equation}

We now prove a simple index bound for the diagram $(\bT,\bar{c}_w,  c_w)$. We view this diagram as having both a basepoint $w\in \bT$, as well as a connected sum point $p\in \bT$. Note that these do not coincide. See Figure~\ref{twist_fig:8}.

\begin{figure}[ht]
\begingroup%
  \makeatletter%
  \providecommand\color[2][]{%
    \errmessage{(Inkscape) Color is used for the text in Inkscape, but the package 'color.sty' is not loaded}%
    \renewcommand\color[2][]{}%
  }%
  \providecommand\transparent[1]{%
    \errmessage{(Inkscape) Transparency is used (non-zero) for the text in Inkscape, but the package 'transparent.sty' is not loaded}%
    \renewcommand\transparent[1]{}%
  }%
  \providecommand\rotatebox[2]{#2}%
  \newcommand*\fsize{\dimexpr\f@size pt\relax}%
  \newcommand*\lineheight[1]{\fontsize{\fsize}{#1\fsize}\selectfont}%
  \ifx\svgwidth\undefined%
    \setlength{\unitlength}{173.57680535bp}%
    \ifx\svgscale\undefined%
      \relax%
    \else%
      \setlength{\unitlength}{\unitlength * \real{\svgscale}}%
    \fi%
  \else%
    \setlength{\unitlength}{\svgwidth}%
  \fi%
  \global\let\svgwidth\undefined%
  \global\let\svgscale\undefined%
  \makeatother%
  \begin{picture}(1,0.95491554)%
    \lineheight{1}%
    \setlength\tabcolsep{0pt}%
    \put(0,0){\includegraphics[width=\unitlength,page=1]{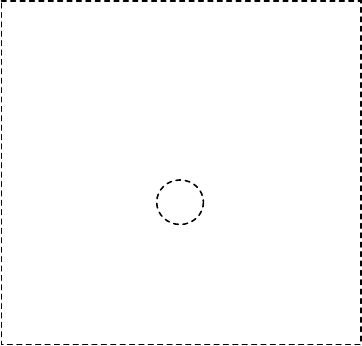}}%
    \put(0.72425744,0.62875944){\makebox(0,0)[lt]{\lineheight{1.25}\smash{\begin{tabular}[t]{l}$w$\end{tabular}}}}%
    \put(0,0){\includegraphics[width=\unitlength,page=2]{twist_fig8.pdf}}%
    \put(0.28085552,0.52456521){\makebox(0,0)[lt]{\lineheight{1.25}\smash{\begin{tabular}[t]{l}$\bar c_w$\end{tabular}}}}%
    \put(0.66973236,0.28691808){\makebox(0,0)[lt]{\lineheight{1.25}\smash{\begin{tabular}[t]{l}$c_w$\end{tabular}}}}%
  \end{picture}%
\endgroup%

\caption{The diagram $(\bT, \bar c_w,  c_w,w)$. The dashed circle is the connected sum point $p$. }
\label{twist_fig:8}
\end{figure}

\begin{lem}
\label{lem:index-bound}
Suppose that $\phi\in \pi_2(\theta^+, \theta)$ is a class of disks on $(\bT,\bar c_w, c_w,w)$. 
\begin{enumerate}
\item Then $\mu(\phi)=2n_w(\phi)+\gr(\theta^+,\theta)$.
\item If $D(\phi)\ge 0$, then $\mu(\phi) \ge n_p(\phi)$. 
\end{enumerate}
\end{lem}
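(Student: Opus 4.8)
The diagram $(\bT, \bar c_w, c_w, w)$ is a genus-one two-pointed Heegaard diagram (with the additional connected-sum marked point $p$), and the curves $\bar c_w$ and $c_w$ are small Hamiltonian translates of one another; hence $\bar c_w \cap c_w$ consists of exactly two points, which we call $\theta^+$ and $\theta^-$, with $\theta^+$ the top-degree generator. For part~(1), the plan is to invoke the standard Maslov index formula. Since $\bar c_w$ and $c_w$ cobound a single bigon region on either side of each intersection point in the genus-one surface, the intersection number computation is purely local: one checks directly from Lipshitz's formula $\mu(\phi) = e(D(\phi)) + n_{\theta^+}(\phi) + n_\theta(\phi)$ (or equivalently from the general ``$\mu = 2n_w + (\text{grading shift})$'' identity that holds whenever the diagram represents $S^1 \times S^2$, cf.\ the index computations in \cite{HHSZExact}*{Section~9}) that a class $\phi \in \pi_2(\theta^+,\theta)$ satisfies $\mu(\phi) = 2n_w(\phi) + \gr(\theta^+,\theta)$. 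The only subtlety is keeping careful track of which of the two complementary bigons contains $w$; with the configuration drawn in Figure~\ref{twist_fig:8} this is immediate.

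For part~(2), the idea is that once we know $\mu(\phi) = 2n_w(\phi) + \gr(\theta^+,\theta)$, it suffices to control the location of $w$ and $p$ relative to the two bigon regions. Concretely: the complement $\bT \setminus (\bar c_w \cup c_w)$ consists of two bigons $B_1$ and $B_2$, and any nonnegative class $\phi$ with positive multiplicities has domain $D(\phi) = a B_1 + b B_2$ with $a,b \ge 0$; from the boundary conditions one extracts $|a - b| = 1$ (depending on whether $\phi$ runs from $\theta^+$ to $\theta^+$, $\theta^+$ to $\theta^-$, etc.), and $e(D(\phi)) = 0$ since bigons have zero Euler measure, so $\mu(\phi) = a + b \ge 1$ whenever $\phi$ is nonconstant. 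One then observes from Figure~\ref{twist_fig:8} that $p$ lies in exactly one of the two bigons — say $p \in B_1$ — so $n_p(\phi) = a$, while $n_w(\phi) = b$ or $a$ depending on placement of $w$. Comparing $\mu(\phi) = a + b$ with $n_p(\phi) = a$ gives $\mu(\phi) - n_p(\phi) = b \ge 0$, i.e.\ $\mu(\phi) \ge n_p(\phi)$, as desired. (If instead the bookkeeping places $w$ in the same bigon as $p$, one uses part~(1) together with $a+b \ge \max(a,b)$ directly.)

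The main obstacle I expect is purely combinatorial-diagrammatic: pinning down precisely which bigon contains $w$ and which contains $p$, and verifying that the relation between $a$, $b$ forced by the endpoints $\theta^\pm$ is the one claimed (this is where an error would creep in). This is a finite check on the genus-one picture in Figure~\ref{twist_fig:8}, so while it is the crux, it is not deep — it amounts to reading off multiplicities from the figure and matching them against Lipshitz's index formula. Everything else follows formally from the standard index theory already used throughout \cite{HHSZExact}.
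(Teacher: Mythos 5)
Part (1) of your proposal is fine and coincides with the paper's one-line argument: it is the Maslov grading formula $\gr(\xs)-\gr(\ys)=\mu(\phi)-2n_w(\phi)$ for a torsion $\Spin^c$ structure. Part (2), however, rests on a false description of the diagram. The curves $\bar c_w$ and $c_w$ are isotopic \emph{essential} curves on the torus $\bT$ (both are meridians of the tube), so their complement cannot consist of two disks: it is two bigons together with an annulus. Consequently $\pi_2(\theta^+,\theta)$ is not spanned by the bigons; it contains arbitrary multiples of the full surface $[\bT]$ and of a periodic domain $P$ with $n_w(P)=0$ and $n_p(P)=1$. Your decomposition $D(\phi)=aB_1+bB_2$ therefore omits exactly the classes for which the inequality has content (already $n[\bT]$, and more seriously $kP$ for large $k$, which is where positivity must be invoked). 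A smaller slip: a bigon has Euler measure $1/2$, not $0$; it is rectangles that have Euler measure $0$.

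The paper's proof instead parametrizes an arbitrary class as $\ve{e}_{\theta^+}+kP+n[\bT]$ when $\theta=\theta^+$, and as $\phi_0+kP+n[\bT]$ when $\theta=\theta^-$, where $\phi_0$ is the index-one bigon with $n_p(\phi_0)=1$. It then reads off that the local multiplicities on the three complementary regions are $n$, $n-k$, $n+k$ (resp.\ $n$, $n-k$, $n+k+1$), so nonnegativity of the domain forces $n\ge|k|$ (resp.\ $n\ge k$). Combining this with part (1), which gives $\mu(\phi)=2n$ (resp.\ $1+2n$), and with $n_p(\phi)=n+k$ (resp.\ $n+k+1$), yields the claim. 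To repair your argument you would need to replace the two-bigon model with this three-region bookkeeping; the conclusion genuinely uses the positivity constraint from the region of multiplicity $n-k$, which is invisible in your setup, and no amount of care about which bigon contains $w$ or $p$ substitutes for it.
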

\begin{proof}The first claim is the Maslov grading formula.

We now consider the second claim. Consider first the case that $\theta=\theta^+$. Then any class $\phi=\ve{e}_{\theta}+k\cdot P+n\cdot [\bT]$, where $P$ is the periodic domain class which has $n_w=0$ and $n_p=1$. We observe $\mu(\phi)=2n$, and $n_p(\phi)=n+k$. Also, $n$, $n-k$ and $n+k$ are multiplicities of this disk class, so all must be nonnegative. This implies, in particular, that $n\ge |k|$. Ergo,
\[
\mu(\phi)=2n\ge n+k= n_p(\phi),
\]
as claimed.

We now consider the case that $\theta = \theta^-$. In this case, write $\phi_0$ for the bigon which has $n_p=1$ and write $\phi=\phi_0+k\cdot  P+n\cdot [\bT]$ for a general class. In this case, $n$, $n-k$ and $n+k+1$ are multiplicities on the diagram. Hence, $n\ge k$.  We have
\[
\mu(\phi)=1+2n\ge 1+n+k= n_p(\phi),
\]
completing the proof.
\end{proof}

We now perform a key computation:

\begin{lem}\label{lem:pair}  We have the relation
\[
F_3^{ c_w, c_w}\circ \Psi^{(\bar c_w \bar \a)\to (c_w \bar \a)}_{(\bar c_w \bar \b)\to (c_w \bar \b)}\simeq\left(\begin{array}{l} \xs\otimes \theta^-\mapsto \xs\\
\xs\otimes \theta^+\mapsto \Phi_w^{\bar\Sigma}(\xs)
\end{array} \right)\circ \Psi^{\bar\Sigma<\bT}
\]
where $\Phi_w^{\bar \Sigma}$ is the standard basepoint action on $\CF(\bar \Sigma, \bar \bs, \bar \as)$, usually denoted $\Phi_w$.
\end{lem}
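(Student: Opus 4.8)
The idea is to degenerate the connected sum tube joining $\Sigma\#\bar{\Sigma}$ to the genus one region $\bT$ (a neighborhood of $D\cup c_w$, or equivalently of $\bar{c}_w\cup c_w$ after the identifications in Equation~\eqref{eq:commute-3-handle-c-w-c-w'}), and to analyze the matched holomorphic curve counts on the $\bT$ side. Concretely, I would stretch the neck on the $p$-tube sufficiently far that the disks contributing to the composition $F_3^{c_w,c_w}\circ \Psi_{(\bar c_w\bar \b)\to (c_w\bar \b)}^{(\bar c_w\bar \a)\to (c_w\bar \a)}$ decompose as matched pairs $(u,u_0)$, with $u$ on $\Sigma\#\bar\Sigma$ and $u_0$ on $\bT$. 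The map $\Psi^{\bar\Sigma<\bT}$ is the connected-sum homotopy equivalence from the earlier section, counting perfectly matched pairs with a special line constraint; its existence and the fact that it is a chain homotopy equivalence are quoted from the preceding material (\cite{ZemBordered}*{Section~11.2}).

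\textbf{Key steps.} First, I would rewrite the left-hand side using Equation~\eqref{eq:commute-3-handle-c-w-c-w'} and $F_3^{\a c_p,\a c_p}=F_3^{c_p,c_p}\circ F_3^{\a,\a}=F_3^{c_w,c_w}\circ F_3^{\a,\a}$, so that the whole composition is governed by the genus one subsurface $\bT$ together with the unchanged $\as,\bs$ curves. Second, I would apply the degeneration/matching machinery: after stretching, the composition $F_3^{c_w,c_w}\circ \Psi_{(\bar c_w\bar \b)\to(c_w\bar \b)}^{(\bar c_w\bar\a)\to(c_w\bar\a)}$ is computed by counting matched pairs, and the $\bT$-side counts are counts of triangles on $(\bT,\bar c_w,D,c_w,w)$ followed by the torus $3$-handle map (deleting the $\bar c_w$ curve). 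By Lemma~\ref{lem:index-bound}, a positive disk class $\phi\in\pi_2(\theta^+,\theta)$ on $(\bT,\bar c_w,c_w,w)$ satisfies $\mu(\phi)\ge n_p(\phi)$; this is exactly the inequality needed to show that only matchings of low multiplicity at $p$ contribute to the relevant moduli spaces, which is what lets us identify the output with either $\Psi^{\bar\Sigma<\bT}$ composed with a simple diagonal map, or with a $\Phi_w$-twisted version of it. Third, I would carry out the explicit genus one computation: on $\bT$, the input with $\theta^-$ factor passes through with multiplicity zero at $p$, giving the identity on the $\bar\Sigma$-factor; the input with $\theta^+$ factor forces a disk of multiplicity one at $p$ (equivalently one at $w$ on $\bar\Sigma$ under the matching), which is precisely the defining count of the basepoint action $\Phi_w^{\bar\Sigma}$. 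The relevant genus one holomorphic quadrilateral is the one drawn in Figure~\ref{twist_fig:7}; I would check that it is the unique rigid contribution and that it has the claimed multiplicity at the connected sum point.

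\textbf{Main obstacle.} The delicate point is step two — controlling the matched moduli spaces and showing that exactly the curves predicted by the index bound of Lemma~\ref{lem:index-bound} contribute, while all higher-multiplicity matchings cancel or are excluded by the expected dimension count. This requires combining Sarkar's index formula (as used in Proposition~\ref{prop:multi-stabilization-counts}) with the matching conditions defining $\Psi^{\bar\Sigma<\bT}$, and verifying that the $\theta^+$-input really does force exactly multiplicity one at $p$ rather than higher multiplicity. I would handle this by a Gromov compactness argument on the $1$-dimensional matched moduli spaces, exactly as in the proof of Proposition~\ref{prop:multi-stabilization-counts}, paying attention to which degenerations occur on the $\bT$-side versus the $\Sigma\#\bar\Sigma$-side; the point is that on $\bT$ the only relevant positive classes are the ones enumerated in the proof of Lemma~\ref{lem:index-bound}, so the count reduces to a finite bookkeeping problem. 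Once this is done, the identification with $\Phi_w^{\bar\Sigma}$ on the $\theta^+$-summand and with the identity on the $\theta^-$-summand follows by comparing the resulting count to the definition of the basepoint action, and the homotopy $\simeq$ absorbs the change-of-almost-complex-structure terms needed to pass between the stretched model and the original one.
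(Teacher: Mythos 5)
Your overall toolkit is the right one (neck stretching, the index bound of Lemma~\ref{lem:index-bound}, Gromov compactness on one-parameter matched moduli spaces), but the proposal is missing the actual mechanism by which the homotopy is constructed and by which the $\Phi_w^{\bar\Sigma}$ term appears. The paper's proof does not directly ``compute the left-hand side as a matched count.'' It proceeds in three distinct moves. First, the index bound on $(\bT,c_w,c_w)$ shows that $F_3^{c_w,c_w}=F_3^{c_w,c_w}\circ\Psi^{\bar\Sigma<\bT}$ on the nose, so the connected-sum homotopy equivalence can be inserted after the $3$-handle map for free. Second, one slides the special line of the matching condition down through the two triangle maps defining $\Psi^{(\bar c_w\bar\a)\to(c_w\bar\a)}_{(\bar c_w\bar\b)\to(c_w\bar\b)}$; besides the expected ends (the composition $\Psi^{(\bar c_w\bar\a)\to(c_w\bar\a)}_{(\bar c_w\bar\b)\to(c_w\bar\b)}\circ\Psi^{\bar\Sigma<\bT}$ and chain-homotopy terms), an anomalous end appears when the special line reaches the $\bar\bs$--$\bar\bs$ puncture and an index-one bigon on $\bT$ breaks off there; Lemma~\ref{lem:index-bound} is used precisely to show this is the only generic degeneration. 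Third, continuing the moduli space past that degeneration yields a count with a new matching condition (the $(\bar\Sigma<\bT<\cdots<\bar\Sigma)$-matched triangles), and a \emph{separate} special-line-splitting homotopy is then needed to identify that count with $(\id\otimes\Phi_w^{\bar\Sigma})\circ\Psi^{\bar\Sigma<\bT}$.

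Your step three asserts that a $\theta^+$ input ``forces a disk of multiplicity one at $p$ \dots\ which is precisely the defining count of the basepoint action.'' This conflates a matched disk pair of multiplicity one at the connected sum point with the composition $\Phi_w^{\bar\Sigma}\circ\Psi^{\bar\Sigma<\bT}$. The latter is a composition of two maps, and identifying it with a single moduli-space count is itself a nontrivial degeneration argument (the one pictured in Figure~\ref{twist_fig:10}, where a special line carrying a single $\bar\Sigma$-marked point is pulled apart from the $\bar\Sigma<\bT$-matched line). Without constructing the one-parameter families in the second and third moves you have no candidate for the chain homotopy realizing $\simeq$, only a guess at what the answer should be. A smaller slip: the transition map in this lemma involves only $\bar c_w$ and $c_w$ on the $\bT$ side; the curve $D$ plays no role here (it enters in the adjacent analysis of the compressed hyperbox of attaching curves), so the quadruple $(\bT,\bar c_w,D,c_w,w)$ you cite is not the relevant diagram.
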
\label{lem:cw-cwbar}
\begin{proof} The map $\Psi^{(\bar c_w \bar \a)\to (c_w \bar \a)}_{(\bar c_w \bar \b)\to (c_w \bar \b)}$ is a composition of two triangle maps. The diagram $(\Sigma,\bar c_w, c_w, w)$ is shown in Figure~\ref{twist_fig:8}. The transition map may be viewed as handlesliding $\bar c_w$ over the connected sum point. 

We first claim that index bound which implies that the 3-handle map is well-defined implies also that 
\begin{equation}
F_3^{c_w,c_w}=F_3^{c_w,c_w}\circ \Psi^{\bar\Sigma<\bT}
\label{eq:hypercube-3-handle-preliminary}.
\end{equation}
To see this, note that $\Psi^{\bar\Sigma<\bT}$ counts holomorphic disk-pairs $(u_{\Sigma}, u_{\bT})$ on $(\bar\Sigma, \bar \bs, \bar \as)\# (\bT,c_w,c_w)$ which have a special line matching condition. Suppose that $u_{\Sigma}$ has $S$ marked points below the special line and $M$ marked points along the special line. The matching condition implies that $u_{\bT}$ has the same number of marked points below and on the special lines, respectively. If $\phi_{\bT}\in \pi_2(\theta,\theta')$ is the class of $u_{\bT}$, we observe that
\[
\mu(\phi_{\bT})=2n_w(\phi_{\bT})+\gr(\theta,\theta')\ge 2M+2S+\gr(\theta,\theta').
\]
In the composition $F_3^{c_w,c_w}\circ \Psi^{\bar\Sigma<\bT}$, only curves with $\theta'=\theta^-$ contribute. Therefore, the above shows that
\[
\mu(\phi_{\bT})\ge 2M+2S.
\]
By assumption, $\Psi^{\bar\Sigma<\bT}$ counts 0-dimensional moduli spaces, so we must have
\[
0=\mu(\phi_\Sigma)+\mu(\phi_{\bT})-2S-2M\ge \mu(\phi_\Sigma).
\]
By transversality, if $\phi_\Sigma$ admits a holomorphic representative, then $\mu(\phi_\Sigma)\ge 0$ with equality if and only if $\phi_\Sigma$ is a constant class. Hence, $F_3^{c_w,c_w}\circ \Psi^{\bar\Sigma<\bT}$ counts only curves which represent the constant class on $\Sigma$. It is easy to see by considering the diagram $(\bT,c_w,c_w)$ that this also implies that these classes are also constant on $\bT$. Hence, Equation~\eqref{eq:hypercube-3-handle-preliminary} follows.

Therefore, we may replace $F_3^{c_w,c_w}$ in the left side of the equation in the statement with $F_3^{c_w,c_w}\circ \Psi^{\bar\Sigma<\bT}$. We now consider the 1-parameter moduli spaces which would naturally be used to commute $\Psi^{\bar\Sigma<\bT}$ past both of the triangle maps used to define $\Psi^{(\bar c_w \bar \a)\to (c_w \bar \a)}_{(\bar c_w \bar \b)\to (c_w \bar \b)}$. These are the 1-dimensional moduli spaces of holomorphic triangles which have a special line which has an even number of marked points, which are $\bar\Sigma<\bT$ matched. Here, we view the triangle $\Delta$ as $[0,1]\times \R$ with the puncture $\{1\}\times \{0\}$ removed. The special lines are of the form $[0,1]\times \{s\}$, $s\in \R$. 

In these moduli spaces, there are ends corresponding to the composition
$\Psi^{(\bar c_w \bar \a)\to (c_w \bar \a)}_{(\bar c_w \bar \b)\to (c_w \bar \b)}\circ\Psi^{\bar\Sigma<\bT}$, as well as ends corresponding to a chain homotopy. There is also the possibility of an additional end where a holomorphic disk forms at the same height as the special line, and some the punctures along the special line may also degenerate into a holomorphic disk. We claim that these ends are constrained to a broken curve containing the following curves:
\begin{enumerate}[label=($u$-\arabic*), ref=$u$-\arabic*]
\item An index one disk on $(\bT, \bar c_w,  c_w)$ which has a single marked point, which is from $\bT$. \item A trivial disk on $(\bar\Sigma, \bar \bs, \bar \bs)$.
\item \label{triangle-3} A pair of triangles on $(\bT,\bar c_w,  c_w, c_w)\wedge (\bar\Sigma, \bar \as, \bar \bs,\bar \bs)$ which have marked points at the same height as the $\bar \bs$-$\bar \bs$ vertex of the triangle. Furthermore, $\bar\Sigma$ and $\bT$ marked points alternate along this line, and both the left-most and right-most punctures are from $\Sigma$.
\end{enumerate}
 See Figure~\ref{twist_fig:9}. We say that triangle pair satisfying~\eqref{triangle-3} is \emph{$(\bar\Sigma<\bT<\cdots<\bar\Sigma)$-matched}.

To establish that these are the only ends, we consider a potential degeneration where the special line on the triangle $\Delta$ has the same height as the $\bar \bs$-$\bar \bs$ puncture of $\Delta$.
 
We consider a limiting curve, and write $\psi^l_{\bar \Sigma}, \psi^l_\bT$ for the homology classes of the holomorphic triangles, and we write $\phi^r_{\bar \Sigma}$ and $\phi^r_{\bT}$ for the classes of the holomorphic disks that break off. We suppose that there are $n^r_{\bar \Sigma}$ and $n^r_\bT$ marked points of the disk classes along the special line, and $S^r=S^r_{\bar \Sigma}=S^r_{\bT}$ marked points below the special line (i.e. perfectly matched). The expected dimension of the disk class with this matching condition is
\begin{equation}
\mu(\phi^r_{\bar\Sigma})+\mu(\phi^r_{\bT})-n^r_{\bar \Sigma}-n^r_\bT+1-2 S^r.
\label{eq:expected-dimension-monster-disk}
\end{equation}
The above quantity must be 1 in a generic degeneration. 
On the other hand, since $\phi^r_{\bar \Sigma}$ is a disk class on $(\bar\Sigma, \bar \bs, \bar \bs)$, and the connected sum point is the basepoint, we have
 \[
 \mu(\phi^r_{\bar\Sigma})\ge 2n_p(\phi^r_{\bar\Sigma}) \ge 2(n^r_{\bar\Sigma}+S^r)
 \]
 by the absolute grading formula. Similarly, Lemma~\ref{lem:index-bound} implies that
 \[
  \mu(\phi^r_{\bT})\ge n^r_\bT+S^r.
 \]
   We observe that the expected dimension in Equation~\eqref{eq:expected-dimension-monster-disk} is at least
\[
n_{\bar\Sigma}^r+1+ S^r. 
\]
Hence $S^r=n_{\bar\Sigma}^r=0$ for a generic degeneration. 
Since $|n_{\bT}^r-n_{\bar\Sigma}^r|\le 1$, we must have $n_\bT^r\in \{0,1\}$. 

We therefore conclude that $\mu(\phi_{\bar\Sigma}^r)=n_p(\phi_{\bar\Sigma}^r)=0$ and $\mu(\phi_\bT^r)=1$, and $n_{\bT}^r=1$, for a generic degeneration.

 From this broken curve, we may extend the moduli space, by considering the $(\bar\Sigma<\bT<\dots<\bar\Sigma)$-matched moduli space space of triangles where the special line has height below the $\bar \bs$-$\bar \bs$ boundary puncture of the triangle $\Delta$.

 There is a natural map 
 \[
 \Psi^{\bar\Sigma<\bT<\cdots<\bar\Sigma}\colon \CF_\wedge(\bar \bs c_w,\bar \as \bar c_w)\to \CF_{\otimes}( \bar \bs c_w,\bar \as \bar c_w)
 \]
 which counts curve pairs of expected dimension 0 with the following matchings:
 \begin{enumerate}
 \item Punctures along the special line are $(\bar\Sigma<\bT<\dots< \bar\Sigma)$-matched.
 \item Punctures below the special line are perfectly matched.
 \item Punctures above the special line are trivially matched.
 \end{enumerate} 
 We claim that
 \[
\Psi^{\bar\Sigma<\bT<\cdots<\bar\Sigma}\simeq (\id\otimes \Phi_w^{\bar\Sigma})\circ  \Psi^{\bar\Sigma<\bT}.
 \]
 This is proven by splitting the special line into two special lines. The higher special line contains a single $\bar\Sigma$-marked point. The lower special line is $\bar\Sigma<\bT$ matched. Other marked points are perfectly matched below the lower special line, and trivially matched above it. (I.e. there is no change in matching as we cross the $\bar\Sigma$-matched line).

 \begin{figure}[ht]
\begingroup%
  \makeatletter%
  \providecommand\color[2][]{%
    \errmessage{(Inkscape) Color is used for the text in Inkscape, but the package 'color.sty' is not loaded}%
    \renewcommand\color[2][]{}%
  }%
  \providecommand\transparent[1]{%
    \errmessage{(Inkscape) Transparency is used (non-zero) for the text in Inkscape, but the package 'transparent.sty' is not loaded}%
    \renewcommand\transparent[1]{}%
  }%
  \providecommand\rotatebox[2]{#2}%
  \newcommand*\fsize{\dimexpr\f@size pt\relax}%
  \newcommand*\lineheight[1]{\fontsize{\fsize}{#1\fsize}\selectfont}%
  \ifx\svgwidth\undefined%
    \setlength{\unitlength}{224.18988235bp}%
    \ifx\svgscale\undefined%
      \relax%
    \else%
      \setlength{\unitlength}{\unitlength * \real{\svgscale}}%
    \fi%
  \else%
    \setlength{\unitlength}{\svgwidth}%
  \fi%
  \global\let\svgwidth\undefined%
  \global\let\svgscale\undefined%
  \makeatother%
  \begin{picture}(1,0.83905751)%
    \lineheight{1}%
    \setlength\tabcolsep{0pt}%
    \put(0,0){\includegraphics[width=\unitlength,page=1]{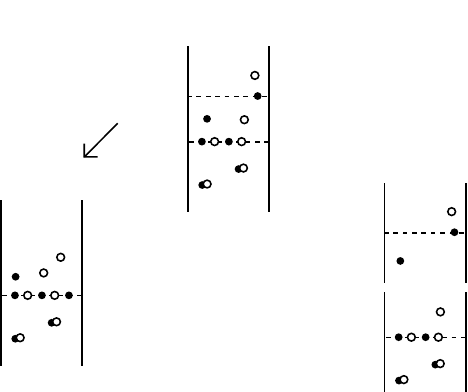}}%
    \put(0.17189963,0.55934039){\makebox(0,0)[lt]{\lineheight{1.25}\smash{\begin{tabular}[t]{l}$\d$\end{tabular}}}}%
    \put(0,0){\includegraphics[width=\unitlength,page=2]{twist_fig10.pdf}}%
    \put(0.75608346,0.56159601){\makebox(0,0)[lt]{\lineheight{1.25}\smash{\begin{tabular}[t]{l}$\d$\end{tabular}}}}%
  \end{picture}%
\endgroup%

 \caption{A schematic of the homotopy $\Psi^{\bar\Sigma<\bT<\dots<\bar\Sigma}\simeq (\id\otimes \Phi^{\bar\Sigma}_w)\circ \Psi^{\bar\Sigma<\bT}$.}
 \label{twist_fig:10}
 \end{figure}

We count the ends of these 1-parameter moduli spaces with two special lines. The only non-canceling ends are as follows:
\begin{enumerate}
\item An index 1 disk breaking off while the special lines have finite, non-zero distance, giving a chain homotopy 
\item  The vertical distance between the two canceling ends approaches $0$ or $\infty$. The distance $0$ end contributes $\Psi^{\bar\Sigma<\bT<\dots<\bar\Sigma}$ while the distance $\infty$ end corresponds to the composition $(\id\otimes \Phi_w^{\bar \Sigma})\circ \Psi^{\bar\Sigma<\bT}$. 
\end{enumerate}
 Counting ends as above yields
 \[
 \Psi^{\bar\Sigma<\bT<\dots<\bar\Sigma}\simeq (\id\otimes\Phi_w^{\bar \Sigma}) \circ \Psi^{\bar\Sigma<\bT}. 
 \]

Finally, we put all the pieces together. When counting triangles with trivial matching conditions, it is straightforward to see that
\[
\Psi^{(\bar c_w \bar \a)\to (c_w \bar \a)}_{(\bar c_w \bar \b)\to (c_w \bar \b)}=\id\otimes \id,
\]
on the level of intersection points. 

Counting the ends of the above moduli spaces, we have  ends which contribute the map
\[F_3^{ c_w,  c_w}\circ \Psi^{(\bar c_w \bar \a)\to (c_w \bar \a)}_{(\bar c_w \bar \b)\to (c_w \bar \b)} \circ \Psi^{\bar\Sigma<\bT}.
\]
This is equal to
\[\left(\begin{array}{l} \xs\otimes \theta^-\mapsto \xs\\
\xs\otimes \theta^+\mapsto 0
\end{array} \right)\circ \Psi^{\bar\Sigma<\bT}
\]
Additionally, we have the contribution corresponding to extending the bigon degeneration. This gives and extra contribution of $\Psi^{\bar\Sigma<\bT}$ post-composed with
\[
\xs\otimes \theta^+\mapsto \Phi_w^{\bar\Sigma}(\xs).
\]
Summing this with the previous contribution gives the main statement.
\end{proof}

\begin{figure}[ht]
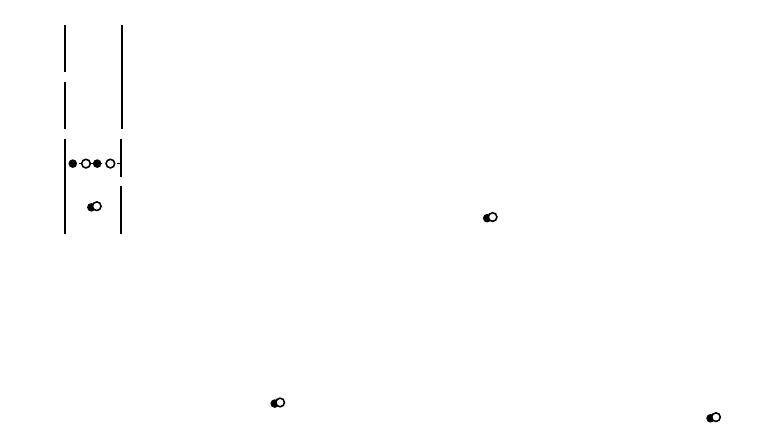
\caption{Canceling moduli space ends in Lemma~\ref{lem:cw-cwbar}. Squares denote special inputs of the holomorphic polygon maps. The curves in the square indicate 1-dimensional moduli spaces, and the arrows labeled with $\d$ indicate codimension 1 degenerations. Solid dots are $\bar\Sigma$-punctures and open dots are $\bT$-punctures. }
\label{twist_fig:9}
\end{figure}

\begin{rem} Note that the proof of Equation~\eqref{eq:hypercube-3-handle-preliminary} adapts easily to show that
\[
F_3^{\a,\a} \circ  \Psi^{\Sigma\# \bar \Sigma<\bT}=\Psi^{\bar \Sigma<\bT}\circ F_3^{\a,\a}.
\]
\end{rem}

\subsection{Central hypercubes}

 In this section, we describe how to commute $\Psi^{\Sigma\# \bar \Sigma<\bT}$ past the levels of $\tw_*$ which involve the quadrilateral maps. 
  The main result of this section is the following:

\begin{lem}
\label{lem:tensor-product-hyerpcube-middle}The map $\Psi^{\Sigma\# \bar \Sigma<\bT}$ extends to a homotopy equivalence of hyperboxes from the perfectly matched hyperbox
\[
\begin{tikzcd}
\CF(\as \bar c_w \bar \bs, \bs \bar c_w \bar \bs)
	\ar[r]
	\ar[d]
	\ar[dr,dashed]
&\CF(\as \bar c_w \bar \bs, \bs \bar c_w \bar \bs)
	\ar[d]
\\
\CF(\as \bar c_w \bar \bs, \bs D \bar \bs)
	\ar[r]
	\ar[d]
	\ar[dr,dashed]
&\CF(\as \bar c_w \bar \bs, \bs \tilde D \bar \bs)
	\ar[d]
\\
\CF(\as\bar c_w \bar \bs, \Ds_1)
	\ar[r]
	\ar[d]
	\ar[dr,dashed]
&
\CF(\as\bar c_w \bar \bs, \tilde{\Ds}_1)
\ar[d]
\\
\vdots\ar[d]\ar[dr,dashed]& \vdots \ar[d]\\
\CF(\as\bar c_w \bar \bs, \Ds_n)
	\ar[r]
	\ar[d]
	\ar[dr,dashed]
&
\CF(\as\bar c_w \bar \bs, \tilde{\Ds}_n)
	\ar[d]
\\
\CF(\as \bar c_w \bar \bs, \as D \bar \as)
	\ar[r]\ar[d] 
	\ar[dr,dashed]
&\CF(\as \bar c_w \bar \bs, \as \tilde D \bar \as)
\ar[d]
\\
\CF(\as \bar c_w \bar \bs, \as \bar c_w \bar \as)
\ar[r]
&\CF(\as \bar c_w \bar \bs, \as \bar c_w \bar \as)
\end{tikzcd}
\]  
to its trivially matched counterpart.
\end{lem}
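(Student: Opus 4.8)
\textbf{Proof plan for Lemma~\ref{lem:tensor-product-hyerpcube-middle}.}

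The plan is to view the entire hyperbox as occurring on a connected sum of the ordinary doubled diagram $(\Sigma\# \bar \Sigma, \as\bar \bs, \dots)$ with the genus-one region $\bT$ that carries $D$, $\tilde D$, $\bar c_w$, and in which the diffeomorphism $\tw$ acts non-trivially (moving $D$ to $\tilde D$). Since the connecting necks have been stretched maximally, every complex and every holomorphic polygon map in the perfectly matched hyperbox is a count of perfectly matched curve pairs $(u_{\Sigma\# \bar\Sigma}, u_{\bT})$. I would first record that $\Psi^{\Sigma\# \bar\Sigma<\bT}$ is a chain homotopy equivalence on each of the complexes appearing along the two vertical edges of the hyperbox; this is the connected-sum formula, which holds here because the relevant diagrams all arise as connected sums at a single point, exactly as in \cite{ZemBordered}*{Section~11.2}. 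The content is then to extend this collection of length-$1$ maps to a full hyperbox morphism commuting (up to specified homotopies) with all the hyperbox structure maps, including the diagonal ones.

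The key steps, in order, are: (1) For the faces built by pairing hypercubes of attaching curves on $\Sigma\# \bar\Sigma$ with a fixed hypercube of attaching curves on $\bT$ (the quadrilateral-map levels relating $\Ds_i$ to $\tilde\Ds_i$), use the standard machinery for commuting the connected-sum map past holomorphic polygon counts: consider the moduli spaces of $(\Sigma\# \bar\Sigma)<\bT$-matched polygons with a moving special line, and read off the ends. These give the desired hyperbox morphism together with the homotopies realizing commutativity; the only subtlety is the possible extra end where a holomorphic disk bubbles at the height of the special line, and I would dispatch this using the index bound of Lemma~\ref{lem:index-bound} applied in the $\bT$-region, which forces any such disk to be trivial on $\Sigma\# \bar\Sigma$ (i.e. $S^r = 0$), so it contributes nothing new to this level. (2) For the $1$-handle and $3$-handle levels at the top and bottom, use the remarks already established in the proof of Lemma~\ref{lem:pair} (the adaptation of Equation~\eqref{eq:hypercube-3-handle-preliminary}): these show $F_3^{\a,\a}\circ \Psi^{\Sigma\# \bar\Sigma<\bT} = \Psi^{\bar\Sigma<\bT}\circ F_3^{\a,\a}$ on the nose, and dually for $F_1$. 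So those levels extend with trivial diagonal contribution. (3) Assemble the levelwise hyperbox morphisms into a single hyperbox morphism by the usual cube-filling procedure: at each stage the sum of the already-constructed terms of a hypercube relation is a cycle (by the relations on lower-dimensional faces) living in a complex which is, by our algebraic rigidity hypotheses on the triples $(\Sigma\# \bar\Sigma, \Ds_i^0,\Ds_{i+1}^0)$ and the corresponding statements on $\bT$, concentrated in the relevant degree, so a primitive exists. This is where the careful choice of the intermediate curves $\Ds_1,\dots,\Ds_n$ (algebraic rigidity, elementary handleslides, marked point $D$ untouched) pays off. (4) Conclude that the extended map is a homotopy equivalence of hyperboxes because it is one on each vertex, hence on the compressions.

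The main obstacle I expect is step (1): controlling the degenerations of the moving-special-line moduli spaces of matched holomorphic quadrilaterals, in particular ruling out or correctly accounting for the ``disk bubbles at the special-line height'' ends and the related possibility of punctures along the special line collapsing into a disk. This is exactly the kind of analysis carried out in the proof of Lemma~\ref{lem:pair} for triangles, and the quadrilateral case is combinatorially heavier but conceptually identical; the index bound of Lemma~\ref{lem:index-bound} together with the absolute grading formula on $\bar\Sigma$ (where the connected-sum point is a basepoint) again forces the bad ends to be trivial on the $\Sigma\# \bar\Sigma$ side. A secondary, more bookkeeping-level difficulty is ensuring the diagonal maps produced at the various levels are mutually compatible when stacked — i.e. that ambiguities like ``add a copy of the top-degree generator to make the relations satisfiable'' do not obstruct stacking — but as in the proof of Lemma~\ref{lem:1-handles-and-handleslides}, such additions have no effect on the compressed diagonal map because they are supported on elements with a top-degree tensor factor, which are annihilated further down the composition.
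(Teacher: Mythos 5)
Your plan is substantively correct, but it takes a different route from the paper. You propose to re-derive the homotopy equivalence directly, level by level, via moving-special-line moduli spaces of matched polygons and then assemble the pieces by cube-filling. The paper instead observes that every complex and every structure map in the hyperbox arises from pairing the $0$-dimensional alpha hypercube $\as\bar c_w\bar\bs$ (itself a connected sum of $\as\bar\bs$ with $\bar c_w$) with beta hypercubes of attaching curves that are connected sums of hypercubes on $\Sigma\#\bar\Sigma$ with hypercubes on $\bT$, and then invokes the general connected-sum statement \cite{ZemBordered}*{Proposition~11.8}, whose hypotheses (algebraic rigidity plus a grading condition $\mu(\phi)\ge 2n_p(\phi)$ at the connected sum point) are exactly what the careful choice of $\Ds_1,\dots,\Ds_n$ was designed to guarantee. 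The two arguments have the same analytic core; yours makes the moduli-space analysis explicit, the paper's packages it once and for all and reduces the lemma to a hypothesis check. What the citation-based route buys is that the delicate bookkeeping of which degenerations can occur is done uniformly for all polygon counts at once, rather than level by level.

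Three soft spots in your version that you should address. First, the grading condition is \emph{asymmetric}: in $\Psi^{\Sigma\#\bar\Sigma<\bT}$ one needs the alpha hypercubes on one side and the beta hypercubes on the other to be graded by the respective connected sum points, and in fact the beta hypercubes on $\bT$ (those involving $D\to\tilde D$) are \emph{not} all graded by the basepoint — only the alpha hypercubes on $\bT$ are. Your claim that the index bounds ``force the bad ends to be trivial on the $\Sigma\#\bar\Sigma$ side'' needs to be checked against this asymmetry; it is precisely why the ordering $\Sigma\#\bar\Sigma<\bT$ is used and not the reverse. Relatedly, Lemma~\ref{lem:index-bound} as stated concerns only the diagram $(\bT,\bar c_w,c_w,w)$; the analogous bounds for the pairs $(\bar c_w,D)$, $(D,\tilde D)$, etc.\ actually appearing in this hyperbox must be verified separately (this is the content of the algebraic rigidity and grading hypotheses). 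Second, your step (2) is vacuous for this lemma: no $1$- or $3$-handle maps appear in the stated hyperbox; those levels are treated in the adjacent ``initial and final hypercubes'' step of the overall argument. Third, the hypercubes of attaching curves here do not all consist of handleslide-equivalent curves (e.g.\ $\bs\bar c_w\bar\bs$ and $\Ds_1$ are not handleslide equivalent), so the general result cannot be quoted verbatim; one must note, as the paper does, that its proof still applies because every pair of adjacent attaching-curve sets gives an algebraically rigid diagram for a connected sum of copies of $S^1\times S^2$ satisfying the grading bound.
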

\begin{proof}
We observe that all of the attaching curves which appear in the statement are disjoint from the connected sum tube which connects $\bar\Sigma$ and $\bT^2$. In fact, we may describe the entire 
hyperbox using a connected sum operation on hypercubes of attaching curves. This connected sum operation on hypercubes of attaching curves is a small modification of the one considered by Lipshitz, Ozsv\'{a}th and Thurston in \cite{LOTDoubleBranchedII}*{Section~3.5} (see \cite{ZemBordered}*{Section~9.5} for our present notation). If $\cL_{\b}=(\bs_\veps,\Theta_{\veps,\veps'})_{\veps\in \bE_n}$ and $\cL_{\b'}=(\bs'_{\nu}, \Theta_{\nu,\nu'})_{\nu\in \bE_m}$ are two hypercubes of attaching curves on Heegaard surfaces $\Sigma$ and $\Sigma'$, then $\cL_{\b}\# \cL_{\b'}$ is a hypercube of attaching curves of dimension $n+m$. The curves of $\cL_{\b}\# \cL_{\b'}$ are defined as
\[
\cB_{(\veps,\nu)}=\bs_{\veps}\cup \bs'_{\nu}
\]
with small translations of curves taken to achieve admissibility.  In our present setting, if the connected sum of $\Sigma$ and $\Sigma'$ is taken at either $w$ or $w'$ (deleting this basepoint), then the curves $\cB_{(\veps,\nu)}$ will also form an admissible diagram which is algebraically rigid, so we take the length 1 Floer chains of $\cL_{\b}\# \cL_{\b'}$ to be the top degree generators, and all higher length chains to vanish.

In the hyperbox in the statement of the lemma, each constituent hypercube of chain complexes is obtained by pairing the 0-dimensional hypercube $\as \bar c_w \bar \bs$ with a 2-dimensional hypercube of beta attaching curves. We view $\as \bar c_w \bar \bs$ as the connected sum of 0-dimensional hypercubes $\as \bar \bs$ with $\bar c_w$. In the top-most and bottom-most hypercubes, the beta hypercube of attaching curves are connected sums of a 0-dimensional hypercube on $\Sigma\# \bar \Sigma$ with a 2-dimensional hypercube on $\bT^2$. The remaining beta hypercubes are connected sums of a 1-dimensional hypercube on $\Sigma\# \bar \Sigma$ and a 1-dimensional hypercube on $\bT^2$.

 Work of the last author \cite{ZemBordered}*{Section~11.3} constructs, under suitable hypotheses, a homotopy equivalence between the hyperbox in the lemma statement (i.e. the hyperbox obtained by counting curve pairs with perfect matching) and the hyperbox obtained by counting curve pairs with trivial matching. The precise statement may be found in \cite{ZemBordered}*{Proposition~11.8} and involves the following somewhat technical definitions:
 \begin{enumerate}
 \item The alpha and beta hypercubes of attaching curves must be \emph{algebraically rigid} and consist of handleslide equivalent attaching curves.
 \item \label{item:technical-point} A technical condition about the placement of the connected sum points and the basepoint must be satisfied. See \cite{ZemBordered}*{Definition~11.4}. We say that a hypercube of algebraically rigid, handleslide equivalent attaching curves is \emph{graded} by the connected sum point $p$ if for all nonnegative classes $\phi\in \pi_2(\Theta_{\veps_1,\veps_2},\dots, \Theta_{\veps_{n-1},\veps_n})$ (where $\Theta_{\veps_i,\veps_{i+1}}$ are chains from the hypercube), we have
 \[
 \mu(\phi)\ge 2n_p(\phi).
 \]
 \end{enumerate}

We recall the statement of \cite{ZemBordered}*{Proposition~11.8}. Suppose that $\Sigma$ and $\Sigma'$ are Heegaard surfaces with distinguished points $p\in \Sigma$ and $p'\in \Sigma'$, and $\cL_{\a}$ and $\cL_{\b}$ are algebraically rigid hypercubes on $\Sigma$ and $\cL_{\a'}$ and $\cL_{\b'}$ are algebraically rigid hypercubes on $\Sigma'$. If $\cL_{\a}$ is graded by $p$ and $\cL_{\b'}$ is graded by $p'$, then the map
\[
\Psi^{\Sigma'<\Sigma}\colon \CF_{\wedge}(\cL_{\a}\# \cL_{\a'}, \cL_{\b}\# \cL_{\b'})\to  \CF_{\otimes}(\cL_{\a}\# \cL_{\a'}, \cL_{\b}\# \cL_{\b'})
\]
is a homotopy equivalence. Here $\CF_{\wedge}$ has differential counting perfectly matched curve pairs, while $\CF_{\otimes}$ counts trivially matched curve pairs.

We now return to the original setting of the present lemma. By construction, each hypercube is algebraically rigid. Furthermore, the alpha hypercubes $\as\bar \bs$ and $\bar c_w$ are vacuously graded by the connected sum point since they are 0-dimensional. All of the hypercubes on $\Sigma\# \bar \Sigma$ are graded by the connected sum point since the connected sum point is adjacent to the basepoint and the hypercubes are algebraically rigid. The beta hypercubes on $\bT^2$ are not all graded by the basepoint, but all of the alpha hypercubes which lie on $\bT^2$ are. The only difference in our present situation and \cite{ZemBordered} is that the hypercubes of attaching curves do not all consist of handleslide equivalent attaching curves. For example, $\bs\bar \bs$ is not handleslide equivalent to $\Ds_1^0$. Nonetheless, the proof of \cite{ZemBordered}*{Proposition~11.8} still goes through since Heegaard diagram formed from a pair of curves in the diagram represents an algebraically rigid diagram for a connected sum of $S^1\times S^2$'s and the grading assumption in \eqref{item:technical-point} is satisfied. Hence, the same argument as in \cite{ZemBordered}*{Proposition~11.8} provides the homotopy equivalence in the statement.
\end{proof}

Next, we view the above hyperbox as consisting of attaching curves on the disjoint union of $\Sigma\# \bar \Sigma$ and $\bT$, instead of the connected sum of these two diagrams. Note that compression commutes with pairing up to homotopy, so we may instead compress the hyperbox of attaching curves:
\begin{equation}
\begin{tikzcd}[row sep=.4cm, column sep=1.4cm]
\bs \bar c_w \bar \bs\ar[r]\ar[d] &\bs \bar c_w \bar \bs\ar[d]\\
\bs D \bar \bs \ar[r]\ar[d]& \bs \tilde D \bar \bs\ar[d]\\
\Ds_1\ar[r]\ar[d] &\tilde{\Ds}_1 \ar[d]\\
\vdots\ar[d]& \vdots\ar[d]\\
\Ds_n\ar[r]\ar[d] &\tilde{\Ds}_n \ar[d]\\
\as D \bar \as\ar[r]\ar[d]& \as \tilde D \bar \as\ar[d]\\
\as\bar c_w \bar \as\ar[r] &\as \bar c_w \bar \as
\end{tikzcd}\label{eq:expanded-hyperbox-tw-map}
\end{equation}
Note that we have not defined compression of hyperboxes of attaching curves. Nonetheless, the function-composition approach from Section~\ref{sec:hypercubes} generalizes to this setting with minimal care.
In our present case this amounts to viewing each of the $n+2$  levels in the above diagram
as a twisted complex in the Fukaya category. Each pair of adjacent levels in the diagram determines a morphism between 1-dimensional hypercubes of attaching curves. A pair of morphisms between hypercubes of attaching curves may be composed, so we compose iteratively all of these morphisms. Note that this is not the same as applying the ordinary $A_\infty$-composition map $\mu_{n+2}$. Rather, it is the result of applying $\mu_2$ repeatedly.

\begin{lem}
\label{lem:compressed-Fuk-hyperbox}
The compression of the diagram in Equation~\eqref{eq:expanded-hyperbox-tw-map} takes the following form:
\[
\begin{tikzcd}[row sep=.6cm, column sep=1.4cm]
\bs \bar c_w \bar \bs\ar[r, "1"]\ar[d, "\ve{X}|\theta^-"] \ar[dr, "\ve{X}'|\theta^+"] &\bs \bar c_w \bar \bs\ar[d,"\ve{X}|\theta^-"]\\
\as\bar c_w \bar \as\ar[r, "1"] &\as \bar c_w \bar \as
\end{tikzcd}
\]
Here,  $\ve{X}$ is the iterated composition of the vertical arrows on $\Sigma\# \bar \Sigma$, and $\theta^{\pm}$ are the intersection points of $\bar c_w\cap \bar c_w'$. Furthermore, the cycle $\ve{X}'$ is homologous to $\ve{X}$. 
\end{lem}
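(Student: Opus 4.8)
\textbf{Proof proposal for Lemma~\ref{lem:compressed-Fuk-hyperbox}.}

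The plan is to analyze the compression of the hyperbox in Equation~\eqref{eq:expanded-hyperbox-tw-map} by decomposing all of the holomorphic polygon counts as occurring on the disjoint union $(\Sigma\# \bar \Sigma)\sqcup \bT$, and to use the fact that each Heegaard subdiagram on $\Sigma\# \bar \Sigma$ consisting of the curves appearing in the $n+2$ levels of the hyperbox is algebraically rigid (by construction of the $\Ds_i^0$, each consecutive pair $(\Sigma\# \bar \Sigma, \Ds_i^0, \Ds_{i+1}^0, w)$ is algebraically rigid, and the top and bottom triples $(\Sigma\# \bar \Sigma, \bs\bar \bs, \Ds_1^0, w)$ and $(\Sigma\# \bar \Sigma, \as\bar \as, \Ds_n^0, w)$ are as well). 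The key point is that on $\bT$, the vertical curves form the sequence $\bar c_w \to D \to \tilde D \to \bar c_w$ (on the left column) and $\bar c_w \to \tilde D \to \tilde D \to \bar c_w$ (on the right column, after applying the twist), and on the $\Sigma\# \bar \Sigma$ side the vertical composition is exactly the map $\ve{X}$. First I would record that the function-composition model of compression applies: iterated $\mu_2$-composition of the $n+2$ levels, with the output being a length-$1$ hypercube whose length-$1$ chains are the top-degree generators and whose diagonal (length-$2$) chain is the iterated composition of the diagonals. Applying the stabilization/degeneration result Proposition~\ref{prop:multi-stabilization-counts} to the $\bT$-summand (whose relevant triples and quadruples all have vanishing differential, since they represent connected sums of $S^1\times S^2$'s and $S^3$ with the minimal number of generators), each vertical length-$1$ chain decomposes as a tensor of a top-degree generator on $\Sigma\# \bar \Sigma$ with a generator on $\bT$; since the $\bT$-portion carries the sequence $\bar c_w\to D\to\dots\to \bar c_w$ (resp.\ with $\tilde D$), whose composed triangle map on $\widehat{\CF}$ is the nearest-point identity map by the small translate theorems \cite{HHSZExact}*{Propositions~11.1, 11.5}, the vertical composition on the $\bT$-factor is $\theta^-$ (by grading considerations: on $(\bT, \bar c_w, \bar c_w')$ the iterated composition of the surgery triangle maps lands in the bottom degree, since attaching and then canceling a $0$-framed handle along $D$ yields the identity cobordism map, which in the relevant $\Spin^c$ structure sends the generator to the bottom-graded one). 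This gives the length-$1$ maps $\ve{X}|\theta^-$ as claimed.

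Next I would treat the diagonal. The diagonal of the compressed hyperbox is the iterated $\mu_2$-composition of the $n+1$ length-$2$ chains in Equation~\eqref{eq:expanded-hyperbox-tw-map}, together with the triangle terms pairing a length-$1$ chain with a length-$2$ chain. On the $\bT$-side, the only length-$2$ chains that can occur are the ones corresponding to the change from $D$ to $\tilde D$ at the appropriate level; precisely because the twist $\tw$ is supported in $\bT$ and acts trivially on $\Sigma\# \bar \Sigma$, all the off-$\bT$ contributions to the diagonal assemble into $\ve{X}$ (or a chain homotopic rearrangement thereof). The diagonal $\bT$-factor is a generator of $\widehat{\CF}(\Sigma \# \bar \Sigma...)$-type complex on $\bT$ of grading one higher than $\theta^-$, hence a cycle representing the top degree, which forces its $\bT$-component to be $\theta^+$ (the only generator of that grading). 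Thus the diagonal has the form $\ve{X}'|\theta^+$ where $\ve{X}'$ is some chain on $\Sigma\# \bar\Sigma$ in the same grading as $\ve{X}$. The hypercube relations for the compressed diagram then give
\[
\ve{X} + \ve{X} = \d(\ve{X}'),
\]
so $\ve{X}'$ is a cycle in the grading of the top-degree generator; since the relevant homology is $\bF$ in that grading, $\ve{X}'$ is either null-homologous or homologous to the top-degree generator, and I claim it is homologous to $\ve{X}$. To pin this down I would run the same degeneration argument as in the proof of Lemma~\ref{lem:1-handles-and-handleslides} and Lemma~\ref{lem:cw-cwbar}: the diagonal $\bT$-factor arises as the iterated composition of the length-$2$ chains realizing $D\rightsquigarrow \tilde D$, which by the model computation on $\bT$ (analogous to the computation verifying Equation~\eqref{eq:sum-of-classes-is-1} in Figure~\ref{fig:13}) equals the relative homology action along the arc dual to the twisting region; transporting this identity across the algebraically rigid $\Sigma\# \bar \Sigma$-factor via Proposition~\ref{prop:multi-stabilization-counts} identifies $\ve{X}'$ with $\ve{X}$ up to boundary.

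The main obstacle I expect is bookkeeping the iterated $\mu_2$-compression cleanly enough to isolate the $\bT$-factor of the diagonal and to verify that the cross-terms (a length-$1$ chain composed with a length-$2$ chain) do not contribute anything outside the span of $\ve{X}|\theta^-$ and $\ve{X}'|\theta^+$. For this I would lean on the grading estimate of Lemma~\ref{lem:index-bound}, which bounds below the Maslov index of any nonnegative class on the $\bT$-diagrams in terms of the connected-sum multiplicity $n_p$, so that only the minimal-$n_p$ classes survive; combined with the algebraic rigidity on the $\Sigma\# \bar \Sigma$-side (which kills all differentials, so that any two homotopic maps between those complexes are equal) this collapses the compression to the stated $2\times 2$ diagram. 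A secondary subtlety is checking that the same choices of diagonal chains can be used consistently across all $n+1$ levels, exactly as in the consistency discussion at the end of the proof of the $[\scC, \scE]$-relation in Section~\ref{sec:naturality-knots-and-links}; as there, adding a copy of the top-degree generator to an intermediate diagonal chain has no effect on the final compressed diagonal, so the choices may be made compatibly.
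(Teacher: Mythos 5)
Your overall strategy matches the paper's: view the compression as an iterated $\mu_2$-composition, use the small translate theorems and algebraic rigidity to kill all intermediate contributions, compute the $\bT$-factor of the length-one maps by a direct triangle count, and then identify the diagonal. The length-one part of your argument is fine. The gap is in the step that actually matters, namely showing $\ve{X}'$ is homologous to $\ve{X}$ rather than to $0$ (the dichotomy you correctly note is all that pure grading/hypercube algebra gives you, and the two outcomes correspond to $\tw_*\simeq \id+Q\Phi$ versus $\tw_*\simeq\id$, so this cannot be waved through). Your proposed mechanism --- identify the $\bT$-factor of the diagonal with a relative homology action and then ``transport'' this across the $\Sigma\#\bar\Sigma$-factor via Proposition~\ref{prop:multi-stabilization-counts} --- does not work. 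The diagonal term is a count of quadrilateral \emph{pairs} $(u,u')$ with $u$ of index $0$ on $\Sigma\#\bar\Sigma$ and $u'$ of index $-1$ on $\bT$, constrained to have equal evaluation in $\cM(\Box)\iso\R$; this is exactly the sub-leading, fibered-product stratum that Proposition~\ref{prop:multi-stabilization-counts} leaves as an undetermined error term $\sum q_{\zs}\otimes\zs$ in higher grading. In particular the count does not factor as a tensor product, so no identity on the $\bT$-side alone determines $\ve{X}'$.

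The paper closes this gap by deforming the matching: one replaces the condition $\ev_{\Box}(u)=\ev_{\Box}(u')$ by $\ev_{\Box}(u)-\ev_{\Box}(u')=t$ and sends $t\to\infty$. Counting ends of the resulting one-parameter family shows that $\ve{X}'$ changes only by a boundary and, in the limit, decouples into the composite of nearest-point triangle maps $\mu_2(\mu_2(\ve{X},\Theta_{\a\bar\a,\a\bar\a}),\Theta_{\a\bar\a,\a\bar\a})=\ve{X}$. This is in the spirit of the degeneration arguments of Lemma~\ref{lem:pair} that you cite in passing, so you have the right reference point; but you would need to actually set up and run this neck/parameter deformation rather than appeal to the stabilization proposition, which is structurally incapable of seeing the $\theta^+$-component. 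A secondary (and fixable) imprecision: your reduction to a single $2\times 2$ square leans on Lemma~\ref{lem:index-bound}, which concerns the \emph{matched} diagrams $(\bT,\bar c_w,c_w)$ with a connected sum point; in the trivially matched setting of this lemma the correct tool is the observation that every intermediate quadruple contains an adjacent small-translate pair in both factors, so the small translate theorem for quadrilaterals kills all partial-composition diagonals, leaving only the final quadrilateral on $(\bT,\bar c_w,D,\tilde D,\bar c_w')$.
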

\begin{proof} We view the diagram as being obtained by composing $n+1$ morphisms between 1-dimensional hypercubes of Lagrangians. Viewing this diagram as a sequence of morphisms $\theta_{n+1},\dots, \theta_1$, we form the composition as
\[
\mu_2(\theta_{n+1},\dots, \mu_2(\theta_2,\theta_1)).
\]
We first claim that except for the final composition, there are no diagonal terms in any partial composition. To see this, consider the first composition; the same argument will apply to all later compositions except for the final. There are three holomorphic quadrilateral counts which contribute to the diagonal term, corresponding to the following quadrilaterals:
\begin{enumerate}
\item $(\bs \bar c_w \bar \bs, \bs \bar c_w \bar \bs, \bs \tilde D \bar \bs, \tilde \Ds_1)$.
\item $(\bs \bar c_w \bar \bs, \bs D \bar \bs, \bs \tilde D \bar \bs, \tilde \Ds_1)$.
\item $(\bs \bar c_w \bar \bs, \bs D \bar \bs, \Ds_1, \tilde \Ds_1)$.
\end{enumerate}
Each of these Heegaard quadruples decomposes as a connected sum where both factors have one set of attaching curves which is a small translation of an adjacent set. Since we are counting curves using trivial matchings, we use \cite{HHSZExact}*{Proposition~11.5} to see that no holomorphic quadrilaterals contribute. Similarly, using a model count on the genus 1 diagram, we observe that the two triangle compositions in the vertical direction coincide.

Therefore the final composition is of a diagram of the following form:
\begin{equation}
\begin{tikzcd}
\bs \bar c_w \bar \bs\ar[r]\ar[d, "\ve{X}|\theta"] &\bs \bar c_w \bar \bs\ar[d, "\ve{X}|\theta"]\\
\as D \bar \as\ar[r]\ar[d] &\as \tilde D \bar \as \ar[d]\\
\as\bar c_w \bar \as\ar[r] &\as \bar c_w \bar \as
\end{tikzcd}
\label{eq:hyperbox-lagrangians-to-be-compressed}
\end{equation}
where $\theta$ denotes the intersection point of $\bar c_w\cap D$ or $\bar c_w \cap \tilde{D}$.

As a first step, we consider the length 1 maps in the compression of Equation~\eqref{eq:hyperbox-lagrangians-to-be-compressed}. The length one chain on the left side of the compression is given by counting triangles on $(\Sigma\# \bar \Sigma, \bs \bar \as, \as \bar \as, \as \bar \as)$ and $(\bT^2, \bar c_w, D, \bar c_w)$. There is no matching condition, so we count triangles on the two diagrams separately and see that the triangle maps applied to $(\ve{X}|\theta, \Theta^+| \theta)$ have output $\ve{X}|\theta^-$. The computation on $\Sigma\# \bar \Sigma$ follows from nearest-point triangle counts, and the computation on $\bT^2$ is a direct and straightforward triangle count. 

We now consider the length 2 map of the compression. The small translate theorems \cite{HHSZExact}*{Propositions~11.1 and 11.5} prohibit holomorphic quadrilaterals except on 
\[
(\Sigma\# \bar \Sigma,\bs\bar \bs, \as \bar \as, \as \bar \as, \as \bar \as)\#(\bT^2,\bar c_w,D, \tilde D, \bar c_w).
\]
Since the diagram on $\Sigma\# \bar \Sigma$ has several adjacent sets of attaching curves which are small translates of one another, the only curves counted consist of pairs $(u,u')$ where $u$ is an index 0 quadrilateral on $\Sigma\# \bar \Sigma$ and $u'$ is an index $-1$ quadrilateral on $\bT^2$. Furthermore, the two quadrilaterals share the same almost complex structure parameter (viewing the moduli space of rectangles as $\R$). There is a unique such curve $u'$, shown in Figure~\ref{twist_fig:7}. Write $\ve{X}'|\theta^-$ for the output. 

To see that $\ve{X}'$ is homologous to $\ve{X}$ we deform the matching, by instead requiring the quadrilaterals $u$ and $u'$ to have almost $\ev_{\Box}(u)-\ev_{\Box}(u')=t$, where $\ev_{\Box}$ denotes the evaluation map to the moduli space of complex rectangles, identified with $\R$. We then send $t\to \infty$. The effect is to change $\ve{X}'$ by a boundary. This shows that $\ve{X}'$ is homologous to the composition 
\[
\mu_2(\mu_2(\ve{X}, \Theta_{\a\bar \a, \a \bar \a}), \Theta_{\a\bar \a, \a \bar \a}).
\]
 which we identify with $\ve{X}$ via the nearest point map. This completes the proof.
\end{proof}


\subsection{Proof of Theorem~\ref{thm:twist-map}}

We now complete our proof of the computation of the map $\tw_*$:
\begin{proof}[Proof of Theorem~\ref{thm:twist-map}] It remains to put
the pieces together to finish the proof. We build a 3-dimensional hyperbox to realize the homotopy. The back face is the map $\tw_*$, computed using perfect matching between $\Sigma\# \bar \Sigma$ and $\bT^2$.

 The front face is defined very similarly, using the trivial (i.e. tensor product) matchings, with a modification of one cube. In place of
\[
\begin{tikzcd}[column sep=2cm, row sep=1.5cm, labels=description]
\CF_\wedge( \bar c_w \bar \bs,\bar c_w \bar \as)
	\ar[r,"\id"]
	\ar[d, "F_3^{c_w,c_w}\Psi^{(\bar c_w \bar \a)\to (c_w \bar \a)}_{(\bar c_w \bar \b)\to (c_w \bar \b)}"]& 
\CF_\wedge(\bar c_w \bar \bs,\bar c_w \bar \as)
	\ar[d, "F_3^{c_w,c_w}\Psi^{(\bar c_w \bar \a)\to (c_w \bar \a)}_{(\bar c_w \bar \b)\to (c_w \bar \b)}"]
\\
\CF(\bar \bs, \bar \as)\ar[r, "\id"]
& \CF(\bar \bs, \bar \as)
\end{tikzcd}
\]
we have
\[
\begin{tikzcd}[column sep=1.5cm, row sep=1cm, labels=description]
\CF_\otimes(\bar c_w \bar \bs,\bar c_w \bar \as)
	\ar[r,"\id"]
	\ar[d, "\Psi'"]&
 \CF_\otimes(\bar c_w \bar \bs,\bar c_w \bar \as )
	\ar[d, "\Psi'"]
\\
\CF(\bar \bs, \bar \as   )\ar[r, "\id"]
& \CF(\bar \bs, \bar \as)
\end{tikzcd}
\]
Where 
\[
\Psi'=\left(\begin{array}{l} \xs\otimes \theta^-\mapsto \xs\\
\xs\otimes \theta^+\mapsto \Phi_w(\xs)
\end{array} \right)\circ \Psi^{\bar\Sigma<\bT}
\]
Lemmas~\ref{lem:1-handle-c_w-commutation}, \ref{lem:tensor-product-hyerpcube-middle} and Equation~\eqref{eq:hypercube-3-handle-preliminary} show that the hyperbox relations are satisfied for appropriate choices of length 3 maps on the interior of the cube.

Next, we compress the front face of the hypercube described above. We pair the hypercube from Lemma~\ref{lem:compressed-Fuk-hyperbox} with $\as \bar c_w \bar \bs$ and then stack with the 2-dimensional hypercube
\[
\begin{tikzcd}\CF(\bar\Sigma,\bar \bs, \bar \as)\ar[r, "\id"]\ar[d, "\eta"]& \CF(\bar \Sigma, \bar \bs, \bar \as) \ar[d, "\eta"]\\
\CF(\Sigma,\as,\bs)\ar[r, "\id"]& \CF(\Sigma,\as,\bs)
\end{tikzcd}
\] The so-constructed cube has the form
\[
\begin{tikzcd}\CF(\as, \bs)\ar[r, "\id"]\ar[dr, "\Phi_w \iota"] \ar[d, "\iota"] & \CF(\as, \bs)\ar[d, "\iota"]\\
\CF(\as, \bs)\ar[r, "\id"] & \CF(\as, \bs),
\end{tikzcd}
\]
completing the proof.
\end{proof}

\subsection{Knots and links}\label{subsec:twist-knots}

\label{sec:twisting-knots-links}

If $K$ is a knot, then \emph{a-priori} there is an action $\tw^\mu_*$ corresponding to rotating a neighborhood of $K$ in the meridional direction. (Rotation in the longitudinal direction is well known to induce an interesting automorphism by work of Sarkar \cite{SarkarMovingBasepoints}). Despite the interesting potential for a new map, we have the following:

\begin{prop} 
The map $\tw^{\mu}_*$ is chain homotopic to $\id$ on $\cCFKI(K)$. The same holds for links. 
\end{prop}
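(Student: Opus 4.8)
The plan is to run exactly the same neck-stretching argument used for the $\SO(3)$-twist on closed three-manifolds, keeping track of the extra basepoint $z$, and to observe that the single new ingredient — the appearance of the map $\Phi_w$ in Lemma~\ref{lem:pair} — does not occur in the knot/link setting because the connected-sum region of the (expanded) doubled diagram contains no basepoint. Concretely, I would first set up the Heegaard picture: realize $\tw^\mu_*$ on $\cCFLI(K)$ by the analogue of the hyperbox in~\eqref{eq:Dehn-twist-map}, where the doubling curves $\Ds$ and $\tilde{\Ds}$ differ by a meridian twist, and then pass to the once-further-stabilized model with two connected-sum tubes (attached at $w$ and at an auxiliary point $p$ away from all basepoints), exactly as in Figure~\ref{twist_fig:5}. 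As in the closed case, one breaks the doubling-curve sequence $\bs c_p\bar\bs \to \Ds \to \as c_p\bar\as$ into a long chain $\Ds_1,\dots,\Ds_n$ with algebraic rigidity, so that all holomorphic polygon counts occur on a connected sum of the ordinary doubled link diagram on $\Sigma\#\bar\Sigma$ with a genus-one torus region $\bT$ where the twist acts, and the extra tube at $w$ can be rerouted to a meridian $c_w$ or $\bar c_w$ using the stabilization result Proposition~\ref{prop:multi-stabilization-counts} (the analogue of Lemma~\ref{lem:1-handle-c_w-commutation}).

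Next I would carry out the degeneration argument of Section~\ref{sec:twist} verbatim, with one crucial difference in the final-stage lemma. In the closed case, Lemma~\ref{lem:pair} produces $F_3^{c_w,c_w}\circ \Psi^{(\bar c_w\bar\a)\to(c_w\bar\a)}_{(\bar c_w\bar\b)\to(c_w\bar\b)}\simeq (\xs\otimes\theta^-\mapsto\xs,\ \xs\otimes\theta^+\mapsto\Phi_w^{\bar\Sigma}(\xs))\circ\Psi^{\bar\Sigma<\bT}$; the term $\Phi_w^{\bar\Sigma}$ arises from a bigon degeneration in $\bT$ that crosses the connected-sum point, which in the closed case is forced to be adjacent to the basepoint $w$. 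In the knot/link expanded model, the connected-sum tube at $p$ and the torus region $\bT$ are placed away from $\ws\cup\zs$ entirely (this is exactly the feature emphasized in Section~\ref{subsec:twist-knots}: the connect-sum region of the doubled link diagram contains no basepoint). Hence the relevant index bound, the analogue of Lemma~\ref{lem:index-bound}, gives $\mu(\phi)\geq n_p(\phi)$ for the tube point $p$ but there is no basepoint in $\bT$ whose $\Phi$-action could be picked up — the "monster disk'' degeneration that produced $\Phi_w$ now contributes nothing, since it would require a bigon crossing a basepoint in the torus region and there is none. So the analogue of Lemma~\ref{lem:pair} simplifies to $F_3^{c_w,c_w}\circ\Psi^{(\bar c_w\bar\a)\to(c_w\bar\a)}_{(\bar c_w\bar\b)\to(c_w\bar\b)}\simeq (\xs\otimes\theta^-\mapsto\xs,\ \xs\otimes\theta^+\mapsto 0)\circ\Psi^{\bar\Sigma<\bT}$, and similarly the central hyperbox (Lemmas~\ref{lem:tensor-product-hyerpcube-middle}, \ref{lem:compressed-Fuk-hyperbox}) compresses to a diagram with trivial diagonal term.

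Assembling the three-dimensional hyperbox whose back face is $\tw^\mu_*$ (perfect matching) and whose front face uses trivial matching — exactly as at the end of the proof of Theorem~\ref{thm:twist-map} — the front face now compresses to
\[
\begin{tikzcd}[column sep=1.6cm,row sep=1.2cm,labels=description]
\cCFL(\as,\bs)\ar[r,"\id"]\ar[d,"\iota_K"]& \cCFL(\as,\bs)\ar[d,"\iota_K"]\\
\cCFL(\as,\bs)\ar[r,"\id"]& \cCFL(\as,\bs)
\end{tikzcd}
\]
with zero diagonal, i.e.\ the enhanced $\iota_K$-morphism $(\id,0)$. This identifies $\tw^\mu_*$ with the identity up to homotopy of $\iota_K$-homomorphisms. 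The link case is identical: each component carries its own meridian twist, the extra connected-sum tube for the expanded model of $\iota_{L,\pm}$ is still placed away from $\ws\cup\zs$, and the same degeneration argument applies component by component, so $\tw^\mu_*\simeq\id$ on $\cCFLI(L)$.

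The main obstacle I anticipate is bookkeeping rather than conceptual: one must check carefully that in the expanded link model the auxiliary point $p$ and the torus region $\bT$ can genuinely be chosen disjoint from all of $\ws\cup\zs$ while still giving a valid collection of doubling arcs (this is where Section~\ref{sec:expanded-involution-knots} and the basis-of-$H_1(\Sigma\setminus(\ws\cup\zs),p)$ condition are used), and that the index bound of Lemma~\ref{lem:index-bound} — which in the closed case used the $\gr$-formula relating $\mu$, $n_w$ and the grading shift — continues to hold for the torus region with only the tube point $p$ (no basepoint) present, so that the $\Phi$-producing bigon degeneration is genuinely absent. Once this placement and the modified index estimate are verified, every other step is a line-by-line transcription of Section~\ref{sec:twist} with $\CF$ replaced by $\cCFL$ and the extra $\zs$ basepoints carried along passively.
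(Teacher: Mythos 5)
Your conclusion is right, and you have correctly located the heuristic (no basepoint in the connect-sum region), but the proof you propose is both far heavier than the paper's and rests on a premise that fails in the model the paper actually uses. In the expanded model of Section~\ref{sec:expanded-involution-knots} the doubling arcs $\delta_1,\dots,\delta_{2g+2|L|-1}$ have all their endpoints at the auxiliary point $p$, which is disjoint from $\ws\cup\zs$, while $\tw^\mu$ is supported in a neighborhood of $K$, i.e.\ in small disks around the points of $\ws\cup\zs$ on $\Sigma$. Those disks can be taken disjoint from every attaching curve and every doubling arc, so $\tilde{\Ds}=\Ds$ and $\tw^\mu$ fixes every Heegaard diagram appearing in the definition of the involution on the nose. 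The paper's proof is exactly this one-line observation: there is nothing to compute, no hyperbox to build, and no neck to stretch. Your setup, in which ``$\Ds$ and $\tilde{\Ds}$ differ by a meridian twist,'' silently transplants the closed-manifold picture (where the arcs end at $w$ and the twist acts on the connect-sum tube) into a setting where it does not occur.

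If you instead work with the non-stabilized model of Section~\ref{sec:doubling-knots}, where the tube is attached at a basepoint and the twist genuinely changes $\Ds$, two issues arise. First, the paper's remark following the proposition gives a much shorter argument there as well: since the tube region contains no basepoint of the doubled diagram, the change $\Ds\to\tilde{\Ds}$ is realized by a surface diffeomorphism of $\Sigma\#\bar\Sigma$ isotopic to the identity rel basepoints, and continuity (as in Proposition~\ref{prop:continuity}) finishes the proof. Second, and more seriously for your route, the step you defer as ``the main obstacle'' is precisely where the closed-case argument uses that the connect-sum point coincides with the basepoint: the estimate $\mu(\phi_{\bar\Sigma})\ge 2n_p(\phi_{\bar\Sigma})$ in the proof of Equation~\eqref{eq:hypercube-3-handle-preliminary}, and the first item of Lemma~\ref{lem:index-bound}, are grading formulas at the basepoint and do not transcribe when $p$ is no longer a basepoint. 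Without replacements for these bounds, your degeneration analysis of the analogue of Lemma~\ref{lem:pair} is not justified, so as written the proposal has a genuine gap --- one that is entirely avoidable by using the paper's tautological argument (or, in the non-stabilized model, the isotopy-plus-continuity argument of the remark).
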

\begin{proof}We consider the stabilized model of the involution. For knots and links, the doubling arcs $\ds$ have endpoints at a point $p$ which is not one of the basepoints of the knot. Hence, applying $\tw^{\mu}$ to each Heegaard diagram has no effect on any of the attaching curves.
\end{proof}

\begin{figure}[h]
\begingroup%
  \makeatletter%
  \providecommand\color[2][]{%
    \errmessage{(Inkscape) Color is used for the text in Inkscape, but the package 'color.sty' is not loaded}%
    \renewcommand\color[2][]{}%
  }%
  \providecommand\transparent[1]{%
    \errmessage{(Inkscape) Transparency is used (non-zero) for the text in Inkscape, but the package 'transparent.sty' is not loaded}%
    \renewcommand\transparent[1]{}%
  }%
  \providecommand\rotatebox[2]{#2}%
  \newcommand*\fsize{\dimexpr\f@size pt\relax}%
  \newcommand*\lineheight[1]{\fontsize{\fsize}{#1\fsize}\selectfont}%
  \ifx\svgwidth\undefined%
    \setlength{\unitlength}{97.69787159bp}%
    \ifx\svgscale\undefined%
      \relax%
    \else%
      \setlength{\unitlength}{\unitlength * \real{\svgscale}}%
    \fi%
  \else%
    \setlength{\unitlength}{\svgwidth}%
  \fi%
  \global\let\svgwidth\undefined%
  \global\let\svgscale\undefined%
  \makeatother%
  \begin{picture}(1,0.90487954)%
    \lineheight{1}%
    \setlength\tabcolsep{0pt}%
    \put(0,0){\includegraphics[width=\unitlength,page=1]{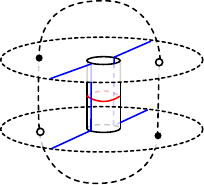}}%
    \put(0.74399439,0.85281655){\makebox(0,0)[lt]{\lineheight{1.25}\smash{\begin{tabular}[t]{l}$K$\end{tabular}}}}%
  \end{picture}%
\endgroup%

\caption{A doubled knot diagram (for the unknot). Meridianal twisting preserves the diagram. }
\end{figure}

\begin{rem} We can also recover the above result on the non-stabilized doubling models of the knot involution, considered in Section~\ref{sec:doubling-knots} and \cite{HHSZExact}*{Section~2.2}, though the argument is more subtle. Here, we double at one of the basepoints (say $w\in \Sigma$), with the $\ds$ curves having endpoints at $w$. In this model, after attaching 1-handles, we move $w$ to the position of $\bar z$. The map $\tw^\mu$ still changes the curves $\Ds$, but since there is no basepoint in the tube region (unlike in the case of closed 3-manifolds), the effect on the level of diagrams is to act by a surface diffeomorphism which is isotopic to the identity relative to the basepoints on $\Sigma\# \bar \Sigma$. 
\end{rem}

\bibliographystyle{custom}
\def\MR#1{}
\bibliography{biblio}

\end{document}